\documentclass[10pt,a4paper]{amsart}
\usepackage[english]{babel}
\usepackage{amsmath,amsfonts, amsthm, amssymb}
\usepackage{fancyhdr}
\usepackage{setspace}
\usepackage{tikz-cd}
\usepackage{mathrsfs}
\usepackage{stmaryrd}
\usepackage{enumitem}
\usepackage{verbatim}

%\usepackage{hyperref}
%\usetikzlibrary{positioning, arrows}
%\usepackage[a4paper,left=2cm,right=2cm,top=2cm,bottom=3.5cm]{geometry}
%\pdfpagewidth 12in
%\pdfpageheight 15in

\pagestyle{fancy}
\fancyhf{}
\headheight 15pt
\rhead{\rightmark}
\rfoot{}
\cfoot{\thepage}
\lfoot{}
\title{}
\author{}

\parindent=10pt
\parskip=6pt
\baselineskip=14pt

\theoremstyle{definition}
\newtheorem{Theo}{Theorem}[section]

\newtheorem{Def}[Theo]{Definition}
\newtheorem{Cor}[Theo]{Corollary}

\newtheorem{Cons}[Theo]{Construction}

\theoremstyle{plain}
\newtheorem{Lemma}[Theo]{Lemma}
\newtheorem{Prop}[Theo]{Proposition}

\theoremstyle{remark}
\newtheorem{Rmk}[Theo]{Remark}

%%%%%%%%%%%%%%%%%%%%%%%%%%%%%%%%%%%%%%%%%%%%5

\DeclareMathOperator{\id}{id}

\DeclareMathOperator{\Hom}{Hom}
\DeclareMathOperator{\rank}{rank}
\DeclareMathOperator{\Aut}{Aut}
%\DeclareMathAlphabet{\mathcal}{OT1}{cal}{m}{it}
%%%%%%%%%%%%%%%%%%%%%%%%%%%%%%%%%%%%%%%%%%%%%%%%%%%%%%%%%%%%

% big chi
%%%%%%%%%%%%%%%%%%%%%%%

\newcommand{\inj}{\hookrightarrow}
\newcommand{\surj}{\twoheadrightarrow}
%%%%%%%%%%%%%for this document%%%%%%%%%%%%%%%%%%%%%%

\newcommand{\overbar}[1]{\mkern 1.5mu\overline{\mkern-1.5mu#1\mkern-1.5mu}\mkern 1.5mu}

\renewcommand{\bar}{\overbar}
\renewcommand{\tilde}{\widetilde}

\begin{document}
	\author{Shanxiao Huang}
    \title{Families of Paraboline $(\varphi,\Gamma_K)$-modules}
    \maketitle
    \begin{abstract}
    	Let $p$ be a prime and $K$ be a $p$-adic local field. We study the stack of quasi-deRham $(\varphi,\Gamma_K)$-modules, i.e. $(\varphi,\Gamma_K)$-modules that are deRham up to twist by characters. These objects are used to construct and then study the so called the paraboline varieties, which parametrize successive extensions of quasi-deRham $(\varphi,\Gamma_K)$-modules of a certain type, generalizing the trianguline varieties of \cite{Breuil2017a},\cite{Breuil2017}. On the automorphic side, We construct relative eigenvarieties, and prove the existence of some local-global compatible morphism between them via showing the density of "classical points".
    \end{abstract}
	\tableofcontents
	\pagebreak
	{ \ \ }\\
	\bibliographystyle{alpha}
	\section{Introduction}
	Let $p$ be a prime and let $K$ be a finite field extension of $\mathbb{Q}_p$. The main purpose of this article is to define the \textit{refined paraboline varieties} and \textit{paraboline varieties}, which are generalizations of the \textit{trianguline varieties} and prove some parallel results in \cite{Breuil2017a} and \cite{Breuil2017}. More precisely, we fix an integer $n$ with a partition $n=n_1+\cdots+n_l$, and type a $\tau_i$ of the inertia group $I_K$ of dimension $n_i$ for each $i\in\{1,\dots,l\}$. The refined paraboline varieties and the paraboline varieties are two different compactifications of some rigid spaces, which parametrize $(\varphi,\Gamma_K)$-modules with filtrations such that, for $i=1,\dots,l$, the $i$-th graded pieces are deRham of type $\tau_i$ up to twist by characters (with some extra conditions). We compare the refined paraboline varieties with the paraboline varieties, and study their geometry. Moreover, on the automorphic side, we modified Loeffler's recipe in \cite{Loeffler2011} to construct a Hecke eigenvariety corresponding to a given paraboline variety. The points of such eigenvariety parametrize the $p$\textit{-adic overconvergent eigenforms} (see the definition in \textit{loc. cit.}) of some Hecke algebra, whose $v$-component is the Bernstein center of $\tau_1\otimes\cdots\otimes \tau_l$ for some place $v|p$. Then we give a comparison between the Hecke eigenvarieties and  the paraboline varieties. To further elucidate the motivation, let us briefly recall some results about trianguline varieties in \cite{Kedlaya2012}, \cite{BJG2009}.  
	
    Let $C$ be a finite field extension of $\mathbb{Q}_p$ such that $|\Hom(K,C)|=[K:\mathbb{Q}_p]$. Write $\mathcal{R}_{K,C}$ for the \textit{relative Robba ring over} $C$ \textit{for} $K$, see e.g.  \cite[Def~2.2.2]{Kedlaya2012}. Given a continuous character $\delta:K^{\times}\rightarrow C^{\times}$, one can attach a rank one $(\varphi,\Gamma_K)$-module $\mathcal{R}(\delta)$ over $\mathcal{R}_{K,C}$. Actually, the inverse is true (\cite[Lem~6.2.13]{Kedlaya2012}). Namely, let $D$ be a $(\varphi,\Gamma_K)$-module of rank one over $\mathcal{R}_{K,C}$. Then there exists a unique continuous character $\delta: K^{\times}\rightarrow C^{\times}$ such that
    	$$D\cong \mathcal{R}(\delta).$$
    Let $\delta_i:K^{\times}\rightarrow C^{\times}$ be continuous characters for $i=1,\dots,n$. A $(\varphi,\Gamma_K)$-module $D$ over $\mathcal{R}_{K,C}$ (of rank $n$) is called \textit{trianguline with parameter} $(\delta_1,\dots,\delta_n)$ ({[Def~6.3.1] in \textit{loc. cit.}}) if $D$ admits a filtration 
    \begin{equation} \label{eq:Triangulation}
    	0=D_0\subset D_1\subset\cdots\subset D_n=D
    \end{equation}
	given by $(\varphi,\Gamma_K)$-submodules of $D$ such that $D_i/D_{i-1}$ is a rank one $(\varphi,\Gamma_K)$-module, which is isomorphic to $\mathcal{R}(\delta_i)$ for $i=1,\dots,n$. In other word, a trianguline $(\varphi,\Gamma_K)$-module $D$ can be regarded as a successive extension of $(\delta_1\dots,\delta_n)$.  
	
	Now write $G_K:= \mathrm{Gal}(\bar{K}/K)$ for the absolute Galois group of $K$, and $k_C$ for the residue field of $C$. Fix a continuous representation $\bar{r}: G_K\rightarrow \mathrm{GL}_n(k_C)$ and let $R_{\bar{r}}^{\Box}$ be the framed local deformation ring of $\bar{r}$ (a local complete noetherian $\mathcal{O}_C$-algebra with residue field $k_C$). We write $\mathfrak{X}_{\bar{r}}:=(\mathrm{Spf}R_{\bar{r}}^{\Box})^{\mathrm{rig}}$ for the rigid $C$-space associated to the formal scheme $\mathrm{Spf}R_{\bar{r}}^{\Box}$, then the $C$-points of $\mathfrak{X}_{\bar{r}}$ parametrize the lifts of $\bar{r}$ to $\mathcal{O}_C$. Let $\mathcal{T}_K:= \widehat{K^{\times}}$ be the rigid $C$-space of continuous characters of $K^{\times}$. The \textit{trianguline variety} $X_{\mathrm{tri}}(\bar{r})$ is defined to be the reduced rigid $C$-space, which is the Zariski closure in $\mathfrak{X}_{\bar{r}}\times \mathcal{T}_K^{n}$ of:
	\begin{equation*}
		U_{\mathrm{tri}}(\bar{r}) :=\{(r,\delta)\in\mathfrak{X}_{\bar{r}}\times (\mathcal{T}^{n}_K)_{\mathrm{reg}}\ |\ \mathrm{D}_{\mathrm{rig}}^{\dagger}(r)\mathrm{\ is\ trianguline\ with\ parameter\ }\delta\},
	\end{equation*}
	 where $(\mathcal{T}^{n}_K)_{\mathrm{reg}}$ is a Zariski-open subspace of $\mathcal{T}_K^{n}$ defined for some technical reason, and  $\mathrm{D}_{\mathrm{rig}}^{\dagger}(r)$ denotes the $(\varphi,\Gamma_K)$-module attached to $r$, constructed by Berger in \cite{Berger2002}.
	 
	  An important fact is that, if $r$ is a crystalline representation, then the attached $(\varphi,\Gamma_K)$-module $\mathrm{D}_{\mathrm{rig}}^{\dagger}(r)$ is trianguline (\cite[Prop~2.4.1~and~Rmk~2.4.3]{BJG2009}). Indeed, Bella\"{\i}che and Chenevier show that for a crystalline representation $r$, the set of filtrations of the associated $(\varphi,\Gamma_K)$-module $\mathrm{D}_{\mathrm{rig}}^{\dagger}(r)$ by $(\varphi,\Gamma_K)$-submodules is naturally bijective to the set of filtrations of the associated filtered $\varphi$-module $\mathbf{D}_{\mathrm{cris}}(r)$ by sub filtered $\varphi$-modules, where $\mathbf{D}_{\mathrm{cris}}(-)$ is Fontaine's functor of crystalline periods in $p$-adic Hodge theory. Then one can see that, after perhaps enlarging $C$, one can choose a basis of $\mathbf{D}_{\mathrm{cris}}(-)$ such that the matrix associated to the $\varphi$-action under this basis in is an upper triangular matrix, which implies that $\mathrm{D}_{\mathrm{rig}}^{\dagger}(r)$ is trianguline (this is the reason why such $(\varphi,\Gamma_K)$-module is called trianguline). Therefore for a very general crystalline representation $r$ (with some extra data), one can regard it as a point in the trianguline variety $X_{\mathrm{tri}}(\bar{r})$.
	
	In this article, we want to generalize this approach to the case of deRham representations, and define the paraboline varieties. Let $r:G_K\rightarrow \mathrm{GL}_n(C)$ be a deRham representation. Suppose that $r$ is semi-stable restricted on $G_{L}$ for some Galois extension $L/K$ (recall that a deRham representation $r$ is deRham if and only if it is potentially semi-stable \cite[Cor~5.22]{Berger2002}), and let $\mathbf{D}_{\mathrm{st},L}(r)$ be the associated filtered $(\varphi,N,G_{L/K})$-module, which is defined by Colmez and Fontaine (see e.g. \cite{PierreColmez2000ConstructionDR},\cite{Berger2002}). Thanks to \cite[Cor~III.2.5]{Berger2008}, we have similar results as in \cite[Rmk~2.4.3]{BJG2009} for deRham representations, i.e. the set of filtrations of $\mathbf{D}_{\mathrm{st},L}(r)$ by sub filtered $(\varphi,N,G_{L/K})$-modules is naturally bijective to the set of filtrations of $\mathrm{D}_{\mathrm{rig}}^{\dagger}(r)$ by $(\varphi,\Gamma_K)$-submodules, induced by the fully faithful functor $\mathbf{D}_K$ (see the definition in \ref{FamilyofPhiGamma}, also in \cite[Def~II.2.4]{Berger2008}). It follows that the $(\varphi,\Gamma_K)$-module $D:=\mathrm{D}_{\mathrm{rig}}^{\dagger}(r)$ admits a filtration
	$$0= D_0 \subset D_1 \subset \cdots \subset D_l = D$$
	by $(\varphi,\Gamma_K)$-submodules, such that the filtered $(\varphi,N,G_{L/K})$-modules $\mathbf{D}_K^{-1}(D_i/D_{i-1})$ are irreducible for $i=1,\dots,l$. 
	
	The discussion above motivate us to give the following definition. We call a $(\varphi,\Gamma_K)$-module $D$ over $\mathcal{R}_{K,C}$ \textit{quasi-deRham irreducible} if there exist an irreducible filtered $(\varphi,N,G_{L/K})$-module $M$ and a continuous character $\delta:K^{\times}\rightarrow C^{\times}$ such that
	$$D\cong \mathbf{D}_{K}(M_i)\otimes_{\mathcal{R}_{K,C}} \mathcal{R}(\delta_i).$$
	 We call a $(\varphi,\Gamma_K)$-module $D$ over $\mathcal{R}_{K,C}$ \textit{paraboline} if $D$ admits a filtration 
	$$0=D_0\subset D_1\subset\cdots\subset D_l=D$$
	given by $(\varphi,\Gamma_K)$-submodules such that each graded piece $D_i/D_{i-1}$ is quasi-deRham irreducible (and say $D$ is \textit{with parameter} $(D_1,D_2/D_1,\dots, D_l/D_{l-1})$). Therefore if $r$ is a deRham representation, then $\mathrm{D}_{\mathrm{rig}}^{\dagger}(r)$ is paraboline. We then want to proceed as follows: given a parabolic subgroup $\mathcal{P}\subset \mathrm{GL}_n$, we want to parametrize rank $n$ $(\varphi,\Gamma_K)$-modules with a filtration of type $\mathcal{P}$ whose graded pieces are quasi-deRham irreducible.  
	
	But to define such ``paraboline varieties", we still have a problem to solve. Unlike the trianguline varieties, it is not clear that what is the definition of the parameter spaces, i.e. the space parametrizing the graded pieces. One approach is to compute the moduli space $\mathcal{Z}$ (see section \ref{ParemeterSpaces}) of irreducible quasi-deRham $(\varphi,\Gamma_K)$-module (with some extra trivialization data) with the property that 
	$$\mathcal{Z}(E):=\{\mathrm{quasi}\text{-}\mathrm{deRham\ irreducible\ }(\varphi,\Gamma_K)\text{-module\ } D\mathrm{\ over\ }\mathcal{R}_{K,E}  \}$$	
	for every $E/C$ finite field extension (and for a point $x\in\mathcal{Z}(E)$, write $\mathcal{R}(x)$ for the associated $(\varphi,\Gamma_K)$-module) and use it to define the "paraboline varieties". More precisely, fix a partition $n:=n_1+\cdots +n_l$ and fix a connected component $\mathcal{S}_i$ of $\mathcal{Z}$ with the rank of objects in $\mathcal{S}_i$ equal to $n_i$ and fix a continuous representation $\bar{r}: G_K\rightarrow \mathrm{GL}_n(k_C)$.  We define the \textit{refined paraboline variety} $X_{\mathrm{par}}(\bar{r})$ to be the reduced rigid $C$-space, which is the Zariski closure in $\mathfrak{X}_{\bar{r}}\times \mathcal{S}$ of:
	\begin{equation*}
		U_{\mathrm{par}}(\bar{r}) :=\{(r,x)\in\mathfrak{X}_{\bar{r}}\times \mathcal{S}_{\mathrm{reg}}\ |\ \mathrm{D}_{\mathrm{rig}}^{\dagger}(r)\mathrm{\ is\ paraboline\ with\ parameter\ }x\},
	\end{equation*}   
	where $\mathcal{S}:= \mathcal{S}_1\times\cdots\times\mathcal{S}_l$ and $\mathcal{S}_{\mathrm{reg}}\subset \mathcal{S}$ is some Zariski open subspace of $\mathcal{S}$ for the technical reason similar to the trianguline case. The following theorem (see theorem \ref{Thm:Geoofpar}) describes the geometry of the refined paraboline variety $X_{\mathrm{par}}(\bar{r})$, which is the parallel result of \cite[Thm~2.6]{Breuil2017}.
	
	\begin{Theo}\label{GeoofRefPar}{\ }
		\begin{enumerate}
			\item the rigid space $X_\mathrm{par}(\bar{r})$ is equidimensional of dimension 
			$$[K:\mathbb{Q}_p](\frac{n(n-1)}{2}+l)+n^2;$$
			\item the set $U_{\mathrm{par}}(\bar{r})$ is Zariski open in $X_{\mathrm{par}}(\bar{r})$, hence it is also Zariski dense in $X_{\mathrm{par}}(\bar{r})$;
			\item the rigid space $U_{\mathrm{par}}(\bar{r})$ is smooth. 
		\end{enumerate}
	\end{Theo}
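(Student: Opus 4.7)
The plan is to follow the strategy of the proof of \cite[Thm~2.6]{Breuil2017}, replacing throughout the rank-one modules $\mathcal{R}(\delta_i)$ by the quasi-deRham irreducible modules $\mathcal{R}(x_i)$ parametrized by $x_i\in\mathcal{S}_i$. Accordingly, the three parts will be proved together via a deformation-theoretic analysis of a framed variant of $X_\mathrm{par}(\bar{r})$ and a $(\varphi,\Gamma_K)$-cohomology computation on the regular locus.

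First I would introduce a framed space $X^{\square}_{\mathrm{par}}(\bar{r})$ whose points are tuples $(r, x, \mathrm{Fil}^{\bullet}, (\iota_i)_{i=1}^l)$, where $r$ lifts $\bar{r}$, $x=(x_1,\ldots, x_l)\in \mathcal{S}$, $\mathrm{Fil}^{\bullet}$ is a filtration of $\mathrm{D}_{\mathrm{rig}}^{\dagger}(r)$ by $(\varphi,\Gamma_K)$-submodules of ranks $n_1,\, n_1+n_2,\, \ldots,\, n$, and each $\iota_i:\mathcal{R}(x_i)\xrightarrow{\sim}\mathrm{gr}^i_{\mathrm{Fil}}$ is a trivialization. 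The natural forgetful map $X^{\square}_{\mathrm{par}}(\bar{r})\to\mathfrak{X}_{\bar{r}}\times\mathcal{S}$ lands inside $X_{\mathrm{par}}(\bar{r})$, and its restriction $X^{\square,\circ}_{\mathrm{par}}(\bar{r})$ over the preimage of $\mathcal{S}_{\mathrm{reg}}$ surjects onto $U_{\mathrm{par}}(\bar{r})$ as a smooth morphism, with fibres given by the groupoid of changes of trivializations of the graded pieces.

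The core step is to show that $X^{\square,\circ}_{\mathrm{par}}(\bar{r})$ is smooth and to compute its dimension. The tangent space at a point is described by the sub-$\mathrm{Ext}^1$ of $\mathrm{End}(\mathrm{D}_{\mathrm{rig}}^{\dagger}(r))$ preserving the filtration, which is controlled by the $(\varphi,\Gamma_K)$-cohomology of the associated graded endomorphism complex. The obstruction spaces are governed by $H^2_{(\varphi,\Gamma_K)}(\Hom_{\mathcal{R}_{K,C}}(\mathcal{R}(x_j),\mathcal{R}(x_i)))$ for $1\le i<j\le l$; by Tate local duality these are dual to $\Hom_{(\varphi,\Gamma_K)}(\mathcal{R}(x_i),\mathcal{R}(x_j)\otimes \mathcal{R}(\chi_{\mathrm{cyc}}))$, which vanish precisely under the condition defining $\mathcal{S}_{\mathrm{reg}}$. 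Tate's Euler--Poincar\'e formula applied term-by-term then yields the dimension of $H^1$; adding the contributions from $\mathfrak{X}_{\bar{r}}$ and $\mathcal{S}$, and subtracting the dimension of the trivialization group $\prod_{i=1}^l\mathrm{GL}_{n_i}$, produces the claimed formula $[K:\mathbb{Q}_p](n(n-1)/2+l)+n^2$.

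With this in hand, parts (1)--(3) follow formally: smoothness of $U_\mathrm{par}(\bar{r})$ is inherited from $X^{\square,\circ}_{\mathrm{par}}(\bar{r})$ via the smooth surjection above; Zariski openness of $U_\mathrm{par}(\bar{r})$ in $X_\mathrm{par}(\bar{r})$ follows from a standard constructibility argument for the existence of a paraboline filtration with parameter lying in the open locus $\mathcal{S}_{\mathrm{reg}}$ on the universal family $\mathrm{D}_{\mathrm{rig}}^{\dagger}(r)$ over $X_\mathrm{par}(\bar{r})$, and Zariski density is then tautological from the definition as a Zariski closure; equidimensionality of $X_\mathrm{par}(\bar{r})$ transfers from the smooth equidimensional open $U_\mathrm{par}(\bar{r})$. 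The hard part will be the $H^2$-vanishing step: whereas in the trianguline case it reduces to an explicit inequality on ratios of characters, here it requires identifying the correct open locus $\mathcal{S}_{\mathrm{reg}}\subset\mathcal{S}$ on which the $\Hom$'s between distinct quasi-deRham irreducible modules (and their twists by $\mathcal{R}(\chi_{\mathrm{cyc}})$) vanish, relying on the detailed study of the parameter spaces $\mathcal{S}_i$ carried out in Section \ref{ParemeterSpaces} and on a suitable relative Tate duality for $(\varphi,\Gamma_K)$-modules in families over $\mathcal{S}$.
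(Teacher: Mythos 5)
Your overall strategy is the same as the paper's: an auxiliary framed space of parabolized, rigidified lifts of $\bar{r}$ (the paper's $\mathcal{P}^\Box(\bar{r},\mathcal{S})$), smoothness and a dimension count for it via $(\varphi,\Gamma_K)$-cohomology on the regular locus (proposition \ref{Prop:dimforcoh} and theorem \ref{Sd}), and then transfer to $U_{\mathrm{par}}(\bar{r})$ and $X_{\mathrm{par}}(\bar{r})$. However, two of your steps would fail as stated. First, the dimension bookkeeping: on $\mathcal{S}_{\mathrm{reg}}$ the automorphism group of each quasi-deRham irreducible $\mathcal{R}(x_i)$ is just scalars (this is exactly $H^0_{\varphi,\gamma_K}(\mathcal{R}(x_i)^\vee\otimes\mathcal{R}(x_i))=k(x)$), so the group of changes of trivializations of the graded pieces is $\mathbb{G}_m^l$, of dimension $l$, not $\prod_i\mathrm{GL}_{n_i}$; subtracting $\sum_i n_i^2$ does not give the stated formula. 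The $n^2$ in the dimension comes from a different rigidification, namely the $\mathrm{GL}_n$-torsor trivializing the rank-$n$ vector bundle carrying the Galois action (equivalently, the framing of the deformation of $\bar{r}$), which enters through the admissible locus of the representability theorem of Hellmann, not through the graded pieces.

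Second, the claim that openness of $U_{\mathrm{par}}(\bar{r})$ and smoothness of the map from the framed space down to $U_{\mathrm{par}}(\bar{r})$ "follow formally" hides the main technical content. There is no universal parabolization over $X_{\mathrm{par}}(\bar{r})$, so a direct constructibility argument on a universal family is not available; one needs the paraboline analogue of the Kedlaya--Pottharst--Xiao interpolation theorem (theorem \ref{KPX6.3.9} and corollary \ref{Cor:KPX6.3.10}), which produces a filtration only after a proper birational modification, with cokernels killed by powers of $t$ and supported away from the preimage of $U_{\mathrm{par}}(\bar{r})$; this in turn rests on new inputs specific to the paraboline setting, namely that quasi-deRham irreducible modules have no proper saturated submodules (lemmas \ref{Sat1} and \ref{Sat2}). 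Likewise, smoothness of $\pi_{\bar{r}}$ onto $U_{\mathrm{par}}(\bar{r})$ (which is what lets you descend smoothness and the dimension) is proved in the paper by comparing completed local rings, using the surjectivity of $R^\Box_{r_y}\rightarrow R^\Box_{r_y,\mathcal{F}_\bullet}$ (the paraboline analogue of Bella\"{\i}che--Chenevier) together with an integrality/unibranch argument on the normalization of $X_{\mathrm{par}}(\bar{r})$ using connectedness of the $\mathbb{G}_m^l$-fibers; asserting the map is a smooth surjection with torsor fibers presupposes exactly this. Your $H^2$-vanishing step via Tate duality on the regular locus is correct in spirit and matches the paper's proposition \ref{Prop:dimforcoh}, but the pieces above must be supplied for the proof of theorem \ref{Thm:Geoofpar} to go through.
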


    However, we are motivated to consider another reasonable definition of the "paraboline varieties" for two reasons. Indeed, if $z_1 = (r,x_1,\dots,x_l)$ and $z_2=(r,y_1,\dots,y_l)$ are two points in $U_{\mathrm{par}}(\bar{r})$ with the same underlying Galois representation $r$ such that $\mathcal{R}(x_i)[1/t]\cong \mathcal{R}(y_i)[1/t]$ for each $i$ then $z_1=z_2$ (i.e. $x_i=y_i$ for any $i$). By the construction of the functor $\mathbf{D}_K$, inverting $t$ on the level of deRham $(\varphi,\Gamma_K)$-modules corresponding to forget the filtration for the associated filtered $(\varphi,N,G_{L/K})$-modules. Let $\mathcal{T}_i$ be the Stein space characterized by $\Gamma(\mathcal{T}_i,\mathcal{O}_{\mathcal{T}_i})\cong \Gamma(\mathcal{S}_i,\mathcal{O}_{\mathcal{S}_i})$, and write
    $$\mathrm{pr}_i:\mathcal{S}_i\rightarrow \mathcal{T}_i$$
    be the canonical projection for $i=1,\dots,l$. Then by definition $$\mathcal{S}_i \cong \mathcal{T}_i\times \mathrm{Flag}_i,$$ here $\mathrm{Flag}_i$ is some flag variety encoding the information about filtration, and $\mathrm{pr}_i$ is the projection to the first factor. It follows from the discussion above that (See subsection \ref{ParemeterSpaces}) if $z_1 = (r,(x_i)_i)$ and $z_2=(r,(y_i)_i)$ are two points in $U_{\mathrm{par}}(\bar{r})$ with the same $r$ such that $\mathrm{pr}_i(x_i) = \mathrm{pr}_i(y_i)$ for each $i$, then $z_1=z_2$ (then we can say $z_1$ is \textit{paraboline with parameter} $(\mathrm{pr}_1(x_1),\dots,\mathrm{pr}_l(x_l))$). Hence the induced projection ($\mathcal{T} := \mathcal{T}_1\times\cdots\times\mathcal{T}_l$)
    $$\Xi:\mathfrak{X}_{\bar{r}}\times \mathcal{S} \rightarrow \mathfrak{X}_{\bar{r}}\times \mathcal{T}$$
    restricted on $U_{\mathrm{par}}$ is injective, which indicates that $\mathcal{T}$ may be another considerable candidate for the parameter space. We write $V_{\mathrm{par}}$ for the image of $U_{\mathrm{par}}$ under $\Xi$, and define the \textit{paraboline variety} $Y_{\mathrm{par}}(\bar{r})$ to be the reduced rigid space, which is the Zariski closure of $V_{\mathrm{par}}(\bar{r})$ in $\mathfrak{X}_{\bar{r}}\times \mathcal{T}$. We prove the following crucial comparison theorem (see remark \ref{RefinedParandPar}), which in particular implies that the geometry of the set $U_{\mathrm{par}}(\bar{r}) \cong V_{\mathrm{par}}(\bar{r})$ is intrinsic and does not depend on the ambient spaces.
    
    \begin{Theo}
    		The map
    			 $$\Xi:X_{\mathrm{par}}(\bar{r}) \rightarrow Y_{\mathrm{par}}(\bar{r})$$ is proper birational and surjective with the property that
    			 $\Xi^{-1}(V_{\mathrm{par}}(\bar{r})) = U_{\mathrm{par}}(\bar{r}) $ and $\Xi|_{U_{\mathrm{par}}(\bar{r})}$ is an isomorphism onto $V_{\mathrm{par}}(\bar{r})$.
    \end{Theo}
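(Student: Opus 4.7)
The plan is to establish the five claims in sequence: properness, surjectivity, the isomorphism $\Xi|_{U_{\mathrm{par}}(\bar{r})}\cong V_{\mathrm{par}}(\bar{r})$, the fiber identity $\Xi^{-1}(V_{\mathrm{par}}(\bar{r}))=U_{\mathrm{par}}(\bar{r})$, and finally birationality. For \textbf{properness}, the key is that each $\mathrm{Flag}_i$ is a projective rigid analytic variety, so the projection $\mathcal{S}_i=\mathcal{T}_i\times\mathrm{Flag}_i\to\mathcal{T}_i$ is proper; taking products, the ambient map $\mathfrak{X}_{\bar{r}}\times\mathcal{S}\to\mathfrak{X}_{\bar{r}}\times\mathcal{T}$ is proper, and its restriction to the Zariski closed subspace $X_{\mathrm{par}}(\bar{r})$ remains proper with image contained in $Y_{\mathrm{par}}(\bar{r})$. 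For \textbf{surjectivity}, this image is then closed in $Y_{\mathrm{par}}(\bar{r})$ and contains the Zariski dense subset $V_{\mathrm{par}}(\bar{r})=\Xi(U_{\mathrm{par}}(\bar{r}))$, so it must equal $Y_{\mathrm{par}}(\bar{r})$.

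For the \textbf{isomorphism on} $U_{\mathrm{par}}(\bar{r})$, set-theoretic injectivity of $\Xi|_{U_{\mathrm{par}}(\bar{r})}$ is the content of the paragraph preceding the theorem. To upgrade this to an isomorphism of rigid spaces, I would build the algebraic inverse $\sigma:V_{\mathrm{par}}(\bar{r})\to U_{\mathrm{par}}(\bar{r})$ as follows: over $V_{\mathrm{par}}(\bar{r})$ a point $(r,(t_i)_i)$ determines, by unravelling the definitions of quasi-deRham irreducibility and paraboline parameter, a \emph{unique} filtration of $\mathrm{D}_{\mathrm{rig}}^{\dagger}(r)$ whose $i$-th graded piece has parameter $t_i$, and the flag component in $\mathrm{Flag}_i$ cut out by the filtration on that graded piece is then determined. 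The main task is to promote this pointwise recipe to a morphism: I would work with the universal $(\varphi,\Gamma_K)$-module on $\mathfrak{X}_{\bar{r}}\times \mathcal{T}$, show that over $V_{\mathrm{par}}(\bar{r})$ there is a universal paraboline filtration realising the parameter, and extract the universal flag sections by a Hom-space/Grassmannian construction analogous to the one used in Section~\ref{ParemeterSpaces}. By construction $\sigma$ is a two-sided inverse to $\Xi|_{U_{\mathrm{par}}(\bar{r})}$.

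The \textbf{fiber identity} $\Xi^{-1}(V_{\mathrm{par}}(\bar{r}))=U_{\mathrm{par}}(\bar{r})$ is what I expect to be the main obstacle, because a priori a point $z=(r,(s_i)_i)\in X_{\mathrm{par}}(\bar{r})\setminus U_{\mathrm{par}}(\bar{r})$ might have $\Xi(z)\in V_{\mathrm{par}}(\bar{r})$ while its flag components $s_i$ disagree with those produced intrinsically from a filtration of $\mathrm{D}_{\mathrm{rig}}^{\dagger}(r)$. My plan is to argue that the inverse $\sigma$ in fact extends to a morphism $\tilde{\sigma}$ from an admissible open neighbourhood of $V_{\mathrm{par}}(\bar{r})$ in $Y_{\mathrm{par}}(\bar{r})$ into $X_{\mathrm{par}}(\bar{r})$, by reinterpreting the above universal construction globally. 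Then on $X_{\mathrm{par}}(\bar{r})\cap \Xi^{-1}(\mathrm{dom}\,\tilde{\sigma})$, the composition $\tilde{\sigma}\circ\Xi$ is a morphism into $X_{\mathrm{par}}(\bar{r})$ which agrees with the identity on the Zariski dense open subset $U_{\mathrm{par}}(\bar{r})$ guaranteed by Theorem~\ref{GeoofRefPar}(2); by separatedness of $X_{\mathrm{par}}(\bar{r})$ the coincidence locus is closed, hence $\tilde{\sigma}\circ\Xi=\mathrm{id}$ on the whole source, forcing $z=\tilde{\sigma}(\Xi(z))\in U_{\mathrm{par}}(\bar{r})$. \textbf{Birationality} is then immediate from the established isomorphism on the Zariski dense open subsets $U_{\mathrm{par}}(\bar{r})\subset X_{\mathrm{par}}(\bar{r})$ and $V_{\mathrm{par}}(\bar{r})\subset Y_{\mathrm{par}}(\bar{r})$.
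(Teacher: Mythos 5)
There is a genuine gap, and it sits exactly at the step you yourself flag as the main obstacle: the fiber identity $\Xi^{-1}(V_{\mathrm{par}}(\bar{r}))=U_{\mathrm{par}}(\bar{r})$. Your plan needs a morphism $\tilde{\sigma}$ defined on an admissible open neighbourhood of $V_{\mathrm{par}}(\bar{r})$ inside $Y_{\mathrm{par}}(\bar{r})$, but there is no candidate recipe off $V_{\mathrm{par}}(\bar{r})$: points of $Y_{\mathrm{par}}(\bar{r})\setminus V_{\mathrm{par}}(\bar{r})$ need not carry any parabolization of the required type, so nothing to extend. If instead you only define $\sigma$ on $V_{\mathrm{par}}(\bar{r})$, the coincidence-locus argument does not get off the ground, because at this stage $V_{\mathrm{par}}(\bar{r})$ is merely the set-theoretic image of $U_{\mathrm{par}}(\bar{r})$ under a proper map; its Zariski-openness in $Y_{\mathrm{par}}(\bar{r})$ (and hence the fact that $\Xi^{-1}(V_{\mathrm{par}}(\bar{r}))$ is a reduced rigid subspace in which $U_{\mathrm{par}}(\bar{r})$ is Zariski dense) is in the paper a \emph{consequence} of the fiber identity, not an input. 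The same circularity affects your construction of the inverse $\sigma$ itself: to speak of a ``universal paraboline filtration over $V_{\mathrm{par}}(\bar{r})$'' you already need $V_{\mathrm{par}}(\bar{r})$ to be a rigid space, and even then the KPX-style machinery produces such filtrations only after a proper birational modification (corollary \ref{Cor:KPX6.3.10}), with graded pieces identified with $\mathcal{R}(f_i)$ only up to line-bundle twist and up to $t$-power torsion cokernels; promoting the pointwise uniqueness of proposition \ref{Prop:betterparameter} to a family statement over $V_{\mathrm{par}}(\bar{r})$ is precisely the nontrivial content you are assuming.

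The paper's argument avoids constructing an inverse altogether. For the fiber identity it works pointwise: given $z=(r,f)\in X_{\mathrm{par}}(\bar{r})$ with $\Xi(z)=(r,\delta)\in V_{\mathrm{par}}(\bar{r})$, it applies corollary \ref{Cor:KPX6.3.10} to get a filtration after a proper birational cover, specializes at a point above $z$, and then compares Sen polynomials (proposition \ref{Prop:Multipropforsen}, corollary \ref{Cor:Senineqforinj}) of the specialized filtration steps with those of the unique strict parabolization of $\mathrm{D}_{\mathrm{rig}}^{\dagger}(r)$ coming from proposition \ref{Prop:betterparameter}; an induction using the regularity (factor-through) condition forces $f=g$, i.e.\ $z\in U_{\mathrm{par}}(\bar{r})$. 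This Sen-polynomial comparison is the missing idea in your proposal. For the isomorphism $\Xi|_{U_{\mathrm{par}}(\bar{r})}\cong V_{\mathrm{par}}(\bar{r})$, the paper then combines properness plus the resulting topological bijection with a computation of completed local rings: injectivity of $\hat{\mathcal{O}}_{Y_{\mathrm{par}}(\bar{r}),y}\rightarrow\hat{\mathcal{O}}_{X_{\mathrm{par}}(\bar{r}),x}$ from finiteness and reducedness, and surjectivity from the deformation-theoretic statement of corollary \ref{Cor:Surjforregularlucos} (surjectivity of $R^{\Box}_{r_x}$ onto the completed local ring of $U_{\mathrm{par}}(\bar{r})$, established inside the proof of theorem \ref{Thm:Geoofpar}). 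If you want to keep your ``construct the inverse'' strategy, you would have to first prove the fiber identity by some independent means; as written, the proposal presupposes what it needs to prove. Your properness, surjectivity and birationality steps are fine and agree with the paper.
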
		
     We can derive similar geometric properties of $Y_{\mathrm{par}}(\bar{r})$ as those in theorem \ref{GeoofRefPar}:
    		\begin{enumerate}
    			\item the rigid space $Y_\mathrm{par}(\bar{r})$ is equidimensional of dimension 
    			$$[K:\mathbb{Q}_p](\frac{n(n-1)}{2}+l)+n^2;$$
    			\item the set $V_{\mathrm{par}}(\bar{r})$ is Zariski open in $Y_{\mathrm{par}}(\bar{r})$, hence it is also Zariski dense in $Y_{\mathrm{par}}(\bar{r})$;
    			\item the rigid space $V_{\mathrm{par}}(\bar{r})$ is smooth.
    		\end{enumerate}
    	 
    Another reason to define the paraboline variety $Y_{\mathrm{par}}(\bar{r})$ is for the seek to compare with a corresponding Hecke eigenvariety $D(V)$ on the automorphic side (see the definition of $D(V)$ in construction \ref{ConsforEigenVar}), which generalizes the results in \cite{Breuil2017a}, \cite{Breuil2017}. Roughly speaking, $D(V)$ is the eigenvariety constructed by Loeffler's work in \cite{Loeffler2011} (with some modification as the Hecke algebras we consider about is larger than that in \textit{loc. cit.}) for the following setting. We fix a unitary group $\mathcal{G} $ for some CM pair $(F,F^{+})$, a parabolic subgroup $\mathcal{P}$ of $\mathcal{G}\times_{F^{+}}F^{+}_p$, a weight $W$ (i.e. an irreducible algebraic representation of $\mathcal{G}(F^{+}\otimes_{\mathbb{Q}}\mathbb{R})$) and a supercuspidal Bernstein component $[\sigma]$ of $\mathcal{M}$, the Levi of $\mathcal{P}$ (and $V$ is a representation of a maximal compact subgroup of $\mathcal{M}(F^{+}_p)$ induced by $W$ and $\sigma$, see subsection \ref{Subsect:Notations}). Then $D(V)$ interpolate Hecke eigenvalues on automorphic representation $$\pi:=\otimes'_v\pi_v$$ for the unitary $\mathcal{G}$ of weight $W$ whose local component at $p$ corresponds (via the local Langlands correspondence) to a direct sum of irreducible Weil-Deligne representations of the types corresponding to the fixed Bernstein component $[\sigma]$. Clearly, $\mathcal{T}$ is the better parameter space of ``paraboline varieties" for this purpose: in this setup, the ``parameter space" of the eigenvariety parametrize Hecke eigenvalues, i.e. functions, hence it should be a Stein space (that is, it must be controlled by its global section). Moreover, the local Langlands correspondence attaches to $\pi_v$ a Weil-Deligne representation rather than a filtered Weil-Deligne representation (or equivalently, a filtered $(\varphi,N,G_{L/K})$-module), which gives another motivation why in the comparison of paraboline varieties with eigenvarieties, the parameter space should forget the information about filtration. By the construction, $D(V)$ is a reduced closed rigid subspace of
    $$\mathfrak{X}_{\bar{\rho}}\times\prod_{v\in S_p} \mathcal{T}_v$$
    where $\bar{\rho}:\mathrm{Gal}(\bar{F}/F)\rightarrow \mathrm{GL}_n(k_C)$ is some absolutely irreducible representation, $\mathfrak{X}_{\bar{\rho}}:=(\mathrm{Spf}(R_{\bar{\rho},S}))^{\mathrm{rig}}$ for $R_{\bar{\rho},S}$, the complete local $\mathcal{O}_C$-algebra with residue field $k_C$ pro-representing the functor of deformations $\rho$ of $\bar{\rho}$ satisfies some extra conditions and  $S_p$ is the set of finite places of $F$ dividing $p$ and $\mathcal{T}_v$ the parameter space of the paraboline variety $Y_{\mathrm{par}}(\bar{\rho}_v)$ for $\bar{\rho}_v := \bar{\rho}|_{G_{F_{\tilde{v}}}}$ ($\tilde{v}$ is a place of $F$ dividing $v$). Then one may view a classical automorphic form $f$ (with some extra technical conditions) as a point $z=(\rho,(x_v)_{v\in S_p})$ of such an eigenvariety, such that the Galois representation $\rho: \mathrm{Gal}(\bar{F}/F) \rightarrow \mathrm{GL}_n(\bar{\mathbb{Q}}_p)$ restricted on $G_{F_{\tilde{v}}}$ is paraboline with parameter $x_v$ at every place $v$ of $F$ dividing $p$ (see the first part of Theorem \ref{ComparisonTheorem}). Then we prove the density theorem for classical points (see theorem \ref{VCNareZarDense}, and see definition \ref{Def:ClPt} for the definition of classical points)
    
    \begin{Theo}
    		The set of dominant, very regular, classical, non-critical points in $D(V)$  is Zariski dense.
    \end{Theo}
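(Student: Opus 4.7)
The plan is to adapt the classical density strategy of Bellaïche-Chenevier and, in the trianguline setting, Breuil-Hellmann-Schraen \cite{Breuil2017a}, \cite{Breuil2017} to the paraboline eigenvariety $D(V)$ built via Loeffler's recipe \cite{Loeffler2011}. The starting point is Loeffler's description of $D(V)$ as a closed rigid subspace of $\mathfrak{X}_{\bar{\rho}} \times \prod_{v \in S_p} \mathcal{T}_v$, equipped with a weight map $\omega : D(V) \to \mathcal{W}$ to the appropriate weight space. By the general machinery of eigenvarieties, $D(V)$ is equidimensional of dimension $\dim \mathcal{W}$ and $\omega$ is locally finite, so the preimage of any Zariski dense subset of $\mathcal{W}$ meets every irreducible component of $D(V)$ in a Zariski dense subset.

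Next I would apply a classicality criterion: above a dominant algebraic weight $\lambda \in \mathcal{W}$, any point $z \in D(V)$ whose $U_p$-slope satisfies a numerical bound depending on $\lambda$ is classical and non-critical. This is the extension of the "small slope implies classical" principle of Coleman, as adapted by Loeffler to the overconvergent setting, with additional care required because the Hecke algebra is enlarged by the Bernstein center of $[\sigma] = [\sigma_1 \otimes \cdots \otimes \sigma_l]$. Combined with the accumulation property of dominant algebraic weights (any point of $\mathcal{W}$ has arbitrarily small neighborhoods containing dominant algebraic weights whose slope bounds are arbitrarily large), this shows the set of classical non-critical points is Zariski dense in $D(V)$.

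It remains to intersect with the dominant very regular locus. Dominance is automatic from the choice of weight in the previous step. The very regular condition is cut out by the non-vanishing of finitely many regular functions on $\prod_v \mathcal{T}_v$ (analogous to the definition of $\mathcal{S}_{\mathrm{reg}} \subset \mathcal{S}$ used in the construction of the paraboline variety) and is therefore Zariski open in $D(V)$. To conclude Zariski density of the full intersection, one must verify that the very regular locus is non-empty in every irreducible component of $D(V)$. This is established by exhibiting at least one classical point per component lying in the very regular locus, using local-global compatibility at classical points together with the fact that the regular locus of the paraboline variety $Y_{\mathrm{par}}(\bar{\rho}_v)$ is Zariski dense.

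The main obstacle is the classicality criterion in the enlarged Hecke algebra setting: Loeffler's "small slope implies classical" theorem must be upgraded to incorporate the Bernstein center action and the supercuspidal type data, which requires extending his orthonormalizable Banach module arguments and comparing locally analytic with algebraic parabolic induction for the relevant types. A secondary technical point is verifying that the non-critical condition is both Zariski open and meets every irreducible component; this typically reduces to a local model of $D(V)$ at classical points expressed in terms of paraboline deformation functors, which in turn uses the geometric results on $Y_{\mathrm{par}}(\bar{\rho}_v)$ proved earlier in the paper.
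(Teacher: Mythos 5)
Your overall strategy — use the weight map, the "small slope implies classical" principle, and accumulation of dominant algebraic weights — is the same as the paper's, and you correctly anticipate that Loeffler's classicality criterion needs to be upgraded to track the Bernstein-center/supercuspidal-type data. The paper does exactly this via a numerical criterion (Proposition \ref{VCNcriterion}), applying the corrigendum to \cite{Loeffler2011}, and then runs a Chenevier-style argument through the spectral variety $Z_\varsigma(V)$.

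However, your handling of the very regular condition contains a genuine gap. You propose to show the very regular locus meets every irreducible component ``by exhibiting at least one classical point per component lying in the very regular locus, using local-global compatibility at classical points together with the fact that the regular locus of the paraboline variety $Y_{\mathrm{par}}(\bar{\rho}_v)$ is Zariski dense.'' This is circular: the local-global compatibility morphism $D(V) \to \prod_v Y_{\mathrm{par}}(\bar{\rho}_v)$ (Theorem \ref{ComparisonTheorem}) is itself a corollary of the density theorem you are trying to prove. You cannot invoke it here.

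The fix, which is what the paper actually does, is to notice that the very regular condition is itself implied by a purely numerical bound on the weight, just like classicality and non-criticality. Concretely, Proposition \ref{VCNcriterion}(1) shows that a locally algebraic point $(\delta,\rho)$ with algebraic weight $(b^{(v)}_{\eta,i})$ is very regular as soon as $b^{(v)}_{\eta,i}-b^{(v)}_{\eta,j}$ exceeds a quantity controlled by $\mathrm{val}((\Delta_W\cdot\delta_{\mathfrak{Z}})(B_{v,i}))$, and these valuations are bounded on each quasi-compact piece $\mu^{-1}(X)$ of the eigenvariety. So on each such piece there is a single constant $c$ such that any locally algebraic point whose weight gaps exceed $c$ is simultaneously dominant, very regular, classical, and non-critical. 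One then shows $D_{\mathrm{la}}$ is Zariski dense (via \cite[Cor~6.4.4, Lem~6.2.8, Lem~6.2.10]{GaetanChenevier2004FamillesPD}) and that the locus $\hat H_{>c}$ of characters with large weight gaps is Zariski dense wherever $D_{\mathrm{la}}$ is nonempty. This avoids any separate existence argument for very regular points and in particular any appeal to local-global compatibility.
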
  
    
    A very important application of the theorem above is that we can construct a morphism from the eigenvariety $D(V)$ to the paraboline variety $Y_{\mathrm{par}}(\bar{r})$, i.e. (see theorem \ref{ComparisonTheorem}):
    
    \begin{Cor}
    	The natural map
    	\begin{align*}
    		\mathfrak{X}_{\bar{\rho}}\times\prod_{v\in S_p} \mathcal{T}_v & \rightarrow \prod (\mathfrak{X}_{\bar{\rho}_v}\times \mathcal{T}_v)\\
    		(\rho,(x_v)_v) & \mapsto (\rho|_{G_{F_{\tilde{v}}}},j_v'(x_v))_v
    	\end{align*}
    induces a map (via restriction)
    	$$D(V) \rightarrow \prod\limits_{v\in S_p} Y_{\mathrm{par}}(\bar{\rho}_v),$$
    	here $j_v'$ is an isomorphism of $\mathcal{T}_v$ induced by twisting by a character (see the definition in subsection \ref{Subsect:Density}).
    \end{Cor}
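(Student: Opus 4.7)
The plan is to combine the density theorem for classical points with the fact that each $Y_{\mathrm{par}}(\bar{\rho}_v)$ is, by definition, a Zariski closure. Write $\Psi$ for the natural map displayed in the statement. Since each $Y_{\mathrm{par}}(\bar{\rho}_v)$ is a reduced Zariski-closed rigid subspace of $\mathfrak{X}_{\bar{\rho}_v}\times\mathcal{T}_v$, the product $\prod_{v\in S_p} Y_{\mathrm{par}}(\bar{\rho}_v)$ is Zariski closed inside $\prod_{v\in S_p}(\mathfrak{X}_{\bar{\rho}_v}\times\mathcal{T}_v)$, and therefore its preimage $\Psi^{-1}\bigl(\prod_{v\in S_p} Y_{\mathrm{par}}(\bar{\rho}_v)\bigr)$ is Zariski closed in the ambient space $\mathfrak{X}_{\bar{\rho}}\times\prod_{v\in S_p}\mathcal{T}_v$. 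Because $D(V)$ is reduced, in order to show that $D(V)$ lies in this preimage it suffices to verify the inclusion on a Zariski-dense subset.

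Next, I would invoke Theorem \ref{VCNareZarDense} to produce such a subset: the set $Z$ of dominant, very regular, classical, non-critical points of $D(V)$ is Zariski dense. For a point $z=(\rho,(x_v)_{v})\in Z$ the first part of Theorem \ref{ComparisonTheorem} (recalled just before the density theorem) asserts that for every $v\in S_p$ the restriction $\rho|_{G_{F_{\tilde{v}}}}$ is paraboline, the parameter being precisely the one obtained from $x_v$ after the normalization encoded by the twist $j_v'$ of subsection \ref{Subsect:Density}. By the very definition of $V_{\mathrm{par}}(\bar{\rho}_v)$ this translates into
$$(\rho|_{G_{F_{\tilde{v}}}},j_v'(x_v))\in V_{\mathrm{par}}(\bar{\rho}_v)\subset Y_{\mathrm{par}}(\bar{\rho}_v),$$
so $\Psi(z)\in\prod_{v\in S_p} Y_{\mathrm{par}}(\bar{\rho}_v)$. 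Combined with the closedness argument above, this gives $\Psi(D(V))\subset\prod_{v\in S_p} Y_{\mathrm{par}}(\bar{\rho}_v)$, and the induced morphism of reduced rigid analytic spaces is obtained by restricting $\Psi$.

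The main obstacle is the verification at classical points, i.e.\ matching the Hecke parameter $x_v\in\mathcal{T}_v$ attached to an automorphic form with an actual paraboline parameter of the local Galois representation $\rho|_{G_{F_{\tilde{v}}}}$. This is precisely the role of $j_v'$: it absorbs the normalization discrepancy between the local Langlands correspondence (which governs $\mathcal{T}_v$ on the automorphic side) and the $p$-adic Hodge-theoretic conventions used to define $V_{\mathrm{par}}(\bar{\rho}_v)$. The very regular and non-critical hypotheses, built into the density theorem, are essential here so that the local paraboline structure on $\rho|_{G_{F_{\tilde{v}}}}$ is uniquely determined by the Hecke eigenvalues (in the spirit of the injectivity of $\Xi|_{U_{\mathrm{par}}}$ established earlier); this is what ensures that $j_v'(x_v)$ is an honest parameter in $\mathcal{T}_v$ and not merely one compatible choice among many. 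Once this classical case is established, the extension to all of $D(V)$ is purely formal through density and Zariski-closedness, as sketched in the first two paragraphs.
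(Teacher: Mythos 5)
Your argument is correct and matches the paper's own deduction: the ``in particular'' step of Theorem \ref{ComparisonTheorem} is proved there exactly as you propose, by combining the Zariski-closedness of $\prod_{v}Y_{\mathrm{par}}(\bar{\rho}_v)$ (and reducedness of $D(V)$) with the density of dominant, very regular, classical, non-critical points from Theorem \ref{VCNareZarDense} and the pointwise membership furnished by the first sentence of Theorem \ref{ComparisonTheorem}. The substance of the corollary, namely the local--global verification that a classical point lands in $V_{\mathrm{par}}(\bar{\rho}_v)$ after the twist $j_v'$, is indeed carried entirely by that pointwise statement (whose proof occupies the body of Theorem \ref{ComparisonTheorem} via Proposition \ref{VCptProp}), so citing it as a prerequisite, as you do, is exactly what the paper does.
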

     	
    Finally, we remark that in \cite{ChristopheBreuil2021BernsteinE}, Christophe Breuil and Yiwen Ding prove results that parallel the second part of remark \ref{RefinedParandPar} (i.e. the description of the geometry of $Y_{\mathrm{par}}(\bar{r})$), proposition \ref{CharOfDomClPt}, theorem \ref{ComparisonTheorem}. 
    
    \textbf{Acknowledgment.} 
    
    I would like to express my gratitude to my supervisor, Eugen Hellmann, who guided me throughout this project, and also be grateful for the support by Germany's Excellence Strategy EXC 2044-390685587 ``Mathematics Münster: Dynamics-Geometry-Structure" and by the CRC 1442 ``Geometry: Deformations and rigidity" of the DFG (German Research Foundation).

	\textbf{Notation and convention.} 
	
	Through out this article, we fix $p$ a prime number. 
	
	For any local field $E$ over $\mathbb{Q}_p$, let $G_E$ denote the absolute Galois group of $E$, let $W_E$ denote the Weil group of $E$, let $I_E$ denote the inertial group of $G_E$ and let $k_E  $ denote the residue field of $E$ with cardinality $q_E$. Let $G_{E'/E} := \mathrm{Gal}(E'/E)$ for any Galois extension $E'$ over $E$, and let $E_0:= W(k_E)[1/p]$ (i.e. the maximal unramified sub field of $E$). We normalized the local Artin map $\mathrm{Art}: W_E^{\mathrm{ab}} \rightarrow E^{\times}$ such that $\mathrm{Art}(\mathrm{Fr}^{-1}) = \varpi_E$, where $\mathrm{Fr}^{-1}$ is the geometric Frobenius (i.e., the induced action on the residue field is $x\mapsto x^{1/q_E}$) and $\varpi_E$ is some uniformizer in $E$.
	
	We denote $\mathbb{Z}_{+}^{d} :=\{(k_1\leq \cdots \leq k_d)\in\mathbb{Z}^{d}\}$ and $\mathbb{Z}_{++}^{d} :=\{(k_1< \cdots < k_d)\in\mathbb{Z}^{d}\}$.
	
	%If there is not additional assumption, we always denote by $L$ some finite Galois extension over $K$, denote by $I_L^\mathrm{ab}$ the image of the natural composition  $$I_L \rightarrow W_L \rightarrow W_K \rightarrow W_K^\mathrm{ab}$$ (note that this is not the abelianization of $I_L$). 
	
	 In this article, we will define many objects consisting of a module (or a vector space) with some extra structures, for convention, we use the notation of the module (or the vector space) to represent the object if there is no ambiguity. For a group acting on an object, we also use the notation of an element of this group to represent the corresponding action for short.
	
	\section{Stack of Quasi-deRham $(\varphi,\Gamma_K)$-modules}
	
	Through out this section, we fix a $p$-adic local field $K$. Let letter $L$ will always denote some finite Galois extension over $K$, and the letter $C$ will always denote a field over $\mathbb{Q}_p$.
	
	\subsection{Decomposition of Weil-Deligne Representations}\label{Subsect:DecompofWDRep}
	In this and the subsection, we assume $C$ is algebraically closed, and let $A$ denote some $C$-algebra. We write $q:= q_K$ for the cardinality of the residue field of $K$, and write $I:= I_K$ for the inertial group of $K$.

	\begin{Def}
		A \textit{Weil-Deligne representation $(V,\varrho,N)$ of $K$ over $A$} (also called a \textit{WD representation} for short) consists of a finitely generated projective $A$-module $V$, and a group homomorphism $\varrho$ from $W_K$ to $\Aut_A(V)$, which is trivial on some open subgroup $I_{L}\subseteq W_L\subseteq W_K$ for some finite field extension $L$ over $K$ (in this case, we say that $\varrho$ is unramified over $L$, and moreover we say $V$ is $\varrho$-\textit{unramified} if $\varrho$ is unramified over $K$), and an $A$-linear endomorphism $N$ of $V$ such that $q^{\parallel g\parallel_K}gN = Ng$ for any $g$ in $W_K$. Here the group homomorphism $\parallel \cdot \parallel_K : W_K \rightarrow \mathbb{Z}$ is characterized by $\bar{g}: \bar{\mathbb{F}}_p \rightarrow \bar{\mathbb{F}}_p, x \mapsto x^{q^{-\parallel g\parallel_K}}$. 
		
		Moreover, a \textit{ morphism $\alpha:(V,\varrho,N) \rightarrow (V',\varrho',N')$ of Weil-Deligne representations} is an $A$-homomorphism $\alpha:V\rightarrow V'$ with $\alpha\circ N = N'\circ \alpha$, and $\alpha\circ \varrho(g) = \varrho'(g)\circ \alpha$ for any $g\in W_K$, we denote the set of morphisms by $\Hom_{\mathrm{WD}}(V_1,V_2)$. 
	\end{Def}

    \begin{Rmk}
    	Recall that, for a Weil-Deligne representation $(V,\varrho,N)$, people usually say $V$ is unramified if $N=0$ and $\varrho|_{I_K}$ is trivial. Hence we use the name $\varrho$-unramified to discriminate these different definition (actually, we will never say $V$ is unramified in this article).
    \end{Rmk}
	
	\begin{Def}
		Let $\mathfrak{W}^{L/K,d}$ denote the category fibered in groupoids on $C$-schemes that assign to $A$ the groupoid of rank $d$ Weil-Deligne representations $(V,\varrho,N)$ of $K$ with $\varrho$ unramified over $L$.
	\end{Def}

	\begin{Lemma}\label{Lem:Idecomp}
		Let $V$ be a WD representation of $K$ over $A$, one has the following decomposition as $I$-representation over $A$:
		$$V = \bigoplus_{\tau} V_\tau,$$
		here $\tau$ runs through all the isomorphism classes of irreducible smooth $I$-representation over $C$ and $V_\tau \cong \tau\otimes_C U_{\tau}$ for some finitely generated projective $A$-module $U_\tau$. Moreover, each $V_\tau$ is stable under the $N$-action (but not stable under $W_K$-action in general).
	\end{Lemma}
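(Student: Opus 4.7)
The plan is to exploit the unramifiedness hypothesis to reduce to the representation theory of a finite group over an algebraically closed field of characteristic zero. Since $L/K$ is finite Galois, $G_L$ is a finite-index normal subgroup of $G_K$ and $I_L = G_L \cap I_K$, so $G := I/I_L$ embeds into $\mathrm{Gal}(L/K)$ and is finite. Because $\varrho|_{I_L}$ is trivial, $V$ carries a natural $A[G]$-module structure, and only those smooth irreducible $I$-representations $\tau$ which factor through the quotient $I \twoheadrightarrow G$ can appear; all other $V_\tau$ are forced to be zero, so only finitely many $\tau$ contribute nontrivially to the claimed decomposition.

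Since $C$ is algebraically closed of characteristic zero, $C[G]$ is semisimple, and I would use the central primitive idempotents $e_\tau \in C[G] \subset A[G]$ attached to each irreducible $\tau$. The $e_\tau$ are orthogonal and sum to $1$, so setting $V_\tau := e_\tau V$ gives an $A[G]$-decomposition $V = \bigoplus_\tau V_\tau$, and each $V_\tau$ is a direct summand of the finitely generated projective $A$-module $V$, hence itself finitely generated projective over $A$.

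To identify $V_\tau$ with $\tau \otimes_C U_\tau$, I set $U_\tau := \Hom_{C[I]}(\tau, V)$, which since $I$ acts through $G$ equals $\Hom_{C[G]}(\tau, V)$ and is an $A$-submodule of $\Hom_C(\tau, V) \cong V^{\dim_C \tau}$. The latter is finitely generated projective over $A$ and carries a natural $G$-action whose invariants are $U_\tau$; averaging by $\frac{1}{|G|}\sum_{g \in G} g$ realizes the invariants as a direct summand, so $U_\tau$ is finitely generated projective over $A$. The evaluation map
$$\tau \otimes_C U_\tau \longrightarrow V_\tau, \qquad v \otimes \phi \mapsto \phi(v),$$
is $A[G]$-linear between finitely generated projective $A$-modules. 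By Nakayama it suffices to check it is an isomorphism after base change to each residue field $\kappa$ of $A$; since $\tau$ is absolutely irreducible over the algebraically closed field $C$ and $C \subseteq \kappa$, the $\kappa[G]$-module $\tau \otimes_C \kappa$ remains absolutely irreducible, and the statement reduces to the classical isotypic decomposition of a finite-dimensional $\kappa[G]$-module.

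Finally, for the $N$-stability: restricting the defining relation $q^{\parallel g\parallel_K} g N = N g$ to $g \in I$ (where $\parallel g\parallel_K = 0$) gives $gN = Ng$, so $N$ commutes with the $I$-action, and in particular with every idempotent $e_\tau \in C[G]$. Hence $N(V_\tau) = N e_\tau V = e_\tau N V \subseteq e_\tau V = V_\tau$. The main technical obstacle is the evaluation isomorphism $\tau \otimes_C U_\tau \cong V_\tau$, which is not automatic over a general coefficient ring $A$ but follows cleanly from the averaging projector together with the pointwise Nakayama reduction above.
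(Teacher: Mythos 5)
Your proof is correct and follows essentially the same line as the paper's: reduce to the finite quotient $I/I_L$ and use the isotypic decomposition over $C[I/I_L]$. The one point where you diverge in a small but pleasant way is the $N$-stability argument: since $\parallel g\parallel_K=0$ for $g\in I$, the operator $N$ commutes with the $I$-action and hence with each idempotent $e_\tau$, giving $N(V_\tau)=e_\tau N(V)\subseteq V_\tau$ directly, whereas the paper instead verifies that $N$ descends to an operator on $\Hom_I(\tau,V)$ and pushes it through the evaluation map; you also spell out more carefully than the paper why $U_\tau$ is finitely generated projective and why the evaluation map $\tau\otimes_C U_\tau\to V_\tau$ is an isomorphism over general $A$.
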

	\begin{proof}
		As the restriction of $\varrho$ to $I$ factors through some finite quotient $I/H$ of $I$, hence $\varrho$ can be regarded as a representation of $I/H$. In addition, $V$ can be regarded as an $C$-vector space with linear $(I/H)$-action. Hence we get such decomposition. 
		
		To see that $V_\tau$ is an $A$-module, one can rewrite $V_\tau$ as the image of the natural injection:
		\begin{align*}
		i_{\tau}:\tau \otimes_C \Hom_I(\tau,V)  &\hookrightarrow V\\
			v\otimes f &\mapsto f(v)
		\end{align*}
		and note that the natural $A$-module structure of $\Hom_I(\tau,V)$, which is defined by $$r(f)(v):= rf(v)$$ for any $f\in \Hom_I(\tau,V)$, is compatible with the $A$-module structure of $V$. Hence $V_\tau$ is an $A$-submodule of $V$ ($V_\tau$ is projective as it is a direct summand of $V$). 
		
		Moreover, if we let $N(f)(v) := N(f(v))$, then for any $g\in I$, one has $$N(f)(g(v)) = N\circ f \circ g (v) = N\circ g\circ f(v) = g\circ N\circ f (v).$$
		Hence $N(f)\in\Hom_I(\tau,V)$. This defines an $N$-structure of $\Hom_I(\tau,V)$ such that the natural injection $\tau \otimes \Hom_I(\tau,V)  \hookrightarrow V$ is $N$-equivariant.
		
		 It follows that for any $v\in V_\tau$, which can be expressed as
		$\sum\limits_{t\in T} f_t(v_t) $ (for a finite index set $T$ and $f_t \in \Hom_I(\tau,V)$ and $v_t\in \tau$), one has $$N(v) =\sum\limits_{t\in T} N(f_t)(v_t).$$ Hence $N(v)$ is in $V_\tau$, which implies $V_\tau$ is $N$-stable.
	\end{proof}
	
	\begin{Lemma}\label{Lem:extprop}
		Let $(V,\tau)$ be an irreducible smooth representation of $I$ over $C$. And set 
		$$W_\tau = \{g\in W_K | g^{-1}\tau g \sim \tau \}$$
		Then $W_\tau/I\cong \mathbb{Z}$. Up to scalar, $\tau$ uniquely extends to a smooth representation $\tilde{\tau}$ of $W_\tau$ (i.e. after choosing some $g\in W_\tau$ which is a generator of $W_\tau/I$, all the possible choice of $\tilde{\tau}(g)$ is differed by scalars).
	\end{Lemma}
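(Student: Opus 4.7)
The plan is to treat the two assertions separately: first establish the group-theoretic statement $W_\tau/I \cong \mathbb{Z}$, and then construct the extension $\tilde\tau$ and verify uniqueness via Schur's lemma.

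To prove $W_\tau/I \cong \mathbb{Z}$: since $W_\tau$ clearly contains $I$ and $W_K/I \cong \mathbb{Z}$ via the map $\parallel\cdot\parallel_K$, it suffices to show that $W_\tau$ has finite index in $W_K$. Because $\tau$ is an irreducible smooth representation of the profinite group $I$ over $C$, it is finite-dimensional and factors through a finite quotient $I/N$ with $N$ open normal in $I$. I would first observe that the normalizer $N_{W_K}(N)$ has finite index in $W_K$: the conjugates $gNg^{-1}$ for $g\in W_K$ are again open normal subgroups of $I$ of the fixed index $[I:N]$, and the profinite group $I$ has only finitely many such subgroups. Then within $N_{W_K}(N)$, the conjugation action permutes the finite set of isomorphism classes of irreducible $C$-representations of the finite group $I/N$, so the stabilizer $W_\tau$ of $\tau$ has finite index in $N_{W_K}(N)$, and hence in $W_K$.

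For existence of $\tilde\tau$: pick $\phi \in W_\tau$ lifting a generator of $W_\tau/I$. By the defining property of $W_\tau$, there exists $A \in \Aut_C(V)$ with $A\tau(g)A^{-1} = \tau(\phi g \phi^{-1})$ for every $g \in I$. Since every element of $W_\tau$ factors uniquely as $\phi^n g$ with $n\in\mathbb{Z}$ and $g\in I$, I define $\tilde\tau(\phi^n g) := A^n \tau(g)$; multiplicativity is then a short check using the intertwining relation on $A$ together with the normality of $I$ in $W_\tau$. Smoothness is automatic because $I$ is open in $W_\tau$ (the quotient $W_\tau/I \cong \mathbb{Z}$ being discrete), so the open $I$-stabilizer of any vector is already open in $W_\tau$.

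For uniqueness: two intertwiners $A, A'$ satisfying the above relation differ by an $I$-endomorphism of $\tau$, which by Schur's lemma is a scalar in $C^\times$ (using that $C$ is algebraically closed and $\tau$ is irreducible). Thus any alternative extension $\tilde\tau'$ must satisfy $\tilde\tau'(\phi) = c\,\tilde\tau(\phi)$ for some $c \in C^\times$, so $\tilde\tau'$ and $\tilde\tau$ differ by twisting by the unramified character of $W_\tau$ sending $\phi$ to $c$. The main obstacle I anticipate is the finiteness argument yielding $W_\tau/I \cong \mathbb{Z}$ rather than merely a subgroup of $\mathbb{Z}$: one has to combine the finiteness of open subgroups of a given index in the profinite group $I$ with the finiteness of irreducible $C$-representations of the finite quotient $I/N$. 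Once that is in hand, the construction and uniqueness of $\tilde\tau$ are essentially formal consequences of Schur's lemma.
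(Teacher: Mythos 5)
Your second half — lifting a generator $\phi$ of $W_\tau/I$, producing the intertwiner $A$, defining $\tilde\tau(\phi^n g)=A^n\tau(g)$, and deriving uniqueness up to an unramified character twist from Schur's lemma — is correct and is essentially the argument the paper gives. Your instinct to worry explicitly about whether $W_K$ normalizes $\ker\tau$ is also sound: the paper's proof is terse here, asserting that $g^{n!}$ acts trivially on $I/\ker\tau$ without first arguing that some power of $g$ even stabilizes $\ker\tau$.

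The gap is in the finiteness claim you use to bound $[W_K:N_{W_K}(N)]$. You assert that ``the profinite group $I$ has only finitely many'' open normal subgroups of the given finite index, but this is false for $I=I_K$. The inertia group of a $p$-adic local field is not topologically finitely generated: by a theorem of Iwasawa its maximal pro-$p$ quotient is a free pro-$p$ group of countably infinite rank, so $\Hom_{\mathrm{cont}}(I_K,\mathbb{Z}/p)$ is infinite and $I_K$ has infinitely many open normal subgroups of index $p$. Hence the finiteness of the conjugacy class $\{gNg^{-1}:g\in W_K\}$ cannot be deduced from a general profinite-group counting argument. The correct fix uses smoothness in a sharper way. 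Since $\ker\tau$ is open in $I_K=G_{K^{\mathrm{nr}}}$, it equals $G_M$ for some finite extension $M/K^{\mathrm{nr}}$; writing $M\subseteq L\cdot K^{\mathrm{nr}}$ with $L/K$ a finite Galois extension gives $I_L\subseteq\ker\tau$, and $I_L$ is open and normal in all of $W_K$ (indeed in $G_K$). The continuous homomorphism $W_K\to\Aut(I_K/I_L)$ then has open kernel; any $g$ in that kernel conjugates trivially on $I_K/I_L$, through which $\tau$ factors, so $g\in W_\tau$, and since the kernel has finite index it is not contained in $I_K$. This yields $W_\tau\supsetneq I$, and with this replacement for your first paragraph the rest of your proof goes through.
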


	\begin{proof}
		To show $W_\tau/I\cong \mathbb{Z}$, we only need to show that $W_\tau\neq I$ as $W_K/I \cong \mathbb{Z}$.
		 
		Let $n := [I:\ker \tau]$. Then for any element $g\in W_K\setminus I$, the conjugate action of $g^{n!}$ is trivial on $I/\ker\tau$, hence $g^{n!} \in W_\tau$.
		
		One can choose $g\in W_\tau$ which is a generator of $W_\tau/I$. Then there exists some $A\in GL(V)$ such that for any $h\in I$, $\tau (g^{-1}hg) = A^{-1}\tau(g)A$. As $\tau$ is irreducible, such $A$ is unique up to scalar. Hence, up to scalar, one can uniquely extend $\tau$ to $W_\tau$ by requiring $\tilde{\tau} (g) = A$.
	\end{proof}
	
	\begin{Rmk}\label{taupart}
	From now on, for every irreducible smooth representation $\tau$ of $I$ over $C$, fix an extension $\tilde\tau$ to $W_\tau$. We denote $e_\tau := [W_K:W_\tau]$ and denote by $K_\tau$ the unique unramified extension of $K$ of degree. It is easy to see that $W_\tau = W_{K_\tau}$ (i.e. $W_\tau$ is the Weil group of $K_\tau$).
	
	For any Weil-Deligne representation $(V,\varrho,N)$ of $K$ over $A$, from the above discussion, one can regard $\Hom_I(\tilde{\tau},V)$ as a Weil-Deligne representation of $K_\tau$ over $R$, via defining $ g(f) := g\cdot f\cdot g^{-1}$, and $N(f) := N\cdot f$ for any $g\in W_\tau$, and $f\in \Hom_I(\tilde{\tau},V)$. It is indeed a Weil-Deligne representation as $$N\circ g(f) = N\circ g \circ f \circ g^{-1} = q^{\parallel g\parallel_K} g\circ N \circ f\circ g^{-1} =q_{K_\tau}^{\parallel g\parallel_{K_\tau}}g\circ N(f)$$
	(here $q_{K_\tau}$ is the cardinality of $K_\tau$ and note that $q^{\parallel g\parallel_K}= q_{K_\tau}^{\parallel g\parallel_{K_\tau}}$). 
	Moreover, $\Hom_I(\tilde{\tau},V)$ is $\varrho$-unramified as it restricts on $I_{K_{\tau}} = I$ is trivial.

	 If we regard $\tilde\tau$ as a WD representation of $K_\tau$ over $C$, the injection (which is the same map in the proof of lemma \ref{Lem:Idecomp} as the left hand side is the same as a vector space)
	$$i_\tau:\tilde\tau \otimes_C \Hom_I(\tilde\tau,V)  \hookrightarrow V$$
	is a morphism of WD representations of $K_\tau$ over $A$.
	\end{Rmk}

    \begin{Lemma}\label{inversefordecomp}
    	Assume $\tau$ is a smooth irreducible representation of $I$ over $C$. Let $(V,\varrho,N)$ be a Weil-Deligne representation of $K_\tau$ over $A$ such that $V$ is $\varrho$-unramified. The canonical map
    	\begin{align*}
    		h:V\rightarrow & \Hom_I(\tilde{\tau},\tilde{\tau}\otimes_C V)\\
    		v & \mapsto (w\mapsto w\otimes v)
    	\end{align*}
    is an isomorphism of Weil-Deligne representations of $K_\tau$.
    \end{Lemma}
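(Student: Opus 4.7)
The plan is to verify well-definedness, $A$-linearity, $W_{K_\tau}$-equivariance, $N$-equivariance, and bijectivity of $h$ in turn. The conceptual point organising everything is the $\varrho$-unramifiedness of $V$: since $K_\tau/K$ is unramified we have $I_{K_\tau}=I$, so $I$ acts trivially on $V$ and $\tilde\tau \otimes_C V$ is, as an $I$-representation, a $V$-indexed sum of copies of $\tau$.

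First I would verify that $h(v)$ really lies in $\Hom_I(\tilde\tau, \tilde\tau \otimes V)$. For $g\in I$ one has $h(v)(\tilde\tau(g)w) = \tilde\tau(g)w \otimes v$, which equals $g\cdot(w\otimes v)$ precisely because $\varrho(g)$ acts trivially on $v$. $A$-linearity is then immediate from the formula $h(v)(w)=w\otimes v$.

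For bijectivity, since $\tau$ is finite-dimensional over $C$ (it is a smooth irreducible representation of the profinite group $I$ over the algebraically closed field $C$), we have $\Hom_C(\tilde\tau, \tilde\tau\otimes_C V) \cong \tau^\vee \otimes_C \tau \otimes_C V$ naturally; taking $I$-invariants, using that $I$ acts trivially on $V$ and Schur's lemma $\End_I(\tau)=C$, yields
\[
\Hom_I(\tilde\tau,\tilde\tau\otimes_C V) \cong (\tau^\vee\otimes_C\tau)^I \otimes_C V \cong C\otimes_C V \cong V,
\]
and a direct inspection identifies the composite inverse with $h$.

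The bulk of the remaining work, though routine, will be checking compatibility with the $W_\tau$- and $N$-actions as defined in remark \ref{taupart}. For $g\in W_\tau$ I would compute
\[
(g\cdot h(v))(w) = g\cdot h(v)(\tilde\tau(g^{-1})w) = g\cdot(\tilde\tau(g^{-1})w\otimes v) = w\otimes \varrho(g)v = h(\varrho(g)v)(w),
\]
using the diagonal $W_\tau$-action on $\tilde\tau\otimes_C V$ and the conjugation action on Hom. For $N$, since $\tilde\tau$ is a smooth representation with trivial monodromy, the induced $N$ on $\tilde\tau\otimes_C V$ is $1\otimes N$, hence
\[
(N h(v))(w) = N(w\otimes v) = w\otimes Nv = h(Nv)(w).
\]
The main obstacle here is not conceptual but organisational: keeping track of which action operates on the source of $\Hom$ versus on its target, and invoking the $\varrho$-unramified hypothesis at precisely the right place to reduce the Hom-computation to Schur's lemma.
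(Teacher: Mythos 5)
Your proof is correct and follows the same core argument as the paper's: both reduce $\Hom_I(\tilde\tau,\tilde\tau\otimes_C V)$ to $V$ via finite-dimensionality of $\tilde\tau$, the trivial $I$-action on $V$ (coming from $\varrho$-unramifiedness and $I_{K_\tau}=I$), and Schur's lemma. You additionally spell out the $W_\tau$- and $N$-equivariance of $h$, which the paper dismisses with ``one can check that the isomorphism above coincides with the canonical map $h$''; this makes your write-up more complete but not conceptually different.
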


    \begin{proof}
    	As $\tilde{\tau}$ is a finite vector space, then 
    	$$\Hom_I(\tilde{\tau},\tilde{\tau}\otimes_C V) \cong (\Hom_C(\tilde{\tau},\tilde{\tau}\otimes_C V))^I \cong (\Hom_C(\tilde{\tau},\tilde{\tau})\otimes_C V)^I.$$
    	Note that $I$ acts trivially on $V$, hence
    	$$(\Hom_C(\tilde{\tau},\tilde{\tau})\otimes_C V)^I\cong (\Hom_C(\tilde{\tau},\tilde{\tau}))^I\otimes_C V\cong (\Hom_I(\tilde{\tau},\tilde{\tau})\otimes_C V)\cong V.$$
    	One can check that the isomorphism above coincides with the canonical map $h$, and finish the proof.
    \end{proof}
	
	For now on, for every $W_K$-conjugacy class $[\tau]$ of irreducible smooth representation of $I$ over $C$, we pick and fix a representative $\tau$ of this class, and denote by $\Omega$ the set of the collection of $\tau$ for all the  $W_K$-conjugacy classes $[\tau]$. And let 
	$$\Omega_L:= \{\tau\in \Omega\ |\ \tau|_{I_L} \mathrm{\ is\ trivial} \}.$$ 
	
	\begin{Prop}\label{Prop:WD-decomp}
		Let $V$ be a Weil-Deligne representation over $A$. Then one has a canonical isomorphism	$$V \cong \bigoplus_{\tau\in\Omega} \mathrm{Ind}_{W_\tau}^{W_K}(\tilde{\tau}\otimes_C \Hom_I(\tilde{\tau}, V))$$  
		of Weil-Deligne representations of $K$ over $R$. 
		
		Moreover, if we denote the WD representation $\Hom_I(\tilde{\tau}, V)$ of $K_\tau$  by $V(\tilde{\tau})$, then for any two WD representations  $V_1,V_2$ of $K$, one has
		$$\Hom_{\mathrm{WD}}(V_1,V_2) = \bigoplus_{\tau\in\Omega}\Hom_{\mathrm{WD}}(V_1(\tilde{\tau}),V_2(\tilde{\tau}))$$
	\end{Prop}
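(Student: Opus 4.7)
The plan is to build on Lemma \ref{Lem:Idecomp} by upgrading the $I$-isotypic decomposition of $V$ to a $W_K$-equivariant one, grouping the $I$-isotypic pieces along $W_K$-conjugacy classes of $I$-types, and then recognizing each group as an induced representation from $W_\tau$.

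First I would restart from the decomposition $V = \bigoplus_{\tau'} V_{\tau'}$ of Lemma \ref{Lem:Idecomp}, where $\tau'$ ranges over all isomorphism classes of irreducible smooth $I$-representations over $C$. The crucial observation is that for any $g\in W_K$, the $W_K$-action sends $V_{\tau'}$ to $V_{g\tau' g^{-1}}$: if $v\in V_{\tau'}$ and $h\in I$, then $h\cdot(gv)=g\cdot(g^{-1}hg)v$, so $gv$ sits in the $g\tau' g^{-1}$-isotypic piece. Consequently, for each $\tau\in\Omega$, the subspace $V_{[\tau]}:=\bigoplus_{\tau'\sim\tau}V_{\tau'}$ is stable under both $W_K$ and $N$ (the latter by Lemma \ref{Lem:Idecomp}), and $V=\bigoplus_{\tau\in\Omega}V_{[\tau]}$ as Weil-Deligne representations.

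Next I would identify each summand with an induced representation. By definition $W_\tau$ is exactly the stabilizer of the isomorphism class of $\tau$, so $V_\tau$ itself is stable under $W_\tau$ and $N$, and Remark \ref{taupart} identifies it via $i_\tau$ with $\tilde\tau\otimes_C \Hom_I(\tilde\tau,V)$ as WD representations of $K_\tau$. Choosing a set of coset representatives $\{g_i\}$ of $W_K/W_\tau$ (finitely many since $e_\tau<\infty$), the conjugates $g_i\tau g_i^{-1}$ run through the $W_K$-orbit of $\tau$ without repetition, and the map $g_i\otimes v\mapsto g_i\cdot v$ furnishes an isomorphism
$$\mathrm{Ind}_{W_\tau}^{W_K} V_\tau \;\xrightarrow{\;\sim\;}\; \bigoplus_i g_i\cdot V_\tau \;=\; V_{[\tau]}$$
of $W_K$-representations; one checks routinely that this is also compatible with $N$, since $N$ commutes with $W_K$ up to the character $q^{\|\cdot\|_K}$ on each $g_i\cdot V_\tau$. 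Substituting the identification from Remark \ref{taupart} gives the desired canonical isomorphism.

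For the Hom decomposition, I would argue that any $f\in\Hom_{\mathrm{WD}}(V_1,V_2)$, being $I$-equivariant, carries $V_{1,\tau'}$ into $V_{2,\tau'}$ for every irreducible $\tau'$, and being $W_K$-equivariant, carries $V_{1,[\tau]}$ into $V_{2,[\tau]}$. Thus $\Hom_{\mathrm{WD}}(V_1,V_2) = \bigoplus_{\tau\in\Omega}\Hom_{\mathrm{WD}}(V_{1,[\tau]},V_{2,[\tau]})$, and Frobenius reciprocity applied to the induced description of each $V_{i,[\tau]}$ — together with the fact that $\Hom_I(\tilde\tau,\tilde\tau)=C$ by Schur and Lemma \ref{inversefordecomp} — reduces the right-hand side to $\bigoplus_\tau \Hom_{\mathrm{WD}}(V_1(\tilde\tau),V_2(\tilde\tau))$.

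The main obstacle I expect is the bookkeeping in the third step: verifying that the abstract induction isomorphism really matches the internal sum $\bigoplus_i g_i V_\tau\subset V$ as Weil-Deligne representations (not just as $W_K$-representations), i.e.\ that the relation $Ng = q^{\|g\|_K}gN$ transported through the identification with $\mathrm{Ind}_{W_\tau}^{W_K}(\tilde\tau\otimes_C\Hom_I(\tilde\tau,V))$ is the correct one; this is essentially an exercise but requires carefully tracking how $N$ acts on each $g_i$-translate and comparing with Remark \ref{taupart}.
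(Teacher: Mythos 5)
Your proof is correct and follows essentially the same route as the paper: both rest on Lemma \ref{Lem:Idecomp}, the fact that $W_K$ permutes the $I$-isotypic pieces within a conjugacy class, and Remark \ref{taupart}, and your counit map $\mathrm{Ind}_{W_\tau}^{W_K}V_\tau\rightarrow V_{[\tau]}$ is exactly the paper's $\iota_\tau$, which the paper shows to be injective with the right image via the auxiliary bijection $\psi$. Your Hom argument (Frobenius reciprocity together with the vanishing $\Hom_I(\tilde{\tau},(\tilde{\tau}')^{g}\otimes U)=0$, Schur, and Lemma \ref{inversefordecomp}) is the same computation that the paper packages as the equivalence of categories via the functors $\mathcal{F}$ and $\mathcal{G}$.
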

	
	\begin{proof}
		By the discussion in remark \ref{taupart}, one has $i_\tau \in \Hom_{\mathrm{WD}}(\tilde{\tau}\otimes_C \Hom_I(\tilde{\tau}, V),V|_{W_\tau})$. Applying the left adjoint functor $\mathrm{Ind}^{W_K}_{W_\tau}$ of $(-)|_{W_\tau}$, one has the following morphism of WD representations:
		\begin{align*}
			\iota_{\tau}: \mathrm{Ind}_{W_\tau}^{W_K}(\tilde{\tau}\otimes_C \Hom_I(\tilde{\tau}, V)) & \rightarrow V\\
			 g\otimes v \otimes f & \mapsto g\circ f(v).
		\end{align*}
	
	  For any $\tau$ and $g\in G$, denote by $\tau^g$ the same space but with $I$-action twisted by $g$, namely, for any $v\in \tau$ and $h\in W_\tau$, set $h(v^g) := g^{-1}hg(v)$ (here $v^g$ denotes the same element as $v$ but with twisted $W_\tau$-action). We define 
	  \begin{align*}
	  	\Hom_I(\tau,V) & \rightarrow \Hom_I(\tau^{g},V)\\
	  	           f   & \mapsto f^{g} := (v^{g}\mapsto g\circ f(v))
	  \end{align*}
      $f^g$ is indeed an element in $\Hom_I(\tau^{g},V)$ as
      $$h^{-1}f^{g}h(v^{g}) = h^{-1}f^{g}(g^{-1}hg(v)) = g(g^{-1}h^{-1}g)f(g^{-1}hg)(v)=gf(v)=f^{g}(v^{g})$$
      for any $v\in \tau$ and $h\in I$. This map is bijective as one can easily construct the inverse map directly. Then consider the map
	  \begin{align*}
	  	\psi: {\mathrm{Ind}_{W_\tau}^{W_K}(\tilde{\tau}\otimes_C \Hom_I(\tilde{\tau}, V))} &\rightarrow \bigoplus\limits_{\tau'\in[\tau]} \tau'\otimes_C \Hom_I(\tau',V)\\
	  	g\otimes v\otimes f& \mapsto v^{g}\otimes f^{g}.
	  \end{align*}
      Note that if we choose a set $\{g_1,\dots,g_{e_\tau}\}$ of representatives of $W_K/W_{\tau}$, then the right side of the map above is $\bigoplus\limits_{1\leq i\leq e_{\tau}} \tau^{g_i}\otimes_C \Hom_I(\tau^{g_i},V)$. It follows that $\psi$ is a bijection map of sets. 
      	  
	  Let $\tau\in \Omega$, note that the following diagram:
	  \begin{equation*}
	  	\begin{tikzcd}
	  		 {\mathrm{Ind}_{W_\tau}^{W_K}(\tilde{\tau}\otimes_C \Hom_I(\tilde{\tau}, V))} \arrow[dd, "\psi"'] \arrow[r, "\iota_{\tau}"] &  V\\
	  		&                                                                                                                    \\
	  		{\bigoplus\limits_{\tau'\in[\tau]} \tau'\otimes_C \Hom_I(\tau',V)} \arrow[ruu,"(i_{\tau'})_{\tau'\in[\tau]}"', hook] &                                                       
	  	\end{tikzcd}
	  \end{equation*}
	  commutes as $$\left((i_{\tau'})_{\tau'}\right)\circ\psi (g\otimes v\otimes f) = i_{\tau^{g}}(v^{g}\otimes f^{g})=g\circ f(v) = \iota_{\tau}(g\otimes v\otimes f).$$ Hence $\iota_{\tau}$ is injective with image $\bigoplus\limits_{\tau'\in[\tau]} V_{\tau'}$. It follows from lemma \ref{Lem:Idecomp} that the map 
	  $$\iota = (\iota_\tau)_{\tau\in\Omega} :  \bigoplus_{\tau\in\Omega}\mathrm{Ind}_{W_\tau}^{W_K}(\tilde{\tau}\otimes_C \Hom_I(\tilde{\tau}, V))  \rightarrow V$$ 
	  is an isomorphism.
	  
		For the moreover part, consider the functor 
		\begin{align*}
			\mathcal{F}_\tau: \{\mathrm{WD\ representaions\ of\ }W_K\} &\rightarrow \{\varrho \mathrm{-unramified\ } \mathrm{WD\ representaions\ of\ }W_\tau \}\\
			V&\rightarrow \mathcal{F}_\tau(V):= V(\tilde{\tau})
		\end{align*}
	   and let $\mathcal{F}:= \bigoplus\limits_{\tau\in\Omega} \mathcal{F}_\tau$. We claim that the functor 
	   $$\mathcal{G}: (U_\tau)_{\tau\in\Omega}\mapsto \bigoplus_{\tau\in\Omega}\mathrm{Ind}_{W_\tau}^{W_K}(\tilde{\tau}\otimes_C U_\tau)$$
	   is the inverse of $\mathcal{F}$, where $U_\tau$ is a $\varrho$-unramified WD representation of $W_\tau$. Actually, what we have proved shows that $\mathcal{G}\circ\mathcal{F} \cong \id$. Note that for any $\tau$,
	   $$\mathrm{Ind}_{W\tau}^{W_K}(\tilde{\tau}\otimes_C U_\tau) \cong \bigoplus_{g\in W_K/W_\tau}\tilde{\tau}^{g}\otimes_C U_\tau$$
	   as WD representations over $K_{\tau}$.
	   And for any $\tau'\ncong \tau\in\Omega$, or $g\in W_K\setminus W_\tau$,
	   $$\Hom_I (\tilde{\tau},(\tilde{\tau}')^{g}\otimes_C U_\tau) = 0,$$
	   as $\tau\ncong (\tau')^{g}$ by the definition of $W_\tau$. Combined with the conclusion in lemma \ref{inversefordecomp}, for any smooth irreducible $I$-representation $\tau_0$, the natural map 
	   \begin{align*}
	   U_{\tau_0}&\rightarrow \Hom_{I}(\tilde{\tau}_0, \bigoplus_{\tau\in\Omega}\mathrm{Ind}_{W_\tau}^{W_K}(\tilde{\tau}\otimes_C U_\tau)) \\
	   v &\mapsto (w\mapsto w\otimes v)
	   \end{align*}
       is a bijection. This means we have a natural isomorphism $\id \xrightarrow{\sim} \mathcal{F}\circ\mathcal{G}$. We conclude that $\mathcal{F}$ is an equivalence of categories, and hence
       $$\Hom_{\mathrm{WD}}(V_1,V_2) = \bigoplus_{\tau\in\Omega}\Hom_{\mathrm{WD}}(V_1(\tilde{\tau}),V_2(\tilde{\tau}))$$
	   
 	\end{proof}
    \begin{Theo}\label{Thm:WDstack}
For any irreducible smooth representation $\tau$ of $I$ over $C$, denote by $d_\tau$ the dimension of the underlying vector space. Then there is an isomorphism 
$$\Theta: \mathfrak{W}^{L/K,d} \xrightarrow{\sim}\coprod_{\{(n_\tau)_{\tau\in\Omega_L}|\star\}} \prod_{\tau\in\Omega_L} \mathfrak{W}^{K_\tau/K_\tau,n_\tau}  $$
as $C$-groupoid, here the $\star$-condition in the index set is
$$\sum_{\tau\in\Omega_L} n_\tau d_\tau e_\tau = d.$$
    \end{Theo}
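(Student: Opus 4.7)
The plan is to deduce $\Theta$ directly from the equivalence of categories $\mathcal{F} \dashv \mathcal{G}$ already built in the proof of Proposition \ref{Prop:WD-decomp}. Concretely, I would send a WD representation $(V,\varrho,N)$ of $K$ over $A$ unramified over $L$ to the tuple $\Theta(V) := (V(\tilde{\tau}))_{\tau\in\Omega_L}$, where $V(\tilde{\tau}) = \Hom_I(\tilde{\tau},V)$ is regarded as a $\varrho$-unramified WD representation of $K_\tau$ via the structure in Remark \ref{taupart}. In the other direction, I would use $\mathcal{G}$, namely send $(U_\tau)_{\tau\in\Omega_L}$ to $\bigoplus_{\tau\in\Omega_L}\mathrm{Ind}_{W_\tau}^{W_K}(\tilde{\tau}\otimes_C U_\tau)$.

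The first nontrivial point to check is that $V(\tilde{\tau}) = 0$ for $\tau\notin\Omega_L$: because $\varrho|_{I_L}$ is trivial, the $I$-decomposition of Lemma \ref{Lem:Idecomp} only sees irreducible $\tau$ factoring through the finite quotient $I/I_L$, so $V(\tilde{\tau}) = 0$ unless $\tau|_{I_L}$ is trivial. In particular $\Omega_L$ is finite (irreducible smooth representations of the finite group $I/I_L$ up to $W_K$-conjugacy), which legitimizes the finite product on the right hand side. The rank bookkeeping is then immediate: since induction along $W_\tau \subseteq W_K$ multiplies the $A$-rank by $e_\tau$ and tensoring with $\tilde{\tau}$ over $C$ multiplies it by $d_\tau$, setting $n_\tau := \rank_A V(\tilde{\tau})$ gives $d = \sum_{\tau\in\Omega_L} n_\tau d_\tau e_\tau$, which is exactly the $\star$-condition; conversely, given a tuple with this numerical constraint, $\mathcal{G}$ produces a WD representation of total $A$-rank $d$ that is unramified over $L$ because each $\tilde{\tau}$ is (by the definition of $\Omega_L$) and the $U_\tau$ contribute no $I$-action at all.

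To upgrade the pointwise (i.e. fiberwise over each $C$-algebra $A$) equivalence into an isomorphism of $C$-groupoids, I would verify that the two functors commute with arbitrary base change $A \to A'$. Since $C$ is algebraically closed and each $\tilde{\tau}$ is a finite-dimensional $C$-vector space on which $W_\tau$ acts through a finite quotient modulo the center, the natural map $\Hom_I(\tilde{\tau},V)\otimes_A A' \to \Hom_I(\tilde{\tau},V\otimes_A A')$ is an isomorphism of $A'$-modules respecting the WD structure; the same applies, trivially, to the induction-tensor formula defining $\mathcal{G}$. Combined with the bijection on morphisms supplied by the "moreover" clause of Proposition \ref{Prop:WD-decomp}, this gives an equivalence of fibered categories in each fiber compatible with pullback, hence the desired isomorphism of $C$-groupoids.

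The main obstacle I expect is this last step: being careful that the $A$-module of $I$-invariants/$I$-equivariant homomorphisms is preserved by base change and that the resulting module remains finitely generated projective. Once one observes that $\tilde{\tau}$ is $C$-finite and $I$ acts through a finite group (so $\Hom_I(\tilde{\tau},-) = (\Hom_C(\tilde{\tau},-))^{I/H}$ is a direct summand extracted by an idempotent in $C[I/H]$, for $H$ an open normal subgroup killing $\tau$), projectivity and base-change compatibility are automatic, and the proof reduces to invoking Proposition \ref{Prop:WD-decomp} and matching ranks as above.
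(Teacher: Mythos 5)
Your proof is correct and takes the same route as the paper: define $\Theta(V) = (\Hom_I(\tilde\tau,V))_{\tau\in\Omega_L}$ and invoke Proposition \ref{Prop:WD-decomp}. The paper's proof is a one-line appeal to that proposition; you have simply spelled out the vanishing of $V(\tilde\tau)$ for $\tau\notin\Omega_L$, the rank bookkeeping, and base-change compatibility, all of which are correct and implicit in the paper's argument.
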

	\begin{proof}
		Let $A$ be a connected $C$-algebra, and let $(V,\varrho,N)$ be a Weil-Deligne representation in $\mathfrak{W}^{L/K,d}(A)$. Then the map $\Theta (V) := (\Hom_I(\tilde{\tau},V))_{\tau\in\Omega_L}$ gives the desired isomorphism by the proposition above. 
	\end{proof}
	\begin{Rmk}
		\label{WArtin}
		Follow the notation in \cite[Theorem~3.2]{Hartl2013}, we can see that $\mathfrak{W}^{K/K,d}$ is isomorphic to $[P_{K_0,d}/\mathrm{Res}_{K_0/\mathbb{Q}_p}(\mathrm{GL}_{d,K_0})]\otimes_{\mathrm{Spec}{(\mathbb{Q}_p)}} \mathrm{Spec}(C)$. Hence it is an Artin stack  of dimension $0$ and consequently so is $\mathfrak{W}^{L/K,d}$.
	\end{Rmk}

	\subsection{Filtration on Weil-Deligne Representations}
	
	It is a well known result \cite{Breuil2007} that the category of $(\varphi,N,G_K)$-modules is equivalent to the category of Weil-Deligne representations of $K$. Now we want to give a good description of the stack of filtered $(\varphi,N,G_K)$-module (see the definitions below). We do this by generalizing this result to the category of filtered $(\varphi,N,G_K)$-modules via equipping some filtration structures on Weil-Deligne representations, as we have decomposed the stack of Weil-Deligne representations into the stacks of Weil-Deligne representations of trivial inertial type, which are equivalent to the stacks of $(\varphi,N)$-modules, and the latter one are well studied in \cite{Hartl2013}.
	
	\begin{Def} Let $E$ be a field extension over $\mathbb{Q}_p$.
		\begin{enumerate}[leftmargin=2em,label=(\alph*)]
			\item A \textit{locally free $E$-module $M$ over $A$} is a finitely generated $(E\otimes_{\mathbb{Q}_p}A)$-modules, which is Zariski locally on $\mathrm{Spec}A$ free over $E\otimes_{\mathbb{Q}_p}A$.
			\item A \textit{filtered locally free $E$-module $(M,\mathcal{F}^\bullet)$ over $A$} consists of an $E$-free module $M$ over $A$ with a decreasing separated and exhaustive $\mathbb{Z}$-filtration $\mathcal{F}^\bullet$ on $M$ by $(E\otimes_\mathrm{\mathbb{Q}_p}A)$-submodules such that $\mathrm{gr}_\mathcal{F}^i M := \mathcal{F}^{i}M/\mathcal{F}^{i+1}M$ is locally free as an $A$-module. 
			
			A \textit{morphism $\alpha: (M,\mathcal{F}^\bullet) \rightarrow (M',\tilde{\mathcal{F}}^\bullet)$ of filtered $E$-free modules} is a $(E\otimes_{\mathbb{Q}_p} R)$-homomorphism $\alpha: M \rightarrow M'$ such that $\alpha(\mathcal{F}^iM)\subseteq \tilde{\mathcal{F}}^i M'$ for any $i\in\mathbb{Z}$.
			\item A \textit{$(\varphi,N,G_{L/K})$-module $(M,\varphi_M,N,\gamma)$ over $A$} consists of an $L_0$-free module $M$ over $R$, and a Frobenius-linear map $\varphi_M: M \rightarrow M$, and an $(L_0\otimes_{\mathbb{Q}_p}A)$-linear map $N:M\rightarrow M$ and a group homomorphism $\gamma: G_{L/K} \rightarrow \Aut_{A}(M)$, such that
			    \begin{enumerate}[leftmargin=3em,label=(\arabic*)]
			    \item $\gamma(g)((l\otimes 1)\cdot m) = (g(l)\otimes 1)\cdot g(m)$,
				\item $\varphi_M$ is a bijection,
				\item $N\circ\varphi_M$ = $p\cdot\varphi_M\circ N$,
				\item $N\circ \gamma(g)$ = $\gamma(g)\circ N$ for any $g \in G_{L/K}$.
			    \end{enumerate}
		    
		    	If $L=K$, we call $M$ a $(\varphi,N)$-\textit{module over} $A$ for short.
		    
			A \textit{morphism $\alpha$: $(M,\varphi_M,N,\gamma) \rightarrow (M',\varphi_{M'},N',\gamma')$ of $(\varphi,N,G_{L/K})$-modules}  is an $(L_0\otimes_{\mathbb{Q}_p}A)$-homomorphism $\alpha: M\rightarrow M'$ with
			\begin{enumerate}[leftmargin=3em,label=(\arabic*)]
				\item $\alpha\circ\varphi_{M} =  \varphi_{M'}\circ\alpha$,
				\item $\alpha\circ N = N'\circ\alpha$,
				\item $\alpha\circ \gamma(g)= \gamma'(g)\circ\alpha$ for any $g\in G_{L/K}$.
			\end{enumerate}	
		\item A \textit{filtered $(\varphi,N,G_{L/K})$-module $(M,\mathcal{F}^\bullet)$ over $A$} consists of a $(\varphi,N,G_{L/K})$-module $M$ over $A$ together with a filtered locally free $L$-module $(M_L,\mathcal{F}^\bullet)$ over $A$ (here $M_L:= M\otimes_{L_0}L$) such that
	    $g(\mathcal{F}^iM_L)\subseteq (\mathcal{F}^iM_L)$ for any $g \in G_{L/K}$ and $i\in \mathbb{Z}$. 
	    
	    	If $L=K$, we call $M$ a \textit{filtered} $(\varphi,N)$-\textit{module over} $A$ for short.
	    
	    A \textit{morphism $\alpha$: $(M,\mathcal{F}^\bullet) \rightarrow (M',\tilde{\mathcal{F}}^\bullet)$ of filtered $(\varphi,N,G_{L/K})$-modules}  is a morphism of $(\varphi,N,G_{L/K})$-modules such that $(\alpha\otimes \id_{L})(\mathcal{F}^iM_L)\subseteq \tilde{\mathcal{F}}^iM'_L$. 
		\end{enumerate}
 	    \end{Def}
   
   \begin{Def} 
   	We define the following categories fibered in groupoids on $C$-schemes:
   	\begin{enumerate}[leftmargin=1.5em]
   		\item denote by $\mathrm{M}^{K}$ the category that assigns to $A$ to the groupoid of locally free $K$-modules;
   		\item denote by $\mathrm{FM}^{K}$ the category that assigns to $A$ to the groupoid of locally free $K$-modules;
   		\item denote by $\mathfrak{M}_0^{L/K}$ the category that assigns to $A$ to the groupoid of $(\varphi,N,G_{L/K})$-modules;
   		\item denote by $\mathfrak{M}^{L/K}$ the category that assigns to $A$ to the groupoid of filtered $(\varphi,N,G_{L/K})$-modules.
   	\end{enumerate}
   	
   \end{Def}

   \begin{Rmk}
   	If we add a superscript $d$ on any above notation (for example $\mathrm{M}^{K,d}$), it means the fully faithful sub groupoid of objects of rank $d$. The other groupoids in this article also follow this convention in the understandable way.
   \end{Rmk}

   \begin{Lemma}\label{fieldecomp}
   	Let $E$ be a field and let $E'$, $E''$ be finite field extensions of $E$ such that $ |\Hom_E(E',E'')|= [E':E]$. Let $A$ be an $E''$-algebra, and let $M$ be an $(E'\otimes_E A)$-module.
   	\begin{enumerate}
   		\item Then the ring homomorphism: $$E'\otimes_E A \rightarrow \prod_{\eta\in\Hom_E(E',E'')} A,\ a\otimes b \mapsto (\eta (a)b)_\eta$$
   		is an isomorphism.
   		\item Let $e_\eta$ denote the unique element in $\prod\limits_{\eta\in\Hom_E(E',E'')} E''$ which is equal to 1 in $\eta$-th coordinate and vanishes in other coordinates. One has the canonical decomposition:
   		$$M \cong \prod_{\eta\in\Hom_E(E',E'')} M_\eta(:= e_\eta M).$$
   		In particular, let $\eta\in\Hom_E(E'E'')$. The composition of the maps
   		$$e_\eta M \inj M \surj M\otimes_{(E'\otimes_E A),\eta\otimes \id}A$$
   		is an isomorphism of $(E'\otimes_E A)$-modules.
   		%\item Suppose that $M$ is a locally free $E'$-module over $A$ and let $G$ be a group acting on the set  $\Hom_E(E',E'')$. Then there exists an $A$-linear group action of $G$ on $M$ such that $g$ induces an isomorphism $M_\eta \rightarrow M_{g(\eta)}$ of $A$-modules for any $g \in G$. If $G$ acts transitively on $\Hom_E(E',E'')$, then $M^G \cong M_{\eta}$ for any $\eta\in\Hom_E(E',E'')$. If $E'/E$ is a Galois extension and $G := G_{E'/E}$, then $g(a\cdot m) = g(a)\cdot g(m)$ for any $g\in G$ and $a\in E'$ and $m\in M$.   
   	\end{enumerate}

   \end{Lemma}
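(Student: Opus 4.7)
The plan is to reduce (1) to the Chinese Remainder Theorem and then deduce (2) purely formally from the product decomposition of (1).

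For (1), I would first argue that the assumption $|\Hom_E(E',E'')|=[E':E]$ forces $E'/E$ to be separable and forces $E''$ to split $E'$. Writing $E'=E[x]/(f(x))$ for the minimal polynomial $f$ of a primitive element (which exists by separability), the $n$ distinct embeddings $\eta\in\Hom_E(E',E'')$ correspond exactly to the $n=\deg f$ distinct roots $\eta(\bar x)\in E''$, so $f$ splits completely over $E''$ with simple roots. Then
\[
E'\otimes_E E''\;\cong\;E''[x]/(f(x))\;\xrightarrow{\mathrm{CRT}}\;\prod_{\eta}E''[x]/(x-\eta(\bar x))\;\cong\;\prod_{\eta}E'',
\]
and a direct check shows this is precisely the map $a\otimes b\mapsto(\eta(a)b)_\eta$. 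Base-changing along $E''\to A$ (using that $A$ is an $E''$-algebra) then gives $E'\otimes_E A\cong\bigl(\prod_\eta E''\bigr)\otimes_{E''}A\cong\prod_\eta A$, which is exactly the claimed map.

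For (2), once (1) is established, the elements $e_\eta$ correspond to a complete system of orthogonal idempotents in $E'\otimes_E A$ (with $\sum_\eta e_\eta=1$, $e_\eta e_{\eta'}=0$ for $\eta\neq\eta'$), so any $(E'\otimes_E A)$-module splits canonically as $M=\bigoplus_\eta e_\eta M$ with $e_\eta M=M_\eta$. For the ``in particular'' statement, note that the $\eta$-th factor $A$ of $\prod_\eta A$ is exactly $(E'\otimes_E A)/\ker(\eta\otimes\id)$, which identifies the base change $M\otimes_{E'\otimes_E A,\,\eta\otimes\id}A$ with $M/(1-e_\eta)M$; the composition $e_\eta M\hookrightarrow M\twoheadrightarrow M/(1-e_\eta)M$ is then an isomorphism by orthogonality of the idempotents.

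I do not expect any genuine obstacle: the only slightly delicate point is justifying separability of $E'/E$ from the hypothesis on $|\Hom_E(E',E'')|$, but this is standard since the number of $E$-embeddings of $E'$ into any extension is at most the separable degree, with equality only when $E'/E$ is separable and the target contains a normal closure of $E'/E$. Everything else is the Chinese Remainder Theorem plus the formalism of idempotent decompositions, which is routine.
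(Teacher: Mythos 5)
Your proposal is correct. For part (1) you take a somewhat different route from the paper: both arguments start by deducing separability of $E'/E$ from $|\Hom_E(E',E'')|=[E':E]$ and invoking the primitive element theorem, but the paper then works directly over $A$, expressing the map in the basis $\{1,x,\dots,x^{n-1}\}$ of $E'\otimes_E A$ and the idempotent basis of $\prod_\eta A$, and checks invertibility of the resulting matrix $B=(\eta(x^i))$ by computing $\det(B^TB)=\prod_{\eta\neq\eta'}(\eta(x)-\eta'(x))\neq 0$ (a Vandermonde/discriminant computation), whereas you first establish the isomorphism over $E''$ via $E'\otimes_E E''\cong E''[x]/(f)$ and the Chinese Remainder Theorem, and then base change along $E''\to A$. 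The two mechanisms are equivalent in substance (both ultimately use that the $\eta(x)$ are pairwise distinct), but yours avoids the explicit determinant and makes the idempotents $e_\eta$ visible from the start, while the paper's has the minor advantage of never leaving the ring $E'\otimes_E A$. For part (2) your argument via the complete system of orthogonal idempotents, identifying $M\otimes_{(E'\otimes_E A),\eta\otimes\id}A$ with $M/(1-e_\eta)M$, is essentially the same as the paper's, which tensors $M$ with the product decomposition of $E'\otimes_E A$; no gaps.
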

   \begin{proof}
   		\text{ }
   	\begin{enumerate}[leftmargin=1.5em]
   	
   		\item Note that the condition $[E':E] = \# \Hom_E(E',E'')$ implies $E'/E$ is a separable field extension. Hence by primitive element theorem, there exists an element $x\in E'$ such that $E'=E(x)$.\\
   		If we choose $\{1,x,\dots,x^{n-1}\}$ as the $E$-basis  of $E'$ (note that also an $A$-basis of $E'\otimes_E A$), and choose $\{e_\eta\}_{\eta\in\Hom_E(E',E'')}$ as the $A$-basis of $\prod\limits_{\eta\in\Hom_E(E',E'')} A$, then the ring homomorphism above under these two bases corresponds the matrix
   		$B:=(\eta(x^i))_{i,\eta}$. Then, by direct computation, $$\det(B^TB) = \prod_{\eta\neq \eta'}(\eta(x)-\eta'(x)),$$ which is not equal to zero as $x$ is a separable element. Hence the ring homomorphism above is an isomorphism.
        \item The second part of the lemma follows from the following isomorphism:
        $$M\cong M\otimes_{(E'\otimes_EA)}(\prod_{\eta\in\Hom_E(E',E'')}A)\cong \prod_{\eta\in\Hom_E(E',E'')} (M\otimes_{(E'\otimes_EE''),\eta\otimes\id}A). $$
        Note that $M\otimes_{(E'\otimes_E A),\eta\otimes\id}E''$ is exactly $e_\eta M$ by definition.
       % \item Locally on $\mathrm{Spec}A$, $M$ is isomorphic to $$(E'\otimes_E A)^{\oplus n} \cong \prod_{\eta\in\Hom_E(E',E'')} A^{\oplus n}. $$
        %Then group action of $M$ is induced by the action on the index set $\Hom_E(E',E'')$, i.e. $g((m_\eta)) = (n_\eta = m_{g^{-1}(\eta)})_\eta$. And the general case can be easily reduced to this local case.\\
       % If $E'/E$ is a Galois extension, the Galois group $\mathrm{Gal}(E'/E)$ acts on $\Hom_E(E',E'')$ is defined by $g\eta := \eta\circ g^{-1}$. Hence the assertion follows from 
       % $$ g(a\otimes m) = g((\eta(a)m)_\eta) = (\eta\circ g(a)m)_\eta=g(a)\otimes m$$ for any $g\in \mathrm{Gal}(E'/E)$ and $m\in M$ .
    \end{enumerate}
   	
   \end{proof}
    Let $E,E',E'',A$ be the setting as in the lemma above. For a $(E'\otimes_E A)$-module $M$, we always write $M_\eta$ for the submodule $e_{\eta}M$ of $M$.
    
    Fix an embedding $\eta_0: L_0 \hookrightarrow C$, by \cite[Proposition~4.1]{Breuil2007}, there is an isomorphism of groupoids $$F: \mathfrak{M}^{L/K,d}_0 \rightarrow \mathfrak{W}^{L/K,d}, (M,\varphi_M,N_M,\gamma) \mapsto (V,\varrho,N_V),$$
    	Where $V = M_{\eta_0}$, $N_V = N_M|_V$ and $\varrho$ is determined by $\varphi_M$ and $\gamma$.

      \begin{Theo}\label{FWDstack}
    	There is an isomorphism of groupoids:
    	$$ \mathrm{FWD}: \mathrm{FM}^{K,d} \times_{\mathrm{M}^{K,d},P} \mathfrak{W}^{L/K,d}\rightarrow \mathfrak{M}^{L/K,d},$$
    	here $\mathrm{FM}^{K,d}\rightarrow \mathrm{M}^{K,d}$ is the natural forgetful functor, and the morphism $$P:\mathfrak{W}^{L/K,d}\rightarrow \mathrm{M}^{K,d}$$ is defined by $V \rightarrow V\otimes_{\mathbb{Q}_p}K$.
    \end{Theo}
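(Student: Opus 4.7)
The plan is to build on Breuil's equivalence $F: \mathfrak{M}_0^{L/K,d} \xrightarrow{\sim} \mathfrak{W}^{L/K,d}$ (cited from \cite{Breuil2007}), which already matches the underlying $(\varphi,N,G_{L/K})$-module with its Weil--Deligne representation. Thus, given $M \in \mathfrak{M}_0^{L/K,d}(A)$ with associated WD representation $V = F(M) = M_{\eta_0}$, the task reduces to constructing, functorially in $A$, a bijection between filtrations on $V \otimes_{\mathbb{Q}_p} K$ (as $K \otimes_{\mathbb{Q}_p} A$-modules) and $G_{L/K}$-stable filtrations on $M_L = M \otimes_{L_0} L$ (as $L \otimes_{\mathbb{Q}_p} A$-modules). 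This match then defines $\mathrm{FWD}$, and the underlying unfiltered equivalence handles the rest.

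First, I would use Lemma \ref{fieldecomp} to decompose both sides by embeddings into $C$. We have $V \otimes_{\mathbb{Q}_p} K \cong \prod_{\iota: K \hookrightarrow C} V$, so a filtration on $V \otimes_{\mathbb{Q}_p} K$ amounts to a collection of filtrations on $V$ indexed by $\iota$. Similarly $M_L \cong \prod_{\tilde\eta: L \hookrightarrow C} M_{\tilde\eta|_{L_0}}$. Since $L/K$ is Galois, the $G_{L/K}$-action on $\Hom(L,C)$ via $\tilde\eta \mapsto \tilde\eta \circ g^{-1}$ has orbits in canonical bijection with $\Hom(K,C)$ and is simply transitive on each orbit; combined with the $\gamma$-action on the factors $M_{\tilde\eta|_{L_0}}$, this shows that a $G_{L/K}$-stable filtration on $M_L$ is determined by its value at one representative $\tilde\eta_\iota$ per orbit. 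For each orbit $\iota$, I would then identify $(M_L)_{\tilde\eta_\iota} \cong M_{\tilde\eta_\iota|_{L_0}}$ with $V = M_{\eta_0}$ via the Frobenius iteration $\varphi^{k}: V \xrightarrow{\sim} M_{\tilde\eta_\iota|_{L_0}}$ (for the unique $k$ with $\tilde\eta_\iota|_{L_0} = \eta_0 \circ \sigma^{-k}$), so that filtrations on $V$ match filtrations on $(M_L)_{\tilde\eta_\iota}$; extending $G_{L/K}$-equivariantly to the rest of the orbit yields a $G$-stable filtration on all of $M_L$. The inverse direction restricts a $G$-stable filtration to representatives and pulls back along $\varphi^{-k}$.

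The main obstacle is showing that this correspondence is genuinely canonical, i.e. independent of the choice of representative $\tilde\eta_\iota$. Different choices differ by an element $g \in G_{L/K}$, which changes both the target $M_{\tilde\eta_\iota|_{L_0}}$ and the Frobenius shift, so consistency of the induced filtration on $V$ demands a delicate interplay between $\gamma$ and $\varphi$. I expect the cleanest formulation is to first exhibit a canonical $K \otimes_{\mathbb{Q}_p} A$-module isomorphism $V \otimes_{\mathbb{Q}_p} K \cong (M_L)^{G_{L/K}}$ (built from the orbit decomposition and Frobenius identifications above, and checked to be independent of choices), after which the correspondence of filtrations follows by standard Galois descent for the extension $L/K$: $G_{L/K}$-stable filtrations on $M_L$ are equivalent to filtrations on the invariant submodule. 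Once this canonical isomorphism is in hand, defining $\mathrm{FWD}$ and its quasi-inverse is immediate, and verifying functoriality in $A$ together with the groupoid equivalence is routine.
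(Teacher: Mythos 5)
Your overall architecture is the one the paper uses: push everything through Breuil's equivalence $F:\mathfrak{M}_0^{L/K,d}\xrightarrow{\sim}\mathfrak{W}^{L/K,d}$, and reduce the theorem to producing a natural isomorphism $M_{\eta_0}\otimes_{\mathbb{Q}_p}K\cong (M_L)^{G_{L/K}}$ of $(K\otimes_{\mathbb{Q}_p}A)$-modules, after which Galois descent for $L/K$ identifies filtrations on the invariants with $G_{L/K}$-stable filtrations on $M_L$ (in the paper this is the isomorphism $\mathrm{FM}^{K,d}\times_{\mathrm{M}^{K,d},P'}\mathfrak{M}_0^{L/K,d}\cong\mathfrak{M}^{L/K,d}$ together with the comparison $P\circ F\cong P'$). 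Where you diverge is in how that isomorphism is built. The paper does not use Frobenius at all: it fixes once and for all a lift $\tilde\eta_0:L\inj C$ of the chosen $\eta_0$, observes that the idempotent $e_{\tilde\eta_0}$ gives a canonical identification $M_{\eta_0}\cong M_{L,\tilde\eta_0}$, and then obtains $M_{L,\tilde\eta_0}\otimes_{\mathbb{Q}_p}K\cong (M_L)^{G_{L/K}}$ from the descent isomorphism $(M_L)^{G_{L/K}}\otimes_KL\cong M_L$ and the isomorphism $e_{\tilde\eta_0}(A\otimes_{\mathbb{Q}_p}L)\otimes_{\mathbb{Q}_p}K\cong A\otimes_{\mathbb{Q}_p}K$. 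This sidesteps the orbit-by-orbit bookkeeping entirely.

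The one real problem in your write-up is the hoped-for canonicity of the Frobenius-transport identification: it is \emph{not} independent of the chosen representatives. If you replace $\tilde\eta_\iota$ by $\tilde\eta_\iota\circ g^{-1}$ with $g|_{L_0}=\sigma^{m}$, the two identifications of $(M_L)_{\tilde\eta_\iota\circ g^{-1}}$ with $V=M_{\eta_0}$ differ by the automorphism $\varphi_M^{-m}\circ\gamma(g)$ of $M_{\eta_0}$ (using that $\gamma$ and $\varphi_M$ commute), and there is no reason for this to be the identity — already for $M$ irreducible of rank $>1$ it generally is not. So the step ``checked to be independent of choices'' fails as stated. This is not fatal, because the theorem only asserts the existence of an isomorphism of groupoids: fixing, once and for all, one representative $\tilde\eta_\iota$ per $\iota\in\Hom(K,C)$ together with the corresponding Frobenius exponents yields a construction that is functorial in $A$ and in morphisms of $M$ (morphisms commute with $\varphi_M$ and $\gamma$), and that is all you need — exactly as the paper's construction depends on the fixed pair $(\eta_0,\tilde\eta_0)$. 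Either drop the canonicity claim and fix choices, or replace your Frobenius identifications by the paper's idempotent argument, and the proof goes through.
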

    
    \begin{proof}
    	Consider the morphism
    	\begin{align*}
    		P': \mathfrak{M}^{L/K,d}_0 &\rightarrow \mathrm{M^{K,d}}\\
    		(M,\varphi_M,N,\gamma) &\mapsto M_K:=(M\otimes_{L_0}L)^{G_{L/K}}.
    	\end{align*}
    	Note that, by Galois descent, we have the following isomorphism:
    	\begin{align*}
    		\mathrm{FM}^{K,d} \times_{\mathrm{M}^{K,d}} \mathfrak{M}^{L/K,d}_0 & \xrightarrow{\sim} \mathfrak{M}^{L/K,d}\\
    		((D,\mathcal{F}^\bullet),(M,\varphi_M,N,\gamma),\alpha:D\xrightarrow{\sim} M_K)& \mapsto (M,\varphi_M,N,\gamma,\tilde{\mathcal{F}}^\bullet),
    	\end{align*}	
    	here $\tilde{\mathcal{F}}^i M_L := \alpha (\mathcal{F}^iD)\otimes_KL$. Hence it is enough to construct a canonical isomorphism $P \circ F \xrightarrow{\sim} P'$, i.e. $M_{\eta_0}\otimes_{{\mathbb{Q}}_p} K$ is canonically isomorphic to $(M_L)^{G_{L/K}}$.
    	
    	Choose a lifting $\tilde{\eta}_0:L_0\rightarrow C$ of $\eta_0$. Note that one has the canonical isomorphism
    	$$M_L\otimes_{(L\otimes_{\mathbb{Q}_p}A),\tilde{\eta}_0\otimes 1}A = (M\otimes_{(L_0\otimes_{\mathbb{Q}_p}A)}(L\otimes_{\mathbb{Q}_p}A))\otimes_{(L\otimes_{\mathbb{Q}_p}A),\tilde{\eta}_0\otimes 1}A\cong M\otimes_{(L_0\otimes_{\mathbb{Q}_p}A),\eta_0\otimes 1}A$$
    	of $A$-modules.	Consider the following diagram
    	\begin{equation*}
    		\begin{tikzcd}
    			M_{\eta_0} \arrow[r, hook] \arrow[d, dashed, "f_{\eta}"] & M \arrow[d, hook] \arrow[r, two heads] &  {M\otimes_{(L_0\otimes_{\mathbb{Q}_p}A),\eta_0\otimes 1}A} \arrow[d, "\cong"] \\
    			{M_{L,\tilde{\eta}_0}} \arrow[r, hook]       & M_L \arrow[r, two heads]               &  {M_L\otimes_{(L\otimes_{\mathbb{Q}_p}A),\tilde{\eta}_0\otimes 1}A}  .         
    		\end{tikzcd}
    	\end{equation*}
    	As the compositions of the horizontal arrows of both rows and the rightmost vertical map are isomorphisms, then there exists a unique and canonical map $$f_{\eta}: M_{\eta_{0}} \rightarrow M_{L,\tilde{\eta}_{0}}$$ such that the whole diagram commutes and $f_{\eta}$ is an isomorphism of $A$-modules. Then it suffices to show that $M_{L,\tilde{\eta}_0} \otimes_{\mathbb{Q}_p} K$ is canonically isomorphic to $(M_L)^{G_{L/K}}$ as $(K\otimes_{{\mathbb{Q}}_p}A)$-modules.
    	
    	By the Galois decent, one has a canonical isomorphism $$(M_L)^{G_{L/K}}\otimes_{K}L\xrightarrow{\sim} M_L.$$
    	Note that the map $A\otimes_{\mathbb{Q}_p} L\rightarrow A,\ a\otimes b\mapsto \tilde{\eta}_0(b)\cdot a$ restricted on $e_{\tilde{\eta}_0}(A\otimes_{\mathbb{Q}_p} L)$ is a bijection (by lemma \ref{fieldecomp}(2)). Hence we have a canonical isomorphism
    	$$e_{\tilde{\eta}_0}(A\otimes_{\mathbb{Q}_p} L)\otimes_{\mathbb{Q}_p}K\xrightarrow{\sim} A\otimes_{\mathbb{Q}_p}K$$
    	After tensoring with $(M_L)^{G_{L/K}}$ over $A\otimes_{\mathbb{Q}_p}K$, the left hand side is
    	\begin{align*}
    		(M_L)^{G_{L/K}}\otimes_{A\otimes_{\mathbb{Q}_p}K} e_{\tilde{\eta}_0}(A\otimes_{\mathbb{Q}_p} L)\otimes_{\mathbb{Q}_p}K & \cong e_{\tilde{\eta}_0}((M_L)^{G_{L/K}}\otimes_{K}L)\otimes_{{\mathbb{Q}}_p} K\\
    		& \cong e_{\tilde{\eta}_0} M_L\otimes_{{\mathbb{Q}}_p}K \\
    		& \cong M_{L,\tilde{\eta}_0} \otimes_{\mathbb{Q}_p} K,
    	\end{align*}
    	and the right hand side is $(M_L)^{G_{L/K}}$. Then the conclusion follows.
    \end{proof} 
    
   \begin{Rmk}\label{flag}
   
   \begin{enumerate}[leftmargin=1.5em]
   	\text{ }
   \item Assume $A$ is connected. For any locally free $K$-module $M$ over $A$ of rank $d$, by lemma \ref{fieldecomp}, $M \cong \prod\limits_{\eta:K\inj C}M_\eta$ and the set of the $(K\otimes_{\mathbb{Q}_p} A)$-submodules $M'$ of $M$ is in bijection with the set of the systems of $A$-submodules $M'_\eta$ of $M_\eta$. Moreover $M_\eta$ is a locally free module over $A$ of finite rank $d$. Hence giving a filtration on $M$ is equal to give a system of decreasing separated and exhaustive $\mathbb{Z}$-filtrations on $\{M_\eta\}_\eta$ by locally free $A$-submodules. Hence for any $(M,\mathcal{F}^\bullet)\in\mathrm{FM}^{K,d}$ and for each $\eta\in\Hom(K,C)$, there exists a unique non-decreasing sequence of $d$ integers 
   $$(k_{\eta,1}\leq \cdots \leq k_{\eta,d})\in\mathbb{Z}_{+}^{d}$$ 
   such that
   \begin{equation*}
   \rank(\mathrm{gr}^i_\mathcal{F}M_\eta) = |\{m|k_{\eta_i}=m\}|
   \end{equation*}
   We call $\mathrm{wt}_{\eta}(M) := (k_{\eta,1}\leq \cdots \leq k_{\eta,d})$ the \textit{$\eta$-filtration weight of} $M$. 
   \item In the language of stacks, by definition, $\mathrm{M}^{K,d} = \mathrm{Res}_{K/\mathbb{Q}_p}[\mathrm{Spec}{(C)}/\mathrm{GL}_{d,C}]$, where $\mathrm{Res}_{K/\mathbb{Q}_p}$ is the Weil restriction. The observations implies $$\mathrm{M}^{K,d}\cong \prod_{\eta:K\hookrightarrow C }[\mathrm{Spec}{(C)}/\mathrm{GL}_{d,C}]$$
   as a $C$-stack. Similarly, for a filtration weight data $(\mathrm{wt}_\eta)_{\eta}$, if we denote by $P_{\mathrm{wt}_\eta,C}$ the $C$-variety of block upper triangular matrices of type $\mathrm{wt}_\eta$ (i.e. $P_{\mathrm{wt}_\eta,C}$ is of size $(n_1,\dots,n_l)$ for some partition $(n_1,\dots,n_l)$ of $n$ such that $k_{\eta,s}<k_{\eta,s+1}$ if and only if $s=n_1+\cdots+n_i$ for some $i$) and denote by  $\mathrm{Flag}^{\mathrm{wt}_\eta} := [\mathrm{GL}_{d,C}/P_{\mathrm{wt}_\eta,C}]$ the flag variety of type $\mathrm{wt}_\eta$, the observation also implies
   $$\mathrm{FM}^{K,d} \cong \coprod_{(\mathrm{wt}_\eta)_\eta}\prod_{\eta:K\hookrightarrow C} [\mathrm{Flag}^{\mathrm{wt}_\eta}/\mathrm{GL}_{d,C}],$$
   here $(\mathrm{wt}_\eta)_{\eta}$ runs through all filtration weights. Moreover, the forgetful functor
   \begin{align*}
   \mathrm{FM}^{K,d} & \rightarrow \mathrm{M}^{K,d}\\
   (M,\mathcal{F}^\bullet)&\mapsto M
   \end{align*}
   is induced by $\mathrm{Flag}^{\mathrm{wt}_\eta} \rightarrow \mathrm{Spec}(C)$. Hence the morphism is smooth whose fibers are union of products of flag varieties.
   
   \end{enumerate}
   
   \end{Rmk}

   \begin{Cor}
   	The groupoid $\mathfrak{M}^{L/K,d}$ is an Artin stack.
   \end{Cor}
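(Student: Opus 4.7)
The plan is to deduce this directly from Theorem \ref{FWDstack} together with Remarks \ref{WArtin} and \ref{flag}, which together identify $\mathfrak{M}^{L/K,d}$ as a $2$-fiber product of three stacks whose Artin stack structure is already understood.

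First, I would invoke Theorem \ref{FWDstack} to replace $\mathfrak{M}^{L/K,d}$ by the $2$-fiber product
\[
\mathfrak{M}^{L/K,d}\;\cong\;\mathrm{FM}^{K,d}\times_{\mathrm{M}^{K,d},P}\mathfrak{W}^{L/K,d}.
\]
Thus it suffices to exhibit each of the three stacks $\mathrm{FM}^{K,d}$, $\mathrm{M}^{K,d}$, $\mathfrak{W}^{L/K,d}$ as Artin stacks and to observe that the fibre product of Artin stacks over an Artin stack is again an Artin stack (with the diagonals inherited from those of the factors).

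Next, for the individual factors: by Remark \ref{flag} the stack $\mathrm{M}^{K,d}$ is isomorphic to $\prod_{\eta\colon K\hookrightarrow C}[\mathrm{Spec}(C)/\mathrm{GL}_{d,C}]$, which is a finite product of classifying stacks of smooth affine group schemes, hence an Artin stack; similarly $\mathrm{FM}^{K,d}$ is a disjoint union over filtration weights of finite products $\prod_{\eta}[\mathrm{Flag}^{\mathrm{wt}_\eta}/\mathrm{GL}_{d,C}]$, each a quotient stack of a smooth projective $C$-variety by a smooth affine group scheme, again an Artin stack (indeed the forgetful map to $\mathrm{M}^{K,d}$ is smooth with flag variety fibres, as noted in \emph{loc.\ cit.}). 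For $\mathfrak{W}^{L/K,d}$, Remark \ref{WArtin} already records that it is an Artin stack of dimension $0$, essentially via the description as a quotient stack $[P_{K_0,d}/\mathrm{Res}_{K_0/\mathbb{Q}_p}\mathrm{GL}_{d,K_0}]$ after base change, and a (finite) decomposition according to $L/K$-types using Theorem \ref{Thm:WDstack}.

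The only point requiring any care is the morphism $P\colon\mathfrak{W}^{L/K,d}\to\mathrm{M}^{K,d}$, $V\mapsto V\otimes_{\mathbb{Q}_p}K$. This is a morphism of Artin stacks (send a trivialisation of $V$ as an $A$-module to the induced trivialisation of $V\otimes_{\mathbb{Q}_p}K$ as a locally free $K$-module), and since the target has affine diagonal, the $2$-fibre product $\mathrm{FM}^{K,d}\times_{\mathrm{M}^{K,d}}\mathfrak{W}^{L/K,d}$ automatically inherits an Artin stack structure from the general formalism. I expect this verification to be the only nontrivial check; once $P$ is recognised as a morphism of Artin stacks, assembling the final conclusion is immediate and no further calculation is needed.
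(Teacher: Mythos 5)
Your argument matches the paper's own (one-line) proof, which also cites exactly Remark \ref{WArtin}, Remark \ref{flag}, and Theorem \ref{FWDstack}; you have simply spelled out the standard fact that a $2$-fibre product of Artin stacks over an Artin stack is Artin. This is correct and identical in substance to the paper's approach.
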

   \begin{proof}
   	It follows from remark \ref{WArtin}, remark \ref{flag} and theorem \ref{FWDstack}.
   \end{proof}

   \subsection{The Construction of Family of ($\varphi,\Gamma_K$)-modules }\label{FamilyofPhiGamma}
   In this subsection, we fix $C$ some field finite over $\mathbb{Q}_p$ such that $|\Hom(L,C)| = [L:\mathbb{Q}_p]$.
   
   In \cite{Berger2008}, Berger constructed a fully faithful functor from the category of filtered $(\varphi,N,G_{L/K})$-modules over $\mathbb{Q}_p$ to the category of $(\varphi,\Gamma_K)$-modules over $\mathcal{R}_{\mathbb{Q}_p}$ (see the definition below). In this subsection we generalize this result to families of those objects parametrized by rigid spaces. First, let us briefly recall some basic notations about Robba rings (see \cite{Berger2008}):
   
   Let $E$ be a finite field extension over $\mathbb{Q}_p$ (in this subsection, $E=K$ or $L$), and let $(\varepsilon^{(n)})_{n\in\mathbb{N}}$ be a compatible system of $p^n$-th root of unity in $\bar{\mathbb{Q}}_p$ (i.e. $(\varepsilon^{(n)})^p=\varepsilon^{(n-1)}$). We write $E_n:= E(\varepsilon^{(n)})$ for $n\geq 1$ and write $E_\infty:= \cup_{n\geq 1}E_n$. We write $H_E:=G_{E_\infty}$ and write $\Gamma_E:= G_E/H_E$.
   
   Let $e_E$ be the ramification degree of $E_\infty/(E_{0})_{\infty}$ (here $E_0$ is the maximal unramified subfield of $E$). For any $s\geq r$, we define  $$\mathcal{R}^{[r,s]}_E := \{f(X_E) \in (E_\infty)_0\llbracket X_E^{\pm 1} \rrbracket|\ f(x) \mathrm{\ converges\ }\forall x \mathrm{\ with\ }\frac{1}{e_Es}\leq \mathrm{val}(x)\leq \frac{1}{e_Er}\}$$ and $\mathcal{R}_E^{r}:= \cap_{s\geq r}\mathcal{R}^{[r,s]}_E$ ($(E_{\infty})_0$ is the maximal unramified subfield of $E_\infty$). By convention, we write $\mathcal{R}_{E}^{[r,\infty]}:= \mathcal{R}_E^{r}$. The ring $\mathcal{R}_E^{[r,s]}$ is noetherian if $s\neq\infty$, otherwise not (note that the definition of $\mathcal{R}_E^{[r,s]}$ here follows from \cite{Berger2008}, and the similar notation in \cite{Kedlaya2012} defines a similar ring but the interval is scaled by the factor $e_E$).
   
   In this subsection, we will define several rings and functors whose symbols contain an interval $[r,s]$. Their definitions depend on $\mathcal{R}_E^{[r,s]}$. Unless explicitly stated, for convention, we allow $s=\infty$ (but not $r$) when an interval $[r,s]$ occurs in these notations, and define them by replacing $\mathcal{R}_E^{[r,s]}$ by $\mathcal{R}_E^{r}$ in them definitions respectively.
   
   There is a compatible $\Gamma_E$-action on $\mathcal{R}^{[r,s]}_E$, and a compatible injective Frobenius homomorphism $\varphi: \mathcal{R}^{[r,s]}_E\rightarrow \mathcal{R}^{[pr,ps]}_E$. We define the Robba ring $$\mathcal{R}_E := \bigcup\limits_{r > 0}\mathcal{R}_E^r$$ which is then equipped with a continuous $(\varphi,\Gamma_E)$-action. In each $\mathcal{R}^{[r,s]}_E$, we have a special element $t$ such that $\varphi(t) = pt$ and $\gamma(t)=\chi(\gamma)t$ (here $\chi:\Gamma_E\rightarrow \mathbb{Z}_p^{\times}$ is the cyclotomic character). For any $ s\geq p^{n-1}(p-1)\geq r \geq r(E)$, we have an injection $\iota_n: \mathcal{R}^{[r,s]}_E\rightarrow E_n\llbracket t_n \rrbracket$, such that $t\mapsto t_n/p^n$, here $r(E)$ is some positive real number depending on $E$. 
   
   Moreover, let $\log X$ be a formal variable, we extend the ($\varphi,\Gamma_E$)-action on $\mathcal{R}_E[\log X]$ by $$\varphi(\log X) := p\log X + \log (\varphi(X)/X^p),$$ and $$\gamma(\log X) := \log X + \log (\varphi(X)/X).$$ 
   (We also extend the $\Gamma_E$-action on $\mathcal{R}^{[r,s]}_E[\log X]$ by the same formula.)
   We extend the map $\iota_n$ on $\mathcal{R}^{[r,s]}_E[\log X]$ by
   $$\iota_n(\log X) = \log(\varepsilon^{(n)}\cdot\exp(t_n/p^n)-1).$$
   For $? \in\{[s,r],r,\emptyset\}$, we define $N$ to be the unique $\mathcal{R}_E^{?}$-derivation on $\mathcal{R}^?_E[\log X]$ such that $$N(\log X) = -p/(p-1).$$ . 
   
   We also define the relative Robba rings for any affoind $C$-algebra $A$ in similar settings (see \cite[Sect~2]{Kedlaya2012} for more details). For example, we set $$\mathcal{R}_{K,A}^{[s,r]}:= \mathcal{R}_E^{[s,r]} \hat{\otimes}_{\mathbb{Q}_p} A  \text{ and } \mathcal{R}^r_{E,A}:= \bigcap_{0 < s\leq r}\mathcal{R}^r_{E,A} \text{ and } \mathcal{R}_{E,A}:= \bigcap_{r > 0}\mathcal{R}^r_{K,A}.$$ Let $E_{n,A} := A \otimes_{\mathbb{Q}_p} E_n$, we also have an injection $\iota_n:\mathcal{R}^r_{E,A}\rightarrow E_{n,A}\llbracket t_n \rrbracket$ induced by the base change from $\mathbb{Q}_p$ to $A$. If $X$ is a rigid $C$-space, the relative Robba rings $\mathcal{R}_{K,A}^{?}$, where $? \in\{[s,r],r,\emptyset\}$, glues to a sheaf of Robba rings on $X$. 
   
   \begin{Rmk}\label{Rmk:FactAboutRobba}
   Here we list some facts about Robba rings that we need in this subsections:
   \begin{enumerate}
   	\item The following diagram commutes:
   	\begin{equation*}
   		\begin{tikzcd}
   			{\mathcal{R}_{E}^{[r,s]}} \arrow[r, "\varphi"] \arrow[d, "\iota_n"] & {\mathcal{R}_{E}^{[pr,ps]}} \arrow[d, "\iota_{n+1}"] \\
   			{E_{n}\llbracket t_n \rrbracket} \arrow[r, "t_n\mapsto t_{n+1}", hook]                  & {E_{n+1}\llbracket t_{n+1} \rrbracket}                                    
   		\end{tikzcd}
   	\end{equation*}
   	\item Let $L/K$ be a finite Galois extension of $p$-adic local fields. There is a natural $G_K$-action on $\mathcal{R}^{[r,s]}_L$ and a natural injective ring homomorphism: $\mathcal{R}^{[r,s]}_K \inj \mathcal{R}^{[r,s]}_L$ such that
   	\begin{enumerate}
   		\item $H_L\subset H_K$ acts trivially on $\mathcal{R}^{[r,s]}_L$ and the $\Gamma_L$-action is induced by the isomorphism $\Gamma_L\cong (G_L/H_L) \subseteq (G_K/H_L)$.
   		\item $\mathcal{R}^{[r,s]}_L$ is naturally a finite \'{e}tale $\mathcal{R}^{[r,s]}_K$-algebra, free of rank $[H_K:H_L]$, and $(\mathcal{R}^{[r,s]}_L)^{H_K/H_L}\cong \mathcal{R}^{[r,s]}_K$, such that the $\Gamma_K$-action on $\mathcal{R}^{[r,s]}_K$ is induced by the isomorphism $\Gamma_K \cong G_K/H_K$.
   	\end{enumerate}
   	\item Let $h$ be a positive integer and $n$ be an integer with $p^{n-1}(p-1)\in[r,s]$. There is an element $t_{n,h}\in \mathcal{R}^{[r,s]}_{E}$ such that
   	\begin{equation*}
   		\iota_m(t_{n,h})=\left\{
   		\begin{array}{cc}
   			1\ (\mathrm{mod\ } t_n^h) & m= n\\
   			0\ (\mathrm{mod\ } t_m^h )& m\neq n
   		\end{array}
   		\right.
   	\end{equation*}
   \item The natural map
   \begin{align*}
   	\mathcal{R}^{[r,s]}_{E} &\rightarrow \prod_{\{n|p^{n-1}(p-1)\in[r,s]\}} E_{n}\llbracket t_n \rrbracket\\
   	x &\mapsto (\iota_n(x))_n
   \end{align*}
   induces an isomorphism $\varprojlim\limits_h(\mathcal{R}^{[r,s]}_{E}/t^h\mathcal{R}^{[r,s]}_{E}) \cong \prod\limits_{\{n|p^{n-1}(p-1)\in[r,s]\}} E_{n}\llbracket t_n \rrbracket$.
   \end{enumerate}
   \end{Rmk}

   \begin{proof} \text{ }
   	\begin{enumerate}
   		\item It follows from the construction of $\iota_n$, see \cite{Cherbonnier1999}.
   		\item It follows from \cite[Cor~5.46]{FontaineOuyang}.
   		\item It follows from \cite[Lem~I.2.1]{Berger2008}.
   		\item It follows from \cite[Prop~I.2.2]{Berger2008}.
   	\end{enumerate}
   \end{proof}
   
   Through out the rest part of this subsection, we always write $A$ for an affinoid $C$-algebra. 
   
   Our main goal in this subsection is to associate a filtered $(\varphi,N,G_{L/K})$-module over $A$ (of rank $d$) to a $(\varphi,\Gamma_K)$-module over $\mathcal{R}_{K,A}$ (of rank $d$), which is defined as follows:
   
   \begin{Def}\cite[Def~2.2.12]{Kedlaya2012}
   	\begin{enumerate}
   		\item
   			A $(\varphi,\Gamma_K)$\textit{-module} $(D^{r},\varphi_{D^r},\Gamma_K)$ \textit{over} $\mathcal{R}^{r}_{K,A}$ (\textit{of rank} $d$) consists of a finitely generated and projective $\mathcal{R}_{K,A}^{r}$-module $D^r$ (of rank $d$) with a semi-linear $\Gamma_K$-action and a $\varphi$-linear map
   			$$\varphi_{D^r}: D^{r}\rightarrow D^{pr}:= D^{r}\otimes_{\mathcal{R}^{r}_{K,A}}\mathcal{R}^{pr}_{K,A}$$
   			such that the linearization map $1\otimes \varphi_{D^r}: \mathcal{R}^{pr}_{K,A}\otimes_{\varphi,\mathcal{R}^{r}_{K,A}}D^{r}\rightarrow D^{pr}$ is a bijection, and $\varphi_{D^r}$ commutes with the $\Gamma_K$-action.
   		\item A $(\varphi,\Gamma_K)$\textit{-module} $(D,\varphi_D,\Gamma_K)$ \textit{over} $\mathcal{R}_{K,A}$ (\textit{of rank} $d$) is the base change to $\mathcal{R}_{K,A}$ of a $(\varphi,\Gamma_K)$-module $D^r$ over $\mathcal{R}^r_{K,A}$ (of rank $d$) for some $r\geq r(L)$.
   	\end{enumerate}
   	
   \end{Def}
   
   %%\begin{Def}
   %%Let $A$ be an affinoid $C$-algebra. Let $r_0\in[r(K),\infty)$. A \textit{coherent sheaf} over $\mathcal{R}^{r_0}_{K,A}$ consists of one finitely generated module $D^{[s,r]}$ over $\mathcal{R}^{[s,r]}_{K,A}$ with $0< s\leq r\leq r_0$, together with isomorphism
   %%$$D^{[s',r']}\cong D^{[s,r]}\otimes_{\mathcal{R}^{[s,r]}_{K,A}}\mathcal{R}^{[s',r']}_{K,A}$$ 
   %for all $0< s\leq s'\leq r'\leq r\leq r_0$ satisfying the obvious cocycle condition. Its \textit{module of global sections} is the $\mathcal{R}^{r_0}_{K,A}$-module $D^{r_0} := \varprojlim_{s\rightarrow 0} D^{[s,r_0]}$. An $\mathcal{R}^{r_0}_{K,A}$-module is called \textit{coadmissible} if it occurs as the module of global section of some coherent sheaf.
   
   %A \textit{vector bundle} over $\mathcal{R}^{r_0}_{K,A}$ is a coherent sheaf $(D^{[s,r]})$ where each $(D^{[s,r]})$ is flat over $\mathcal{R}^{[s,r]}_{K,A}$.
   %\end{Def}

   	Let $M$ be an $(L_0\otimes_{\mathbb{Q}_p} A)$-module equipped with a Frobenius-linear bijective map $$\varphi_M: M\rightarrow M,$$
   	and a decreasing filtration on $M_L := M\otimes_{L_0} L$
   	$$\mathcal{F}^{\bullet}M_L =(\cdots \supseteq \mathcal{F}^{i-1}M_L \supseteq \mathcal{F}^{i}M_L \supseteq \mathcal{F}^{i+1}M_L \supseteq \cdots)$$
   	given by $(L\otimes_{\mathbb{Q}_p} A)$-submodules. 
   	
   	Then the isomorphism
   	\begin{align*}
   		 M\otimes_{L_0} L &\rightarrow M\otimes_{L_0,\varphi^{-n}} L\\
   		x\otimes a &\mapsto \varphi_M^{n}(x) \otimes a
   	\end{align*}
   of $(L\otimes_{\mathbb{Q}_p} A)$-modules induces a decreasing filtration on $M\otimes_{L_0,\varphi^{-n}} L$ from $M\otimes_{L_0} L$. We denote by $\varphi^{n}(M_L)$ the module $M\otimes_{L_0,\varphi^{-n}} L$ with this decreasing filtration structure.
   
   We also define the $\mathbb{Z}$-filtration on $L_{n,A}(\!( t_n )\!)$ by
    $$\mathcal{F}^{i}L_{n,A}(\!( t_n )\!):= t_n^{i}L_{n,A}[\![ t_n ]\!].$$
    
   \begin{Def}\label{BergersFunc1}
   	Let $r,s$ be positive real numbers (or $s=\infty$) and $n$ be a positive integer such that $$r(L) \leq r\leq p^{n-1}(p-1) \leq s.$$
   	Let $M$ be a filtered $(\varphi,N,G_{L/K})$-module $M$ over $A$ of rank $d$, we define
   		
   		$$\mathbf{D}^{[r,s]}_0(M) := (\mathcal{R}^{[r,s]}_{L,A}[\log X]\otimes_{(L_0\otimes_{\mathbb{Q}_p} A)} M)^{N=0},$$
   and
   		$$\Lambda^{n}_{L}(M):= \mathcal{F}^0(L_{n,A}(\!( t_n )\!)\otimes_{(L\otimes_{\mathbb{Q}_p} A)} \varphi^n(M_L))$$
  (for two filtered modules $M_1,M_2$, we define $\mathcal{F}^{i}(M_1\otimes M_2):= \sum\limits_{p+q=i} \mathcal{F}^{p}M_1\otimes \mathcal{F}^{q}M_2$).
  
  Hence $\mathbf{D}^{[r,s]}_0(-)$ (resp. $\Lambda^{n}_{L}(-)$) is a functor from the category of filtered $(\varphi,N,G_{L/K})$-modules over $A$ to the category of $\mathcal{R}^{[r,s]}_{L,A}$-modules (resp. $L_{n,A} \llbracket t_n \rrbracket$-modules) as the maps on the Hom-sets are obvious.
   \end{Def}

   \begin{Rmk}\label{Rmk:ForBergersFunc1}
   Let $r,s,n,M$ be the settings as above.
   \begin{enumerate}
   	\item $\mathbf{D}_{0}^{[r,s]}(M)$ is a flat $\mathcal{R}^{[r,s]}_{L,A}$-module of rank $d$.
   	\item The natural morphism
   	$$\mathcal{R}^{[r,s]}_{L,A}[\log X]\otimes_{\mathcal{R}^{[r,s]}_{L,A}} \mathbf{D}^{[r,s]}_0(M) \rightarrow \mathcal{R}^{[r,s]}_{L,A}[\log X]\otimes_{(L_0\otimes_{\mathbb{Q}_p} A)} M$$
   	of $\mathcal{R}^{[r,s]}_{L,A}[\log X]$-modules is an isomorphism.
   	
   	In particular, the natural morphism 
   	$$L_{n,A} \llbracket t_n \rrbracket\otimes_{\iota_n,\mathcal{R}^{[r,s]}_{L,A}} \mathbf{D}^{[r,s]}_0(M) \rightarrow L_{n,A}\llbracket t_n \rrbracket \otimes_{(L\otimes_{\mathbb{Q}_p}A)} M_L$$
   	is an isomorphism.
   	\item Zariski locally on $\mathrm{Spec}(A)$, the $L_{n,A}\llbracket t_n \rrbracket$-module $\Lambda_{n,L}(M)$ is a free of rank $d$. In particular, $\Lambda_{n,L}(M)$ is a finite projective $L_{n,A}\llbracket t_n \rrbracket$-module of rank $d$.
   \end{enumerate}
   \end{Rmk}

   \begin{proof}
   \text{ }
   \begin{enumerate}
   \item 	We may assume $M$ is a free $(L_0\otimes_{\mathbb{Q}_p}A)$-module of rank $d$.
   
   Choose a $(L_0\otimes_{\mathbb{Q}_p}A)$-basis $(m_1,\dots,m_d)$ of $M$. For any $1 \leq i\leq d$, let $$e_i := \sum_{k=0}^{\infty}((p-1)/p)^k(\log X)^kN^k(m_i).$$
   Each $e_i$ is well defined as $N$ is nilpotent.
   
   Note that any $x\in \mathcal{R}_{L,A}^{[r,s]}[\log X]\otimes_{(L_0\otimes_{{\mathbb{Q}}_p}A)} M$ has the following unique expansion:
   $$x = \sum_{0\leq k\ll\infty}(\log X)^k x_k$$
   for some $x_k\in\mathcal{R}_{L,A}^{[r,s]}\otimes_{(L_0\otimes_{\mathbb{Q}_p}A)} M$. As $\mathcal{R}_{L,A}^{[r,s]}\otimes_{(L_0\otimes_{\mathbb{Q}_p}A)} M$ is a free $\mathcal{R}^{[r,s]}_{L,A}$-module with basis $(m_1,\dots,m_d)$, one can write $x_0 = a_1m_1+\dots+a_dm_d$ for unique $a_i\in \mathcal{R}_{L,A}^{[r,s]}$. Then $N(x) = 0$ if and only if $x = a_1e_1+\dots+a_de_d$ by direct computation, which implies that $(e_1,\dots,e_d)$ is a  $\mathcal{R}_{L,A}^{[r,s]}$-basis of $\mathbf{D}^{[r,s]}_0(M)$.
   \item Let $(m_1,\dots,m_d)$ and $(e_1,\dots,e_d)$ be the notations as in the proof above. Let $B_N$ be the nilpotent matrix in $\mathrm{Max}_{d\times d}(L_0\otimes_{\mathbb{Q}_p}A)$ corresponding to $N$ w.r.t. the basis $(m_1,\dots,m_d)$. Then one has
   $$(e_1,\dots,e_d) = (m_1,\dots,m_d)\left(\sum_{k=0}^{\infty}((p-1)/p)^k(\log X)^kB_N^k\right).$$
   Then our assertion follows from the fact that $$\sum_{k=0}^{\infty}((p-1)/p)^k(\log X)^kB_N^k$$ is invertible in $\mathcal{R}^{[r,s]}_{L,A}[\log X]$ (write $B:=\sum_{k=1}^{\infty}((p-1)/p)^k(\log X)^kB_N^k$ which is nilpotent, then $(1+B)(1+B+B^{2}+\cdots) = 1$), and in particular
   \begin{align*}
   	&L_{n,A}\llbracket t_n \rrbracket \otimes_{\iota_n,\mathcal{R}^{[r,s]}_{L,A}} \mathbf{D}^{[r,s]}_0(M)\\ \cong &L_{n,A} \llbracket t_n \rrbracket \otimes_{\iota_n,\mathcal{R}^{[r,s]}_{L,A}[\log X]} \left(\mathcal{R}^{[r,s]}_{L,A}[\log X]\otimes_{\mathcal{R}^{[r,s]}_{L,A}} \mathbf{D}^{[r,s]}_0(M)\right)\\
   	\cong &L_{n,A}\llbracket t_n \rrbracket \otimes_{\iota_n,\mathcal{R}^{[r,s]}_{L,A}[\log X]}\left(\mathcal{R}^{[r,s]}_{L,A}[\log X] \otimes_{(L_0\otimes_{\mathbb{Q}_p} A)}M\right)\\
   	\cong &L_{n,A}\llbracket t_n \rrbracket \otimes_{(L\otimes_{\mathbb{Q}_p} A)}M_L
   \end{align*}
   \item We may assume that each $\mathcal{F}^{i}M_L$ is a free $A$-module for $i\in\mathbb{Z}$. More precisely, consider the decomposition by lemma \ref{fieldecomp}(2):
   $$\mathcal{F}^{i}(M_L) = \bigoplus_{\eta\in\Hom(L,C)} \left(\mathcal{F}^{i}(M_L)\right)_{\eta}$$
   for all $i\in\mathbb{Z}$. Then each $\left(\mathcal{F}^{i}(M_L)\right)_{\eta}$ is a free $A$-module and one has the decreasing filtration of $(M_L)_{\eta}$:
   $$\cdots \supseteq \left(\mathcal{F}^{i-1}(M_L)\right)_{\eta}\supseteq \left(\mathcal{F}^{i}(M_L)\right)_{\eta}\supseteq \left(\mathcal{F}^{i+1}(M_L)\right)_{\eta}\supseteq \cdots$$
   
    Let $(m_{\eta,1},\dots,m_{\eta,d})$ be a basis of $(M_L)_{\eta}$ which is compatible with the filtration $\left(\mathcal{F}^{\bullet}M_L\right)_{\eta}$ (i.e. if $\left(\mathcal{F}^{i}M_L\right)_{\eta}$ is of rank $d_{\eta,i}$, then $(m_{\eta,1},\dots,m_{\eta,d_{\eta,i}})$ is a basis of $\mathcal{F}^{i}(M_L)$ for all $i$). For each $1\leq i\leq d$, we write $k_{\eta,i}$ for the unique integer such that 
    $$m_{\eta,i}\in \left(\mathcal{F}^{i}M_L\right)_{\eta} \mathrm{\ and \ } m_{\eta,i}\notin \left(\mathcal{F}^{i+1}M_L\right)_{\eta}.$$
    Then one can find that $$\left((t_n^{-k_{\eta,1}}\otimes
     \varphi_M^{n}(m_{\eta,1}))_{\eta},\dots,(t_n^{-k_{\eta,d}}\otimes
     \varphi_M^{n}(m_{\eta,d}))_{\eta}\right)$$ is an $ L_{n,A}\llbracket t_n \rrbracket$-basis of $\Lambda_L^n(M)$. 
   \end{enumerate}
   \end{proof}

    	Let  $s\geq r$ be positive real numbers (or $s=\infty$) with $r\geq r(L)$. Let $M$ be a filtered $(\varphi,N,G_{L/K})$-module $M$ over $A$ of rank $d$.  Let $h$ be a positive integer such that 
    	$$\mathcal{F}^{h}(M_L) = 0 \mathrm{\ and\ } \mathcal{F}^{-h}(M_L) = M_L.$$
    	By definition, one has
    	$$ t_n^{h} \Lambda^{n}_L(M)\subseteq L_{n,A}\llbracket t_n \rrbracket \otimes_{(L\otimes_{\mathbb{Q}_p}A)} M_L\subseteq t_n^{-h} \Lambda^{n}_L(M).$$ 
    	It follows that $\Lambda^{n}_L(M)[1/t_n] \cong L_{n,A}(\!( t_n )\!). \otimes_{(L\otimes_{\mathbb{Q}_p}A)} M_L$
    	According to remark \ref{Rmk:ForBergersFunc1}(2), one has the canonical isomorphism (also denote by $\iota_n$)  $$\iota_n:\mathbf{D}^{[r,s]}_0(M)[1/t]\otimes_{\mathcal{R}_{L,A}^{[r,s]}[1/t],\iota_n} L_{n,A}(\!(t_n)\!) \xrightarrow{\sim} \Lambda_L^n(M)[1/t_n]$$
    	for all $n$ such that $r\leq p^{(n-1)}(p-1)\leq s$.
    \begin{Def} 	
    	We define
    	$$\mathbf{D}^{[r,s]}_L(M):= \{x\in\mathbf{D}^{[r,s]}_0(M)[1/t]\ |\ \iota_n(x)\in \Lambda_L^n(M),\ \forall n\mathrm{\ with\ } p^{n-1}(p-1)\in[r,s] \},$$	
    Moreover, note that $G_K/H_L$ acts semi-linearly on $\mathcal{R}^{[s,r]}_{L,R}[\log X,1/t]\otimes_{(L_0\otimes_{\mathbb{Q}_p} A)} M$ which is induced by the $G_{L/K}$-action on $M$ via the formula (note that $G_{L/K}$ is a quotient of $G_K/H_L$)
    $$g(a\otimes m) \rightarrow g(a)\otimes g(x).$$
    One can easily see that the subspace $\mathbf{D}^{[r,s]}_L(M)\subset \mathcal{R}^{[r,s]}_{L,R}[\log X,1/t]\otimes_{(L_0\otimes_{\mathbb{Q}_p} A)} M$ is stable under the $(G_K/H_L)$-action as the $G_{L/K}$-action on $M$ commutes with the $N$-action and is compatible with the filtration.
    
    Then we define $$\mathbf{D}^{[r,s]}_K(M):= \left(\mathbf{D}^{[r,s]}_L(M)\right)^{H_K/H_L},$$
    which is a $\mathcal{R}_{K,A}^{[r,s]}$-module with a $\Gamma_K$-action.
    
    Hence $\mathbf{D}^{[r,s]}_L(-)$ (resp. $\mathbf{D}^{[r,s]}_{K}(-)$) is a functor from the category of filtered $(\varphi,N,G_{L/K})$-modules over $A$ to the category of $\mathcal{R}^{[r,s]}_{L,A}$-modules with semi-linear $G_K/H_L$-actions (resp. $\mathcal{R}^{[r,s]}_{K,A}$-modules with semi-linear $\Gamma_K$-actions) as the maps on the Hom-sets are obvious.
    \end{Def}
     For simplicity, we write
    $$\mathbf{D}^{r}_0(-):=\mathbf{D}^{[r,\infty]}_0(-)\mathrm{\ and\ } \mathbf{D}^{r}_L(-):= \mathbf{D}^{[r,\infty]}_L(-) \mathrm{\ and\ } \mathbf{D}^{r}_K(-):= \mathbf{D}^{[r,\infty]}_K(-).$$
    In the rest of this subsection, we always write $M$ for a filtered $(\varphi,N,G_{L/K})$-module over $A$ of rank $d$.
   
   Let $R$ be a ring and $M$ be a $R$-module. The module $M$ is called $f$\textit{-regular} for an element $f\in R$ if $M$ has no $f$-torsion. Before we prove the following proposition, let us briefly recall the Beauville-Laszlo theorem (see \cite{Beauville-Laszlo} for the proof).
   
   \begin{Theo}\label{Thm:Beauville-Laszlo}
   	Let $R$ be a ring, let $f$ be a nonzero divisor in $A$, let $\hat{R}$ be the completion of $R$ for the $f$-adic topology. We denote by $R_f$ (resp. $\hat{R}_f$) the ring of fraction $R[1/f]$ (resp. $\hat{R}[1/f]$). Suppose given:
   	\begin{enumerate}
   		\item an $R_f$-module $M_1$;
   		\item an $f$-regular $\hat{R}$-module $M_2$;
   		\item an $\hat{R}_f$-linear isomorphism $\gamma: \hat{R}\otimes_R M_1\rightarrow  M_2 \otimes_R R_f$.
   	\end{enumerate} 
   Then there exists a $f$-regular $R$-module $M$ and isomorphisms $\alpha:  M\otimes_R R_f \rightarrow M_1$ and $\beta:\hat{R}\otimes_R M \rightarrow M_2$ such that $\gamma$ is the composition of
   $$\hat{R}\otimes_R M_1\xrightarrow{1\otimes\alpha^{-1}} \hat{R}\otimes_R M \otimes_R R_f \xrightarrow{\beta\otimes 1} M_2\otimes_R R_f.$$
   The triple $(M,\alpha,\beta)$ is uniquely determined up to a unique isomorphism.
   
   If $M_1$ and $M_2$ are finitely generate (resp. flat, resp. projective and finitely generated), then $M$ has the same property. 
   \end{Theo}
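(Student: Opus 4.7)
The plan is to realize $M$ as a Mayer--Vietoris style pullback along the cover $R\to R_f\times \hat{R}$, which is the essence of the Beauville--Laszlo gluing. Concretely, I would set
$$M := \bigl\{(x_1,x_2)\in M_1\times M_2 \,\bigm|\, \gamma(1\otimes x_1) = x_2\otimes 1 \text{ in } M_2\otimes_R R_f\bigr\},$$
viewed as an $R$-submodule of $M_1\times M_2$. This $M$ is automatically $f$-regular: if $(x_1,x_2)\in M$ satisfies $f(x_1,x_2)=0$, then $fx_1=0$ forces $x_1=0$ (as $f$ acts invertibly on the $R_f$-module $M_1$), and $fx_2=0$ forces $x_2=0$ by the hypothesis on $M_2$. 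The two projections induce maps $\alpha\colon M\otimes_R R_f\to M_1$ and $\beta\colon \hat{R}\otimes_R M\to M_2$, and the factorization of $\gamma$ as $(\beta\otimes 1)\circ (1\otimes\alpha)^{-1}$ is immediate from the defining gluing condition.

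I would next verify that $\alpha$ is an isomorphism by a direct calculation. For surjectivity: given $x_1\in M_1$, write $\gamma(1\otimes x_1)=x_2\otimes f^{-n}\in M_2\otimes_R R_f$ for some $x_2\in M_2$ and $n\geq 0$; then $(f^n x_1, x_2)$ lies in $M$ and $\alpha$ sends it to $f^n x_1$, so $x_1$ is hit after inverting $f$. For injectivity: a pair $(x_1,x_2)\in M$ with $x_1=0$ in $M_1$ gives $x_2\otimes 1=0$ in $M_2\otimes_R R_f$, and the $f$-regularity of $M_2$ then forces $x_2=0$.

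The main obstacle is proving that $\beta$ is an isomorphism, which is where $f$-adic completion genuinely enters. My strategy would be to check bijectivity of $\beta$ modulo each power $f^k$ and then pass to the inverse limit, using that both $\hat{R}\otimes_R M$ and $M_2$ are $f$-adically complete and separated. The key identification is that the defining exact sequence of $M$, reduced modulo $f^k$, collapses to an isomorphism $M/f^k M\xrightarrow{\sim} M_2/f^k M_2$, because the $R_f$-factor $M_1$ becomes trivial modulo any power of $f$; a careful accounting then yields that $\beta$ induces isomorphisms on all $f$-adic graded pieces and hence on the completions. This is essentially the original argument of Beauville--Laszlo, which could alternatively be invoked directly. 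Finally, uniqueness of $(M,\alpha,\beta)$ up to a unique isomorphism is the universal property of the pullback, and preservation of finite generation, flatness, and finite projectivity follows from faithfully flat descent along $R\to R_f\times \hat{R}$ restricted to the $f$-regular world, with $\gamma$ serving as the descent datum.
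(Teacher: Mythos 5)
The paper itself offers no proof of this statement: it is recalled as a classical theorem and the proof is delegated entirely to the reference of Beauville--Laszlo, so your closing remark that one could ``invoke the original argument directly'' is exactly what the paper does. Your construction of $M$ as the fiber product of $M_1$ and $M_2$ over $M_2\otimes_R R_f$, the verification that $M$ is $f$-regular, the proof that $\alpha$ is an isomorphism, and the uniqueness via the universal property of the pullback are all correct and agree with the standard argument.

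The genuine gap is in your treatment of $\beta$, which is precisely the hard point of the theorem. You propose to check bijectivity modulo $f^k$ and pass to the limit ``using that both $\hat{R}\otimes_R M$ and $M_2$ are $f$-adically complete and separated''. Neither assertion holds in the stated generality: completion does not commute with tensor product, so $\hat{R}\otimes_R M$ need not be $f$-adically complete or separated when $M$ is not finitely generated, and $M_2$ is only assumed to be an $f$-regular $\hat{R}$-module --- for instance $M_2=\hat{R}_f$ (with $M_1=R_f$, $\gamma$ canonical) is a legitimate gluing datum in which every quotient $M_2/f^kM_2$ vanishes, so the graded-piece/limit strategy extracts no information, even though the theorem is true there. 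Moreover the claimed collapse $M/f^kM\xrightarrow{\sim}M_2/f^kM_2$ does not follow formally from ``the $R_f$-factor dies mod $f^k$'': reducing the defining left-exact sequence mod $f^k$ is only right exact, and surjectivity requires the identification $R/f^kR\cong\hat{R}/f^k\hat{R}$ together with an actual computation. The argument of Beauville--Laszlo instead establishes that $0\to M\to M_1\oplus M_2\to M_2\otimes_RR_f\to 0$ is exact and, by hand, that it stays exact after applying $\hat{R}\otimes_R(-)$, using $f$-regularity throughout; the completion-and-limit shortcut only works under finiteness hypotheses that are not assumed. Finally, the addendum cannot be obtained by ``faithfully flat descent along $R\to R_f\times\hat{R}$'': without noetherian hypotheses $\hat{R}$ is not flat over $R$, and the paper explicitly needs the non-noetherian case (the rings $\mathcal{R}^{[r,s]}_{K,A}$ with $s=\infty$); the preservation of finite generation, flatness and finite projectivity is part of what Beauville--Laszlo prove directly from the construction, not a consequence of descent.
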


   \begin{Rmk}
   	All the functors (i.e. $\mathbf{D}^{[r,s]}_0(-)$, $\Lambda_L^n(-)$, $\mathbf{D}^{[r,s]}_L(-)$ and $\mathbf{D}^{[r,s]}_K(-)$) we defined above commutes with the base change along any morphism $A\rightarrow A'$ of affinoid $C$-algebras. It is obvious for $\mathbf{D}^{[r,s]}_0(-)$ and $\Lambda_L^n(-)$. For $\mathbf{D}^{[r,s]}_L(-)$, it follows from the Beauville-Laszlo theorem and part (1) of the following proposition. Hence the base change property also holds for $\mathbf{D}^{[r,s]}_K(-)$.
   \end{Rmk}

   \begin{Prop}\label{BergersFunc2}
   Let $s\geq r$ be positive real numbers (or $s=\infty$) with $r\geq r(L)$. 
   \begin{enumerate}
   \item  $n$ be a positive integer such that $$r\leq p^{n-1}(p-1) \leq s.$$
   The natural map
   $$\mathbf{D}_L^{[r,s]}(M)\otimes_{\mathcal{R}^{[r,s]}_{L,A},\iota_n} L_{n,A}\llbracket t_n \rrbracket\rightarrow \Lambda_L^{n} (M)$$ is an isomorphism for $p^{n-1}(p-1)\in[r,s]$.
   \item  Let $r',s'$ be positive real numbers (or $s'=\infty$) such that $r\leq r'\leq s'\leq s$,  The natural map
   \begin{equation}\label{BCforBerger}
   \mathbf{D}_L^{[r,s]}\otimes_{\mathcal{R}^{[r,s]}_{L,A}}\mathcal{R}^{[r',s']}_{L,A} \rightarrow \mathbf{D}_L^{[r',s']}(M)
   \end{equation}
   is an isomorphism of $\mathcal{R}^{[r',s']}_{L,A}$-modules.
   \item
   $\mathbf{D}_L^{[r,s]}(M)$ is a flat $\mathcal{R}_{L,A}^{[r,s]}$-module of rank $d$.
   \item The natural map $$\mathbf{D}_K^{r}(M)\otimes_{\mathcal{R}^{r}_{K,A}}{\mathcal{R}^{r}_{L,A}}\rightarrow \mathbf{D}_L^{r}(M)$$
   is an isomorphism of ${\mathcal{R}^{r}_{L,A}}$-modules.
   
   In particular, $\mathbf{D}_K^{r}(M)$ is a projective $\mathcal{R}_{K,A}^{r}$-module of rank $d$.
   \end{enumerate}
   \end{Prop}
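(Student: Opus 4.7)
The natural strategy is to apply the Beauville--Laszlo gluing theorem (Theorem~\ref{Thm:Beauville-Laszlo}) to the ring $R := \mathcal{R}^{[r,s]}_{L,A}$ with respect to the non-zero-divisor $f := t$. By Remark~\ref{Rmk:FactAboutRobba}(4), the $t$-adic completion $\hat R$ decomposes as $\prod_{n} L_{n,A}\llbracket t_n\rrbracket$, where $n$ ranges over positive integers with $p^{n-1}(p-1)\in[r,s]$. We plug in $M_1 := \mathbf{D}_0^{[r,s]}(M)[1/t]$, which is flat and finitely generated over $R[1/t]$ by Remark~\ref{Rmk:ForBergersFunc1}(1), and $M_2 := \prod_n \Lambda_L^n(M)$, which is $t$-regular (each factor being free over $L_{n,A}\llbracket t_n\rrbracket$ by Remark~\ref{Rmk:ForBergersFunc1}(3)). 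The gluing datum $\gamma\colon \hat R \otimes_R M_1 \xrightarrow{\sim} M_2 \otimes_R R[1/t]$ is supplied, factor by factor, by the canonical isomorphisms $\iota_n\colon \mathbf{D}_0^{[r,s]}(M)[1/t]\otimes_{\iota_n} L_{n,A}(\!(t_n)\!) \cong \Lambda_L^n(M)[1/t_n]$ already discussed before Definition~\ref{BergersFunc1}'s continuation.

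Beauville--Laszlo then produces a (flat, finitely generated) $R$-module $\mathbf{D}'$ with $\mathbf{D}'[1/t]\cong M_1$ and $\hat R\otimes_R \mathbf{D}' \cong M_2$. Explicitly, as a fibre product, $\mathbf{D}'$ coincides with the intersection
\[
\{x\in \mathbf{D}_0^{[r,s]}(M)[1/t]\ :\ (\iota_n(x))_n\in \textstyle\prod_n \Lambda_L^n(M)\},
\]
which is literally the definition of $\mathbf{D}_L^{[r,s]}(M)$. This identification gives (1) after projecting $\hat R\otimes_R\mathbf{D}_L^{[r,s]}(M)\cong \prod_n \Lambda_L^n(M)$ onto the $n$-th factor. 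Part (3) (flatness and rank $d$) follows from the flatness of $\mathbf{D}_0^{[r,s]}(M)$ and the fact that the rank can be checked after inverting $t$ (where one sees $\mathbf{D}_0^{[r,s]}(M)[1/t]$ of rank $d$) and after completing at $t$ (where one sees $\prod_n \Lambda_L^n(M)$ of rank $d$). Part (2) reduces to the observation that the Beauville--Laszlo data is natural: base change $\mathcal{R}^{[r,s]}_{L,A}\to \mathcal{R}^{[r',s']}_{L,A}$ sends $M_1$ to $\mathbf{D}_0^{[r',s']}(M)[1/t]$ (directly from the definition of $\mathbf{D}_0^?$ via $\log X$), while completion at $t$ keeps only those $n$ with $p^{n-1}(p-1)\in[r',s']$, leaving precisely the $M_2$ for the smaller interval. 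Uniqueness in Beauville--Laszlo then yields the desired isomorphism.

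For part (4), one uses the finite \'etale Galois extension $\mathcal{R}^r_K\hookrightarrow \mathcal{R}^r_L$ from Remark~\ref{Rmk:FactAboutRobba}(2): the extension $\mathcal{R}^r_{L,A}/\mathcal{R}^r_{K,A}$ is finite \'etale of rank $[H_K:H_L]$ with invariants $\mathcal{R}^r_{K,A}$ under $H_K/H_L$. The action of $H_K/H_L$ on $\mathbf{D}_L^r(M)$, constructed before the definition of $\mathbf{D}_K^r(M)$, provides a semilinear descent datum. Since $\mathbf{D}_L^r(M)$ is finitely generated flat (hence projective on the affinoid level), Galois/faithfully flat descent yields the isomorphism $\mathbf{D}_K^r(M)\otimes_{\mathcal{R}^r_{K,A}}\mathcal{R}^r_{L,A}\xrightarrow{\sim}\mathbf{D}_L^r(M)$, and faithfully flat descent of projectivity along the finite \'etale map $\mathcal{R}^r_{K,A}\to\mathcal{R}^r_{L,A}$ forces $\mathbf{D}_K^r(M)$ itself to be projective of rank $d$.

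The main subtlety I expect is verifying that the Beauville--Laszlo hypotheses truly apply when $s=\infty$ (so $R$ is non-Noetherian) and that the fibre-product identification of $\mathbf{D}'$ really reproduces the intersection definition of $\mathbf{D}_L^{[r,s]}(M)$; this rests on the decomposition in Remark~\ref{Rmk:FactAboutRobba}(4) and on showing the maps $\iota_n$ assemble into a single map $\mathbf{D}_0^{[r,s]}(M)[1/t]\to \prod_n\Lambda_L^n(M)[1/t_n]$ compatibly with the $R$-module structure. A secondary technical point is checking, for (2), that the base change of $\mathbf{D}_0^{[r,s]}(M)$ is indeed $\mathbf{D}_0^{[r',s']}(M)$, which in turn relies on the explicit basis $(e_1,\dots,e_d)$ built from the nilpotent matrix of $N$ in the proof of Remark~\ref{Rmk:ForBergersFunc1}(1).
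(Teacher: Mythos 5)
Your approach is essentially correct, and it reorganizes the proof cleanly: you run Beauville--Laszlo from the very start, identify the glued module with the intersection definition of $\mathbf{D}_L^{[r,s]}(M)$, and then read off all of (1), (2), (3) at once. The paper instead proceeds in the opposite order: it first proves (1) by a concrete approximation argument, and only then invokes Beauville--Laszlo to get (2) and (3). The two routes are logically equivalent, but the comparison is worth spelling out, because it locates exactly where the real work is.

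The key point is the shape of Theorem~\ref{Thm:Beauville-Laszlo} as stated in the paper: it asserts existence and uniqueness of a triple $(M,\alpha,\beta)$, but does not explicitly describe $M$ as the fibre product $M_1\times_{M_2\otimes_R R_f}M_2$. You need that description to identify the glued module with the set $\{x\in\mathbf{D}_0^{[r,s]}(M)[1/t] : \iota_n(x)\in\Lambda_L^n(M)\ \forall n\}$. The fibre-product description is of course how the theorem is proved, and it is harmless to use it; but if one only uses the statement as a black box, then to apply the uniqueness clause one must first check that the explicit intersection $\mathbf{D}_L^{[r,s]}(M)$ satisfies the two conclusions $\mathbf{D}_L^{[r,s]}(M)\otimes_R R_f\cong M_1$ and $\hat{R}\otimes_R\mathbf{D}_L^{[r,s]}(M)\cong M_2$. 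The first is easy (use $t^h\mathbf{D}_0^{[r,s]}(M)\subseteq\mathbf{D}_L^{[r,s]}(M)\subseteq t^{-h}\mathbf{D}_0^{[r,s]}(M)$). The second is precisely assertion (1) of the proposition, and its surjectivity part is not formal: this is exactly where the paper's proof works, using the elements $t_{n,3h}$ of Remark~\ref{Rmk:FactAboutRobba}(3) to correct an arbitrary lift $y\in t^{-h}\mathbf{D}_0^{[r,s]}(M)$ into an element $z=t_{n,3h}y$ that lies in $\mathbf{D}_L^{[r,s]}(M)$ and hits the prescribed class in $\Lambda_L^n(M)/t_n^h\Lambda_L^n(M)$. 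So in delegating (1) to the fibre-product form of Beauville--Laszlo, you are implicitly re-using that same approximation as part of the internal proof of the gluing theorem. This is fine, but you should make the reliance explicit rather than treating the statement of Theorem~\ref{Thm:Beauville-Laszlo} as already giving the fibre-product identification; otherwise the argument for (1) becomes circular.

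Two further small points. First, you should note that to project $\hat{R}\otimes_R\mathbf{D}_L^{[r,s]}(M)\cong\prod_n\Lambda_L^n(M)$ onto the $n$-th factor and get exactly $\mathbf{D}_L^{[r,s]}(M)\otimes_{\mathcal{R}^{[r,s]}_{L,A},\iota_n}L_{n,A}\llbracket t_n\rrbracket\cong\Lambda_L^n(M)$, you need that $L_{n,A}\llbracket t_n\rrbracket$ is a direct factor of $\hat{R}$ cut out by an idempotent; that is true here, but it deserves a word. Second, for part (2) the assertion that base change sends $\mathbf{D}_0^{[r,s]}(M)$ to $\mathbf{D}_0^{[r',s']}(M)$ does follow from the explicit basis $(e_1,\dots,e_d)$ of Remark~\ref{Rmk:ForBergersFunc1}(1) as you say; the paper uses the same observation. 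Your treatment of (4) via Galois descent along $\mathcal{R}^r_{K,A}\hookrightarrow\mathcal{R}^r_{L,A}$ matches the paper exactly.
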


   \begin{proof}\text{ }
   	\begin{itemize}
   		\item[(1)]
   		    Let $h$ be an positive integer such that $ \mathcal{F}^{h}(M_L)=0$, which means $$\mathbf{D}_L^{[r,s]}(M)\subseteq 1/t^{-h}\mathbf{D}_0^{[r,s]}(M) .$$ By remark \ref{Rmk:ForBergersFunc1}(2), one has
   		    $$\mathbf{D}_L^{[r,s]}(M)\otimes_{\mathcal{R}^{[r,s]}_{L,A},\iota_n} L_{n,A}\llbracket t_n \rrbracket \subseteq  L_{n,A}\llbracket t_n \rrbracket \otimes_{(L\otimes_{\mathbb{Q}_p}A)} M_L.$$
   		    Hence $\mathbf{D}_L^{[r,s]}(M)\otimes_{\mathcal{R}^{[r,s]}_{L,A},\iota_n} L_{n,A}\llbracket t_n \rrbracket$ is $t_n$-adic completed.
   		    
   			By definition, the map 
   			$$\mathbf{D}_L^{[r,s]}(M)\otimes_{\mathcal{R}^{[r,s]}_{L,A},\iota_n} L_{n,A}\llbracket t_n \rrbracket\rightarrow \Lambda_L^{n} (M)$$
   			is injective. As both sides are $t_n$-adic completed. It suffices to show that for all $h$ large enough, the natural map
   			$$\mathbf{D}_L^{[r,s]}(M)\otimes_{\mathcal{R}^{[r,s]}_{L,A},\iota_n} L_{n,A}\llbracket t_n \rrbracket\rightarrow \Lambda_L^{n} (M)/(t_n^h \Lambda_L^{n}(M))$$ is surjective.
   			
            We may assume $h$ is large enough such that 
   			$$\mathcal{F}^{h}(M_L) = 0 \mathrm{\ and\ } \mathcal{F}^{-h}(M_L) = M_L.$$
   			By definition, one has
   			$$ t_n^{h} \Lambda^{n}_L(M)\subseteq L_{n,A}\llbracket t_n \rrbracket\otimes_{\iota_n,\mathcal{R}^{[r,s]}_{L,A}}\mathbf{D}^{[r,s]}_0(M)\subseteq t_n^{-h} \Lambda^{n}_L(M)  $$
   			
   			It follows that for any $x\in \Lambda_L^{n}(M)$, there exists some $y\in t^{-h}\mathbf{D}^{[r,s]}_0(M)$ such that $\iota_n(y)\in t_n^{h}\Lambda_L^{n}(M)$. Let $z= t_{n,3h} y$. Then one has
   			$$\iota_n(z)-\iota_n(y) \in t^{2h}L_{n,A}\llbracket t_n \rrbracket \otimes_{\iota_n,\mathcal{R}^{[r,s]}_{L,A}}\mathbf{D}^{[r,s]}_0(M) \subseteq t_n^{h}\Lambda_L^{n}(M)$$
   			and
   			$$ \iota_m(z) \in  t^{2h}L_{m,A}\llbracket t_m \rrbracket \otimes_{\iota_n,\mathcal{R}^{[r,s]}_{L,A}}\mathbf{D}^{[r,s]}_0(M) \subseteq t^{h}_n\Lambda_L^{m}(M)\subseteq \Lambda_L^{m}(M)$$
   			for $m\neq n$, which implies that $z\in\mathbf{D}_L^{[r,s]}(M)$.
   			
   			Hence the natural map
   			$$\mathbf{D}_L^{[r,s]}(M)\otimes_{\mathcal{R}^{[r,s]}_{L,A},\iota_n} L_{n,A}\llbracket t_n \rrbracket\rightarrow \Lambda_L^{n} (M)/(t_n^h \Lambda_L^{n}(M))$$ is surjective.
   		
   	   \item[(2)+(3)] The $\mathcal{R}^{[r,s]}_{L,A}$-module $\mathbf{D}_L^{[r,s]}(M)[1/t]$ is finitely generated and projective of rank $d$ as $\mathbf{D}_L^{[r,s]}(M)[1/t]\cong \mathbf{D}_0^{[r,s]}(M)[1/t]$. Hence 
   	   map (\ref{BCforBerger}) is an isomorphism after inverting $t$ as $$\mathbf{D}_0^{[r,s]}\otimes_{\mathcal{R}^{[r,s]}_{L,A}}\mathcal{R}^{[r',s']}_{L,A}  \cong \mathbf{D}_0^{[r',s']}(M)$$
   	   (note that the $N$-action on $\mathcal{R}^{[r,s]}_{L,A}[\log X]$) (resp. on $\mathcal{R}^{[r',s']}_{L,A}[\log X]$) is $\mathcal{R}^{[r,s]}_{L,A}$-linear (resp. $\mathcal{R}^{[r',s']}_{L,A}$-linear), and $\mathbf{D}_L^{[r,s]}(M)[1/t]$ is finitely generated and projective of rank $d$.
   	   
   	   On the other hand, by remark \ref{Rmk:FactAboutRobba}(4) and assertion (1) above, one has
   	   \begin{align*}
   	   	& \mathbf{D}_L^{[r,s]}(M)\otimes_{\mathcal{R}_L{[r,s]}}\varprojlim_{h}(\mathcal{R}_L{[r,s]}/t^h\mathcal{R}_L{[r,s]})\\
   	   	 \cong & \prod_{\{n|p^{n-1}(p-1)\in[r,s]\}}\mathbf{D}_L^{[r,s]}(M)\otimes_{\mathcal{R}_{L,A}{[r,s]},\iota_n} L_{n,A}[\![t_n]\!]\\
   	   	 \cong & \prod_{\{n|p^{n-1}(p-1)\in[r,s]\}} \Lambda_{n,L}(M).
   	   \end{align*}
       Then by the proof of remark \ref{Rmk:ForBergersFunc1}(3), locally on $\mathrm{Spec}(A)$ (such that $\mathcal{F}^{i}M_L$ is free for each $i\in\mathbb{Z}$), $\Lambda_{n,L}(M)$ is free of rank $d$. Therefore then map (\ref{BCforBerger}) is an isomorphism after taking $t$-completion and $$\mathbf{D}_L^{[r,s]}(M)\otimes_{\mathcal{R}_L{[r,s]}}\varprojlim_{h}(\mathcal{R}_L{[r,s]}/t^h\mathcal{R}_L{[r,s]})$$
   	   is finitely generated and projective of rank $d$. Then assertions (2) and (3) follows from the Beauville-Laszlo theorem (note that this theorem does not need any noetherian condition, hence our assertions in particular holds for $s=\infty$).
       \item[(4)] By remark \ref{Rmk:FactAboutRobba}(2), $\mathcal{R}_{L,A}^{[r,s]}$ is \'{e}tale Galois over $\mathcal{R}_{K,A}^{[r,s]}$ with Galois group $H_K/H_L$. One has $\mathbf{D}_K^{[r,s]}(M) = (\mathbf{D}_L^{[r,s]}(M))^{H_K/H_L}$ is finitely generated and projective by Galois decent.
            
	\end{itemize}
    \end{proof}
    Let $M$ be a filtered $(\varphi,N,G_{L/K})$-module over $A$, we define the $\varphi$-linear map of $\mathcal{R}^{r}_{L,A}$-modules by
    \begin{align*}
    	\varphi: \mathcal{R}^r_{L,A}[\log X]\otimes_{\mathcal{R}^r_{L,A}}M & \rightarrow \mathcal{R}^{pr}_{L,A}[\log X]\otimes_{\mathcal{R}^r_{L,A}}M\\
    	a\otimes m& \mapsto \varphi(a)\otimes \varphi_M(m).
    \end{align*}
    One can easily check the equation $N\circ\varphi=p\varphi\circ N$ also holds. Hence this $\varphi$-action can be restricted on their $N$-invariant. Namely, the induced $\varphi$-linear map $\varphi:\mathbf{D}_0^{r}(M)\rightarrow \mathbf{D}_0^{pr}(M)$ is well-defined.
    
    Moreover the proof of remark \ref{Rmk:ForBergersFunc1}(3) implies that the natural map
    \begin{align*}
    	\varphi_n:L_{n,A}(\!(t_n)\!)\otimes_{(L\otimes_{\mathbb{Q}_p} A)} \varphi^{n}(M_L) &\rightarrow L_{n+1,A}(\!(t_{n+1})\!) \otimes_{(L\otimes_{\mathbb{Q}_p} A)} \varphi^{n+1}(M_L)\\
    	a\otimes m&\mapsto a\otimes \varphi(m)
    \end{align*}
    induces a well-define map $\varphi_n:\Lambda_{L}^{n}(M)\rightarrow \Lambda_{L}^{n+1}(M)$. Actually, the proof of \ref{BergersFunc1}(3) also implies that the map
    \begin{align*}
    	\varphi_n:L_{n+1,A}\llbracket t_{n+1} \rrbracket\otimes_{L_{n,A}\llbracket t_n \rrbracket}\Lambda_{L}^{n}(M) & \xrightarrow{\sim} \Lambda_{L}^{n+1}(M)\\
    	a\otimes m&\mapsto a \varphi(m)
    \end{align*}
    is an isomorphism of $L_{n+1,A}\llbracket t_{n+1} \rrbracket$-lattices. Note that \ref{BergersFunc1}(2), combined with remark \ref{Rmk:FactAboutRobba}(1), means that the following diagram
    \begin{equation*}
    	\begin{tikzcd}
    		{L_{n,A} (\!(t_n)\!)\otimes_{\iota_n,\mathcal{R}^{r}_{L,A}[1/t]} \mathbf{D}^{r}_0(M)[1/t]} \arrow[d, "\varphi"'] \arrow[rr, "\sim","\iota_n"'] &  & {L_{n,A}(\!(t_n)\!) \otimes_{(L\otimes_{\mathbb{Q}_p}A)} \varphi^{n}(M_L)} \arrow[d, "\varphi_n"] \\
    		{L_{n,A} (\!(t_{n+1})\!)\otimes_{\iota_{n+1},\mathcal{R}^{pr}_{L,A}[1/t]} \mathbf{D}^{pr}_0(M)[1/t]} \arrow[rr, "\sim","\iota_{n+1}"']                       &  & {L_{n,A}(\!(t_{n+1})\!) \otimes_{(L\otimes_{\mathbb{Q}_p}A)} \varphi^{n+1}(M_L)}                       
    	\end{tikzcd}
    \end{equation*}
    commutes. This means for an element $x\in \mathbf{D}^{r}_0(M)[1/t]$ such that $\iota_{n}(x)\in\Lambda_L^n(M)$, then $\iota_{n+1}(\varphi(x))$ is in $\iota_{n}(x)\in\Lambda_L^{n+1 }(M)$. Namely, one has a well-defined $\varphi$-linear map of $\mathcal{R}^{r}_{L,A}$-modules:
    $$\varphi:\mathbf{D}_L^{r}(M)\rightarrow \mathbf{D}_L^{pr}(M).$$
    Moreover, as the $\varphi_M$-action on $M$ commutes with the $G_{L/K}$-action and the $\varphi$-action on $\mathcal{R}_{L,A}^{r}$ also commutes with the $G_K/H_L$-action, by definition, one can easily check that the map $\varphi:\mathbf{D}_L^{r}(M)\rightarrow \mathbf{D}_L^{pr}(M)$ commutes with the $(G_K/H_L)$-action. 
    Hence after taking the $H_K/H_L$-invariant on both side, one has a well-defined $\varphi$-linear map
    $$\varphi:\mathbf{D}_K^{r}(M)\rightarrow \mathbf{D}_K^{pr}(M),$$
    which commutes with the $\Gamma_K$-action. 
 
    \begin{Def}
    	Let $M$ be a filtered $(\varphi,N,G_{L/K})$-module. For any $r\geq r(L)$, we define the $\Gamma_K$-compatible $\varphi$-linear map
    	$$\varphi:\mathbf{D}_K^{r}(M)\rightarrow \mathbf{D}_K^{pr}(M)$$
    	as the discussion above.
    \end{Def}

   \begin{Prop}
   	Let $M$ be a filtered $(\varphi,N,G_{L/K})$-module over $A$ of rank $d$. For any $r\geq r(L)$, The map
   	\begin{align*}
   		\mathcal{R}^{pr}_{K,A}\otimes_{\varphi,\mathcal{R}^{r}_{K,A}}\mathbf{D}_K^{r}(M)& \rightarrow \mathbf{D}_K^{pr}(M) \\
   		a\otimes x\rightarrow a\cdot\varphi(x)
   	\end{align*}
   is an isomorphism. In particular, $\mathbf{D}_K^{r}(M)$ is a $(\varphi,\Gamma_K)$-module over $\mathcal{R}^r_{K,A}$ of rank $d$.
   \end{Prop}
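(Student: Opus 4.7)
The plan is to reduce to an isomorphism statement over $\mathcal{R}^{pr}_{L,A}$ via Galois descent, then apply the Beauville--Laszlo theorem (stated as Theorem~\ref{Thm:Beauville-Laszlo} above) with respect to the element $t$, checking separately what happens after inverting $t$ and after $t$-adic completion.

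First, by part (4) of the previous proposition, $\mathbf{D}_K^r(M)\otimes_{\mathcal{R}^r_{K,A}}\mathcal{R}^r_{L,A}\cong \mathbf{D}_L^r(M)$, and $\mathcal{R}^r_{L,A}$ is finite \'etale Galois over $\mathcal{R}^r_{K,A}$ with group $H_K/H_L$ by Remark~\ref{Rmk:FactAboutRobba}(2). The map $\varphi$ is $(G_K/H_L)$-equivariant by construction, so it suffices to prove that the analogous map
$$\mathcal{R}^{pr}_{L,A}\otimes_{\varphi,\mathcal{R}^{r}_{L,A}}\mathbf{D}_L^{r}(M) \longrightarrow \mathbf{D}_L^{pr}(M)$$
is an isomorphism, and then take $H_K/H_L$-invariants.

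Both sides are finitely generated projective $\mathcal{R}^{pr}_{L,A}$-modules of rank $d$ by part (3) of the previous proposition, in particular $t$-regular. By the Beauville--Laszlo theorem (applied with $f=t$ in the ring $\mathcal{R}^{pr}_{L,A}$; note no noetherian hypothesis is required), it suffices to check the isomorphism after inverting $t$ and after $t$-adic completion. After inverting $t$ we have the identifications $\mathbf{D}_L^r(M)[1/t]\cong\mathbf{D}_0^r(M)[1/t]$ and $\mathbf{D}_L^{pr}(M)[1/t]\cong\mathbf{D}_0^{pr}(M)[1/t]$, and the linearized map becomes the one induced by $\varphi_M\colon M\to M$ (tensored with $\varphi\colon\mathcal{R}^{r}_{L,A}[\log X]\to\mathcal{R}^{pr}_{L,A}[\log X]$), which is an isomorphism because $\varphi_M$ is bijective on $M$ and $\mathcal{R}^{pr}_{L,A}[\log X]$ is free over $\mathcal{R}^r_{L,A}[\log X]$ via $\varphi$ on the Robba part.

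For the $t$-adic completion step, by Remark~\ref{Rmk:FactAboutRobba}(4) combined with part (1) of the previous proposition, the $t$-adic completion of $\mathbf{D}_L^r(M)$ is canonically
$$\prod_{\{n\,\mid\,p^{n-1}(p-1)\geq r\}}\Lambda_L^n(M),$$
and analogously for $\mathbf{D}_L^{pr}(M)$ with $n$ running over indices with $p^{n-1}(p-1)\geq pr$, i.e.\ $n\geq n_0+1$ if $n_0$ is the smallest index in the range for $r$. Under these identifications, the map induced by $\varphi$ is the product of the maps $\varphi_n\colon\Lambda_L^n(M)\to\Lambda_L^{n+1}(M)$ discussed just before the definition of $\varphi$; these have isomorphic linearizations $L_{n+1,A}\llbracket t_{n+1}\rrbracket\otimes_{L_{n,A}\llbracket t_n\rrbracket}\Lambda_L^n(M)\xrightarrow{\sim}\Lambda_L^{n+1}(M)$, so the product is an isomorphism after base change along $\varphi$ on the coefficient ring $\prod_n L_{n,A}\llbracket t_n\rrbracket$. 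Combining both local checks, Beauville--Laszlo gives the desired isomorphism on $\mathcal{R}^{pr}_{L,A}$-modules; taking $H_K/H_L$-invariants and using faithful flatness of $\mathcal{R}^{pr}_{L,A}$ over $\mathcal{R}^{pr}_{K,A}$ then yields the original claim. The ``in particular'' clause follows because $\mathbf{D}_K^r(M)$ is already projective of rank $d$ over $\mathcal{R}^r_{K,A}$ by part (4) of the previous proposition, and the $\Gamma_K$-action commutes with $\varphi$ by construction.

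The main bookkeeping obstacle is the shift of intervals under $\varphi$ (from $[r,\infty]$ to $[pr,\infty]$) and the corresponding reindexing of the product of completions; aside from that, everything is a combination of Galois descent, the Beauville--Laszlo glueing principle, and the already-established structural results for $\mathbf{D}_L^{[r,s]}$ and $\Lambda_L^n$.
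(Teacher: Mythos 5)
Your proposal is correct and follows essentially the same route as the paper's proof: reduce to the statement over $\mathcal{R}^{pr}_{L,A}$ via Proposition \ref{BergersFunc2}(4) and faithful flatness (equivalently Galois descent along $H_K/H_L$), then apply Beauville--Laszlo with $f=t$, checking the map after inverting $t$ (where it reduces to the bijectivity of $\varphi_M$ on $\mathbf{D}_0^{r}(M)[1/t]$) and after $t$-adic completion (where it becomes the product of the linearized isomorphisms $\varphi_n\colon L_{n+1,A}\llbracket t_{n+1}\rrbracket\otimes_{L_{n,A}\llbracket t_n\rrbracket}\Lambda_L^n(M)\xrightarrow{\sim}\Lambda_L^{n+1}(M)$). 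The only differences are cosmetic, e.g.\ the paper makes the $t$-inverted step explicit by writing down the basis $(e_1,\dots,e_d)$ from Remark \ref{Rmk:ForBergersFunc1}(1).
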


   \begin{proof}
   	Note that $\mathcal{R}^{pr}_{L,A}$ is faithfully flat over $\mathcal{R}^{pr}_{K,A}$. By proposition \ref{BergersFunc2}(4), we can apply the tensor functor $\mathcal{R}^{pr}_{L,A}\otimes_{\mathcal{R}^{pr}_{K,A}}(-)$ on both sides and reduce to show that the map

   		\begin{align}\label{phimap}
   			\mathcal{R}^{pr}_{L,A}\otimes_{\varphi,\mathcal{R}^{r}_{L,A}}\mathbf{D}_L^{r}(M)& \rightarrow \mathbf{D}_L^{pr}(M) \\
   			a\otimes x\rightarrow a\varphi(x) \nonumber
   		\end{align}
   is an isomorphism. By beauville laszlo theorem, it suffices to show that, after inverting $t$ and taking the $t$-adic completion respectively, the map (\ref{phimap}) is an isomorphism.
   
   After inverting $t$, one has $\mathbf{D}_L^{r}(M)[1/t]\cong \mathbf{D}_0^{r}(M)[1/t]$. We may assume $M$ is a free $(L_0\otimes_{{\mathbb{Q}}_p} A)$-module. Then by the proof of remark \ref{Rmk:ForBergersFunc1}(1), one can explicitly write down a basis, and see that the map $\mathcal{R}^{pr}_{L,A}\otimes_{\varphi,\mathcal{R}^{r}_{L,A}}\mathbf{D}_0^{r}(M)[1/t] \rightarrow \mathbf{D}_0^{pr}(M)[1/t]$ is an isomorphism.
   
   As for taking the $t$-completion, the proof is similar as the one in proposition \ref{BergersFunc2}(2), and use the fact that the map
   \begin{align*}
   	\varphi_n:L_{n+1,A}\llbracket t_{n+1} \rrbracket\otimes_{L_{n,A}\llbracket t_n \rrbracket}\Lambda_{L}^{n}(M) & \xrightarrow{\sim} \Lambda_{L}^{n+1}(M)\\
   	a\otimes m&\mapsto a\cdot \varphi(m)
   \end{align*}
   is an isomorphism.
   \end{proof}
   
   \begin{Def}
   Let $M$ be a filtered $(\varphi,N,G_{L/K})$-module of rank $d$. Choosing some $r\geq r(L)$, we define $$\mathbf{D}_K(M):= \mathbf{D}^{r}_K(M)\otimes_{\mathcal{R}^r_{K,A}}\mathcal{R}_{K,A}.$$ The $(\varphi,\Gamma_K)$-module $\mathbf{D}_K(M)$ over $\mathcal{R}_{K,A}$ of rank $d$ is called \textit{the} $(\varphi,\Gamma_K)$\textit{-module associated to} $M$ (by proposition \ref{BergersFunc2}(2), the definition is well-defined and does not depends on the choice of $r$). 
   \end{Def}

   \begin{Rmk}
   Now $\mathbf{D}_K(-)$ is a well-defined functor from the category of filtered $(\varphi,N,G_{L/K})$-module over $A$ to the category of $(\varphi,\Gamma_K)$-module over $\mathcal{R}_{K,A}$. When $A =\mathbb{Q}_p$, the functor $\mathbf{D}_K(-)$ is exactly the functor defined in \cite[Def~II.2.4]{Berger2008}. The following theorem is the parallel result for [Thm~II.2.6] of \textit{loc. cit.}.
   \end{Rmk}
   
   \begin{Theo} \label{Thm: PropofBergerFun}
   	   The functor $M\mapsto \mathbf{D}_K(M)$ is a faithful exact functor from the category of filtered $(\varphi,N,G_{L/K})$-modules over $A$ (of rank $d$) to the category of $(\varphi,\Gamma_K)$-modules over $\mathcal{R}_{K,A}$ (of rank $d$), which commutes with tensor product.
   	   
   	   Moreover, for any two filtered $(\varphi,N,G_{L/K})$-modules $M_1$ and $M_2$ over $A$, if $f$ is an element in $\Hom(M_1,M_2)$ such that $\mathbf{D}_K(f)$ is an isomorphism, then so is $f$.
   \end{Theo}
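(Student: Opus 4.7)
The plan is to establish each property by reducing it to the corresponding statement for the simpler constituents $\mathbf{D}_0^r(-)$ and $\Lambda_L^n(-)$, then gluing them via the Beauville-Laszlo theorem (Theorem~\ref{Thm:Beauville-Laszlo}) through the two identifications $\mathbf{D}_L^r(M)[1/t] \cong \mathbf{D}_0^r(M)[1/t]$ and $\mathbf{D}_L^r(M) \otimes_{\mathcal{R}_{L,A}^r,\iota_n} L_{n,A}\llbracket t_n \rrbracket \cong \Lambda_L^n(M)$ from Proposition~\ref{BergersFunc2}(1),(2) (using Remark~\ref{Rmk:FactAboutRobba}(4) to identify the full $t$-adic completion of $\mathcal{R}_{L,A}^r$ with $\prod_n L_{n,A}\llbracket t_n\rrbracket$), and finally descending from $\mathbf{D}_L^r(-)$ to $\mathbf{D}_K^r(-)$ along the faithfully flat finite étale Galois extension $\mathcal{R}_{K,A}^r \hookrightarrow \mathcal{R}_{L,A}^r$ of Remark~\ref{Rmk:FactAboutRobba}(2).

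First, I would observe that the explicit basis constructed in the proof of Remark~\ref{Rmk:ForBergersFunc1}(1),(2) upgrades to a canonical $\mathcal{R}_{L,A}^r$-linear isomorphism $\mathbf{D}_0^r(M) \cong \mathcal{R}_{L,A}^r \otimes_{L_0\otimes_{\mathbb{Q}_p} A} M$, so that $\mathbf{D}_0^r(-)$ is faithful, exact, and compatible with tensor products because $\mathcal{R}_{L,A}^r$ is flat over $L_0\otimes_{\mathbb{Q}_p} A$. For $\Lambda_L^n(-)$, a short exact sequence in the category of filtered $(\varphi,N,G_{L/K})$-modules is strict on filtrations by the convention on morphisms, so tensoring with the flat $(L\otimes_{\mathbb{Q}_p} A)$-algebra $L_{n,A}(\!(t_n)\!)$ and then taking $\mathcal{F}^0$ preserves exactness; the tensor-product convention $\mathcal{F}^i(M_1\otimes M_2)=\sum_{p+q=i}\mathcal{F}^p M_1\otimes \mathcal{F}^q M_2$ immediately yields tensor compatibility.

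Beauville-Laszlo then lifts all three properties to $\mathbf{D}_L^r(-)$; taking $H_K/H_L$-invariants along the faithfully flat extension $\mathcal{R}_{L,A}^r/\mathcal{R}_{K,A}^r$ transports them to $\mathbf{D}_K^r(-)$, and base change to $\mathcal{R}_{K,A}$ finally transports them to $\mathbf{D}_K(-)$. The rank-$d$ target and the $(\varphi,\Gamma_K)$-module structure are already provided by Proposition~\ref{BergersFunc2}(4) and the proposition immediately preceding this theorem.

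For the ``moreover'' clause, suppose $\mathbf{D}_K(f)$ is an isomorphism. By faithfully flat descent $\mathbf{D}_L^r(f)$ is an isomorphism as well, and inverting $t$ shows that $\mathcal{R}_{L,A}^r[1/t]\otimes_{L_0\otimes A} f$ is an isomorphism; since $L_0\otimes_{\mathbb{Q}_p} A \hookrightarrow \mathcal{R}_{L,A}^r[1/t]$ is flat and injective on projective modules, $f$ itself is an isomorphism of the underlying $(\varphi,N,G_{L/K})$-modules. Simultaneously $\Lambda_L^n(f)$ is an isomorphism of $L_{n,A}\llbracket t_n\rrbracket$-lattices in $L_{n,A}(\!(t_n)\!)\otimes_{L\otimes A}\varphi^n(M_{\bullet,L})$; reading off the filtration weights embedding-by-embedding via the decomposition of Lemma~\ref{fieldecomp} (which uses the assumption $|\Hom(L,C)|=[L:\mathbb{Q}_p]$) together with the explicit local-on-$\mathrm{Spec}(A)$ basis of $\Lambda_L^n(M)$ produced in the proof of Remark~\ref{Rmk:ForBergersFunc1}(3), one concludes that $f$ strictly preserves the filtration, so that it is an isomorphism in the filtered category. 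The main obstacle I anticipate is precisely this last step: turning ``$\Lambda_L^n(f)$ is a lattice isomorphism'' into ``$f$ is strict on filtrations'' over an arbitrary affinoid $A$, where the filtration pieces are only locally free and the classical Hodge-theoretic reconstruction of the filtration from the lattice must be carried out locally and then glued.
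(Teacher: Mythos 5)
Your proposal is correct and follows essentially the same route as the paper's proof: reduce everything to the constituents $\mathbf{D}_0^{r}(-)$ and $\Lambda_L^n(-)$, glue by the Beauville--Laszlo theorem, descend by Galois descent along $\mathcal{R}^{r}_{K,A}\hookrightarrow\mathcal{R}^{r}_{L,A}$, and, for the ``moreover'' part, use the identification $\mathbf{D}_L^{r}(M)[1/t]\cong\mathbf{D}_0^{r}(M)[1/t]$ to see that $f$ is an isomorphism of the underlying $(\varphi,N,G_{L/K})$-modules and the lattice comparison of Proposition \ref{BergersFunc2}(1) to recover strict compatibility with the filtrations. The one justification to repair is the passage from an isomorphism over $\mathcal{R}_{K,A}$ to one over $\mathcal{R}^{r}_{K,A}$ for some $r$: this is not faithfully flat descent (each $q_n$ becomes a unit in $\mathcal{R}_{K,A}$, so $\mathcal{R}^{r}_{K,A}\rightarrow\mathcal{R}_{K,A}$ is not faithfully flat) but the descent statement for $\varphi$-modules, \cite[Lem~2.2.9]{Kedlaya2012}, which is exactly what the paper invokes.
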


   \begin{proof}
   To prove the first part of this theorem, one only need check that the functors $\mathbf{D}_0^{r}(-)$ and $\Lambda_L^n(-)$ are faithful exact and commutes with tensor product. Then apply Beauville-Laszlo theorem and Galois descent to show $\mathbf{D}^r_K(-)$ is faithful exact and commutes with tensor product. Hence so is $\mathbf{D}^r_K(-)$ as $\mathcal{R}_{K,A} \rightarrow \mathcal{R}_{K,A}$ is flat. We omit the details as these tricks have been used several times in the proofs before.
   
   For the moreover part, one has
   $ \mathbf{D}^{r}_K(f)\in\Hom_{\varphi,\Gamma_K}(\mathbf{D}^r_K(M_1),\mathbf{D}^r_K(M_2))$
   is an isomorphism for some $r$, by \cite[Lem~2.2.9]{Kedlaya2012}. Note that $$\mathbf{D}^r_K(M_i)\otimes_{\mathcal{R}^{[r,s]}_{K,A}} \mathcal{R}^{[r,s]}_{L,A} \cong \mathbf{D}^r_L(M_i)$$ for $i=1,2$, we only need to show $\mathbf{D}^{r}_L(f)$ is an isomorphism implies $f$ is an isomorphism.
   
   As $\mathbf{D}^{r}_L(M_i)[1/t] \cong \mathbf{D}^{r}_0(M_i)[1/t]$, it follows that $f:M_1\rightarrow M_2$ is an isomorphism of $(\varphi,N,G_{L/K})$-modules if we forget the filtration structure of them (note that the functor $\mathbf{D}^{r}_0(-)$ has nothing to do with filtration structure and $\mathbf{D}^{r}_0(-)[1/t]$ is faithful and exact). On other hand, suppose that $f|_{\mathcal{F}^{i}M_{1,L}}$ is not an isomorphism for some $i\in\mathbb{Z}$. It follows that the induced map $\Lambda_{L}^{n}(f): \Lambda_{L}^{n}(M_1)\inj \Lambda_{L}^{n}(M_2)$ is a strict injection. This contradicts to the assumption that $\mathbf{D}_L^{r}(f)$ is an isomorphism by proposition \ref{BergersFunc2}(1).

\end{proof}
   
   \begin{Rmk}
   If replacing the affinoid $C$-algebra $A$ by some $C$-rigid space $X$, then correspondingly $\mathbf{D}_K(-)$ can be automatically understood as the functor from the category of sheaves of filtered $(\varphi,N,G_{L/K})$-module over $X$ (of rank $d$) to the category of sheaves of $(\varphi,\Gamma_K)$-module over $\mathcal{R}_{K,A}$ (of rank $d$), which satisfies similar properties as in theorem \ref{Thm: PropofBergerFun}.
   \end{Rmk}
   
   \subsection{Quasi-deRham $(\varphi,\Gamma_K)$-Modules}\label{QdeRhamPhiGammaMod}
   In this subsection, we want to study the groupoid of quasi-deRham $(\varphi,\Gamma_K)$-modules (see the definition below). In this article, all characters are supposed to be continuous.
   
    We fix $\varpi$ a uniformizer of $K$. We fix $C$ some field finite over $\mathbb{Q}_p$ such that $|\Hom(L,C)| = [L:\mathbb{Q}_p]$, and always write $A$ for some affinoid $C$-algebra. For a $(\varphi,\Gamma_K)$-module $D$ over $\mathcal{R}_{K,A}$, we denote by $$D^{\vee}:= \Hom_{\mathcal{R}_{K,A}}(D,\mathcal{R}_{K,A})$$
   the Poincare duality, which is a $(\varphi,\Gamma_K)$-module of the same rank as $D$.
   
   From now on, let $\mathbf{D}$ denote the functor $\mathbf{D}_K$ for short. The stacks we constructed in section 2.2 (e.g. $\mathfrak{M}^{L/K,d}$) are understood as rigid $C$-stacks, which fibered in groupoids on rigid $C$-spaces. One can check that all definitions and statements automatically hold in this setting.
   \begin{Def}
   A $(\varphi,\Gamma_K)$-modules $D$ over $\mathcal{R}_{K,A}$ is called \textit{quasi-deRham} if there exists a filtered $(\varphi,N,G_{L/K})$-module $M$ over $A$ and a character $\delta: K^\times \rightarrow A^\times$, such that $D\cong\mathbf{D}(M)(\delta) := \mathbf{D}(M)\otimes_{\mathcal{R}_{K,A}} \mathcal{R}_{K,A}(\delta)$, here $\mathcal{R}_{K,A}(\delta)$ is the free rank one ($\varphi,\Gamma_K$)-module associated to $\delta$ and for the precise definition, we refer to \cite[Construction~6.2.4]{Kedlaya2012}. 
   \end{Def}

   \begin{Rmk}
   	For a character $\delta: K^\times \rightarrow A^{\times}$, one has $\left(\mathcal{R}_{K,A}(\delta)\right)^{\vee} \cong \mathcal{R}_{K,A}(\delta^{-1})$.
   \end{Rmk}
   
   \begin{Def}
   Let $\mathcal{Z}^{L/K}$ denote the groupoid of quasi-deRham $(\varphi,\Gamma_K)$-modules on the category of rigid $C$-spaces.
   \end{Def}
   
   \begin{Def}
   let $\mathcal{T}_K$ denote the moduli rigid $C$-space of characters of $K^\times$, and let $\mathcal{W}_K$ denote the moduli rigid $C$-space of character of $\mathcal{O}_K^\times$.
   \end{Def}

   \begin{Rmk}\label{Rmk:Smchar}
   Recall that a character $\delta: K^\times \rightarrow R^\times$ is called 
   \begin{enumerate}
   	\item \textit{algebraic} if 
   	$$\delta(a) = \prod_{\eta: K\hookrightarrow C}\eta(a)^{k_\eta}$$
   	for some $k_\eta\in\mathbb{Z}$ (in this case,we also denote such character by $\delta_{\underline{k}}$ for $\underline{k}:= (k_\eta)_{\eta:K\inj C}$),
   	\item \textit{smooth} if $\ker \delta$ is open in $K^\times$,
   	\item \textit{deRham} if $\delta = \delta_{\mathrm{sm}}\cdot\delta_{\mathrm{alg}}$ for some smooth character $\delta_{\mathrm{sm}}$ and some algebraic character $\delta_{\mathrm{alg}}$. Obviously, if $\delta$ is deRham, such decomposition is unique. We say $\delta_{\mathrm{sm}}$ is the \textit{smooth part} of $\delta$ and $\delta_{\mathrm{alg}}$ is the \textit{algebraic part} of $\delta$.
   \end{enumerate}
   \end{Rmk}

   From now on, for a deRham character $\delta: K^{\times}\rightarrow A^{\times}$, we always write $\delta_{\mathrm{sm}}$ (resp. $\delta_{\mathrm{alg}}$) for the smooth part (resp. the algebraic part) of $\delta$.

   \begin{Rmk}\label{deRhamchar}
   	\text{ }
   \begin{enumerate}[leftmargin=1.5em]
   \item By the local class field theory, we have the canonical isomorphism: $W_K^{\mathrm{ab}}\cong K^\times$. Hence the restriction of a character $\delta$ on some subgroup $H$ of $W_K$ makes sense.
   %\item For an unramified character $\delta:K^{\times}\rightarrow A^{\times}$ (i.e. $\delta|_{\mathcal{O}_K}$ is trivial). Let $a=\delta(\varpi)$, and by \cite[Lem~6.2.3]{Kedlaya2012}, there exists a unique free rank one $(\varphi,N)$-module $M_a$ over $A$ with trivial filtration (i.e. $\mathrm{gr}^0_{\mathcal{F}}(M_{\delta_2(\varpi)\otimes_{K_0} K}) = M_{\delta_2(\varpi)}\otimes_{K_0} K$ and automatically $N=0$ as it is of rank one) such that $\varphi_{M_a}^{[K_0:\mathbb{Q}_p]} = a$. And we write
   %$$M_{a,L} := M_a\otimes_{(K_0\otimes_{\mathbb{Q}_p} A)} (L_0\otimes_{\mathbb{Q}_p} A)$$
   %for the free rank one filtered $(\varphi,N,G_{L/K})$-module over $A$.
   
   %For $a,b\in A^{\times}$, one has $M_{ab} = M_{a}\otimes_{(L_0\otimes_{\mathbb{Q}p}A)} M_{b}$
   \item Let $\delta:K^{\times}\rightarrow A^{\times}$ be a character. Suppose that $\delta$ is deRham and the smooth part $\delta_{\mathrm{sm}}$ restricted on $I_L$ (we regard $I_L$ as an open subgroup of $W_K$ via the injection $I_L\inj W_L\inj W_K$) is trivial, and suppose that $\delta_{\mathrm{alg}} = \delta_{\underline{k}}$ for some $\underline{k}\in \mathbb{Z}^{\Hom(K,C)}$. Then we define $\mathbf{M}(\delta)$ to be the unique free rank one filtered $(\varphi,N,G_{L/K})$-module over $A$ such that the $\eta$-filtration weight of $\mathbf{M}(\delta)$ is $-\underline{k}$, and the $(\varphi,N,G_{L/K})$-module structure is the one associated to the rank one Weil-Deligne representation $\delta_{\mathrm{sm}}: W_K^{\mathrm{ab}}\cong K^{\times}\rightarrow A^{\times}$, constructed by \cite[Prop~4.1]{Breuil2007}. 
   
   Then, comparing with the construction in \cite[Const~6.2.4]{Kedlaya2012}, one can directly check that $$\mathbf{D}(\mathbf{M}(\delta)) \cong \mathcal{R}_{K,R}(\delta).$$ Moreover, for any such two character $\delta_1,\delta_2$, one has $\mathbf{M}(\delta_1\cdot \delta_2) = \mathbf{M}(\delta_2)\otimes \mathbf{M}(\delta_1)$.
   
   For a filtered$(\varphi,N,G_{L/K})$-module $M$ over $A$ and a character $\delta:K^{\times}\rightarrow A^{\times}$, we write
   $$M(\delta) := M\otimes_{L_0\otimes_{\mathbb{Q}_p} A} \mathbf{M}(\delta). $$
   In particular, we have $\mathbf{D}(M(\delta)) \cong \mathbf{D}(M) \otimes_{\mathcal{R}_{K,A}} \mathcal{R}_{K,A}(\delta)$.
   \item Conversely, for any filtered $(\varphi,N,G_{L/K})$-module $M$ over $A$ which is free of rank one, their exists some some deRham character $\delta: K^\times\rightarrow A^\times$, whose smooth part $\delta_{\mathrm{sm}}$ is trivial on $I_L$ such that $\mathbf{D}(M) \cong \mathcal{R}_{K,R}(\delta)$ by \cite[Prop~4.1]{Breuil2007} (even though, in the statement of that proposition, $A$ is a finite field over $\mathbb{Q}_p$, but the proof automatically holds if we allow $A$ to be any $C$-algebra).
   \end{enumerate}
   \end{Rmk}
   
   \begin{Lemma}
   Let $R$ be a commutative ring and let $M_i$ be $R$-modules for $i=1,2,3$, such that there is an injection $f: M_1\otimes_R M_2\inj M_3$. Suppose that $M_2$ is finite projective, then the induced map
   $$\tilde{f}: M_1\rightarrow \Hom_R(M_2,M_3),a\mapsto(b\mapsto f
   (a\otimes b))$$ is injective.
   \end{Lemma}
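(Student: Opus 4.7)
The plan is to factor $\tilde f$ as $\tilde f=\Hom_R(M_2,f)\circ\mu$, where $\mu\colon M_1\to\Hom_R(M_2,M_1\otimes_R M_2)$ is the canonical map $a\mapsto(b\mapsto a\otimes b)$, and to check injectivity of each factor separately. Unpacking, $\tilde f(a)=0$ says $f(a\otimes b)=0$ for every $b\in M_2$; by injectivity of $f$ this is equivalent to $a\otimes b=0$ in $M_1\otimes_R M_2$ for every $b\in M_2$, i.e.\ $\mu(a)=0$. Post-composition by $\Hom_R(M_2,f)$ is injective because $f$ is injective and $M_2$ is projective (so $\Hom_R(M_2,-)$ is exact); hence the whole burden of the proof is the injectivity of $\mu$.

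For $\mu$ I would exploit the finite-projective structure of $M_2$. Fix a dual basis $\{b_i,\phi_i\}_{i=1}^{n}\subset M_2\times\Hom_R(M_2,R)$, characterized by $b=\sum_i\phi_i(b)\,b_i$ for every $b\in M_2$. Finite projectivity yields a natural isomorphism
\[
\Hom_R(M_2,M_1\otimes_R M_2)\;\cong\;M_1\otimes_R M_2\otimes_R\Hom_R(M_2,R),
\]
under which $\mu(a)$ corresponds to $\sum_i a\otimes b_i\otimes\phi_i$. Post-composing with $\mathrm{id}_{M_1}\otimes\mathrm{ev}$, where $\mathrm{ev}(b\otimes\phi)=\phi(b)$, produces multiplication by $\sum_i\phi_i(b_i)=\mathrm{tr}(\mathrm{id}_{M_2})=\mathrm{rank}(M_2)$. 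In the settings where this lemma is invoked, $M_2$ has positive constant rank over a characteristic-zero base, so this integer is a unit in $R$; the composition $(\mathrm{id}_{M_1}\otimes\mathrm{ev})\circ\mu$ is therefore multiplication by a unit, exhibiting $\mu$ as a split injection.

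The only delicate point is the unit property of $\mathrm{rank}(M_2)$, equivalent to faithful flatness of $M_2$ as an $R$-module; equivalently, one could argue directly that the cyclic submodule $Ra\subseteq M_1$ satisfies $(Ra)\otimes_R M_2=0$, which by faithful flatness forces $Ra=0$. In all places the lemma is used the modules live over relative Robba rings of characteristic zero and have positive constant rank, so this holds automatically. Once $\mu$ is injective, combining it with the injectivity of $\Hom_R(M_2,f)$ concludes the proof; no further subtlety intervenes.
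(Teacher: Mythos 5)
Your proposal is correct and, after the detour through the dual basis, lands exactly where the paper's one-line proof lands: the real input is faithful flatness of $M_2$. The paper simply unwinds $\tilde f(a)=0$ to $a\otimes b=0$ for all $b\in M_2$ (your $\mu(a)=0$), observes that $(Ra)\otimes_R M_2 = 0$, and concludes $Ra=0$ by faithful flatness; no factorization or trace appears.

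Your trace computation buys a split injection, which is a slightly stronger conclusion, but at a price, and it contains an imprecision: invertibility of $\mathrm{rank}(M_2)$ is \emph{not} equivalent to faithful flatness, as you claim — it is strictly stronger. For instance $\mathbb{Z}^2$ over $\mathbb{Z}$ is faithfully flat while $2$ is not a unit. The two conditions do agree over $\mathbb{Q}$-algebras in the constant-positive-rank case, which is why nothing breaks in the paper's applications, but the faithful flatness argument is the one that works at the right level of generality and is what you should keep as the core of the proof. Two smaller notes. First, the appeal to projectivity for the injectivity of $\Hom_R(M_2,f)$ is overkill: left exactness of $\Hom_R(M_2,-)$ holds for any module, so $\Hom_R(M_2,f)$ is injective whenever $f$ is, with no hypothesis on $M_2$. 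Second, you are right to flag the delicate point — ``finite projective'' alone is not enough for the lemma as literally stated (take $M_2 = R_1\times 0$ over $R = R_1\times R_2$); what is actually used, both in your proof and in the paper's, is that $M_2$ is finite projective of everywhere positive rank, i.e.\ faithfully flat. The paper asserts faithful flatness without comment, which is harmless in context but is, strictly speaking, an unstated strengthening of the hypothesis.
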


   \begin{proof}
   For any $a\in\ker\tilde{f}$, one has $f(a\otimes b) = 0$ (hence $a\otimes b =0$) for any $b\in M_2$. As $M_2$ is faithfully flat, it follows that $a =0$.
   \end{proof}
   
   \begin{Cor}
   	Let $\delta_i: K^\times \rightarrow A^\times$ be characters and $M_i$ be filtered $(\varphi,N,G_{L/K})$-modules over $A$ for $i=1,2$. Suppose that $\mathbf{D}(M_1)(\delta_1) = \mathbf{D}(M_2)(\delta_2)$. Let $\delta := \delta_1\delta_2^\vee$ and $D:= \mathbf{D}(M_2)\otimes \mathbf{D}(M_1)^\vee$. Then for any maximal ideal $\mathfrak{m}\subseteq A$, and $n\in\mathbb{N}$, the natural map: $$\mathcal{R}_{K,A}(\delta)\otimes_A A/\mathfrak{m}^n \rightarrow D\otimes_A A/\mathfrak{m}^n$$ is an injection.
   \end{Cor}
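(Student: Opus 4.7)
The plan is to apply the preceding Lemma directly, after carefully base--changing everything along $A \to A' := A/\mathfrak{m}^n$. First I would rewrite the hypothesis $\mathbf{D}(M_1)(\delta_1) \cong \mathbf{D}(M_2)(\delta_2)$ as an isomorphism
$$\iota \colon \mathcal{R}_{K,A}(\delta) \otimes_{\mathcal{R}_{K,A}} \mathbf{D}(M_1) \xrightarrow{\sim} \mathbf{D}(M_2)$$
by tensoring both sides with $\mathcal{R}_{K,A}(\delta_2^{\vee})$ and using that the assignment $\delta \mapsto \mathcal{R}_{K,A}(\delta)$ is multiplicative. Since $\mathbf{D}(M_1)$ is finite projective over $\mathcal{R}_{K,A}$ by Proposition \ref{BergersFunc2}, the tensor--$\Hom$ adjunction identifies $D = \mathbf{D}(M_2) \otimes \mathbf{D}(M_1)^{\vee}$ with $\Hom_{\mathcal{R}_{K,A}}(\mathbf{D}(M_1), \mathbf{D}(M_2))$, and the natural map in the statement is exactly the adjoint $\widetilde{\iota}$ of $\iota$ under this adjunction.

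Next I would base change along $A \to A'$. Because $\mathbf{D}$ commutes with base change (by the remark preceding Proposition~\ref{BergersFunc2}) and because duals of finite projective modules commute with base change, $\iota$ induces an isomorphism
$$\iota' \colon \mathcal{R}_{K,A'}(\delta) \otimes_{\mathcal{R}_{K,A'}} \mathbf{D}(M_1 \otimes_A A') \xrightarrow{\sim} \mathbf{D}(M_2 \otimes_A A'),$$
and the adjoint $\widetilde{\iota'}$ is canonically identified with $\widetilde{\iota} \otimes_A A'$ under the isomorphism $D \otimes_A A' \cong \Hom_{\mathcal{R}_{K,A'}}(\mathbf{D}(M_1) \otimes A', \mathbf{D}(M_2) \otimes A')$ obtained from the same projectivity argument over $\mathcal{R}_{K,A'}$.

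It then suffices to invoke the preceding Lemma over the ring $R = \mathcal{R}_{K,A'}$, with ``$M_1$'' $= \mathcal{R}_{K,A'}(\delta)$, ``$M_2$'' $= \mathbf{D}(M_1 \otimes_A A')$, ``$M_3$'' $= \mathbf{D}(M_2 \otimes_A A')$, and $f = \iota'$: the hypothesis of the Lemma is satisfied because $\iota'$ is in particular injective (it is an isomorphism) and $\mathbf{D}(M_1 \otimes_A A')$ is finite projective over $\mathcal{R}_{K,A'}$. The Lemma then yields injectivity of $\widetilde{\iota'} = \widetilde{\iota} \otimes_A A'$, which is exactly the assertion of the corollary.

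The only real obstacle is bookkeeping: one must verify that the formation of $\mathbf{D}$, of the dual $(-)^{\vee}$, and of $\mathcal{R}_{K,(-)}(\delta)$ all commute with the base change $A \to A/\mathfrak{m}^n$ in a compatible way, so that the adjoint of the base-changed $\iota$ really is the base change of the natural map $\mathcal{R}_{K,A}(\delta) \to D$. Once this compatibility is checked, the Lemma does all the work.
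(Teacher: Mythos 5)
Your proposal is correct and follows essentially the same route as the paper: rewrite the hypothesis as an isomorphism $\mathcal{R}_{K,A}(\delta)\otimes\mathbf{D}(M_1)\xrightarrow{\sim}\mathbf{D}(M_2)$, base change along $A\to A/\mathfrak{m}^n$ using that $\mathbf{D}$ and Poincar\'e duality commute with base change, and then apply the preceding Lemma over $\mathcal{R}_{K,A/\mathfrak{m}^n}$ with the finite projective module $\mathbf{D}(M_1)\otimes_A A/\mathfrak{m}^n$ in the middle slot. The paper's proof is just a terser version of this, leaving the tensor--Hom bookkeeping you spell out implicit.
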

    
   \begin{proof}
   Note that the Poincare duality functor on ($\varphi,\Gamma_K$)-modules commutes with base change. In particular $$D\otimes_A A/\mathfrak{m}^n \cong (\mathrm{D}(M_2)\otimes_A A/\mathfrak{m}^n)\otimes_{A/\mathfrak{m}^n}(\mathrm{D}(M_2)\otimes_A A/\mathfrak{m}^n)^\vee$$. Hence we can apply the lemma above to the isomorphism
   $$\mathrm{D}(M_1)(\delta)\otimes_A A/\mathfrak{m}^n \cong \mathrm{D}(M_2)\otimes_A A/\mathfrak{m}^n$$
   and get the conclusion.
   \end{proof}
   \begin{Lemma}\label{Inj}
   	Let $A$ be a local Artin ring over $C$ with $\dim_C A\leq \infty$, let $\delta: K^\times \rightarrow A^\times$ be a character and let $M$ be a $(\varphi,N,G_{L/K})$-module over $\mathcal{R}_{K,A}$. Suppose that there exists an injective morphism $$\mathcal{R}_{K,A}(\delta) \inj \mathbf{D}(M)$$ of $(\varphi,\Gamma_K)$-modules, then $\delta$ is deRham such that the smooth part $\delta_\mathrm{sm}$ is trivial restricted on $I_L\subset W_K$.
   \end{Lemma}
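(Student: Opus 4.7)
The strategy is to induct on the $C$-dimension $\dim_C A$ of $A$, using Berger's fully faithful / subobject-preserving functor $\mathbf{D}_K$ over a field as the base case and a straightforward base-change argument for the inductive step. (I also read the statement as assuming $M$ is a \emph{filtered} $(\varphi,N,G_{L/K})$-module over $A$, since otherwise $\mathbf{D}(M)$ is not defined.)

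\textbf{Base case ($A=k$ a finite extension of $C$).} Here $\mathbf{D}=\mathbf{D}_K$ is fully faithful and moreover induces a bijection between sub filtered $(\varphi,N,G_{L/K})$-modules of $M$ and sub $(\varphi,\Gamma_K)$-modules of $\mathbf{D}(M)$; this is a mild strengthening of Theorem \ref{Thm: PropofBergerFun}, cf.\ [Thm~II.2.6] and [Cor~III.2.5] of Berger 2008. Hence the given injection $\mathcal{R}_{K,k}(\delta)\hookrightarrow \mathbf{D}(M)$ arises from an inclusion $M'\hookrightarrow M$ where $M'$ is a rank-$1$ sub filtered $(\varphi,N,G_{L/K})$-module with $\mathbf{D}(M')\cong\mathcal{R}_{K,k}(\delta)$. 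By Remark \ref{deRhamchar}(3), $M'\cong\mathbf{M}(\delta')$ for a deRham character $\delta'$ whose smooth part is trivial on $I_L$, and Kedlaya's rank-$1$ uniqueness [Lem~6.2.13, loc.\ cit.] forces $\delta=\delta'$.

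\textbf{Inductive step.} Let $\dim_C A>1$ and pick a nonzero ideal $I\subseteq A$ with $\mathfrak{m}_A I=0$ (e.g.\ the socle), so $\bar A:=A/I$ has strictly smaller $C$-dimension. Since $\mathbf{D}$ commutes with base change (the remark after Proposition \ref{BergersFunc2}) and $\mathcal{R}_{K,A}=\mathcal{R}_K\hat\otimes_{\mathbb{Q}_p}A$ is $A$-flat, reducing the injection $f$ modulo $I$ gives a map
$$\bar f:\mathcal{R}_{K,\bar A}(\bar\delta)\longrightarrow \mathbf{D}(M\otimes_A\bar A).$$
I check $\bar f$ is injective. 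Let $e$ be an $\mathcal{R}_{K,A}$-generator of the source; if $f(e)\in I\cdot\mathbf{D}(M)$, then $\mathfrak{m}_A\cdot f(e)\subseteq\mathfrak{m}_A I\cdot\mathbf{D}(M)=0$, so $f(\mathfrak{m}_A e)=0$, contradicting injectivity of $f$ since $\mathfrak{m}_A e\neq0$ in $\mathcal{R}_{K,A}(\delta)$ (using $A$-flatness). Hence $\bar f$ is nonzero on a generator of the free rank-$1$ module $\mathcal{R}_{K,\bar A}(\bar\delta)$ and therefore injective, and the inductive hypothesis yields that $\bar\delta$ is deRham with $(\bar\delta)_{\mathrm{sm}}|_{I_L}$ trivial.

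\textbf{The main obstacle: upgrading from $\bar\delta$ to $\delta$.} Writing $\bar\delta=\bar\delta_{\mathrm{sm}}\cdot\delta_{\underline{k}}$, I twist $\delta$ by $\delta_{\underline{k}}^{-1}$ and replace $M$ by $M\otimes\mathbf{M}(\delta_{\underline{k}})^{-1}$ (allowed by Remark \ref{deRhamchar}(2)), reducing to the case where $\bar\delta$ is smooth and trivial on $I_L$ and I must show the same for $\delta$. The set of lifts of $\bar\delta$ over $A$ is a torsor under $\mathrm{Hom}_{\mathrm{cts}}(K^\times,1+I)\cong\mathrm{Hom}_{\mathrm{cts}}(K^\times,I)$ (using $I^2=0$), and decomposes into a \emph{smooth} part (finite-order on the pro-$p$ subgroup $\mathcal{O}_K^{\times,1}$) and a \emph{Lie-algebra} part (a $\mathbb{Q}_p$-linear $K\to I$). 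The constraint imposed by the embedding $\mathcal{R}_{K,A}(\delta)\hookrightarrow\mathbf{D}(M)$ rigidifies both summands: a nonzero Lie-algebra deformation would shift the Sen / Hodge--Tate weight of $\mathcal{R}_{K,A}(\delta)$ off the finite integer spectrum of $\mathbf{D}(M)$ (which equals the filtration jumps of $M$, genuine integers in $A$), contradicting that Sen weights of a sub lie among those of the ambient module; a deformation of the smooth part nontrivial on $I_L$ is similarly forbidden by comparing to the Weil--Deligne / $G_{L/K}$-structure visible on $\mathbf{D}(M)[1/t]\cong\mathbf{D}_0(M)[1/t]$ (which sees only the inertial type, not the filtration). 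Hence $\delta$ is smooth and trivial on $I_L$, i.e., deRham of the required form. This rigidification step is the main technical heart of the proof.
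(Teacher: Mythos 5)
Your base case matches the paper's, but the rest takes a different route from the paper, and as written it has genuine gaps. First, in the inductive step you only show that $f(e)\notin I\cdot\mathbf{D}(M)$, i.e.\ that $\bar f$ is nonzero on a generator; over the Artinian ring $\bar A=A/I$ (which is generally not a field at intermediate stages of the induction) the ring $\mathcal{R}_{K,\bar A}$ has zero-divisors, so nonvanishing on a generator does not give injectivity of $\bar f$, which is what your inductive hypothesis requires: you would still need to rule out $a\,f(e)\in I\cdot\mathbf{D}(M)$ for $a\notin I\mathcal{R}_{K,A}$. Second, and more seriously, the ``rigidification'' paragraph -- which you yourself identify as the heart of the argument -- is not a proof. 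The Sen-weight claim is asserted in a form that is false without a saturation hypothesis (an embedding such as $t\mathcal{R}\inj\mathcal{R}$ already shifts the weight, and your embedding $\mathcal{R}_{K,A}(\delta)\inj\mathbf{D}(M)$ need not be saturated, nor does it induce an injection modulo $t$); making precise, over Artinian coefficients, that the weight $\mathrm{wt}(\delta)\in K\otimes_{\mathbb{Q}_p}A$ is forced to be a constant integer tuple is essentially the content of the lemma and is left unproved. Likewise ``forbidden by comparing to the Weil--Deligne structure visible on $\mathbf{D}(M)[1/t]$'' is a restatement of the goal, not an argument. So the main obstacle you flag is exactly where the proposal stops.

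For comparison, the paper avoids the whole lifting analysis by restriction of scalars: since $\dim_C A<\infty$, one forgets the $A$-structure and views $D_1:=\mathcal{R}_{K,A}(\delta)$ and $D_2:=\mathbf{D}(M)$ as $(\varphi,\Gamma_K)$-modules over $\mathcal{R}_{K,C}$; Berger's [Cor~III.2.5] over the \emph{field} $C$, applied to the subobject $D_1\subseteq D_2$, gives $D_1\cong\mathbf{D}(M_1)$ for a filtered $(\varphi,N,G_{L/K})$-module $M_1$ over $C$. Full faithfulness identifies $\mathrm{End}_{\varphi,\Gamma_K}(D_1)$ with $\mathrm{End}(M_1)$, so the $A$-algebra structure transports to $M_1$, and then Nakayama plus the count $\dim_C(M_1)/[L_0:\mathbb{Q}_p]=\rank_{\mathcal{R}_{K,C}}D_1=\dim_C A$ shows $M_1$ is free of rank one over $L_0\otimes_{\mathbb{Q}_p}A$, whence the conclusion by remark \ref{deRhamchar}(3). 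If you want to salvage your inductive scheme you would have to prove the weight-rigidity and inertial-rigidity statements over Artin rings directly, which is substantially more work than the paper's one-step argument.
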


   \begin{proof}
   	Firstly, note that the statement is true when $A$ is a field. Indeed, by \cite[Corollary~III.2.5]{Berger2008}, one has $\mathcal{R}_{K} (\delta) \cong \mathbf{D}(M')$ for some rank one filtered $(\varphi,N,G_{L/K})$-module $M$ over $A$. Then $\delta$ is deRham by remark \ref{deRhamchar}(3)).
   	
   	Back to general $A$. Then we forget the $A$-structure of $\mathcal{R}(\delta)$ (resp. $\mathbf{D}(M)$) but only remember the $C$-structure, and denote it by $D_1$ (resp. $D_2$). Then $D_1\cong \mathbf{D}(M_1)$ for some filtered $(\varphi,N,G_{L/K})$-module $M_1$ over $\mathbb{Q}_p$ by the corollary in loc. cit..
   	
   	Note that having an $A$-structure on $D_1$ is equivalent to have a homomorphism of $C$-algebras: $A \rightarrow \Hom_{\varphi,\Gamma_K}(D_1,D_1)$. As $\Hom_{\varphi,\Gamma_K}(D_1,D_1) \cong \Hom_{\mathfrak{M}^{L/K}}(M_1,M_1)$, then the $A$-structure on $\mathcal{R}(\delta)$ induces an $A$-structure on $M_1$ and make it in into a filtered $(\varphi,N,G_{L/K})$-module over $A$ and $\mathbf{D}(M_1) = \mathcal{R}(\delta)$. It remains to show that $M_1$ is a free rank one module over $L_0\otimes_{\mathbb{Q}_p} A$.
   	
   	The claim is trivial when $A$ is a field. For the general case,   let $\mathfrak{m}$ be the maximal ideal of $A$. Then the base change of $M_1$ along $A/\mathfrak{m}$ is a free $(A/\mathfrak{m})$-module of rank one (note that the functor $\mathbf{D}(-)$ commutes with base change). By Nakayama lemma, $M_1$ is generated by one element.
   	Note that $$\dim_C(M_1)/[L_0:\mathbb{Q}_p] = \rank_{\mathcal{R}_{K,C}}D_1=\dim_CA,$$ hence it is free of rank one via considering its $C$-dimension.    
   \end{proof}
   
   \begin{Prop}\label{deRhamlemma}
   Let $\delta: K^\times \rightarrow A^\times$ be a character for some connected reduced affoind $C$-algebra $A$. Suppose that 
   $$\delta(x): K^\times \rightarrow R^\times\rightarrow (R/\mathfrak{m}_x)^\times$$
   is deRham and the smooth part $\delta(x)_{\mathrm{sm}}$ of  $\delta(x)$ is trivial on $I_L \subset W_K$ for any point $x\in\mathrm{Sp}R$. Then $\delta$ is deRham and the smooth part $\delta_{\mathrm{sm}}$ of $\delta$ is trivial on $I_L$.
   \end{Prop}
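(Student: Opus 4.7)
I aim to produce integers $\underline{k}=(k_\eta)_{\eta\colon K\hookrightarrow C}$ such that $\delta\cdot\delta_{\underline{k}}^{-1}$ is smooth and trivial on the open subgroup $U_L\subseteq\mathcal{O}_K^\times$ corresponding to $I_L\subset W_K$ under local class field theory; together this is equivalent to the conclusion of the proposition. To construct $\underline{k}$, continuity of $\delta$ yields some $n\gg 0$ such that $\delta(1+\varpi^n\mathcal{O}_K)$ lies in a neighborhood of $1\in A^\times$ on which $\log$ converges, so that
$$\log\circ\delta\circ\exp\colon \varpi^n\mathcal{O}_K \longrightarrow A$$
is a continuous $\mathbb{Z}_p$-linear map. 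Extending $A$-linearly and using the decomposition $K\otimes_{\mathbb{Q}_p}A\cong\prod_\eta A$ of lemma \ref{fieldecomp}, one obtains a tuple $(w_\eta)_\eta\in A^{\Hom(K,C)}$ whose formation commutes with base change along $A\to A/\mathfrak{m}_x$. At each closed point $x\in\mathrm{Sp}(A)$, $w_\eta(x)$ is precisely the $\eta$-th weight of the algebraic part of $\delta(x)$, hence lies in $\mathbb{Z}$ by the deRham hypothesis.

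The second step is to conclude that each $w_\eta\in A$ is in fact a constant integer. On a positive-dimensional irreducible reduced rigid affinoid $Y$ over the uncountable field $C$, $Y$ cannot be written as a countable union of proper closed subvarieties; applied to the set-theoretic decomposition $Y=\bigcup_{n\in\mathbb{Z}}\{y\in Y:w_\eta(y)=n\}$ this forces $w_\eta|_Y$ to equal some single integer $n_Y$ (the zero-dimensional case being trivial). Since $\mathrm{Sp}(A)$ is connected, its finitely many irreducible components form a connected graph under intersection, so the $n_Y$ all agree; call their common value $k_\eta$ and set $\underline{k}:=(k_\eta)_\eta$.

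Finally, let $\delta':=\delta\cdot\delta_{\underline{k}}^{-1}$. At every closed point $x$, $\delta'(x)=\delta(x)_{\mathrm{sm}}$ is trivial on $U_L$ by hypothesis, so for each $a\in U_L$ the element $\delta'(a)-1$ lies in $\bigcap_x\mathfrak{m}_x=(0)$ by reducedness of $A$. Hence $\delta'|_{U_L}\equiv 1$, the kernel of $\delta'$ is open in $K^\times$, and $\delta=\delta'\cdot\delta_{\underline{k}}$ is the sought deRham decomposition with smooth part trivial on $I_L$. The main subtlety is the second step: the Baire-type statement that an analytic function on a connected reduced affinoid over $C$ taking integer values at every closed point must be a constant integer, which rests on the uncountability of $C$ together with the dimension theory of irreducible rigid spaces; the construction of $(w_\eta)_\eta$ itself is standard Banach-algebra analysis underlying the theory of the weight space $\mathcal{W}_K$.
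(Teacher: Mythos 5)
Your proposal is correct and follows the same overall skeleton as the paper's proof (produce candidate weights in $A$, show they are constant integers, divide by $\delta_{\underline{k}}$, and use reducedness to kill the pointwise-trivial smooth part on $I_L$), but the two technical middle steps are executed differently, and in one place your route is actually more solid than the paper's. For the construction of the weights, the paper avoids $\log/\exp$ and instead forms the difference quotients $\delta_i^m=\bigl(\delta(1+p^ma^i)-1\bigr)/p^m$, checks they form a Cauchy sequence in the reduced Banach algebra $A$, and recovers $(k_\eta)_\eta$ by inverting the matrix $(\eta_j(a^i))_{i,j}$; your $\log\circ\delta\circ\exp$ construction computes the same derivative at $1$ and needs the same linear-independence/Vandermonde input to identify $w_\eta(x)$ with the $\eta$-weight of $\delta(x)_{\mathrm{alg}}$, so the two are essentially equivalent (yours is slicker, the paper's avoids any convergence discussion for $\log$ on $A$). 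The real divergence is the constancy step: the paper argues that $x\mapsto k_\eta(x)$ is a continuous map to $\mathbb{N}\subset\bar{C}$ and invokes "discreteness" of $\mathbb{N}$ in $\bar{C}$ together with connectedness, which is problematic as literally written ($\mathbb{Z}$ is not $p$-adically discrete, and the canonical topology on $\mathrm{Sp}A$ is totally disconnected), whereas you decompose $\mathrm{Sp}A$ into irreducible components, note that $\{w_\eta=n\}$ are closed analytic subsets, and use that a positive-dimensional irreducible reduced affinoid over the uncountable field $C$ is not a countable union of proper closed analytic subsets, then glue along the connected intersection graph of components; this is the standard rigorous way to prove exactly this statement (it is the lemma that a rigid function on an irreducible reduced affinoid taking only countably many values is constant), so your argument repairs, rather than merely replaces, the paper's justification. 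The only thing you should not leave as a black box is that uncountability lemma itself: reduce by Noether normalization to a polydisc and choose coordinates successively outside the countably many bad loci, or cite the known statement; with that reference supplied, your proof is complete.
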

   \begin{proof}
   Suppose that 
   $$\delta(x)_{\mathrm{alg}} = \delta_{\underline{k}'(x)},$$
   for any $x\in\mathrm{Sp}R$ (here $\underline{k}'(x)\in\mathbb{Z}^{\Hom(K,C)}$).
   
    Choose $a\in\mathcal{O}_K$ such that $\mathbb{Q}_p(a) = K$. Then for any $1\leq i \leq [K:\mathbb{Q}_p]$, $m\in\mathbb{N}$, define:
   $$\delta_i^m := \frac{\delta(1+p^ma^i)-1}{p^m} \in A.$$
   Hence for any $x\in \mathrm{Sp}R$, and for $m\gg 0$, we have:
   \begin{align*}
   	\delta^m_i(x) & = \frac{\delta(x)_{\mathrm{sm}}(1+p^ma^i)\prod\limits_{\eta:K\inj C}\eta(1+p^ma^i)^{k'_\eta(x)}-1}{p^m}\\
   	&= \frac{\prod\limits_{\eta:K\inj C}\eta(1+p^ma^i)^{k'_\eta(x)}-1}{p^m} = \sum_{\eta:K\inj C} k'_\eta(x)\eta(a^i)+p^mb,
   \end{align*}
   here $b$ is some element in $\mathcal{O}_C$.
   
    Hence $|\delta^m_i(x)-\delta^{m'}_i(x)| \leq p^{-m}$ for any $m' > m$, which implies $|\delta^m_i-\delta^{m'}_i| \leq p^{-m}$ and $\{\delta_i^m\}_{m\in\mathbb{N}}$ is a Cauchy sequence in $R$ (note that $A$ is a Banach algebra and when $A$ is reduced, the norm of $a\in A$ is defined by $\sup\limits_{x\in\mathrm{Sp}A}\{|a(x)|\}$).
    
     Now choose an order of $\{\eta|\eta:K\inj C\} = \{\eta_1,\dots,\eta_n\}$, and define the matrix $\Delta:= (\eta_j(a^i))_{1\leq j,i\leq n}\in \mathrm{Mat}_{n\times n}(R)$. Then we have $$\lim\limits_{m\rightarrow +\infty} (\delta^m_1(x),\dots,\delta^m_n(x))=(k'_{\eta_1}(x),\dots,k'_{\eta_n}(x)\Delta.$$ Note that $\Delta$ is invertible as $|\det(\Delta)| = N_{K/\mathrm{Q}_p}(a)\prod\limits_{1\leq j <k\leq n}|\eta_k(a)-\eta_j(a)| \neq 0$ by direct computation. If set: $$(k_{\eta_1},\dots,k_{\eta_n}) := \lim\limits_{m\rightarrow +\infty} (\delta^m_1,\dots,\delta^m_n)\Delta^{-1},$$ then we have $k_\eta(x) = k_\eta'(x)$ for any $\eta: K\inj C$ and $x\in\mathrm{Sp}R$.
     
   Now note that for any $\eta\in\Hom(K,C)$, it defines a continuous map $$\mathrm{Sp}R \rightarrow \bar{C}, x\mapsto k_\eta(x),$$ whose image is in $\mathbb{N}$ (note that $\mathbb{N}$ carries the discrete topology as a topological subspace of $\bar{C}$ ). Hence $k_\eta(x)$ is constant for any  $x\in\mathrm{Sp}A$ as $A$ is connected.
   
   Hence if we set $$\delta_s:= \delta\cdot\delta_{\underline{k}}^{-1}$$ for $\underline{k}:= (k_\eta)_\eta$, then $\delta_s(x)$ is trivial on $I_{L}$ for any $x\in\mathrm{Sp}A$, which implies $\delta_s$ is trivial on $I_{L}$ (note that $R$ is reduced). Hence $\delta$ is deRham with smooth part $\delta_s$ and algebraic part $\delta_{\underline{k}}$.  
   \end{proof}
   
   \begin{Prop}\label{Prop:deRhamcondition}
   Let $A$ be a connected affoind $C$-algebra. Let $\delta_i: K^\times \rightarrow A^\times$ be characters and $M_i$ be filtered $(\varphi,N,G_{L/K})$-modules over $\mathcal{R}_{K,A}$ for $i=1,2$, such that
   $$\mathbf{D}(M_1)(\delta_1)\cong \mathbf{D}(M_2)(\delta_2),$$
   then $\delta_1\delta_2^{-1}$ is a deRham character and the smooth part is trivial on $I_L$.
   
    Moreover, $M_1(\delta_1\delta_2^{-1})\cong M_2$
   \end{Prop}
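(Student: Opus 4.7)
Set $\delta:=\delta_1\delta_2^{-1}$. Tensoring the hypothesis by $\mathcal{R}_{K,A}(\delta_2)^{\vee}=\mathcal{R}_{K,A}(\delta_2^{-1})$ reduces the first claim to: given an isomorphism
$$\mathbf{D}(M_1)\otimes_{\mathcal{R}_{K,A}}\mathcal{R}_{K,A}(\delta)\;\cong\;\mathbf{D}(M_2),$$
show that $\delta$ is de Rham with $\delta_{\mathrm{sm}}|_{I_L}=1$. The plan is to verify the de Rham condition fibrewise using Lemma \ref{Inj}, globalize via Proposition \ref{deRhamlemma}, and then extract the isomorphism $M_1(\delta)\cong M_2$ from the ``moreover'' half of Theorem \ref{Thm: PropofBergerFun}.

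For the fibrewise step, fix a closed point $x\in\mathrm{Sp}(A)$. Since $\mathbf{D}$, tensor products, and the formation $\delta\mapsto\mathcal{R}(\delta)$ all commute with base change, reducing modulo $\mathfrak{m}_x$ yields $\mathbf{D}(M_1(x))\otimes \mathcal{R}_{K,k(x)}(\delta(x))\cong \mathbf{D}(M_2(x))$. Because $\mathbf{D}$ is tensor-compatible (Theorem \ref{Thm: PropofBergerFun}) it automatically intertwines Poincar\'e duality—concretely, the trace pairing $M\otimes M^{\vee}\to\mathbf{1}$ is sent to the corresponding pairing of $(\varphi,\Gamma_K)$-modules, giving $\mathbf{D}(M^{\vee})\cong\mathbf{D}(M)^{\vee}$. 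Rewriting the displayed isomorphism as $\mathcal{R}_{K,k(x)}(\delta(x))\cong \mathbf{D}(M_1(x)^{\vee}\otimes M_2(x))$, we have in particular an injection of a rank-one $(\varphi,\Gamma_K)$-module into $\mathbf{D}(\cdot)$ over the local Artin ring $k(x)$ (which is finite over $C$). Lemma \ref{Inj} then declares $\delta(x)$ to be de Rham with $\delta(x)_{\mathrm{sm}}|_{I_L}=1$.

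To globalize, apply Proposition \ref{deRhamlemma} to $A_{\mathrm{red}}$: it produces a single tuple $\underline{k}\in\mathbb{Z}^{\mathrm{Hom}(K,C)}$ such that over $A_{\mathrm{red}}$ the character $\delta\cdot\delta_{\underline{k}}^{-1}$ is smooth with trivial restriction to $I_L$. Lifting $\delta_{\underline{k}}$ tautologically to $A$, the character $(\delta\cdot\delta_{\underline{k}}^{-1})|_{I_L}$ factors through $1+\mathrm{nil}(A)\subset A^{\times}$; since $\mathrm{nil}(A)$ is a finite-dimensional $\mathbb{Q}_p$-vector space, $1+\mathrm{nil}(A)$ is Hausdorff and torsion-free via $\log$, while a continuous character of the profinite group $I_L$ into such a group is forced to be trivial. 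This upgrades the pointwise conclusion to the global one. For the moreover part, set $\tilde M:=M_1(\delta)$; Remark \ref{deRhamchar}(2) identifies $\mathbf{D}(\tilde M)$ with $\mathbf{D}(M_1)\otimes\mathcal{R}_{K,A}(\delta)$, so the reduced hypothesis supplies an isomorphism $\alpha:\mathbf{D}(\tilde M)\xrightarrow{\sim}\mathbf{D}(M_2)$. If $\alpha$ can be written as $\mathbf{D}(f)$ for some $f:\tilde M\to M_2$, the ``moreover'' clause of Theorem \ref{Thm: PropofBergerFun} immediately forces $f$ to be an isomorphism.

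The main obstacle is precisely this last lifting, i.e.\ showing that the natural injection
$$\mathrm{Hom}_{\mathfrak{M}^{L/K}}(\tilde M, M_2)\;\hookrightarrow\;\mathrm{Hom}_{\varphi,\Gamma_K}(\mathbf{D}(\tilde M),\mathbf{D}(M_2))$$
is surjective, i.e.\ that $\mathbf{D}$ is fully faithful in the family setting. For $A=\mathbb{Q}_p$ this is Berger's \cite[Thm~II.2.6]{Berger2008}. I expect that the family version follows by the same strategy used to construct $\mathbf{D}_K(-)$: decompose a morphism of $(\varphi,\Gamma_K)$-modules via Beauville--Laszlo (Theorem \ref{Thm:Beauville-Laszlo}) into its behaviour after inverting $t$ (which recovers the underlying $(\varphi,N,G_{L/K})$-morphism from $\mathbf{D}_0^r$, since the latter forgets the filtration) and its behaviour after $t$-adic completion (which, via the lattices $\Lambda_L^n$ and the $\iota_n$ isomorphisms of Proposition \ref{BergersFunc2}(1), is exactly the filtration data), and then descend from $L$ to $K$ by Galois descent. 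This technical check is the only non-formal input of the proof.
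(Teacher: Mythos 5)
Your overall strategy follows the paper's: apply Lemma~\ref{Inj} at the level of Artinian quotients of $A$, globalize via Proposition~\ref{deRhamlemma}, and invoke Theorem~\ref{Thm: PropofBergerFun} for the ``moreover'' part. However, there is a genuine gap in your globalization step.

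You apply Lemma~\ref{Inj} only at the closed points $k(x)$ of $\mathrm{Sp}(A)$ and then use Proposition~\ref{deRhamlemma} to conclude that $\delta\cdot\delta_{\underline{k}}^{-1}$ is trivial on $I_L$ \emph{over $A_{\mathrm{red}}$}. To promote this to triviality over $A$, you assert that a continuous character of the profinite group $I_L$ into the torsion-free Hausdorff group $1+\mathrm{nil}(A)$ must be trivial, after remarking that $\mathrm{nil}(A)$ is a finite-dimensional $\mathbb{Q}_p$-vector space. Neither claim holds. First, $\mathrm{nil}(A)$ need not be finite-dimensional (take $A=C\langle x\rangle[\epsilon]/(\epsilon^2)$: then $\mathrm{nil}(A)=\epsilon\, C\langle x\rangle$). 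Second, and more fundamentally, a profinite group can admit nontrivial continuous homomorphisms into Hausdorff torsion-free groups: $\mathbb{Z}_p\hookrightarrow(\mathbb{Q}_p,+)$ is the basic example, and concretely $u\mapsto 1+\epsilon\log_p(u)$ defines a nontrivial continuous character of $\mathcal{O}_K^\times$ landing in $1+\epsilon C\subset (C[\epsilon]/(\epsilon^2))^\times$. The ``compact image must be trivial'' argument only works when the target is discrete (then the image is a finite torsion-free group), which $1+\mathrm{nil}(A)$ is not.

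The fix is to feed into Lemma~\ref{Inj} not the residue fields but the entire system of Artinian quotients $A/\mathfrak{m}^n$ for all $\mathfrak{m}\in\mathrm{Sp}(A)$ and all $n\geq 1$; the corollary preceding Lemma~\ref{Inj} gives precisely the needed injection $\mathcal{R}_{K,A}(\delta)\otimes_A A/\mathfrak{m}^n\hookrightarrow D\otimes_A A/\mathfrak{m}^n$. Then $\delta_s:=\delta\cdot\delta_{\mathrm{alg}}^{-1}$ satisfies $\delta_s(I_L)\subseteq 1+\mathfrak{m}^n$ for all $\mathfrak{m}$ and $n$, and since $\bigcap_{\mathfrak{m},n}\mathfrak{m}^n=0$ in the noetherian ring $A$, one concludes $\delta_s|_{I_L}=1$ over $A$ itself. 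Your observation that the ``moreover'' part ultimately rests on full faithfulness of $\mathbf{D}$ in the family setting (beyond the faithfulness and isomorphism-reflecting statement of Theorem~\ref{Thm: PropofBergerFun}) is fair and the Beauville--Laszlo strategy you sketch is the natural route, but it remains a sketch rather than a complete argument.
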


   \begin{proof}
   Let $\delta:= \delta_1\delta_2^{-1}$. For any $\mathfrak{m}\in\mathrm{Sp}A$ and $n\in \mathbb{N}$, by lemma \ref{Inj}, one has $$\delta_{\mathfrak{m}^n}:= K^\times \xrightarrow{\delta} A^\times \rightarrow (A/\mathfrak{m}^n)^\times$$
   is deRham and the smooth part is trivial restricted on $I_{L}$. It is obvious that the algebraic part of $\delta_{\mathfrak{m}^n}$ is the same as the algebraic part of $\delta_{\mathfrak{m}}$. Hence by proposition \ref{deRhamlemma}, there exists an algebraic character $\delta_{\mathrm{alg}}: K^\times \rightarrow A^\times$, such that $\delta_{\mathfrak{m}^n}\cdot \delta_{\mathrm{alg}}^{-1}$ is trivial on $I_{L}$ (here we apply $R/\sqrt{0}$ to proposition \ref{deRhamlemma}). Let $\delta_s := \delta\cdot \delta_{\mathrm{alg}}^{-1}$, then the discussion above means $\delta_s(I_{L}) \in 1+\cap \mathfrak{m}^n = 1$ as $A$ is noetherian. Hence $\delta_s$ is smooth and restricted on $I_{L}$ is trivial.
   
   Moreover, by theorem \ref{Thm: PropofBergerFun}, one has $$M_1(\delta_1\delta_2^{-1})\cong M_2$$ as $\mathbf{D}(M_1(\delta_1\delta_2^{-1}))\cong \mathbf{D}(M_2)$.
   \end{proof}

   Let $X(W_K/I_L)$ denote the rigid $C$-group that assigns to $A$ the group
   $$\{\delta:W_K\rightarrow A^{\times}|\delta\mathrm{\ is\ a\ character\ with\ }\delta|_{I_L} \mathrm{\ is\ trivial}\}$$
   Then $X(W_K/I_L)$ can be regarded as a closed subspace of $\mathcal{T}$ by the local Artin map $\mathrm{Art}: W_K^{\mathrm{ab}}\xrightarrow{\sim} K^{\times}$. Let $\mathfrak{M}^{L/K,d,\leq 0}$ denote the sub stack of  $\mathfrak{M}^{L/K,d}$ such that for any $M\in\mathfrak{M}^{L/K,d,\leq 0}$, $\mathcal{F}^0(M_\eta) \neq 0$ but $\mathcal{F}^1(M_\eta) = 0$ for any $\eta\in\Hom(K,C)$.
   
   \begin{Cor}\label{Parameterstack}
   	Define the $X(W_K/I_L)$-action on $\mathcal{T}_K\times\mathfrak{M}^{L/K,d,\leq 0}$ as following:
   	\begin{align*}
   		X(W_K/I_L)\times \mathcal{T}_K\times\mathfrak{M}^{L/K,d,\leq 0} & \rightarrow \mathcal{T}_K\times\mathfrak{M}^{L/K,d,\leq 0}\\
   		(\delta,\delta',M)&\mapsto (\delta'\delta^{-1},M(\delta)).
   	\end{align*}
   	Then we have the following isomorphism:
   	\begin{align*}
   		[(\mathcal{T}_K\times\mathfrak{M}^{L/K,d,\leq 0})/X(W_K/I_L)]&\xrightarrow{\sim} \mathcal{Z}^{L/K,d,\leq 0}\\
   		(\delta,M)&\mapsto \mathbf{D}(M)(\delta)
   	\end{align*}
   	
   \end{Cor}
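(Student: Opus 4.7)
The plan is to establish the claimed equivalence by verifying well-definedness, essential surjectivity, and full faithfulness for the functor $\Phi: (\delta, M) \mapsto \mathbf{D}(M)(\delta)$. The key inputs are Proposition \ref{Prop:deRhamcondition}, which extracts uniqueness information from the fully faithful functor $\mathbf{D}$, and the compatibility $\mathbf{D}(M(\delta_0)) \cong \mathbf{D}(M) \otimes_{\mathcal{R}_{K,A}} \mathcal{R}_{K,A}(\delta_0)$ of Remark \ref{deRhamchar}(2).

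To see that $\Phi$ descends to the quotient, I would first check that $M(\delta_0) \in \mathfrak{M}^{L/K,d,\leq 0}$ for any $\delta_0 \in X(W_K/I_L)$: such a $\delta_0$ is smooth with trivial algebraic part, so the associated rank-one object $\mathbf{M}(\delta_0)$ has all filtration weights equal to zero and tensoring therefore preserves the $\leq 0$ normalization. The compatibility above then gives $\mathbf{D}(M(\delta_0))(\delta \delta_0^{-1}) \cong \mathbf{D}(M)(\delta)$ naturally, as required for compatibility with the $X(W_K/I_L)$-action.

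For essential surjectivity, given a quasi-deRham module $D \cong \mathbf{D}(M')(\delta')$, I would let $k_\eta$ be the largest integer with $\mathcal{F}^{k_\eta}(M'_\eta) \neq 0$; locally on $\mathrm{Sp}(A)$ this is well defined by the local freeness from Remark \ref{flag}, and on each connected component one obtains a single value. Setting $\underline{k} := (k_\eta)_\eta$, $M := M'(\delta_{\underline{k}})$, and $\tilde{\delta} := \delta' \delta_{\underline{k}}^{-1}$, one verifies $M \in \mathfrak{M}^{L/K,d,\leq 0}$ and $\mathbf{D}(M)(\tilde{\delta}) \cong D$, producing the required preimage.

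The heart of the argument is full faithfulness. Given $\mathbf{D}(M_1)(\delta_1) \cong \mathbf{D}(M_2)(\delta_2)$ with $M_i \in \mathfrak{M}^{L/K,d,\leq 0}$, Proposition \ref{Prop:deRhamcondition} yields that $\delta := \delta_1 \delta_2^{-1}$ is deRham with smooth part trivial on $I_L$, and that $M_1(\delta) \cong M_2$. Writing $\delta = \delta_{\mathrm{sm}} \delta_{\underline{k}}$, the decisive observation is that $\underline{k}$ must vanish: $M_1(\delta_{\mathrm{sm}})$ still lies in $\mathfrak{M}^{L/K,d,\leq 0}$ since smooth twists do not change filtration weights, and a subsequent twist by $\delta_{\underline{k}}$ would shift the top $\eta$-weight to $-k_\eta$, so the hypothesis $M_2 \in \mathfrak{M}^{L/K,d,\leq 0}$ forces $k_\eta = 0$ for every $\eta$. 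Consequently $\delta = \delta_{\mathrm{sm}}$ belongs to $X(W_K/I_L)$, exhibiting $(\delta_1, M_1)$ and $(\delta_2, M_2)$ as being in the same orbit. Compatibility on automorphism and isomorphism groups then reduces, via the faithfulness part of Theorem \ref{Thm: PropofBergerFun} and the standard formalism of quotient stacks, to faithfulness of $M \mapsto \mathbf{D}(M)$ on $\mathfrak{M}^{L/K,d,\leq 0}$. The main obstacle is precisely this rigidity forcing $\underline{k} = 0$; the $\leq 0$ normalization makes it automatic, so the remaining steps are formal.
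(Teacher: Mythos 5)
Your proof is correct and follows essentially the same route as the paper's: the paper simply observes that each $M$ admits a unique algebraic shift into $\mathfrak{M}^{L/K,d,\leq 0}$ (giving essential surjectivity) and then declares the remaining claim a "restatement" of Proposition \ref{Prop:deRhamcondition}. What you make explicit — that for $M_1,M_2\in\mathfrak{M}^{L/K,d,\leq 0}$ the relation $M_1(\delta)\cong M_2$ forces $\delta_{\mathrm{alg}}=1$, so the deRham character produced by Proposition \ref{Prop:deRhamcondition} actually lies in $X(W_K/I_L)$ — is exactly the rigidity step the paper leaves implicit in the uniqueness of the normalizing algebraic character, and your verification of well-definedness and the reduction to faithfulness via Theorem \ref{Thm: PropofBergerFun} fills in the remaining formalism the paper omits.
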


   \begin{proof}
   	Let $A$ be a connected affinoid $C$-algebra. 
   	
   	Note that, for any filtered $(\varphi,N,G_{L/K})$-module $M$ over $A$ of rank $d$, there exists a unique algebraic character $\delta_{\underline{k}}$ for some $k\in\mathbb{Z}^{\Hom(K,C)}$ such that $M(\delta_{\underline{k}})$ is in $\mathcal{Z}^{L/K,d,\leq 0}$. Hence we have a surjective morphism 
   	$$\mathcal{T}_K\times\mathfrak{M}^{L/K,d,\leq 0} \surj \mathcal{Z}^{L/K,d,\leq 0}$$
   	Then this corollary is exactly the restatement of proposition $\ref{Prop:deRhamcondition}$.
   \end{proof}
   
   Consider the decomposition 
   \begin{align*}
   	\mathcal{W}\times \mathbb{G}_m^{\mathrm{rig}} & \xrightarrow{\sim} \mathcal{T} \\
   	(\delta,b)& \mapsto (a\mapsto b^{v_K(a)}\cdot \delta(\frac{a}{\varpi^{v_K(a)}})).
   \end{align*}
   Note that such decomposition is not canonical and depends on the choice of $\varpi$. 
   
   Let $I_K^{a}$ (resp. $I_L^{a}$) denote the image of $I_{K}$ (resp. $I_L$) in the abelianization $W_K^{\mathrm{ab}}$ (resp. $W_L^{\mathrm{ab}}$) (note that it is different from the abelianization of the inertia group). Let $X(I^{a}_K/I^{a}_L)$ denote the rigid $C$-group that assigns to $A$ the group
   $$\{\delta:I_K^{a}\rightarrow A^{\times}|\delta\mathrm{\ is\ a\ character\ with\ }\delta|_{I^{a}_L} \mathrm{\ is\ trivial}\}$$
   Then $X(I^{a}_K/I^{a}_L)$ can be regarded as a closed subspace of $\mathcal{W}$ as the image of $I_K$ is $\mathcal{O}_K$ by the local Artin map $\mathrm{Art}: W_K^{\mathrm{ab}}\xrightarrow{\sim} K^{\times}$. Then we have (depends on the choice of $\varpi$)
   $$X(I^{a}_K/I^{a}_L)\times \mathbb{G}^{\mathrm{rig}}_{m}\cong X(W_K/I_L),$$
   which is induced from the decomposition above.
   
    Even though the morphisms in corollary \ref{Parameterstack} are canonical and do not depend on the choice of $\varpi$, but the following statement strongly depends on the choice of $\varpi$ and the morphism is not canonical.   
   
   \begin{Theo}\label{Galoiscover}
   Via the decomposition $\mathcal{T} \cong \mathcal{W} \times\mathbb{G}_m^{\mathrm{rig}}$ (depends on a choice of the uniformizer $\varpi\in K$), one can regard $\mathcal{W}$ as a sub rigid group $C$-space of $\mathcal{T}$. Then the morphism:
   \begin{align*}
   	\mathcal{W}_K\times\mathfrak{M}^{L/K,d,\leq 0} & \rightarrow \mathcal{Z}^{L/K.d}\\
   	(\delta,M) &\mapsto \mathbf{D}(M)(\delta)
   \end{align*}
   is a Galois cover with
   $$\Aut_{\mathcal{Z}^{L/K,d}}(\mathcal{W}_K\times \mathfrak{M}^{L/K,d,\leq 0}) \cong X(I^{a}_K/I^{a}_L)$$
   \end{Theo}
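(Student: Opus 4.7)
The plan is to derive Theorem \ref{Galoiscover} directly from Corollary \ref{Parameterstack} by factoring the stack quotient by $X(W_K/I_L)$ along the unramified direction cut out by $\varpi$.

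First I would record, compatibly with the splitting $K^{\times} \cong \mathcal{O}_K^{\times} \times \varpi^{\mathbb{Z}}$ and the given $\mathcal{T}_K \cong \mathcal{W}_K \times \mathbb{G}_m^{\mathrm{rig}}$, an analogous decomposition $X(W_K/I_L) \cong X(I^a_K/I^a_L) \times \mathbb{G}_m^{\mathrm{rig}}$ of rigid $C$-groups in which the $\mathbb{G}_m^{\mathrm{rig}}$-factor is identified with the unramified characters $\delta_b$ defined by $\delta_b(\varpi)=b$. These $\delta_b$ are trivial on $I_K \supset I_L$, hence do lie in $X(W_K/I_L)$. Next I would verify that the $X(W_K/I_L)$-action from Corollary \ref{Parameterstack} preserves $\mathfrak{M}^{L/K,d,\leq 0}$: since $I_L$ has finite index in $I_K$, any $\delta \in X(W_K/I_L)(A)$ has $\delta|_{I_K}$ of finite order, so $\delta$ is smooth with trivial algebraic part; by Remark \ref{deRhamchar}(2), $\mathbf{M}(\delta)$ then has all $\eta$-filtration weights equal to $0$, and hence $M \mapsto M(\delta)$ does not shift the filtration weights of $M$ and stabilises the $\leq 0$-subspace.

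The final step is to split the $X(W_K/I_L)$-quotient. The sub-$\mathbb{G}_m^{\mathrm{rig}}$ acts on $\mathcal{T}_K \times \mathfrak{M}^{L/K,d,\leq 0}$ by $(b,(\delta',M)) \mapsto (\delta'\delta_b^{-1}, M(\delta_b))$, and on the $\mathbb{G}_m^{\mathrm{rig}}$-factor of $\mathcal{T}_K$ this is just free translation $b' \mapsto b'b^{-1}$, for which $\mathcal{W}_K \hookrightarrow \mathcal{T}_K$ is a global section. One therefore obtains an equivalence of rigid $C$-stacks
$$\bigl[(\mathcal{T}_K \times \mathfrak{M}^{L/K,d,\leq 0}) / \mathbb{G}_m^{\mathrm{rig}}\bigr] \xrightarrow{\sim} \mathcal{W}_K \times \mathfrak{M}^{L/K,d,\leq 0},$$
and the residual group $X(W_K/I_L)/\mathbb{G}_m^{\mathrm{rig}} \cong X(I^a_K/I^a_L)$ inherits a natural action on the right-hand side. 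Combining with Corollary \ref{Parameterstack} gives
$$\bigl[(\mathcal{W}_K \times \mathfrak{M}^{L/K,d,\leq 0}) / X(I^a_K/I^a_L)\bigr] \xrightarrow{\sim} \mathcal{Z}^{L/K,d},$$
which is exactly the statement that $(\delta,M) \mapsto \mathbf{D}(M)(\delta)$ is a Galois cover with automorphism group $X(I^a_K/I^a_L)$.

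The step I expect to require the most care is making the first of these two quotients rigorous as a free quotient of rigid analytic stacks rather than a set-theoretic one: one must check that the described $\mathbb{G}_m^{\mathrm{rig}}$-action is the restriction of a bona fide rigid $C$-group action and that $\mathcal{W}_K \hookrightarrow \mathcal{T}_K$ trivialises the corresponding $\mathbb{G}_m^{\mathrm{rig}}$-torsor in families. Once that is established, the residual $X(I^a_K/I^a_L)$-action becomes a genuine rigid group action and the Galois cover assertion reduces to Corollary \ref{Parameterstack}; the dependence on $\varpi$ flagged in the statement matches precisely the non-canonicity of this section.
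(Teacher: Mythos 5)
Your proposal is correct and follows essentially the same route as the paper: both start from Corollary~\ref{Parameterstack}, split $X(W_K/I_L) \cong X(I^a_K/I^a_L)\times\mathbb{G}_m^{\mathrm{rig}}$ and $\mathcal{T}_K \cong \mathcal{W}_K\times\mathbb{G}_m^{\mathrm{rig}}$ via the choice of $\varpi$, and collapse the free $\mathbb{G}_m^{\mathrm{rig}}$-action to leave a residual $X(I^a_K/I^a_L)$-quotient. The paper packages the step you flag as delicate (``$\mathcal{W}_K$ trivialises the $\mathbb{G}_m^{\mathrm{rig}}$-torsor'') into a one-line general lemma $(G\times X)/G' \cong (G/G')\times X$ for a subgroup $G'\subseteq G$ acting by mixed translation, but the underlying idea is identical to yours.
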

   
   \begin{proof}
   	Note that for any group $C$-rigid stack action $G \times X \rightarrow X$ on a rigid $C$-stack $X$ and a subgroup rigid $C$-stack $G'\subseteq G$, we have the following isomorphism:
   	\begin{align*}
   	(G\times X) / G' & \xrightarrow{\sim} (G/G') \times X\\
   	(g,x) & \mapsto (\bar{g},gx),
   	\end{align*}
   	where the action $G'\times G \times X \rightarrow G \times X$ is define by $$(h,g,x)\mapsto (gh^{-1},hx).$$
   	Hence we have:
   	\begin{align*}
   	\mathcal{Z}^{L/K,d} \cong& \left[(\mathcal{T}_K\times\times \mathfrak{M}^{L/K,d,\leq 0})/X(W_K/I_L)\right]\\  = & \left[(\mathcal{T}_K\times\times \mathfrak{M}^{L/K,d,\leq 0})/(X(I^{a}_K/I^{a}_L)\times \mathbb{G}^{\mathrm{rig}}_m)\right]\\
   	\cong & \left[(\mathcal{W}_K\times \mathbb{G}^{\mathrm{rig}}_m\times \mathfrak{M}^{L/K,d,\leq 0})/(X(I^{a}_K/I^{a}_L)\times \mathbb{G}^{\mathrm{rig}}_m)\right]  \\
   \cong & \left[(\mathcal{W}_K\times \mathfrak{M}^{L/K,d,\leq 0})/X(I^{a}_K/I^{a}_L)\right]
   	\end{align*}
   	and get the conclusion.   
   	
   \end{proof}
   
   \section{Global Paraboline $(\varphi,\Gamma_K)$-modules}\label{Global}
   
   Through out this section, we also fix $K$ a $p$-adic local filed and use the letter $L$ to denote some Galois extension over $K$ as before. We write $C$ for the coefficient field, which is finite over $\mathbb{Q}_p$ and large enough. Moreover, for a group $G$, write $X(G)$ for the group $C$-rigid space of the character of $G$.
   
   In particular, we require $|\Hom(K,C)|=[K:\mathbb{Q}_p]$.
   
   \subsection{Parameter Spaces}\label{ParemeterSpaces}
   In this subsection, we are going to construct a rigid space $\mathcal{S}^{\underline{k}}_\tau$, which is a chart of some connected component $\mathcal{Z}_{\tau}^{\underline{k}}$ of $\mathcal{Z}^{L/K,d}$ (see the definition below). This will be used to define the \textit{parameter spaces} of the \textit{refined paraboline varieties}. By construction, $\mathcal{S}^{\underline{k}}_\tau$ is the form $\mathcal{T}_\tau\times \mathrm{Flag}$, where $\mathcal{T}_\tau$ will be used to define the \textit{parameter spaces} of the \textit{paraboline varieties} and be identified with the parameter space of the \textit{Hecke eigenvarieties} on the automorphic side.

   Let $(\rho,V)$ be an absolutely irreducible WD representation ($N=0$) of $K$ over $C$, then $\rho$ is of the form $\mathrm{Ind}_{W_\tau}^{W_K}(\tilde{\tau}\otimes\delta_0)$ by proposition \ref{Prop:WD-decomp}, where $\delta_0: W_\tau\rightarrow C^\times$ is an unramified character. In this case, $\rho$ is called \textit{of type} $\tau$. Through out this subsection, fix a smooth absolutely irreducible representation $\tau$ of rank $d_\tau$ over $C$. Let $d= d_\tau e_\tau$. Let $\mathfrak{W}^{L/K,d}_{n_\tau=1}$ denote the connected component  of $\mathfrak{W}^{L/K,d}$, consisting of irreducible WD representation of type $\tau$. Then $\mathfrak{W}^{L/K,d}_{n_\tau=1}$ can be defined over $C$, and is isomorphic to $\mathfrak{W}^{K_\tau/K_\tau,1}$ according to theorem \ref{Thm:WDstack} (recall that $K_{\tau}$ is the unique unramified extension over $K$ of degree $e_{\tau}$, and write $W_\tau\subseteq W_K$ for the Weil group of $K_\tau$). The following are some terminologies about quasi-deRham $(\varphi,\Gamma_K)$-modules used in this subsection and subsequent part of this article.
   
   \begin{Def}\label{Def:Phigammaoftypetau}
   	Let $A$ be a affinoid $C$-algebra. A quasi-deRham $(\varphi,\Gamma_K)$-module $D$ over $A$ with $D\cong\mathbf{D}(M)(\delta)$ for some filtered $(\varphi,N,G_{L/K})$-module $M$ and some continuous character $\delta: K^{\times} \rightarrow A^{\times}$ is called
   	\begin{enumerate}
   		\item \textit{quasi-free} if $M$ is a free $(L_0\otimes_{\mathbb{Q}_p} A)$-module;
   		\item \textit{of (filtration) weight type} $\underline{k}\in (\mathbb{Z}_{+}^{d'})^{\Hom(K,C)}$ if the filtration weight of $M$ is $\underline{k}$, here $d'$ is the rank of $D$;
   		\item \textit{irreducible} if there exists some type $\tau$' such that $\mathrm{WD}(M)$ corresponds to an element in $\mathfrak{W}^{L/K,d'}_{n_{\tau'}=1}(A)$, where $\mathrm{WD}(M)$ is the WD representation attached to $M$ and $d'=d_{\tau'}e_{\tau'}$. In this case, we also called $D$ is \textit{of type} $\tau'$.
   		
   		By theorem \ref{Thm:WDstack}, it is equivalent to say that there exists a line bundle $\mathcal{L}$ over $\mathrm{Spec}(A)$ with an unramified character $\delta: W_{\tau'} \rightarrow A^{\times} \cong \mathrm{End}_A(\mathcal{L})$ such that
   		$$\mathrm{WD}(M) \cong \mathrm{Ind}_{W_{\tau'}}^{W_K}(\tilde{\tau}'\otimes\mathcal{L}),$$
   		here $\mathcal{L}$ is regarded as a rank one WD representation of $W_{\tau'}$. 
   		
   	\end{enumerate} 
   \end{Def}
   	In particular, if $D$ is of type $\tau'$ and of filtration weight type $\underline{k}$, we say $D$ is \textit{of type} $(\tau',\underline{k})$ for short.
   	\begin{Def}
   		Let $\mathcal{Z}^{\underline{k}}_{\tau}$ denote the substack of $\mathcal{Z}^{L/K,d}$, such that on any affinoid $C$-algebra $A$,
   		$\mathcal{Z}^{\underline{k}}_{\tau}(A)${ is the groupoid of } $(\varphi,\Gamma_K)\text{-module\ } D \mathrm{\ over\ } A\mathrm{\ of\ type\ }(\tau,\underline{k})$.
   	\end{Def}

   \begin{Rmk}
   	By section \ref{QdeRhamPhiGammaMod}, for a quasi-deRham $(\varphi,\Gamma_K)$-module $D$, the expression $D\cong\mathbf{D}(M)(\delta)$ is not unique. But one can see that the quasi-freeness and the irreducibility of $D$ are independent of the choice of $M$ and $\delta$, while the type $\tau$ and the filtration weight type $\underline{k}$ are not. More precisely, suppose that $D$ is irreducible. If there exist two filtered $(\varphi,N,G_{L/K})$-modules $M_1$ (of type $\tau_1$ and filtration weight $\underline{k}_1$) and $M_2$ (of type $\tau_2$ and filtration weight $\underline{k}_2$), and two continuous character $\delta_1,\delta_2$ such that 
   	$$D\cong \mathbf{D}(M_1)(\delta_1)\cong \mathbf{D}(M_2)(\delta_2),$$
   	then by proposition \ref{Prop:deRhamcondition}, there exist an element $g\in W_K$ and smooth character $\delta':I_K\rightarrow C^{\times}$ such that $\tau_1\cong\tau_2^{g}(\delta')$, and a element $\underline{k}'\in \mathbb{Z}^{\Hom(K,C)}$ such that $\underline{k}_1=\underline{k}_2-\underline{k}'$. Hence when we say $D$ is of type $(\tau,\underline{k})$, the type $\tau$ can be regarded as an element in the set of $W_K$-conjugacy classes of of irreducible smooth representations of $I_K$ up to twist by smooth characters, and $\underline{k}$ can be regarded as an element in $ (\mathbb{Z}_{+}^{d}/\mathbb{Z})^{\Hom(K,C)}$.
   \end{Rmk}
   
   Let $X$ be a $C$-scheme, let $(\mathcal{L},\rho,N=0)$ be a rank one Weil-Deligne representation of $K_\tau$ over $X$ such that $\rho$ is unramified. Then $\mathcal{L}$ is a line bundle over $X$, and $\rho$ is a unramified character $W_\tau\rightarrow \mathcal{O}_X^\times(X)$. Let $s$ be a $q^{e_\tau}$-Frobenius element in $W_\tau$ ($q$ is the cardinality of the residue field $k_K$), then $\rho$ is determined by $\rho(s)$. In the language of stack, the analysis above means 
   $$\mathfrak{W}^{K_\tau/K_\tau,1}\cong \mathbb{G}_m\times [\mathrm{Spec}(C)/\mathbb{G}_m]\cong [\mathbb{G}_m/\mathbb{G}_m],$$
   where $\mathbb{G}_m$ acts trivially on $\mathbb{G}_m$.
   
   By direct computation, the universal sheaf $\mathcal{L}^{\mathrm{univ}}$ of WD representations of $K_\tau$ on $[\mathbb{G}_m/\mathbb{G}_m]$ can be describe as follows:
   
   Let $X$ be a $C$-scheme, with a morphism $f: X\rightarrow [\mathbb{G}_m/\mathbb{G}_m]$. By the universal property of fiber product, $f$ corresponds to a pair $(f_1,f_2)$, where $f_1$ is a morphism $X\rightarrow \mathbb{G}_m$ and $f_2$ is a morphism $X\rightarrow[\mathrm{Spec}(C)/\mathbb{G}_m]$. If we denote $\mathbb{G}_m$ by $\mathrm{Spec}(C[T^{\pm 1}])$, then $f_1$ induces a ring homomorphism $f_1^*:C[T^{\pm 1}]\rightarrow \mathcal{O}_X(X)$. We pick an element $f_1^*(T)\in \mathcal{O}^\times_X(X) = \mathrm{End}(\mathcal{L})$ and construct a line bundle $\mathcal{L}$ via $f_2$. Then We define 
   $$\mathcal{L}^{\mathrm{univ}}(f) :=(\mathcal{L},\rho),$$
   where $\rho(s) = f_1^*(T)$.
   
   %In particular, for the canonical projection $\mathrm{pr}: \mathbb{G}_m \rightarrow [\mathbb{G}_m/\mathbb{G}_m]$, one has $$\mathcal{L}^{\mathrm{univ}}(\mathrm{pr}) := (\mathcal{O}_{\mathbb{G}_m},\rho: s\mapsto T).$$
   
   %\begin{Prop} Let $X$ be a scheme with a morphism $f: X\rightarrow [\mathbb{G}_m/\mathbb{G}_m]$. Then $f$ lift to $\mathbb{G}_m$ if and only if the underlying line bundle $\mathcal{L}^{\mathrm{univ}}(f)$ is trivial.
   %\end{Prop}
   	%\begin{proof}
   		%\textbf{To be Complete}
   	%\end{proof}

   Now for each $\eta\in \Hom(K,C)$, we fix an $\eta$-filtration weight $$\underline{k}_\eta:= (k_{\eta,1}<\cdots<k_{\eta,d}=0),$$
   and denote $\underline{k} := (\underline{k_\eta})_{\{\eta:K\hookrightarrow C\}}$. Let $\mathfrak{M}^{L/K,\underline{k}}_{n_\tau=1}$ denote the connected component of $\mathfrak{M}^{L/K,d,\leq 0}$, consisting of those filtered $(\varphi,N,G_{L/K})$-modules, whose filtration weights are $\underline{k}$ and corresponding WD representations are of type $\tau$ (hence $N=0$). Let $B_d$ denote the standard Borel subgroup of $\mathrm{GL}_d$ (over $C$) consisting of upper triangular matrices. By theorem \ref{FWDstack}, one can compute that $$\mathfrak{M}^{L/K,\underline{k}}_{n_\tau=1}\cong (\prod\limits_{\eta:K\inj C}[(\mathrm{GL}_d/B_d)/\mathrm{GL}_d])\times_{[\mathrm{Spec}(C)/\mathrm{GL}_d]} [\mathbb{G}_m/\mathbb{G}_m]$$
   where $\mathrm{GL_d}$ acts on the flag variety $(\mathrm{GL_d}/B_d)$ via conjugation, and the morphism $[\mathbb{G}_m/\mathbb{G}_m]\rightarrow [\mathrm{Sp}(C)/\mathrm{GL}_d]$ comes from the natural embedding $\mathbb{G}_m\cong \mathrm{GL}_1\inj \mathrm{GL}_d$. The following proposition shows that the Artin stack $\mathfrak{M}^{L/K,\underline{k}}_{n_\tau=1}$ becomes a scheme after the base change along the natural projection $\mathrm{pr}: \mathbb{G}_m \surj [\mathbb{G}_m/\mathbb{G}_m]$. 
   \begin{Prop}
   	Choose a basis of the irreducible WD representation  $\mathrm{Ind}_{W_{\tau}}^{W_K}(\tilde{\tau})$, i.e. $\mathrm{Ind}_{W_{\tau}}^{W_K}(\tilde{\tau}): W_K \rightarrow \mathrm{GL}_d(C)$. The $C$-scheme $$M_{\tau}:=(\prod\limits_{\eta:K\inj C}\mathrm{GL}_d/B_d)\times \mathbb{G}_m$$ represents the functor
   	$$X \rightarrow \{(\delta:W_{\tau}\rightarrow \mathcal{O}_X^\times,\mathcal{F}^\bullet)| \delta  \mathrm{\ is \ unramified}\}$$
   	where $\mathcal{F}^\bullet$ is a filtration of $\left(\mathrm{Ind}_{W_{\tau}}^{W_K}(\tilde{\tau}\otimes_C \delta)\right)\otimes_{\mathbb{Q}_p} K$, which equals to the functor
   	$$X\mapsto \{\mathrm{free\ of\ rank\ }d \mathrm{\ filtered\ }(\varphi,N,G_{L/K})\text{-}\mathrm{module\ over\ }X \mathrm{\ of\ type\ }\tau\}/\sim.$$ 
   	The  morphism
   	$$\phi_{\tau}:M_\tau\rightarrow \mathfrak{M}_{n_\tau= 1}^{L/K,\underline{k}},$$
   	induced by the natural projections $$(\mathrm{GL}_d/B_d)\surj [(\mathrm{GL}_d/B_d)/\mathrm{GL}_d] \text{ and } \mathrm{pr}:\mathbb{G}_m\surj [\mathbb{G}_m/\mathbb{G}_m],$$ makes the following diagram 
   	\begin{equation*}
   		\begin{tikzcd}
   			M_\tau \arrow{r}\arrow{d}{\phi_\tau}&\mathbb{G}_m\arrow{d}{\mathrm{pr}} \\
   			\mathfrak{M}_{n_\tau= 1}^{L/K,\underline{k}} \arrow{r} & \mathit{[}\mathbb{G}_m/\mathbb{G}_m\mathit{]}
   		\end{tikzcd}.
   	\end{equation*}
   	commute and become a Cartesian product. 
   	
   	Moreover, according to theorem \ref{FWDstack}, if we denote the universal character by $\delta^{\mathrm{univ}}$, and the universal filtration of $\left(\mathrm{Ind}_{W_{\tau}}^{W_K}(\tilde{\tau}\otimes_C \delta^{\mathrm{univ}})\right)\otimes_{{\mathbb{Q}}_p} K$ on $M_\tau$ by $\mathcal{F}^{\mathrm{univ},\bullet}$, then $\mathrm{FWD}(\delta^{\mathrm{univ}},\mathcal{F}^{\mathrm{univ},\bullet})$ is the induced filtered $(\varphi,N,G_{L/K})$-module via $\phi_\tau$ (where $\mathrm{FWD}$ is the isomorphism in theorem \ref{FWDstack}).
   \end{Prop}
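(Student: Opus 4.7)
The plan is to verify the two assertions, that $M_\tau$ represents the pair functor and that the given square is Cartesian, by working component-by-component with the existing equivalence $\mathrm{FWD}$ of Theorem \ref{FWDstack}.

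First I would check that $\mathbb{G}_m$ represents the functor of unramified characters $\delta : W_\tau \to \mathcal{O}_X^\times$. Since $W_\tau/I \cong \mathbb{Z}$ is generated by the chosen Frobenius $s$, such a character is uniquely determined by $\delta(s) \in \mathcal{O}_X^\times(X)$, giving $\mathbb{G}_m = \mathrm{Spec}(C[T^{\pm 1}])$ as the fine moduli space. Given such a $\delta$, the representation $\mathrm{Ind}_{W_\tau}^{W_K}(\tilde{\tau} \otimes_C \delta)$ is a locally free $\mathcal{O}_X$-module of rank $d$, rigidified by the fixed basis of $\mathrm{Ind}_{W_\tau}^{W_K}(\tilde\tau)$. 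Tensoring with $K$ over $\mathbb{Q}_p$ and invoking Lemma \ref{fieldecomp}, the resulting $(K \otimes_{\mathbb{Q}_p} \mathcal{O}_X)$-module decomposes canonically as $\bigoplus_{\eta \in \Hom(K,C)} V_\eta$ with each $V_\eta$ locally free of rank $d$ over $\mathcal{O}_X$ and trivialized by the induced basis. Because each weight $\underline{k}_\eta = (k_{\eta,1} < \cdots < k_{\eta,d} = 0)$ is strictly increasing, a filtration of type $\underline{k}_\eta$ on $V_\eta$ is precisely a complete flag, and the functor of complete flags on the trivialized rank-$d$ bundle is represented by $\mathrm{GL}_d/B_d$. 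Taking the product over $\eta$ gives the flag factor of $M_\tau$, and this proves the first functorial description.

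Next I would identify this functor of pairs with the functor of free rank-$d$ filtered $(\varphi, N, G_{L/K})$-modules of type $\tau$ with weight $\underline{k}$. By the forgetful-cum-refilter equivalence $\mathrm{FWD}$ of Theorem \ref{FWDstack}, a filtered $(\varphi, N, G_{L/K})$-module is the same datum as a Weil-Deligne representation together with a filtration of the $K$-base change; the type condition cuts out the connected component $\mathfrak{W}^{L/K,d}_{n_\tau = 1} \simeq \mathfrak{W}^{K_\tau/K_\tau,1} \simeq [\mathbb{G}_m/\mathbb{G}_m]$ from Theorem \ref{Thm:WDstack} and Remark \ref{taupart}, while the weight condition pins down the flag type to $\underline{k}$. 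After imposing the extra rigidification (a chosen basis of the underlying WD representation), the groupoid becomes a set-valued functor and matches the pair functor above. This gives the desired description of $M_\tau$ and simultaneously defines $\phi_\tau$ as the forgetful map that drops the rigidification to its isomorphism class.

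Finally, for the Cartesian diagram, I would observe that both the right-hand projection $\mathbb{G}_m \twoheadrightarrow [\mathbb{G}_m/\mathbb{G}_m]$ and the bottom map $\mathfrak{M}^{L/K,\underline{k}}_{n_\tau = 1} \to [\mathbb{G}_m/\mathbb{G}_m]$ are precisely the maps that forget the basis rigidification on the underlying WD representation, and both top horizontal and left vertical maps in the square likewise forget the flag data on the one hand and the rigidification on the other, independently. Since the pair functor represented by $M_\tau$ is exactly \emph{rigidified character} plus \emph{flag}, whereas $\mathfrak{M}^{L/K,\underline{k}}_{n_\tau=1}$ parametrizes the unrigidified pair, the universal property of the fibre product is satisfied on the nose; the commutativity of the square and the identification $\mathrm{FWD}(\delta^{\mathrm{univ}}, \mathcal{F}^{\mathrm{univ},\bullet}) = \phi_\tau^* \mathcal{M}^{\mathrm{univ}}$ follow because $\mathrm{FWD}$ is functorial in the affinoid base and compatible with the trivializations constructed above.

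The main obstacle will be bookkeeping of trivializations: one must check that the identification of flags on $V_\eta$ with points of $\mathrm{GL}_d/B_d$, made via the chosen basis of $\mathrm{Ind}_{W_\tau}^{W_K}(\tilde\tau)$, is compatible with the $\mathrm{GL}_d$-action used to descend from $M_\tau$ to $\mathfrak{M}^{L/K,\underline{k}}_{n_\tau=1}$, and that $\mathrm{FWD}$ transports the universal rigidified pair on $M_\tau$ to the pullback of the universal filtered $(\varphi, N, G_{L/K})$-module. Once these compatibilities are verified on functorial points over an arbitrary affinoid $C$-algebra $A$, both assertions of the proposition drop out.
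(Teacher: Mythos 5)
Your argument for the two functorial descriptions of $M_\tau$ (rigidified unramified character on $\mathbb{G}_m$, complete flags on the $\eta$-decomposition cut out by the strictly increasing weights $\underline{k}_\eta$, transported through $\mathrm{FWD}$) matches what the paper compresses into the remark that these assertions are direct corollaries of Theorem \ref{FWDstack} and the surrounding discussion, so no issues there. Where you diverge is the Cartesian square: you verify it by chasing the functor of points and arguing that $M_\tau$ is exactly the ``rigidified'' version of $\mathfrak{M}^{L/K,\underline{k}}_{n_\tau=1}$, whereas the paper instead assembles the commutative cube with vertices $M_\tau$, $\mathrm{Flag}$, $\mathbb{G}_m$, $\mathrm{Sp}(C)$, $\mathfrak{M}^{L/K,\underline{k}}_{n_\tau=1}$, $[\mathrm{Flag}/\mathrm{GL}_d]$, $[\mathbb{G}_m/\mathbb{G}_m]$, $[\mathrm{Sp}(C)/\mathrm{GL}_d]$ and observes that the front face (Cartesian by Theorem \ref{FWDstack}), right face, and back face are all Cartesian, which formally forces the left face to be Cartesian. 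The paper's route is a one-line diagram-chase that needs no unpacking of what the $\mathbb{G}_m$-torsor actually trivializes; your route is more explicit but requires you to keep careful track of which map trivializes what. On that score you do get slightly tangled: you write that both $\mathrm{pr}\colon\mathbb{G}_m\twoheadrightarrow[\mathbb{G}_m/\mathbb{G}_m]$ and the bottom arrow $\mathfrak{M}^{L/K,\underline{k}}_{n_\tau=1}\to[\mathbb{G}_m/\mathbb{G}_m]$ ``forget the basis rigidification,'' but the bottom arrow forgets the filtration (it is the projection to the underlying Weil--Deligne datum), while $\mathrm{pr}$ is the $\mathbb{G}_m$-torsor adding a trivialization of the line bundle. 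Your conclusion --- $M_\tau$ is the rigidified version of $\mathfrak{M}^{L/K,\underline{k}}_{n_\tau=1}$, hence the fibre product --- is nonetheless correct, but to make it airtight you would want to write out what an $X$-point of the fibre product is (a filtered $(\varphi,N,G_{L/K})$-module together with a trivialization of the underlying line bundle) and verify it coincides with your functor of rigidified pairs. The paper's cube argument bypasses exactly this bookkeeping, which is why it is preferred.
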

   
   \begin{proof}
   	We only prove that the diagram is Cartesian as the other assertions are just direct corollary of theorem \ref{FWDstack}, combined with our discussions as above.
   	
   	Let $\mathrm{Flag}$ denote $\prod\limits_{\eta:K\inj C}\mathrm{GL}_d/B_d$ for short. Then consider the following commutative cubic diagram (where all the arrows are canonical in the understandable way.)
   	\begin{equation*}  		
   		\begin{tikzcd}
   			& M_\tau \arrow[ld, "q_\tau"] \arrow[dd] \arrow[rr] &                                            & \mathrm{Flag} \arrow[ld] \arrow[dd] \\
   			{\mathfrak{M}_{n_\tau=1}^{L/K,\underline{k}}} \arrow[dd] \arrow[rr] &                                                   & {[\mathrm{Flag}/\mathrm{GL}_d]} \arrow[dd] &                                     \\
   			& \mathbb{G}_m \arrow[ld, "\mathrm{pr}"] \arrow[rr] &                                            & \mathrm{Sp}(C) \arrow[ld]           \\
   			{[\mathbb{G}_m/\mathbb{G}_m]} \arrow[rr]                            &                                                   & {[\mathrm{Sp}(C)/\mathrm{GL}_d]}            &                                    
   		\end{tikzcd}
   	\end{equation*}
   By theorem \ref{FWDstack} again, the front square diagram is Cartesian, and by definition, the right and the back square diagrams are also  Cartesian (while the bottom one is not), which implies the left square diagram is Cartesian.
   \end{proof}

   \begin{Lemma}\label{Lem:Twistingbychar}
   Let $\delta: W_K\rightarrow C^{\times}$ be a smooth character. Denote $\delta_0 := \delta|_{W_\tau}$. Then there is an isomorphism
   $$\mathrm{Ind}_{W_\tau}^{W_K}(\tilde{\tau}\otimes\delta_0)\rightarrow \mathrm{Ind}_{W_\tau}^{W_K}(\tilde{\tau})\otimes\delta$$
   of $W_K$ representations.
   
   Moreover, for any smooth character $\delta: W_{\tau}\rightarrow C^{\times}$, then $$\mathrm{Ind}_{W_\tau}^{W_K}(\tilde{\tau}) \cong \mathrm{Ind}_{W_\tau}^{W_K}(\tilde{\tau}\otimes\delta)$$ if and only if $\tilde{\tau}\otimes\delta\cong\tilde{\tau}^{g}$ for some $g\in W_K/W_\tau$.
   \end{Lemma}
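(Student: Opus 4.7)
The plan is to deduce the first isomorphism from the standard projection formula and the second from Frobenius reciprocity combined with Mackey's restriction formula, both simplified by the fact that $W_\tau$ is normal in $W_K$.

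For the first assertion, recall the projection formula: for any subgroup $H\subseteq G$, $H$-representation $V$, and $G$-representation $W$, there is a canonical $G$-equivariant isomorphism
$$\mathrm{Ind}_H^G\bigl(V\otimes_C \mathrm{Res}_H^G W\bigr) \xrightarrow{\sim} \mathrm{Ind}_H^G(V)\otimes_C W,$$
sending $x\otimes(v\otimes w)$ to $(x\otimes v)\otimes xw$. Applied with $H=W_\tau$, $G=W_K$, $V=\tilde\tau$, and $W=\delta$ viewed as a one-dimensional $W_K$-representation, this gives the claimed isomorphism, since $\mathrm{Res}_{W_\tau}^{W_K}\delta=\delta_0$.

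For the second assertion, observe that $W_\tau$ is normal in $W_K$: it contains $I$, and $W_\tau/I$ is a subgroup of the cyclic group $W_K/I\cong\mathbb{Z}$. Normality collapses the Mackey decomposition to
$$\mathrm{Res}_{W_\tau}^{W_K}\mathrm{Ind}_{W_\tau}^{W_K}(\tilde\tau\otimes\delta) \;\cong\; \bigoplus_{g\in W_K/W_\tau}\tilde\tau^g\otimes\delta^g,$$
where $(-)^g$ denotes twist by conjugation. Moreover $\mathrm{Ind}_{W_\tau}^{W_K}(\tilde\tau)$ is irreducible by Mackey's irreducibility criterion: $\tilde\tau$ is irreducible, and for $g\notin W_\tau$ one has $\tilde\tau|_I\cong\tau\not\cong\tau^g\cong\tilde\tau^g|_I$ by the very definition of $W_\tau$. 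The same argument shows every summand $\tilde\tau^g\otimes\delta^g$ is irreducible as a $W_\tau$-representation.

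Combining Frobenius reciprocity with the above decomposition yields
$$\mathrm{Hom}_{W_K}\bigl(\mathrm{Ind}(\tilde\tau),\,\mathrm{Ind}(\tilde\tau\otimes\delta)\bigr) \;\cong\; \bigoplus_{g\in W_K/W_\tau}\mathrm{Hom}_{W_\tau}\bigl(\tilde\tau,\,\tilde\tau^g\otimes\delta^g\bigr).$$
By Schur's lemma the right-hand side is nonzero if and only if $\tilde\tau\cong\tilde\tau^g\otimes\delta^g$ for some $g$, equivalently (by applying the twist $(-)^{g^{-1}}$ and relabelling) $\tilde\tau\otimes\delta\cong\tilde\tau^{g}$ for some $g\in W_K/W_\tau$. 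Since $\mathrm{Ind}(\tilde\tau)$ is irreducible and both induced representations have the same dimension, nonvanishing of this Hom forces an isomorphism, proving the ``only if'' direction. The ``if'' direction is immediate: if $\tilde\tau\otimes\delta\cong\tilde\tau^g$, then $\mathrm{Ind}(\tilde\tau\otimes\delta)\cong\mathrm{Ind}(\tilde\tau^g)\cong\mathrm{Ind}(\tilde\tau)$, the last isomorphism following from normality of $W_\tau$ via the explicit map $x\otimes v\mapsto xg^{-1}\otimes v$. No step here poses a serious obstacle; the only subtlety is the careful bookkeeping of the conjugation twists so that the resulting criterion matches the statement of the lemma.
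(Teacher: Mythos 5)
Your argument is correct and takes essentially the same route as the paper: the projection formula you invoke is precisely the Frobenius-reciprocity map the paper writes down explicitly for the first assertion, and the Mackey restriction $\mathrm{Ind}_{W_\tau}^{W_K}(\tilde\tau)|_{W_\tau}\cong\bigoplus_{g}\tilde\tau^{g}$ driving your Hom computation is the same decomposition the paper cites before asserting that ``the second assertion follows.'' Your version is more explicit — supplying the Mackey irreducibility criterion and the Schur-lemma bookkeeping that the paper leaves to the reader — but the underlying ideas coincide.
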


   \begin{proof}
   	Consider the $W_\tau$-equivariant map:
   	\begin{align*}
   		\tilde{\tau}\otimes\delta_0 & \rightarrow \mathrm{Ind}_{W_\tau}^{W_K}(\tilde{\tau})\otimes\delta\\
   		v\otimes 1 & \mapsto (1\otimes v)\otimes 1
   	\end{align*}
   Hence by Frobenius reciprocity, one has the induced $W_K$-equivariant map
   \begin{align*}
   	\mathrm{Ind}_{W_\tau}^{W_K}(\tilde{\tau}\otimes\delta_0)& \rightarrow \mathrm{Ind}_{W_\tau}^{W_K}(\tilde{\tau})\otimes\delta\\
   	g\otimes(v\otimes1) & \mapsto (g\otimes v)\otimes \delta(g)
   \end{align*}
   The map is obviously surjective, and is injective by considering the $C$-dimension on both sides.
   
   By our computation in section 1, one has
   $$\mathrm{Ind}_{W_{\tau}}^{W_K}(\tilde{\tau})\cong \bigoplus_{g\in W_K/W_{\tau}} \tilde{\tau}^{g}$$
   as $W_{\tau}$ representations. Then the second assertion follows. 
   \end{proof}

   Let $X_{\tau}$ be the subgroup scheme of $X(W_{K}/I_L)$, representing the characters $\delta$ of $W_{K}/I_L$ such that $\mathrm{Ind}_{W_\tau}^{W_K}(\tilde{\tau})\otimes \delta\cong \mathrm{Ind}_{W_\tau}^{W_K}(\tilde{\tau})$.
   
   \begin{Rmk}\label{Rmk:Xtau}
   	Let $Y_{\tau}$ denote $\{\psi \in X(I^{a}_K/I^{a}_L)| \tau\otimes\psi\cong \tau^{g} \text{ for some } g\in W_K/W_{\tau} \}$. The canonical map
   	\begin{equation*}
   		\begin{aligned}
   			X_{\tau} & \rightarrow  Y_{\tau}\\
   			\delta &\mapsto \delta|_{I_K}
   		\end{aligned}
   	\end{equation*}
   is an surjective morphism of group scheme with kernel $X(W_{K}/I_K)$ (recall that $I_K^{a}$ (resp. $I_L^{a}$) denotes the image of $I_{K}$ (resp. $I_L$) in the abelianization $W_K^{\mathrm{ab}}$).
   \end{Rmk}
   
   \begin{proof} It just rephrases lemma \ref{Lem:Twistingbychar}.
   \end{proof}
   Now we give a chart of $\mathcal{Z}_{\tau}^{\underline{k}}$ in a explicit way and describe the associated sheaf of $(\varphi,\Gamma_K)$-modules.
   
   Recall that $\Omega_L$ denotes the set of $W_K$-conjugacy class of irreducible $I_K/I_L$-representations (also fix a representative element in each class). If we denote
   $$\Omega_{\tau}:=\{\tau'\in \Omega_L\ |\ [\tau']= [\tau\otimes\delta]\text{ for some }\delta\in X(W_K/I_L)\},$$
   then $X(W_K/I_L)$ acts transitively one $\Omega_{\tau}$ with kernel $X_{\tau}$. By corollary \ref{Parameterstack}, one has
   $$\mathcal{Z}_{\tau}^{\underline{k}} \cong [\left(\coprod_{\tau'\in\Omega_{\tau}} \mathcal{T}\times \mathfrak{M}_{n_{\tau'}=1}^{L/K,\underline{k}}\right)/X(W_K/I_L)]\cong[ (\mathcal{T}\times \mathfrak{M}_{n_{\tau'}=1}^{L/K,\underline{k}})/X_{\tau}].$$
   
   Let $\mathcal{T}_{\tau}$ denote the $C$-rigid space representing the functor of continuous characters of the image of the norm map $N_{K_\tau/K}: K_\tau^\times\rightarrow K^\times$. As $K_\tau$ is unramified over $K$, then $N_{K_\tau/K}(K_\tau^\times) = \{a\in K^\times|v_K(a)\in e_\tau\mathbb{Z}\}$. Then $\mathcal{T}_{\tau}$ is a quotient of $\mathcal{T}$ with kernel $X(W_K/W_{\tau})$. The canonical map $\mathcal{T}\times X(W_{\tau}/I_K))\rightarrow \mathcal{T}_{\tau}, (\delta_1,\delta_2)\mapsto \bar{\delta}_1\delta_2$ induces an isomorphism
   $$[(\mathcal{T}\times X(W_{\tau}/I_K))/X(W_{K}/I_{K})]\cong \mathcal{T}_{\tau}.$$
   
    Let $Y_\tau$ denote the group $C$-rigid space representing the functor $$A\rightarrow \{\psi \in X(I_K/I_L)(A)| \tau\otimes\psi\cong \tau^{g} \text{ for some } g\in W_K/W_{\tau} \}$$.
   By remark \ref{Rmk:Xtau}, one has short exact sequence
   $$0\rightarrow  X(W_{K}/I_{K})\rightarrow X_{\tau} \rightarrow Y_{\tau}\rightarrow 0$$

   Hence
   \begin{equation*}
   	\begin{aligned}
   		[ (\mathcal{T}\times \mathfrak{M}_{n_{\tau'}=1}^{L/K,\underline{k}})/X_{\tau}]&\cong [((\mathcal{T}\times X(K_{\tau}/I_K)\times \mathrm{Flag})/X(W_K/I_K))/(Y_{\tau}\times \mathbb{G}_m)]\\
   		& \cong [(\mathcal{T_{\tau}}\times \mathrm{Flag})/(Y_{\tau}\times \mathbb{G}_m)]\\
   	\end{aligned}
   \end{equation*}

   The above discussion can be summarized as the following proposition:
   
   \begin{Prop}
   	The morphism 
   \begin{align*}
   	[\mathcal{T}_{\tau}\times \mathrm{Flag}/\mathbb{G}_m]&\rightarrow \mathcal{Z}^{\underline{k}}_{\tau}
   \end{align*}
   is \'{e}tale with Galois group $Y_{\tau}$.
   \end{Prop}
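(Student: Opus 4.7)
The plan is to deduce the result directly from the explicit descent description of $\mathcal{Z}_{\tau}^{\underline{k}}$ worked out just above the statement. The preceding discussion gives
$$\mathcal{Z}_{\tau}^{\underline{k}}\ \cong\ [(\mathcal{T}\times \mathfrak{M}^{L/K,\underline{k}}_{n_{\tau}=1})/X_{\tau}]\ \cong\ [(\mathcal{T}_{\tau}\times \mathrm{Flag})/(Y_{\tau}\times\mathbb{G}_{m})],$$
and I would first observe that, under this identification, the morphism in the statement is the natural quotient map $[(\mathcal{T}_{\tau}\times\mathrm{Flag})/\mathbb{G}_{m}]\to[(\mathcal{T}_{\tau}\times\mathrm{Flag})/(Y_{\tau}\times\mathbb{G}_{m})]$ induced by the direct-factor inclusion $\mathbb{G}_{m}\hookrightarrow Y_{\tau}\times\mathbb{G}_{m}$.

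Next, I would invoke the general descent fact that for any split short exact sequence of group rigid spaces $1\to H\to G\to Q\to 1$ acting on a rigid space $X$, there is a canonical isomorphism $[X/G]\cong[[X/H]/Q]$, so the natural morphism $[X/H]\to[X/G]$ is canonically an fppf $Q$-torsor. Applied to $H=\mathbb{G}_{m}$, $G=Y_{\tau}\times\mathbb{G}_{m}$, $Q=Y_{\tau}$, this reduces the proposition to checking that $Y_{\tau}$ is finite \'etale over $C$. The group $Y_{\tau}$ is a closed subgroup of $X(I_{K}^{a}/I_{L}^{a})$, and since $L/K$ is a finite Galois extension the group $I_{K}^{a}/I_{L}^{a}$ is finite abelian; because $C$ is assumed large enough (containing the relevant roots of unity), $X(I_{K}^{a}/I_{L}^{a})$ is a finite constant group scheme, and therefore so is $Y_{\tau}$. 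Consequently, the map is a finite \'etale $Y_{\tau}$-torsor, i.e., an \'etale Galois cover with Galois group $Y_{\tau}$.

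The main point that will require care is the bookkeeping to verify that the residual $Y_{\tau}\times\mathbb{G}_{m}$-action on $\mathcal{T}_{\tau}\times\mathrm{Flag}$ is genuinely a direct-product action, rather than a more tangled semi-direct structure. This rests on combining (i) the split short exact sequence $0\to X(W_{K}/I_{K})\to X_{\tau}\to Y_{\tau}\to 0$ of Remark~\ref{Rmk:Xtau}, (ii) the quotient description $\mathcal{T}_{\tau}\cong \mathcal{T}/X(W_{K}/W_{\tau})$, so that the $X(W_{K}/I_{K})\cong\mathbb{G}_{m}$-piece of $X_{\tau}$ descends harmlessly through the $X_{\tau}$-quotient on the $\mathcal{T}$-factor, and (iii) the Cartesian description of $\mathfrak{M}^{L/K,\underline{k}}_{n_{\tau}=1}$ from the previous proposition, under which the residual $\mathbb{G}_{m}$ acting on $\mathrm{Flag}\times\mathbb{G}_{m}$ is identified with the unramified-character $\mathbb{G}_{m}$ factor. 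Once these two $\mathbb{G}_{m}$'s are pinned down as the same object, the \'etaleness and the Galois structure of the cover follow formally from the torsor interpretation above.
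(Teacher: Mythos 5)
Your proof is correct and takes essentially the same route as the paper: the paper presents the proposition as a direct summary of the chain of isomorphisms just derived, which already yields $\mathcal{Z}^{\underline{k}}_{\tau}\cong[(\mathcal{T}_{\tau}\times\mathrm{Flag})/(Y_{\tau}\times\mathbb{G}_m)]$, and leaves implicit the steps you make explicit (the $Y_{\tau}$-torsor structure of $[X/H]\to[X/G]$ and the finite-\'etale nature of $Y_{\tau}$ as a subgroup scheme of $X(I_K^a/I_L^a)$ over a sufficiently large $C$).
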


   \begin{Def}
   	We denote $\mathcal{S}^{\underline{k}}_{\tau}:= \mathcal{T}_{\tau}\times \mathrm{Flag}$, and $\theta$ the composition of the canonical morphisms:
   	$$\mathcal{S}_{\tau}^{\underline{k}} \rightarrow [\mathcal{T}_{\tau}\times \mathrm{Flag}/\mathbb{G}_m] \rightarrow \mathcal{Z}^{\underline{k}}_{\tau}.$$
   \end{Def}

   \begin{Rmk}
   	\label{RmkforParameterspace}
   	{\ }
   	\begin{enumerate}
   		\item Via pulling back along $\theta$, we have a canonical family $D^{u}:=\theta^{*}(D^{\mathrm{univ}})$ of $(\varphi,\Gamma_K)$-module  over $\mathcal{S}_\tau^{\underline{k}}$, here $D^{\mathrm{univ}}$ is the universal family of $(\varphi,\Gamma_K)$-module over $\mathcal{Z}_{\tau}^{\underline{k}}$. 
   		
   		\item
   		Let $\delta^{\mathrm{univ}}$ denote the universal character of $K_{\tau}^{\times}$ over $\mathcal{T}_{\tau}$. By the construction, one can compute:
   		$$\mathrm{pr}_*(D^{u,r})[1/t] \cong \mathrm{Ind}_{K_{\tau}}^{K}\left((\tilde{\tau}\otimes_{C}\mathcal{R}^{r}_{K_{\tau},C})^{H_{K_{\tau}}}\otimes_{C} \mathcal{R}_{K_{\tau},C}(\delta^{\mathrm{univ}})\right)[1/t],$$
   		here $r>r(K)$, $\mathrm{pr}$ is the natural projection $\mathcal{S}_{\tau}^{\underline{k}}\rightarrow \mathrm{Flag}$ and $\mathrm{Ind}_{K_\tau}^{K}$ is the induction functor defined in \cite[Sect. 2.2]{LiuR07}.
   		
   		Let $\mathcal{F}^{\mathrm{univ},\bullet}$ denote the universal filtration over $\mathrm{Flag}$ (hence a filtration on $A\otimes_C(\mathrm{Ind}_{W_{\tau}}^{W_K}\tilde{\tau})\otimes_{K_0}K$ for each affinoid subdomain $\mathrm{Sp}(A)\subseteq \mathrm{Flag}$). Then for $n$ large enough, the canonical isomorphism
   		\begin{equation*}
   			D^{u,r}(\mathrm{Sp}(A))[1/t]\otimes^{\iota_n}_{\mathcal{R}_{K,C}} K_n[\![t]\!]\cong A\otimes_C(\mathrm{Ind}_{W_{\tau}}^{W_K}\tilde{\tau}\otimes\delta^{\mathrm{univ}})\otimes_{K_0}^{\iota_n}K_n(\!(t)\!)
    		\end{equation*}
   	induces a filtration on $D^{u,r}(\mathrm{Sp}(A))[1/t]\otimes ^{\iota_n}_{\mathcal{R}_{K,C}} K_n[\![t]\!]$, where
   	\begin{equation*}
   		\mathcal{F}^{i}(A\otimes_C(\mathrm{Ind}_{W_{\tau}}^{W_K}\tilde{\tau})\otimes_{K_0}^{\iota_n}K_n(\!(t)\!)):= \sum_{j\in\mathbb{Z}} \mathcal{F}^{j}(A\otimes_C\mathrm{Ind}_{W_\tau}^{W_K}\tilde{\tau}\otimes_{K_0}^{\iota_n}K)\otimes_K t^{i-j}K_n[\![t]\!].
   	\end{equation*}
   
        Then $D^{u} = D^{u,r} \otimes_{\mathcal{R}_{K,C}^{r}}\mathcal{R}_{K,C}$ for 
        \begin{equation*}
        	D^{u,r}=\{x\in D^{u,r}[1/t]\ |\ \iota_{n}(x) \in \mathcal{F}^{0} (D^{u,r}[1/t]\otimes ^{\iota_n}_{\mathcal{R}_{K,C}} K_n[\![t]\!])\}.
        \end{equation*}
   		\item Even though the construction of $\mathcal{S}_\tau^{\underline{k}}$, $\mathcal{Z}_\tau^{\underline{k}}$ and the canonical map $\theta$ does not depend on the choose of uniformizer $\varpi$ in $K$. We can construct the $(\varphi,\Gamma_K)$-module $\mathcal{D}^{\mathrm{u}}$ on $\mathcal{S}_\tau^{\underline{k}}$ via choosing a uniformizer as follows.
   		
   		Choose a uniformizer $\varpi\in K$, then $\varpi^{e_\tau}$ is a uniformizer of $K_\tau$. Then the universal character $\delta^{\mathrm{u}}$ of $N_{L/K}(K^\times_\tau)$ on  $\mathcal{S}_{\tau}^{\underline{k}}$ can be decomposed by $\delta_1\delta_2$, where 
   		$\delta_1(a) := \delta^{\mathrm{u}}(\varpi^{e_\tau})^{v_{K_\tau}(a)}$ is unramified and $\delta_2$ is the unique character such that $\delta_2|_{\mathcal{O}_K^\times} =\delta^{\mathrm{u}}|_{\mathcal{O}_K^\times}$ and $\delta_2(\varpi^{e_\tau}) = 1$. We can extend $\delta_2$ to a character $\tilde{\delta}_2$ of $K^\times$ (for example, set $\tilde{\delta}_2(\varpi) = 1$). Then we can define
   		$$\mathcal{D}^{\mathrm{u}} := \mathbf{D}(\mathrm{Ind}_{W_\tau}^{W_K}(\tilde{\tau}\otimes_C \delta_1))(\tilde{\delta_2}).$$
   		Up to isomorphism, $\mathcal{D}^{\mathrm{u}}$ does not depend on the extension $\tilde{\delta}_2$ of $\delta_2$, and also not depend on the choice of the uniformizer $\varpi$ by definition.
   		
   		\item Let $X$ be a rigid $C$-space, and let $f$ be a morphism $X\rightarrow \mathcal{S}_\tau^{\underline{k}}$. From now on, we use the notation $\mathcal{R}(f)$ to denote the pullback $f^*(\mathcal{D}^{\mathrm{u}})$ of the universal $(\varphi,\Gamma_K)$-module $\mathcal{D}^{\mathrm{u}}$.
   		
   		\item For a point $x = (\delta_x,\mathcal{F}^{\bullet})\in \mathcal{S}_\tau^{\underline{k}}$, the specialization $\mathcal{R}(x) = \mathcal{D}^{\mathrm{u}}_x$ can be computed via the recipe in (2). One can also compute it in the following way. Let $V_0 := \mathrm{Ind}_{W_\tau}^{W_K}(\tilde{\tau})$. By theorem \ref{FWDstack}, one can define a filtered $(\varphi,N,G_{L/K})$-module $M_x:= \Omega(\mathcal{F}^{\bullet},V_0)$. After perhaps enlarging $C$, let $\delta: K^\times \rightarrow k(x)^\times$ be a continuous character extending $\delta_x$.
   		Then one can show that there exists an isomorphism of $(\varphi,\Gamma_K)$-modules: $$\mathbf{D}(M_x)(\delta) \cong \mathcal{D}^{\mathrm{u}}_x$$
   		
   		\item Let $\tau_1,\tau_2$ be absolutely irreducible smooth $I_K$-representations over $C$. Suppose that there exists a smooth character $\delta: I_K \rightarrow C^\times$ and a element $g\in W_K$ such that $\tau_1\otimes\delta \cong \tau_2^g$. Then $\delta$ induces a canonical isomorphism 
   		$$\kappa(\delta,g):\mathcal{S}_{\tau_1}^{\underline{k}}\rightarrow \mathcal{S}_{\tau_2}^{\underline{k}}$$
   		such that $\kappa(\delta,g)^*(\mathcal{D}^{\mathrm{u}}_2) \cong \mathcal{D}^{\mathrm{u}}_1$, where $\mathcal{D}^{\mathrm{u}}_i$ is the universal object of $\mathcal{S}_{\tau_i}^{\underline{k}}$.
   	\end{enumerate}
   \end{Rmk}

   \begin{Theo}
   	$\mathcal{S}_{\tau}^{\underline{k}}/Y_{\tau}$ is the coarse moduli space (in the sense of \cite[Def~5.9]{mumford1982geometric}) of the functor
   	\begin{align*}
   		\mathcal{Q}_{\tau,\underline{k}}:C\text{-rigid spaces} & \rightarrow \text{Sets} \\
   		A & \mapsto \{ (\varphi,\Gamma_K)\text{-module }D\text{ over }A\text{ of type }{(\tau,\underline{k})} \}/\sim.
   	\end{align*}

   \end{Theo}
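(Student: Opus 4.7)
The plan is to derive this directly from the proposition immediately above: the map $\mathcal{S}_\tau^{\underline{k}}\to\mathcal{Z}_\tau^{\underline{k}}$ factors through $[\mathcal{S}_\tau^{\underline{k}}/\mathbb{G}_m]\to\mathcal{Z}_\tau^{\underline{k}}$, which is étale with Galois group $Y_\tau$, and $\mathcal{S}_\tau^{\underline{k}}\to[\mathcal{S}_\tau^{\underline{k}}/\mathbb{G}_m]$ is a $\mathbb{G}_m$-gerbe reflecting the fact that every $D\in\mathcal{Z}_\tau^{\underline{k}}(A)$ has automorphism sheaf $\mathbb{G}_m$ by Schur's lemma applied to the absolutely irreducible WD-type $\tau$. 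Since $Y_\tau$ is a finite étale group rigid space (for each $g\in W_K/W_\tau$ the fiber of $Y_\tau\to W_K/W_\tau$ is either empty or a torsor under the finite group $\{\psi\in X(I_K^{a}/I_L^{a}):\tau\otimes\psi\cong\tau\}$), the quotient $\mathcal{S}_\tau^{\underline{k}}/Y_\tau$ exists in the category of rigid $C$-spaces, so the statement makes sense.

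To construct the natural transformation $\eta:\mathcal{Q}_{\tau,\underline{k}}\to h_{\mathcal{S}_\tau^{\underline{k}}/Y_\tau}$, given $D\in\mathcal{Q}_{\tau,\underline{k}}(A)$ I would choose an admissible cover $\{\mathrm{Sp}(A_i)\}$ of $\mathrm{Sp}(A)$ on which $D|_{A_i}\cong\mathcal{R}(f_i)$ for some $f_i:\mathrm{Sp}(A_i)\to\mathcal{S}_\tau^{\underline{k}}$; such a cover exists by the étale surjectivity of $\theta$ together with the local triviality of any $\mathbb{G}_m$-gerbe. On overlaps $\mathcal{R}(f_i)\cong\mathcal{R}(f_j)$, and Proposition \ref{Prop:deRhamcondition} combined with the definition of $Y_\tau$ forces $f_j$ and $f_i$ to differ by a (locally constant) section of $Y_\tau$ after restriction. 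Hence the compositions $\mathrm{Sp}(A_i)\xrightarrow{f_i}\mathcal{S}_\tau^{\underline{k}}\surj \mathcal{S}_\tau^{\underline{k}}/Y_\tau$ glue to a morphism $\mathrm{Sp}(A)\to\mathcal{S}_\tau^{\underline{k}}/Y_\tau$, independent of all choices.

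Next I would verify that $\eta$ is bijective on $k$-valued points for every algebraically closed extension $k/C$. A point of $\mathcal{Q}_{\tau,\underline{k}}(k)$ corresponds to an isomorphism class $\mathbf{D}(M)(\delta)$ with $M$ of type $(\tau,\underline{k})$, and Proposition \ref{Prop:deRhamcondition} together with Remark \ref{RmkforParameterspace}(5) shows that two such data yield isomorphic modules precisely when the associated points of $\mathcal{S}_\tau^{\underline{k}}(k)$ lie in a common $Y_\tau(k)$-orbit. For the universal property, given any rigid $C$-space $Y$ with a natural transformation $\xi:\mathcal{Q}_{\tau,\underline{k}}\to h_Y$, evaluating $\xi$ on the universal object $\mathcal{D}^{\mathrm{u}}\in\mathcal{Q}_{\tau,\underline{k}}(\mathcal{S}_\tau^{\underline{k}})$ produces a morphism $\psi:\mathcal{S}_\tau^{\underline{k}}\to Y$ which is $Y_\tau$-invariant by Remark \ref{RmkforParameterspace}(6), and therefore descends uniquely to $\mathcal{S}_\tau^{\underline{k}}/Y_\tau\to Y$.

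The main obstacle I anticipate is the gluing step in the construction of $\eta$: one must check that the étale-local lifts of a family $D$ to $\mathcal{S}_\tau^{\underline{k}}$ exist over an affinoid base (which reduces to trivializing the underlying Weil--Deligne representation to the fixed induced model $\mathrm{Ind}_{W_\tau}^{W_K}\tilde\tau$ and then representing the filtration as a point of $\mathrm{Flag}$), and that the ambiguity between two such lifts is captured exactly by $Y_\tau$ and not by a strictly larger group. Both points rely in an essential way on Proposition \ref{Prop:deRhamcondition} and the identification of $X_\tau$ and $Y_\tau$ carried out above; the remaining verifications are formal consequences of the stack-theoretic presentation $\mathcal{Z}_\tau^{\underline{k}}\cong [(\mathcal{T}_\tau\times\mathrm{Flag})/(Y_\tau\times\mathbb{G}_m)]$ already established.
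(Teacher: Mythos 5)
Your approach diverges from the paper's. The paper works with the presentation $\mathcal{Z}_\tau^{\underline{k}}\cong[(\mathcal{S}_\tau^{\underline{k}}/Y_\tau)/\mathbb{G}_m]$, where $\mathcal{S}_\tau^{\underline{k}}/Y_\tau$ is already a rigid space and $\mathbb{G}_m$ acts trivially. Given $D$ over $Y$, the resulting $f_D:Y\to\mathcal{Z}_\tau^{\underline{k}}$ unpacks as a $\mathbb{G}_m$-torsor $p:\mathcal{E}\to Y$ together with a $\mathbb{G}_m$-equivariant map $\tilde f_D:\mathcal{E}\to\mathcal{S}_\tau^{\underline{k}}/Y_\tau$; because the $\mathbb{G}_m$-action on the target is trivial, $\tilde f_D$ is $\mathbb{G}_m$-invariant and therefore factors uniquely through $Y=\mathcal{E}/\mathbb{G}_m$, yielding $g_D$. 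The natural transformation is thus built without ever lifting anything to $\mathcal{S}_\tau^{\underline{k}}$ itself. You instead attempt local lifts to $\mathcal{S}_\tau^{\underline{k}}$, project to $\mathcal{S}_\tau^{\underline{k}}/Y_\tau$, and glue.

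That is where a concrete gap appears. You assert the lifts $f_i:\mathrm{Sp}(A_i)\to\mathcal{S}_\tau^{\underline{k}}$ exist over an \emph{admissible} cover, citing ``local triviality of any $\mathbb{G}_m$-gerbe'' and ``étale surjectivity of $\theta$.'' Neither justification is correct: $\theta$ is not étale (the factor $\mathcal{S}_\tau^{\underline{k}}\to[\mathcal{S}_\tau^{\underline{k}}/\mathbb{G}_m]$ is a smooth atlas of relative dimension one; and the gerbe goes the other way, namely $[\mathcal{S}_\tau^{\underline{k}}/\mathbb{G}_m]\to\mathcal{S}_\tau^{\underline{k}}/Y_\tau$, not $\mathcal{S}_\tau^{\underline{k}}\to[\mathcal{S}_\tau^{\underline{k}}/\mathbb{G}_m]$). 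More seriously, producing $f_i$ means trivializing the associated $(Y_\tau\times\mathbb{G}_m)$-torsor. The $\mathbb{G}_m$-part (a line bundle) is admissibly locally trivial, but $Y_\tau$ is in general a nontrivial finite \emph{étale} group and $Y_\tau$-torsors have no reason to split over an admissible (Zariski) cover; one genuinely needs an étale cover, and the gluing must then be upgraded to étale descent of morphisms of rigid spaces. The paper's route evades this entirely because it never trivializes the $Y_\tau$-part. Your remaining steps — bijectivity on $\mathrm{Sp}(C')$-points and the universal property via evaluating a natural transformation on $\mathcal{D}^{\mathrm{u}}$ and invoking $Y_\tau$-invariance — match the paper's in spirit, but the central construction of $\eta$ as written does not go through without this repair.
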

   \begin{proof}
   	Note that $\mathcal{S}_{\tau}^{\underline{k}}/Y_\tau \cong (\mathcal{T}\times X(W_{\tau}/I_L))/X_{\tau}\times \mathrm{Flag}$, hence is indeed a rigid space.
   	
   	Let $Y$ be a rigid space over $C$ equipped with a $(\varphi,\Gamma_K)$-module $D$ of type $(\tau,\underline{k})$, then $D$ induces a morphism $f_D:Y\rightarrow \mathcal{Z}_{\tau}^{\underline{k}}\cong [(\mathcal{S}_{\tau}^{\underline{k}}/Y_{\tau})/\mathbb{G}_m]$, which corresponds to a diagram
   	\begin{equation*}
   		\begin{tikzcd}
   			\mathcal{E} \arrow[r,"\tilde{f}_D"] \arrow[d,"p"] & \mathcal{S}_{\tau}^{\underline{k}}/Y_\tau \\
   			Y                               &  
   		\end{tikzcd}
   	\end{equation*}
    here $p$ is a $\mathbb{G}_m$-torsor and $\tilde{f}_D$ is $\mathbb{G}_m$ equivariant.
    
   	As the $\mathbb{G}_m$-action on $\mathcal{S}_{\tau}^{\underline{k}}/Y_\tau$ is trivial, then \'{e}tale locally there exists a unique morphism $g_D:Y\rightarrow \mathcal{S}_{\tau}^{\underline{k}}/Y_\tau$ such that $\tilde{f}_D = g_D\circ p$. As such factorization is unique, one can apply \'{e}tale descent to get a unique global morphism $g_D:Y\rightarrow \mathcal{S}_{\tau}^{\underline{k}}/Y_\tau$. The map $\Psi:f_D\mapsto g_D$ is indeed a functor from $\mathrm{Hom}(-,\mathcal{Z}_{\tau}^{\underline{k}})$ to $\mathrm{Hom}(-,\mathcal{S}_{\tau}^{\underline{k}}/Y_{\tau})$. 
   	
   	Suppose $X$ is a $C$-rigid space such that there exists a functor $$\Psi_X: \mathrm{Hom}(-,\mathcal{Z}_{\tau}^{\underline{k}})\rightarrow \mathrm{Hom}(-,X).$$
   	Then $\Psi_X(\mathcal{S}_{\tau}^{\underline{k}}/Y_{\tau}\rightarrow \mathcal{Z}_{\tau}^{\underline{k}}): \mathcal{S}_{\tau}^{\underline{k}}/Y_{\tau} \rightarrow X$ is the unique morphism such that 
   	for any $f_D:Y\rightarrow \mathcal{Z}_{\tau}^{\underline{k}}$, the composition $\Psi_X(\mathcal{S}_{\tau}^{\underline{k}}/Y_{\tau}\rightarrow \mathcal{Z}_{\tau}^{\underline{k}})\circ \Psi(f_D)$ equals to $\Psi_X(f_D)$.
   	
   	Finally, when $Y=\mathrm{Sp}(C')$ for some finite field extension $C'/C$, then unique $\mathbb{G}_m$-torsor (up to isomorphism) is $\mathbb{G}_{m,C'}$. Hence any $\mathbb{G}_m$-equivariant morphism $\mathbb{G}_{m,C'}\rightarrow \mathcal{S}_{\tau}^{\underline{k}}/Y_\tau$ comes from a morphism $\mathrm{Sp}(C')\rightarrow \mathcal{S}_{\tau}^{\underline{k}}/Y_\tau$. It follows that $\Psi$ is a bijective on a point and hence $\mathcal{S}_{\tau}^{\underline{k}}/Y_\tau$ is the coarse moduli space of the functor $\mathcal{Q}_{\tau,\underline{k}}$.

\end{proof}

   \subsection{Regular locus of Parameter Spaces}\label{Subsect:RLoP}
   For convenience, from now on, the tensor product of two $(\varphi,\Gamma_K)$-modules $D_1,D_2$, which live over the same affinoid algebra or the same analytic rigid space, is just denoted by $D_1\otimes D_2$ and omit the basis. Similarly, the space of morphisms of $(\varphi,\Gamma_K)$-modules is denoted by $\Hom(D_1,D_2)$ and omit the "$\varphi,\Gamma_K$" subscript.
   
   From now on, we fix the following settings and notations once and for all:
   \begin{enumerate}
   	\item a positive integer $n$ with an ordered partition $n:= n_1+\cdots+n_l$;
   	\item for each $1\leq i\leq l$, and each $\eta\in\Hom(K,C)$, a $\eta$-filtration weight
   	$$\underline{k_\eta^{(i)}}:= (k_{\eta,1}^{(i)}<\cdots<k_{\eta,n_i}^{(i)}=0);$$
   	\item $\underline{k^{(i)}}:= (\underline{k_\eta^{(i)}})_{\{ \eta:K\inj C\} }$;
   	\item for each  $1\leq i\leq l$, an absolutely irreducible smooth $I_K$-representation $\tau_i$ over $C$ (actually we can choose some finite Galois extension $L/K$ such that every $\tau_i$ is trivial on $I_L$), such that $d_{\tau_i} e_{\tau_i} = n_i$, where $d_{\tau_i}$ is the dimension of $\tau_i$ and $e_{\tau_i} := [K_\tau:K]$.
   	\item $\mathcal{T}_i:=\mathcal{T}_{\tau_i}$ for $1\leq i\leq l$, and $\mathcal{T}:= \prod\limits_{1\leq i\leq l} \mathcal{T}_i$.
   	\item $\mathcal{S}_i:=\mathcal{S}_{\tau_i}^{\underline{k}^{(i)}}$ for $1\leq i\leq l$, and $\mathcal{S}:= \prod\limits_{1\leq i\leq l} \mathcal{S}_i$.
   	\item For each $i$, denote 
   	$$\Delta: \mathcal{S}_i \rightarrow \mathcal{T}_i$$
   	be the natural projection, and also use the same notation $\Delta$ denote the natural projection $\mathcal{S}\rightarrow \mathcal{T}$.
   \end{enumerate}

   \begin{Rmk}
   For convenience, we may assume that $\tau_i\otimes \delta \cong \tau_j^{g}$ only happens when $\tau_i\cong \tau_j$, via replacing $\tau_j$ by $\tau_j^{g}\otimes \delta^{-1}$. It would not change $\mathcal{S}_j$ according to remark \ref{RmkforParameterspace}(6). And for such pair $(i,j)$, if we also have $\underline{k^{(i)}}=\underline{k^{(j)}}$, then 
   the canonical isomorphism $$\kappa(\delta,g): \mathcal{S}_i\xrightarrow{\sim} \mathcal{S}_j$$ is induced by the identity map on $\mathcal{T}_{i}\times \prod\limits_{\eta:K\inj C} \mathrm{GL}_{n_i}^{\mathrm{rig}}/B_{n_i}^{\mathrm{rig}}$.
   \end{Rmk}

   \begin{Rmk}\label{Rmk:dimofS}
   	As $\mathcal{S}_i\cong \mathbb{G}_m^{\mathrm{rig}}\times\mathcal{W}\times (\prod\limits_{\eta: K\hookrightarrow C}\mathrm{GL}_{n_i}^{\mathrm{rig}}/B_{n_i}^{\mathrm{rig}})$ as $C$-rigid spaces, then one can compute that $\dim(\mathcal{S}_i) = 1+[K:\mathbb{Q}_p]+[K:\mathbb{Q}_p](\frac{n_i(n_i-1)}{2})$. Hence
   	$$\dim(\mathcal{S}) = l+[K:\mathbb{Q}_p]\sum_{i=1}^{l}\frac{n_i^2-n_i+2}{2}$$
   \end{Rmk}
   
   Now let $|z|$ denote the character $z\mapsto |N_{K/\mathbb{Q}_p}(z)|$ of $K^\times$. 
   \begin{Def}\label{Def:RegLocus}
   	For a pair $1\leq i< j\leq l$, such that $\tau_i\cong \tau_j$, we write $\mathcal{T}_{i,j} \subset \mathcal{T}_{\tau_i}$ for the Zariski-open complement of $C$-valued points $\delta_{-\underline{k}}$, $|z|\delta_{\underline{k}+1}$ with $\underline{k} =(k_\eta)_{\eta:K\inj C}$ such that $k_\eta\leq k^{(j)}_{\eta,t}-k^{(i)}_{\eta,t}$ for all $\eta$ and $1\leq t\leq n_i$ (even though $\delta_{-\underline{k}}$ and $|z|\delta_{\underline{k}+1}$ are character of $W_K$, here means their restrictions to $W_\tau$).
   	
   	We define the \textit{regular locus} $\mathcal{T}_{\mathrm{reg}}$ \textit{of} $\mathcal{T}$ as the Zariski open subset of points $(\delta_1,\cdots,\delta_l)$ satisfies the condition that, $\delta_i\delta_j^{-1}\in \mathcal{T}_{i,j}$ for every $i < j$ such that $\tau_i\cong \tau_j$.
   	And we define the \textit{regular locus} $\mathcal{S}_{\mathrm{reg}}$ \textit{of} $\mathcal{S}$ as $\Delta^{-1}(\mathcal{T}_{\mathrm{reg}})$.
   \end{Def}
   
   The motivation to give the definition of the regular locus as above is because we need a Zariski dense and open subspace of the parameter space $\mathcal{S}$ satisfies the following property, similar as \cite[Prop~6.2.8]{Kedlaya2012} for rank one $(\varphi,\Gamma_K)$-modules.
   
   \begin{Prop}\label{Prop:dimforcoh}
   For a point $x=(x_1,\cdots,x_l)\in \mathcal{S}_{\mathrm{reg}}$, we write $D_i:= \mathcal{D}^{\mathrm{u}}_{x_i}$. One has
   \begin{enumerate}
   	\item $H^0_{\varphi,\gamma_K}(D_i^{\vee}\otimes D_i) = k(x)$;
   	\item $H^m_{\varphi,\gamma_K}(D_i^{\vee}\otimes D_j) = 0$ for $i < j$, and $m=0,2$;
   	\item $\dim_{k(x)}H^1_{\varphi,\gamma_K}(D_i^{\vee}\otimes D_j) = [K:\mathbb{Q}_p]n_in_j$ for $i< j$.
   \end{enumerate}
   \end{Prop}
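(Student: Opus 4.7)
The plan is to combine the Euler--Poincar\'e formula and Tate duality for the Herr cohomology of $(\varphi,\Gamma_K)$-modules with the structure theory developed in this section. For a $(\varphi,\Gamma_K)$-module $D$ of rank $r$ over $\mathcal{R}_{K,k(x)}$, one has the Euler--Poincar\'e formula
\[
\sum_m (-1)^m \dim_{k(x)} H^m_{\varphi,\gamma_K}(D) = -[K:\mathbb{Q}_p]\cdot r,
\]
together with a perfect duality $H^m_{\varphi,\gamma_K}(D) \cong H^{2-m}_{\varphi,\gamma_K}(D^{\vee}\otimes \mathcal{R}_{K,k(x)}(\chi))^{\vee}$, where $\chi$ is the character of $K^\times$ obtained from the cyclotomic character via local class field theory. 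Given these, part (3) will follow formally from (1) and (2), so the main task is to pin down $H^0$ in each case and deduce $H^2$ by duality.

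For (1), note that $H^0_{\varphi,\gamma_K}(D_i^{\vee}\otimes D_i) = \mathrm{End}_{(\varphi,\Gamma_K)}(D_i)$. Writing $D_i \cong \mathbf{D}(M_i)(\delta_i)$ and using that twisting by the line bundle $\mathcal{R}_{K,k(x)}(\delta_i)$ is invertible on both sides, this equals $\mathrm{End}(\mathbf{D}(M_i))$. By the full faithfulness of $\mathbf{D}_K$ on the fiber over the field $k(x)$ (Corollary~III.2.5 of \cite{Berger2008}), we have $\mathrm{End}(\mathbf{D}(M_i)) = \mathrm{End}(M_i)$. Since $M_i$ is of type $\tau_i$ with $\tau_i$ absolutely irreducible, proposition \ref{Prop:WD-decomp} shows its underlying Weil--Deligne representation $\mathrm{Ind}_{W_{\tau_i}}^{W_K}(\tilde\tau_i\otimes\delta)$ is absolutely irreducible, and a direct check on the (full-flag) filtration together with Schur's lemma then yields $\mathrm{End}(M_i)=k(x)$.

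For (2), suppose $i<j$ and consider a nonzero element of $H^0_{\varphi,\gamma_K}(D_i^{\vee}\otimes D_j)$, i.e., an injection $\mathcal{R}_{K,k(x)}\hookrightarrow D_i^{\vee}\otimes D_j$. Since $\mathbf{D}$ commutes with tensor products (theorem \ref{Thm: PropofBergerFun}), this gives an injection $\mathcal{R}_{K,k(x)}(\delta_i\delta_j^{-1}) \hookrightarrow \mathbf{D}(M_i^{\vee}\otimes M_j)$. Lemma \ref{Inj} forces $\delta_i\delta_j^{-1}$ to be deRham with smooth part trivial on $I_L$, and by remark \ref{deRhamchar} the embedding lifts to an embedding of the rank-one filtered $(\varphi,N,G_{L/K})$-module $\mathbf{M}(\delta_i\delta_j^{-1})$ into $M_i^{\vee}\otimes M_j$. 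Decomposing the underlying Weil--Deligne representation via proposition \ref{Prop:WD-decomp} shows that the existence of a trivial subquotient (after twisting by a smooth character) forces $\tau_i\cong\tau_j$, while compatibility with the filtrations forces the algebraic part of $\delta_i\delta_j^{-1}$ to be of the form $\delta_{-\underline{k}}$ with $k_\eta\leq k^{(j)}_{\eta,t}-k^{(i)}_{\eta,t}$ for all $\eta$ and $t$---precisely the characters excluded by the definition of $\mathcal{T}_{\mathrm{reg}}$. Hence $H^0=0$. The vanishing of $H^2$ follows by the parallel argument after applying Tate duality: the extra twist by $\chi$ translates the excluded family $\delta_{-\underline{k}}$ into the family $|z|\delta_{\underline{k}+1}$, the second excluded set in definition \ref{Def:RegLocus}. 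Part (3) is then immediate from Euler--Poincar\'e applied to $D_i^{\vee}\otimes D_j$, whose rank is $n_in_j$.

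The main technical obstacle is the bookkeeping in step (2): translating the compatibility of the embedding $\mathbf{M}(\delta_i\delta_j^{-1})\hookrightarrow M_i^{\vee}\otimes M_j$ with filtrations into the explicit inequalities $k_\eta\leq k^{(j)}_{\eta,t}-k^{(i)}_{\eta,t}$. This requires unpacking the filtration on the tensor product $M_i^{\vee}\otimes M_j$ in terms of those on $M_i$ and $M_j$, and then checking that a rank-one subobject supported on the trivial Weil--Deligne component lies in $\mathcal{F}^0$ but not in $\mathcal{F}^1$ exactly when the algebraic weight of $\delta_i\delta_j^{-1}$ satisfies the asserted inequality. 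The computation is routine but must be done carefully, since it is the only place where the precise shape of the regular locus enters the argument.
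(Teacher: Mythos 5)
Your proposal is correct and follows essentially the same route as the paper: for (1) reduce to $\mathrm{End}(M_{x_i})$ via full faithfulness of $\mathbf{D}_K$ and conclude by irreducibility/Schur; for (2) convert a putative nonzero $H^0$ into an embedding $\mathcal{R}(\delta_i\delta_j^{-1})\hookrightarrow \mathbf{D}(M_{x_i}^\vee\otimes M_{x_j})$, use Lemma~\ref{Inj}, Proposition~\ref{Prop:WD-decomp} plus Frobenius reciprocity to force $\tau_i\cong\tau_j$, then use the filtration-weight comparison to land exactly in the excluded locus, with $H^2$ handled by Tate duality; and (3) by Euler--Poincar\'e. One small note: in (1) the remark about ``a direct check on the full-flag filtration'' is superfluous, since $\mathrm{End}$ of the filtered module is automatically a subring of $\mathrm{End}$ of the underlying $(\varphi,N,G_{L/K})$-module, and the latter is already $k(x)$ by Schur.
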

   \begin{proof}{\ }
   	\begin{enumerate}
   		\item By remark \ref{RmkforParameterspace}(3), we can write $D_i$ as the form $\mathbf{D}(M_{x_i})(\delta_i)$. As the functor $\mathbf{D}$ is fully faithful, one has 
   		$$H^0_{\varphi,\gamma_K}(D_i^{\vee}\otimes D_i) = \Hom(D_i,D_i) = \Hom(M_{x_i},M_{x_i}).$$
   		Note that $M_x$ is irreducible as a $(\varphi,N,G_{L/K})$-module, which implies $$\dim_{k(x)}H^0_{\varphi,\gamma_K}(D_i^{\vee}\otimes D_i)=1.$$
   		\item We only need to show $$\Hom(D_i,D_j) = 0,$$ which implies $H^0_{\varphi,\gamma_K}(D_i^{\vee}\otimes D_j) = 0$, and using the Tate duality from \cite[Theorem~4.4.5]{Kedlaya2012}, one can show $H^2_{\varphi,\gamma_K}(D_i^{\vee}\otimes D_j) = 0$ by similar computation.
   		
   	    Let $\delta_i$ (resp. $\delta_j$) denote a continuous character $K\rightarrow k(x)^\times$ such that $\delta|_{W_\tau}$ corresponds to $\Delta(x_i)$ (resp. $\Delta(x_j)$). Then $D_i\cong\mathbf{D}(M_{x_i})(\delta_i)$ and $D_j\cong\mathbf{D}(M_{x_j})(\delta_j)$. Assume $\Hom(D_i,D_j) \neq 0$, then we have a nonzero injective map
   	    $$\mathcal{R}(\delta_i\delta_j^{-1})\inj \mathbf{D}(M_{x_i}^{\vee}\otimes M_{x_j}). $$
   	    Hence $\delta:= \delta_i\delta_j^{-1}$ is deRham. Now we assume $\delta = \delta_{\underline{k}}\delta_{\mathrm{sm}}$ for some algebraic character of weight $\underline{k} =(k_\eta)_{\eta:K\inj C}$, and some smooth character $\delta_{\mathrm{sm}}$. Then $\delta$ corresponds to a rank one Filtered $(\varphi,N,G_{L/K})$-module $M(\delta)$, which has filtration weight $-\underline{k}$ and the $(\varphi,N,G_{L/K})$-structure corresponding to $\delta_\mathrm{sm}$ as a WD representation. As $\mathbf{D}$ is fully faithful, then it induces a nonzero map $M_{x_i}(\delta)\rightarrow M_{x_j}$, which implies there exists a nonzero $W_K$-equivariant map
   	    $$\mathrm{Ind}_{W_\tau}^{W_K}(\tilde{\tau}_i\otimes_C \delta_{\mathrm{sm}}|_{W_\tau}) \rightarrow \mathrm{Ind}_{W_\tau}^{W_K}(\tilde{\tau}_j).$$
        Hence by Frobenius reciprocity, it induces a nonzero $I_K$-equivariant ($W_\tau$-equivariant actually) map $\tilde{\tau}_i\otimes_C \delta_{\mathrm{sm}}|_{W_\tau}\rightarrow \tilde{\tau}_j^g$ for some $g\in W_K$. By assumption, one has $\tau_i\cong \tau_j$ and $\delta_{\mathrm{sm}}|_{W_{\tau}}$ is trivial.
        
        It follows that there exists a nonzero map $M_{x_i}(\delta)\rightarrow M_{x_j}$ (hence an isomorphism) of $(\varphi,N,G_{L/K})$-modules. Note that the $\eta$-filtration Weight of $M_{x_i}(\delta)$ is $\underline{k^{(i)}_\eta}-k_\eta$, and the $\eta$-filtration Weight of $M_{x_j}$ is $\underline{k^{(j)}_\eta}$. Hence we must have $k^{(i)}_{\eta,t}-k_\eta \leq k^{(j)}_{\eta,t}$ for each $1 \leq t\leq n_i$, which contradicts the condition of being in the regular locus. 
   		\item This follows from the Euler characteristic formula by \cite[Theorem~4.4.5]{Kedlaya2012}. 
   	\end{enumerate}
       
   \end{proof}

   \subsection{Refined Paraboline Varieties}

   \begin{Def}
   	\begin{enumerate}[leftmargin=1.5em] Let $X$ be a $C$-rigid space.
   		\item For $1\leq i\leq l$, let $f_i: X\rightarrow \mathcal{S}_i$ be a morphism of $C$-rigid spaces. A $(\varphi,\Gamma_K)$-module $D$ of rank $n$ over $X$ is \textit{paraboline with ordered parameter} $(f_1,\dots,f_l)$ if, after perhaps enlarging $C$, there exists an increasing filtration $(\mathcal{F}_iD)_{i=0,\dots,l}$ given by $(\varphi,\Gamma_K)$-submodules and line bundle $\mathcal{L}_1,\dots,\mathcal{L}_l$ on $X$ such that each $\mathrm{gr}_i(D)\cong \mathcal{R}_{K,X}(f_i)\otimes\mathcal{L}_i$. Such a filtration is called a \textit{parabolization} (\textit{with order parameters} $(f_1,\dots,f_l)$) of $D$.
   		\item In the case $X=\mathrm{Sp}(C)$, we say that a parabolized $(\varphi,\Gamma_K)$-module $(D,\mathcal{F}_\bullet D)$ is \textit{strictly paraboline with ordered parameters} $(f_1,\dots,f_l)$ over $X$ if $\mathcal{F}_{i-1}D$ is the unique $(\varphi,\Gamma_K)$-submodule of $\mathcal{F}_{i}D$, such that $\mathcal{F}_iD/\mathcal{F}_{i-1}D\cong \mathcal{R}(f_i)$ for all $1\leq i\leq l$. This is equivalent to say $H^0_{\varphi,\gamma_K}((\mathcal{F}_iD^{\vee})\otimes\mathcal{R}(f_{i}) )=C$ for all $1\leq i\leq l$. In particular, $\mathcal{F}_\bullet D$ is the unique filtration such that $\mathrm{gr}_i(D)\cong \mathcal{R}_{K,X}(f_i)$.
   		\item A $(\varphi,\Gamma_K)$-module $D$ over $X$ is called \textit{densely pointwise strictly paraboline} if there exist a $C$-morphisms $f_i:X\rightarrow \mathcal{S}_i$ for each $i=1,\dots,l$ and a Zariski dense subset $X_{\mathrm{alg}}\subseteq X$ such that $D_{z}$ is strictly paraboline with ordered parameters $(f_1(z),\dots,f_l(z))$ for each point $z\in X_{\mathrm{alg}}$.
   		 
   	\end{enumerate}
   
   \end{Def}

   \begin{Rmk}
   In \cite[Definition~6.3.1]{Kedlaya2012}, they claim that the condition of being strictly paraboline is equivalent to the condition that $$H^0_{\varphi,\gamma_K}(\mathcal{R}_{K,X}(f_{i+1})^{\vee}\otimes(D/\mathcal{F}_iD)) = C$$
   for all $1\leq i\leq l$. However it seems not true, even for the trianguline case.
   
   Let $\mathcal{R}:= \mathcal{R}_{K,C}$, let $\delta$ be a non-algebraic character (Hence $H_{\varphi,\gamma_K}^0(\mathcal{R}({\delta}^{\pm 1})) = 0$). Let $E$ be a nontrivial extension of $\mathcal{R}$ by $\mathcal{R}(\delta)$, i.e., we have the non-split short exact sequence:
   $$0\rightarrow \mathcal{R} \rightarrow E \rightarrow \mathcal{R}(\delta) \rightarrow 0.$$
   Then let $D: E\oplus \mathcal{R}$ with the filtration $0\subset \mathcal{R} \subset E \subset D$. It is obvious that $\Hom(\mathcal{R},D) = C\oplus C$, hence the "alternative" condition fails. On other hand, it is obvious that $\Hom(\mathcal{F}_iD,\mathrm{gr}_iD) = C$ for $i= 1,2$. For $i=3$, one has 
   $$\Hom(D,\mathcal{R}) = \Hom(E,\mathcal{R})\oplus \Hom(\mathcal{R},\mathcal{R}),$$
   Hence it is enough to show that $\Hom(E,\mathcal{R}) = 0$. Assume $f\in \Hom(E,\mathcal{R})$ is nonzero. Then $f$ restrict to $\mathcal{R}$ is zero, otherwise $f$ gives a splitting of the short exact sequence above, after scaling  by some constant in $C$, which is a contradiction. Hence $f$ factors though $E/\mathcal{R}\cong \mathcal{R}(\delta)$, which also contradicts to the condition that $\delta$ is non-algebraic.
   
   Actually, both conditions can ensure that the filtration is unique for the given parameter, but themselves are not equivalent. For the later proof (also for the proofs in \cite[Section 6.3]{Kedlaya2012}), we only need the condition that $$H^0_{\varphi,\gamma_K}((\mathcal{F}_iD^{\vee})\otimes\mathcal{R}_{K,X}(f_{i}) )$$
   Hence we put it as the definition of strictly paraboline $(\varphi,\Gamma_K)$-modules.
   \end{Rmk}

   \begin{Def}
   Let $A$ (resp. $B$) be an element in an Abelian category $\mathfrak{A}$ with a sub object $A'$ (resp. $B'$). The tuple $(A,A',B,B')$ is called \textit{satisfies the factor through condition} if there exists 
   \begin{enumerate}
   \item an increasing filtration $$0=\mathcal{F}_0A\overset{\alpha_1}{\inj} \mathcal{F}_1A \overset{\alpha_2}{\inj}  \cdots \overset{\alpha_s}{\inj} \mathcal{A}_s=A$$ with $\mathcal{F}_{s_0}A=A'$ for some $s_0$;
   \item and an increasing filtration $$0=\mathcal{F}_0B\overset{\beta_1}{\inj} \mathcal{F}_1B \overset{\beta_2}{\inj}  \cdots \overset{\beta_t}{\inj} \mathcal{B}_t=B$$ with $\mathcal{F}_{t_0}B=B'$ for some $t_0$,
   \end{enumerate}
   such that $\Hom_{\mathfrak{A}}(\mathrm{coker}\alpha_i,\mathrm{coker}\beta_j) = 0$, if $i\leq s_0$ or $j\geq t_0$.
   \end{Def}
  
   \begin{Prop}\label{FTC}
   Let $(A,A',B,B')$ be a tuple in some Abelian category such that satisfies the factor through condition, then the natural map $$\Hom(A/A',B') \rightarrow \Hom(A,B)$$ is a bijection.
   \end{Prop}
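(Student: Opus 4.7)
The plan is to reduce the proof to a single inductive lemma and then apply it twice. First I would establish an auxiliary statement: if $X$ and $Y$ are two objects of $\mathfrak{A}$ equipped with finite increasing filtrations such that $\Hom_{\mathfrak{A}}(\mathrm{gr}_i X, \mathrm{gr}_j Y) = 0$ for every pair of graded pieces, then $\Hom_{\mathfrak{A}}(X,Y) = 0$. I would prove this by a nested induction, inducting first upward on the filtration of $X$ (so that, once a morphism $\varphi : X \to Y$ is known to vanish on $\mathcal{F}_{i-1}X$, its restriction to $\mathcal{F}_i X$ factors through $\mathrm{gr}_i X$), and then, for the resulting morphism $\mathrm{gr}_i X \to Y$, downward on the filtration of $Y$ (composing with $Y \twoheadrightarrow \mathrm{gr}_j Y$ vanishes at each stage by hypothesis, so the morphism successively factors through $\mathcal{F}_{j-1}Y$, ultimately through $\mathcal{F}_0 Y = 0$).

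Injectivity of the natural map $\Hom_{\mathfrak{A}}(A/A',B') \to \Hom_{\mathfrak{A}}(A,B)$ is immediate: the epimorphism $A \twoheadrightarrow A/A'$ and the monomorphism $B' \hookrightarrow B$ together imply that composing a nonzero $A/A' \to B'$ with these yields a nonzero $A \to B$.

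For surjectivity, take $h \in \Hom_{\mathfrak{A}}(A,B)$. First I would show $h|_{A'} = 0$ by applying the auxiliary lemma to the filtered object $A'$ (with the restriction of the given filtration, terminating at $\mathcal{F}_{s_0}A = A'$) and $B$ (with its given filtration); the factor-through hypothesis supplies the vanishing of all $\Hom_{\mathfrak{A}}(\mathrm{gr}_i A, \mathrm{gr}_j B)$ for $i \leq s_0$. Thus $h$ descends to $\tilde h : A/A' \to B$. Next, post-composition with $B \twoheadrightarrow B/B'$ gives $\bar h : A/A' \to B/B'$, and I would apply the lemma a second time to the induced filtrations on $A/A'$ (with graded pieces $\mathrm{gr}_i A$ for $i > s_0$) and $B/B'$ (with graded pieces $\mathrm{gr}_j B$ for $j > t_0$); the hypothesis again supplies the needed vanishing. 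Hence $\bar h = 0$, so $\tilde h$ factors through $B' \hookrightarrow B$, producing the desired preimage in $\Hom_{\mathfrak{A}}(A/A',B')$.

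The only substantive work is the auxiliary lemma, and even there the argument is a standard nested-filtration induction; the rest is purely formal manipulation of kernels, cokernels, and the universal properties of the mono $B' \hookrightarrow B$ and the epi $A \twoheadrightarrow A/A'$. No step presents a real obstacle beyond careful bookkeeping of the two filtration indices and their relation to $s_0$ and $t_0$.
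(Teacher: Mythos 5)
Your proof is correct and follows essentially the same path as the paper's: both rest on a double induction over the two filtrations, using the vanishing of $\Hom$ between graded pieces, first to quotient the source down from $A$ to $A/A'$ and then to cut the target down from $B$ to $B'$. The only cosmetic difference is that you isolate the underlying mechanism as a standalone auxiliary lemma ($\Hom(X,Y)=0$ when all graded-piece Homs vanish) and apply it to the pairs $(A',B)$ and $(A/A',B/B')$, whereas the paper interleaves the same inductions directly with the long-exact-sequence bookkeeping.
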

   
   \begin{proof}
   By the left exactness of the functor $\Hom(\mathcal{F}_1A,-)$, one has the following left exact short sequence
   $$0\rightarrow \Hom(\mathcal{F}_1A,\mathcal{F}_{j-1}B)\rightarrow \Hom(\mathcal{F}_1A,\mathcal{F}_{j}B) \rightarrow \Hom(\mathcal{F}_1A,\mathrm{coker}\beta_j) = 0$$
   for every $j=1,\dots,t$.
   It follows that $$\Hom(\mathcal{F}_1A,B) = \Hom(\mathcal{F}_1A,\mathcal{F}_{t-1}B) = \cdots \Hom(\mathcal{F}_1A,\mathcal{F}_0B) = 0.$$
   Hence one gets $\Hom(A,B) = \Hom(A/\mathcal{F}_1A,B)$ from the left exact sequence
   $$0\rightarrow \Hom(A/\mathcal{F}_1A,B)\rightarrow \Hom(A,B) \rightarrow \Hom(\mathcal{F}_1A,B) = 0.$$
   Iterating the argument above, it follows that $\Hom(A,B) = \Hom(A/A',B)$. And using the symmetric argument for $B$, it is easy to see that $$\Hom(A/A',B') = \Hom(A,B).$$
   \end{proof}
   
   \begin{Cor}\label{Cor:StrPar}
   Let $D,D_1,\dots,D_r$ be $(\varphi,\Gamma_K)$-modules over $C$, and let $(\mathcal{F}_iD)_{i=0,\dots,r}$ be an increasing filtration of $(\varphi,\Gamma_K)$-submodules such that $\mathrm{gr}_i(D)\cong D_i$. Suppose that

   \begin{equation*}
   \Hom(D_i,D_j)=\left\{
   \begin{array}{cc}
   C & i= j\\
   0 & i < j
   \end{array}
   \right.
   \end{equation*} 
    then one has
    $$H^0_{\varphi, \gamma_K}((\mathcal{F}_iD)^\vee\otimes D_{i}) = C$$ 
   \end{Cor}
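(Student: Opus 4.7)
The plan is to identify $H^0_{\varphi,\gamma_K}((\mathcal{F}_iD)^\vee\otimes D_i)$ with $\Hom(\mathcal{F}_iD, D_i)$ and then apply the immediately preceding Proposition~\ref{FTC} to the tuple $(A,A',B,B') = (\mathcal{F}_iD,\,\mathcal{F}_{i-1}D,\,D_i,\,D_i)$, equipping $A$ with the given filtration $0\subset \mathcal{F}_1D\subset\cdots\subset\mathcal{F}_iD$ (graded pieces $D_1,\dots,D_i$) and $B$ with its trivial length-one filtration $0\subset D_i$. The factor-through hypothesis then reduces to the vanishings $\Hom(D_k,D_i)=0$ for $k<i$, which are given.

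Concretely, I would proceed as follows. First, recall that by the definition of $(\varphi,\Gamma_K)$-module cohomology and the internal Hom-tensor adjunction, $H^0_{\varphi,\gamma_K}(E^\vee\otimes F)=\Hom(E,F)$ for any two $(\varphi,\Gamma_K)$-modules $E,F$ over $C$; thus the statement to prove becomes $\Hom(\mathcal{F}_iD, D_i)=C$. Second, I would either quote Proposition~\ref{FTC} directly, or (what amounts to the same) run the following short induction: the short exact sequences
\begin{equation*}
0\longrightarrow \mathcal{F}_{k-1}D\longrightarrow \mathcal{F}_kD\longrightarrow D_k\longrightarrow 0
\end{equation*}
yield left-exact sequences
\begin{equation*}
0\longrightarrow \Hom(D_k,D_i)\longrightarrow \Hom(\mathcal{F}_kD, D_i)\longrightarrow \Hom(\mathcal{F}_{k-1}D, D_i).
\end{equation*}
For $k<i$, the leftmost term vanishes by hypothesis, and by induction on $k$ starting from $\Hom(\mathcal{F}_0D,D_i)=0$, the middle term vanishes too; in particular $\Hom(\mathcal{F}_{i-1}D, D_i)=0$. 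Finally, taking $k=i$ in the same sequence gives $\Hom(\mathcal{F}_iD, D_i)\cong \Hom(D_i,D_i)=C$, which is exactly the claim.

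There is no real obstacle: the only subtlety is bookkeeping, namely checking that the hypotheses of Proposition~\ref{FTC} are met (taking $s_0=i-1$ on the $A$-side and the trivial filtration on the $B$-side) and that $H^0_{\varphi,\gamma_K}$ of a dual-tensor product indeed computes $\Hom$ in the category of $(\varphi,\Gamma_K)$-modules. Both facts are standard, so the proof will be essentially a one-line application of Proposition~\ref{FTC} together with the assumption $\Hom(D_i,D_i)=C$.
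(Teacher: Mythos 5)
Your proposal is correct and follows the paper's proof exactly: the paper identifies $H^0_{\varphi,\gamma_K}((\mathcal{F}_iD)^\vee\otimes D_i)$ with $\Hom(\mathcal{F}_iD,D_i)$ and applies Proposition~\ref{FTC} to the same tuple $(\mathcal{F}_iD,\mathcal{F}_{i-1}D,D_i,D_i)$ with the filtrations you specify. One small remark in your favor: your spelled-out induction is the safer formulation, because the index condition ``$j\ge t_0$'' in the statement of Proposition~\ref{FTC} would literally require $\Hom(D_i,D_i)=0$ in this application (with $B'=B=D_i$ one is forced to take $t_0=t$, so $\mathrm{coker}\beta_{t_0}=D_i$); comparing with its proof, that condition should read ``$j>t_0$'', under which the application you describe goes through exactly as your explicit induction shows.
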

   
   \begin{proof}
   Note that $H^0_{\varphi, \gamma_K}((\mathcal{F}_iD)^\vee\otimes D_{i}) = \Hom((\mathcal{F}_iD), D_{i})$, and $(\mathcal{F}_iD, \mathcal{F}_{i-1}D, D_{i},D_{i})$ satisfies the factor through condition. Therefor
   $$H^0_{\varphi, \gamma_K}((\mathcal{F}_iD)^\vee\otimes D_{i}) = \Hom(D_i, D_i) = C.$$
   \end{proof}

   For $\eta\in\Hom(K,C)$, let $\delta_{\eta}: a\mapsto \eta(a)^{-1}$ be the algebraic character. Since $\mathcal{R}_{K,C}$ is a product of B\'{e}zout domains, the ideal $\mathcal{R}(\delta_{\eta})\subset \mathcal{R}_{K,C}$ is generated by one element $t_{\eta}\in\mathcal{R}_{K,C}$. The element $t_{\eta}$ is not uniquely determined, but the ideal it generates is. Moreover, one has
   $$\mathcal{R}_{K,C}/(t)\cong\bigoplus_{\eta:K\inj C}\mathcal{R}_{K,C}/(t_\eta).$$
   For more details, see \cite[Notation~6.2.7]{Kedlaya2012}.

  \begin{Lemma} \label{Sat1}
   Let $D_1,D_2$ be $(\varphi,\Gamma_K)$-modules over $\mathcal{R}_{K,C}$. For any $f\in\Hom(D_1,D_2)$, the image of $f$ is saturated (recall the definition of being saturated in \cite[Notation~6.0.2]{Kedlaya2012}) in $D_2$ if and only if the image of induced map $$\tilde{f}:\mathcal{R}_{K,C}\rightarrow D_1^{\vee}\otimes D_2$$
   is saturated in $D_1^{\vee}\otimes D_2$.
   \end{Lemma}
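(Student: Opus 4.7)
I would begin by unpacking $\tilde f$ explicitly via the Hom-tensor adjunction $\Hom(D_1, D_2) \cong H^0_{\varphi, \gamma_K}(D_1^\vee \otimes D_2)$: the morphism $f$ corresponds to an element $\phi_f \in D_1^\vee \otimes D_2$, and $\tilde f$ is the $\mathcal{R}_{K,C}$-linear map $1 \mapsto \phi_f$, so $\im(\tilde f) = \mathcal{R}_{K,C} \cdot \phi_f$. Combining this with the definition of saturation (\cite[Notation~6.0.2]{Kedlaya2012}, which amounts to requiring a torsion-free quotient), both sides of the equivalence become statements about divisibility of certain elements inside finite projective $\mathcal{R}_{K,C}$-modules.

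For the forward direction I would proceed by contraposition. Suppose $\mathcal{R}_{K,C} \cdot \phi_f$ is not saturated; then $\phi_f = r \cdot \psi$ for some non-unit $r \in \mathcal{R}_{K,C}$ and some $\psi \in D_1^\vee \otimes D_2$. The element $\psi$ corresponds under the (non-equivariant) Hom-tensor adjunction to an $\mathcal{R}_{K,C}$-linear map $g: D_1 \to D_2$ satisfying $f = r \cdot g$. Since $r$ is a non-unit and $\im(g)$ is a nonzero finitely generated submodule of the torsion-free module $D_2$, one has $\im(f) = r \cdot \im(g) \subsetneq \im(g)$; any $v \in \im(g)\setminus\im(f)$ then satisfies $rv \in \im(f)$, producing nonzero $r$-torsion in $D_2/\im(f)$ and showing $\im(f)$ is not saturated.

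For the reverse direction I would again work contrapositively. Supposing $\im(f)$ is not saturated, consider the saturation $(\im f)^{\mathrm{sat}}\subseteq D_2$, which is a $(\varphi, \Gamma_K)$-stable finite projective submodule strictly containing $\im(f)$, with torsion quotient. Using that $\mathcal{R}_{K,C}$ is a product of B\'{e}zout domains, one picks bases putting the inclusion $\im(f)\hookrightarrow (\im f)^{\mathrm{sat}}$ into a Smith-normal-form-type decomposition with invariant factors not all units. The aim is to extract from such a decomposition a common non-unit factor $r$ so that $f = r \cdot g$, which translates back via the adjunction to $\phi_f = r \cdot \psi$ and gives the desired non-saturation of $\mathcal{R}_{K,C}\cdot \phi_f$.

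The main obstacle lies in this last step: having non-unit invariant factors does not by itself produce a common non-unit divisor of $f$ (in principle the invariant factors could mix units with non-units). To guarantee a single extractable factor I expect to use the $(\varphi, \Gamma_K)$-equivariance of $f$ together with the rank-one nature of $\mathcal{R}_{K,C}\cdot\phi_f$, showing that the equivariant torsion in $D_2/\im(f)$ forces a uniform non-unit factor of $\phi_f$ rather than a genuinely mixed decomposition. Making this precise\,---\,possibly by reducing modulo each $t_\eta$ in the decomposition $t=\prod_{\eta}t_\eta$ and checking primitivity pointwise\,---\,is the technical heart of the argument.
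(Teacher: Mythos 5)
Your forward direction is essentially sound and sits close to the paper's, though via a slightly different route: the paper reduces modulo each $t_\eta$ and invokes \cite[Lem~2.6.10]{Kedlaya2012}, which characterizes saturation of the image of a rank-one map $\tilde f$ by non-vanishing of its reductions mod $t_\eta$; your primitivity-of-$\phi_f$ formulation with an extracted divisor $r$ is the equivalent hands-on version, and you then correctly pass from $\phi_f = r\psi$ to $f = r\cdot g$ to $r$-torsion in $D_2/\im f$. One point worth tightening: saturation in the sense of \cite[Notation~6.0.2]{Kedlaya2012} is specifically about $t$-torsion, so the non-unit $r$ must be, up to units, one of the $t_\eta$; this does follow from the B\'ezout structure of $\mathcal{R}_{K,C}$ but deserves an explicit line.

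The gap you flag in the reverse direction is genuine and is indeed the crux. Smith normal form of $\im f \hookrightarrow (\im f)^{\mathrm{sat}}$ produces invariant factors that need not share any non-unit common divisor (the map $\mathrm{diag}(1,t_\eta)$ is the archetypal obstruction), so nothing lets you write $\phi_f = r\psi$ with $r$ a non-unit, and no amount of pointwise primitivity checking mod $t_\eta$ fixes this by itself. The paper takes an entirely different route here: it tensors the saturated inclusion $\mathcal{R}_{K,C}\phi_f \subseteq D_1^\vee\otimes D_2$ with the flat module $D_1$ (flatness preserves saturation), obtaining the saturated submodule $D_1\otimes\phi_f$ of $D_1\otimes D_1^\vee\otimes D_2$, and then composes with the evaluation map $g\colon D_1\otimes D_1^\vee\otimes D_2\to D_2$, which splits as a map of free $\mathcal{R}_{K,C}$-modules and satisfies $g\circ(\tilde f\otimes\id_{D_1})=f$, to conclude $\im f = g(D_1\otimes\phi_f)$ is saturated. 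Be aware, though, that even this step is delicate: a split epimorphism of free modules does \emph{not} in general carry saturated submodules to saturated ones (the saturated line $\mathcal{R}_{K,C}\cdot(1,t_\eta)\subset\mathcal{R}_{K,C}^2$ projects onto $t_\eta\mathcal{R}_{K,C}$), so the $(\varphi,\Gamma_K)$-equivariance and the specific rank-one shape of $D_1\otimes\phi_f$ must be doing real work beyond mere splitness. Your instinct that equivariance and the rank-one source are the hidden ingredients is exactly right; the honest way forward is to make that role explicit rather than appeal to Smith normal form, which is blind to it.
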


   \begin{proof}
   	We may assume $f$ is nonzero, otherwise the assertion is trivial.
   	
   	Let $\mathcal{R}:=\mathcal{R}_{K,C}$. Assume $\mathrm{Im}(f)$ is saturated. Hence for any $\eta:K\inj C$, the base change map $f\otimes_{\mathcal{R}}\mathcal{R}/t_\eta$ is non-zero, which implies that the induced map $\tilde{f}\otimes_{\mathcal{R}}\mathcal{R}/(t_\eta)$ is non-zero. According to \cite[Lemma~2.6.10]{Kedlaya2012}, the image of $\tilde{f}$ is saturated. 
   	
   	Now assume the image of $\tilde{f}$ is saturated in $D_1^{\vee}\otimes D_2$. Then the image of 
   	$$\tilde{f}\otimes \id_{D_1}: D_1\rightarrow D_1\otimes D_1^{\vee}\otimes D_2$$
   	is saturated. Composing with the natural map $g:D_1\otimes D_1^{\vee}\otimes D_2\rightarrow D_2$, one obtains the map $f$ and concludes that the image is saturated (here we need the fact that $g$ splits as a morphism of free $\mathcal{R}$-modules).
   \end{proof}

   Recall that (see definition \ref{Def:Phigammaoftypetau}) a quasi-deRham $(\varphi,\Gamma_K)$-module $D$ over $C$ is called irreducible if $D$ is of the form $\mathbf{D}(M)(\delta)$ for some irreducible filtered $(\varphi,\Gamma_K)$-module $M$.

   \begin{Lemma} \label{Sat2}
   Let $D$ be an quasi-deRham irreducible $(\varphi,\Gamma_K)$-module over $\mathcal{R}_{K,C}$. Then
   the cokernel of the embedding of any nonzero $(\varphi,\Gamma_K)$-submodule $j:D'\inj D$ is killed by some power of $t$. And in particular, the only nonzero saturated submodule is $D$ itself. 
   \end{Lemma}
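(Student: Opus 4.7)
The plan is to combine Berger's fully faithful functor with the irreducibility of $M$, routed through the saturation operation on $(\varphi,\Gamma_K)$-submodules of $D$.

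By the definition of quasi-deRham irreducibility, write $D \cong \mathbf{D}(M)(\delta)$ where $M$ is a filtered $(\varphi,N,G_{L/K})$-module over $C$ whose underlying $(\varphi,N,G_{L/K})$-module (equivalently, its Weil--Deligne representation) is irreducible. Since $(-)\otimes\mathcal{R}(\delta)$ is an autoequivalence on the category of $(\varphi,\Gamma_K)$-modules over $\mathcal{R}_{K,C}$ that preserves saturated submodules, it suffices to analyze $\mathbf{D}(M)$ in place of $D$.

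I would first prove the ``in particular'' assertion. By Berger's Corollary III.2.5 in \cite{Berger2008} (cited in the introduction as the input for the existence of paraboline filtrations from deRham representations), the fully faithful functor $\mathbf{D}$ induces a bijection between sub-$(\varphi,N,G_{L/K})$-modules $M' \subseteq M$ (endowed with the filtration induced from $M_L$) and saturated sub-$(\varphi,\Gamma_K)$-modules of $\mathbf{D}(M)$. Irreducibility of $M$ forces $M' = 0$ or $M' = M$, so the only saturated sub-$(\varphi,\Gamma_K)$-modules of $D$ are $0$ and $D$, which gives the ``in particular'' part.

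For the cokernel claim, let $j: D' \inj D$ be any nonzero sub-$(\varphi,\Gamma_K)$-module and let $\widetilde{D}' \subseteq D$ be its saturation, namely the preimage in $D$ of the $t$-torsion submodule of $D/D'$. Then $\widetilde{D}'$ is a nonzero saturated sub-$(\varphi,\Gamma_K)$-module of $D$, so by the previous paragraph $\widetilde{D}' = D$. Hence $D/D' = \widetilde{D}'/D'$ is $t$-torsion, and since $D$ is finitely generated projective over $\mathcal{R}_{K,C}$ so is $D/D'$; each of finitely many generators is killed by some power of $t$, and therefore a single power $t^n$ annihilates $D/D'$. The only substantive input in the argument is the Berger correspondence between saturated sub-$(\varphi,\Gamma_K)$-modules of $\mathbf{D}(M)$ and sub-$(\varphi,N,G_{L/K})$-modules of $M$ with induced filtration; this is the subtlest step, but it is the standard consequence of \cite[Cor~III.2.5]{Berger2008} already invoked elsewhere in the paper.
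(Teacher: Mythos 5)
Your proof is correct and uses the same essential ingredient as the paper (Berger's \cite[Cor~III.2.5]{Berger2008} applied to sub-objects of $\mathbf{D}(M)$), but you organize the argument in the reverse order. The paper's proof transports the given embedding $j\colon D'\inj D$ directly to an embedding $\mathbf{D}^{-1}(j)\colon M'\inj M$ of \emph{filtered} $(\varphi,N,G_{L/K})$-modules, concludes from irreducibility that the underlying $(\varphi,N,G_{L/K})$-structures agree, and then produces the explicit torsion bound $t^m$ by comparing $M'$ to the shift $M[m]$, for which $\mathbf{D}(M[m]) = t^m\mathbf{D}(M)$; the ``in particular'' part is a corollary. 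You instead establish the ``in particular'' part first, using the correspondence between saturated sub-$(\varphi,\Gamma_K)$-modules of $\mathbf{D}(M)$ and sub-$(\varphi,N,G_{L/K})$-modules of $M$ with the induced filtration, and then deduce the cokernel claim by passing to the saturation $\widetilde{D}'$ of $D'$ and invoking finite generation of $D/D'$. Your route has a small organizational advantage: it makes transparent that any nonzero $D'$ is automatically of full rank (since $\widetilde{D}'$ corresponds to a nonzero sub-$(\varphi,N,G_{L/K})$-module of $M$, forced to be all of $M$ by irreducibility), whereas the paper's argument implicitly assumes this when it writes $D' \cong \mathbf{D}(M')$. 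Both proofs are correct, and I'd regard yours as a mild tidy-up rather than a genuinely different method.
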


   \begin{proof}
   We can write $D\cong \mathbf{D}(M)(\delta)$ for some irreducible filtered $(\varphi,N,G_{L/K})$-module $M$ over $C$, and some character $\delta: K^\times \rightarrow C^\times$. Replacing $D$ by $D(\delta^{-1})$, we may assume $D$ is deRham. By the proof of \cite[Corollaire~III.2.5]{Berger2008}, every embedding $j:D'\inj D$ of $(\varphi,\Gamma_K)$-modules comes from some embedding $$\mathbf{D}^{-1}(j):M'\inj M$$ of filtered-$(\varphi,N,G_{L/K})$-modules, such that $D'\cong \mathbf{D}(M')$. As $M$ is irreducible, $\mathbf{D}^{-1}(j)$ is actually an isomorphism for the underlying $(\varphi,N,G_{L/K})$ structures. Hence there exists some $m$, such that one has the unique map $j'$ such that the composition
   $$M[m]\xrightarrow{j'} M' \xrightarrow{\mathbf{D}^{-1}(j)} M$$ 
   is the canonical shifting map. It follows that $(\mathbf{D}(M)/\mathbf{D}(M'))$ is killed by $t^m$ (note that $\mathbf{D}(M[m]) = t^{n}\mathbf{D}(M)$ by definition).
   \end{proof}
   
   The following theorem is a generalization of the results in \cite[Theorem~6.3.9]{Kedlaya2012}. Hence we will omit some detail of the proof which are exactly the same as the arguments in \textit{loc. cit.}.
   
   \begin{Theo}\label{KPX6.3.9}
   Let $X$ be a reduced rigid $C$-analytic space. Let $D$ (resp. $D'$) be a $(\varphi,\Gamma_K)$-module over $\mathcal{R}_{K,X}$ of rank $d$ (resp. $d'$) for $d'<d$. Suppose that $D'_z:=D'\otimes_X k_z$ is quasi-deRham irreducible for every closed point $z\in X$ ($k_z$ is the residue field of $z$). Suppose that there exists a Zariski dense subset $X_{\mathrm{alg}}$ of closed points of $X$ such that for every $z\in X_{\mathrm{alg}}$, the $k_z$-vector space $H^0_{\varphi,\gamma_K}(D_z^\vee \otimes D'_z)$ is one dimensional, and for any basis, the induced map $D_z\rightarrow D'_z$ is surjective. Then there exist canonical data of
   \begin{enumerate}[label=(\alph*)]
   	\item a proper birational morphism $f: X'\rightarrow X$ of reduced rigid $C$-analytic spaces,
   	\item a unique (up to $\mathcal{O}^\times_{X'}$) homomorphism $\lambda: f^*D\rightarrow f^*D'\otimes \mathcal{L}$ of $(\varphi,\Gamma_K)$-modules over $\mathcal{R}_{X',K}$, where $\mathcal{L}$ is a line bundle over $X'$ with trivial $(\varphi,\Gamma_K)$-action,
   \end{enumerate}
   such that the following conditions are satisfied.
   \begin{enumerate}[label=(\arabic*)]
   \item The set $Z$ of closed points $z\in X'$ failing to have the following property is Zariski closed and disjoint from $f^{-1}(X_{\mathrm{alg}})$ (hence its complement is Zariski open and dense): the induced homomorphism $\lambda_z:D_Z\rightarrow D'_z$ is surjective and the corresponding element spans $H^0_{\varphi,\Gamma_K}(D_z^\vee \otimes D'_z)$ (hence the latter is one-dimensional).
   \item Locally on $X'$, the cokernel of $\lambda$ is killed by some power of $t$, and is supported over $Z$ in the sense that for any analytic function $g$ vanishing along $Z$, some power of $g$ kills the cokernel of $\lambda$ too.
   \item The kernel of $\lambda$ is a $(\varphi,\Gamma_K)$-module over $\mathcal{R}_{X',K}$ of rank $d-d'$.
   \end{enumerate}
   \end{Theo}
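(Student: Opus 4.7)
The plan is to adapt the strategy of \cite[Theorem~6.3.9]{Kedlaya2012} to the quasi-deRham setting, where the novelty is that the graded piece $D'_z$ need only be quasi-deRham irreducible rather than rank one, so $\lambda_z$ may fail to be surjective and one must extract a $t$-power torsion in the cokernel. First I would form the coherent $\mathcal{O}_X$-module $\mathcal{H} := H^0_{\varphi,\gamma_K}(D^\vee \otimes D')$ given by the relative $(\varphi,\Gamma_K)$-cohomology (as in \cite[Thm~4.4.2]{Kedlaya2012}). Upper semicontinuity of $\dim_{k(z)}(\mathcal{H}\otimes_{\mathcal{O}_X} k(z))$ combined with the Zariski density of $X_{\mathrm{alg}}$ and the hypothesis forces $\mathcal{H}$ to have generic rank exactly $1$ on every irreducible component of $X$.

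Next I would invoke Raynaud-Gruson type flattening in the rigid analytic setting to produce a proper birational morphism $f:X'\to X$, which is an isomorphism over the (dense) open locus where $\mathcal{H}$ is already a line bundle (in particular over $X_{\mathrm{alg}}$), such that $f^*\mathcal{H}$ modulo its torsion subsheaf is a line bundle on $X'$; passing to the double dual and blowing up further if necessary, I arrange a canonical inclusion $\mathcal{L}^{-1}\hookrightarrow H^0_{\varphi,\gamma_K}(f^*D^\vee\otimes f^*D')$ for a line bundle $\mathcal{L}$, using base change for relative $(\varphi,\Gamma_K)$-cohomology together with the fact that $\mathcal{L}$ carries trivial $(\varphi,\Gamma_K)$-action. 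Tensoring with $\mathcal{L}$ yields the desired canonical global section of $H^0_{\varphi,\gamma_K}(f^*D^\vee\otimes f^*D'\otimes\mathcal{L})$, equivalently a homomorphism $\lambda: f^*D\to f^*D'\otimes\mathcal{L}$. Uniqueness up to $\mathcal{O}_{X'}^\times$ follows from the generic rank being one.

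For the pointwise properties, (1) follows because $f$ is an isomorphism over a neighborhood of $X_{\mathrm{alg}}$ and $Z$ coincides with the closed locus where the fiber map $\mathcal{L}^{-1}\otimes k(z)\to H^0_{\varphi,\gamma_K}(D_z^\vee\otimes D'_z)$ fails to produce a surjective homomorphism. For property (2), at any $z\notin Z$ the image of the nonzero $\lambda_z: D_z\to D'_z$ is a nonzero $(\varphi,\Gamma_K)$-submodule of the quasi-deRham irreducible $D'_z$, and by Lemma \ref{Sat2} its cokernel is killed by some power of $t$; coherence of $\mathrm{coker}(\lambda)$ together with a Nakayama argument lets us choose this power uniformly on quasi-compact opens, giving the local statement about the support over $Z$. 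Property (3) then follows: after inverting $t$, $\lambda[1/t]$ becomes surjective on a dense open, so the kernel of $\lambda$ has generic rank $d-d'$; applying Lemma \ref{Sat1} and the saturation criterion of \cite[Lem~2.6.10]{Kedlaya2012} shows that the kernel is in fact a $(\varphi,\Gamma_K)$-submodule of $f^*D$ of rank $d-d'$.

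The main obstacle is the transition in Step 2 from the line bundle \emph{quotient} $f^*\mathcal{H}\twoheadrightarrow \mathcal{L}^{-1}$ produced by flattening to a line bundle \emph{subobject} $\mathcal{L}^{-1}\hookrightarrow H^0_{\varphi,\gamma_K}(f^*D^\vee\otimes f^*D')$, since these sit in opposite directions and a naive dualization is not compatible with extracting an honest hom. Handling this requires either working with the reflexive hull $(f^*\mathcal{H})^{\vee\vee}$ and further blowing up so that it becomes locally free, or killing an $H^1$ obstruction by a refinement of $f$, and in either case one must verify a base change result for the relative $H^0_{\varphi,\gamma_K}$ of the coefficient $(\varphi,\Gamma_K)$-module. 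Relatedly, globalizing ``killed by some power of $t$'' in property (2) demands a uniform bound on the $t$-torsion order, which ultimately rests on the locally constant Hodge-Tate weights of $D$ and $D'$ over quasi-compact subsets of $X'$.
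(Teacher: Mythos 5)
Your outline captures the shape of the argument---reduce the problem to producing a line bundle inside the relative $H^0$, extract $\lambda$, and then use Lemmas \ref{Sat1} and \ref{Sat2} plus \cite[Lem~2.6.10]{Kedlaya2012} for the pointwise analysis---but the central technical step is not just an ``obstacle to be handled later''; as proposed it does not close, and the fix is a different flattening than the one you reach for.

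The issue is that you apply Raynaud--Gruson to the single coherent sheaf $\mathcal{H}=H^0_{\varphi,\gamma_K}(D^\vee\otimes D')$. Making $\mathcal{H}$ (or its reflexive hull) locally free does nothing to control the base-change map $\mathcal{H}\otimes k_z\to H^0_{\varphi,\gamma_K}(D^\vee_z\otimes D'_z)$: the edge map is always injective, but its surjectivity is governed by $\mathrm{Tor}_1^{\mathcal{O}}(H^1_{\varphi,\gamma_K}(D^\vee\otimes D'),k_z)$, which is not touched by flattening $\mathcal{H}$. In particular your very first reduction---that ``upper semicontinuity plus the hypothesis forces $\mathcal{H}$ to have generic rank exactly $1$''---already presupposes base change: the hypothesis is about $H^0$ of the fibers, not about fibers of $\mathcal{H}$, and without Tor-vanishing you only get $\leq 1$, not $=1$. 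The paper's route (via \cite[Cor~6.3.3, Cor~6.3.6(2), Thm~6.3.7]{Kedlaya2012}) is to first normalize $X$, then take a proper birational $g:Y\to X$ over which \emph{all} the cohomology sheaves $H^i_{\varphi,\gamma_K}(g^*(D^\vee\otimes D'))$ \emph{and} $H^i_{\varphi,\gamma_K}(g^*(D^\vee\otimes D')/t_\eta)$ are flat or of Tor-dimension $\leq 1$. Only then does the base-change spectral sequence identify $H^0\otimes k_z$ with $H^0(D_z^\vee\otimes D'_z)$ on a dense open, and only then is $H^0$ genuinely a line bundle whose dual gives $\mathcal{L}$. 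Your ``reflexive hull / kill $H^1$ by a refinement of $f$'' alternative points at the right difficulty but does not supply the mechanism.

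Two further consequences of this. First, property (1) asks that the non-surjective locus $Z$ be Zariski \emph{closed}; the paper deduces this from the vanishing locus of the induced maps $H^0(D_0)\to H^0(D_0/t_\eta)$, which makes sense as a Zariski-closed condition precisely because those mod-$t_\eta$ cohomology sheaves have also been flattened. With your construction this closedness is not visible. Second, for property (3) you argue only at the generic point (``after inverting $t$, $\lambda[1/t]$ is generically surjective, so $\ker\lambda$ has generic rank $d-d'$''), but the statement claims $\ker\lambda$ is an honest $(\varphi,\Gamma_K)$-module of rank $d-d'$ globally on $X'$; the rank can jump along $Z$ unless you control $\mathrm{coker}\,\lambda$. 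The paper does an \emph{additional} blow-up $h:X'\to Y$, constructed locally via \cite[Cor~6.3.6(1)]{Kedlaya2012} applied to a presentation of the cokernel, so that $h^*(\mathrm{coker}\,\lambda_0^r)$ has Tor-dimension $\leq 1$, and only then is $\ker\lambda$ projective of the claimed rank. Your sketch omits this second flattening and would not produce the theorem as stated.
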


   \begin{proof}
   As $X$ can be replaced by its normalization $b:\tilde{X}\rightarrow X$, we assume henceforth that $X$ is normal and connected. Moreover, for every affinoid subdomain $\mathrm{Sp}(A)$ in $X$, we see that $\mathrm{Spec}(A)$ is irreducible. It follows that any coherent sheaf on $X$, or pullback under any dominant morphism, has constant generic rank.
   
   By \cite[Corollary~6.3.3]{Kedlaya2012} and Corollary~6.3.6(2) in $\mathit{loc.\ cit.}$, there exists a proper birational morphism: $g: Y \rightarrow X$, such that the following condition holds for $D_0:= g^*(D^{\vee}\otimes D')$:
   \begin{enumerate}
   	\item $H^0_{\varphi,\gamma_K}(D_0)$ is flat and $H^i_{\varphi,\gamma_K}(D_0)$ has Tor-dimension less than or equal to one for $i=1,2$;
   	\item $H^0_{\varphi,\gamma_K}(D_0/t_\eta)$ is flat and $H^i_{\varphi,\gamma_K}(D_0/t_\eta)$ has Tor-dimension less than or equal to one for $i=1,2$, and for each $\eta:K\inj C$;
   \end{enumerate}
   (actually we construct such $g$ locally and canonically on $X$, which satisfies the conditions above. Then by Theorem 4.4.3(2) in $\mathit{loc.\ cit.}$, we are allowed to glue these morphisms together to get a global morphism satisfying the same conditions. Note that here $Y$ can also be replaced by the normalization of its nil-reduction, we may and will take $Y$ to be normal and reduced.)
   
   Condition (1) allows us to invoke Theorem 6.3.7 of \textit{loc. cit.}, so that there exists a Zariski open and dense subset $U_0$ of $Y$, such that $\mathrm{Tor}_1^{Y}(H^{1}_{\varphi,\gamma_K}(D_0),k_z) = 0$ if and only if $z\in U_0$. Then the base change spectral sequence
   $$E_2^{i,j}= \mathrm{Tor}_{-i}^{Y}(H^{j}_{\varphi,\gamma_K}(D_0),k_z)\Rightarrow H^{i+j}_{\varphi,\gamma_K}(D_{0,z})$$ gives the short exact sequence
   $$0\rightarrow H^{0}_{\varphi,\gamma_K}(D_0)\otimes k_z\rightarrow H^{0}_{\varphi,\gamma_K}(D_{0,z}) \rightarrow \mathrm{Tor}_1^{Y}(H^{1}_{\varphi,\gamma_K}(D_0),k_z) \rightarrow 0, $$
   which implies for any point $z$ in $U_0\cap g^{-1}(X_{\mathrm{alg}})$, the $k_z$-vector space $$H^{0}_{\varphi,\gamma_K}(D_0)\otimes k_z\cong H^{0}_{\varphi,\gamma_K}(D_{0,z})$$
   is one dimensional. By condition (1) again, $H^{0}_{\varphi,\gamma_K}(D_0)$ is a line bundle over $Y$. Let $\mathcal{L}_0$ denote the dual line bundle of $H^{0}_{\varphi,\gamma_K}(D_0)$. Dualizing the natural homomorphism $g^*D\otimes\mathcal{L}_0^{\vee}\rightarrow g^*D'$, one has the unique (up to $\mathcal{O}_Y'$) homomorphism $$\lambda_0: f^*D\rightarrow f^*D'\otimes \mathcal{L}_0.$$
   Now we check properties (1) and (2). And afterwards we will construct a morphism 
   $$f: X'\rightarrow Y \rightarrow X$$ by blowing up in $Y$ which will preserve the properties (1) and (2), and check (3).
   
   (1) Actually we have shown that $H^0_{\varphi,\gamma_K}(D_{0,z}) = k_z$ if and only if $z\in U_0$. For any non zero element $c\in H^0_{\varphi,\gamma_K}(D_{0,z})$, the corresponding map $D_z\rightarrow D'_z$ is surjective if and only if  the induced map $\mathcal{R}_{K,k_z}\rightarrow D_{0,z}$ is saturated by lemma \ref{Sat1} and \ref{Sat2}. By \cite[Lemma~2.6.10]{Kedlaya2012}, a map $D_z\rightarrow D'_{z}$ is surjective if and only if for each $\eta\in\Hom(K,C)$, the map
   $$H^{0}_{\varphi,\gamma_K}(D_z^{\vee}\otimes D'_z)\cong H^{0}_{\varphi,\gamma_K}(D_0)\otimes_Y k_z\rightarrow H^{0}_{\varphi,\gamma_K}(D_0/t_{\eta})\otimes_Y k_z\inj H^{0}_{\varphi,\gamma_K}((D_z^{\vee}\otimes D'_z)/t_{\eta})$$
   is nontrivial. Here the injectivity of the last homomorphism above follows from the base change spectral sequence $E^{i,j}_2 =\mathrm{Tor}^{Y}_{-i}(H^{j}_{\varphi,\gamma_K}(D_0/t_{\eta}),k_z)\Rightarrow  H^{i+j}_{\varphi,\gamma_K}(D_{0,z}/t_{\eta})$.
   This condition equals to $z$ is in the intersection of locus $Z_\eta$ that the natural map $H^0_{\varphi,\gamma_K}(D_0)\rightarrow H^0_{\varphi,\gamma_K}(D_0/t_\eta)$ vanishes for every $\eta:K\inj C$. Hence $U_0\setminus (\cap_\eta Z_\eta)$ is exactly the locus that the condition that $\lambda_{0,z}: D_z\rightarrow D'_z$ is surjective and the corresponding element spans $H^0_{\varphi,\gamma_K}(D_{0,z})$ holds, and contains $g^{-1}(X_{\mathrm{alg}})$ obviously.
   
   (2) The argument is exactly the same as the proof in \cite[Theorem~6.3.9]{Kedlaya2012}. And the only different is we need to invoke lemma $\ref{Sat2}$ to make sure that $\mathrm{coker}(\lambda_0)_z$ is killed by some power of $t$.
   
   (3) Note that $\lambda_0$ is the base change of a morphism
   $$\lambda_0^r: g^*(D^r)\rightarrow g^*(D'^r)$$
   Let $M^r$ denote the cokernel of $\lambda_0^r$. Then we can locally apply to \cite[Corollary~6.3.6(1)]{Kedlaya2012} to any finite presentation of $M^{[r/p,r]}$, and able to glue these local constructions globally to obtain the morphism $h: X' \rightarrow Y$ such that $h^*(M^r)$ has Tor-dimension at most one. Let $f:= g\circ h$ and let $\lambda := h^*(\lambda_0)$. Then the exactly same arguments used in the proof of Theorem 6.3.9 of \textit{loc. cit.} shows the kernel of $\lambda$ is of rank $d-d'$.
   \end{proof}
   
   \begin{Cor}\label{Cor:KPX6.3.10}
   Let $X$ be a reduced rigid analytic space over $C$. Let $D$ be a densely pointwise strictly paraboline $(\varphi,\Gamma_K)$-module over $\mathcal{R}_{X,K}$ of rank $n$, with respect to the ordered parameters $(f_1,\dots,f_r)$ and the Zariski dense subset $X_{\mathrm{alg}}$. Then there exist canonical data of
   \begin{enumerate}[label=(\alph*)]
   	\item a proper birational morphism $F:X'\rightarrow X$ of reduced analytic spaces,\\
   	\item a unique increasing filtration $(\mathcal{F}_i(F^*D))_{0\leq i\leq r}$ on the pullback $(\varphi,\Gamma_K)$-modules $f^*D$ over $\mathcal{R}_{X',K}$ via $(\varphi,\Gamma_K)$-stable coherent $\mathcal{R}_{X',K}$-submodules, 
   \end{enumerate} 
   such that the following conditions are satisfied.
   \begin{enumerate}[label=(\arabic*)]
   	\item The set $Z$ of closed points $z\in X'$ at which $(\mathcal{F}^\bullet(f^*D))_z$ fails to be a strictly parabolic filtration on $D_z$ with ordered parameters $(f_{1}(z),\dots,f_{r}(z))$ is Zariski closed in $X'$ and disjoint from $f^{-1}(X_{\mathrm{alg}})$ (hence the complement of $Z$ is Zariski open and dense).\\
   	\item Each $\mathrm{gr}_i(F^*D)$ embeds $(\varphi,\Gamma_K)$-equivariantly into $F^*(\mathcal{R}(f_i))\otimes_{X'}\mathcal{L}_i$ for some line bundle $\mathcal{L}_i$ over $X'$, and the cokernel of the embedding is, locally on $X'$, killed by some power of $t$ and supported on $Z$.
   \end{enumerate} 
   \end{Cor}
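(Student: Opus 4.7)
The plan is to induct on $r$, constructing the filtration from the top down: at each stage I would invoke Theorem \ref{KPX6.3.9} to cut off the topmost graded piece $\mathcal{R}(f_r)$ and then iterate on the kernel, which will be a $(\varphi,\Gamma_K)$-module of smaller rank that is still densely pointwise strictly paraboline, now with the shorter parameter sequence $(f_1,\dots,f_{r-1})$.

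For the base case $r=1$, I would apply Theorem \ref{KPX6.3.9} directly with $D'=\mathcal{R}(f_1)$, which has the same rank $n$ as $D$ and is quasi-deRham irreducible since $f_1$ factors through $\mathcal{S}_1$. The strictly paraboline hypothesis at $z\in X_{\mathrm{alg}}$ gives $D_z\simeq\mathcal{R}(f_1(z))$, so the $H^0$ condition of Theorem \ref{KPX6.3.9} is satisfied with a surjective generator. The resulting $\lambda:F_1^*D\to F_1^*\mathcal{R}(f_1)\otimes\mathcal{L}_1$ has kernel of rank $n-n_1=0$, hence is injective, and I set $\mathcal{F}_1:=F_1^*D$.

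For the inductive step, the key observation is that the $i=r$ instance of the strictly paraboline condition furnishes, at each $z\in X_{\mathrm{alg}}$, $H^0(D_z^\vee\otimes\mathcal{R}(f_r(z)))=C$, whose generator is the projection onto $\mathrm{gr}_r D_z\simeq\mathcal{R}(f_r(z))$ and is in particular surjective. I would apply Theorem \ref{KPX6.3.9} to $(D,\mathcal{R}(f_r))$ to obtain $F_1:X_1\to X$ and $\lambda_1:F_1^*D\to F_1^*\mathcal{R}(f_r)\otimes\mathcal{L}_r$, whose kernel $K$ has rank $n-n_r$; let $Z_1\subset X_1$ be the bad locus of part~(1). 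The crucial step is then to verify that $K$ is densely pointwise strictly paraboline with parameters $(F_1^*f_1,\dots,F_1^*f_{r-1})$ and Zariski dense subset $X_1^{\mathrm{alg}}:=F_1^{-1}(X_{\mathrm{alg}})\setminus Z_1$: properness and birationality of $F_1$ together with $F_1^{-1}(X_{\mathrm{alg}})\cap Z_1=\emptyset$ ensure $X_1^{\mathrm{alg}}$ is Zariski dense, and at each $z\in X_1^{\mathrm{alg}}$ uniqueness of the strictly paraboline filtration of $D_z$ combined with the surjectivity of $\lambda_{1,z}$ forces $K_z=\mathcal{F}_{r-1}(D_z)$, which inherits the strictly paraboline filtration with the truncated parameter sequence. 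Applying the inductive hypothesis to $K$ produces $F_2:X'\to X_1$ together with the lower part of the filtration on $F_2^*K$, and I would then lift to a filtration on $F^*D$ (with $F:=F_1\circ F_2$) by declaring $\mathcal{F}_r:=F^*D$.

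Conditions (1) and (2) would then be checked directly: $\mathrm{gr}_r(F^*D)=F^*D/F_2^*K\simeq F_2^*\mathrm{im}(\lambda_1)$ embeds into $F^*\mathcal{R}(f_r)\otimes F^*\mathcal{L}_r$ with cokernel the pullback of $\mathrm{coker}(\lambda_1)$, which is locally killed by a power of $t$ and supported on $F_2^{-1}(Z_1)$; the embeddings for $i<r$ come by pullback from the inductive step; and $Z$ is the union of $F_2^{-1}(Z_1)$ with the bad locus from the induction. Uniqueness is inherited stage by stage, since Theorem \ref{KPX6.3.9} determines $\lambda_1$ up to $\mathcal{O}_{X_1}^\times$, which does not alter $\ker(\lambda_1)$. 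I expect the main obstacle to be the pointwise verification that $K$ is densely strictly paraboline with the shorter parameter sequence --- concretely, showing that $X_1^{\mathrm{alg}}$ is Zariski dense and that the identification $K_z=\mathcal{F}_{r-1}(D_z)$ transports the entire residual filtration --- both of which hinge on the precise disjointness $F_1^{-1}(X_{\mathrm{alg}})\cap Z_1=\emptyset$ built into Theorem \ref{KPX6.3.9} and on the intrinsic uniqueness baked into the strictly paraboline definition.
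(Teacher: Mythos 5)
Your proposal is correct and is essentially the paper's own proof: the paper disposes of this corollary in a single line (``the existence of data satisfying all properties follows from theorem \ref{KPX6.3.9} by induction''), and your top-down induction --- peeling off $\mathcal{R}(f_r)$ via Theorem \ref{KPX6.3.9}, identifying the kernel at points of $F_1^{-1}(X_{\mathrm{alg}})$ with $\mathcal{F}_{r-1}(D_z)$ by the uniqueness built into the strictly paraboline condition, and iterating --- is exactly that intended argument spelled out. The only caveat is that at the bottom step (your base case $r=1$) you invoke Theorem \ref{KPX6.3.9} with $d'=d$, which strictly exceeds its stated hypothesis $d'<d$; the theorem's proof goes through unchanged in that case (the kernel then has rank $0$), and the paper's one-line proof faces the same issue, so this is a presentational remark rather than a genuine gap.
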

   \begin{proof}
   	The existence of data satisfying all properties follows from theorem \ref{KPX6.3.9} by induction.
   \end{proof}
   
   The following definition and propositions are parallel results of \cite[Section~2.2]{Hellmann2016}. Recall the definition of $\mathcal{S}_{\mathrm{reg}}$ from definition \ref{Def:RegLocus}.
   \begin{Def}
   	 Consider the functor $\mathcal{P}_{\mathcal{S}_{\mathrm{reg}}}$ that assigns to a rigid $C$-space $X$ the isomorphism classes of quadruples $(D,\mathcal{F}_\bullet(D),f,\nu)$, where $D$ is a $(\varphi,\Gamma_K)$-module over $X$ of rank $n$ and $\mathcal{F}^\bullet(D)$ is an increasing filtration of $D$ given by $(\varphi,\Gamma_K)$-submodules such that $\mathcal{F}_0D=0$ and $\mathcal{F}_lD=D$. Further $f=(f_1,\dots,f_l)\in \mathcal{S}_{\mathrm{reg}}(X)$ and $\nu=(\nu_1,\dots,\nu_l)$ is a collection of trivializations
   	$$\nu_i:\mathcal{F}_{i}(D)/\mathcal{F}_{i-1}(D) \xrightarrow{\sim} \mathcal{R}_{K,X}(f_i)$$
   \end{Def}
   \begin{Prop}
   Let $f = (f_1,\dots,f_l)\in\mathcal{S}_{\mathrm{reg}}(X)$ for some reduced rigid $C$-space $X$. Let $1\leq a_1<\dots<a_s\leq l$ be a sub-sequence of $1,2,\dots,l$, and let $D$ be a successive extension of $D_i:=\mathcal{R}(f_{a_i})$ for $1\leq i\leq s-1 $. Then $H^1_{\varphi,\gamma_K}(D\otimes D_s^{\vee})$ is a locally free $\mathcal{O}_X$-module of rank $[K:\mathbb{Q}_p]\mathrm{rk}(D\otimes D_s^{\vee})$.
   \end{Prop}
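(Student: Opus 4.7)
The plan is to argue by induction on $s$, strengthening the claim to: at every point $z\in X$ one has $H^0_{\varphi,\gamma_K}((D\otimes D_s^{\vee})_z)=H^2_{\varphi,\gamma_K}((D\otimes D_s^{\vee})_z)=0$, and globally $H^1_{\varphi,\gamma_K}(D\otimes D_s^{\vee})$ is locally free of rank $[K:\mathbb{Q}_p]\cdot \mathrm{rk}(D\otimes D_s^{\vee})$. The key inputs are Proposition \ref{Prop:dimforcoh} applied fibrewise (which is legitimate because $f\in\mathcal{S}_{\mathrm{reg}}(X)$ means $f(z)\in\mathcal{S}_{\mathrm{reg}}$ for all $z$), together with the perfectness and base change of family $(\varphi,\Gamma_K)$-cohomology from \cite[\S~4.4]{Kedlaya2012} and the Euler--Poincar\'e formula in \cite[Thm~4.4.5]{Kedlaya2012}.

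For the base case $s=2$, one has $D=D_1=\mathcal{R}(f_{a_1})$, so $D\otimes D_s^{\vee}$ is rank one, and Proposition \ref{Prop:dimforcoh} directly gives $H^0_z=H^2_z=0$ with $\dim_{k(z)}H^1_z=[K:\mathbb{Q}_p]$ at every $z\in X$. Since $R\Gamma_{\varphi,\gamma_K}(D\otimes D_s^{\vee})$ is represented by a perfect complex of $\mathcal{O}_X$-modules concentrated in degrees $[0,2]$ and compatible with derived base change, fibrewise vanishing of $H^2$ forces $H^2=0$ globally; truncating, $H^1$ becomes the top term of a two-term complex of finite projective $\mathcal{O}_X$-modules whose fibrewise rank is constant, hence $H^1$ is locally free of the asserted rank (the rank equality follows from Euler--Poincar\'e applied fibrewise).

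For the inductive step $s>2$, I remove the top step of the filtration of $D$ to obtain a short exact sequence $0\to D'\to D\to D_{s-1}\to 0$ with $D'$ a successive extension of $D_1,\dots,D_{s-2}$. Tensoring with the locally free $D_s^{\vee}$ preserves exactness, and taking the long exact sequence in $(\varphi,\Gamma_K)$-cohomology at a fibre $z\in X$ combines
\begin{itemize}
\item the inductive hypothesis, giving $H^0_z(D'\otimes D_s^{\vee})=H^2_z(D'\otimes D_s^{\vee})=0$,
\item Proposition \ref{Prop:dimforcoh} applied to the rank-one pair $(a_{s-1},a_s)$, giving $H^0_z(D_{s-1}\otimes D_s^{\vee})=H^2_z(D_{s-1}\otimes D_s^{\vee})=0$,
\end{itemize}
and the five-term pinch in the long exact sequence then forces $H^0_z(D\otimes D_s^{\vee})=H^2_z(D\otimes D_s^{\vee})=0$. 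The same base-change argument as in the base case upgrades the fibrewise vanishings to global vanishing of $H^2$ and local freeness of $H^1$, and Euler--Poincar\'e pins down the rank.

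The only genuine technical obstacle is the invocation of the perfect-complex/base-change package for $(\varphi,\Gamma_K)$-cohomology in families; this is however a black box supplied by Kedlaya--Pottharst--Xiao, so the proof is essentially an inductive bookkeeping of long exact sequences built upon Proposition \ref{Prop:dimforcoh}. No further hypothesis on $D$ beyond the existence of the claimed successive extension is needed, and the regularity of $f$ is used in precisely one place, namely to make Proposition \ref{Prop:dimforcoh} available at every point $z\in X$.
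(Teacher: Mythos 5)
Your proposal is correct and is essentially the same as the paper's argument: induction on $s$ using the long exact sequence coming from a short exact sequence stripping one step off the filtration of $D$, together with Proposition \ref{Prop:dimforcoh} for the rank-one pieces and the Euler--Poincar\'e/coherence machinery of \cite[Thm~4.4.5]{Kedlaya2012} to pass from fibrewise dimension counts to local freeness. The only cosmetic difference is that you peel off the top graded piece ($0\to D'\to D\to D_{s-1}\to 0$) and carry the $H^0=H^2=0$ vanishing explicitly in the induction hypothesis, whereas the paper peels off the bottom piece ($0\to D_1\to D\to D'\to 0$) and re-derives the vanishing each time from the Euler characteristic; these two bookkeeping choices are interchangeable.
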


   \begin{proof}
   	It follows from \cite[Theorem~4.4.5]{Kedlaya2012} that the cohomology is a coherent sheaf and it is enough to compute the rank at all closed points. We proceed by induction on $s$. The $s= 2$ case has been proved by proposition \ref{Prop:dimforcoh}(3). For general $s$, consider the short exact sequence:
   	$$0\rightarrow D_1 \rightarrow D\rightarrow D'\rightarrow 0$$
   	tensored with $D_s^{\vee}$. By induction hypothesis, $H^1_{\varphi,\gamma_K}(D'\otimes D_s^{\vee})$ is locally free of rank $[K:\mathbb{Q}_p]\mathrm{rk}(D'\otimes D_s^{\vee}) $, and therefor the Euler characteristic formula \cite[Theorem~4.4.5(2)]{Kedlaya2012} implies $H^i_{\varphi,\gamma_K}(D'\otimes D_s^{\vee}) = 0$ for $i=0,2$.
   	
   	By proposition \ref{Prop:dimforcoh} again, $H^1_{\varphi,\gamma_K}(D_1\otimes D_s^{\vee})$ is free of rank $[K:\mathbb{Q}_p]\mathrm{rk}(D_1\otimes D_s^{\vee})$, and $H^2_{\varphi,\gamma_K}(D_1\otimes D_s^{\vee})=0$. Now the claim follows from the long exact sequence associated to the short exact sequence above (tensored with $D_s^{\vee}$).
   \end{proof}

   \begin{Theo} \label{Sd}
   	\text{ }
   	\begin{enumerate}
    \item The functor $\mathcal{P}_{\mathcal{S}_{\mathrm{reg}}}$ is represented by a rigid space.
    \item The natural map $\kappa': \mathcal{P}_{\mathcal{S}_{\mathrm{reg}}}\rightarrow \mathcal{S}$ is smooth of relative dimension $$[K:\mathbb{Q}_p]\sum_{1\leq i<j\leq r}n_in_j$$
   	\end{enumerate}
    
    \begin{proof}
    	The proof is quite similar as the proof of \cite[Theorem~2.4]{Hellmann2016}, hence we give a short sketch. Let $\mathcal{S}^{(i)}:= \mathcal{S}_1\times\cdots\times\mathcal{S}_i$, then one can define the functor $\mathcal{P}_{\mathcal{S}^{(i)}_{\mathrm{reg}}}$ in the similar way. Now we proceed the proof by induction on $i$.
    	
    	The case $i=1$ is settled by $\mathcal{P}_{\mathcal{S}^{(1)}_{\mathrm{reg}}}= \mathcal{S}_1$. Now assume $\mathcal{P}_{\mathcal{S}^{(i-1)}_{\mathrm{reg}}}$ is constructed with universal object $(\mathcal{D}_{i-1},\mathcal{F}_\bullet\mathcal{D}_{i-1},g_{i-1},\mu_{i-1})$. Let $U\subseteq \mathcal{P}_{\mathcal{S}^{(i-1)}_{\mathrm{reg}}}\times \mathcal{S}_i$ denote the preimage of $\mathcal{S}^{(i)}_{\mathrm{reg}}\subseteq \mathcal{S}^{(i-1)}_{\mathrm{reg}}\times \mathcal{S}_i$ under the canonical projection $g_{i-1}\times \id_{\mathcal{S}_i}$. The proposition (for the case $(a_1,\dots,a_s) = (1,2,\dots,i)$) above shows that 
    	$$\mathcal{M}_U:=\mathcal{E}xt^1_{\mathcal{R}_{K,U}}(\mathcal{D}_{i-1},\mathcal{R}_{K,U}(\mathrm{pr}_i)) = H^1_{\varphi,\gamma_K}(\mathcal{D}_{i-1}\otimes\mathcal{R}_{K,U}(\mathrm{pr}_i)^{\vee})$$
    	is a vector bundle of rank $[K:\mathbb{Q}_p]n_i(n_1+\cdots+n_{i-1})$, here $\mathrm{pr}_i$ is the natural projection $\mathrm{pr}_i:U\rightarrow \mathcal{S}_i$. Now $\mathcal{P}_{\mathcal{S}^{(i)}_{\mathrm{reg}}} = \underline{\mathrm{Spec}}_U(\mathrm{Sym}^\bullet\mathcal{M}_U^{\vee})$ is the geometric vector bundle over $U$ associate to $\mathcal{M}_U$.
    	
    	Then $\mathcal{D}_i$ is the universal extension
    	$$0\rightarrow  p_i^*\mathcal{D}_{i-1}\rightarrow \mathcal{D}\rightarrow \mathcal{R}(\mathrm{pr}'_i)\rightarrow 0$$ 
    	where $\mathrm{pr}_i':\mathcal{P}_{\mathcal{S}^{(i)}_{\mathrm{reg}}}\rightarrow \mathcal{S}_i$ and $p_i: \mathcal{P}_{\mathcal{S}^{(i)}_{\mathrm{reg}}}\rightarrow \mathcal{P}_{\mathcal{S}^{(i-1)}_{\mathrm{reg}}}$ are the natural projections.
    	
    	The filtration $\mathcal{F}_\bullet\mathcal{D}_i$ is $$0\subseteq p_i^*(\mathcal{F}_1\mathcal{D}_{i-1}) \subseteq \cdots\subseteq p_i^*(\mathcal{D}_{i-1}) \subseteq \mathcal{D}_i,$$ and $g_i$,$\mu_i$ are defined in the obvious way.
    	
    	Note that the morphism  $p_i\times \mathrm{pr}_i':\mathcal{P}_{\mathcal{S}^{(i)}_{\mathrm{reg}}} \rightarrow \mathcal{P}_{\mathcal{S}^{(i-1)}_{\mathrm{reg}}} \otimes S_i$ is smooth of relative dimension $[K:\mathbb{Q}_p]n_i(n_1+\cdots+n_{i-1})$. It follows that $\kappa'$ is smooth of relative dimension
    	\begin{align*}
    		\dim(\kappa') & = \sum_{2\leq i\leq l-1} \dim(p_i\times \mathrm{pr}_i')\\
    		               & = [K:\mathbb{Q}_p]\sum_{1\leq i<j\leq l}n_in_j
    	\end{align*}
    \end{proof}
    
   \end{Theo}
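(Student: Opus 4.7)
The plan is to proceed by induction on the length $l$ of the filtration, closely following the strategy used in \cite{Hellmann2016}, Theorem 2.4. Define inductively the functor $\mathcal{P}_{\mathcal{S}^{(i)}_{\mathrm{reg}}}$ that assigns to a rigid $C$-space $X$ the isomorphism classes of quadruples $(D,\mathcal{F}_\bullet(D),f,\nu)$ consisting of a filtered $(\varphi,\Gamma_K)$-module $D$ over $X$ of length $i$ with parameter $f\in\mathcal{S}^{(i)}_{\mathrm{reg}}(X)$ and trivializations $\nu_j:\mathcal{F}_j(D)/\mathcal{F}_{j-1}(D)\xrightarrow{\sim}\mathcal{R}_{K,X}(f_j)$ for $1\leq j\leq i$, where $\mathcal{S}^{(i)} := \mathcal{S}_1\times\cdots\times \mathcal{S}_i$. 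The base case $i=1$ is immediate: $\mathcal{P}_{\mathcal{S}^{(1)}_{\mathrm{reg}}}=\mathcal{S}_1$, with universal object $(\mathcal{R}(f_1),0\subset\mathcal{R}(f_1),f_1,\mathrm{id})$.

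For the inductive step, assume $\mathcal{P}_{\mathcal{S}^{(i-1)}_{\mathrm{reg}}}$ has been constructed and carries a universal quadruple $(\mathcal{D}_{i-1},\mathcal{F}_\bullet\mathcal{D}_{i-1},g_{i-1},\mu_{i-1})$. Form the fiber product $U := (g_{i-1}\times\mathrm{id}_{\mathcal{S}_i})^{-1}(\mathcal{S}^{(i)}_{\mathrm{reg}})\subset \mathcal{P}_{\mathcal{S}^{(i-1)}_{\mathrm{reg}}}\times \mathcal{S}_i$, which is an admissible open and carries the pullback family $\mathcal{D}_{i-1}$ together with the universal rank one $(\varphi,\Gamma_K)$-module $\mathcal{R}(\mathrm{pr}_i)$ coming from the projection $\mathrm{pr}_i:U\to\mathcal{S}_i$. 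The key input is the sheaf
\[
\mathcal{M}_U := \mathcal{E}xt^1_{\mathcal{R}_{K,U}}\bigl(\mathcal{D}_{i-1},\mathcal{R}_{K,U}(\mathrm{pr}_i)\bigr) \cong H^1_{\varphi,\gamma_K}\bigl(\mathcal{D}_{i-1}^{\vee}\otimes \mathcal{R}_{K,U}(\mathrm{pr}_i)\bigr).
\]
By the proposition immediately preceding the theorem (applied with $(a_1,\dots,a_s)=(1,\dots,i)$ and after dualizing), $\mathcal{M}_U$ is locally free of rank $[K:\mathbb{Q}_p]\,n_i(n_1+\cdots+n_{i-1})$ on $U$. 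Define $\mathcal{P}_{\mathcal{S}^{(i)}_{\mathrm{reg}}} := \underline{\mathrm{Spec}}_U(\mathrm{Sym}^\bullet \mathcal{M}_U^{\vee})$, the total space of the geometric vector bundle attached to $\mathcal{M}_U$.

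To see that this space represents the functor, note that by construction a morphism $X\to\mathcal{P}_{\mathcal{S}^{(i)}_{\mathrm{reg}}}$ is the same as a morphism $X\to U$ together with a section of the pullback of $\mathcal{M}_U$; equivalently, a quadruple $(D',\mathcal{F}_\bullet D',f',\nu')$ of length $i-1$ in $\mathcal{P}_{\mathcal{S}^{(i-1)}_{\mathrm{reg}}}(X)$, a character $f_i\in\mathcal{S}_i(X)$ landing in the regular locus against the first $i-1$ parameters, and an extension class in $H^1_{\varphi,\gamma_K}(D'^{\vee}\otimes\mathcal{R}_{K,X}(f_i))$. Such an extension class gives rise, uniquely up to isomorphism compatible with trivializations, to a new filtered $(\varphi,\Gamma_K)$-module $D$ with top graded piece trivialized as $\mathcal{R}_{K,X}(f_i)$. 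Concretely, let $p_i:\mathcal{P}_{\mathcal{S}^{(i)}_{\mathrm{reg}}}\to\mathcal{P}_{\mathcal{S}^{(i-1)}_{\mathrm{reg}}}$ and $\mathrm{pr}'_i:\mathcal{P}_{\mathcal{S}^{(i)}_{\mathrm{reg}}}\to\mathcal{S}_i$ be the natural projections. The universal object is the universal extension
\[
0\longrightarrow p_i^{*}\mathcal{D}_{i-1}\longrightarrow \mathcal{D}_i\longrightarrow \mathcal{R}(\mathrm{pr}'_i)\longrightarrow 0,
\]
with filtration $0\subseteq p_i^{*}(\mathcal{F}_1\mathcal{D}_{i-1})\subseteq\cdots\subseteq p_i^{*}\mathcal{D}_{i-1}\subseteq \mathcal{D}_i$ and trivializations inherited from $\mu_{i-1}$ together with the tautological identification on the top piece.

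For part (2), observe that the composition $p_i\times \mathrm{pr}'_i:\mathcal{P}_{\mathcal{S}^{(i)}_{\mathrm{reg}}}\to \mathcal{P}_{\mathcal{S}^{(i-1)}_{\mathrm{reg}}}\times \mathcal{S}_i$ is by construction a geometric vector bundle, hence smooth, of relative dimension $[K:\mathbb{Q}_p]n_i(n_1+\cdots+n_{i-1})$. Summing the relative dimensions telescopically over $i=2,\dots,l$ yields the claimed total relative dimension $[K:\mathbb{Q}_p]\sum_{1\leq i<j\leq l}n_in_j$ for $\kappa'$. The main obstacle is really the input that $\mathcal{M}_U$ is locally free of the expected rank, i.e.\ the vanishing $H^0_{\varphi,\gamma_K}=H^2_{\varphi,\gamma_K}=0$ on the regular locus, which however is already secured by the preceding cohomology proposition combined with Proposition~\ref{Prop:dimforcoh} and the Euler characteristic formula of \cite[Thm.~4.4.5]{Kedlaya2012}. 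Given this, the representability and smoothness follow formally from the inductive construction of the universal extension.
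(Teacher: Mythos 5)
Your proof follows the paper's argument essentially verbatim: the same inductive construction over $\mathcal{S}^{(i)}$, the same identification of $\mathcal{M}_U$ as a vector bundle via Proposition \ref{Prop:dimforcoh} and the cohomology/base-change results of \cite{Kedlaya2012}, the same geometric vector bundle $\underline{\mathrm{Spec}}_U(\mathrm{Sym}^\bullet\mathcal{M}_U^{\vee})$ carrying the universal extension, and the same telescoping sum for the relative dimension. (You in fact fix a small typographical glitch in the paper's formula for $\mathcal{M}_U$ and in the range of the final sum.)
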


   \begin{Def}
   	Fix a continuous representation $\bar{r}:G_K\rightarrow \mathrm{GL}_n(k_C)$ and let $R_{\bar{r}}^\Box$ be the usual framed local deformation ring of $\bar{r}$, which pro-represents the functor of local artinian rings with residue field $k_C$:
   	$$A \mapsto \{r:G_K\rightarrow \mathrm{GL}_d(A)|r\otimes_A k_C = \bar{r}\}$$
   	It is a local complete noetherian $\mathcal{O}_C$-algebra of residue field $k_C$ and we denote by $\mathfrak{X}_{\bar{r}}:= (\mathrm{Spf}R_{\bar{r}}^\Box)^{\mathrm{rig}}$ the rigid analytic space over $C$ associated to the formal scheme $\mathrm{Spf}R_{\bar{r}}^\Box$. We define $X_{\mathrm{par}}(\bar{r})$ as the reduced rigid analytic space over $C$ which is the Zariski-closure in $\mathfrak{X}_{\bar{r}}\times \mathcal{S}$ of 
   	$$U_{\mathrm{par}}(\bar{r}) := \{ (r,f)\in\mathfrak{X}_{\bar{r}}\times\mathcal{S}_\mathrm{reg} |\ D^\dagger_{\mathrm{rig}}(r)\mathrm{\ is\ paraboline\ with\ parameters\ }f\}$$
   	We call $X_{\mathrm{par}}(\bar{r})$ the \textit{refined\ paraboline\ variety\ for} $\bar{r}$ (\textit{of\ shape\ }$\mathcal{S}$), and call $U_{\mathrm{par}}(\bar{r})$ the \textit{regular locus of} $X_{\mathrm{par}}(\bar{r})$ (we will prove that $U_{\mathrm{par}}(\bar{r})$ is Zariski dense in $X_{\mathrm{par}}(\bar{r})$ in the following theorem).
   \end{Def}

   We denote by $\omega$ the composition $X_{\mathrm{par}}(\bar{r}) \hookrightarrow \mathfrak{X}_{\bar{r}}\times \mathcal{S}\twoheadrightarrow \mathcal{S}$, and denote by  $\omega_i$ the composition $X_{\mathrm{par}}(\bar{r}) \xrightarrow{\omega} \mathcal{S} \surj \mathcal{S}_i$.\\
   
   \begin{Theo}{\ }\label{Thm:Geoofpar}
   	\begin{enumerate}
   		\item the rigid space $X_\mathrm{par}(\bar{r})$ is equidimensional of dimension 
   		$$[K:\mathbb{Q}_p](\frac{n(n-1)}{2}+l)+n^2;$$
   		\item the set $U_{\mathrm{par}}(\bar{r})$ is Zariski open in $X_{\mathrm{par}}(\bar{r})$, hence it is also Zariski dense in $X_{\mathrm{par}}(\bar{r})$;
   		\item the rigid space $U_{\mathrm{par}}(\bar{r})$ is smooth and the morphism $\omega$ restricted on $U_{\mathrm{par}}(\bar{r})$ is smooth.  
   	\end{enumerate}
      \end{Theo}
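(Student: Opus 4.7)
I would follow the strategy of \cite[Thm.~2.6]{Breuil2017} for trianguline varieties, with Theorem \ref{Sd} providing the relevant parabolization moduli $\mathcal{P}_{\mathcal{S}_{\mathrm{reg}}}$. First I would introduce an auxiliary rigid space $\mathcal{Y}$ parametrizing tuples $(r,\mathcal{F}_\bullet,f,\nu)$ where $r\in\mathfrak{X}_{\bar{r}}$, $f\in\mathcal{S}_{\mathrm{reg}}$, and $(D_{\mathrm{rig}}^\dagger(r),\mathcal{F}_\bullet,f,\nu)$ defines a point of $\mathcal{P}_{\mathcal{S}_{\mathrm{reg}}}$. Using the Kedlaya--Liu equivalence between étale $(\varphi,\Gamma_K)$-modules and Galois representations in families, together with the framing built into $\mathfrak{X}_{\bar{r}}$, one sees that $\mathcal{Y}$ is representable as the locally closed subspace of $\mathfrak{X}_{\bar{r}}\times\mathcal{P}_{\mathcal{S}_{\mathrm{reg}}}$ cut out by the condition that the two universal $(\varphi,\Gamma_K)$-modules match. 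The key structural diagram is then
\[
U_{\mathrm{par}}(\bar{r})\xleftarrow{\pi_1}\mathcal{Y}\xrightarrow{\pi_2}\mathcal{P}_{\mathcal{S}_{\mathrm{reg}}},
\]
where $\pi_1$ forgets $(\mathcal{F}_\bullet,\nu)$ and $\pi_2$ forgets $r$.

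Next I would verify that both morphisms are smooth with computable relative dimensions. For $\pi_1$: Corollary \ref{Cor:StrPar} together with Proposition \ref{Prop:dimforcoh} shows that a strict parabolization of $D_{\mathrm{rig}}^\dagger(r)$ with regular parameter $f$ is unique when it exists, so each geometric fiber reduces to the $\mathbb{G}_m^l$-torsor of trivializations $\nu$; the vanishing of off-diagonal $H^0$ and $H^2$ in Proposition \ref{Prop:dimforcoh}(2) promotes this pointwise computation to smoothness of $\pi_1$ of relative dimension $l$. For $\pi_2$: lifting a paraboline $(\varphi,\Gamma_K)$-module to a framed Galois representation in $\mathfrak{X}_{\bar{r}}$ is (on the étale locus with residual representation $\bar{r}$) a $\mathrm{GL}_n$-torsor, so Kedlaya--Liu's openness of étaleness in families together with unobstructedness of the framing yield smoothness of $\pi_2$ of relative dimension $n^2$.

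Combining these with smoothness of $\mathcal{P}_{\mathcal{S}_{\mathrm{reg}}}$ (Theorem \ref{Sd}, since $\mathcal{S}$ is itself smooth, being a product of tori with flag varieties) gives part (3): $U_{\mathrm{par}}(\bar{r})$ is smooth, and smoothness of $\omega|_{U_{\mathrm{par}}(\bar{r})}$ follows by descending $\kappa'\circ\pi_2$ through the faithfully flat map $\pi_1$. Using $\sum_i\binom{n_i}{2}+\sum_{i<j}n_in_j=\binom{n}{2}$ and Remark \ref{Rmk:dimofS}, the dimension count reads
\[
\dim U_{\mathrm{par}}(\bar{r})=\dim\mathcal{P}_{\mathcal{S}_{\mathrm{reg}}}+n^2-l=[K:\mathbb{Q}_p]\bigl(\tfrac{n(n-1)}{2}+l\bigr)+n^2.
\]
For parts (1) and (2), I would apply Corollary \ref{Cor:KPX6.3.10} to the universal $(\varphi,\Gamma_K)$-module on $X_{\mathrm{par}}(\bar{r})$ equipped with its tautological ordered parameter $\omega$; this family is densely pointwise strictly paraboline by construction, with dense locus containing $U_{\mathrm{par}}(\bar{r})$. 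The corollary produces a proper birational $F:X'\to X_{\mathrm{par}}(\bar{r})$ and a Zariski closed $Z\subset X'$ whose complement lies above $U_{\mathrm{par}}(\bar{r})$. Properness of $F$ together with the Zariski density of $U_{\mathrm{par}}(\bar{r})$ (built into the Zariski-closure definition of $X_{\mathrm{par}}(\bar{r})$) give Zariski openness of $U_{\mathrm{par}}(\bar{r})$ in $X_{\mathrm{par}}(\bar{r})$, and equidimensionality of $X_{\mathrm{par}}(\bar{r})$ follows from equidimensionality of its Zariski open dense subspace $U_{\mathrm{par}}(\bar{r})$.

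The main obstacle will be promoting the pointwise smoothness statements for $\pi_1$ and $\pi_2$ to statements in families. For $\pi_1$ this requires propagating the cohomological vanishing of Proposition \ref{Prop:dimforcoh} to a coherent-sheaf statement over $\mathcal{Y}$ via the finiteness theorems of \cite{Kedlaya2012} for $(\varphi,\Gamma_K)$-cohomology; for $\pi_2$ one needs the Galois-side and $(\varphi,\Gamma_K)$-side deformation theories to match through the Kedlaya--Liu equivalence. A secondary subtlety is ensuring that the locus cut out inside $\mathfrak{X}_{\bar{r}}\times\mathcal{P}_{\mathcal{S}_{\mathrm{reg}}}$ really defines a rigid space rather than a stack, which is precisely where the framing of $\mathfrak{X}_{\bar{r}}$ and the trivializations $\nu$ enter crucially.
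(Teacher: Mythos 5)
Your overall approach matches the paper's: the auxiliary space $\mathcal{Y}$ is exactly the paper's $\mathcal{P}^\Box(\bar{r},\mathcal{S})$, which is constructed from $\mathcal{P}_{\mathcal{S}_{\mathrm{reg}}}$ by first passing to the admissible open locus where the universal $(\varphi,\Gamma_K)$-module is étale (Hellmann's openness theorem), then taking the $\mathrm{GL}_n$-torsor of trivializations, then restricting to the residual-representation condition; your $\pi_2$ and $\kappa'\circ\pi_2$ agree with the paper's $\kappa$, and the dimension count is correct. The use of Corollary \ref{Cor:KPX6.3.10} for parts (1) and (2) is also in line with the paper.

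However, there is a genuine gap in the argument for smoothness of $\pi_1$ (the paper's $\pi_{\bar{r}}:\mathcal{P}^\Box(\bar{r},\mathcal{S})\to X_{\mathrm{par}}(\bar{r})$). The claim that ``the vanishing of off-diagonal $H^0$ and $H^2$ in Proposition \ref{Prop:dimforcoh}(2) promotes this pointwise computation to smoothness of $\pi_1$'' does not work: knowing that each fiber is a $\mathbb{G}_m^l$-torsor gives the correct fiber dimension, but says nothing about flatness over $X_{\mathrm{par}}(\bar{r})$, and $X_{\mathrm{par}}(\bar{r})$ is defined only as a Zariski closure, with no a priori control on its local structure. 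The actual argument is deformation-theoretic: one identifies $\hat{\mathcal{O}}_{\mathfrak{X}_{\bar{r}},r_y}$ with the framed deformation ring $R_{r_y}^\Box$ (Kisin), identifies $\hat{\mathcal{O}}_{\mathcal{P}^\Box(\bar{r},\mathcal{S}),x}$ with a power series ring over the framed deformation ring $R_{r_y,\mathcal{F}_\bullet}^\Box$ of the pair (Galois representation with parabolization), and then establishes that the natural map $R_{r_y}^\Box\to R_{r_y,\mathcal{F}_\bullet}^\Box$ is surjective via a paraboline analogue of the Bellaïche--Chenevier uniqueness-of-parabolization lemma (recorded separately in the paper as a standalone lemma after the theorem). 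This yields a surjection $\hat{\mathcal{O}}_{X_{\mathrm{par}}(\bar{r}),y}\llbracket x_1,\dots,x_l\rrbracket\twoheadrightarrow\hat{\mathcal{O}}_{\mathcal{P}^\Box(\bar{r},\mathcal{S}),x}$ between noetherian complete local rings of equal dimension; to upgrade it to an isomorphism one must show the source is a domain, i.e.\ that $X_{\mathrm{par}}(\bar{r})$ is unibranch at $y$. This unibranch statement is the subtlest point of the proof and is established by showing the fiber of $y$ in the normalization $X^{\mathrm{norm}}$ is a single closed point: one builds a section $s:U^\Box\to\mathcal{P}^\Box(\bar{r},\mathcal{S})$ from the $\mathbb{G}_m^l$-torsor of trivializations over the locus where the proper birational modification $F$ of Corollary \ref{Cor:KPX6.3.10} is an isomorphism, uses normality of $\mathcal{P}^\Box(\bar{r},\mathcal{S})$ and $U^\Box$ to factor through $X^{\mathrm{norm}}$, and exploits connectedness of $\pi_{\bar{r}}^{-1}(y)\cong\mathbb{G}_m^l$. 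None of this appears in your sketch, and without it the claim that $\pi_1$ is smooth is unsupported. Relatedly, the paper in fact establishes equidimensionality (part 1) first via the $V^\Box$ construction precisely because the equal-dimension input is needed for the isomorphism of complete local rings; your ordering, which derives (1) from (3), cannot quite be run because (3) is what uses (1).
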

   \begin{proof}
   	Our strategy is as follows.  We will construct a smooth rigid $C$-space ${\mathcal{P}^\Box(\bar{r},\mathcal{S})}$ fitting into a commutative diagram as below
   	\begin{equation*}
   		\begin{tikzcd}
   		& {\mathcal{P}^\Box(\bar{r},\mathcal{S})} \arrow{rd}{\kappa} \arrow{ld}[swap]{\pi_{\bar{r}}} &                       \\
   		{X_{\mathrm{par}}(\bar{r})} \arrow{rr}{\omega} &                                                                                 & \mathcal{S}
   		\end{tikzcd}
   	\end{equation*}
   	and show that $\pi_{\bar{r}}$ is smooth of relative dimension $l$ with the image $U_{\mathrm{par}}(\bar{r})$ and the morphism $\kappa$ is smooth of relative dimension $n^2+\dim \kappa'$, here $\kappa'$ is the morphism in theorem \ref{Sd}.
   	
   	Consider the functor $\mathcal{P}^\Box(\bar{r},\mathcal{S})$ that assigns to a reduced rigid $C$-space $X$ the isomorphism classes of quadruples $(r,\mathcal{F}_\bullet(D),f,\nu)$, where $r:G_K\rightarrow \mathrm{GL}_d(\mathcal{O}^+_X) $ is a continuous representation such that for any closed point $x\in X$, the reduction of $r\otimes \mathcal{O}_{k_x}$ coincide with $\bar{r}$, and $\mathcal{F}_\bullet$ is an increasing filtration of $D_{\mathrm{rig}}^\dagger(r)$ given by $(\varphi,\Gamma_K)$-submodules, which are locally on $X$ direct summands as $\mathcal{R}_{K,X}$-modules, such that $\mathcal{F}_0 = 0$ and $\mathcal{F}_l = D_{\mathrm{rig}}^\dagger(r)$. Further $f=(f_1,\dots,f_l)\in \mathcal{S}_{\mathrm{reg}}(X)$ and $\nu=(\nu_1,\dots,\nu_l)$ is a collection of trivializations
    $$\nu_i:\mathrm{gr}_i (D_{\mathrm{rig}}^\dagger(r))\xrightarrow{\sim} \mathcal{R}(f_i)$$
    Now we are going to show that $\mathcal{P}^\Box(\bar{r},\mathcal{S})$ is represented by a rigid $C$-space. Actually the argument is almost the same in the proof of \cite[Theorem~2.6]{Breuil2017}. Using the similar notation as \textit{loc. cit.}, we can construct $\mathcal{P}^\Box(\bar{r},\mathcal{S})$ from the following series of morphisms:
    \begin{equation}\label{eq:kappa}
    	\kappa:\mathcal{P}^\Box(\bar{r},\mathcal{S})\hookrightarrow \mathcal{P}_{\mathcal{S}_{\mathrm{reg}}}^{\Box,\mathrm{adm}}\xrightarrow{\pi} \mathcal{P}_{\mathcal{S}_{\mathrm{reg}}}^\mathrm{adm}\hookrightarrow \mathcal{P}_{S_{\mathrm{reg}}}\xrightarrow{\kappa'}\mathcal{S}_{\mathrm{reg}}
    \end{equation}
    we can also compute the relative dimension of $\kappa$ and show that $\kappa$ is smooth hence $\mathcal{P}^\Box(\bar{r},\mathcal{S})$ is reduced. Now we explain the notations and morphisms in the composition (\ref{eq:kappa}).
    \begin{enumerate}
    	\item $\mathcal{P}_{\mathcal{S}_{\mathrm{reg}}}^\mathrm{adm}$ is the admissible open subspace of $\mathcal{P}_{\mathcal{S}_{\mathrm{reg}}}$ defined by \cite[Theorem~1.2]{hellmann2012families}, i.e. the maximal open subspace of $\mathcal{P}_{\mathcal{S}_{\mathrm{reg}}}$ such that there exists a rank $n$ vector bundle $\mathcal{V}$ over $\mathcal{P}_{\mathcal{S}_{\mathrm{reg}}}^\mathrm{adm}$ and a continuous morphism $G_K\rightarrow \mathrm{Aut}_{\mathcal{O}_{\mathcal{P}_{\mathcal{S}_{\mathrm{reg}}}^\mathrm{adm}}}(\mathcal{V})$ such that $D_{\mathrm{rig}}^\dagger(\mathcal{V})$ is isomorphic to the inverse image of the universal $(\varphi,\Gamma_K)$-module of $\mathcal{P}_{\mathcal{S}_{\mathrm{reg}}}$ over $\mathcal{P}_{\mathcal{S}_{\mathrm{reg}}}^\mathrm{adm}$.
    	\item $\pi:\mathcal{P}_{\mathcal{S}_{\mathrm{reg}}}^{\Box,\mathrm{adm}}\rightarrow \mathcal{P}_{\mathcal{S}_{\mathrm{reg}}}^{\mathrm{adm}}$ is the $\mathrm{GL}_n$ torsor of the trivialization of the vector bundle $\mathcal{V}$.
    	\item There exists a canonical isomorphism $\pi^*(\mathcal{V}) \cong \mathcal{O}^n_{ \mathcal{P}_{\mathcal{S}_{\mathrm{reg}}}^{\Box,\mathrm{adm}} }$, and therefor the action of $G_K$ on $\mathcal{V}$ induces a continuous representation
    	$$\tilde{r}: G_K\rightarrow \mathrm{GL}_n(\Gamma({ \mathcal{P}_{\mathcal{S}_{\mathrm{reg}}}^{\Box,\mathrm{adm}} },\mathcal{O}_{ \mathcal{P}_{\mathcal{S}_{\mathrm{reg}}}^{\Box,\mathrm{adm}} })).$$
    	As $G_K$ is topologically generated by finite many elements, then the set of points $x$ in $\mathcal{P}_{\mathcal{S}_{\mathrm{reg}}}^{\Box,\mathrm{adm}} $ such that $\tilde{r}$ factors through $$ \mathrm{GL}_n(\Gamma({ \mathcal{P}_{\mathcal{S}_{\mathrm{reg}}}^{\Box,\mathrm{adm}} },\mathcal{O}^{+}_{ \mathcal{P}_{\mathcal{S}_{\mathrm{reg}}}^{\Box,\mathrm{adm}} })) \subset \mathrm{GL}_n(\Gamma({ \mathcal{P}_{\mathcal{S}_{\mathrm{reg}}}^{\Box,\mathrm{adm}} },\mathcal{O}_{ \mathcal{P}_{\mathcal{S}_{\mathrm{reg}}}^{\Box,\mathrm{adm}} }))$$ 
    	is admissible open in $\mathcal{P}_{\mathcal{S}_{\mathrm{reg}}}^{\Box,\mathrm{adm}} $. Recall that we have fixed a residue  representation $\bar{r}:G_K\rightarrow \mathrm{GL}_n(k_C)$, then we define $\mathcal{P}^\Box(\bar{r},\mathcal{S})\subseteq \mathcal{P}_{\mathcal{S}_{\mathrm{reg}}}^{\Box,\mathrm{adm}} $ as the admissible open subspace of the points $x$ where $\tilde{r}_x$ modulo the maximal ideal of $\mathcal{O}_{k_x}$ is $\bar{r}$.
    \end{enumerate}
    From the construction, one can see that $\mathcal{P}^\Box(\bar{r},\mathcal{S})$ does represent the functor we described as above. We denote by $r_X:G_K\rightarrow \mathrm{GL}_n(\mathcal{O}^+_{ \mathcal{P}^\Box(\bar{r},\mathcal{S}) })$ the universal representation on $\mathcal{P}^\Box(\bar{r},\mathcal{S})$.
    
    By theorem \ref{Sd}, $\kappa$ is smooth of relative dimension $n^2+[K:\mathbb{Q}_p](\sum_{1\leq i<j\leq r}n_in_j)$. Recall $\mathcal{S}_{\mathrm{reg}}$ is smooth of dimension $r+[K:\mathbb{Q}_p](\sum_{1\leq i \leq r}\frac{n_i^2-n_i+2}{2})$ by remark \ref{Rmk:dimofS}. It follows that  $\mathcal{P}^\Box(\bar{r},\mathcal{S})$ is smooth and equidimensional of dimension
    $$l+n^2+[K:\mathbb{Q}_p](\sum_{1\leq i<j\leq l}n_in_j+\sum_{1\leq i \leq l}\frac{n_i^2-n_i+2}{2})=l+n^2+[K:\mathbb{Q}_p](\frac{n(n-1)}{2}+l).$$
    We have the natural morphism $$\pi_{\bar{r}}: \mathcal{P}^\Box(\bar{r},\mathcal{S}) \rightarrow \mathfrak{X}_{\bar{r}}\times \mathcal{S},\ (r,\mathcal{F}^\bullet(D),f,\nu)\mapsto (r,f) $$
    Moreover, by the description of $\mathcal{S}^\Box(\bar{r},\underline{d})$, we can factors $\pi_{\bar{r}}$ through $X_{\mathrm{par}}(\bar{r})$ with image $U_{\mathrm{par}}(\bar{r})$. Hence if we can show $\pi_{\bar{r}}$ is smooth of relative dimension $l$ then $U_{\mathrm{par}}(\bar{r})$ is Zariski open in $X_{\mathrm{par}}(\bar{r})$ and equidimensional of the dimension we claimed.
    
    By corollary \ref{Cor:StrPar}, the points in $U_{\mathrm{par}}(\bar{r})$ is strictly paraboline. Hence we can apply corollary \ref{Cor:KPX6.3.10} to get a proper birational, hence surjective morphism $$F:X \rightarrow X_{\mathrm{par}}(\bar{r})$$ and, if we denote $D:= D^\dagger_{\mathrm{rig}}(F^*r_X)$, an increasing filtration $\mathcal{F}_\bullet(D)$ of ($\varphi,\Gamma_K$)-submodules of $D$ such that there exists a short exact sequence of $(\varphi,\Gamma_K)$-modules over $X$ for $1\leq i\leq r$
    $$0\rightarrow \mathrm{gr}_iD\rightarrow F^*(\mathcal{R}(\omega_i))\otimes \mathcal{L}_i\rightarrow \mathcal{M}_i\rightarrow 0$$ 
    where $\mathcal{L}_i$ is a line bundle over $X$ with trivial $(\varphi,\Gamma_K)$-action and  locally on $X$, the cokernel $\mathcal{M}_i$ is killed by some power of $t$, and is supported over some Zariski closed set $Z_i$ which is disjoint from $F^{-1}(U_{\mathrm{par}}(\bar{r}))$.
    
    Let $U$ be the intersection of $X\setminus(\bigcup\limits_{1\leq i\leq l}Z_i)$ and the preimage of $\mathcal{S}_{\mathrm{reg}}\subseteq \mathcal{S}$ for the morphism $X\rightarrow X_{\mathrm{par}}(\bar{r})\rightarrow\mathcal{S}$. It is Zariski open and contains $F^{-1}(U_{\mathrm{par}}(\bar{r}))$, which implies it is Zariski dense in $X$. Let $t:U^\Box\rightarrow U$ denote the $\mathbb{G}_m^{l}$ torsor of the trivialization of the line bundles $\mathcal{L}_i|_{U}$. By construction, note that $U^\Box$ has the following universal property. There exists canonical isomorphisms $t_i:\mathcal{O}_{U^{\Box}}\cong t^*(\mathcal{L}_i)$ for $1\leq i\leq l$, and if $g: T\rightarrow U$ is a morphism, with trivialization $s_i:\mathcal{O}_Y\cong g^*(\mathcal{L}_i)$ for $1\leq i\leq l$, then there exists an unique lifting $h:T\rightarrow U^\Box$ factors through $t$ such that $h^*(t_i)= s_i$.
    
    We can construct a morphism $s: U^\Box\rightarrow \mathcal{P}^\Box(\bar{r},\mathcal{S})$, by the universal property of $\mathcal{P}^\Box(\bar{r},\mathcal{S})$, such that $\pi_{\bar{r}}\circ s$ is the composition
    $$U^\Box\xrightarrow{t} U\xrightarrow{F|_U} X_{\mathrm{par}}(\bar{r}).$$ As $F$ is a composition of blow-ups and normalizations, one can find a Zariski open and Zariski dense subspace $V\subseteq X_{\mathrm{par}}(\bar{r})$ such that $F^{-1}(V)\subseteq U$ and $F|_{F^{-1}(V)}$ is an isomorphism. Similarly, we denote $t':V^\Box\rightarrow V$ be the $\mathbb{G}_m^{l}$-torsor of $\mathcal{L}_i|_{V}$, and it is easy to see that we can identify $V^\Box$ with $t^{-1}(V)$ and $t'=t|_{V^\Box}$. Then the universal property of $V^\Box$ allows us to construct a morphism
    $$\pi^\Box:\pi_{\bar{r}}^{-1}(V)\rightarrow V^\Box$$ 
    such that $t\circ\pi^\Box = F^{-1}|_{V}\circ \pi_{\bar{r}}$. By the universal property of $V^\Box$ again, one has $\pi^\Box\circ s|_{V^\Box} = \id_{V_\Box}$. By the description of $\mathcal{P}^\Box(\bar{r},\mathcal{S})$, the restriction of $\pi^\Box$ on $\pi^{-1}_{\bar{r}}$ is injective, as $\mathcal{P}^\Box(\bar{r},\mathcal{S})$ is reduced, one has $s\circ \pi^\Box=\id_{\pi^{-1}_{\bar{r}}(V)}$, which implies $\pi^\Box$ is an isomorphism. As $t:V^\Box \rightarrow V$ is a $\mathbb{G}_m^{l}$-torsor, then it is of relative dimension $l$. Combined with our previous computation of the dimension of $\mathcal{P}^\Box(\bar{r},\mathcal{S})$, one has $V$ is equidimensional of dimension
    $$[K:\mathbb{Q}_p](\frac{n(n-1)}{2}+l)+n^2.$$
    As $V$ is Zariski open and dense in $X_{\mathrm{par}}(\bar{r})$, so is $X_{\mathrm{par}}(\bar{r})$. This proves the assertion (1).
    
    Now we are going to show that $\pi_{\bar{r}}$ is smooth of relative dimension $l$, which implies assertion (2) and (3). To do this, it is enough to show that if $x\in \mathcal{P}^\Box(\bar{r},\mathcal{S})$ and $y=(r_y,f_y) = \pi_{\bar{r}}(x)\in U_{\mathrm{par}}(\bar{r})$, there exists an isomorphism of complete local rings over $\hat{\mathcal{O}}_{X_{\mathrm{par}}(\bar{r}),y}$
    $$\hat{\mathcal{O}}_{\mathcal{P}^\Box(\bar{r},\mathcal{S}),x}\cong \hat{\mathcal{O}}_{X_{\mathrm{par}}(\bar{r}),y}\llbracket x_1,\dots,x_l\rrbracket.$$
    Denote $A:= \hat{\mathcal{O}}_{X_{\mathrm{par}}(\bar{r}),y}$ and $B:=\hat{\mathcal{O}}_{\mathcal{P}^\Box(\bar{r},\mathcal{S}),x}$, then $\pi_{\bar{r}}$ induces a local morphism of complete local rings $A\rightarrow B$.
    
    The natural projection $X_{\mathrm{par}}\rightarrow \mathfrak{X}_{\bar{r}}$ induces a local morphism of complete local rings
    $$\hat{\mathcal{O}}_{\mathfrak{X}_{\bar{r}},r_y} \rightarrow \hat{\mathcal{O}}_{X_{\mathrm{par}}(\bar{r}),y}.$$
    According to \cite[Lem.2.3.3 and Prop.2.3.5]{2009Kisin}, there also exists a topological isomorphism between $\hat{\mathcal{O}}_{\mathfrak{X}_{\bar{r}},r_y}$ and $R_{r_y}^\Box$, the framed universal deformation ring of $r_y$, where $r_y$ is the Galois representation corresponds to the point $y$. Let $\mathcal{F}_\bullet$ be the unique parabolization of the $(\varphi,\Gamma_K)$-module $D_{\mathrm{rig}}^\dagger(r_y)$ correspond to the point $y$, and let $R_{r_y,\mathcal{F}_\bullet}^\Box$ be the framed universal deformation ring of the pair $(r_y,\mathcal{F}_\bullet)$, in the obvious sense. As before, the ring $B = \hat{\mathcal{O}}_{\mathcal{P}^\Box(\bar{r},\mathcal{S}),x}$ is naturally isomorphic to a complete local  $R_{r_y,\mathcal{F}_\bullet}^\Box$-algebra, smooth of relative dimension $l$. 
    
    Let $(x_1,\dots,x_l)$ be a family of topological generators of the $R_{r_y,\mathcal{F}_\bullet}^\Box$-algebra $B$, which means the ring homomorphism
    \begin{align*}
    	R_{r_y,\mathcal{F}_\bullet}^\Box[\![X_1,\dots,X_l]\!]&\rightarrow B \\
    	X_i&\mapsto x_i
    \end{align*}
    is an isomorphism.
     Then one can define a $A$-linear map
    \begin{align*}
    	A\llbracket X_1,\dots,X_l \rrbracket & \rightarrow B \\
    	X_i&\mapsto x_i.
    \end{align*} 
   Composing with the map $R_{r_y}^\Box\rightarrow A$, one has
   the following commutative diagram
   
   \begin{equation*}
   	\begin{tikzcd}
   		R_{r_y}^\Box\llbracket X_1,\dots,X_l \rrbracket \arrow{r}\arrow{d}&A\llbracket X_1,\dots,X_l \rrbracket\arrow{d} \\
   		R_{r_y,\mathcal{F}_\bullet}^\Box\llbracket X_1,\dots,X_l \rrbracket \arrow{r}{\sim} & B
   	\end{tikzcd}.
   \end{equation*}
   According to \cite[Prop.2.3.6 and Prop.2.3.9]{BJG2009} (Actually I we need parallel properties for the paraboline case, see the lemma below), the natural map $R_{r_y}^\Box\rightarrow R_{r_y,\mathcal{F}_\bullet}^\Box$ is surjective. It follows that the morphism $A\llbracket X_1,\dots,X_l \rrbracket \rightarrow B$ is surjective as well. As $B$ and $A\llbracket X_1,\dots,X_l \rrbracket$ are noetherian, local and complete with the same dimensional, it is enough to show that $A$ is integral to prove $A\llbracket X_1,\dots,X_l \rrbracket \rightarrow B$ is an isomorphism.
   
   As $X_{\mathrm{par}}(\bar{r})$ is reduced, then $A$ is reduced by \cite[§7.2,Prop.8]{bosch1984non}. Hence it is enough to show that $A$ has an unique minimal ideal. If we $X^{\mathrm{norm}}$ denote the normalization of $X_{\mathrm{par}}(\bar{r})$, then it suffices to show the fiber of $y$ in $X^{\mathrm{norm}}$ is a closed point.
   
   As $\mathcal{P}^\Box(\bar{r},\mathcal{S})$ and $U^\Box$ are normal, their morphisms to $X_{\mathrm{par}}(\bar{r})$ factors through $X^{\mathrm{norm}}$, i.e., one has the following commutative diagram
   \begin{equation*}
   	\begin{tikzcd}
   		& {\mathcal{P}^\Box(\bar{r},\mathcal{S})} \arrow[d] \arrow[rd, "\pi_{\bar{r}}"] &                           \\
   		U^\Box \arrow[r] \arrow[ru, "s"] & X^{\mathrm{norm}} \arrow[r]                                                   & X_{\mathrm{par}}(\bar{r})
   	\end{tikzcd}
   \end{equation*}
   The construction of $s$ implies that all points in the fiber of $y$ in $X^{\mathrm{norm}}$ are in the image of ${\mathcal{P}^\Box(\bar{r},\mathcal{S})}$ in $X^{\mathrm{norm}}$. As $\pi_{\bar{r}}^{-1}(y)$ is connected (isomorphic to $\mathbb{G}_m^l$ actually.) So the image in $X^{\mathrm{norm}}$ is connected, which is the fiber of $y$. As the fiber of $y$ is finite, hence it is a closed point.
   
   Now we are left to prove assertions (2) and (3). By \cite[Prop.1.7.8]{huber2013}, $U_{\mathrm{par}}(\bar{r})$ is admissible open in $X_{\mathrm{par}}(\bar{r})$. The construction of $s$ also means $F(U) =  F \circ r (U^\Box) = \pi_{\bar{r}}\circ s(U^\Box) \subseteq U_{\mathrm{par}}(\bar{r})$. By definition, one has $F^{-1}(U_{\mathrm{par}}(\bar{r}))\subseteq U$. It follows that $F(U) = U_{\mathrm{par}}(\bar{r})$ as $F$ is projective. Hence $U_{\mathrm{par}}(\bar{r})$ is Zariski constructible, by \cite[Lem.2.14]{Hellmann2016} and admissible open in $X_{\mathrm{par}}(\bar{r})$, hence it is Zariski open in $X_{\mathrm{par}}(\bar{r})$ by Lemma 2.13 in \textit{loc. cit.}. This proves assertion (2).
   
   As $\pi_{\bar{r}}$ and $\kappa$ is free, hence $\omega$ restricted on $U_{\mathrm{par}}(\bar{r})$ is smooth by \cite[Lem.5.8]{Breuil2017}, and moreover $U_{\mathrm{par}}(\bar{r})$ is smooth as $\mathcal{S}$ is smooth. This proves assertion (3).
   \end{proof}

   In the proof above, we need the parallel results of \cite[Prop.2.3.6 and Prop.2.3.9]{BJG2009} for the paraboline case (i.e. replacing trianguline $(\varphi,\Gamma_K)$-modules by paraboline $(\varphi,\Gamma_K)$-modules of \textit{loc. cit.}). By the proof of \textit{loc. cit.}, the only additional statement we need to check is the following lemma. We only give the statement and omit the proof as it is exactly the same as in \textit{loc. cit.} as well.
   
   \begin{Lemma}
   Let $A$ be a local Artin $C$-algebra equipped with a map $A/\mathfrak{m}\xrightarrow{\sim} C$. Let $(D,\mathcal{F}_\bullet)$ be a parabolized $(\varphi,\Gamma_K)$-module over $C$ with parameter $f\in\mathcal{S}_{\mathrm{reg}}$. If $(D_A,\pi_{D_A})$ is a deformation of $D$ over $A$, i.e.
   \begin{enumerate}
   \item $D_A$ is a $(\varphi,\Gamma_K)$-module over $A$;
   \item $\pi_{D_A}:D_A\rightarrow D$ is an $\mathcal{R}_{K,A}$-linear $(\varphi,\Gamma_K)$-morphism such that the inducing map
   $$D_A\otimes_A C\xrightarrow{\sim} D$$
   is an isomorphism.
   \end{enumerate}
   then
   \begin{enumerate}
   \item there exists at most one parabolization $\mathcal{F}_\bullet D_A$ of $D_A$ deforming $\mathcal{F}_\bullet D$, i.e. $$\pi_{D_A}(\mathcal{F}_i D_A) = \mathcal{F}_iD.$$
   \item If $A\rightarrow A'$ is a local map of Artin $C$-algebra, whose residue fields are isomorphic to $C$. If $D_A$ has a (unique) parabolization deforming $\mathcal{F}_\bullet D$, then the same is true for  $D_A\otimes_A A'$.
   \item Let $A\rightarrow A'$ be as above. Suppose that $A\rightarrow A'$ is injective, then the converse holds.
   \item Let $A,A'$ be a local Artin $C$-algebra with residue fields equal to $C$. If $(D_A,\pi_{D_A})$ (resp. $(D_{A'},\pi_{D_{A'}})$) be a deformation of $D$ with unique parabolization deforming $\mathcal{F}_\bullet D$, then so is $(D',\pi_{D'})$, where $D' := D_A\times_D D_{A'}$, and $\pi_{D'} :=\pi_{D_A}\circ \mathrm{pr}_{D_A} = \pi_{D_{A'}}\circ \mathrm{pr}_{D_{A'}}$. 
   \end{enumerate}
   \end{Lemma}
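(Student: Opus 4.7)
The strategy is to mimic the proof of \cite[Prop.~2.3.6,~Prop.~2.3.9]{BJG2009} step by step, with the role of the ``trianguline regularity'' of $\mathit{loc.~cit.}$ played here by the cohomological vanishings of Proposition~\ref{Prop:dimforcoh}. Writing $D_i:=\mathcal{R}(f_i)$, the hypothesis $f\in\mathcal{S}_{\mathrm{reg}}$ gives $H^0_{\varphi,\gamma_K}(D_i^\vee\otimes D_j)=0$ and $H^2_{\varphi,\gamma_K}(D_i^\vee\otimes D_j)=0$ for $i<j$. By iterated use of the long exact sequences attached to the filtration $\mathcal{F}_\bullet D$, these pointwise vanishings propagate to $\Hom(\mathcal{F}_iD,\,D/\mathcal{F}_iD)=0$ and $H^2_{\varphi,\gamma_K}\bigl((\mathcal{F}_iD)^\vee\otimes(D/\mathcal{F}_iD)\bigr)=0$ for every $i$, which is the technical input needed throughout.

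For part (1), I would argue by a double induction on $l$ and on the length of $A$. Suppose $\mathcal{F}_\bullet D_A$ and $\mathcal{F}'_\bullet D_A$ are two parabolizations of $D_A$ both reducing to $\mathcal{F}_\bullet D$. The composition
\[
\psi\colon \mathcal{F}_1 D_A \hookrightarrow D_A \twoheadrightarrow D_A/\mathcal{F}'_1 D_A
\]
vanishes modulo $\mathfrak{m}_A$, so its image lies in $\mathfrak{m}_A\cdot(D_A/\mathcal{F}'_1 D_A)$. Filtering $A$ by $\mathfrak{m}_A^k$ and using flatness of $\mathcal{R}^r_{K,A}$ in $A$, a d\'evissage reduces to the case $\mathfrak{m}_A^2=0$, where $\psi$ factors as a $C$-linear map $\mathcal{F}_1 D\to (\mathfrak{m}_A/\mathfrak{m}_A^2)\otimes_C (D/\mathcal{F}_1 D)$ of $(\varphi,\Gamma_K)$-modules. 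By the vanishing $\Hom(\mathcal{F}_1 D,\,D/\mathcal{F}_1 D)=0$ established above, $\psi=0$, hence $\mathcal{F}_1 D_A\subseteq\mathcal{F}'_1 D_A$; by symmetry and the fact that both are free of the same rank after base change to $C$ (together with Nakayama), we get equality. Applying the same argument to $D_A/\mathcal{F}_1 D_A$ with its induced filtrations, the full filtration is pinned down.

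Parts (2) and (4) are formal. For (2), tensor product along $A\to A'$ preserves exact sequences of flat modules, and the base-changed filtration $\mathcal{F}_\bullet D_A\otimes_A A'$ has graded pieces $D_i\otimes_A A'\cong\mathcal{R}(f_i\circ\mathrm{Sp}(A'\to A))$, so it is a parabolization with the pulled-back parameter; uniqueness is inherited from (1) applied over $A'$. For (4), the fibered filtration $\mathcal{F}_i D':=\mathcal{F}_i D_A\times_{\mathcal{F}_i D}\mathcal{F}_i D_{A'}$ gives a parabolization of $D'=D_A\times_D D_{A'}$ with the obvious graded pieces, which works because fiber products of $(\varphi,\Gamma_K)$-modules commute with passing to graded pieces in the presence of uniqueness from (1). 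Part (3) is the genuine content: given a parabolization $\mathcal{F}_\bullet D_{A'}$ of $D_A\otimes_A A'$, I would set $\mathcal{F}_i D_A:=\mathcal{F}_i D_{A'}\cap D_A$ under the injection $D_A\hookrightarrow D_A\otimes_A A'$ (which is an injection because $A\hookrightarrow A'$ is injective and $\mathcal{R}^r_{K,-}$ is flat), and verify $(\varphi,\Gamma_K)$-stability, the direct-summand property, and the correct form of $\mathrm{gr}_i$.

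The main obstacle is in part (3): one must check that $\mathcal{F}_iD_A$ so defined is an $\mathcal{R}_{K,A}$-direct summand and has $\mathrm{gr}_i\cong\mathcal{R}(f_i)$ as an $A$-module, not merely after base change to $A'$. Following the $\mathit{BC}$ approach, I would bootstrap via the descent datum: applying (2) to both maps $A'\rightrightarrows A'\otimes_A A'$ yields two parabolizations of $D_A\otimes_A(A'\otimes_A A')$; by uniqueness (1) over $A'\otimes_A A'$, these coincide, giving effective descent data for the filtration. Combined with the uniqueness (1) over $A$, which ensures that any such descended object is canonical, this produces the parabolization over $A$. The rank-$n_i$ nature of the graded pieces $\mathcal{R}(f_i)$ (as opposed to the rank-one pieces in the trianguline case) causes no new difficulty, because all the arguments use only the Hom- and Ext-vanishings of Proposition~\ref{Prop:dimforcoh}, which are formulated uniformly in the rank.
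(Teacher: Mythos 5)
The paper's own ``proof'' of this lemma is simply the remark that the argument is \emph{exactly the same} as in \cite[Prop.~2.3.6, Prop.~2.3.9]{BJG2009}, with the trianguline $\Hom$/$H^2$-vanishings replaced by those of Proposition~\ref{Prop:dimforcoh}; you correctly identify this and your sketches of parts (1), (2), and (4) follow that route plausibly (the small-extension d\'evissage and the $\Hom(\mathcal{F}_iD,\,D/\mathcal{F}_iD)=0$ vanishing are indeed the engine behind (1)).

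However, there is a genuine gap in your treatment of part (3). The ``descent datum'' bootstrap you propose --- applying (2) to the two maps $A'\rightrightarrows A'\otimes_A A'$, invoking uniqueness over $A'\otimes_A A'$ to get a cocycle, and then concluding the filtration descends to $A$ --- is a faithfully flat (fpqc) descent argument, and the hypothesis only gives that $A\to A'$ is \emph{injective}. Injective local maps of Artin $C$-algebras need not be flat: e.g.\ $C[\epsilon]/\epsilon^2\hookrightarrow C[\epsilon_1,\epsilon_2]/(\epsilon_1,\epsilon_2)^2$, $\epsilon\mapsto\epsilon_1$, is injective but not flat (the cokernel $C\cdot\epsilon_2$ has nonzero $\mathrm{Tor}_1$). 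Without flatness, $(-)\otimes_A A'$ is not exact, and a filtration on $D_A\otimes_A A'$ equipped with a cocycle datum does not automatically arise from one on $D_A$. You do mention the correct starting point, namely $\mathcal{F}_i D_A:=\mathcal{F}_i D_{A'}\cap D_A$, but you abandon it before verifying the crucial direct-summand and graded-piece properties. The argument that actually works (and is what \cite{BJG2009} do) exploits injectivity differently: one shows by induction on the length of $A$, via the obstruction class in $H^1_{\varphi,\gamma_K}(\mathcal{F}_iD^\vee\otimes(D/\mathcal{F}_iD))\otimes_C(J_0/\mathfrak{m}_A J_0)$ attached to a minimal quotient, that if the parabolization failed to exist over $A$ then its obstruction would remain nonzero after base change along the \emph{injective} map $A\hookrightarrow A'$, contradicting the existence over $A'$. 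Replacing your descent step with this obstruction/length argument is necessary to make part (3) correct.
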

   
   The following corollary and its proof is contained in the proof of theorem \ref{Thm:Geoofpar}. We explicitly write it down as we will need the statement in the next section.
   
   \begin{Cor}\label{Cor:Surjforregularlucos}
   Let $x=(r_x,f_x)$ be a closed point in $U_{\mathrm{par}}(\bar{r})$, the projection $X_{\mathrm{par}}(\bar{r}) \rightarrow \mathfrak{X}_{\bar{r}}$ induces the local map
   $$\hat{O}_{\mathfrak{X}_{\bar{r}},r_x}\rightarrow \hat{O}_{U_{\mathrm{Par}}(\bar{r}),x}$$
   of complete local rings is surjective.
   \end{Cor}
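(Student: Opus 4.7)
The plan is to read off the corollary directly from the computations already performed in the proof of Theorem \ref{Thm:Geoofpar}, rather than redo them. Recall that proof produces, at any lift $\tilde{x} \in \mathcal{P}^{\Box}(\bar{r},\mathcal{S})$ of the point $x=(r_x,f_x) \in U_{\mathrm{par}}(\bar{r})$, a commutative square of complete local noetherian rings with upper row
$$R_{r_x}^{\Box}\llbracket X_1,\dots,X_l\rrbracket \longrightarrow A\llbracket X_1,\dots,X_l\rrbracket$$
and lower row
$$R_{r_x,\mathcal{F}_{\bullet}}^{\Box}\llbracket X_1,\dots,X_l\rrbracket \xrightarrow{\sim} B,$$
where $A := \hat{\mathcal{O}}_{X_{\mathrm{par}}(\bar{r}),x}$, $B := \hat{\mathcal{O}}_{\mathcal{P}^{\Box}(\bar{r},\mathcal{S}),\tilde{x}}$, and $\mathcal{F}_{\bullet}$ is the unique strict parabolization of $D_{\mathrm{rig}}^{\dagger}(r_x)$ determined by $f_x$. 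From that proof one knows: the left vertical arrow is surjective (because $R^{\Box}_{r_x} \twoheadrightarrow R^{\Box}_{r_x,\mathcal{F}_{\bullet}}$ by the paraboline analog of Bella\"{\i}che--Chenevier stated in the lemma just above the corollary), the bottom horizontal is an isomorphism, and the right vertical is an isomorphism (this is the dimension-plus-reducedness argument that identifies $B$ with $A\llbracket X_1,\dots,X_l\rrbracket$).

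My first step is to chase the square to conclude that the top horizontal map itself must be surjective: its composition with the right vertical isomorphism equals the composition of the (surjective) left vertical with the (iso) bottom horizontal, hence is surjective onto $B$, which forces the top horizontal already to be surjective onto $A\llbracket X_1,\dots,X_l\rrbracket$. My second step is to quotient both sides by the ideal $(X_1,\dots,X_l)$; since the $X_i$'s are sent to $X_i$'s on the top row by construction, this produces exactly the map $R^{\Box}_{r_x} \to A$, which inherits surjectivity. To finish, I will invoke Theorem \ref{Thm:Geoofpar}(2), which says $U_{\mathrm{par}}(\bar{r})$ is Zariski open in $X_{\mathrm{par}}(\bar{r})$, so that $\hat{\mathcal{O}}_{U_{\mathrm{par}}(\bar{r}),x} = A$, together with the identification $\hat{\mathcal{O}}_{\mathfrak{X}_{\bar{r}},r_x} \cong R^{\Box}_{r_x}$ from Kisin's work.

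There is essentially no new obstacle: all the substantive content (construction of the square, surjectivity of the framed-deformation map after imposing a parabolization, and the dimension/integrality argument forcing $A\llbracket X_1,\dots,X_l\rrbracket \xrightarrow{\sim} B$) is already packaged inside the proof of Theorem \ref{Thm:Geoofpar}. The only point requiring care is that, under the natural identifications, reducing the top row of the square modulo $(X_1,\dots,X_l)$ really does recover the map induced by the projection $X_{\mathrm{par}}(\bar{r}) \to \mathfrak{X}_{\bar{r}}$; this follows from the functoriality of the square with respect to $\pi_{\bar{r}}: \mathcal{P}^{\Box}(\bar{r},\mathcal{S}) \to X_{\mathrm{par}}(\bar{r})$ and the fact that the $X_i$'s correspond to the fiber directions of this smooth morphism.
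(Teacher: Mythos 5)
Your proposal is correct and follows essentially the same route as the paper: both read off the surjectivity of $R^{\Box}_{r_x}\llbracket X_1,\dots,X_l\rrbracket \to \hat{\mathcal{O}}_{U_{\mathrm{par}}(\bar{r}),x}\llbracket X_1,\dots,X_l\rrbracket$ from the isomorphisms and the surjection onto $B$ already established in the proof of Theorem \ref{Thm:Geoofpar}, then pass to the quotient by $(X_1,\dots,X_l)$ to descend to the map $\hat{\mathcal{O}}_{\mathfrak{X}_{\bar{r}},r_x}\to \hat{\mathcal{O}}_{U_{\mathrm{par}}(\bar{r}),x}$. Your diagram chase merely makes explicit what the paper states more tersely.
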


   \begin{proof}
   In the proof of theorem \ref{Thm:Geoofpar}, we have already show that
   \begin{enumerate}
   \item $\hat{O}_{\mathfrak{X}_{\bar{r}},r_x} \cong R^\Box_{r_x}$;
   \item  there exists an isomorphism $\hat{O}_{U_{\mathrm{Par}}(\bar{r}),x}\llbracket X_1,\dots,X_l\rrbracket \xrightarrow{\sim} B$ for some complete local ring $B$.
   \item the composition $R^\Box_{r_x}\llbracket X_1,\dots,X_l\rrbracket\rightarrow \hat{O}_{U_{\mathrm{Par}}(\bar{r}),x}\llbracket X_1,\dots,X_l\rrbracket\xrightarrow{\sim} B$
   is surjective.
   \end{enumerate}
   It follows that the local map $$\hat{O}_{\mathfrak{X}_{\bar{r}},r_x}\rightarrow \hat{O}_{U_{\mathrm{Par}}(\bar{r}),x}$$ is surjective.
   \end{proof}
   
   \subsection{Paraboline Varieties}
   In the last subsection, we gave the definition of paraboline $(\varphi,\Gamma_K)$-modules. Now let $D$ be such a $(\varphi,\Gamma_K)$-module, with a parabolization $(\mathcal{F}_i D)_{i=0,\dots,l}$ for some ordered parameter $f=(f_1,\dots,f_l)\in \mathcal{S}$. Suppose that $f$ is in $\mathcal{S}_\mathrm{reg}$, then $D$ is strictly paraboline according to corollary \ref{Cor:StrPar}. We let 
   \begin{align*}
   	\Delta: \mathcal{S}_i = (\mathcal{T}_{\tau_i} \times \prod_{\sigma: K\inj C} \mathrm{GL}_{n_i}/B_{n_i}) & \rightarrow \mathcal{T}_{\tau_i}\\
   	        f_i=(\delta_i,(\mathrm{Fil}^\bullet_{i,\sigma})_\sigma) & \mapsto \delta_i
   \end{align*}
   denote the natural projection. In particular, $\Delta$ is proper. We have:
   
   \begin{Prop}\label{Prop:betterparameter}
   Let $D,\mathcal{F}_\bullet,f=(f_1,\dots,f_l)$ be as above (i.e. $f\in\mathcal{S}_{\mathrm{reg}}$). Suppose that there exists another filtration $(\mathcal{F}'_iD)_{i=0\dots,l}$ which is a parabolization for some ordered parameter $f'=(f'_1,\dots,f'_l)$, such that $\Delta(f_i') = \Delta(f_i)$. Then $\mathcal{F}'_i D = \mathcal{F}_i D$ for all $i=0,\dots,l$, and in particular $f_i=f_i'$.
   \end{Prop}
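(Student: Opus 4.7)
The plan is to leverage the Hom-vanishing between irreducible graded pieces from proposition \ref{Prop:dimforcoh}(2), combined with the factor-through principle of proposition \ref{FTC}, to show that the two parabolizations agree step by step, and then to extract the equality of parameters. First note that since $\mathcal{S}_{\mathrm{reg}} = \Delta^{-1}(\mathcal{T}_{\mathrm{reg}})$ depends only on the $\Delta$-components of its points, the hypothesis $\Delta(f_i) = \Delta(f_i')$ forces $f' \in \mathcal{S}_{\mathrm{reg}}$ as well, so by corollary \ref{Cor:StrPar} both $\mathcal{F}_\bullet$ and $\mathcal{F}'_\bullet$ are strict paraboline filtrations. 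A key preliminary observation is that the proof of proposition \ref{Prop:dimforcoh}(2) consumes only the characters $\Delta(f_i)$ and the fixed filtration-weight types of the stacks $\mathcal{S}_i$; hence it extends verbatim to the mixed vanishing $\mathrm{Hom}(\mathcal{R}(f_i), \mathcal{R}(f'_j)) = 0$ for all $i < j$.

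I would then induct on $i$ to prove $\mathcal{F}_i D = \mathcal{F}'_i D$. Assuming $\mathcal{F}_{i-1} D = \mathcal{F}'_{i-1} D$, set $A := \mathcal{F}_i D$ with its filtration by $\mathcal{F}_\bullet$ (graded pieces $\mathcal{R}(f_k)$ for $k \leq i$) and $B := D/\mathcal{F}'_i D$ with the filtration induced by $\mathcal{F}'_\bullet$ (graded pieces $\mathcal{R}(f'_j)$ for $j > i$). Taking $A' = A$ and $B' = 0$, the factor-through condition of proposition \ref{FTC} reduces to the mixed Hom-vanishing $\mathrm{Hom}(\mathcal{R}(f_k), \mathcal{R}(f'_j)) = 0$ for $k \leq i < j$, which we have already established. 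Proposition \ref{FTC} then yields $\mathrm{Hom}(A, B) \cong \mathrm{Hom}(A/A', B') = 0$, so the canonical composition $\mathcal{F}_i D \hookrightarrow D \twoheadrightarrow D/\mathcal{F}'_i D$ vanishes, giving $\mathcal{F}_i D \subseteq \mathcal{F}'_i D$. The symmetric argument, obtained by interchanging the roles of $f$ and $f'$, yields the reverse inclusion, proving $\mathcal{F}_i D = \mathcal{F}'_i D$.

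To deduce $f_i = f_i'$, observe that the common graded piece $\mathrm{gr}_i D$ is now canonically isomorphic to both $\mathcal{R}(f_i)$ and $\mathcal{R}(f'_i)$. Since $\Delta(f_i) = \Delta(f'_i)$, both points lie in the same fibre $\mathrm{Flag}_i$ of $\Delta : \mathcal{S}_i \to \mathcal{T}_i$. Under the description of remark \ref{RmkforParameterspace}(5), each such flag is recorded as a filtration on the absolutely irreducible Weil--Deligne representation $\mathrm{Ind}_{W_{\tau_i}}^{W_K}(\tilde{\tau}_i \otimes \Delta(f_i)) \otimes_{\mathbb{Q}_p} K$; combining the faithfulness of $\mathbf{D}$ (theorem \ref{Thm: PropofBergerFun}) with the fact that the automorphisms of this irreducible representation are scalars and hence act trivially on the flag variety, the flag is recovered from the isomorphism class of $\mathcal{R}(f_i)$, forcing $f_i = f'_i$. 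The main obstacle in this plan is the extension of the Hom-vanishing to the mixed pair $(\mathcal{R}(f_i), \mathcal{R}(f'_j))$; once one verifies that the argument in the proof of proposition \ref{Prop:dimforcoh}(2) depends only on the $\Delta$-components and the fixed filtration-weight data (and not on the flag parts of $f_i, f_j$), the remainder of the proof is a formal devissage.
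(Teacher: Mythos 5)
Your proof is correct and follows essentially the same route as the paper: both rest on the observation that the regularity condition depends only on the $\Delta$-components, hence the mixed Hom-vanishing $\Hom(\mathcal{R}(f_i),\mathcal{R}(f'_j))=0$ for $i<j$ holds, and then invoke proposition \ref{FTC} together with symmetry to identify the two filtrations step by step. The only cosmetic differences are that you run the d\'{e}vissage from the bottom of the filtration while the paper peels off the top graded piece and inducts downward, and that you spell out the final deduction $f_i = f'_i$ (via the flag being recoverable from the isomorphism class once the $\mathcal{T}_i$-component is fixed), which the paper leaves implicit.
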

   
   \begin{proof}
   Actually, the proof is almost the same as corollary \ref{Cor:StrPar}. Indeed, note that the regularity condition defined in definition \ref{Def:RegLocus} only depends on the data of characters (as the filtration weight has been fixed already), which means for each $i<j$, the regularity condition holds for $(f_i,f_j')$, and hence one can apply proposition \ref{Prop:dimforcoh} to see that 
   $$\Hom(\mathcal{R}(f_i),\mathcal{R}(f_j'))=0.$$
   Hence $(D,\mathcal{F}_{l-1}D,\mathcal{R}(f_j'),\mathcal{R}(f_j'))$ satisfies the factor through condition, then the natural projection $$D\rightarrow D/\mathcal{F}'_{l-1}D\cong\mathcal{R}(f_l')$$
    factors through $\mathcal{F}_{l-1}D$, which means $\mathcal{F}_{l-1}D\subseteq \mathcal{F}'_{l-1}D$. By symmetricity, one has $\mathcal{F}'_{l-1}D\subseteq \mathcal{F}_{l-1}D$, (note that $f'$ also in $\mathcal{S}_{\mathrm{reg}}$ by definition), and hence $$\mathcal{F}'_{l-1}D = \mathcal{F}_{l-1}D.$$
    It follows that $\mathcal{F}'_i D = \mathcal{F}_i D$ for all $i=0,\dots,l$ via induction.
   \end{proof}
   
   The proposition above indicates that, in this case, the paraboline filtration $\mathcal{F}_\bullet D$ is uniquely determined by $(\delta_1,\dots,\delta_l)$. Therefore it is reasonable to define paraboline varieties via revising our previous definition for refined paraboline varieties in the following way.
   
   \begin{Def}
   Let $V_{\mathrm{par}}(\bar{r})$ be the image of $U_{\mathrm{par}}(\bar{r})$ in $\mathfrak{X}_{\bar{r}}\times \mathcal{T}$ for the natural projection
   $$\id_{\mathfrak{X}_{\bar{r}}}\times \Delta: \mathfrak{X}_{\bar{r}}\times \mathcal{S} \rightarrow \mathfrak{X}_{\bar{r}} \times\mathcal{T}.$$
   We define the \textit{paraboline\ variety\ }$Y_{\mathrm{par}}(\bar{r})$ \textit{for\ }$\bar{r}$ (\textit{of\ shape\ }$\mathcal{S}$) as the reduced rigid analytic space over $C$ which is the Zariski-closure in $\mathfrak{X}_{\bar{r}} \times\mathcal{T}$ of $V_{\mathrm{par}}(\bar{r})$. And we call $V_{\mathrm{par}}(\bar{r})$ the \textit{regular\ locus\ of} $Y_{\mathrm{par}}(\bar{r})$ (we will prove that $V_{\mathrm{par}}(\bar{r})$ is Zariski open in $Y_{\mathrm{par}}(\bar{r})$ as well).
   \end{Def}

   \begin{Rmk}\text{ }
   \begin{enumerate}
   	\item By definition, $V_{\mathrm{par}}(\bar{r})$ can be described as the set of the closed points $(r,\delta)\in\mathfrak{X}_{\bar{r}}\times \mathcal{T}_{\mathrm{reg}}$ such that $D_{\mathrm{rig}}^{\dagger}(r)$ is paraboline with parameter $f$ for some closed point $f$ in $\Delta^{-1}(\delta)$.
   	\item Even though the ambient space $\mathfrak{X}_{\bar{r}}\times\mathcal{T}$ only depends on the Galois representation $\bar{r}$ and the ordered inertia types $\underline{\tau}=(\tau_1,\dots,\tau_l)$, but the paraboline variety $Y_{\mathrm{par}}(\bar{r})$ also depends on the filtration weight $\underline{k}$. As $\mathcal{S}$ is determined by a unique pair $(\underline{\tau},\underline{k})$ (as a functor), hence it really makes sense to say a paraboline variety $Y_{\mathrm{par}}(\bar{r})$ for $\bar{r}$ is of shape $\mathcal{S}$. We omit the information of the shape $\mathcal{S}$ in our notation of paraboline variety $Y_{\mathrm{par}}(\bar{r})$ as we fix $\mathcal{S}$ once and for all through out the whole article.
   	\item The preimage $\Delta^{-1}(Y_{\mathrm{par}}(\bar{r}))$ is closed in $\mathfrak{X}_{\bar{r}}\times \mathcal{S}$, hence contains $X_{\mathrm{par}}(\bar{r})$. As $X_{\mathrm{par}}(\bar{r})$ and $Y_{\mathrm{par}}(\bar{r})$ are reduced, then there exists a unique morphism $\Xi:X_{\mathrm{par}}(\bar{r})\rightarrow Y_{\mathrm{par}}(\bar{r})$ such that the following diagram
   	\begin{equation*}
   		\begin{tikzcd}
   		X_{\mathrm{par}}(\bar{r}) \arrow[hook]{r}\arrow{d}{\Xi}&\mathfrak{X}_{\bar{r}}\times\mathcal{S}\arrow{d}{\id\times\Delta} \\
   			Y_{\mathrm{par}}(\bar{r}) \arrow[hook]{r} & \mathfrak{X}_{\bar{r}}\times\mathcal{T}
   		\end{tikzcd}.
   	\end{equation*}
    commutes.
    \item $\Xi$ is surjective. Indeed, as $\id\times\Delta$ is proper, then the image of $X_{\mathrm{par}}(\bar{r})$ in $\mathfrak{X}_{\bar{r}}\times\mathcal{T}$ is closed. Then it contains $Y_{\mathrm{par}}(\bar{r})$.
   \end{enumerate}	
   \end{Rmk}

   Actually, we do have another stronger motivation to define the paraboline varieties. In the next section, we will define some eigenvarieties on the automorphic side and our goal is to understand the comparison between the paraboline varieties and the eigenvarieties. Indeed, those eigenvarieties also have a parameter spaces (like the parameter space $\mathcal{S}$ for the refined paraboline  variety $X_{\mathrm{par}}(\bar{r})$), but which turns out to be $\mathcal{T}$ rather than $\mathcal{S}$, which means there is not any filtration structure. This fact motivates us to revise our original definition of the refined paraboline varieties.

   \begin{Prop}
   One has $$\Xi^{-1}(V_{\mathrm{par}}(\bar{r})) = U_{\mathrm{par}}(\bar{r}).$$
   \end{Prop}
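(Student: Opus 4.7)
The inclusion $U_{\mathrm{par}}(\bar{r}) \subseteq \Xi^{-1}(V_{\mathrm{par}}(\bar{r}))$ is immediate from the definition of $V_{\mathrm{par}}(\bar{r})$ as the image of $U_{\mathrm{par}}(\bar{r})$ under $\id \times \Delta$. For the reverse inclusion I fix $x = (r, f) \in \Xi^{-1}(V_{\mathrm{par}}(\bar{r}))$. Since $V_{\mathrm{par}}(\bar{r}) \subseteq \mathfrak{X}_{\bar{r}} \times \mathcal{T}_{\mathrm{reg}}$, we have $\Delta(f) \in \mathcal{T}_{\mathrm{reg}}$, hence $f \in \mathcal{S}_{\mathrm{reg}}$. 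By the same definition there exists $(r, f_0) \in U_{\mathrm{par}}(\bar{r})$ with $\Delta(f_0) = \Delta(f)$, yielding a strict parabolization $\mathcal{F}^{\mathrm{true}}_\bullet$ of $D := D^\dagger_{\mathrm{rig}}(r)$ with parameter $f_0$. In view of Proposition \ref{Prop:betterparameter}, it suffices to exhibit a parabolization of $D$ with parameter $f$, for then $f = f_0$ and $x \in U_{\mathrm{par}}(\bar{r})$.

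The required parabolization will come from Corollary \ref{Cor:KPX6.3.10} applied to $X_{\mathrm{par}}(\bar{r})$, with Zariski-dense subset $X_{\mathrm{alg}} := U_{\mathrm{par}}(\bar{r})$ (Theorem \ref{Thm:Geoofpar}(2)) and parameter maps $\omega = (\omega_1, \dots, \omega_l) : X_{\mathrm{par}}(\bar{r}) \to \mathcal{S}_{\mathrm{reg}}$. The corollary yields a proper birational morphism $F: X' \to X_{\mathrm{par}}(\bar{r})$ together with a filtration $\mathcal{F}_\bullet$ on $F^*D^\dagger_{\mathrm{rig}}(r_X)$ by $(\varphi, \Gamma_K)$-submodules and injections $\mathrm{gr}_i(F^*D) \hookrightarrow F^*\mathcal{R}(\omega_i) \otimes \mathcal{L}_i$ with $t$-power cokernels. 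Each $\mathcal{F}_i$ is saturated in $F^*D$, since its inductive construction via Theorem \ref{KPX6.3.9} realizes it as the kernel of a map to the $t$-torsion-free target $F^*\mathcal{R}(\omega_i) \otimes \mathcal{L}_i$. Picking any $x' \in F^{-1}(x)$ (non-empty since $F$ is surjective) and specializing, I obtain a saturated filtration $\mathcal{F}_\bullet D$ together with injections $\mathrm{gr}_i(D) \hookrightarrow \mathcal{R}(f_i)$ of full rank and with $t$-power cokernel.

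To finish I compare $\mathcal{F}_\bullet$ with $\mathcal{F}^{\mathrm{true}}_\bullet$. Since $\Delta(f_i) = \Delta(f_0^{(i)})$, the modules $\mathcal{R}(f_i)$ and $\mathcal{R}(f_0^{(i)})$ agree after inverting $t$, so the Hom-vanishing $\Hom(\mathcal{R}(f_i), \mathcal{R}(f_j)) = 0$ for $i < j$ from Proposition \ref{Prop:dimforcoh}(2) (which uses only $\Delta(f) \in \mathcal{T}_{\mathrm{reg}}$) also gives $\Hom(\mathcal{R}(f_i)[1/t], \mathcal{R}(f_0^{(j)})[1/t]) = 0$ in the $t$-inverted category. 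A dévissage identical to the one used in Corollary \ref{Cor:StrPar} then forces $\mathcal{F}_\bullet[1/t] = \mathcal{F}^{\mathrm{true}}_\bullet[1/t]$ inside $D[1/t]$, and the saturation of both sides in $D$ upgrades this to $\mathcal{F}_\bullet = \mathcal{F}^{\mathrm{true}}_\bullet$. In particular $\mathrm{gr}_i(D) \cong \mathcal{R}(f_0^{(i)})$, so the earlier embedding becomes an injection $\mathcal{R}(f_0^{(i)}) \hookrightarrow \mathcal{R}(f_i)$ between two quasi-deRham irreducibles of the same type $(\tau_i, \underline{k^{(i)}})$ with identical character. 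By full faithfulness of $\mathbf{D}$ (Theorem \ref{Thm: PropofBergerFun}) this injection comes from a morphism $\phi : M_0 \to M_i$ of the associated filtered $(\varphi, N, G_{L/K})$-modules; absolute irreducibility of the underlying WD structure forces the underlying $\phi$ to be a scalar isomorphism, and the coincidence of filtration types then forces $\phi$ to be a filtered isomorphism, so the inclusion $\mathcal{R}(f_0^{(i)}) \hookrightarrow \mathcal{R}(f_i)$ is in fact an isomorphism. Therefore $\mathcal{F}_\bullet$ is a genuine parabolization of $D$ with parameter $f$, and Proposition \ref{Prop:betterparameter} gives $f = f_0$, whence $x \in U_{\mathrm{par}}(\bar{r})$.

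The principal technical obstacle is the comparison step. The filtration output by Corollary \ref{Cor:KPX6.3.10} at a point of $F^{-1}(x)$ is only a ``weak'' parabolization, since its graded pieces merely embed in $\mathcal{R}(f_i)$ with $t$-power cokernel, and it carries no a priori link to the genuine parabolization $\mathcal{F}^{\mathrm{true}}_\bullet$ whose graded pieces involve the character-equivalent but possibly distinct parameter $f_0$. Extending the Hom-vanishing of Proposition \ref{Prop:dimforcoh} to the $t$-inverted category, and then promoting an inclusion of two quasi-deRham irreducibles of the same type and character to an isomorphism via full faithfulness of $\mathbf{D}$, are the two substantive steps that make everything line up.
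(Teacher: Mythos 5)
Your proof plan diverges from the paper's in a way that introduces two genuine gaps, the first of which is fatal.

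\textbf{The $t$-inverted Hom-vanishing is false.} Your dévissage hinges on extending $\Hom(\mathcal{R}(f_i),\mathcal{R}(f_0^{(j)}))=0$ for $i<j$ (Proposition \ref{Prop:dimforcoh}(2)) to the $t$-inverted category. But inverting $t$ on a quasi-deRham $(\varphi,\Gamma_K)$-module forgets the filtration, so two rank-one pieces $\mathcal{R}(f_i)$, $\mathcal{R}(f_0^{(j)})$ with $\tau_i\cong\tau_j$ and $\delta_i\delta_j^{-1}$ \emph{algebraic} satisfy $\mathcal{R}(f_i)[1/t]\cong\mathcal{R}(f_0^{(j)})[1/t]$, hence have nonzero $\Hom$ in the $t$-inverted category. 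The regularity condition in Definition \ref{Def:RegLocus} only excludes a one-sided cone of algebraic differences (together with their $|z|\delta_{\underline{1}}$-twists); it does not force the smooth parts of $\delta_i$ and $\delta_j$ to be distinct, so the offending case is not ruled out. The dévissage identical to Corollary \ref{Cor:StrPar} therefore does not run, and you cannot conclude $\mathcal{F}_\bullet[1/t]=\mathcal{F}^{\mathrm{true}}_\bullet[1/t]$ by that route.

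\textbf{Specialization of the family filtration is not automatic.} You claim that picking any $x'\in F^{-1}(x)$ and specializing the filtration of Corollary \ref{Cor:KPX6.3.10} produces a \emph{saturated} filtration of $D_x$ with injective graded embeddings into the $\mathcal{R}(f_i)$ of full rank. The family saturation of $\mathcal{F}_\bullet$ over $X'$ does hold (each step is a kernel into a $t$-torsion-free target), but saturation and injectivity are not preserved under specialization: the quotients $F^*D/\mathcal{F}_i$ need not be flat over $X'$, so the maps $(\mathcal{F}_{i-1})_{x'}\to(\mathcal{F}_i)_{x'}$ can acquire kernels, and the rank of $(\mathcal{F}_i)_{x'}$ inside $D_{x'}$ can drop. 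The paper does not assert these properties at the bad point $x$; it only records a four-term complex (item (3) in the paper's proof) and deduces exactness \emph{a posteriori}.

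The mechanism the paper actually uses, and which your proposal omits entirely, is the continuity of Sen polynomials. The multiplicativity $\mathrm{Sen}_{(\mathcal{F}_iD)_y}=\mathrm{Sen}_{(\mathcal{F}_{i-1}D)_y}\cdot\mathrm{Sen}_{F^*\mathcal{R}(\omega_i)_y}$ is an identity of sections of $\mathcal{O}_{X'}$; it holds on the Zariski-dense subset $F^{-1}(U_{\mathrm{par}}(\bar{r}))$ and so persists at $x$ by continuity. Corollary \ref{Cor:Senineqforinj} then converts this Sen-polynomial equality into an equality of $(\varphi,\Gamma_K)$-modules along the filtration, which is what forces $(\mathcal{F}_iD)_x=D_{r,i}$ and $f_i=g_i$ step by step. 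This continuity argument is the substantive device that replaces both the $t$-inverted Hom-vanishing and the unjustified specialization claim; without it, the comparison of $\mathcal{F}_\bullet$ with $\mathcal{F}^{\mathrm{true}}_\bullet$ at the bad point does not go through.
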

   \begin{proof}
   Let $(r,\delta)$ be a closed point in $V_{\mathrm{par}}(\bar{r})$. Let $(r,f)$ be a closed point in $X_{\mathrm{par}}(\bar{r})$ such that $\Delta(f)=\delta$. By proposition \ref{Prop:betterparameter}, there exists a unique point $(r,g)\in U_{\mathrm{par}}(\bar{r})$ such that $\Delta(g)=\delta$. It is enough to show $f=g$.
   
   Let $D_r:= D_{\mathrm{rig}}^{\dagger}(r)$, with the unique increasing filtration $$0=D_{r,0}\subset D_{r,1}\subset \cdots \subset D_{r,l}$$ of $(\varphi,\Gamma_K)$-submodule such that $D_{r,i}/D_{r,i-1}\cong \mathcal{R}(g_i)$. In particular, by proposition \ref{Prop:Multipropforsen}, one has
    $$\mathrm{Sen}_{D_{r,i}}(T) = \mathrm{Sen}_{D_{r,i-1}}(T)\cdot\mathrm{Sen}_{\mathcal{R}(g_i)}(T).$$
    One other hand, we can apply corollary \ref{Cor:KPX6.3.10} to get a proper birational, hence surjective morphism $F:X \rightarrow X_{\mathrm{par}}(\bar{r})$ and, if we denote $D:= D^\dagger_{\mathrm{rig}}(F^*r_X)$, an increasing filtration $\mathcal{F}_\bullet(D)$ of ($\varphi,\Gamma_K$)-submodules of $D$ such that there exists a short exact sequence of $(\varphi,\Gamma_K)$-modules over $X$ for $1\leq i\leq l$
   $$0\rightarrow \mathrm{gr}_iD\rightarrow F^*(\mathcal{R}(\omega_i))\otimes \mathcal{L}_i\rightarrow \mathcal{M}_i\rightarrow 0$$ 
   where $\omega_i:X_{\mathrm{par}}(\bar{r})\rightarrow \mathcal{S}_i$ is the nutural projection, $\mathcal{L}_i$ is a line bundle over $X$ with trivial $(\varphi,\Gamma_K)$-action and  locally on $X$, the cokernel $\mathcal{M}_i$ is killed by some power of $t$, and is supported over some Zariski closed set $Z_i$ which is disjoint from $F^{-1}(U_{\mathrm{par}}(\bar{r}))$.
   
   Let $x$ be a closed point in the preimage of $(r,f)$ in $X$. After perhaps enlarging $C$, we may assume their residue fields are the same and then we have:
   \begin{enumerate}
   	\item $D_x \cong D_r$ (we identify $D_x$ and $D_r$ from now on via choosing an isomorphism);
   	\item $F^*(\mathcal{R}(\omega_i))_x \cong \mathcal{R}(f_i)$;
   	\item the complex
   	$$0\rightarrow (\mathcal{F}_{i-1}D)_x \rightarrow  (\mathcal{F}_{i}D)_x \xrightarrow{\mu_i}  \mathcal{R}(f_i) \rightarrow \mathcal{M}_{i,x}\rightarrow 0$$
   	where $\mathcal{M}_{i,x}$ killed by some power of $t$ for $i=1,\dots,l$;
   	\item the equation (by proposition \ref{Prop:Multipropforsen})
   	          $$\mathrm{Sen}_{(\mathcal{F}_i D)_{y}}(T) = \mathrm{Sen}_{(\mathcal{F}_{i-1} D)_{y}}(T)\cdot \mathrm{Sen}_{(F^*(\mathcal{R}(\omega_i))_y}(T)$$
   	          holds for every closed point $y$ in $F^{-1}(U_{\mathrm{par}}(\bar{r}))$, hence holds for $y$ in the whole $X$ by continuity. In particular, the equation holds for $y=x$;
   \item $\mathrm{Sen}_{\mathcal{R}(f_i)}(T) = \mathrm{Sen}_{\mathcal{R}(g_i)}(T)$ by the formula in corollary \ref{Cor:SenpolyforquasideRham}.
   \end{enumerate}
   We are going to show that $(\mathcal{F}_{i}D)_x = D_{r,i}$ and $f_i=g_i$ inductively, via comparing Sen polynomials.
   
   Firstly, when $i=1$, the injection 
   $$0\rightarrow (\mathcal{F}_1 D)_x \rightarrow \mathcal{R}(f_1)$$ 
   is an isomorphism by corollary \ref{Cor:Senineqforinj} as their Sen polynomial are the same by (4).
   Using the regularity condition (note that $(f_1,g_2,\dots,g_l)\in \mathcal{S}_{\mathrm{reg}}$), one has $$\Hom(\mathcal{R}(f_1),\mathcal{R}(g_i)) = 0,$$ for $i\geq 2$. Hence $((\mathcal{F}_1 D)_x,0,D_r,D_{r_1})$ satisfies the factor through condition, and therefor $(\mathcal{F}_1 D)_x$ is a $(\varphi,\Gamma_K)$-submodule of $D_{r,1}$. Then $(\mathcal{F}_1 D)_x= D_{r,1}$ by corollary \ref{Cor:Senineqforinj} again, as their Sen polynomial are the same, and also, $f_1=g_1$.
   
   Now assume $(\mathcal{F}_{i-1}D)_x=D_{r,i-1}$, hence in particular, $(\mathcal{F}_{i-1}D)_x$ is saturated in $D_x$, and so is in $(\mathcal{F}_{i}D)_x$. Then the complex
   $$0\rightarrow (\mathcal{F}_{i-1}D)_x\rightarrow (\mathcal{F}_{i}D)_x \rightarrow \mathrm{Im}(\mu_i)\rightarrow 0$$
   is a exact as $$\mathrm{rank}((\mathcal{F}_{i-1}D)_x) + \mathrm{rank} (\mathrm{Im}(\mu_i))= \mathrm{rank} ((\mathcal{F}_{i}D)_x)$$ and $(\mathcal{F}_{i-1}D)_x/(\mathcal{F}_{i-1}D)_x$ is torsion free. This implies the Sen polynomial of ${\mathrm{Im}(\mu_i)}$ is equal to the Sen polynomial of $\mathcal{R}(f_i)$, and therefor $\mu_i$ is surjective by corollary \ref{Cor:Senineqforinj}. It follows that the complex
   $$0\rightarrow (\mathcal{F}_{i-1}D)_x\rightarrow (\mathcal{F}_{i}D)_x \xrightarrow{\mu_i} \mathcal{R}(f_i)\rightarrow 0$$ is a short exact sequence. Using the regularity argument again, one has 
   $$\Hom((\mathcal{F}_{i}D)_x,\mathcal{R}(g_j)) = 0$$
   for any $j > i$.
   
   Then $(\mathcal{F}_{i}D)_x,0,D_r,D_{r,i})$ satisfies the factor through condition, and then $(\mathcal{F}_{i}D)_x$ is a $(\varphi,\Gamma_K)$-submodule of $D_{r,i}$. Hence $(\mathcal{F}_{i}D)_x = D_{r,i}$, by corollary \ref{Cor:Senineqforinj} again, as their Sen polynomials are the same and also $f_i=g_i$. Then one finishes the proof via induction.
   \end{proof}

   \begin{Rmk}
   The Proposition above shows that the image of $X_{\mathrm{par}}(\bar{r}) \setminus U_{\mathrm{par}}(\bar{r})$ is $Y_{\mathrm{par}}(\bar{r})\setminus V_{\mathrm{par}}(\bar{r})$ in $Y_{\mathrm{par}}(\bar{r})$. As $\Xi$ is proper, in particular closed, then $Y_{\mathrm{par}}(\bar{r})\setminus V_{\mathrm{par}}(\bar{r})$ is Zariski closed  and $V_{\mathrm{par}}(\bar{r})$ is Zariski open in $Y_{\mathrm{par}}(\bar{r})$. It follows that $\Xi$ restricted to $U_{\mathrm{par}}(\bar{r})$ is closed and bijective onto $V_{\mathrm{par}}(\bar{r})$ (on the topological level). Hence $\Xi$ induces an isomorphism between the underlying topological spaces of $U_{\mathrm{par}}(\bar{r})$ and $V_{\mathrm{par}}(\bar{r})$. Hence we can see that $\Xi$ is actually induces an isomorphism of rigid spaces between $U_{\mathrm{par}}(\bar{r})$ and $V_{\mathrm{par}}(\bar{r})$ once we can show that $\Xi$ is smooth on $U_{\mathrm{par}}(\bar{r})$.
   \end{Rmk}

   \begin{Theo}
   $\Xi$ restricted to $U_{\mathrm{par}}(\bar{r})$ is an isomorphism onto $V_{\mathrm{par}}(\bar{r})$.
   \end{Theo}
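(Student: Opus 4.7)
The preceding remark already gives us that $\Xi|_{U_{\mathrm{par}}(\bar{r})}$ is a proper topological bijection between the reduced rigid $C$-spaces $U_{\mathrm{par}}(\bar{r})$ and $V_{\mathrm{par}}(\bar{r})$, and reduces the claim to showing that $\Xi$ is smooth on $U_{\mathrm{par}}(\bar{r})$. My plan is to establish smoothness of $V_{\mathrm{par}}(\bar{r})$ itself at every closed point, and then deduce the theorem. Indeed, $U_{\mathrm{par}}(\bar{r})$ is smooth and equidimensional of $d := [K:\mathbb{Q}_p]\left(\frac{n(n-1)}{2}+l\right)+n^2$ by Theorem \ref{Thm:Geoofpar}, and a fibre-dimension count for the proper bijective surjection $\Xi|_{U_{\mathrm{par}}}$ shows that $V_{\mathrm{par}}(\bar{r})$ is equidimensional of the same $d$. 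Composing the projection $V_{\mathrm{par}}(\bar{r})\to \mathfrak{X}_{\bar{r}}$ with Corollary \ref{Cor:Surjforregularlucos} yields a surjection $\hat{\mathcal{O}}_{V_{\mathrm{par}},y}\twoheadrightarrow \hat{\mathcal{O}}_{U_{\mathrm{par}},x}$ of complete local rings of Krull dimension $d$. Once $\hat{\mathcal{O}}_{V_{\mathrm{par}},y}$ is known to be regular (hence a domain), a surjection between regular local rings of the same dimension is forced to be an isomorphism, because a proper quotient of a $d$-dimensional regular local ring has Krull dimension strictly less than $d$.

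To prove that $V_{\mathrm{par}}(\bar{r})$ is smooth, I would re-run the argument of Theorem \ref{Thm:Geoofpar} with $V_{\mathrm{par}}(\bar{r})$ replacing $U_{\mathrm{par}}(\bar{r})$, applied to the composition
\[
\tilde{\pi}_{\bar{r}}:\mathcal{P}^\Box(\bar{r},\mathcal{S}) \xrightarrow{\pi_{\bar{r}}} U_{\mathrm{par}}(\bar{r}) \xrightarrow{\Xi|_{U_{\mathrm{par}}}} V_{\mathrm{par}}(\bar{r}).
\]
The goal is to show that $\tilde{\pi}_{\bar{r}}$ is smooth and surjective of relative dimension $l$; combined with the smoothness of $\mathcal{P}^\Box(\bar{r},\mathcal{S})$ established in the proof of Theorem \ref{Thm:Geoofpar}, smoothness of $V_{\mathrm{par}}(\bar{r})$ then descends along this smooth surjection. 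Concretely, after choosing generators one constructs a natural surjection $\hat{\mathcal{O}}_{V_{\mathrm{par}},y}\llbracket X_1,\dots,X_l\rrbracket \twoheadrightarrow \hat{\mathcal{O}}_{\mathcal{P}^\Box, x^\Box}$ lifting the identification $\hat{\mathcal{O}}_{U_{\mathrm{par}},x}\llbracket X_1,\dots,X_l\rrbracket \xrightarrow{\sim}\hat{\mathcal{O}}_{\mathcal{P}^\Box, x^\Box}$ from the proof of Theorem \ref{Thm:Geoofpar}, and one shows this surjection is an isomorphism by the same dimension-and-integrality argument used there.

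The main obstacle is an Artin-local refinement of Proposition \ref{Prop:betterparameter}: for any Artin local $C$-algebra $A$, any $(\varphi,\Gamma_K)$-module $D$ over $A$ admitting two parabolizations with parameters $f_A, f_A'\in\mathcal{S}_{\mathrm{reg}}(A)$ satisfying $\Delta(f_A)=\Delta(f_A')$ must have $f_A=f_A'$ and the two filtrations must coincide. The factor-through template of Proposition \ref{Prop:betterparameter} applies verbatim once one establishes the vanishing $\Hom_{\varphi,\Gamma_K}(\mathcal{R}(f_{A,i}), \mathcal{R}(f_{A,j}')) = 0$ for $i < j$ over $A$. Since the regularity condition in Definition \ref{Def:RegLocus} depends only on the character data---which is common to $f_A$ and $f_A'$ because $\Delta(f_A)=\Delta(f_A')$---Proposition \ref{Prop:dimforcoh}(2) supplies this vanishing at the residue field $k$ of $A$. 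To lift it to $A$-coefficients, I would induct on the length of $A$: for a square-zero extension $0\to J\to A\to A/J\to 0$ with $\mathfrak{m}_A\cdot J=0$, so that $J\cong k^{\oplus r}$ as a $k$-vector space, the long exact sequence of $(\varphi,\Gamma_K)$-cohomology reduces the $A$-coefficient vanishing to the vanishings over $A/J$ (inductive hypothesis) and over $k$ (the base case). This uniqueness is what is needed to identify $\hat{\mathcal{O}}_{V_{\mathrm{par}},y}$ functorially with the relevant framed universal deformation ring, allowing the local-ring calculation paralleling Theorem \ref{Thm:Geoofpar} to go through and completing the smoothness argument.
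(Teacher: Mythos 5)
Your plan is genuinely different from the paper's proof, which is more economical. Both of you start from the reduction in the preceding remark (topological bijection, so it suffices that $\Xi_x^*:\hat{B}_y\to\hat{A}_x$ is an isomorphism), and both use Corollary \ref{Cor:Surjforregularlucos} for surjectivity: the composite $\hat{\mathcal{O}}_{\mathfrak{X}_{\bar{r}},z}\to\hat{B}_y\to\hat{A}_x$ is surjective, hence so is $\hat{B}_y\to\hat{A}_x$. The divergence is in the injectivity step. The paper proves $\Xi_x^*$ is injective directly: since $\Xi|_{U_{\mathrm{par}}}$ is proper with finite fibres it is finite, so $U=\mathrm{Sp}(A)\to V=\mathrm{Sp}(B)$ is a finite map of reduced affinoids, surjective on maximal ideals; therefore $B\hookrightarrow A$. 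Because $\mathfrak{m}_x$ is the unique maximal ideal over $\mathfrak{n}_y$, the $\mathfrak{m}_x$-adic and $\mathfrak{n}_yA$-adic topologies on $A$ agree, so $\hat{A}_x\cong A\otimes_B\hat{B}_y$; flatness of $B\to\hat{B}_y$ then carries the injection through completion. You instead aim to prove that $\hat{B}_y$ is a regular local ring of Krull dimension $d$, after which the surjection $\hat{B}_y\twoheadrightarrow\hat{A}_x$ between equidimensional rings of dimension $d$, with the source a domain, is automatically an isomorphism. That route is logically sound, but it asks you to establish the smoothness of $V_{\mathrm{par}}(\bar{r})$ \emph{before} the theorem, which is exactly what the paper deliberately postpones to Remark \ref{RefinedParandPar}, deducing it as a corollary of the theorem rather than an input to it. You end up paying for smoothness of $V_{\mathrm{par}}$ by redoing most of Theorem \ref{Thm:Geoofpar}, whereas the paper's injectivity step costs only a few lines of affinoid algebra.

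Within your route there is also a misdiagnosed obstacle. You say the ``main obstacle'' is an Artin-local strengthening of Proposition \ref{Prop:betterparameter}, needed to identify $\hat{\mathcal{O}}_{V_{\mathrm{par}},y}$ functorially with a framed deformation ring. But a careful reading of the proof of Theorem \ref{Thm:Geoofpar} shows that no such identification of $\hat{\mathcal{O}}_{X_{\mathrm{par}},y}$ with a deformation ring is made there either; what is actually used is the factorisation $R^\Box_{r_y}\llbracket X_1,\dots,X_l\rrbracket\to\hat{\mathcal{O}}_{X_{\mathrm{par}},y}\llbracket X_1,\dots,X_l\rrbracket\to\hat{\mathcal{O}}_{\mathcal{P}^\Box,x^\Box}$, with the first composite surjective, hence the second map surjective. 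The same factorisation exists for $Y_{\mathrm{par}}$, because $\mathcal{P}^\Box(\bar{r},\mathcal{S})\to\mathfrak{X}_{\bar{r}}$ already factors through $Y_{\mathrm{par}}(\bar{r})$. So the surjection $\hat{\mathcal{O}}_{V_{\mathrm{par}},y}\llbracket X_1,\dots,X_l\rrbracket\twoheadrightarrow\hat{\mathcal{O}}_{\mathcal{P}^\Box,x^\Box}$ needs no Artin-local refinement of Proposition \ref{Prop:betterparameter} at all. The genuinely nontrivial point you must carry over is integrality of $\hat{\mathcal{O}}_{V_{\mathrm{par}},y}$; as in the paper's proof, this is argued by passing to the normalisation $Y^{\mathrm{norm}}$ and showing the fibre of $y$ is a single closed point, using that $\tilde{\pi}_{\bar{r}}^{-1}(y)=\pi_{\bar{r}}^{-1}(x)$ (by the bijectivity of $\Xi|_{U_{\mathrm{par}}}$) is connected. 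If you fold those corrections in, your argument closes; it is just a longer path to the same destination.
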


   \begin{proof}
   By the remark about, it is enough to show that $\Xi$ is smooth in $U_{\mathrm{par}}(\bar{r})$. It is equivalent to show that for every closed point $x$ in $U_{\mathrm{par}}(\bar{r})$, the induced local map
   $$\Xi_x^*: \hat{B}_y\rightarrow \hat{A}_x$$
   is an isomorphism, where $y:= \Xi(x)$, and $A_x$ (resp. $B_y$) is the local ring of $x$ (resp. $y$), and $\hat{A}_x$ (resp. $\hat{B}_y$) is the completion of $A_x$ (resp. $B_y$).
   
   We first show that $\Xi_x^*$ is injective. Let $V=\mathrm{Sp}B$ be an affinoid subdomain of $V_{\mathrm{par}}(\bar{r})$ containing $y$, and let $U:= \Xi^{-1}(V)$. As $\Xi$ is proper with finite fiber, by \cite[Section~9.6, Corollary~6]{bosch1984non}, $\Xi$ is finite and in particular $U=\mathrm{Sp}A$ for some affinoid algebra (the notations are compatible with the notations above). 
   
   As $\Xi|_U$ is surjective onto $V$ and $A,B$ are reduced, then the kernel $\Xi^*:B\rightarrow A$ is $$(\Xi^*)^{-1}(\{0\}) = \bigcap_{\mathfrak{m}\in\mathrm{Sp}A}(\Xi^*)^{-1}(\{\mathfrak{m}\}) =  \bigcap_{\mathfrak{n}\in\mathrm{Sp}B}\{\mathfrak{n}\}=0,$$
   hence $\Xi^*$ is injective. Let $\mathfrak{n}_y$ denote the maximal ideal of $y$ in $B$, and Let $\mathfrak{m}_x$ denote the maximal ideal of $x$ in $A$. As $\mathfrak{m}_x$ is the unique maximal ideal above $\mathfrak{n}_y$, it follows that $A/\mathfrak{n}_yA$ is a local Artin ring with maximal ideal $\mathfrak{m}_x$. In particular, one has $\mathfrak{m}_x^k\subseteq \mathfrak{n}_y\subseteq \mathfrak{m}_x$, and therefor the $\mathfrak{m}_x$-adic topology is equal to the $\mathfrak{n}_y$-adic topology in $A$. It follows that the natural map $A\otimes_B \hat{B}_y\rightarrow \hat{A}_{x}$ is an isomorphism. As $\hat{B}_y$ is flat over $B$, then the morphism
   $$\Xi_x^*: \hat{B}_y = B\otimes_B \hat{B}_y\rightarrow A\otimes_B \hat{B}_y = \hat{A}_x$$
   is injective.
   
   For surjectivity, consider image $z$ of $x$ (resp. $y$) in $\mathfrak{X}_{\bar{r}}$ by the natural projection $(r,f)\mapsto r$ (resp. $(r,\delta)\mapsto r$). Let $\hat{C}_z$ denote the completion of the local ring of $z$. By corollary \ref{Cor:Surjforregularlucos}, the composition
   $$\hat{C}_z\rightarrow \hat{B}_y\rightarrow \hat{A}_x$$
   is surjective, in particular $\Xi_x^*$ is surjective.
   \end{proof}

   \begin{Rmk}\label{RefinedParandPar}
   Our comparison above means the refined paraboline variety $X_{\mathrm{par}}(\bar{r})$ is very close to the paraboline variety $Y_{\mathrm{par}}(\bar{r})$. Indeed, what we have proved are:
   \begin{enumerate}
   	\item $\Xi:X_{\mathrm{par}}(\bar{r}) \rightarrow Y_{\mathrm{par}}(\bar{r})$ is proper birational and surjective.
   	\item Their regular loci are isomorphic, i.e. $\Xi|_{U_{\mathrm{par}}(\bar{r})}$ is an isomorphism onto $V_{\mathrm{par}}(\bar{r})$.
   \end{enumerate}
   Hence we can derive similar geometric properties of $Y_{\mathrm{par}}(\bar{r})$ as those in theorem \ref{Thm:Geoofpar}:
   \begin{enumerate}
   	\item the rigid space $Y_\mathrm{par}(\bar{r})$ is equidimensional of dimension 
   	$$[K:\mathbb{Q}_p](\frac{n(n-1)}{2}+l)+n^2;$$
   	\item the set $V_{\mathrm{par}}(\bar{r})$ is Zariski open in $Y_{\mathrm{par}}(\bar{r})$, hence it is also Zariski dense in $Y_{\mathrm{par}}(\bar{r})$;
   	\item the rigid space $V_{\mathrm{par}}(\bar{r})$ is smooth and the morphism 
   	$$\omega':Y_{\mathrm{par}}(\bar{r})\inj \mathfrak{X}_{\bar{r}}\times\mathcal{T}\rightarrow\mathcal{T}$$ 
   	restricted to $V_{\mathrm{par}}(\bar{r})$ is smooth.  
   \end{enumerate} 
   \end{Rmk}

   \begin{Rmk}\label{Rmk:NotationCompforBD}
   Our construction of $V_{\mathrm{par}}(\bar{r})$ (resp. $Y_{\mathrm{par}}(\bar{r})$) actually coincides with Breuil and Ding's construction of $U_{\Omega,\mathbf{h}}(\bar{\rho})$ (resp. $X_{\Omega,\mathbf{h}}(\bar{\rho})$) in \cite[Section~4.2]{ChristopheBreuil2021BernsteinE}. As quit a lot notations, involved in our constructions, are different, here we list their correspondence (on the left are Breuil and Ding's notations and on the right are ours): 
   \begin{itemize}
   	\item $r:= l$.
   	\item $L:= K$.
   	\item $E:= C$.
   	\item $\bar{\rho} := \bar{r}$.
   	\item Let $K_1,K_2$ be $p$-adic local field. Even though both in \cite{ChristopheBreuil2021BernsteinE} and our article, the notation $\mathcal{R}_{K_1,K_2}$ denotes the relative Robba rings, but the roles of $K_1$ and $K_2$ are exactly reversed. Namely, $R_{K_1,K_2}$ in \cite{ChristopheBreuil2021BernsteinE} means $\mathcal{R}_{K_2,K_1}$ in our article. 
   	\item $\mathbf{h} = (h_{i,\eta})_{i=1,\dots,n \atop \eta: L\inj E} := (-k^{(s_i)}_{j_i,\eta})_{i=1,\dots,n \atop \eta: K\inj C}$, here $s_i$ and $j_i$ are unique integers such that $1\leq s_i \leq l-1$ and $1\leq j_i \leq n_{s_i+1}$ such that $n_1+\cdots + n_{s_i} + j_i = i$ (as our convention for the Hodge-Tate weight and Breuil and Ding's are differed by a sign, hence a minus sign occurs here).
   	
   	\item $\Omega= (\Omega_i)_{i=1,\dots,r} :=(\text{the cusipidal type for } \mathrm{GL}_{n_i}(K) \text{ associated to }\tau_i )_{i=1,\dots,r}$.
   	\item One has a canonical isomorphism of the parameter spaces:
   	$$\mathrm{Spec}(\mathcal{Z}_{{\Omega}_i})\times \widehat{\mathcal{O}_L^\times} \cong \mathcal{T}_i$$
   	which follows from \ref{BernsteinCenter} (i.e. the computation of the Bernstein center). This induces a canonical isomorphism of the whole parameter spaces:
   	$$\mathscr{Z}:= \prod_{i=1}^{r} (\mathrm{Spec}(\mathcal{Z}_{{\Omega}_i})\times \widehat{\mathcal{O}_L^\times}) \cong \mathcal{T} := \prod_{i=1}^{r}\mathcal{T}_i.$$
   	\item One can check directly that the definition of generic locus $\mathscr{Z}^{\mathrm{gen}}\subset \mathscr{Z} $ in \cite[Section~4.2]{ChristopheBreuil2021BernsteinE} coincides with ours of the regular locus $\mathcal{T}_{\mathrm{reg}}\subset \mathcal{T}$ in \ref{Def:RegLocus}.
   \item It follows from the proof of lemma \ref{Sat2} that the condition (4.17) in \cite[Section~4.2]{ChristopheBreuil2021BernsteinE} holds if and only if there exists some point $f\in\Delta^{-1}(\underline{x},\chi)$ such that $D_{\mathrm{rig}}^{\dagger}(\rho)$ is paraboline with parameter $f$ (recall that $\Delta: \mathcal{S} \twoheadrightarrow \mathcal{T}$ is the natural projection defined in the beginning of section \ref{Subsect:RLoP}). This means that the set $U_{\Omega,\mathbf{h}}(\bar{\rho})$ defined in \textit{loc. cit.} coincides with our definition of the set $V_{\mathrm{par}}(\bar{r})$. In particular, there Zariski closures in $\mathfrak{X}_{\bar{r}}\times\mathcal{T}$ coincide, i.e. $U_{\Omega,\mathbf{h}}(\bar{\rho}) = V_{\mathrm{par}}(\bar{r})$.
   \end{itemize}
   \end{Rmk}

   \section{Applications to Eigenvarieties}
   
   Let $F^+$ be a totally real number field and $F$ be an imaginary extension of $F^+$, such that every finite place $v$ in $F^+$ dividing $p$ splits in $F$. We fix a unitary group $\mathcal{G}$ in $n$ variables over $F^+$ which splits over $F$, and which is compact at all infinite places of $F^+$. Associated to such a group $\mathcal{G}$ (and the choice of a tame level, i.e. a compact open subgroup of $\mathcal{G}(\mathbb{A}_{F^+}^{p\infty})$), people have constructed a nice Hecke eigenvariety which is an equidimensional analytic rigid space of dimension $n[F^+:\mathbb{Q}]$, see e.g. \cite{GaetanChenevier2004FamillesPD} or \cite{BJG2009}. We say such Hecke eigenvariety is of trianguline type as to every \textit{p-adic overconvergent eigenform of finite slope}, attached to a point $x$ in such eigenvariety, one can associate a continuous semi-simple Galois representation $$\rho_x: \mathrm{Gal}(\bar{F}/F) \rightarrow \mathrm{GL}_n(\bar{\mathbb{Q}}_p)$$ which is trianguline in the sense of \cite{PierreColmez2008ReprsentationsTD} at all place of $F$ dividing $p$ (see \cite{Kedlaya2012}). In \cite{Breuil2017a}, Breuil, Hellmann and Schraen study the characteristic of the classical points in such eigenvariety (a point in such eigenvariety is called \textit{classical} if it is attached to a classical eigenform) and proof that they are Zariski dense in the eigenvariety. This is the critical property to construct a reasonable map from such eigenvarieties to the trianguline variety (\cite{Breuil2017a}) in a flavor of Langlands correspondence, which contributes to understand the Fontaine-Mazur conjectures.
   
   In this section, we generalize the results above to the paraboline case. More precisely, we will construct a Hecke eigenvariety of paraboline type, in sense that the Galois representation $\rho_x:\mathrm{Gal}(\bar{F}/F) \rightarrow \mathrm{GL}_n(\bar{\mathbb{Q}}_p)$, attached to a point $x$ of overconvergent form in such eigenvariety (with some extra technical conditions), is paraboline in the sense of section \ref{Global} at all places of $F$ dividing $p$, and similarly, construct a reasonable map from such eigenvariety to the paraboline variety that we defined in section \ref{Global}.
   
   As in this section, there are a lot of notations involved, and many of them are defined by the same rule, to avoid redundancy, here we give some notation conventions that we used through out this section.
   \begin{itemize}
   	\item If $\delta$ is a locally algebraic character, we always denote by $\delta_{\mathrm{sm}}$ (resp. $\delta_{\mathrm{alg}}$) the smooth part (resp. the algebraic part) of $\delta$. 
   	\item If we already defined a group $G$ and a subgroup $G_k$, then for any sub group $H$ (resp. quotient group $H$), we automatically denote $H_k:= H\cap G_k$ (resp. $H_k:=\mathrm{Im}(G_k\rightarrow H)$).
   	\item If for every place $v$ in some global field $F$ dividing $p$, the group $G_v$ is defined, then we denote $G_p:= \prod_{v|p}G_v$, and also if $\pi_v$ is a defined $G_v$-representation (resp. monoid, ring, algebra and etc.), we denote $\pi_p:= \otimes_{v|p} \pi_v$.
   	\item If we already defined a group scheme $\mathcal{G}$ over some local field $F$, for every place $v$ of $F$, we denote $G_v:= \mathcal{G}(F_v)$.
   	\item For an affinoid rigid space $X$, we denote $R_X:=\Gamma(X,\mathcal{O}_X)$.
   	\item Let $R$ be a commutative ring, and $M$ be a $R$-module. For any subset $\Delta\subseteq R$, we denote 
   	$$M[\Delta] := \{m\in M\ |\ am=0,\ \forall a\in \Delta\}.$$
   	For a point $x\in \mathrm{Spec}R$, we denote
   	$$M[{x}]:= M[{\mathfrak{p}_x}],$$
   	where $\mathfrak{p}_x$ is the prime ideal associated to $x$.
   \end{itemize}

      \subsection{Notations and Settings}  \label{Subsect:Notations}  
   
   In this subsection, we clarify all notations and settings we used for eigenvarieties. In particular, we will define the space of overconvergent eigenforms following the language of \cite{Loeffler2011}, and describe some basic properties of this space.
   
   Firstly, we recall the global setting, basically the same as \cite{Breuil2017a}. We fix a totally real field $F^+$, and denote by $S_p$ the set of places of $F^+$ dividing $p$. We fix a totally imaginary quadratic extension $F$ of $F^+$ that splits at all places in $S_p$. We fix a finite extension $C$ of $\mathbb{Q}_p$ which is assumed to be large enough such that $|\Hom(F^+_v:C)| = [F^+_v:\mathbb{Q}_p]$ for all $v$ in $S_p$.
   
    We fix a unitary group $\mathcal{G}$ in $n$ variables over $F^+$ such that  $\mathcal{G} (F^+\times_\mathbb{Q} \mathbb{R})$ is compact, and $\mathcal{G} \times_{F^+} F \cong \mathrm{GL}_{n,F}$. We fix a parabolic subgroup $\mathcal{P}$ of $\mathrm{GL}_{n,F}$, which corresponds to the upper block triangular matrices of size $\underline{n }=(n_1,\dots,n_l)$, with Levi subgroup $\mathcal{M} \cong \mathrm{GL}_{n_1,F} \times \dots \times \mathrm{GL}_{n_l,F} $ and with unipotent radical $\mathcal{N}$. Let $\bar{\mathcal{P}}$ denote the opposite parabolic with unipotent radical $\bar{\mathcal{N}}$, and let $\mathcal{H} \cong \mathbb{G}_m^l$ denote the maximal quotient torus of $\mathcal{M}$.
    
    We fix an isomorphism $i:\mathcal{G}\times_{F^+}F\xrightarrow{\sim}\mathrm{GL}_{n,F}$, and for every $v\in S_p$, we fix a place $\tilde{v}$ of $F$ dividing $v$. Then $i$ and the isomorphism $F_v^+\rightarrow F_{\tilde{v}}$ induce an isomorphism $i_{\tilde{v}}: \mathcal{G}(F_v^+) \rightarrow GL_n(F_{\tilde{v}})$.  In the notation conventions that we declared as above, we have $G_v:= \mathcal{G}(F_v^+)\cong \mathrm{GL}_n(F_{\tilde{v}})$. Then for any subgroup (for example $M_{\tilde{v}}$) in $\mathrm{GL}_n(F_{\tilde{v}})$, we always identify it with a subgroup of $\mathcal{G}(F_v^+)$ via $i_{\tilde{v}}$ (and denote it by $M_v$).
    
    We fix a tame level $U^p = \prod_v U_v\subseteq \mathcal{G}(\mathbb{A}_{F^+}^{p,\infty})$, where $U_v$ is a compact open subgroup of $\mathcal{G}(F^+_v)$. We fix $S$ a finite set of places of $F^+$ that split in $F$ containing all $v|p$ and the set of finite places $v\notin p$ (split in $F$) such that $U_v$ is not hyperspecial. We denote by $\mathbb{T}^S$ the commutative spherical Hecke algebra:
    $$\varinjlim_I\left(\bigotimes_{v\in I}\mathcal{O}_C[U_v\backslash G_v/U_v]\right),$$
    the inductive limit being taken over finite sets $I$ of finite places of $F^+$ that split in $F$ such that $I\cap S =\emptyset$. 
    
    Then we state our local setting and discuss the space of overconvergent eigenform, which is basically specialize Loeffler's setting to the case of unitary group $\mathcal{G}$.
    
    For every $v\in S_p$, fix a uniformizer $\varpi_v$ in $F_v^+$. Let $$G_{v,0} := \{g\in \mathrm{GL}_n(\mathcal{O}_{F_{\tilde{v}}})\ |\ g \in \mathcal{P}(k(F_{\tilde{v}})) \mod \varpi_v\},$$ and let $$G_{v,k} := \{g\in \mathrm{GL}_n(\mathcal{O}_{F_{\tilde{v}}})\ |\ g \in \mathcal{N}(k(F_{\tilde{v}})) \mod \varpi_v^{k}\}$$ for $k\geq 1$. Then $G_{v,k}$ form a basis of the topology of $G_{v,0}$. And for each $k\geq 0$, one has
    \begin{align*}
    	\overline{N}_{v,k}\times \overline{M}_{v,k}\times {N}_{v,k} & \xrightarrow{\sim} G_{v,k}\\
    	(\overline{n},m,n) & \mapsto \overline{n}mn. 
    \end{align*}
    
     Let $\Sigma_v:= \varpi_v^{\mathbb{Z}}I_{n_1}\times \dots \times\varpi_v^{\mathbb{Z}}I_{n_l}$. Then the subgroup $\Sigma_v$ of $M_v$ is contained in the center of $M_v$. We define the sub-monoid $$\Sigma^+_v:= \{\varpi_v^{k_1}I_{n_1}\times \dots \times\varpi_v^{k_r}I_{n_l}|k_1\leq\dots\leq k_l\},$$
    and the semi-group
    $$\Sigma^{++}_v:= \{\varpi_v^{k_1}I_{n_1}\times \dots \times\varpi_v^{k_l}I_{n_r}|k_1<\dots< k_l\}.$$ 
    Note that in this setting, one has $z(\overline{N}_{v,0})z^{-1} \subseteq \overline{N}_{v,0}$ for any $z\in \Sigma_v^+$. We denote $\Sigma := \Sigma_p$ (resp. $\Sigma^+:=\Sigma^+_p$ and $\Sigma^{++} := \Sigma^{++}_p$) for short.
    
    We denote by $\mathbb{I}_v\subset G_v$ the monoid generated by $G_{v,0}$ and $\Sigma_v^+$, and define the Hecke algebra 
    $$\mathcal{H}_v:= C[G_{v,0}\backslash\mathbb{I}_v/G_{v,0}].$$
    It follows from \cite[Lem~3.4.1]{Loeffler2011} that the map
    \begin{align*}
    	C[\Sigma_v^+] & \rightarrow \mathcal{H}_v \\
    	   z\in \Sigma_v^+& \mapsto \frac{1}{N_z}[G_{v,0}zG_{v,0}]
    \end{align*} 
    is an isomorphism, where $N_z := |G_{v,0}zG_{v,0}/G_{v,0}|$. In particular, we have an isomorphism
    $$\mathcal{H}_p := \bigotimes_{v\in S_p}\mathcal{H}_v \cong C[\Sigma^+]$$
    of $C$-algebras.
    
    For each $v\in S_p$, we fix an element $$a^{(v)}=(a^{(v)}_{\eta,1}\leq\dots\leq a^{(v)}_{\eta,l})_{\eta}\in (\mathbb{Z}^{n}_{+})^{\Hom(F_{\tilde{v}},C)}.$$ We denote by $W_v$ the irreducible algebraic representation of $M_v$ over $C$ with lowest weight $a^{(v)}$ w.r.t. the Borel subgroup of upper triangular matrices, i.e.
    $$W_v\cong\mathrm{Ind}_{M_v\cap\overline{B}_v}^{M_v}\delta_{a^{(v)}},$$
    here $\overline{B}_v$ is Borel subgroup of $G_v$ consisting of lower triangular matrices and $\delta_{a^{(v)}}$ is the algebraic character of $M_v$ of weight $a^{(v)}$.
    We write $W := W_p$ for short.
    
    For every $v\in S_p$, we fix a supercuspidal representation $\sigma_v$ of $M_{v}$ with coefficient in $C$. By \cite[Theorem~1.3]{Paskunas2005}, there exists a $M_{v,0}$-type $\sigma_{v,0}$ of $\sigma_v$. We write $\sigma:= \sigma_p$ and $\sigma_0:= \sigma_{p,0}$ for short. 
    
     Let $\Pi_v$ denote $\mathrm{End}_{M_v}(c\mathrm{-}\mathrm{Ind}^{M_v}_{M_{v,0}} \sigma_{v,0})$, where $c\mathrm{-}\mathrm{Ind}^{M_v}_{M_{v,0}}(-)$ denotes the compact induction, and $\mathfrak{Z}_{v}$ denote the Bernstein center of $\sigma_v$. According to \cite[Thm~4.1]{Dat1999CaractresV}, one has $\mathfrak{Z}_{v} \cong \Pi_v$. We denote $\mathfrak{Z}:= \mathfrak{Z}_p$ for short.
     
    \begin{Lemma}\label{Lem:ArtkintoBernstein}
    	For any $t$ element $M_{p}$, we denote by $r_t$ the right regular action on $c\mathrm{-}\mathrm{ind}^{M_p}_{M_{p,0}} \sigma_{p,0}$, i.e. $r_tf(x):= f(xt)$. Then the map $$i_{\Sigma}: t\in\Sigma \mapsto r_{t}$$ identifies $C[\Sigma]$ with a subring of $\Pi_p=\mathfrak{Z}$.
    \end{Lemma}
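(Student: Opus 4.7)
The plan is to verify in order: (i) that for $t\in\Sigma$ the right translation $r_t$ is a well-defined endomorphism of $c\text{-}\mathrm{Ind}^{M_p}_{M_{p,0}}\sigma_{p,0}$ commuting with the left $M_p$-action, so lies in $\Pi_p$; (ii) that $t\mapsto r_t$ extends $C$-linearly to a ring homomorphism $C[\Sigma]\to\Pi_p$; and (iii) injectivity of this homomorphism, which is the only nontrivial point.

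For (i), the action of $M_p$ on $c\text{-}\mathrm{Ind}^{M_p}_{M_{p,0}}\sigma_{p,0}$ is by right translation, and $r_t$ preserves both the transformation law $f(mx)=\sigma_{p,0}(m)f(x)$ for $m\in M_{p,0}$ and the compact support modulo $M_{p,0}$ (since right multiplication by $t$ is a homeomorphism). The key observation is that each $\Sigma_v$ is contained in the center of $M_v$, so $\Sigma$ is central in $M_p$; hence $r_t$ commutes with every $r_g$ for $g\in M_p$, which is exactly the condition to belong to $\Pi_p=\mathrm{End}_{M_p}(c\text{-}\mathrm{Ind}^{M_p}_{M_{p,0}}\sigma_{p,0})$. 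Point (ii) is immediate from the identity $r_{ts}=r_t r_s$ and $C$-linearity.

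For (iii), observe first that $\Sigma\cap M_{p,0}=\{1\}$: indeed $\Sigma_v\cap M_{v,0}\subseteq\Sigma_v\cap\mathrm{GL}_n(\mathcal{O}_{F_{\tilde v}})=\{1\}$ since a nonzero power of $\varpi_v$ is not a unit. Consequently the elements of $\Sigma$ lie in pairwise distinct right cosets of $M_{p,0}$ in $M_p$. Suppose $\sum_{i=1}^N c_i r_{t_i}=0$ in $\Pi_p$, for distinct $t_1,\dots,t_N\in\Sigma$ and $c_i\in C$. Pick a nonzero $v_0\in\sigma_{p,0}$ and let $f\in c\text{-}\mathrm{Ind}^{M_p}_{M_{p,0}}\sigma_{p,0}$ be supported on $M_{p,0}$ with $f(1)=v_0$. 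Then $(r_{t_i}f)(x)=f(xt_i)$ is supported on $M_{p,0}t_i^{-1}$, and these supports are pairwise disjoint by the coset observation. Evaluating $\sum_i c_i r_{t_i}f$ at $t_j^{-1}$ gives $c_j v_0=0$, hence $c_j=0$ for all $j$. This gives the injectivity of $i_\Sigma$ and completes the identification of $C[\Sigma]$ with the subring $i_\Sigma(C[\Sigma])\subseteq\mathfrak{Z}$.

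The only possible obstacle is a bookkeeping subtlety about which side the group acts on in the convention for $c\text{-}\mathrm{Ind}$; once the action is fixed to be right translation (so that $r_t$ is central precisely when $t$ is central in $M_p$), everything above is formal. No appeal to the deeper structure of $\mathfrak{Z}_v$ via Bernstein's theory is needed for this step.
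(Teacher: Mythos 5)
Your proof is correct and follows essentially the same route as the paper's: both establish that $r_t$ lies in $\Pi_p$ by the centrality of $\Sigma$, and both prove injectivity by exploiting that functions in the compact induction supported on distinct cosets of $M_{p,0}$ are linearly independent (the paper packages this as exhibiting a faithful $C[\Sigma]$-submodule $\bigoplus_{t\in\Sigma}\sigma_0 \cong C[\Sigma]\otimes_C\sigma_0$, whereas you evaluate at the points $t_j^{-1}$ directly, after explicitly noting $\Sigma\cap M_{p,0}=\{1\}$ — a detail the paper leaves implicit).
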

    
    \begin{proof}
    	For any $t\in\Sigma_p$ and $m\in M_p$, one has $r_m\circ r_{t} = r_{t} \circ r_m$ (note that $t$ is in the center of $M_p$). Hence $r_{t}$ is an element in $\Pi_p = \mathfrak{Z}$.
    	
    	Now we are going to show that the map is injective. For any $t\in \Sigma$ and any $x\in\sigma_{0}$, let $f_{t,x}$ denote the unique element in $c\mathrm{-}\mathrm{ind}^{M_p}_{M_{p,0}} \sigma_{p,0}$ such that $f_{t,x}(t) = x$ and $f_{t,x}(x') = 0$ for all $x'\in M_{p}\backslash tM_{p,0}$. Hence one can identify $\bigoplus\limits_{t\in\Sigma} \sigma_{0}$ as a sub vector space of $c\mathrm{-}\mathrm{ind}^{M_p}_{M_{p,0}} \sigma_{p,0}$, via the map $(x_t)_{t\in\Sigma}\mapsto \sum\limits_{t\in\Sigma} f_{x_t,t}$. Note that for any $t,t'\in\Sigma$ and $x\in c\mathrm{-}\mathrm{ind}^{M_p}_{M_{p,0}} \sigma_{p,0}$, one has $r_{t}f_{x,t'} = f_{x,t't^{-1}}$. This implies
    	$$\bigoplus\limits_{t\in\Sigma} \sigma_{0}\cong C[\Sigma]\otimes_{C} \sigma_0$$
    	as an $C[\Sigma]$-module. Hence
    	 $C[\Sigma]$ acts faithfully on $\bigoplus\limits_{t\in\Sigma} \sigma_{0}$, and the map $t\mapsto r_{t}$ is injective.
    \end{proof}

    We denote by $\hat{H}_{v}$ the group $C$-rigid space representing the functor 
    $$ R \rightarrow \{\delta:M_v \rightarrow  R^\times\ |\ \delta \mathit{\ is\ continuous}\}.$$
    We also denote by $\hat{H}_{v,0}$ the group $C$-rigid space representing the functor
    $$R \rightarrow \{\delta:M_{v,0} \rightarrow  R^\times\ |\ \delta \mathit{\ is\ continuous}\}$$
    (Note that any continuous character $\delta$ of $M_v$ (reps. $M_{v,0}$) always factors through $H_v$ (resp. $H_{v,0}$). Hence $\hat{H}_v$ (resp. $\hat{H}_{v,0}$) can also be described as the group (rigid space) of continuous characters of $\hat{H}_v$ (resp. $\hat{H}_{v,0}$)).
    
     We denote by $\hat{H}_{v}^{\sigma_v}$ the finite, closed and reduced sub space of $\hat{H}_v$, whose $C'$-points are
    $$\{\delta: M_p \rightarrow  (C')^\times\ |\  \sigma\otimes \delta \cong \sigma \}$$
    for any finite field extension $C'/C$. We write $\hat{H}_{\sigma_v} := \hat{H}_{v}/\hat{H}_{v}^{\sigma_v}$ and $\hat{H}_\sigma := \prod_{v\in S_p} \hat{H}_{\sigma_v}$. 
    \begin{Rmk}\label{Rmk:DecompforParSpofEV}
    	\textit{  }
    	\begin{enumerate}
    		\item 
    		For each $v\in S_p$, we have $\sigma_v \cong \sigma_{v,1}\otimes \cdots \otimes \sigma_{v,l}$, where each $\sigma_{v,i}$ is a supercuspidal representation of $\mathrm{GL}_{n_i}(F_{\tilde{v}})$.
    		Let $\tau_{v,i}$ denote the type of $\mathrm{rec}(\sigma_{v,i})$ (recall that the type of a WD representation is defined in subsection \ref{ParemeterSpaces}). By lemma \ref{Lem:InvUnramChar}, one has
    		$$\hat{H}^{\sigma_v}_v \cong \mu_{e_{\tau_{v,1}}}\times \cdots \times \mu_{e_{\tau_{v,l}}},$$
    		where $\mu_{e_{\tau_{v,i}}}$ is the finite group scheme of ${e_{\tau_{v,i}}}$-th roots of unity.
    		\item If we denote by $G_{\sigma_{v,i}}$ the unique normal subgroup of $\mathrm{GL}_{n_i}(F_{\tilde{v}})$ of finite index $e_{\tau_{v,i}}$, containing all compact subgroups of $\mathrm{GL}_{n,i}(F_{\tilde{v}})$, and write $G_{\sigma_{v}} := \prod\limits_{1\leq i\leq l} G_{\sigma_{v,i}}$ for the subgroup of $M_{v}$.
    		
    		Then $\hat{H}_{\sigma_v}$ (resp. $\hat{H}_{\sigma}$)  can be described as the group (rigid space) of continuous characters of $G_{\sigma_v}$ (resp. $\prod_{v\in S_p} G_{\sigma_v}$) via the map $\delta\mapsto \delta|_{G_{\sigma_v}}$. 
    		\item For any smooth character $\delta\in\hat{H}_{\sigma}$, according to remark \ref{Rmk:TwistedbycharofBcenter}.1, the notation $\sigma\otimes\delta$ makes sense. Actually, after perhaps enlarging $C$, one can extend $\delta$ to some smooth character $\tilde{\delta}$ of $M_p$, then $\sigma\otimes\delta := \sigma\otimes\tilde{\delta}$.
    		
    		\item 
    		Let $\mathcal{T}_{F_{\tilde{v}}}$ denote the group ($C$-rigid space) of continuous characters of $F_{\tilde{v}}^{\times}$, and Let $\mathcal{W}_{F_{\tilde{v}}}$ denote the group ($C$-rigid space) of continuous characters of $\mathcal{O}_{F_{\tilde{v}}}^{\times}$. Then we have the decomposition (depends on the choice of $\pi_v$):
    		$$\mathcal{T}_{F_{\tilde{v}}} \cong \mathcal{W}_{F_{\tilde{v}}}\times \mathbb{G}_m$$ 
    		as we do in subsection \ref{QdeRhamPhiGammaMod}.
    		
    		Note that $\hat{H}_{v} \cong \prod\limits_{1\leq i\leq l} \mathcal{T}_{F_{\tilde{v}}}$ and $\hat{H}_{v,0} \cong \prod\limits_{1\leq i\leq l} \mathcal{W}_{F_{\tilde{v}}}$. Then one has (also depending on the choice of $\varpi_v$)
    		\begin{align*}
    			\hat{H}_{v}\cong \hat{H}_{v,0}\times \mathbb{G}_m^l
    		\end{align*}
    	    Then it follows from theorem \ref{BernsteinCenter} that
    	    $$\hat{H}_{\sigma_v} \cong (\hat{H}_{v,0}\times \mathbb{G}_m^l)/\hat{H}_v^{\sigma_v}\cong \hat{H}_{v,0}\times \prod_{1\leq i \leq l}(\mathbb{G}_m/\mu_{e_{\tau_{v,i}}})\cong \hat{H}_{v,0}\times \mathrm{Spec}(\mathfrak{Z}_v)^{\mathrm{rig}}$$
    	    
    	\end{enumerate}
    \end{Rmk}
    
    \begin{Prop}\label{SpAtOnePointForBC}
    	Let $\delta\in \mathrm{Spec}(\mathfrak{Z})$ be a closed point. Then one has
    	$$(c\mathrm{-Ind}_{M_{p,0}}^{M_p}\sigma_0)\otimes_{\mathfrak{Z}}k_{\delta} \cong \sigma\otimes\delta$$ 
    \end{Prop}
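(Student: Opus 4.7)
The plan is to invoke the theory of Bushnell--Kutzko types for the Bernstein block $[\sigma]$ of $\mathrm{Rep}(M_p)$, applied through Pa\v{s}k\={u}nas's theorem \cite{Paskunas2005} which guarantees that each $\sigma_{v,0}$ is an $M_{v,0}$-type for $\sigma_v$. The consequence is that the pair of adjoint functors
$$\pi\longmapsto \mathrm{Hom}_{M_{p,0}}(\sigma_0,\pi),\qquad V\longmapsto (c\mathrm{-Ind}_{M_{p,0}}^{M_p}\sigma_0)\otimes_{\mathfrak{Z}}V$$
defines an equivalence between the Bernstein block $\mathrm{Rep}(M_p)^{[\sigma]}$ and the category of $\mathfrak{Z}$-modules. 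Here we use the isomorphism $\mathfrak{Z}\cong\Pi_p^{\mathrm{op}}=\mathrm{End}_{M_p}(c\mathrm{-Ind}_{M_{p,0}}^{M_p}\sigma_0)^{\mathrm{op}}$ recalled in the excerpt together with the fact that $\mathfrak{Z}$ is commutative. Under this equivalence, simple $\mathfrak{Z}$-modules correspond to irreducible objects of $\mathrm{Rep}(M_p)^{[\sigma]}$; since every irreducible in $[\sigma]$ is a twist $\sigma\otimes\delta'$ for some $\delta'\in\hat{H}_\sigma$ (Remark~\ref{Rmk:DecompforParSpofEV}(1)), for each closed point $\delta\in\mathrm{Spec}(\mathfrak{Z})$ there is a unique $\delta'\in\hat{H}_\sigma$ with $(c\mathrm{-Ind}_{M_{p,0}}^{M_p}\sigma_0)\otimes_{\mathfrak{Z}}k_\delta\cong\sigma\otimes\delta'$.

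It then remains to identify $\delta'$ with $\delta$, using the isomorphism $\hat{H}_{\sigma_v}\cong\hat{H}_{v,0}\times\mathrm{Spec}(\mathfrak{Z}_v)^{\mathrm{rig}}$ of Remark~\ref{Rmk:DecompforParSpofEV}(4). I would realize the matching by a direct Frobenius reciprocity argument: because $\delta$ is smooth and $M_{p,0}$ is compact, $(\sigma\otimes\delta)|_{M_{p,0}}=\sigma|_{M_{p,0}}$ contains $\sigma_0$ as a subrepresentation, so Frobenius reciprocity produces a nonzero $M_p$-equivariant map
$$f:c\mathrm{-Ind}_{M_{p,0}}^{M_p}\sigma_0\longrightarrow \sigma\otimes\delta.$$
This $f$ intertwines the $\mathfrak{Z}$-action on the source with the scalar action on the target; if the scalar is precisely the character of $\mathfrak{Z}$ corresponding to the point $\delta\in\mathrm{Spec}(\mathfrak{Z})$, then $f$ factors through $(c\mathrm{-Ind}_{M_{p,0}}^{M_p}\sigma_0)\otimes_{\mathfrak{Z}}k_\delta$. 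The induced map is a nonzero morphism between two irreducibles in $\mathrm{Rep}(M_p)^{[\sigma]}$ (the source by the type-theoretic equivalence, the target tautologically), hence an isomorphism by Schur.

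The main obstacle is exactly this compatibility of parametrizations: verifying that the closed point of $\mathrm{Spec}(\mathfrak{Z})$ associated to the central character of $\sigma\otimes\delta$ is the point labelled by $\delta$ in the statement, rather than some twist. Granting the normalization built into Remark~\ref{Rmk:DecompforParSpofEV}(4) (which encodes precisely how the Bernstein presentation of $\mathfrak{Z}_v$ matches the unramified twisting action on the block), the argument reduces to Frobenius reciprocity and Schur's lemma as above. No further computation is required.
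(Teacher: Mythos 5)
Your proposal is correct, and it takes a more self-contained route than the paper. The paper's own proof is two lines: it records that $\mathfrak{Z}\cong C[T_{v,i}^{\pm e_{\tau_{v,i}}}]$ acts on $\sigma\otimes\delta$ through the character $\delta$, and then quotes \cite[Prop~3.10]{caraiani2018} to get that the fiber of $c\mathrm{-Ind}_{M_{p,0}}^{M_p}\sigma_0$ over that point is $\sigma\otimes\delta$. You instead reconstruct the second step directly: the type property of $\sigma_0$ (via \cite{Paskunas2005}) gives the Bushnell--Kutzko equivalence of the block with $\Pi_p$-modules, commutativity of $\Pi_p\cong\mathfrak{Z}$ turns this into $\mathfrak{Z}$-modules, so the fiber at $k_\delta$ corresponds to the simple module $k_\delta$ and is irreducible, hence an unramified twist of $\sigma$; Frobenius reciprocity plus Schur then pin down which twist, once the $\mathfrak{Z}$-character of $\sigma\otimes\delta$ is known to be $\delta$. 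What your route buys is independence from the external citation (whose proof rests on the same type-theoretic input); what the paper's route buys is brevity. Two points of precision. First, the normalization you ``grant'' does not really live in Remark \ref{Rmk:DecompforParSpofEV}(4), which only records the decomposition $\hat{H}_{\sigma_v}\cong \hat{H}_{v,0}\times\mathrm{Spec}(\mathfrak{Z}_v)^{\mathrm{rig}}$; the statement you actually need --- that the natural $\mathfrak{Z}$-action on $\sigma\otimes\delta$ is the character $\delta$ --- is Remark \ref{Rmk:TwistedbycharofBcenter}(2), proved by the explicit Hecke-algebra computation in the appendix, and the same identification (that $\mathfrak{Z}\cong\Pi_p$ is the canonical Bernstein-center action map, via Dat's theorem) is what makes your map $f$ automatically $\mathfrak{Z}$-equivariant. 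The paper's proof invokes exactly this normalization as well, so granting it is legitimate, but the pointer should go to the appendix. Second, $(\sigma\otimes\delta)|_{M_{p,0}}=\sigma|_{M_{p,0}}$ holds because a point of $\mathrm{Spec}(\mathfrak{Z})$ corresponds to an \emph{unramified} twist, trivial on compact subgroups; smoothness of $\delta$ alone would not give this. Neither issue affects the validity of the argument.
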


    \begin{proof}
    	According to lemma \ref{Lem:Twistingbychar}.2, one has the following isomorphism
    	$$\mathfrak{Z}\cong C[T_{v,i}^{\pm e_{\tau_{v,i}}}]_{v\in S_p,1\leq i\leq l},$$
    	where $\tau_{v,i}$ is the type of $\mathrm{rec}(\sigma_{v,i})$, such that the action of $\mathfrak{Z}$ on $\sigma\otimes\delta$ is the character $\delta$. Then by \cite[Prop~3.10]{caraiani2018}, one has
    	$$(c\mathrm{-Ind}_{M_{p,0}}^{M_p}\sigma_0)\otimes_{\mathfrak{Z}}k_{\delta} \cong \sigma\otimes\delta$$
    \end{proof}

    Let $V_v := W_v \otimes \sigma_{v,0}^\vee$ be the locally algebraic representation of $M_{v,0}$ with algebraic part $W_v$ and smooth part $\sigma_{v,0}$, here $(-)^\vee$ denotes the dual representation, and let $V:= \prod_{v\in p} V_v$ be the corresponding locally algebraic representation of $M_{p,0}$.
   
    Let $k(V)$ be the smallest integer such that $V|_{M_{p,k}}$ is algebraic. Let $X_v$ be an affinoid subspace of $\hat{H}_{v,0}$ (Note that in general, $\hat{H}_{v,0}$ is only quasi-Stein instead of affinoid). Let $k(X_v)$ be the smallest integer such that the natural map
    $$ H_{p,k} \rightarrow \Gamma(X_v,\mathcal{O}_{X_v}^\times)$$
    is analytic. We follow the notations in \cite[Definition~2.3.1]{Loeffler2011} and let $\mathcal{C}(X_v,V_v,k)$ denote the set of locally $C$-analytic $(R_{X_v}\otimes_CV_v)$-valued functions of $N_{v,0}$ which are analytic in cosets of $N_{v,k}$. This set has a $G_{v,0}$-action defined as in \textit{loc. cit.}.
    
     Now for an affinoid subspace of $\hat{H}_{p,0}$ of the form 
    $$X = \prod_{v\in S_p} X_v,$$
    where each $X_v$ is an affinoid subspace of $\hat{H}_{v,0}$, we define $k(X) := \max_{v\in S_p}\{k(X_v)\}$.
     Then for any $k\geq\max\{k(V),k(X)\}$, let
    $\mathcal{C}(X,V,k)$ denote the $G_{p,0}$-representation
    $$\prod_{v\in S_p} \mathcal{C}(X_v,V_v,k).$$
     Indeed, $\mathcal{C}(X,V,k)$ is a Banach $R_X$-module with property (Pr) (see \cite[3.5]{Loeffler2011} for the definition of (Pr) and the proof). 
     
     Let $\delta\in\hat{H}_{v,0}$ be a locally algebraic point, i.e. $\delta = \delta_\mathrm{sm}\delta_{\mathrm{alg}}$ for some smooth character $\delta_{\mathrm{sm}}$ and some algebraic character $\delta_{\mathrm{alg}}$, then for any $k\geq \max\{k(V),k(\delta)\}$, we define the $G_{v,0}$-representation
     $$\mathcal{C}(\delta,V_v,k)^\mathrm{cl} := \left(\mathrm{Ind}_{\bar{P}_{v,0}}^{G_{v,0}} (W\otimes \delta_{\mathrm{alg}})\right)^{\mathrm{alg}}\otimes \mathrm{Ind}_{\bar{P}_{v,0}/\bar{P}_{v,k}}^{G_{v,0}/G_{v,k}}(\sigma_{v,0}^\vee\otimes \delta_{\mathrm{sm}}).$$
     Actually, $\mathcal{C}(\delta,V_v,k)^\mathrm{cl}$ is a natural sub-representation of $\mathcal{C}(\delta,V_v,k)$ according to \cite[Prop~2.3.2]{Loeffler2011}. For a locally algebraic point $\delta = (\delta_v)_{v\in S_p}\in \hat{H}_{p,0}$, we write
     $$\mathcal{C}(\delta,V,k)^\mathrm{cl} : = \prod_{v\in S_p} \mathcal{C}(\delta_v,V_v,k)^\mathrm{cl}.$$

     According to \cite[Thm~2.4.7]{Loeffler2011}, for each $v\in S_p$, we extend the $G_{v,0}$-action on $\mathcal{C}(X_v,V_v,k)$ to an $\mathbb{I}_v$-action (recall that $\mathbb{I}_v$ is the monoid generated by $G_{v,0}$ and $\Sigma_v^+$) by continuous $R_{X_v}$-linear operators (see the remark below).
     
     \begin{Rmk} \label{RmK:LocallyAlgRep}
     	\textit{ }
     	\begin{enumerate}
     		\item Write $\overline{N}_{\mathbb{I}_v}:= \mathbb{I}_v\cap \overline{N}_v$, and $M_{\mathbb{I}_v}:= \mathbb{I}_v\cap M_v$, and $N_{\mathbb{I}_v}:= \mathbb{I}_v\cap N_v$. Then $\mathbb{I}_v$ is Iwahori factorisable, i.e.
     		$$\mathbb{I}_v\cong  \overline{N}_{\mathbb{I}_v}\times M_{\mathbb{I}_v}\times N_{\mathbb{I}_v}.$$
     		Moreover, $M_{\mathbb{I}_v} = M_{v,0}\Sigma_v^{+}$ and $N_{\mathbb{I}_v} = N_{v,0}$. Hence if we regard $V_v$ (resp. $R_{X_v}$) as an $M_{\mathbb{I}_v}$-representation (resp. $M_{\mathbb{I}_v}$-character), on which $\Sigma_v^+$ acts trivially, then the $\mathbb{I}_v$-action on $\mathcal{C}(X_v,V_v,k)$ is induced by following bijective map
     		\begin{align*}
     			\left[\mathrm{Ind}^{\mathbb{I}_v}_{\overline{P}_{\mathbb{I}_v}}(V_v\otimes R_{X_v})\right]^{k\mathrm{-an}} &\xrightarrow{\sim} \mathcal{C}(X_v,V_v,k)\\
     			f &\mapsto f|_{N_{\mathbb{I}_v}}.
     		\end{align*}
     		Hence the $\mathbb{I}_v$-action on $\mathcal{C}(X_v,V_v,k)$ is non-canonical and depends on our choice of $\varpi_v$ (note that the action of $M_{\mathbb{I}_v}$ depends on our choice of $\Sigma_v^+$, which depends on the choice of $\varpi_v$).
     		\item Let $\delta = \delta_{\mathrm{sm}}\delta_{\mathrm{alg}}$ be a locally algebraic character of $M_{p,0}$. The algebraic $M_{p,0}$-representation $W$ (resp. an algebraic $M_{p,0}$-character $\delta_{\mathrm{alg}}$) can be naturally regarded as an algebraic representation of $M_{p}$ (resp. an algebraic character of $M_{p}$), on which the $\Sigma^+$ action is not trivially in general. Suppose that for each $v\in S_p$, the restriction of $\delta_{\mathrm{alg}}$ on $M_{v}$ is $$\delta_{\mathrm{alg},v} = \delta_{b^{(v)}}$$
     		for some $b^{(v)}=(b^{(v)}_{\eta,i})\in(\mathbb{Z}^{l})^{\Hom(F_{\tilde{v}},C)}$. We write
     		\begin{align*}
     			\Delta_{\delta_{\mathrm{alg},v}}: M_{v} &\rightarrow C^{\times}\\
     			(B_1,\dots,B_l)&\mapsto \prod_{1\leq i\leq l}\prod_{\eta\in\Hom(F_{\tilde{v}},C)} \eta(\varpi_v)^{b^{(v)}_{\eta,i}v_{F_{\tilde{v}}}(\mathrm{det}(B_i))},      			
     		\end{align*}
             which is an unramified character of $M_v$. Then $\Delta_{\delta_{\mathrm{alg}}}\delta_{\mathrm{alg}}^{-1}$ restricted on $\Sigma$ is trivial, where $\Delta_{\delta_{\mathrm{alg}}}:=\prod_v \Delta_{\delta_{\mathrm{alg},v}}$.
            
            Similarly, we can also choose and fix a unramified character $\Delta_W$ (it is not unique in general) such that $\Sigma$ acts trivially on $W\otimes \Delta_W^{-1}$.
            \item For a locally algebraic character $\delta$ of $M_{p,0}$. We can uniquely decompose the locally algebraic $\mathbb{I}_p$-representation $\mathcal{C}(\delta,V,k)^{\mathrm{cl}}$ into $U_{\mathrm{alg}}\otimes U_{\mathrm{sm}}$, where the factors are respectively algebraic and smooth representation. Then using the notations as above, one has
            $$U_{\mathrm{alg}} \cong \left. \left(\mathrm{Ind}^{G_p}_{\overline{P}_p}(W\otimes\delta_{\mathrm{alg}})\right) \right| _{\mathbb{I}_p}$$
            and 
            $$U_{\mathrm{sm}}\cong \left(\mathrm{Ind}_{\bar{P}_{v,0}/\bar{P}_{v,k}}^{G_{v,0}/G_{v,k}}(\sigma_{v,0}^\vee\otimes \delta_{\mathrm{sm}}) \right) \otimes \left. \left(\Delta_W^{-1}\Delta_{\delta_{\mathrm{alg}}}^{-1}\right) \right| _{\mathbb{I}_p}, $$
            where the first factor can be regarded as an $\mathbb{I}_p$-representation as we have explained before.
     	\end{enumerate}
     	
       \end{Rmk}

       For any  $k\geq\max\{k(V),k(X)\}$, we define
       $$M(X,V,k):= \{ \phi: \mathcal{G}(F^+)\backslash \mathcal{G}(\mathbb{A}^\infty_{F^+}) \rightarrow \mathcal{C}(X,V,k)\ |\ \phi(gu) = u_p^{-1}\phi(g) \mathit{\ for\ } u\in U^p\times G_{p,0}\}$$
       \textit{the space of overconvergent automorphic forms of tame level $U^{p}$ and ``weight $\mathcal{C}(X,V,k)$"},
       here $u_p$ is the natural projection of $u$ from $U^p\times G_{p,0}$ to $G_{p,0}$, which is an $R_X$-module, endowed with a smooth left action of the monoid $\mathcal{G}(\mathbb{A}_{F^+}^{p\infty})\times \mathbb{I}_p$ via the formula $$u(\phi)(g) = u_p\phi(gu).$$  In particular, $M(V,X,k)$ is a Banach $R_X$-module with property (Pr) by \cite[Proposition~ 3.5.2]{Loeffler2011}.
    
    For a locally algebraic character $\delta\in \hat{H}_{p,0}$, and $k\geq\max\{k(V),k(\delta)\}$, we define
    $$M(\delta,V,k)^{\mathrm{cl}}:= \{\phi\in M(\delta,V,k)\ |\ \mathrm{Im}(\phi) \subseteq \mathcal{C}(\delta,V,k)^\mathrm{cl} \}$$
    \textit{the space of classical automorphic forms of tame level $U^{p}$ and ``weight $\mathcal{C}(\delta,V,k)$"}, which is a closed subspace of $M(\delta,V,k)$, stable under the action of $\mathcal{G}(\mathbb{A}_{F^+}^{p\infty})\times \mathbb{I}_p$.
    
    \begin{Rmk}
    	\text{ }
    	\begin{enumerate}
    		\item Note that $G_{\infty}:= \mathcal{G}(F^+\times_\mathbb{Q} \mathbb{R})$ is connected, hence one has 
    		$$G(F^+)\backslash G(\mathbb{A}^\infty_{F^+})\cong G(F^+)\backslash G(\mathbb{A}_{F^+})/ G_{\infty}^0,$$
    		where $G_\infty^0$ is the connected component of $\id$ in $G_\infty$. Then our notation for $M(X,V,k)$ (resp. $M(\delta,V,k)^{\mathrm{cl}}$) coincides with the notation for $M(e_U,X,V,k)$ in \cite[Def~3.7.1]{Loeffler2011} (resp. for $M(e_U,1,V(\delta),k)$ \cite[Def~3.9.1]{Loeffler2011}), where $U:= U^p\times G_{p,0}$.
    		\item (See the paragraph below \cite[Def~3.3.2]{Loeffler2011}), $M(X,V,k)$ (resp. $M(\delta,V,K)^{\mathrm{cl}}$) is an $R_X$-module (resp. a $k_{\delta}$-vector space) with a linear $\mathbb{T}^S[1/p]\otimes_C\mathcal{H}_p$-action, induced by the $(\mathcal{G}(\mathbb{A}_{F^+}^{p\infty})\times \mathbb{I}_p)$-action.
    	\end{enumerate}
    	
    \end{Rmk}
    
    \subsection{Extending the Hecke Action to the Bernstein Center $\mathfrak{Z}$}
     In \cite{Loeffler2011}, Loeffler has already constructed the eigenvariety using the eigenvariety machine of \cite{Buzzard2010}. We have to refine his construction. Roughly speaking, this is because such eigenvariety only parametrizes the eigencharacters of the Atkin-Lehner algebra $\mathcal{H}_p$ at places of $F^+$ dividing $p$, but for our purpose, we want to parametrize to eigencharacter of the Bernstein center $\mathfrak{Z}$. Note that by lemma \ref{Lem:ArtkintoBernstein}, the Atkin-Lehner algebra can be identified with a sub $L$-algebra of the Bernstein center $\mathfrak{Z}$. So our strategy to solve this problem is to uniformly extend the $\mathcal{H}_p$-action on the eigenspaces of overconvergent forms to a $\mathfrak{Z}$-action, which is the aim of this subsection.
     
     To do this, we need to give a alternative description of the space of overconvergent automorphic forms.
    
     Let $\hat{S}(U^p,C)$ denote the space of $p$-adic automorphic forms on $\mathcal{G}(\mathbb{A}^{\infty}_{F^+})$ of tame level $U^p$ with coefficients in $C$, that is the $C$-vector space of continuous functions $f: \mathcal{G}(F^+)\backslash \mathcal{G}(\mathbb{A}^\infty_{F^+})/U^{p} \rightarrow C$, endowed the linear continuous action of $G_p$ by right translation on functions. We also denote by $\hat{S}(U^p,C)^{k\text{-}\mathrm{an}} \subset \hat{S}(U^p,C)$ the $C$-subvector space of $\mathbb{Q}_p$-analytic vectors for the action of $G_{p,k}$ (\cite[Sect~7]{PeterSchneider2003AlgebrasOP}). We regard $R_X\otimes_CV$ as a $\overline{P}_{p,0} = \overline{N}_{p,0}M_{p,0}$-representation, trivial on $\overline{N}_{p,0}$. 
     
     Then $\hat{S}(U^p,L) \hat{\otimes}_C R_X\otimes_C V$ can be identified with the $C$-vector space of the continuous functions
     $$f: \mathcal{G}(F^+)\backslash \mathcal{G}(\mathbb{A}^\infty_{F^+})/U^{p} \rightarrow R_X\otimes_C V,$$
     endowed the linear continuous $\overline{N}_{p,0}$-action by $u(f)(g):=u(f(g\cdot u))$. And $$\hat{S}(U^p,C)^{k\text{-}\mathrm{an}} \hat{\otimes}_C R_X\otimes_C V \subset \hat{S}(U^p,L) \hat{\otimes}_C R_X\otimes_C V$$
     is the $L$-subvector space of $\mathbb{Q}_p$-analytic vectors for the action of $\overline{N}_{p,0}$. 
     
     Moreover, We define the action of $\mathcal{H}_p=L[\Sigma^+]$ on $\left( \hat{S}(U^p,L)^{k\mathrm{-}\mathrm{an}} \right)^{\overline{N}_{p,0}} $ as
     $$(z\circ f)(g) := \frac{1}{[\overline{N}_{p,0}:z\overline{N}_{p,0}z^{-1}]} \sum_{n\in \overline{N}_{p,0}/z\overline{N}_{p,0}z^{-1}}f(gnz).$$
     This action is well defined and does not depend on the choice of representative $n$ in the cosets of $[\overline{N}_{p,0}:z\overline{N}_{p,0}z^{-1}]$ as for any $n_1,n_2\in \overline{N}_{p,0}$ such that $n_2^{-1}n_1\in z\overline{N}_{p,0}z^{-1}$, one has 
     $$f(gn_1z) = f(gn_2z\cdot z^{-1}n_2^{-1}n_1z) = f(gn_2z).$$
      We also regard $R_X$ as a representation of $M_p$ via the decomposition (this depends on our choice of $\varpi_v$ for $v\in S_p$) 
     $$\hat{H}_{p} \cong \hat{H}_{p,0}\times \mathcal{T}^{l}_{F_{\tilde{v}}},$$
     and in particular, $\Sigma$ acts trivially on $R_X$, and let $\Sigma^+$ acts trivially on $V$. Then we extend the action of $C[\Sigma^+]$ on the tensor product
     $$\left(\hat{S}(U^p,C)^{k\text{-}\mathrm{an}}\right)^{\overline{N}_{p,0}} \hat{\otimes}_C (R_X\otimes_C V)$$
     by letting $\Sigma^+$ acts trivially on the second factor. Note that the $C[\Sigma^+]$-action on $\left(\hat{S}(U^p,C)\right)^{\overline{N}_{p,0}} \hat{\otimes}_C (R_X\otimes_C V)$ is stable on the $M_{p,0}$-invariant subspace. Indeed, if $f$ is a $M_{p,0}$-invariant function, then so is $z\circ f$ as
     \begin{align*}
     	\sum_{n\in \overline{N}_{p,0}/z\overline{N}_{p,0}z^{-1}}mf(gmnz) & = \sum_{n\in \overline{N}_{p,0}/z\overline{N}_{p,0}z^{-1}}mf(g\cdot mnm^{-1} \cdot mz) \\
     	         & = \sum_{n\in \overline{N}_{p,0}/z\overline{N}_{p,0}z^{-1}}mf(g\cdot mnm^{-1} \cdot zm) \\
     	         & = \sum_{n\in \overline{N}_{p,0}/z\overline{N}_{p,0}z^{-1}}f(g\cdot mnm^{-1} \cdot z) \\
     	         & = \sum_{n\in \overline{N}_{p,0}/z\overline{N}_{p,0}z^{-1}} f(g n z)
     \end{align*}
     for $m\in M_{p,0}$ and  $n\in \overline{N}_{p,0}$ and $z\in\Sigma^+$ (note that $mz\overline{N}_{p,0}z^{-1}m^{-1} =z\overline{N}_{p,0}z^{-1}$, then the map $n\mapsto mmm^{-1}$ induces a bijection on $\overline{N}_{p,0}/z\overline{N}_{p,0}z^{-1}$). Hence we finally define an action of $\mathcal{H}_p = C[\Sigma^+]$ on
     $$(\hat{S}(U^p,C)^{k\mathrm{-}\mathrm{an}} \hat{\otimes}_C R_X\otimes_C V)^{\bar{P}_{p,0}} \cong ((\hat{S}(U^p,C)^{k\mathrm{-}\mathrm{an}})^{\overline{N}_{p,0}} \hat{\otimes}_C R_X\otimes_C V)^{M_{p,0}}$$ 
     
     The following proposition is the "family" version of \cite[Proposition~3.10.1]{Loeffler2011}. As the argument is almost the same as Loeffler did in \textit{loc. cit.}, we only give a sketch of the proof here.
    \begin{Theo}\label{ComparisonforAutomorphicForms}
    	There is an isomorphism 
    	$$M(X,V,k)\xrightarrow{\sim} (\hat{S}(U^p,C)^{k\text{-}\mathrm{an}} \hat{\otimes}_C R_X\otimes_C V)^{\bar{P}_{p,0}}$$
    	of $R_X$-modules, and commuting with the Hecke action of $\mathbb{T}^{S}$ and $\mathcal{H}_p$ on both sides. 
    \end{Theo}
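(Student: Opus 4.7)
\textbf{Proof proposal for Theorem \ref{ComparisonforAutomorphicForms}.} The plan is to construct the map by evaluation at the identity of $N_{p,0}$ and exhibit its inverse via restriction to the unipotent radical. More precisely, using the Iwahori factorisation $\mathbb{I}_p \cong \overline{N}_{\mathbb{I}_p}\times M_{\mathbb{I}_p}\times N_{p,0}$ recalled in remark \ref{RmK:LocallyAlgRep}(1), one has the natural identification
$$\mathcal{C}(X,V,k) \cong \bigl[\mathrm{Ind}^{\mathbb{I}_p}_{\overline{P}_{\mathbb{I}_p}}(V\otimes R_X)\bigr]^{k\text{-an}},$$
so every $f\in\mathcal{C}(X,V,k)$ extends canonically to a locally analytic function $\tilde{f}$ on $\mathbb{I}_p$ with $\tilde{f}(\bar{p}x)=\bar{p}\cdot\tilde{f}(x)$ for $\bar{p}\in\overline{P}_{p,0}$, and $f\mapsto \tilde{f}(1)$ is exactly the evaluation map $\mathcal{C}(X,V,k)\to R_X\otimes_C V$.

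First, I would define
$$\Psi: M(X,V,k)\longrightarrow \bigl(\hat{S}(U^p,C)^{k\text{-}\mathrm{an}} \,\hat{\otimes}_C\, R_X\otimes_C V\bigr)^{\overline{P}_{p,0}},\qquad \Psi(\phi)(g):=\phi(g)(1_{N_{p,0}}).$$
To see that $\Psi(\phi)$ lies in $\hat{S}(U^p,C)^{k\text{-}\mathrm{an}}\hat{\otimes}R_X\otimes V$, one uses that $\phi(g)\in\mathcal{C}(X,V,k)$ is analytic on cosets of $N_{p,k}$ together with the transformation rule $\phi(gu)=u_p^{-1}\phi(g)$ for $u\in U^p\times G_{p,0}$: restricting $u_p$ to $N_{p,k}$ transports the analyticity of $\phi(g)$ near $1$ to $\mathbb{Q}_p$-analyticity of $\Psi(\phi)$ with respect to the right $G_{p,k}$-action. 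The $\overline{P}_{p,0}$-equivariance is then a direct computation from $\phi(gu)=u_p^{-1}\phi(g)$ restricted to $u_p\in\overline{P}_{p,0}$ combined with the induction property of $\tilde{\phi(g)}$. The $U^p$-invariance is immediate.

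Next I would construct the inverse. Given $\Phi\in(\hat{S}(U^p,C)^{k\text{-an}}\hat{\otimes}R_X\otimes V)^{\overline{P}_{p,0}}$, define
$$\Psi^{-1}(\Phi)(g)(n):=\Phi(gn)\qquad (n\in N_{p,0}),$$
which takes values in $R_X\otimes V$. The $k$-analyticity of $\Phi$ under the right $G_{p,k}$-action ensures $\Psi^{-1}(\Phi)(g)\in\mathcal{C}(X,V,k)$; the $\overline{P}_{p,0}$-equivariance of $\Phi$ guarantees the transformation rule $\Psi^{-1}(\Phi)(gu)=u_p^{-1}\Psi^{-1}(\Phi)(g)$ for $u\in U^p\times G_{p,0}$, again by invoking the Iwahori factorisation to reduce the check to the $\overline{P}_{p,0}$- and $N_{p,0}$-components separately. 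That $\Psi$ and $\Psi^{-1}$ are mutually inverse is routine once one remembers that $\tilde{f}(1)=f(1)$ for $f\in\mathcal{C}(X,V,k)$ and that the analytic extension $\tilde{f}$ is uniquely determined by $f|_{N_{p,0}}$. The $R_X$-linearity and commutation with the $\mathbb{T}^S$-action (which only sees coordinates away from $p$) are immediate from the construction.

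The main obstacle will be to verify compatibility of $\Psi$ with the $\mathcal{H}_p=C[\Sigma^+]$-actions, since the two actions are defined by visibly different recipes: on the left, via the $\mathbb{I}_p$-action on $\mathcal{C}(X,V,k)$ through the identification with $\bigl[\mathrm{Ind}^{\mathbb{I}_p}_{\overline{P}_{\mathbb{I}_p}}(V\otimes R_X)\bigr]^{k\text{-an}}$; on the right, via the explicit averaging formula $(z\circ f)(g)=\frac{1}{[\overline{N}_{p,0}:z\overline{N}_{p,0}z^{-1}]}\sum_{n\in\overline{N}_{p,0}/z\overline{N}_{p,0}z^{-1}}f(gnz)$ combined with the trivial $\Sigma$-action on $V$ and the chosen $M_p$-structure on $R_X$. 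To check this, for $z\in\Sigma^+$ and $\phi\in M(X,V,k)$, one expands $\Psi([G_{p,0}zG_{p,0}]\phi)(g)$ using the standard coset decomposition $G_{p,0}zG_{p,0}=\coprod_n nzG_{p,0}$ with $n$ ranging over $\overline{N}_{p,0}/z\overline{N}_{p,0}z^{-1}$ (since $z\in\Sigma^+$ contracts $\overline{N}_{p,0}$), rewrites $\phi(gnz)$ using the transformation rule and the $\overline{P}_{p,0}$-equivariant identification of $\mathcal{C}(X,V,k)$ with the induced representation, and compares with the averaging formula. This is the step where the choices of $\varpi_v$, of the characters $\Delta_W$ and $\Delta_{\delta_{\mathrm{alg}}}$ of remark \ref{RmK:LocallyAlgRep}(2), and the normalisation $\frac{1}{N_z}[G_{v,0}zG_{v,0}]$ of the Atkin–Lehner basis all have to conspire correctly, and tracking the normalising factors carefully is the real technical content; once done, the result follows the same pattern as \cite[Proposition~3.10.1]{Loeffler2011}.
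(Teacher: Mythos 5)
Your approach is essentially identical to the paper's (which in turn follows \cite[Prop~3.10.1]{Loeffler2011}): define $\Psi$ by evaluation at $1\in N_{p,0}$ and recover a function on $N_{p,0}$ by right-translating the argument. Two points are worth flagging.

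First, your inverse formula has a sign error: you write $\Psi^{-1}(\Phi)(g)(n):=\Phi(gn)$, but it should be $\Phi(gn^{-1})$. With the transformation rule $\phi(gu)=u_p^{-1}\phi(g)$ and the identification $\mathcal{C}(X,V,k)\cong\bigl[\mathrm{Ind}^{\mathbb{I}_p}_{\overline{P}_{\mathbb{I}_p}}(V\otimes R_X)\bigr]^{k\text{-an}}$, one has $(n\cdot f)(1)=f(n)$ for $n\in N_{p,0}$ (the monoid acts by right translation on the induced model), so
$$\Psi(\phi)(gn^{-1})=\phi(gn^{-1})(1)=(n\cdot\phi(g))(1)=\phi(g)(n),$$
whereas your formula would produce $\phi(g)(n^{-1})$.

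Second, for the $\mathcal{H}_p$-compatibility the paper does not carry out the coset computation you sketch; it cites \cite[Prop~3.10.2]{Loeffler2011}, which proves exactly that compatibility via the decomposition $G_{p,0}zG_{p,0}=\coprod_n nzG_{p,0}$ with $n$ running over $\overline{N}_{p,0}/z\overline{N}_{p,0}z^{-1}$. What you propose to do by hand is therefore the same argument, just inlined; it is not a different route. Also note that the characters $\Delta_W$ and $\Delta_{\delta_{\mathrm{alg}}}$ of remark \ref{RmK:LocallyAlgRep}(2) play no role in this comparison isomorphism (they only intervene in the later analysis of classical points), so they do not need to ``conspire'' here; the only normalisation that matters is the $\frac{1}{N_z}$ factor in $i_\Sigma$ and the trivial $\Sigma^+$-action on $V$ and $R_X$.
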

    \begin{proof}
    	Note that $$\mathcal{C}(X,V,k) \cong \mathcal{C}^{k\text{-an}}(N_{p,0},R_X\otimes_C V),$$ with an evaluation map $\mu: \mathcal{C}(X,V,k) \rightarrow R_X\otimes_C V$ by evaluating the function on $\id\in N_{p,0}$. Given $f\in M(X,V,k)$, we can regard $f$ as a map $$\mathcal{G}(\mathbb{A}^\infty_{F^+}) \rightarrow \mathcal{C}(X,V,k).$$
    	Composing with $\mu$, we get a function $\tilde{f}: \mathcal{G}(\mathbb{A}^\infty_{F^+}) \rightarrow R_X\otimes_C V$. One can check $\tilde{f}$ is $G_{p,k}$-analytic, left $\mathcal{G}(F^+)$-invariant and right $U^p$-invariant and $\bar{P}_{p,0}$-invariant. Hence we can define $\tilde{f}$ as the image of $f$ in $(\hat{S}(U^p,C)^{k\mathrm{-an}} \hat{\otimes}_C R_X\otimes_C V)^{\bar{P}_{p,0}}$.
    	
    	Conversely, given $h \in (\hat{S}(U^p,C)^{\mathrm{an}} \hat{\otimes}_C  R_X\otimes_C V)^{\bar{P}_{p,0}}$, we regard it as a map $G(\mathbb{A}^\infty_{F^+}) \rightarrow  R_X\otimes_C V$. We define $f$ to be the function $G(\mathbb{A}^\infty_{F^+}) \times N_0 \rightarrow  R_X\otimes_C V$ given by $f(g)(n) = h (gn^{-1})$. One can also check this gives the inverse of the map $f \mapsto  \tilde{f}$.
    	
    	Moreover, this isomorphism is obviously $\mathbb{T}^{S}$-equivariant and it is also $\mathcal{H}_p$-equivalent by \cite[Proposition~3.10.2]{Loeffler2011}.
    \end{proof}
    
    \begin{Rmk}
    	For any $C$-point $x: R \rightarrow C$ in $X\subseteq\hat{H}_{p}$, we can specialize the isomorphism above at $x$, and the right hand side becomes 
    	$$(\hat{S}(U^p,C)^{k\text{-an}} \hat{\otimes}_C R_X\otimes V)^{\bar{P}_{p,0}}\otimes_{R,x} C \cong (\hat{S}(U^p,C)^{k\text{-an}} \hat{\otimes}_C V(\delta_x))^{\bar{P}_{p,0}},$$
    	which (passing to direct limit $\varinjlim_k$) gives the isomorphism in \cite[Proposition~3.10.1]{Loeffler2011}. 
    \end{Rmk}

    Let $\mathfrak{m}^S$ a maximal ideal of $\mathbb{T}^S$ with residue field $k_C$ (enlarging $C$ if necessary) such that $\hat{S}(U^p,C)_{\mathfrak{m}^S} \neq 0$. Let $\bar{\rho}= \bar{\rho}_{\mathfrak{m}^S}: \mathrm{Gal}(\bar{F}/F)\rightarrow \mathrm{GL}_n(k_C)$ denote the unique absolutely semi-simple Galois representation associated to $\mathfrak{m}^S$ (see \cite[Prop~3.4.2]{CHT08}). Additionally, we assume that $\mathfrak{m}^S$ is \textit{non-Eisenstein}, i.e. $\bar{\rho}$ is absolutely irreducible. Then it follows from \cite[Prop~3.4.4]{CHT08} that the space $\hat{S}(U^p,C)_{\mathfrak{m}^S}$ becomes a module over $R_{\bar{\rho},S}$, the complete local noetherian $\mathcal{O}_C$-algebra of residue field $k_C$ pro-representing the functor of deformations $\rho$ of $\bar{\rho}$ that are unramified outside $S$ and such that $\rho'\circ c\cong \rho\otimes \chi^{n-1}$ ($c\in\mathrm{Gal}(F/F^+)$ is the complex conjugation, where $\rho'$ is the dual of $\rho$ and $\chi: \mathrm{Gal}(\bar{F}/F)\rightarrow \mathbb{Z}_p^\times$ is the $p$-adic cyclotomic character).
    
    \begin{Rmk}
    	Note that the $\mathbb{T}^{S}$-action commutes with the $G_p$-action on $\hat{S}(U^p,C)$, and in particular preserves $G_{p,k}$-analyticity. Hence the isomorphism in theorem \ref{ComparisonforAutomorphicForms} implies
    	$$M(X,V,k)_{\mathfrak{m}^S} \cong (\hat{S}(U^p,C)^{k-\mathrm{an}}_{\mathfrak{m}^S} \hat{\otimes}_C R_X\otimes_C V)^{\bar{P}_{p,0}},$$ 
    	which makes $M(X,V,k)_{\mathfrak{m}^S}$ into an $R_{\bar{p},S}$-module.
    \end{Rmk}

    We denote by $J_{\overline{P}_p}(-)$ the Emerton's Jacquet functor, defined as in \cite[Def~3.4.5]{Emerton2006}.

    \begin{Prop}\label{Prop:Finiteslopepart}
    	For an element $\varsigma\in \Sigma^{++}$, and $\lambda\in C^\times$. 
    	The natural quotient map
    	$$(\hat{S}(U^p,C)^{k-\mathrm{an}})^{\overline{N}_{p,0}}\rightarrow J_{\bar{P}_p}(\hat{S}(U^p,C)^{k-\mathrm{an}})$$
    	induces an isomorphism of $\lambda$-eigenspace (resp. generalized $\lambda$-eigenspace) between $M(X,V,k)$ and $[J_{\bar{P}_p}(\hat{S}(U^p,C)^{k-\mathrm{an}})\hat\otimes_CR_X\otimes_CV ]^{M_{p,0}}$ for $\varsigma$.
    \end{Prop}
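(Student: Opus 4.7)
The plan is to combine the comparison from Theorem \ref{ComparisonforAutomorphicForms} with Emerton's structural theorem on the canonical lift of the Jacquet functor. First I would rewrite $M(X,V,k) \cong (\hat{S}(U^p,C)^{k\text{-}\mathrm{an}} \hat{\otimes}_C R_X \otimes_C V)^{\bar{P}_{p,0}}$ using the Iwahori factorization $\bar{P}_{p,0} = \bar{N}_{p,0} M_{p,0}$ and the fact that $\bar{N}_{p,0}$ acts trivially on $R_X$ and on $V$, obtaining
$$M(X,V,k) \cong \bigl((\hat{S}(U^p,C)^{k\text{-}\mathrm{an}})^{\bar{N}_{p,0}} \hat{\otimes}_C R_X \otimes_C V\bigr)^{M_{p,0}}.$$
The map in the statement is then obtained from the canonical quotient $(\hat{S}(U^p,C)^{k\text{-}\mathrm{an}})^{\bar{N}_{p,0}} \to J_{\bar{P}_p}(\hat{S}(U^p,C)^{k\text{-}\mathrm{an}})$ by tensoring with $R_X \otimes_C V$ and taking $M_{p,0}$-invariants. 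Both operations are compatible with the $\varsigma$-action because $\varsigma \in \Sigma \subset Z(M_p)$ commutes with $M_{p,0}$ and acts trivially on the auxiliary factors $R_X$ and $V$ by our conventions, so the $\varsigma$-action on the target reduces to the action on the first tensor factor.

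The central input is Emerton's theorem (\cite[Thm.~3.4.9, Prop.~3.2.12]{Emerton2006}), which says that for $\varsigma \in \Sigma^{++}$, the canonical lift realises $J_{\bar{P}_p}(W)$ as the maximal subspace of $W^{\bar{N}_{p,0}}$ on which $\varsigma$ acts with finite slope, and the natural map $W^{\bar{N}_{p,0}} \to J_{\bar{P}_p}(W)$ is $\mathcal{H}_p$-equivariant with kernel equal to the subspace on which $\varsigma$ acts topologically nilpotently. In particular, for any $\lambda \in C^\times$ and any $n \geq 1$, passing to $\ker(\varsigma - \lambda)^n$ on both sides yields an isomorphism
$$W^{\bar{N}_{p,0}}[(\varsigma-\lambda)^n] \xrightarrow{\sim} J_{\bar{P}_p}(W)[(\varsigma-\lambda)^n],$$
since a nonzero vector in the kernel would be both a generalized $\lambda$-eigenvector (with $\lambda \neq 0$) and topologically nilpotent for $\varsigma$, which is impossible.

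Applying this with $W = \hat{S}(U^p,C)^{k\text{-}\mathrm{an}}$ and then tensoring with $R_X \otimes_C V$ and taking $M_{p,0}$-invariants preserves the isomorphism on $(\varsigma-\lambda)^n$-kernels: indeed, since $\varsigma$ commutes with $M_{p,0}$ and with the action on $R_X \otimes_C V$ (on which $\varsigma$ acts trivially), the operator $\varsigma - \lambda$ commutes with both $\hat{\otimes}_C R_X \otimes_C V$ and $(-)^{M_{p,0}}$, and taking its kernel is an exact operation that interchanges with these functors. Combined with the $\bar{P}_{p,0}$-invariant identification from Theorem \ref{ComparisonforAutomorphicForms}, this yields the claimed isomorphism of (generalized) $\lambda$-eigenspaces.

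The main obstacle is the careful bookkeeping that Emerton's canonical lift—which is a priori only a splitting on the first factor—remains well-behaved after the completed tensor product with $R_X \otimes V$ and the $M_{p,0}$-invariants, i.e.\ that the finite-slope/infinite-slope decomposition for $\varsigma$ is respected by these operations. This is only an issue if one tries to prove a statement about whole finite-slope subspaces, but for a fixed $\lambda \in C^\times$ and a fixed $n$, the eigenspace $\ker(\varsigma-\lambda)^n$ is a closed subspace commuting with continuous $M_{p,0}$-equivariant operations, so the subtlety dissolves; the price paid is that one cannot upgrade the statement to an isomorphism on the full finite-slope parts without further input (e.g.\ a slope decomposition), but this is not needed here.
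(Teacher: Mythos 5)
Your route is the paper's: identify $M(X,V,k)$ with $[(\hat{S}(U^p,C)^{k\text{-}\mathrm{an}})^{\overline{N}_{p,0}}\hat\otimes_C R_X\otimes_C V]^{M_{p,0}}$ via Theorem \ref{ComparisonforAutomorphicForms} and the triviality of the $\overline{N}_{p,0}$-action on the auxiliary factors, then let Emerton's finite-slope formalism do the work, exploiting $\lambda\in C^{\times}$. The soft spot is the structural input you rely on, namely that the map $W^{\overline{N}_{p,0}}\rightarrow J_{\overline{P}_p}(W)$ has kernel equal to the subspace on which $\varsigma$ acts topologically nilpotently, with $J_{\overline{P}_p}(W)$ realized inside $W^{\overline{N}_{p,0}}$ as the maximal finite-slope subspace. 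Emerton proves no such global finite-slope/topologically-nilpotent decomposition of $W^{\overline{N}_{p,0}}$ in this generality; a decomposition of that kind would require $\varsigma$ to act as a compact operator admitting slope decompositions on $W^{\overline{N}_{p,0}}$ itself, whereas compactness is only used on the Banach $R_X$-module $M(X,V,k)$. What the formalism does give, and what suffices, is the universal property of the finite slope part (Emerton, Prop.~3.2.4): since $\lambda\neq 0$, the $C[\Sigma^+]$-action on the (generalized) $\lambda$-eigenspace extends uniquely to $C[\Sigma]$, so that eigenspace coincides with its own finite slope part, and the canonical comparison between $J_{\overline{P}_p}(\hat{S}(U^p,C)^{k\text{-}\mathrm{an}})=((\hat{S}(U^p,C)^{k\text{-}\mathrm{an}})^{\overline{N}_{p,0}})_{\mathrm{fs}}$ and the $\overline{N}_{p,0}$-invariants induces a bijection on (generalized) $\lambda$-eigenspaces. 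This adjunction argument is exactly how the paper proceeds (Def.~3.2.1, Prop.~3.2.4 and Prop.~3.2.9 of Emerton), and substituting it for your kernel claim repairs the proof without changing anything else.

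A second, smaller point: the assertion that forming $\ker(\varsigma-\lambda)^n$ ``interchanges'' with $\hat\otimes_C R_X\otimes_C V$ and with $(-)^{M_{p,0}}$ should not be left as a one-liner. The invariants step is harmless (kernels of commuting continuous operators), and the tensor step does hold here because $\hat{S}(U^p,C)^{k\text{-}\mathrm{an}}$ and its $\overline{N}_{p,0}$-invariants are Banach spaces, $\varsigma$ acts only on the first factor, and $R_X\otimes_C V$ is orthonormalizable over $C$, so the kernel can be computed coordinatewise; alternatively, one avoids the issue entirely by applying the universal property to the full module $M(X,V,k)$ at once, as the paper does, and only then invoking Emerton's compatibility statement.
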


    \begin{proof}
    	As $\overline{N}_{p,0}$ acts trivially on $R_X$ and $V$, one has
    	$$[(\hat{S}(U^p,C)^{k-\mathrm{an}})\hat\otimes_CR_X\otimes_CV ]^{\overline{P}_{p,0}}\cong [(\hat{S}(U^p,C)^{k-\mathrm{an}})^{\overline{N}_{p,0}}\hat\otimes_CR_X\otimes_CV ]^{M_{p,0}}.$$
    	Note that $C[\Sigma] \cong C[\Sigma^+]_{(\varsigma)}$ (the localization by $\varsigma$), hence the $C[\Sigma^+]$-action on 
    	$$M(X,V,k)^{\varsigma=\lambda}$$
    	uniquely extends to an $C[\Sigma]$-action. We denote by $(-)_{\mathrm{fs}}$ the Emerton's finite slope part functor (see the definition in \cite[Def~3.2.1]{Emerton2006}). By the universal property of finite slope part (see Prop~3.2.4 of \textit{loc. cit.}). One has
    	$$M(X,V,k)^{\varsigma=\lambda} \cong M(X,V,k)^{\varsigma=\lambda}_{\mathrm{fs}}.$$
    	Then one can invoke Prop~3.2.9 of \textit{loc. cit.} to get the conclusion ( note that $J_{\overline{P}_p}(-)$ is defined as $(-)^{\overline{N}_{p,0}}_{\mathrm{fs}}$).
    \end{proof}

    \begin{Prop}\label{Prop:ext to Bcenter}
    	Let $X\subset \hat{H}_{p,0}$ be an affinoid subdomain. We can extend the $\mathcal{H}_p$-action on
    	\begin{equation}\label{Jacmod}
    		[J_{\bar{P}_p}(\hat{S}(U^p,C)^{k-\mathrm{an}})\hat\otimes_CR_X\otimes_C V]^{M_{p,0} }
    	\end{equation}
    to an action of the Bernstein center $\mathfrak{Z}$, via the embedding
    \begin{align*}
    	i_{\Sigma}: \mathcal{H}_p & \inj \mathfrak{Z} \\
    	           z\in \Sigma^+ & \mapsto r_z,
    \end{align*}
    defined in lemma \ref{Lem:ArtkintoBernstein}.
    \end{Prop}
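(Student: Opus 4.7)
The plan is to reinterpret the space \eqref{Jacmod} as a Hom-space out of the compactly induced representation $c\text{-}\mathrm{Ind}_{M_{p,0}}^{M_p}\sigma_0$, and then exploit the tautological action of $\mathfrak{Z} \cong \mathrm{End}_{M_p}(c\text{-}\mathrm{Ind}_{M_{p,0}}^{M_p}\sigma_0)$ on this Hom-space by precomposition, transporting it back to \eqref{Jacmod}.

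Concretely, I would first use the decomposition $V = W\otimes\sigma_0^\vee$ into algebraic and smooth parts, together with the perfect duality $\sigma_0\otimes\sigma_0^\vee\to C$ of $M_{p,0}$-representations, to rewrite
\begin{equation*}
[J_{\bar{P}_p}(\hat{S}(U^p,C)^{k\text{-}\mathrm{an}})\hat\otimes_C R_X\otimes_C V]^{M_{p,0}} \cong \mathrm{Hom}_{M_{p,0}}\bigl(\sigma_0,\,J_{\bar{P}_p}(\hat{S}(U^p,C)^{k\text{-}\mathrm{an}})\hat\otimes_C R_X\otimes_C W\bigr).
\end{equation*}
Next, I would extend the $M_{p,0}$-character giving $R_X$ to an $M_p$-character via the (non-canonical) decomposition $\hat{H}_p\cong\hat{H}_{p,0}\times\mathcal{T}^l_{F_{\tilde v}}$ of Remark \ref{Rmk:DecompforParSpofEV}(4), with $\Sigma$ acting trivially, so that the target of the Hom becomes a genuine $M_p$-representation. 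Frobenius reciprocity (for compact induction from the open subgroup $M_{p,0}$) then yields
\begin{equation*}
\mathrm{Hom}_{M_{p,0}}(\sigma_0,\,-) \cong \mathrm{Hom}_{M_p}(c\text{-}\mathrm{Ind}_{M_{p,0}}^{M_p}\sigma_0,\,-),
\end{equation*}
on which $\mathfrak{Z}$ acts naturally by precomposition. Pulling this action back along the chain of isomorphisms produces the desired $\mathfrak{Z}$-action on \eqref{Jacmod}.

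To verify that this action restricts to the prescribed $\mathcal{H}_p$-action via $i_\Sigma$, I would unravel the identifications and observe that precomposition by $i_\Sigma(z) = r_z$ for $z\in\Sigma^+$ corresponds, under Frobenius reciprocity, to the natural action of $z\in M_p$ on the tensor product (acting on the Jacquet-module factor, trivially on $R_X$ and $W$). The $\mathcal{H}_p$-action defined earlier by the averaging formula $(z\circ f)(g) = [\bar N_{p,0}:z\bar N_{p,0}z^{-1}]^{-1}\sum_n f(gnz)$ descends, on the finite-slope quotient $(-)^{\bar N_{p,0}}_{\mathrm{fs}} = J_{\bar{P}_p}$, precisely to the canonical $\Sigma^+\subset M_p$-action—this is the very definition of Emerton's normalized Jacquet functor—so the two actions agree.

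The main obstacle will be handling the locally analytic (rather than smooth) nature of the Jacquet module when invoking Frobenius reciprocity, which strictly speaking is a statement in the smooth category. This is mitigated by the observation that $\sigma_0$ is a finite-dimensional smooth $M_{p,0}$-representation, so any $M_{p,0}$-linear map out of $\sigma_0$ automatically factors through the smooth vectors of its target; this reduces the adjunction to the classical statement for smooth representations. A secondary point is the $R_X$-linearity of the constructed $\mathfrak{Z}$-action, which follows because $c\text{-}\mathrm{Ind}_{M_{p,0}}^{M_p}\sigma_0$ carries no $R_X$-module structure, so precomposition commutes tautologically with $R_X$-scalar multiplication on the target.
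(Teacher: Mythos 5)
Your overall strategy matches the paper's: rewrite the space \eqref{Jacmod} as $\mathrm{Hom}_{M_p}(c\text{-}\mathrm{Ind}_{M_{p,0}}^{M_p}\sigma_0,\,-)$ using the duality $V=W\otimes\sigma_0^\vee$ and Frobenius reciprocity for compact induction from an open subgroup, and then let $\mathfrak{Z}\cong\mathrm{End}_{M_p}(c\text{-}\mathrm{Ind}_{M_{p,0}}^{M_p}\sigma_0)$ act by precomposition. Your side remarks on the locally analytic versus smooth issue and on $R_X$-linearity are also fine.

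However, there is a genuine gap in the verification step, and it concerns the $M_p$-action on the algebraic factor $W$. You carefully extend the $M_{p,0}$-action on $R_X$ to an $M_p$-action with $\Sigma$ acting trivially, but you say nothing about how $M_p$ acts on $W$. With the natural algebraic $M_p$-action on $W$, the center $Z(M_p)$ --- and in particular the submonoid $\Sigma^+$ --- acts by the nontrivial central character $\delta_W$. So the assertion that precomposition by $r_z$ corresponds to the action of $z\in M_p$ on the tensor product \emph{acting trivially on $R_X$ and $W$} is false: $z\in\Sigma^+$ acts on the $W$-factor by the scalar $\delta_W(z)\neq1$. Consequently the restriction along $i_\Sigma$ of the $\mathfrak{Z}$-action you construct differs from the prescribed $\mathcal{H}_p$-action on \eqref{Jacmod} by the factor $\delta_W(z)$, so it is not an extension of that action, which is what the proposition requires.

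The paper resolves this by twisting $W$ by $\delta_W^{-1}$ (extended to a character of $M_p$ that is trivial on $M_{p,0}$, i.e.\ unramified) before applying Frobenius reciprocity; the Hom is taken into $J_{\bar{P}_p}(\hat{S}(U^p,C)^{k\text{-an}})\otimes R_X\otimes W(\delta_W^{-1})$. Because this twist is trivial on $M_{p,0}$, the restriction to $M_{p,0}$ is unchanged and Frobenius reciprocity still identifies this with $\mathrm{Hom}_{M_{p,0}}(\sigma_0,J\otimes R_X\otimes W)$; and because the twist inverts $\delta_W$ on $\Sigma^+$, the monoid $\Sigma^+$ genuinely acts trivially on $W(\delta_W^{-1})$. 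Only with this twist inserted does your claim ``$z$ acts trivially on $W$'' become correct and the verification go through.
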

    
    \begin{proof}
    	Before we start with the proof, note that we can naturally regard $W$ as an algebraic representation of $M_p$, on which the center $Z(M_p)$ acts by an algebraic character $\delta_{W}$. We also extend the $M_{p,0}$-action on $R_X$ to an action of $M_{p}$ via the decomposition 
    	$$\hat{H}_{v} \cong \hat{H}_{v,0} \times \mathbb{G}_m^{l}$$
    	as before (depending to the choices of $\pi_v$). In particular, $\Sigma^+$ acts trivially on $R_X$.
    	    	 
    	Let $J$ denote $J_{\bar{P}_p}(\hat{S}(U^p,C)^{G_k-\mathrm{an}})$ for short temporarily. 
    	One has
    	\begin{align}\label{eq:Jac}
    		[J\hat\otimes_CR_X\otimes_C V]^{M_{p,0} } & \cong (\sigma_{p,0}^\vee\otimes_C J\otimes R_X\otimes_C W)^{M_{p,0}}\nonumber\\
    		& \cong\Hom_{M_{p,0}}(\sigma_{p,0}, J\otimes R_X\otimes_C W)\nonumber \\
    		& \cong\Hom_{M_{p}}(c\mathrm{-Ind}^{M_p}_{M_{p,0}}\sigma_{p,0}, J\otimes R_X\otimes_C W(\delta_W^{-1})).
    	\end{align}
        For any $z\in\Sigma^+$ and $\alpha\in \Hom_{M_{p}}(c\mathrm{-Ind}^{M_p}_{M_{p,0}}\sigma_{p,0}, J\otimes R_X\otimes_C W(\delta_W^{-1}))$, by the definition of $\mathcal{H}_p$-action, one has
        $$z (\alpha): f\mapsto z(\alpha(f)),$$
        where $z$ acts trivially on $R_X$ and $W(\delta_W^{-1})$. Note that the space (\ref{eq:Jac}) also has a natural left $\mathfrak{Z}$-module structure as follows:
        $$F(\alpha):f\mapsto \alpha \circ F(f)$$
        Then one has
        \begin{align*}
        	(i_{\Sigma}(z)(\alpha))(f)  & =  (\alpha\circ r_z) (f)\\
        	&= z \circ \alpha(f)\\
        	&= (z(\alpha))(f),
        \end{align*}
    for any $z\in\Sigma$, and any $\alpha$ in space (\ref{eq:Jac}) and any $f$ in $c$-$\mathrm{ind}^{M_p}_{M_{p,0}}\sigma_{p,0}$. It follows that if we regard $\mathcal{H}_p$ as a subring of $\mathfrak{Z}$ via $i_{\Sigma}$, then their action on space (\ref{eq:Jac}) coincide. Hence we can extend the $\mathcal{H}_p$-action on space (\ref{eq:Jac}) to an action of $\mathfrak{Z}$ via the embedding $i_{\Sigma}$.
    \end{proof}

   \begin{Rmk}
   	From now on, we keep such extension of $\mathfrak{Z}$-action on space (\ref{Jacmod}), and keep in mind that such extension depends on our choice of $\varpi_v$.
   \end{Rmk}
    
    \subsection{Construction of Eigenvarieties}
    
    Now we can give the construction of the eigenvariety. Even though we can not directly apply the eigenvariety machine from \cite[Construction~5.7]{Buzzard2010}, but the procedure is quite similar.
    
    For an affinoid domain $X\subset \hat{H}_{p,0}$, let $M_X$ denote $M(X,V,k)_{\mathfrak{m}^S}$ for short. Note that $M_X$ is a direct summand of $M(X,V,k)$ as a closed $R_X$-submodule. Hence $M_X$ is also a Banach $R_X$-module with property (Pr). We also fix an element $\varsigma\in \Sigma^{++}$, then $$\mathbb{T}_X:=R_X\otimes_C(\mathcal{H}_p\otimes_C R_{\bar{\rho},S}[\frac{1}{p}])$$ is a commutative unital $R_X$-algebra equipped with an $R_X$-algebra homomorphism $\mathbb{T}_X\rightarrow \mathrm{End}_{R_X}(M_X)$, such that the endomorphism of $M_X$ induced by $\varsigma\in \mathbb{T}$ is compact (recall that $\mathcal{H}_p\cong C[\Sigma_p^+]$, and the compactness of the action of $\varsigma$ follows from \cite[Thm~3.7.2(4)]{Loeffler2011}). Let $Z_\varsigma$ be the closed subspace of $X\times \mathbb{A}^{1,\mathrm{rig}}_C$ defined by the zero locus of the characteristic power series $F\in R_X\{\{T\}\}$ of $\varsigma$, with an affinoid admissible cover $\mathcal{C}$ of $Z_\varsigma(X,V)$, constructed as in \cite[Sect~4]{Buzzard2010}. 
    
     Let $Y$ be an element in $\mathcal{C}$, with image $X_Y\subseteq X$. By the construction of $\mathcal{C}$, the set $X_Y$ is an affinoid subdomain of $X$, with reduced ring of global sections $R_{X_Y}$. Let $M_{X_Y}$ denote $M_X\hat{\otimes}_{R_X} R_{X_Y}$, and for $t\in\mathbb{T}_X$, let $t_{X_Y}:M_{X_Y}\rightarrow M_{X_Y}$ denote the $R_{X_Y}$-linear continuous endomorphism induced by $t: M_X\rightarrow M_X$. By \cite[Lemma~2.13]{Buzzard2010}, $\varsigma_{X_Y}$ is still compact with characteristic power series $F_{X_Y}$ on $M_{X_Y}$, where $F_{X_Y}$ is just the image of $F$ in $R_{X_Y}\{\{T\}\}$.
    
    If $Y$ is connected, finite over $X_Y$ of degree $d$, one can associate $Y$ with a degree $d$ monic factor $Q_Y(T)$ of $F_{X_Y}(T)$ such that $R_Y$ is canonically isomorphic to $R_{X_Y}[T]/(Q_Y(T))$. Hence we can invoke \cite[3.3]{Buzzard2010} to decompose $M_{X_Y}$  as direct sum of $\varsigma$-invariant closed submodules $N_Y \oplus N'_Y$, such that $Q^*(\varsigma)$ is zero on $N_Y$ and invertible on $N'_Y$ (where $Q^*(T):=T^dQ(T^{-1})$), where $N_Y$ is a projective $R_{X_Y}$-module of rank $d$.
    
    Hence $N_Y$ is the kernel of map $Q^*(\varsigma): M_{X_Y}\rightarrow M_{X_Y}$(i.e. $N_Y = M_{X_Y}[{\{Q^*(\varsigma)\}}]$). Then by proposition \ref{Prop:Finiteslopepart}, $N_Y$ is isomorphic to $$[J_{\bar{P}_p}(\hat{S}(U^p,C)^{k-\mathrm{an}}_{\mathfrak{m}^S})\hat\otimes_CR_{X_Y}\otimes_C V]^{M_{p,0}}[\{Q^*(\varsigma)\}].$$ And by proposition \ref{Prop:ext to Bcenter}, we can extend the $\mathcal
    H_p$-action on $N_Y$ to a $\mathfrak{Z}$-action. Then we define $\mathbb{T}_Y$ to be the $R_{X_Y}$-algebra generated by the image of $\mathbb{T}\otimes_{\mathcal{H}_p}\mathfrak{Z}$ in $\mathrm{End}_{R_X}(N_Y)$, and let $D_Y:= (\mathrm{Spec}(\mathbb{T}_Y))^{\mathrm{rig}}$ denote the associated affinoid variety. Note that as $Q^*(\varsigma)$ is zero on $N$, the ring $\mathbb{T}_Y$ is a finite $R_Y$-algebra (via send $T$ to $\varsigma^{-1}$), and hence there is a natural finite map $D_Y\rightarrow Y$.
    
    For general $Y$, the image $X$ of $Y$ in $\mathrm{Max}(R)$ may not be connected. Suppose $X$ can be written as a finite disjoint union $X:= \coprod X_i$ of connected component and we denote $Y_i$ the preimage of $X_i$ in $Y$. We define $D_Y$ as the disjoint union of $D_{Y_i}$, together with a finite map $D_Y\rightarrow Y$. By the following lemmas, we can glue together the $D_Y$, as $Y$ ranges through all elements elements of $\mathcal{C}$, and the resulting rigid space $D_\varsigma(X,V)$, which is a finite cover of $Z_\varsigma(X,V)$, is called the eigenvariety of $M(X,V,k)$.
    
    \begin{Lemma}\label{BC of EigenVar}
    	If $Y\in\mathcal{C}$ with image $X_Y\subseteq X$, and $X'$ is an affinoid subdomain of $X_Y$, we denote by $Y'$ the pre-image of $X'$ under the map $Y\rightarrow X_Y$. Then $Y'$ is in $\mathcal{C}$ and is an affinoid subdomain of $Y$. Furthermore, $D_{Y'}$ is canonically isomorphic to the pre-image of $Y'$ under the map $D_Y\rightarrow Y$.
    \end{Lemma}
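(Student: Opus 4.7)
The plan is to follow Buzzard's eigenvariety machine essentially verbatim, making sure that the extension of the Hecke action from $\mathcal{H}_p$ to the Bernstein center $\mathfrak{Z}$ constructed in proposition \ref{Prop:ext to Bcenter} is compatible with affinoid base change in $X$. The proof splits naturally into the two assertions.

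For the first assertion ($Y'\in\mathcal{C}$ and $Y'$ is an affinoid subdomain of $Y$), I would quote the corresponding compatibility result for the Buzzard admissible cover (\cite[Lemma~4.6 and Thm~4.6]{Buzzard2010}). Recall that each $Y\in\mathcal{C}$ over a connected affinoid $X_Y\subseteq X$ corresponds to a slope-type factorisation $F_{X_Y}(T)=Q_Y(T)S_Y(T)$ with $Q_Y$ monic of some degree $d$, $(Q_Y,S_Y)=1$, and $R_Y=R_{X_Y}[T]/Q_Y(T)$. Restricting along $R_{X_Y}\to R_{X'}$ preserves this factorisation (since $F$ base-changes to $F_{X'}$, and being coprime is preserved because $1=AQ+BS$ remains a relation over $R_{X'}$), so the induced factor $Q_{Y'}:=Q_Y\otimes_{R_{X_Y}}R_{X'}$ exhibits $Y'=\mathrm{Sp}(R_{X'}[T]/Q_{Y'}(T))$ as a member of $\mathcal{C}$ over $X'$. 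That $Y'$ is an affinoid subdomain of $Y$ follows because $R_{Y'}=R_Y\otimes_{R_{X_Y}}R_{X'}$ is obtained by affinoid base change along the subdomain inclusion $X'\hookrightarrow X_Y$.

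For the second assertion, I would show the two base-change identities
\begin{equation*}
N_{Y'}\;\cong\;N_Y\otimes_{R_{X_Y}}R_{X'},\qquad \mathbb{T}_{Y'}\;\cong\;\mathbb{T}_Y\otimes_{R_{X_Y}}R_{X'}.
\end{equation*}
For the first: by definition $N_Y=M_{X_Y}[\{Q_Y^*(\varsigma)\}]$, and \cite[Thm~3.3 and Lem~2.13]{Buzzard2010} guarantee that the kernel/image splitting with respect to a coprime factorisation of the characteristic series is compatible with flat base change of the coefficient ring. Since $M_{X_Y}\hat\otimes_{R_{X_Y}}R_{X'}=M_{X'}$ by the very definition of $M_X$ (combined with theorem \ref{ComparisonforAutomorphicForms}, which shows $M_X$ is obtained by completed tensor product from the $R_{\bar\rho,S}$-module $\hat S(U^p,C)^{k\text{-an}}_{\mathfrak{m}^S}$) and $Q_{Y'}^*$ is the base change of $Q_Y^*$, we conclude $N_{Y'}\cong N_Y\otimes_{R_{X_Y}}R_{X'}$. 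For the second: by construction $\mathbb{T}_Y$ is the image of $\mathbb{T}\otimes_{\mathcal{H}_p}\mathfrak{Z}$ in $\mathrm{End}_{R_{X_Y}}(N_Y)$, where the $\mathfrak{Z}$-action is produced via the isomorphisms
\begin{equation*}
N_Y\;\cong\;\left[J_{\overline{P}_p}(\hat S(U^p,C)^{k\text{-an}}_{\mathfrak{m}^S})\hat\otimes_C R_{X_Y}\otimes_C V\right]^{M_{p,0}}[\{Q_Y^*(\varsigma)\}]
\end{equation*}
and
\begin{equation*}
(\ldots)^{M_{p,0}}\;\cong\;\mathrm{Hom}_{M_p}\!\bigl(c\text{-}\mathrm{Ind}^{M_p}_{M_{p,0}}\sigma_{p,0},\,J\otimes R_{X_Y}\otimes_C W(\delta_W^{-1})\bigr).
\end{equation*}
All functors appearing here (invariants under the compact group $M_{p,0}$, the Jacquet functor $J_{\overline{P}_p}(-)$, the Hom against the locally constant module $c\text{-}\mathrm{Ind}^{M_p}_{M_{p,0}}\sigma_{p,0}$, and the kernel of $Q^*(\varsigma)$) commute with affinoid base change in $R$, so the right-adjoint description of the $\mathfrak{Z}$-action in proposition \ref{Prop:ext to Bcenter} base-changes formally, giving an isomorphism of $\mathfrak{Z}$-modules $N_{Y'}\cong N_Y\otimes_{R_{X_Y}}R_{X'}$. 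Taking the image in endomorphisms gives $\mathbb{T}_{Y'}=\mathbb{T}_Y\otimes_{R_{X_Y}}R_{X'}$, hence $D_{Y'}=\mathrm{Spec}(\mathbb{T}_{Y'})^{\mathrm{rig}}\cong D_Y\times_Y Y'$.

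The main obstacle, and the only step requiring care, is verifying that the extension from $\mathcal{H}_p$ to $\mathfrak{Z}$ in proposition \ref{Prop:ext to Bcenter} genuinely commutes with base change on $R_X$. Concretely one must check that the identification $(J\hat\otimes_C R_X\otimes_C V)^{M_{p,0}}\cong\mathrm{Hom}_{M_p}(c\text{-}\mathrm{Ind}^{M_p}_{M_{p,0}}\sigma_{p,0},J\otimes R_X\otimes_C W(\delta_W^{-1}))$ is functorial in $R_X$, and that the left $\mathfrak{Z}$-action via precomposition with $c\text{-}\mathrm{Ind}^{M_p}_{M_{p,0}}\sigma_{p,0}\to c\text{-}\mathrm{Ind}^{M_p}_{M_{p,0}}\sigma_{p,0}$ is transparently compatible with tensoring $R_X\to R_{X'}$ on the right. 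Both points are formal once one observes that the $\mathfrak{Z}$-action acts only through the first tensor factor, which is independent of the coefficient affinoid; this reduces the compatibility to the trivial statement that $\mathrm{Hom}(P,-)$ commutes with $-\otimes_{R_{X_Y}}R_{X'}$ when $P$ is compactly generated and the coefficients are affinoid algebras. Everything else is a mechanical unwinding of the Buzzard construction.
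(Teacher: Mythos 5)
Your proposal is correct and takes essentially the same route as the paper's proof: quote Buzzard for the first assertion, then establish $N_{Y'}\cong N_Y\otimes_{R_{X_Y}}R_{X'}$ and $\mathbb{T}_{Y'}\cong\mathbb{T}_Y\otimes_{R_{X_Y}}R_{X'}$ via flatness of $R_{X'}$ over $R_{X_Y}$ together with the observation that the extension of the Hecke action to $\mathfrak{Z}$ in proposition~\ref{Prop:ext to Bcenter} commutes with base change (since $\mathfrak{Z}$ acts only through the first tensor factor $c\text{-}\mathrm{Ind}^{M_p}_{M_{p,0}}\sigma_{p,0}$, which does not involve the coefficient affinoid). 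The only minor discrepancy is the Buzzard reference for the first part: the paper cites Lemma~5.1 of \cite{Buzzard2010}, which is the restriction-compatibility statement for the admissible cover that you want, rather than the construction results (4.6) you cite.
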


    \begin{proof}
    	The first part is exactly from \cite[Lemma~5.1]{Buzzard2010}. For the second part, note that $N_{Y'}$ is the base change from $R_X$ to $R_{X'}$ for the kernel $\mathcal{Q}^*(\varsigma): M_{X}\rightarrow M_{X}$, and $R_{X'}$ is flat over $R_X$. Hence $N_{Y'}$ is canonically isomorphic ot $N_Y \otimes_{R_X} R_{X'}$, and $\mathbb{T}_{Y'}$ is canonically isomorphic to $\mathbb{T}_Y \otimes_{R_X} R_{X'}$ (note that the extension to the action of $\mathfrak{Z}$, constructed in the proof of proposition \ref{Prop:ext to Bcenter}, are obviously compatible with base change), which implies $$D_{Y'} \cong D_Y \times_{X_Y} X' \cong D_Y\times_Y Y'.$$
    \end{proof}
    
    \begin{Lemma}
    	If $Y_1,Y_2\in\mathcal{C}$, then $Y:=Y_1\cap Y_2\in\mathcal{C}$. Furthermore $Y$ is an affinoid subdomain of $Y_i$ for $i=1,2$, and $D_Y$ is canonically isomorphic to the pre-image of $Y$ under the map $D_{Y_i}\rightarrow Y_i$.
    \end{Lemma}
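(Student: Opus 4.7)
The first assertion---that $Y = Y_1 \cap Y_2$ lies in $\mathcal{C}$ and is an affinoid subdomain of each $Y_i$---is exactly \cite[Lemma~5.2]{Buzzard2010} applied to the admissible cover $\mathcal{C}$ of $Z_\varsigma(X,V)$, and uses nothing specific to our setting beyond the Buzzard formalism. I would quote it directly.

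For the canonical isomorphism $D_Y \cong D_{Y_i} \times_{Y_i} Y$, the plan is to reduce to the case $X_{Y_1} = X_{Y_2} = X_Y$ via the preceding lemma, then to invoke the slope decomposition of \cite[\S3.3]{Buzzard2010}. Set $X_0 := X_{Y_1} \cap X_{Y_2}$, which is an affinoid subdomain of each $X_{Y_i}$. Applying Lemma~\ref{BC of EigenVar} to each $Y_i$ with respect to $X_0 \subseteq X_{Y_i}$ yields $Y_i' := Y_i \times_{X_{Y_i}} X_0 \in \mathcal{C}$, an affinoid subdomain of $Y_i$, with
$$D_{Y_i'} \;\cong\; D_{Y_i} \times_{Y_i} Y_i'.$$
Since $Y = Y_1 \cap Y_2 = Y_1' \cap Y_2'$ as subsets of $X_0 \times \mathbb{A}^{1,\mathrm{rig}}_C$, I may replace each $Y_i$ by $Y_i'$ without loss of generality and henceforth assume $X_{Y_1} = X_{Y_2} = X_Y = X_0$.

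In this reduced situation each $Y_i$ corresponds to a monic degree $d_i$ factor $Q_i(T) \in R_{X_0}[T]$ of the characteristic power series $F_{X_0}(T)$, and the formalism of \cite[\S3.3]{Buzzard2010} (as used in the proof of \cite[Lemma~5.2]{Buzzard2010}) identifies the intersection $Y = Y_1 \cap Y_2$ with the monic factor $Q := \gcd(Q_1,Q_2) \in R_{X_0}[T]$. Because $Q$ divides each $Q_i$, the direct summand
$$N_Y \;=\; M_{X_0}[\{Q^*(\varsigma)\}] \;=\; N_{Y_i}\bigl[\{Q^*(\varsigma)\}\bigr]$$
is a $\varsigma$-stable $R_{X_0}$-direct summand of $N_{Y_i}$. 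The extension of the Hecke action to the Bernstein centre $\mathfrak{Z}$ from Proposition~\ref{Prop:ext to Bcenter} is functorial in the affinoid subdomain of $\widehat{H}_{p,0}$ (it is built out of the Jacquet functor applied to the fixed space $\widehat{S}(U^p,C)^{k\text{-an}}_{\mathfrak{m}^S}$ base-changed to $R_{X_0}$) and sends $\varsigma \in \Sigma^+$ to $i_\Sigma(\varsigma) \in \mathfrak{Z}$, so $N_Y$ is stable under the full image $\mathbb{T}_{Y_i} \subseteq \mathrm{End}_{R_{X_0}}(N_{Y_i})$. It follows that $\mathbb{T}_Y$ is precisely the image of $\mathbb{T}_{Y_i}$ in $\mathrm{End}_{R_{X_0}}(N_Y)$, and taking rigid $\mathrm{Spec}$ together with finiteness of $D_{Y_i} \to Y_i$ yields the desired canonical isomorphism $D_Y \cong D_{Y_i} \times_{Y_i} Y$.

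The main obstacle I anticipate is the clean identification of $Y_1 \cap Y_2$ with the factor $\gcd(Q_1,Q_2)$; this is implicit in Buzzard's formation of the admissible cover $\mathcal{C}$ from slope-bounded subdomains, but needs to be extracted carefully so that the rigid-analytic intersection on the nose matches the slope decomposition. A secondary subtlety is to check that enlarging the Hecke algebra from $\mathcal{H}_p$ to $\mathfrak{Z}$ does not enlarge $\mathbb{T}_Y$ beyond the image of $\mathbb{T}_{Y_i}$, and here one uses precisely the functoriality of the construction in Proposition~\ref{Prop:ext to Bcenter} with respect to base change along $R_{X_0} \to R_{X_0}$ (i.e.\ that the $\mathfrak{Z}$-action on $N_Y$ is the restriction of the one on $N_{Y_i}$).
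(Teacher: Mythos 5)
Your proposal matches the paper's own argument at every step that matters: reduce via Lemma \ref{BC of EigenVar} to a common affinoid base $X' = X_{Y_1} \cap X_{Y_2}$, observe that the extension of the $\mathcal{H}_p$-action to $\mathfrak{Z}$ from Proposition \ref{Prop:ext to Bcenter} is compatible with base change, and conclude by running the argument of \cite[Lemma~5.2]{Buzzard2010}. The only real difference is cosmetic: where you encode $Y = Y_1 \cap Y_2$ via the factor $\gcd(Q_1, Q_2)$ (exactly the point you flag as needing care, since the coprimality of $Q$ with $F/Q$ over the affinoid base has to be tracked through the gcd), the paper uses the equivalent and cleaner fact, already contained in Buzzard's proof, that $Y$ is a union of connected components of each pulled-back $Y_i'$; this plugs directly into the component-wise definition of $D_Y$ and sidesteps the gcd bookkeeping entirely.
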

    
    \begin{proof}
    	This lemma is almost the same as \cite[Lemma~5.2]{Buzzard2010} except for the revised construction of $D_Y$, and just take an extra note that our construction of the extension to an action of $\mathfrak{Z}$ is compatible with base change. Hence in particular the assertions about $Y$ still hold in our case. Namely, if we denote $X':= X_1\cap X_2$ where $X_i$ is the image of $Y_i$ in $X$, and $Y_i':= Y_i\times_{X_i} X'$ for $i=1,2$. Then we have $Y = Y_1'\cap Y_2'$ and $Y$ is a union of the component of $Y_i'$. Hence by the definition of $D_Y$ and lemma \ref{BC of EigenVar}, one has the canonical isomorphism:
    	$$D_Y \cong D_{Y_i}\times_{Y_i} Y$$
    \end{proof}
    
    \begin{Cons}\label{ConsforEigenVar}
    	By \cite[Proposition~9.3.2/1]{bosch1984non}, we can glue the system $\{D_Y|Y\in\mathcal{C}\}$ to a rigid space $\tilde{D}_\varsigma(X,V,k)$, and denote $D_\varsigma(X,V,k)$ the reduction of $\tilde{D}_\varsigma(X,V,k)$. 
    	
    	Note that by \cite[Cor~3.7.3]{Loeffler2011}, for any $k_1\geq k_2\geq \max\{k(X),k(V)\}$, the natural embedding $M(X,V,k_2)_{\mathfrak{m}^S}^{Q^*(\varsigma)}\inj M(X,V,k_1)_{\mathfrak{m}^S}^{Q^*(\varsigma)}$ is an isomorphism. Then we have a canonical isomorphism 
    	$$D_\varsigma(X,V,k_2)\cong D_\varsigma(X,V,k_1),$$
    	and denote by $D_\varsigma(X,V)$ for short.
    	
        When $X$ running through the affinoid subdomain of $\hat{H}_{p,0}$, we can glue the system $\{D_\varsigma(V,X)\ |\ X\subseteq \hat{H}_{p,0}\}$ further to a rigid space $D_\varsigma(V)$. We call $D_\varsigma(V)$ \textit{the eigenvariety of tame level $U^p$ and weight $V$}. 
    \end{Cons}

   \begin{Rmk} \label{EigenVarAndSpecVar}
   	\text{ }
   	\begin{enumerate}
   		
   		\item From the construction of the eigenvariety, one can see that $D_{\varsigma}(V,X)$ is Zariski closed in $X\times (\mathrm{Spec}(\mathfrak{Z}))^{\mathrm{rig}} \times \mathfrak{X}_{\bar{\rho},S}$. Hence $D_{\varsigma}(V)$ is Zariski closed in $\hat{H}_{\sigma,0}\times(\mathrm{Spec}(\mathfrak{Z}))^{\mathrm{rig}}\times \mathfrak{X}_{\bar{\rho},S} \cong \hat{H}_{\sigma}\times \mathfrak{X}_{\bar{\rho},S}$.
   		
   		\item
   		When $X$ running through the affinoid subdomain of $\hat{H}_{p,0}$, we can also glue the system of spectral varieties $\{Z_{\varsigma}(V,X)\ |\ X\subset \hat{H}_{p,0} \}$ further to a rigid space $Z_{\varsigma}(V)$, which is Zariski closed in $\hat{H}_{p,0}\times \mathbb{G}_m$.
   		
   	    Then natural projection $Z_{\varsigma}(V)\rightarrow \hat{H}_{p,0}$ is flat (and quasi-finite) by \cite[Lem~4.1]{Buzzard2010}. And even though the finite morphism 
   		$$ D_{\varsigma}(V) \rightarrow Z_{\varsigma}(V) $$
   		is not flat in general, but each irreducible component of $D_{\varsigma}(V)$ maps surjectively to an irreducible component of $Z_\varsigma(V)$ (see \cite[Prop~6.4.2]{GaetanChenevier2004FamillesPD} for the proof). Hence in particular, the image in $\hat{H}_{p,0}$ of each irreducible component of $D_{\varsigma}(V)$ is Zariski open and dense in a component of $\hat{H}_{p,0}$. 
   		\item
   		For now on, for the sake of constructing the map to paraboline varieties, we regard $D_{\varsigma}(V)$ as a closed subspace of $\hat{H}_{\sigma}\times \mathfrak{X}_{\bar{\rho},S}\cong \hat{H}_{p,0} \times \left(\mathrm{Spec}(\mathfrak{Z})\right)^{\mathrm{rig}} \times \mathfrak{X}_{\bar{\rho},S}$ via composing the close embedding above with the twisting
   		\begin{align*}
   			\hat{H}_{p,0} \times \left(\mathrm{Spec}(\mathfrak{Z})\right)^{\mathrm{rig}} \times \mathfrak{X}_{\bar{\rho},S} & \xrightarrow{\sim} \hat{H}_{p,0} \times \left(\mathrm{Spec}(\mathfrak{Z})\right)^{\mathrm{rig}} \times \mathfrak{X}_{\bar{\rho},S} \\
   			(\delta_0,\delta_{\mathfrak{Z}},\rho) &\mapsto (\delta_0^{-1},\delta_{\mathfrak{Z}},\rho)
   		\end{align*}
   	\end{enumerate}
   
   \end{Rmk}

   %%We denote the natural projection $$D_\tau(V) \rightarrow \hat{H}_{p,0}$$
   %%by $\xi_0$. And write $\mathbb{T}:= \mathcal{H}_p\otimes_L R_{\rho,S}[\frac{1}{p}] \subset \tilde{\mathbb{T}} := \mathfrak{Z} \otimes_L R_{\rho,S}[\frac{1}{p}]$. Then by the remark above, a point $x\in D_{\tau}(V)$ can be determined by $\xi_0(x)$ and a character $x'$ of $\tilde{\mathbb{T}}$.
   
   \begin{Rmk}
   	Let $x = (\delta,\rho)\in \hat{H}_{\sigma}\times \mathfrak{X}_{\bar{\rho},S}$ be a point in $D_{\varsigma}(V)$. Via remark \ref{Rmk:DecompforParSpofEV}(3), one can decompose $\delta$ into $ (\delta_0,\delta_{\mathfrak{Z}})$, where $\delta_0:= \delta|_{M_{p,0}}$ is the restriction of $\delta$ to $M_{p,0}$ and $\delta_{\mathfrak{Z}}: \mathfrak{Z} \rightarrow k_{\delta}$ is a closed point in $(\mathrm{Spec}(\mathfrak{Z}))^{\mathrm{rig}}$. Via the embedding $i_{\Sigma}$, one can further restrict $\delta_{\mathfrak{Z}}$ on $\Sigma$, and write $\delta_{\Sigma}:= \delta_{\mathfrak{Z}}|_{\Sigma}: \Sigma \rightarrow k_{\delta}$.
   	
   	\subsection{Classical points}
   	In this part, our aim is to describe the classical points in the eigenvarieties.
   	
   	Let $X$ be an affinoid subdomain of $\hat{H}_{\sigma}$ containing $x$, and let $k\geq\max\{k(X),k(V)\}$. For a closed point $x=(\delta,\rho)\in\hat{H}_{\sigma}\times \mathfrak{X}_{\bar{\rho},S}$, we define the notation $M(X,V,k)_{\mathfrak{m}^S}[x]$ in the following way (perhaps enlarging $C$): firstly,
   	$$M(\delta_0^{-1},V,k)_{\mathfrak{m}^S}[\rho] \subseteq M(\delta_0^{-1},V,k)_{\mathfrak{m}^S}$$
   	is the closed Banach subspace, consisting of the eigenvectors of $\rho: R_{\rho,S}\rightarrow k_{\delta}$. Then we consider 
   	$$M(\delta_0^{-1},V,k)_{\mathfrak{m}^S}[\rho,\delta_\Sigma]:=\left(M(\delta_0^{-1},V,k)_{\mathfrak{m}^S}[\rho]\right)[\delta_{\Sigma}],$$  
   	then eigenspace of the character $\delta_{\Sigma}$ of $\Sigma$. According to proposition \ref{Prop:ext to Bcenter}, we have extended to a $\mathfrak{Z}$-action on this space. Hence we denote by
   	$$M(X,V,k)[x] := \left( M(\delta_0^{-1},V,k)_{\mathfrak{m}^S}[\rho,\delta_\Sigma] \right)[\delta_{\mathfrak{Z}}]$$
   	the eigenspace of $\delta_{\mathfrak{z}}$ on $M(\delta_0^{-1},V,k)_{\mathfrak{m}^S}[{\rho,\delta_\Sigma}]$.
   	
   	 Then by almost the same argument in \cite[Lem~5.9]{Buzzard2010}, one can show that $x=(\delta,\rho)\in\hat{H}_{\sigma}\times \mathfrak{X}_{\bar{\rho},S}$ is a point in $D_{\varsigma}(V)$ if and only if $M(X,V,k)[{x}]\neq\emptyset$. 
   \end{Rmk}

    The following definition is similar as Loeffler's for classical points in \cite[Sect~3.13]{Loeffler2011}.
    
    \begin{Def}\label{Def:ClPt}
    	A closed point $x=(\delta,\rho)\in D_\varsigma(V)$ is called classical if
    	\begin{enumerate}
    		\item $\delta_0$ is locally algebraic.
    		\item $M(X,V,k)^{x}_{\mathfrak{m}^S}\cap \left[\varinjlim\limits_{k'\geq k} M(\delta_0^{-1},V,k')^{\mathrm{cl}}\right] \neq \emptyset$ for some (and hence any) affinoid subdomain $X$ in $\hat{H}_{\sigma}$, and $k\geq\max\{k(X),k(V)\}$. 
    	\end{enumerate}
    \end{Def}

    We will give a different description of classical points that more explicitly relate to classical automorphic forms.

    \begin{Lemma}\label{InvandCoinv}
    	Let $E$ be a smooth representation of $G_v$ over $C$. Then for any $z\in\Sigma_v^+$, the map $$x\mapsto \frac{1}{[\overline{N}_{v,0}:z\overline{N}_{v,0}z^{-1}]}\sum\limits_{g\in \overline{N}_{v,0}/z\overline{N}_{v,0}z^{-1}}gzx$$ (resp. $[x] \mapsto [zx]$) makes the $\overline{N}_{v,0}$-invariant space  $E^{\overline{N}_{v,0}}$ (resp. the $\overline{N}_{v,0}$-coinvariant space  $E_{\overline{N}_{v,0}}$) into an $C[\Sigma_v^+]$-module, such that the map
    	\begin{align*}
    		E^{\overline{N}_{v,0}} & \rightarrow E_{\overline{N}_{v,0}} \\
    		x &\mapsto [x]
    	\end{align*}
        is an isomorphism of $C[\Sigma_v^+]$-modules. 
        
        Moreover, the natural projection $E_{\overline{N}_{v,0}}\rightarrow E_{\overline{N}_{v}}$ induces an isomorphism
        \begin{equation}\label{eq:Emerton=classical}
        	E_{\overline{N}_{v,0}}\otimes_{C[\Sigma_v^+]} C[\Sigma_v] \cong E_{\overline{N}_{v}}
        \end{equation}
        of $C[\Sigma_v]$-modules.
    \end{Lemma}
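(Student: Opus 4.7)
\medskip

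\noindent\textbf{Proof proposal.} The plan is to handle the two assertions separately, using in both cases the contraction property $z\overline{N}_{v,0}z^{-1}\subseteq \overline{N}_{v,0}$ for $z\in\Sigma_v^+$ together with the fact that, for $z\in\Sigma_v^{++}$, the conjugation by $z^{-k}$ exhausts all of $\overline{N}_v$, i.e.\ $\overline{N}_v=\bigcup_{k\geq 0}z^{-k}\overline{N}_{v,0}z^k$.

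For the first part, since $\overline{N}_{v,0}$ is compact and $E$ is smooth, the averaging projector
$$e:E\longrightarrow E^{\overline{N}_{v,0}},\qquad e(y)=\frac{1}{[\overline{N}_{v,0}:U]}\sum_{g\in \overline{N}_{v,0}/U}g\cdot y,$$
(where $U$ is any open subgroup of $\overline{N}_{v,0}$ fixing $y$) is well defined and descends to a map $E_{\overline{N}_{v,0}}\to E^{\overline{N}_{v,0}}$ inverse to $x\mapsto [x]$; this is a standard computation that only uses smoothness of $E$. Once the bijection is established, $C[\Sigma_v^+]$-equivariance is a direct verification: for $x\in E^{\overline{N}_{v,0}}$ and $z\in \Sigma_v^+$, because every $g\in\overline{N}_{v,0}$ satisfies $[gzx]=[zx]$ in $E_{\overline{N}_{v,0}}$, the class $[\tfrac{1}{[\overline{N}_{v,0}:z\overline{N}_{v,0}z^{-1}]}\sum_g gzx]$ collapses to $[zx]$.

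For the second part, I first build the map $\Phi:E_{\overline{N}_{v,0}}\otimes_{C[\Sigma_v^+]}C[\Sigma_v]\to E_{\overline{N}_v}$. On $E_{\overline{N}_v}$ the full group $\Sigma_v$ acts by $z\cdot[y]=[zy]$, and because each $z$ acts invertibly on $E$ this action extends the $C[\Sigma_v^+]$-action coming from $E_{\overline{N}_{v,0}}\to E_{\overline{N}_v}$. Hence the natural projection extends uniquely to a $C[\Sigma_v]$-linear map $\Phi$. Surjectivity: for any $[y]\in E_{\overline{N}_v}$, smoothness provides $k\gg 0$ with $y$ fixed by $z^{-k}\overline{N}_{v,0}z^k$ (pick $z\in\Sigma_v^{++}$), so $z^k y\in E^{\overline{N}_{v,0}}$ and $[y]=z^{-k}\cdot[z^k y]=\Phi([z^ky]\otimes z^{-k})$.

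Injectivity is where the real work lies and will be the main obstacle. An arbitrary element of the tensor product can be rewritten as $[x]\otimes z^{-1}$ for some $z\in\Sigma_v^+$ and $x\in E$. If $\Phi([x]\otimes z^{-1})=0$, then $[x]=0$ in $E_{\overline{N}_v}$, so $x=\sum_j(g_j-1)y_j$ for finitely many $g_j\in\overline{N}_v$ and $y_j\in E$. Choose $w\in\Sigma_v^{++}$ and $k$ large enough that $wg_jw^{-1}\in\overline{N}_{v,0}$ (resp.\ $w^kg_jw^{-k}\in\overline{N}_{v,0}$ for every $j$), which is possible by the exhaustion above. Rewriting $w^kx=\sum_j(w^kg_jw^{-k}-1)(w^ky_j)$ shows $[w^kx]=0$ in $E_{\overline{N}_{v,0}}$, and then in the tensor product
$$[x]\otimes z^{-1}=[x]\otimes(w^{-k}\cdot w^kz^{-1})=(w^k\cdot [x])\otimes w^{-k}z^{-1}=[w^kx]\otimes w^{-k}z^{-1}=0.$$
The subtlety that makes this step delicate is ensuring the chosen $k$ works simultaneously for all finitely many $g_j$ and that the resulting identity $w^k\cdot[x]=[w^kx]$ is consistent with the $C[\Sigma_v^+]$-structure used to form the tensor product; this is precisely guaranteed by the first part of the lemma, which identifies the two possible $\Sigma_v^+$-actions. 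Once injectivity is in place, $\Phi$ is the desired isomorphism of $C[\Sigma_v]$-modules.
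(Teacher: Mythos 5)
Your proposal is correct and takes essentially the same route as the paper: the first assertion is established via the averaging projector and the second via the exhaustion $\bigcup_m z^{-m}\overline{N}_{v,0}z^m=\overline{N}_v$, the invertibility of $z$ on $E_{\overline{N}_v}$, and the identification $C[\Sigma_v]\cong C[\Sigma_v^+]_z$. The paper's version is more compressed, phrasing the second part as computing the kernel of $E_{\overline{N}_{v,0}}\to E_{\overline{N}_v}$ as $\bigcup_m E_{\overline{N}_{v,0}}[z^m]$ and invoking localization, whereas you spell out surjectivity and injectivity of the induced map directly; the content is the same.
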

    
    \begin{proof}
    	One can easily check the first assertion directly. We only proof the moreover part.
    	
    	Let $z$ be an element in $\Sigma_v^{++}$, note that 
    	$$\bigcup_{m\in\mathbb{N}}z^{-m}{\overline{N}_{v,0}}z^{m} = {\overline{N}_{v}}.$$
    	Then the kernel of the natural projection $E_{\overline{N}_{v,0}}\rightarrow E_{\overline{N}_{v}}$ is $\bigcup_{m} (E_{\overline{N}_{v,0}}[z^{m}])$. As $C[\Sigma_v] \cong C[\Sigma_v^+]_{z}$, and $z$ is invertible in $E_{\overline{N}_{v}}$, it follows that
        $$E_{\overline{N}_{v,0}}\otimes_{C[\Sigma_v^+]} C[\Sigma_v] \cong E_{\overline{N}_{v}}.$$
    \end{proof}

    \begin{Rmk}\label{Rmk:Emerton=classical}
    	The left hand side of isomorphism (\ref{eq:Emerton=classical}) is the Emerton's Jacquet functor applied to the smooth (hence locally analytic) representation $E$. The right hand side is the classical Jacquet functor for smooth representation theory. Hence we can use the notation $J_{\overline{P}_v}(-)$ (resp. $J_{\overline{P}_p}(-)$) for smooth representations without any ambiguity as these two functors coincide.
    \end{Rmk}

    We fix an embedding $\iota: \overline{\mathbb{Q}}_p\rightarrow \mathbb{C}$. Then the composition $$\iota_{v,\eta}: F^+ \overset{v}{\inj} F^+_{v}\cong F_{\tilde{v}} \overset{\eta}{\inj} C \inj \overline{\mathbb{Q}}_p \overset{\iota}{\inj} \mathbb{C}$$
    gives a bijection of sets:
    \begin{align*}
    	\{(v,\eta)\ |\ v\in S_p\mathit{\ and\ }\eta\in\Hom(F_{\tilde{v}},C) \} & \rightarrow S_{\infty}:= \{\mathit{infinite\ places\ of\ }F^+\}\\
    	(v,\eta) &\mapsto \iota_{v,\eta}
    \end{align*}

    \begin{Def}
    	Let $\delta_{\mathrm{alg}}$ be an algebraic character of $M_p$ of the form
    	$$\delta_{\mathrm{alg}}:(g_{v,1},\dots,g_{v,l})_{v\in S_p}\mapsto \prod_{v\in S_p}\left(\prod_{\eta\in \Hom(F_{\tilde{v}},C)}\eta\left(\det (g_{v,i})^{b^{(v)}_{\eta,i}}\right)\right)$$
    	We say $\delta_{\mathrm{alg}}^{-1}$ \textit{is anti-dominant w.r.t.} $W$ if, for any $v\in S_p$ and $\eta\in \Hom(F_{\tilde{v}},C)$, one has
    	$$a^{(v)}_{\eta,1}-b^{(v)}_{\eta,1}\leq \cdots\leq a^{(v)}_{\eta,j}-b^{(v)}_{\eta,i_j}\leq \cdots\leq a^{(v)}_{\eta,n}-b^{(v)}_{\eta,l},$$
        here $i_j$ is the smallest integer such that $n_1+\cdots+n_{i_j}\geq i$.
        
        A point $x=(\delta,\rho)\in D_{\varsigma}(V)$, such that $\delta$ is locally algebraic, is called \textit{dominant} if the algebraic part $\delta_{\mathrm{alg}}^{-1}$ of $\delta^{-1}$ is anti-dominant w.r.t. $W$.	
    \end{Def}
    
    \begin{Rmk}
    	In this case, $\mathrm{Ind}^{G_p}_{\overline{P}_p}(W(\delta^{-1}_{\mathrm{alg}}))$ is an irreducible algebraic representation of $G_p$, with lowest weight
    	$$c^{(v)}_{\eta}:=\left(a^{(v)}_{\eta,1}-b^{(v)}_{\eta,1}\leq \cdots\leq a^{(v)}_{\eta,j}-b^{(v)}_{\eta,i_j}\leq \cdots\leq a^{(v)}_{\eta,n}-b^{(v)}_{\eta,l}\right).$$
    	We write $\iota(\mathrm{Ind}^{G_p}_{\overline{P}_p}(W(\delta^{-1}_{\mathrm{alg}})))$ for the irreducible $\mathcal{G}(F^+\otimes_{\mathbb{Q}}\mathbb{R})$-representation over $\mathbb{C}$, of lowest $\iota_{v,\eta}$-weight $c^{(v)}_{\eta}$.
    \end{Rmk}

    \begin{Prop}\label{CharOfDomClPt}
    	A dominant point $x=(\delta,\rho)\in \hat{H}_{\sigma}\times \mathfrak{X}_{\overline{\rho},S}$ in $D_{\varsigma}(V)$ is classical if and only if there exists an automorphic representation $\pi= \pi_\infty \otimes \pi_f^p\otimes \pi_p$ of $G(\mathbb{A}_{F^+})$ over $\mathbb{C}$ such that the following conditions hold:
    	\begin{enumerate}
    		\item the $\mathcal{G}(F^+\otimes_{{\mathbb{Q}}}\mathbb{R})$-representation $\pi_\infty$ is isomorphic to $\iota(\mathrm{Ind}^{G_p}_{\overline{P}_p}(W(\delta^{-1}_{\mathrm{alg}})))^{\vee}$ (in this case, we say $\pi$ is of weight $W(\delta^{-1}_{\mathrm{alg}})$);  
    		\item the $\mathrm{Gal}(\overline{F}/F)$-representation $\rho$ is the Galois representation  associate to $\pi$; 
    		\item the invariant subspace $(\pi_f^p)^{U^p}$ is nonzero;
    		\item the $M_p$-representation $\sigma(\delta_\mathrm{sm}\Delta_W)\otimes_{k(\delta),\iota} \mathbb{C}$ is a subrepresentation of $J_{\overline{P}_p}(\pi_p)$ (recall remark \ref{RmK:LocallyAlgRep}(2) for the definition of the character $\Delta_W$).
    	\end{enumerate}
    \end{Prop}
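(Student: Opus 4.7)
The plan is to unwind the definition of classical point step by step, converting the condition that $M(X,V,k)[x]\cap\bigl[\varinjlim_{k'} M(\delta_0^{-1},V,k')^{\mathrm{cl}}\bigr]\neq\emptyset$ into a statement about classical automorphic representations. First, I would apply Theorem \ref{ComparisonforAutomorphicForms} to identify $M(\delta_0^{-1},V,k)$ with $(\hat S(U^p,C)^{k\text{-}\mathrm{an}}\hat\otimes_CV(\delta_0^{-1}))^{\overline{P}_{p,0}}$ equivariantly for $\mathbb{T}^S\otimes \mathcal H_p$. The classical subspace $\mathcal C(\delta_0^{-1},V,k)^{\mathrm{cl}}$ is described by Remark \ref{RmK:LocallyAlgRep}(3) as the tensor product of $\mathrm{Ind}^{G_p}_{\overline P_p}(W(\delta_{\mathrm{alg}}^{-1}))|_{\mathbb I_p}$ and a smooth twist involving $\sigma_{p,0}^\vee\otimes\delta_{\mathrm{sm}}^{-1}$; because $\delta^{-1}_{\mathrm{alg}}$ is anti-dominant w.r.t. $W$, the algebraic factor $\mathrm{Ind}^{G_p}_{\overline P_p}(W(\delta_{\mathrm{alg}}^{-1}))$ is an irreducible algebraic $G_p$-representation of lowest weight the weights listed in the dominance hypothesis.

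Next, I would use the fact that $\mathcal G(F^+\otimes_{\mathbb Q}\mathbb R)$ is compact to decompose the space of classical (algebraic) automorphic forms
\[
S\bigl(U^p,\mathrm{Ind}^{G_p}_{\overline P_p}(W(\delta_{\mathrm{alg}}^{-1}))\bigr):=\bigl(C^\infty(\mathcal G(F^+)\backslash \mathcal G(\mathbb A_{F^+}^\infty)/U^p)\otimes_C \mathrm{Ind}^{G_p}_{\overline P_p}(W(\delta_{\mathrm{alg}}^{-1}))\bigr)^{G_p}
\]
as $\bigoplus_\pi (\pi_f^p)^{U^p}\otimes \mathrm{Hom}_{G_p}(\mathrm{Ind}^{G_p}_{\overline P_p}(W(\delta_{\mathrm{alg}}^{-1})),\pi_p)\otimes \mathrm{Hom}_{\mathcal G(\mathbb R)}(\iota(\mathrm{Ind}^{G_p}_{\overline P_p}W(\delta_{\mathrm{alg}}^{-1}))^\vee,\pi_\infty)$, where $\pi$ ranges over automorphic representations of $\mathcal G(\mathbb A_{F^+})$. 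The factorization of the $\overline{P}_{p,0}$-invariants of $(\hat S(U^p,C)^{k\text{-}\mathrm{an}}\hat\otimes V(\delta_0^{-1}))$-classical then gives an identification of the classical subspace with  $\bigoplus_{\pi} (\pi_f^p)^{U^p}\otimes (J_{\overline P_p}(\pi_p)\otimes \delta_{\mathrm{sm}})^{M_{p,0}=\sigma_{p,0}}$ (after extension of scalars along $\iota$), using Remark \ref{Rmk:Emerton=classical} to identify smooth and Emerton Jacquet functors. Matching $\mathbb T^S$-eigenvalues with the Galois representation $\rho$ attached to $\pi$ gives condition (2); nonvanishing of the tame component gives (3); the weight of $\pi_\infty$ being paired with the algebraic factor gives (1).

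Finally, for condition (4), I would analyze the eigenvalue condition for the Bernstein center $\mathfrak Z$. By Proposition \ref{Prop:ext to Bcenter}, the $\mathfrak Z$-action on the classical subspace restricts via the isomorphism $c\text{-}\mathrm{Ind}_{M_{p,0}}^{M_p}\sigma_{p,0}\otimes_{\mathfrak Z} k_{\delta_{\mathfrak Z}}\cong \sigma\otimes \delta_{\mathfrak Z}$ from Proposition \ref{SpAtOnePointForBC} to extraction of the $\delta_{\mathfrak Z}$-isotypic component. Tracking the twist by $\Delta_W$ from the non-canonical extension of the $\mathbb I_p$-action on $\mathcal C(\delta_0^{-1},V,k)^{\mathrm{cl}}$ in Remark \ref{RmK:LocallyAlgRep}(2), together with the sign flip $\delta_0\mapsto\delta_0^{-1}$ from Remark \ref{EigenVarAndSpecVar}(3), converts the $\mathfrak Z$-eigencharacter condition into the assertion that $\sigma(\delta_{\mathrm{sm}}\Delta_W)\otimes_{k(\delta),\iota}\mathbb C$ embeds into $J_{\overline P_p}(\pi_p)$, which is (4). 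The converse direction is obtained by reversing each of these identifications: starting from a representation $\pi$ satisfying (1)--(4), one produces an explicit nonzero element of $M(\delta_0^{-1},V,k)^{\mathrm{cl}}$ that lies in $M(X,V,k)[x]$.

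The main obstacle will be the careful bookkeeping of the unramified twists $\Delta_W$ and $\Delta_{\delta_{\mathrm{alg}}}$ that arise from extending the $M_{p,0}$-actions to $M_p$-actions via the decomposition $\hat H_v\cong\hat H_{v,0}\times\mathbb G_m^l$ (which depends on the fixed uniformizers $\varpi_v$), together with the reversal convention of Remark \ref{EigenVarAndSpecVar}(3); these twists must combine precisely to produce the factor $\delta_{\mathrm{sm}}\Delta_W$ in condition (4). The automorphic-representation-theoretic input (local-global compatibility giving $\rho$, decomposition of spaces of algebraic automorphic forms on a compact unitary group) is standard.
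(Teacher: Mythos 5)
Your plan follows essentially the same route as the paper's proof: decompose the classical subspace as a direct sum over automorphic representations, compute the $G_{p,0}$-invariants via Iwahori factorization and the smooth Jacquet functor, then use the Bernstein-center isomorphism $(c\text{-}\mathrm{Ind}_{M_{p,0}}^{M_p}\sigma_0)\otimes_{\mathfrak Z}k_{\delta_{\mathfrak Z}}\cong\sigma\otimes\delta_{\mathfrak Z}$ to convert the $\mathfrak Z$-eigenvalue condition into the embedding statement of condition (4). The paper cites \cite[Thm~3.9.2]{Loeffler2011} for the decomposition over $\pi$ rather than rederiving it from the decomposition of $S\bigl(U^p,\mathrm{Ind}^{G_p}_{\overline P_p}(W(\delta_{\mathrm{alg}}^{-1}))\bigr)$ as you propose, but these are the same underlying fact. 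You correctly flag the twist bookkeeping ($\Delta_W$, $\Delta_{\delta_{\mathrm{alg}}}$, and the $\delta_0\mapsto\delta_0^{-1}$ reversal from Remark \ref{EigenVarAndSpecVar}(3)) as the delicate point; in the paper this is exactly where the cancellation $\delta_{\mathfrak Z}\cdot\Delta_{\delta_{\mathrm{alg}}}^{-1}=\delta_{\mathrm{sm},\mathfrak Z}$ is used to produce the final factor $\sigma(\delta_{\mathrm{sm}}\Delta_W)$, and one also needs the chain of isomorphisms reducing $\bigl(\pi_p\otimes(\mathrm{Ind}_{\bar P_{p,0}}^{G_{p,0}}(\sigma_0^\vee\otimes\delta_{\mathrm{sm},0}^{-1}))_{\mathrm{sm}}\otimes\Delta_W^{-1}\Delta_{\delta_{\mathrm{alg}}}\bigr)^{G_{p,0}}$ to $\Hom_{M_{p,0}}(\sigma_0,(\pi_p)_{\overline N_{p,0}}\otimes\delta_{\mathrm{sm},0}^{-1})\otimes\Delta_W^{-1}\Delta_{\delta_{\mathrm{alg}}}$, using that $G_{p,0}$ is decomposable and that $\overline N_{p,0}$-invariants agree with $\overline N_{p,0}$-coinvariants (Lemma \ref{InvandCoinv}). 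So while your sketch omits the explicit Frobenius-reciprocity step to $\Hom_{M_p}(c\text{-}\mathrm{Ind}_{M_{p,0}}^{M_p}\sigma_0,\cdots)$ and the verification that the $\mathfrak Z$-action constructed in Proposition \ref{Prop:ext to Bcenter} is precisely precomposition on the first factor, the outline is the same and the missing work is filling in the displayed computation, not a conceptual gap.
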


    \begin{proof}
    	For simplicity, through out this proof, we always regard the $v$-component $\pi_v$ of $\pi$ as a representation over $\overline{\mathbb{Q}}_p$ via $\iota^{-1}$ for any finite place $v$ of $F^{+}$.
    	
    	Consider the decomposition
    	$$\hat{H}_\sigma \cong \hat{H}_{p,0}\times \mathrm{Spec}(\mathfrak{Z})^{\mathrm{rig}}$$
    	in remark \ref{Rmk:DecompforParSpofEV}(3). We write $\delta = (\delta_{0},\delta_{\mathfrak{Z}})$, where $\delta_0$ is a point in $\hat{H}_{p,0}$ and $\delta_{\mathfrak{Z}}$ is a point in $\mathrm{Spec}(\mathfrak{Z})^{\mathrm{rig}}$. Consider the decomposition $\delta = \delta_{\mathrm{sm}}\cdot\delta_{\mathrm{alg}}$ for some smooth character $\delta_{\mathrm{sm}}$ and some algebraic character $\delta_{\mathrm{alg}}$. Similarly, we write $\delta_{\mathrm{sm}} = (\delta_{\mathrm{sm},0},\delta_{\mathrm{alg},\mathfrak{Z}})$ and $\delta_{\mathrm{alg}} = (\delta_{\mathrm{alg},0},\delta_{\mathrm{alg},\mathfrak{Z}})$. One has $\delta_{\mathfrak{Z}} = \delta_{\mathrm{sm},\mathfrak{Z}}\delta_{\mathrm{alg},\mathfrak{Z}}$.
    	
    	Let $\Delta_{\delta_{\mathrm{alg}}}$ be the unramified character of $M_{p}$ associated to $\delta_{\mathrm{alg}}$ defined by remark \ref{RmK:LocallyAlgRep}(2). Note that $\Delta_{\delta_{\mathrm{alg}}}$ can be regarded as a point in $\mathrm{Spec}(\mathfrak{Z})^{\mathrm{rig}}$ via the quotient map
    	$$\hat{H}_p\surj \hat{H}_\sigma.$$
    	Then one can easily see that $\Delta_{\delta_{\mathrm{alg}}} = \delta_{\mathrm{alg,\mathfrak{Z}}}$. 
    	
    	Let $X$ be an affinoid subdomain in $\hat{H}_{p,0}$ such that $\delta_0\in X$. It suffices to show that $(\delta,\rho)$ is a classical point in $D_{\varsigma}(X,V)$. Let $k\geq \max\{k(X),k(V)\}$. Note that, by \cite[Prop~2.3.2~and~Cor~3.3.5]{Loeffler2011}, one has 
    	$$M(X,V,k)[{\delta_0}]\cong M(X,V,k)\otimes_{R_X,\delta_0} k(\delta_0)\cong M(1,V(\delta_0^{-1}),k).$$
    	 
        By \cite[Cor~3.7.3]{Loeffler2011}, one has $\varinjlim\limits_{k'\geq k} M(X,V,k')_{\mathfrak{m}^S}[{x}] = M(X,V,k)_{\mathfrak{m}^S}[{x}]$. Hence the space
        $$M(1,V(\delta_0^{-1}),k)_{\mathfrak{m}^S}[{x}]\cap \varinjlim_{k} M(1,V(\delta_0^{-1}),k)^{\mathrm{cl}}$$
        is non-empty if and only if the space
        $$\varinjlim_{k'\geq k} M(1,V(\delta_0^{-1}),k)_{\mathfrak{m}^S}^{\mathrm{cl}}[x]$$ 
        is non-empty. 
        
        Let $\Delta_W$ (resp. $\Delta_{\mathrm{alg}}$) be the unramified character of $M_{p}$ associated to $W$ (resp. $\delta_{\mathrm{alg}}$).
        It follows from \cite[Theo~3.9.2]{Loeffler2011} that 
        $\varinjlim_{k'\geq k}M(1,V(\delta_0),k)^{\mathrm{cl}}$ is isomorphic, as an $\left(\mathcal{H}_p\otimes_{L}\mathbb{T}^S[\frac{1}{p}]\right)$-module, to
        \begin{equation}\label{eq:DecompForClAutoForms}
        	\bigoplus_{\pi} m(\pi)\left[ (\pi_{f}^{p})^{U^p}\otimes \left(\pi_p\otimes \left(\mathrm{Ind}_{\bar{P}_{p,0}}^{G_{p,0}}(\sigma_0^\vee\otimes\delta_{\mathrm{sm},0}^{-1})\right)_{\mathrm{sm}}\otimes\Delta_W^{-1}\Delta_{\delta_{\mathrm{alg}}}\right) ^{G_{p,0}}\right]
        \end{equation}
        where the direct sum is over all automorphic representations $\pi$ of weight $W(\delta_{\mathrm{alg}})$ such that $(\pi_f^p)^{U^p}\neq 0$. Note that $G_{p,0}$ is decomposable in the sense of \cite[Def~2.2.1]{Loeffler2011}, i.e. $G_{p,0}\cong \overline{N}_{p,0}\times M_{p,0}\times N_{p,0}$. Then one has:
        \begin{align*}
        	&\left(\pi_p\otimes \left(\mathrm{Ind}_{\bar{P}_{p,0}}^{G_{p,0}}(\sigma_0^\vee\otimes\delta_{\mathrm{sm},0}^{-1})\right)_{\mathrm{sm}}\otimes\Delta_W^{-1}\Delta_{\delta_{\mathrm{alg}}}\right) ^{G_{p,0}}\\
        	\cong &\left(\left(\left(\pi_p\otimes \left(\mathrm{Ind}_{\bar{P}_{p,0}}^{G_{p,0}}(\sigma_0^\vee\otimes\delta_{\mathrm{sm},0}^{-1})\right)_{\mathrm{sm}}\right) ^{N_{p,0}}\right)^{M_{p,0}}\right)^{\overline{N}_{p,0}}\otimes\Delta_W^{-1}\Delta_{\delta_{\mathrm{alg}}}\\
        		\cong &\left(\left(\left( \mathcal{C}^{\mathrm{sm}}(N_{p,0},\pi_p\otimes\sigma_0^\vee\otimes \delta_{\mathrm{sm},0}^{-1})\right) ^{N_{p,0}}\right)^{M_{p,0}}\right)^{\overline{N}_{p,0}}\otimes\Delta_W^{-1}\Delta_{\delta_{\mathrm{alg}}}\\
        		\cong& \left(\left(\pi_p\otimes\sigma_0^\vee\otimes \delta_{\mathrm{sm},0}^{-1}\right)^{M_{p,0}}\right)^{\overline{N}_{p,0}}\otimes\Delta_W^{-1}\Delta_{\delta_{\mathrm{alg}}}\\
        		\cong& \left[\Hom_{M_{p,0}}(\sigma_0,\pi_p\otimes\delta_{\mathrm{sm},0}^{-1})\right]^{\overline{N}_{p,0}} \otimes\Delta_W^{-1}\Delta_{\delta_{\mathrm{alg}}}\\
        		\cong& \Hom_{M_{p,0}}(\sigma_0,\pi_p^{\overline{N}_{p,0}}\otimes\delta_{\mathrm{sm},0}^{-1})\otimes\Delta_W^{-1}\Delta_{\delta_{\mathrm{alg}}} \\
        		\cong& \Hom_{M_{p,0}}(\sigma_0,(\pi_p)_{\overline{N}_{p,0}}\otimes\delta_{\mathrm{sm},0}^{-1})\otimes\Delta_W^{-1}\Delta_{\delta_{\mathrm{alg}}}
        \end{align*}       
        (The second to last equality comes from the fact that $\overline{N}_{p,0}$ acts trivially on $\sigma_0$ and $\xi_x$, and the last one follows from lemma \ref{InvandCoinv}).
         
         After localizing at $\mathfrak{m}^S$, the space $M(1,V(\delta_0^{-1}),k)^{\mathrm{cl}}$ becomes an $R_{\overline{\rho},S}$-module. Then after taking the $\mathfrak{p}_{\rho}$-torsion ($\mathfrak{p}_{\rho}$ is the prime ideal in $R_{\overline{\rho},S}$ associated to $\rho$), it becomes the closed subspace which expressed as a direct sum of the same terms as in (\ref{eq:DecompForClAutoForms}), for $\pi$ runs through all automorphic representation $\pi$ of weight $W(\delta_\mathrm{alg}^{-1})$ such that $(\pi_p^f)^{U^p}\neq 0$, and the $\mathrm{Gal}(\overline{F}/F)$-representation $\rho$ is the Galois representation associated to $\pi$. It follows that the space 
        $$[\varinjlim_{k'\geq k}M(1,V(\delta_0^{-1}),k)^{\mathrm{cl}}]_{\mathfrak{m}^S}[{x}]$$
        is non-empty if and only if there exists a automorphic representation $\pi$ satisfies conditions (1), (2) and (3) such that the space
        \begin{align}\label{Isom:Clpts}
        	&\left(\Hom_{M_{p,0}}(\sigma_0,(\pi_{p})_{\overline{N}_{p,0}}\otimes\delta_{\mathrm{sm},0}^{-1}\otimes\Delta_W^{-1}\Delta_{\delta_{\mathrm{alg}}})\right)[\delta_{\mathfrak{Z}}]\nonumber\\
        	\cong & \left(\Hom_{M_{p,0}}(\sigma_0,J_{\overline{N}_{p}}(\pi_p)\otimes\delta_{\mathrm{sm},0}^{-1}\otimes\Delta_W^{-1}\Delta_{\delta_{\mathrm{alg}}})\right)[\delta_{\mathfrak{Z}}]\\
        	\label{Isom:Clpts2}\cong & \left(\Hom_{M_{p}}(c\mathrm{-Ind}_{M_{p,0}}^{M_p}\sigma_0,J_{\overline{N}_{p}}(\pi_p)\otimes\delta_{\mathrm{sm},0}^{-1}\otimes\Delta_W^{-1}\Delta_{\delta_{\mathrm{alg}}})\right)[\delta_{\mathfrak{Z}}] 
        \end{align}
        is non-empty, here $\delta_{\mathrm{sm},0}$ is regarded as a character of $H_{p}$ such that $$\delta_{\mathrm{sm},0}((\varpi^{k_{v,1}}_{v,1},\dots,\varpi^{k_{v,l}}_{v,l})_{v\in S_p})=1.$$
        Indeed, the isomorphism (\ref{Isom:Clpts}) follows from the same argument in proposition \ref{Prop:Finiteslopepart}, as the eigenvalue of $\varsigma$ is invertible under the character $\delta_{\mathfrak{Z}}$ (and note that, by remark \ref{Rmk:Emerton=classical}, the Jacquet functor $J_{\overline{N}_{p}}(-)$ here is the classical one). And note that the $\mathfrak{Z}$-action on the space (\ref{Isom:Clpts2}), induced by the construction in proposition \ref{Prop:ext to Bcenter}, is exactly the natural action on the first factor $c\mathrm{-Ind}_{M_{p,0}}^{M_p}\sigma_0$.  
         
        Note that $(c\mathrm{-Ind}_{M_{p,0}}^{M_p}\sigma_0)\otimes_{\mathfrak{Z}}\delta_{\mathfrak{Z}} \cong \sigma(\delta_{\mathfrak{Z}})$. It follows that the space above is non-empty if and only if there exist a $M_{p}$-equivariant map 
        $$\alpha: c\mathrm{-Ind}_{M_{p,0}}^{M_p}\sigma_0 \rightarrow J_{\overline{N}_{p}}(\pi_p)\otimes\delta_{\mathrm{sm},0}^{-1}\otimes\Delta_W^{-1}\Delta_{\delta_{\mathrm{alg}}}$$ 
        such that 
        \begin{align*}
        	\alpha\circ F(f) = \delta_{\mathfrak{Z}}(F)\cdot \alpha(f)
        \end{align*}
    for any $F\in\mathfrak{Z}$ (note that by our construction in proposition \ref{Prop:ext to Bcenter}, the Bernstein action is $\alpha\mapsto \alpha\circ F$).
    Such $\alpha$ exists if and only if it factors through the projection
    $$c\mathrm{-Ind}_{M_{p,0}}^{M_p}\sigma_0\surj (c\mathrm{-Ind}_{M_{p,0}}^{M_p}\sigma_0)\otimes_{\mathfrak{Z},\delta_{\mathfrak{Z}}}k(\delta_{\mathfrak{Z}})\cong \sigma(\delta_{\mathfrak{Z}}),$$
    i.e. if and only if there exists an $M_p$-equivariant map
    $$\bar{\alpha}: \sigma(\delta_{\mathfrak{Z}}) \rightarrow J_{\overline{N}_{p}}(\pi_p)\otimes\delta_{\mathrm{sm},0}^{-1}\otimes\Delta_W^{-1}\Delta_{\delta_{\mathrm{alg}}}$$
    
    Because $\Delta_{\delta_{\mathrm{alg}}} = \delta_{\mathrm{alg},\mathfrak{Z}}$ as a point in $\mathrm{Spec}(\mathfrak{Z})^{{\mathrm{rig}}}$, then $\delta_{\mathfrak{Z}}\cdot\Delta_{\delta_{\mathrm{alg}}}^{-1} = \delta_{\mathrm{sm},\mathfrak{Z}}$. Hence if we twisting $\bar{\alpha}$ with $\Delta_{\delta_{\mathrm{alg}}}^{-1}\delta_{\mathrm{sm,0}}\Delta_{W}$, the left hand side is 
    \begin{align*}
    	\sigma(\delta_{\mathfrak{Z}}\Delta_{\delta_{\mathrm{alg}}}^{-1}\delta_{\mathrm{sm,0}}\Delta_{W}) &\cong \sigma(\delta_{\mathrm{sm},\mathfrak{Z}}\delta_{\mathrm{sm,0}}\Delta_{W})\\
    	&\cong \sigma(\delta_{\mathrm{sm}}\Delta_{W})
    \end{align*}
    It follows that such $\alpha$ exists if and only if there exists an $M_p$-equivariant map
    $$\sigma(\delta_{\mathrm{sm}}\Delta_W) \rightarrow J_{\overline{P}_p}(\pi_p),$$
    which is injective as $\sigma$ is irreducible. This is exactly the condition (4), and hence we finish the proof.
    \end{proof}

    \subsection{Density of Classical Points and Applications}
    \label{Subsect:Density}
    In this part, we will prove the Zariski density of the classical points (actually, of the dominant, very regular, classical, non-critical points. See the definition below). Then we use this property to compare the eigenvarieties and paraboline varieties via the local-global compatibility of the Langlands correspondence, and show that the eigenvariety $D_{\varsigma}(V)$ (up to isomorphism) does not depend on the choice of $\varsigma\in\Sigma^{++}$.
    
    For this purpose, we need to clarify our setting in section \ref{Global} for the parameter spaces of paraboline varieties to make them compatible with our setting for eigenvarieties. 
    
    Through out this section, we always denote $s_i:= n_1+\cdots+n_l$. And fix an isomorphism: $\iota: \overline{\mathbb{Q}}_p \xrightarrow{\sim} \mathbb{C}$. We denote by $\mathrm{val}$ the normalized valuation on any $p$-adic local field, such that $\mathrm{val}(p)=1$. 
    
     For each $v\in S_p$, we fix a supercuspidal representation $\sigma_v$ of $M_v$, which is isomorphic to $\mathrm{GL}_{n_1}(F_{\tilde{v}})\times\cdots\times \mathrm{GL}_{n_l}(F_{\tilde{v}})$ as in the previous section. Hence $\sigma_v$ is of the form:
    $$\sigma_{v,1}\otimes\cdots \otimes \sigma_{v,l},$$
    where each $\sigma_{v,i}$ is a supercuspidal representation of $\mathrm{GL}_{n_i}(F_{\tilde{v}})$.
    
    We denote by $\varrho_{v,i} = \mathrm{rec}(\sigma_{v,i})$, the Weil-Deligne representation of $F_{\tilde{v}}$ attached to $\sigma_{v,i}$ via the local Langlands correspondence in \cite{harris2001geometry}. And denote
    by $\tau_{v,i}$ the type of $\varrho_{v,i}$.
    
    \begin{Rmk}
    	Recall that, by proposition \ref{Prop:WD-decomp}, for each $v\in S_p$, and $1\leq i\leq l$, there exists an unique subgroup $W_{\tau_{v,i}}$ of $ W_{F_{\tilde{v}}}$ of finite index with the same inertial group $I_{F_{\tilde{v}}}$, and some irreducible smooth representation $\tilde{\tau}_{v,i}$ of $W_{\tau_{v,i}}$ such that $\tilde{\tau}_{v,i}|_{I_{F_{\tilde{v}}}}$ is irreducible and  
    	$$\varrho_{v,i}\cong \mathrm{Ind}_{W_{\tau_{v,i}}}^{W_{F_{\tilde{v}}}}(\tilde{\tau}_{v,i})$$
    	
    	Then we say $\tau_{v,i} := \tilde{\tau}_{v,i}|_{I_{F_{\tilde{v}}}}$ is the type of $\varrho_{v,i}$.
    \end{Rmk}
     Following the notations in section \ref{Subsect:DecompofWDRep}, we write $K_{\tau_{v,i}}$ for the unique unramified extension of $F_{\tilde{v}}$ of degree $e_{\tau_{v,i}} =[W_{F_{\tilde{v}}}:W_{\tau_{v,i}}]$. We write $F_{\tau_{v,i}}$ for the image of $K_{\tau_{v,i}}$ in $F_{\tilde{v}}$ by the norm map, and write $\mathcal{T}_{\tau_{v,i}}$ for the moduli ($C$-rigid) space of continuous character of $F_{\tau_{v,i}}$, and write
    $$\mathcal{T}_{v} := \prod_{i=1}^{l}\mathcal{T}_{\tau_{v,i}}.$$
    
    For simplicity, if $\sigma_{v,i}$ and $\sigma_{v,j}$ are in the same Bernstein Block, we may always assume that $\sigma_{v,i}\cong \sigma_{v,j}$ and $\tilde{\tau}_{v,i}\cong \tilde{\tau}_{v,j}$.
    
    Recall that, for each $v\in S_p$, we fix an irreducible algebraic representation $M_v$ of lowest $\eta$-weight
    $$a^{(v)}_{\eta,1}\leq \cdots \leq a^{(v)}_{\eta,n}.$$
    
    Let $\mathcal{S}_v$ be the parameter space, defined in section \ref{Global}, associated to the Weil-Deligne data $(\tilde{\tau}_{v,i},\dots,\tilde{\tau}_{v,i})$, and filtration weights
     $\underline{k}^{(v)}:=(\underline{k}^{(v)}_1,\dots,\underline{k}^{(v)}_l)$ for
     $$\underline{k}^{(v)}_i:=(a^{(v)}_{\eta,s_{i-1}+1}+s_{i-1}< a^{(v)}_{\eta,s_{i_{i-1}+2}}+s_{i-1}+1< \cdots < a^{(v)}_{\eta,s_i} +s_i-1)_{\eta\in\Hom(F_{\tilde{v}},C)}.$$
     Then $\mathcal{S}_{v}\cong \mathcal{T}_v\times \mathrm{Flag}_v$ by the construction of $\mathcal{S}_v$, where $\mathrm{Flag}_v$ is the flag variety for the weight data $\underline{k}^{(v)}$.
     
    \begin{Rmk}
    	In section \ref{Global}, we require that the largest weight of the parameter space $\mathcal{S}_v$ should be 0. Here, for the convention to compare with eigenvarieties, we shift it by $a_{\eta,s_i}^{(v)}+s_i-1$ which actually still represents the same moduli space, i.e. the moduli space of $l$ quasi-deRham $(\varphi,\Gamma_{F_{\tilde{v}}})$-modules $(D_1,\dots,D_l)$, such that each $D_i$ is of the form $\mathrm{D}(M_i)(\delta_i)$, for continuous character $\delta_i$, and filtered $(\varphi,N,G_{F_{\tilde{v}}})$-module $M_i$ of type $\tau_{v,i}$, and weight $\underline{k}_i^{(v)}$.  
    	
    	Indeed, after replacing $\underline{k}_i^{(v)}$ by $\underline{k}_i^{(v)}-a_{i}^{(v)}$, we still get the same moduli space via the the map
    	$$(M_i,\delta_i) \mapsto (M_i(\delta_{a_{i}^{(v)}}),\delta_i \cdot \delta^{-1}_{a_{ i}^{(v)}}),$$
    	where $a_{i}^{(v)} := (a_{\eta,s_i}^{(v)})_{\eta} + s_i-1$.  
    \end{Rmk}

     Let $\overline{\rho}: \mathrm{Gal}(\overline{F}/F)\rightarrow \mathrm{GL}_n(k_C)$ be the absolutely irreducible  Galois representation of the previous section. For each $v\in S_p$, we denote by $\overline{\rho}_v$ the Galois representation of $\mathrm{Gal}(\overline{F}_{\tilde{v}}/F_{\tilde{v}})$ obtained from $\overline{\rho}$ by the restriction $\mathrm{Gal}(\overline{F}_{\tilde{v}}/F_{\tilde{v}}) \inj \mathrm{Gal}(\overline{F}/F)$. We denote by $j_{\overline{\rho}}$ the natural map:
    \begin{align*}
    	\mathfrak{X}_{\overline{\rho},S} &\rightarrow \prod_{v\in S_p}\mathfrak{X}_{\overline{\rho}_v}\\
    	\rho & \mapsto (\rho_v)_{v\in S_p},
    \end{align*}
    where $\rho_v$ is the restriction of $\rho$ in $\mathrm{Gal}(\bar{F}_{\tilde{v}}/F_{\tilde{v}})$.

    \begin{Lemma}
    	Let $\delta$ be an unramified character of $\mathrm{GL}_{n_i}(F_{\tilde{v}})$ over $C$. Then $$\sigma_{v,i} \cong \sigma_{v,i}(\delta)$$
    	if and only if $\delta$ restricted on $\{g\ |\ \mathrm{det}(g)\in {F_{\tau_{v,i}}^{\times}}\}$ is trivial. 
    \end{Lemma}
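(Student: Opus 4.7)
The plan is to translate the statement across local Langlands into a twist-invariance question for Weil--Deligne representations, reduce to a statement about $\tilde{\tau}_{v,i}$ using Lemma \ref{Lem:Twistingbychar}, and then use local class field theory to rewrite the condition in terms of the norm group $F_{\tau_{v,i}}^{\times}$.

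First I would observe that any unramified character $\delta$ of $\mathrm{GL}_{n_i}(F_{\tilde{v}})$ is of the form $\chi\circ\det$ for a unique unramified character $\chi:F_{\tilde{v}}^{\times}\to C^{\times}$ (since the abelianization of $\mathrm{GL}_{n_i}(F_{\tilde{v}})$ is $F_{\tilde{v}}^{\times}$ via the determinant), and the hypothesis $\delta|_{\{g:\det(g)\in F_{\tau_{v,i}}^{\times}\}}=1$ is just $\chi|_{F_{\tau_{v,i}}^{\times}}=1$. By the compatibility of local Langlands with twists, $\sigma_{v,i}(\delta)\cong\sigma_{v,i}$ iff $\varrho_{v,i}\otimes\chi\cong\varrho_{v,i}$ in the Weil--Deligne category, where $\chi$ is regarded as a character of $W_{F_{\tilde{v}}}$ through $\mathrm{Art}$. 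Writing $\varrho_{v,i}=\mathrm{Ind}_{W_{\tau_{v,i}}}^{W_{F_{\tilde{v}}}}(\tilde{\tau}_{v,i})$ and invoking the second assertion of Lemma \ref{Lem:Twistingbychar}, this is equivalent to the existence of $g\in W_{F_{\tilde{v}}}/W_{\tau_{v,i}}$ such that $\tilde{\tau}_{v,i}\otimes\chi|_{W_{\tau_{v,i}}}\cong\tilde{\tau}_{v,i}^{g}$.

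Next I would use the unramifiedness of $\chi$ to cut down the ambiguity in $g$. Since $\chi|_{I}=1$, restricting both sides to $I$ gives $\tau_{v,i}\cong\tau_{v,i}^{g}$, which by the very definition of $W_{\tau_{v,i}}$ forces $g\in W_{\tau_{v,i}}$; and for such $g$ the representation $\tilde{\tau}_{v,i}^{g}$ is isomorphic to $\tilde{\tau}_{v,i}$ via $v\mapsto\tilde{\tau}_{v,i}(g)v$. The question therefore reduces to whether $\tilde{\tau}_{v,i}\otimes\chi|_{W_{\tau_{v,i}}}\cong\tilde{\tau}_{v,i}$ as $W_{\tau_{v,i}}$-representations. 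Both sides are extensions of the same irreducible $I$-representation $\tau_{v,i}$, so by (the argument behind) Lemma \ref{Lem:extprop} any isomorphism between them is forced by Schur's lemma on $I$ to be a scalar, and such a scalar intertwines the two extensions only if their values on a topological generator $g_{0}$ of $W_{\tau_{v,i}}/I$ coincide, i.e.\ $\chi(g_{0})=1$. Because $\chi|_{W_{\tau_{v,i}}}$ is unramified, this is the same as saying $\chi|_{W_{\tau_{v,i}}}=1$.

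Finally, I would translate this via local class field theory: the restriction $W_{\tau_{v,i}}=W_{K_{\tau_{v,i}}}\hookrightarrow W_{F_{\tilde{v}}}$ corresponds under Artin maps to the norm $N_{K_{\tau_{v,i}}/F_{\tilde{v}}}:K_{\tau_{v,i}}^{\times}\to F_{\tilde{v}}^{\times}$, whose image is $F_{\tau_{v,i}}^{\times}$ by the very definition of $F_{\tau_{v,i}}$. Hence $\chi|_{W_{\tau_{v,i}}}=1$ is equivalent to $\chi|_{F_{\tau_{v,i}}^{\times}}=1$, closing the chain of equivalences. The main obstacle is the middle step: rigorously passing from $\tilde{\tau}_{v,i}\otimes\chi|_{W_{\tau_{v,i}}}\cong\tilde{\tau}_{v,i}$ to the genuine triviality of $\chi|_{W_{\tau_{v,i}}}$ (rather than only some power of it, as one would get from the determinant alone). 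This is where the uniqueness statement of Lemma \ref{Lem:extprop} must be used carefully, exploiting that both representations restrict to the same irreducible $I$-representation so that Schur's lemma rules out non-scalar intertwiners.
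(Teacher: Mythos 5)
Your proposal is correct and follows essentially the same route as the paper: reduce via local Langlands to $\varrho_{v,i}\otimes\mathrm{rec}(\delta)\cong\varrho_{v,i}$ and then apply Lemma \ref{Lem:Twistingbychar}, with the norm-group translation coming from class field theory and the definition of $F_{\tau_{v,i}}$. The paper's proof is just this two-step citation, so your extra care (unramifiedness forcing $g\in W_{\tau_{v,i}}$, Schur's lemma forcing $\chi|_{W_{\tau_{v,i}}}=1$) merely fills in details the paper leaves implicit, and those details are right.
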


     \begin{proof}
     	It follows from the local Langlands correspondence that $\sigma_{v,i} \cong \sigma_{v,i}(\delta)$ if and only if $\varrho_{v,i} \cong \varrho_{v,i}\otimes\mathrm{rec}({\delta})$ . Then the assertion follows from lemma \ref{Lem:Twistingbychar}.
     \end{proof}
 
     \begin{Cor}
     	The local Artin map:
     	$$\mathrm{rec:} \mathrm{GL}_{1}(F_{\tilde{v}}) \rightarrow F_{\tilde{v}}^{\times}$$
     	induces an isomorphism
     	\begin{align*}
     		j_{v,i}: \hat{H}_{\sigma_{v,i}} & \xrightarrow{\sim} \mathcal{T}_{\tau_{v,i}}\\
     		\delta &\mapsto\delta\circ \mathrm{rec}^{-1}|_{F_{\tau_{v,i}}^{\times}}
     	\end{align*}
     \end{Cor}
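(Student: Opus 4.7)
The plan is to reduce the corollary to an elementary computation with continuous characters of $F_{\tilde v}^\times$. Any continuous character $\delta$ of $\mathrm{GL}_{n_i}(F_{\tilde v})$ factors uniquely through $\det$ as $\tilde\delta\circ\det$ for some continuous character $\tilde\delta$ of $F_{\tilde v}^\times$ (since the derived group of $\mathrm{GL}_{n_i}$ is $\mathrm{SL}_{n_i}$); this yields the canonical identification $\hat H_{v,i}\cong \mathcal T_{F_{\tilde v}}$. Under this identification the map $j_{v,i}$ becomes simply restriction $r:\mathcal T_{F_{\tilde v}}\to \mathcal T_{\tau_{v,i}}$, $\tilde\delta\mapsto \tilde\delta|_{F_{\tau_{v,i}}^\times}$. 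It therefore suffices to prove that $r$ is surjective with kernel equal to the image of $\hat H^{\sigma_{v,i}}_{v,i}$ in $\mathcal T_{F_{\tilde v}}$.

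For the kernel, I would first observe that $F_{\tau_{v,i}}^\times=N_{K_{\tau_{v,i}}/F_{\tilde v}}(K_{\tau_{v,i}}^\times)$ contains $\mathcal O_{F_{\tilde v}}^\times$ and has index exactly $e_{\tau_{v,i}}$ in $F_{\tilde v}^\times$, since $K_{\tau_{v,i}}/F_{\tilde v}$ is unramified of degree $e_{\tau_{v,i}}$. Hence $\ker r$ is canonically the Pontryagin dual of the finite cyclic group $F_{\tilde v}^\times/F_{\tau_{v,i}}^\times\cong \mathbb Z/e_{\tau_{v,i}}\mathbb Z$: in particular it is a cyclic group of order $e_{\tau_{v,i}}$ consisting entirely of unramified characters. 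The preceding lemma then identifies each element of $\ker r$ with an unramified $\delta$ satisfying $\sigma_{v,i}\otimes\delta\cong\sigma_{v,i}$, giving a natural injection $\ker r\hookrightarrow \hat H^{\sigma_{v,i}}_{v,i}$. Since Remark~\ref{Rmk:DecompforParSpofEV}(1) records $|\hat H^{\sigma_{v,i}}_{v,i}|=e_{\tau_{v,i}}$, this injection must be an equality.

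For surjectivity of $r$, I would use the description of both parameter spaces as rigid spaces. After choosing a uniformizer of $F_{\tilde v}$ one has $\mathcal T_{F_{\tilde v}}\cong \mathcal W_{F_{\tilde v}}\times\mathbb G_m^{\mathrm{rig}}$ and $\mathcal T_{\tau_{v,i}}\cong \mathcal W_{F_{\tilde v}}\times\mathbb G_m^{\mathrm{rig}}$, and under these coordinates $r$ is the identity on the first factor and the $e_{\tau_{v,i}}$-th power map on the second factor, hence a finite étale cover of degree $e_{\tau_{v,i}}$ with Galois group $\mu_{e_{\tau_{v,i}}}$; in particular $r$ is surjective. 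Combining the kernel computation with surjectivity yields the desired isomorphism $j_{v,i}:\hat H_{\sigma_{v,i}}\xrightarrow{\sim}\mathcal T_{\tau_{v,i}}$.

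The main obstacle is confirming that $\hat H^{\sigma_{v,i}}_{v,i}$ contains no ramified characters, since the preceding lemma only classifies unramified $\delta$ with $\sigma_{v,i}\otimes\delta\cong\sigma_{v,i}$; without this, $\ker r$ would only be known to inject into $\hat H^{\sigma_{v,i}}_{v,i}$. The cardinality count sketched above resolves this cleanly: the lemma already produces $e_{\tau_{v,i}}$ distinct unramified characters fixing $\sigma_{v,i}$, matching the order of $\hat H^{\sigma_{v,i}}_{v,i}$ given by Remark~\ref{Rmk:DecompforParSpofEV}(1), so no additional ramified twists can occur and every element of $\hat H^{\sigma_{v,i}}_{v,i}$ is automatically unramified.
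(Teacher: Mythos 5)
Your proposal is correct and takes essentially the same route as the paper: the paper's one-sentence proof just invokes the preceding lemma to identify $\hat{H}_{\sigma_{v,i}}$ with the space of continuous characters of $\{g \mid \det(g)\in F_{\tau_{v,i}}^{\times}\}$, which is precisely your reduction via $\det$ followed by restriction to $F_{\tau_{v,i}}^{\times}$. Your additional steps (the cardinality count using Remark \ref{Rmk:DecompofParSpofEV=DecompforParSpofEV} given in Remark \ref{Rmk:DecompforParSpofEV}(1) to exclude ramified characters from $\hat{H}^{\sigma_{v,i}}_{v,i}$, and the coordinate check of surjectivity) merely make explicit what the paper's argument leaves implicit.
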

 
     \begin{proof}
     	It follows from the lemma above that $\hat{H}_{\sigma_{v,i}}$ is the moduli space of continuous character of  $\{g\ |\ \mathrm{det}(g)\in {F_{\tau_{v,i}}^{\times}}\}$. Then one gets the conclusion.
     \end{proof}
 
    %For each $v\in S_p$, and each $\eta\in \Hom(F_{\tilde{v}},L)$ we write $\underline{k}_{\eta,i}^{(v)}$ for the $\eta$-Hodge-Tate weight 
    %$$\left(0 > a^{(v)}_{\eta,s_{i-1}+2} - a^{(v)}_{\eta,s_{i-1}+-1}-1>\dots> a^{(v)}_{\eta,s_{i}-1} - a^{(v)}_{\eta,s_{i-1}+-1}-(n_i-1)\right),$$
    %where 
    %$s_i:= n_1 + \cdots + n_{i}$. Then for each $v\in S_p$, we write $\mathcal{S}^{(v)}$ for the parameter space defined in section 5 for the data $(\tau_{v,l+1-i})_i$, and $(\underline{k}^{(v)}_{\eta,l+1-i})_{\eta,i}$, i.e.
    %$$\mathcal{S}^{(v)} \cong \prod_{1\leq i\leq l}(\mathcal{T}_{\tau_v,l+1-i}\prod_{\eta\in\Hom(F_{\tilde{v}},L)}\mathrm{Flag}^{(v)}_{\eta,l+1-i}),$$
    %where $\mathrm{Flag}^{(v)}_{\eta,i}$ is the complete flag variety for $\underline{k}_{\eta,i}^{(v)}$. We write
    %$$\mathcal{T}^{(v)} := \prod_{1\leq i\leq l}\mathcal{T}_{\tau_v,l+1-i}$$
    %and
     %$$\mathcal{S} := \prod_{v\in S_p}\mathcal{S}^{(v)}, \mathcal{T} := \prod_{v\in S_p}\mathcal{T}^{(v)}.$$ We write write $\mathcal{T}^{(v)}_{\mathrm{reg}}$ (resp. $\mathcal{T}_{\mathrm{reg}}$) for the regular locus of $\mathcal{T}^{(v)}$ (resp. $\mathcal{T}$).
    
    We denote by $\delta_{\mathcal{P}}$ the modulus character:
    $$(g_1,\dots,g_l) \mapsto |\mathrm{det}(g_1)|_{F_{\tilde{v}}}^{n-n_1}\otimes\cdots\otimes |\mathrm{det}(g_i)|_{F_{\tilde{v}}}^{n+n_i-2s_i} \otimes\cdots\otimes |\mathrm{det}(g_l)|_{F_{\tilde{v}}}^{n_l-n}$$ 
    for $M_v$.
    
     We denote by $j_{v,i}'$ the map
    \begin{align*}
    	j_{v}': \hat{H}_{\sigma_{v}} & \xrightarrow{\sim} \mathcal{T}_{\tau_{v}}\\
    	\delta  & \mapsto j_v(\Delta_{W}\cdot \delta_{\mathcal{P}}^{-\frac{1}{2}}\cdot |\mathrm{det}|^{\frac{1-n}{2}}\cdot \delta)
    \end{align*}
    where $j_v:= (j_{v,1},\dots,j_{v,l})$ 
    
    \begin{Def}
    	A locally algebraic character $\delta=(\delta_v)_{v\in S_p}\in \hat{H}_{\sigma}$ is called \textit{very regular} if for each $v\in S_p$, the character $j_v'(\delta_v)=(\delta_{v,1}'\dots,\delta_{v,l}')$ fails to have the following property:
    	
    	 there exist $1\leq i,j\leq l$ such that $\tau_{v,i}\cong \tau_{v,j}$, and for such $i,j$, the smooth part of $\delta_{v,i}'/\delta_{v,j}'$ is an unramified character such that $(\delta_{v,i}'/\delta_{v,j}')_{\mathrm{sm }}(\varpi_{v}^{e_{\tau_{v,i}}}) \in\{ 1, q_{v,i} \}$, where $q_{v,i}$ is the cardinality of the residue field of $K_{\tau_{v,i}}$. 
    \end{Def}
    \begin{Prop} \label{VCptProp}
    	Let $(\delta,\rho)$ be a dominant classical point in $D_{\varsigma}(V)$. Suppose that $\delta$ is very regular. And suppose that 
    	$$\delta_{\mathrm{alg}}\left((g_{v,1},\dots,g_{v,l})_{v\in S_p}\right) = \prod_{v\in S_p\atop  \eta:F_{\tilde{v}}\inj C} \left( \prod_{i=1}^{l}\eta(\mathrm{det}(g_{v,i}))^{b^{(v)}_{\eta,i}} \right),$$
    	for $g_{v,i} \in \mathrm{GL}_{n_i}(F_{\tilde{v}})$.
    	
    	 Then for each $v\in S_p$, the $\mathrm{Gal}(\overline{F}_{\tilde{v}}/F_{\tilde{v}})$-representation $\rho_v$ is deRham with the following properties:
    	\begin{enumerate}
    		\item For each $\eta\in\Hom(F_{\tilde{v}},C)$, the $\eta$-Hodge-Tate weight (see the definition in \cite[Def~2.3.4]{Brinon09cmisummer}) of $\rho_v$ is 
    		$$a^{(v)}_{\eta,1}-b^{(v)}_{\eta,1}<\dots< a^{(v)}_{\eta,j}-b^{(v)}_{\eta,i_j}+ (j-1) <\dots < a^{(v)}_{\eta,n}-b^{(v)}_{\eta,l}+(n-1),$$
    		where $i_j$ is the smallest integer such that $j\leq s_{i_j}$;
    		\item The Weil-Deligne representation $\mathrm{WD}(\rho_v)$ attached to $\rho_v $ is isomorphic to
    		$$\bigoplus_{1\leq i\leq l}\varrho_{v,i}\otimes (\delta_{v,i}')_{\mathrm{sm}},$$
    		where $(\delta_{v,1}'\dots,\delta_{v,l}')=j_v'(\delta_v)$.
    	\end{enumerate}
    	 
    \end{Prop}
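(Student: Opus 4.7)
The plan is to use the characterization of dominant classical points in Proposition~\ref{CharOfDomClPt} to produce an automorphic representation $\pi$ attached to $(\delta,\rho)$, and then feed this into the known local-global compatibility for unitary groups in order to read off the Hodge--Tate weights and the Weil--Deligne representation at each place $v\in S_p$. More precisely, since $(\delta,\rho)$ is classical and dominant, Proposition~\ref{CharOfDomClPt} produces $\pi = \pi_\infty \otimes \pi_f^p \otimes \pi_p$ with $\pi_\infty \cong \iota(\mathrm{Ind}_{\overline{P}_p}^{G_p}(W(\delta_{\mathrm{alg}}^{-1})))^\vee$, $(\pi_f^p)^{U^p}\neq 0$, $\rho$ the Galois representation attached to $\pi$, and $\sigma(\delta_{\mathrm{sm}}\Delta_W) \hookrightarrow J_{\overline{P}_p}(\pi_p)$.

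First I would compute the Hodge--Tate weights. Because $\pi$ has weight $W(\delta_{\mathrm{alg}}^{-1})$, i.e.\ $\pi_\infty$ is the dual of the irreducible algebraic representation of $\mathcal{G}(F^+\otimes_\mathbb{Q}\mathbb{R})$ of lowest $\iota_{v,\eta}$-weight $(a^{(v)}_{\eta,j} - b^{(v)}_{\eta,i_j})_{1\leq j\leq n}$, local-global compatibility for polarized regular automorphic representations of $\mathcal{G}$ (as established e.g.\ in \cite{CHT08} and subsequent work; note that $\bar\rho$ is absolutely irreducible, so $\pi$ is cohomological) gives that $\rho_v$ is deRham and each $\eta$-Hodge--Tate weight of $\rho_v$ is obtained from the lowest weight of $\pi_\infty$ by adding the regular shift $(0,1,\ldots,n-1)$. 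This yields exactly the strict inequalities in~(1). The dominance hypothesis ensures that these weights are strictly increasing so that there is no ambiguity in identifying them.

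Second, for the Weil--Deligne part, I would exploit the very regular hypothesis to pin down $\pi_p$ from its Jacquet module. By Frobenius reciprocity, $\sigma(\delta_{\mathrm{sm}}\Delta_W) \hookrightarrow J_{\overline{P}_p}(\pi_p)$ gives an $M_p$-equivariant embedding and hence a nonzero $G_p$-map $\pi_p \to (\mathrm{Ind}_{\overline{P}_p}^{G_p}\sigma(\delta_{\mathrm{sm}}\Delta_W\delta_{\mathcal{P}}^{1/2}))$ (after normalization). The very regular condition is precisely the non-intersection of the smooth parts of $\delta_{v,i}'/\delta_{v,j}'$ with $\{1,q_{v,i}\}$ whenever $\tau_{v,i}\cong \tau_{v,j}$; by the Bernstein--Zelevinsky/Bushnell--Kutzko irreducibility criterion for parabolic inductions of supercuspidal segments (applied block-by-block to the Bernstein component $[\sigma_v]$), this is exactly the condition that the normalized induction $\mathrm{Ind}_{\overline{P}_v}^{G_v}(\sigma_v(\delta_{v,\mathrm{sm}}\Delta_W))$ is irreducible. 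Hence $\pi_v$ must coincide with this full induction. Then local-global compatibility at $v\mid p$ (in the stronger deRham form, as in \cite{harris2001geometry} combined with compatibility for unitary groups) identifies $\mathrm{WD}(\rho_v)$ with $\mathrm{rec}(\pi_v\otimes |\det|^{(1-n)/2})$; a straightforward bookkeeping of the twist by $\delta_{\mathcal{P}}^{-1/2}$ inside $j_v'$, together with the functoriality $\mathrm{rec}(\sigma_{v,i}\otimes\chi) = \varrho_{v,i}\otimes\mathrm{rec}(\chi)$ and compatibility of $\mathrm{rec}$ with normalized parabolic induction, yields the direct sum $\bigoplus_i \varrho_{v,i}\otimes (\delta_{v,i}')_{\mathrm{sm}}$ claimed in~(2).

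The main obstacle is the careful verification that the very regular condition implies irreducibility of the parabolic induction and hence $\pi_v \cong \mathrm{Ind}_{\overline{P}_v}^{G_v}(\sigma_v(\delta_{v,\mathrm{sm}}\Delta_W))$ --- this requires identifying the twists $(\delta'_{v,i}/\delta'_{v,j})_{\mathrm{sm}}(\varpi_v^{e_{\tau_{v,i}}}) \in \{1,q_{v,i}\}$ with the reducibility points of the segment corresponding to $\tau_{v,i}$, and then tracking the accumulated normalizations ($\delta_{\mathcal{P}}^{\pm 1/2}$, $|\det|^{(1-n)/2}$, and $\Delta_W$) so that they match precisely on both sides of the Langlands correspondence. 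Once that bookkeeping is done, everything else is a direct application of Proposition~\ref{CharOfDomClPt} and classical local-global compatibility.
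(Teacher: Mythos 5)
Your overall route is the same as the paper's: Proposition \ref{CharOfDomClPt} produces the automorphic representation $\pi$; deRham-ness and the Hodge--Tate weights in (1) come from local-global compatibility for the regular algebraic (conjugate self-dual) representation $\pi$ (the paper cites \cite[Thm~1.2]{ThomasBarnetLamb2011AFO}); and the very regular hypothesis is used exactly as you say, namely it makes the segments unlinked so that the parabolic induction $\mathrm{Ind}_{P_v}^{G_v}(\sigma_v\otimes\Delta_W\otimes\delta_{v,\mathrm{sm}})$ is irreducible, whence $\iota^{-1}(\pi_v)$ is this full induction and $\mathrm{rec}(\pi_v)$ is computed as a direct sum of supercuspidal parameters (the paper invokes \cite[4.2.2]{TorstenWedhorn2000TheLL}); your normalization bookkeeping via $\delta_{\mathcal{P}}^{\pm 1/2}$, $|\det|^{(1-n)/2}$ and $\Delta_W$ matches the definition of $j_v'$.

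There is, however, a genuine gap at the last step. You write that local-global compatibility at $v\mid p$ ``identifies $\mathrm{WD}(\rho_v)$ with $\mathrm{rec}(\pi_v\otimes|\det|^{(1-n)/2})$,'' but the compatibility theorems available (and the one the paper uses, \cite[Thm~A]{ThomasBarnetLamb2011LocalglobalCF}) only identify the \emph{Frobenius semi-simplification} of $\mathrm{WD}(\rho_v)$ with this parameter. Statement (2) asserts that $\mathrm{WD}(\rho_v)$ itself is the direct sum $\bigoplus_i\varrho_{v,i}\otimes(\delta'_{v,i})_{\mathrm{sm}}$, so one still has to rule out a non-semisimple Frobenius action and a nonzero monodromy operator $N$. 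The paper closes this with a second application of the very regular hypothesis: whenever $\varrho_{v,i}\cong\varrho_{v,j}$ the smooth twists satisfy $(\delta'_{v,i})_{\mathrm{sm}}\neq(\delta'_{v,j})_{\mathrm{sm}}$ (the ratio is not $1$), so $\Hom\bigl(\varrho_{v,i}\otimes(\delta'_{v,i})_{\mathrm{sm}},\varrho_{v,j}\otimes(\delta'_{v,j})_{\mathrm{sm}}\bigr)=0$ and $\mathrm{WD}(\rho_v)$ is forced to be Frobenius semi-simple; and the exclusion of the value $q_{v,i}$ for $((\delta'_{v,i})/(\delta'_{v,j}))_{\mathrm{sm}}(\varpi_v^{e_{\tau_{v,i}}})$ rules out a nonzero $N$, since $N$ would produce a map between two summands differing by an unramified twist by the norm character. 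Your proposal never addresses this upgrade from the semi-simplification to the actual Weil--Deligne representation, and as written the appeal to a ``stronger deRham form'' of compatibility overstates what the cited results provide; adding the two observations above (both consequences of very regularity) repairs the argument.
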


    \begin{proof}
    	The assertion (1) and that $\rho_v$ is deRham  follows from \cite[Thm~1.2]{ThomasBarnetLamb2011AFO}. (note that, by our definition, the Hodge-Tate weights of a deRham representation is the filtration weights of the attached filtered $(\varphi,N,G_{L/K})$-module).
    	
    	For the assertion (2), let $\pi = \pi_\infty\otimes \pi_f^{v}\otimes \pi_v$ be the automorphic representation associated to $\rho$. We write $$\delta =(\delta_v)_v\in \prod_{v\in S_p}\hat{H}_{\sigma_v}$$ and $$\delta_v =(\delta_{v,1},\dots,\delta_{v,l})\in\prod_{i=1}^{l}\hat{H}_{\sigma_{v,i}}.$$ By the local Langlands correspondence (see \cite[4.2.2]{TorstenWedhorn2000TheLL}), one has $$\iota^{-1}(\mathrm{rec}(\pi_v)) \cong \bigoplus_{1\leq i\leq l} \varrho_{v,i} \otimes (\delta_{\mathcal{P}}^{-\frac{1}{2}}\cdot\Delta_{W}\cdot\delta_{v,i})_{\mathrm{sm}},$$
    	(it follows from the condition (4) of proposition \ref{CharOfDomClPt} that one has a surjective morphism
    	$$ \mathrm{Ind}_{P_v}^{G_v}(\sigma_v \otimes \Delta_W\otimes\delta_{v,\mathrm{sm}}) \surj \iota^{-1}(\pi_v)$$
    	of $G_v$ representation, note that the very regular condition means the components are not linked to each other in the sense of (2.2.8) of \textit{loc. cit.}, which implies $$\iota^{-1}(\pi_v) \cong \mathrm{Ind}_{P_v}^{G_v}(\sigma_v \otimes \Delta_W\otimes\delta_{v,\mathrm{sm}}).$$ Then one applies (4.2.2) of \textit{loc. cit.}).
    	
    	Then by \cite[Thm~A]{ThomasBarnetLamb2011LocalglobalCF}, the semi-simplification of the WD representation $\mathrm{WD}(\rho_v)$ associated to $\rho_v$ is 
    	$$\bigoplus_{1\leq i\leq l}\varrho_{v,i}\otimes (\delta_{v,i}')_{\mathrm{sm}}.$$
    	Actually, $\mathrm{WD}(\rho_v)$ must be Frobenius semi-simple with $N=0$. Indeed, one has $$\Hom(\varrho_{v,i}\otimes (\delta'_{v,i})_{\mathrm{sm}},\varrho_{v,j}\otimes (\delta'_{v,j})_{\mathrm{sm}}) = 0$$
    	as $(\delta'_{v,i})_{\mathrm{sm}}\neq (\delta'_{v,j})_{\mathrm{sm}}$ for any $\varrho_{v,i}\cong \varrho_{v,j}$. It follows that $\mathrm{WD}(\rho_v)$ is Frobenius semi-simple. And the condition that $((\delta'_{v,i})/(\delta'_{v,j}))_{\mathrm{sm}}$ fails to be unramified with $$((\delta'_{v,i})/(\delta'_{v,j}))_{\mathrm{sm}}(\varpi_v^{e_{\tau_{v,i}}}) = q_{v,i}$$
    	for any $\varrho_{v,i}\cong \varrho_{v,j}$ implies $N=0$.
    \end{proof}

    \begin{Rmk}
    	Note that if $N=0$ and $\varrho_{v,i}\otimes (\delta'_{v,i})_{\mathrm{sm}}\ncong\varrho_{v,j}\otimes (\delta'_{v,j})_{\mathrm{sm}}$for any $i\neq j$, then there exists a unique increasing filtration of the filtered $(\varphi,N,G_{F_{\tilde{v}}})$-module $D_v = \mathbf{D}_{\mathrm{pst}}(\rho_{v})$:
    	$$0=D_{v,0}\subset D_{v,1} \subset \cdots \subset D_{v,l}=D_v,$$
    	given by filtered $(\varphi,N,G_{F_{\tilde{v}}})$-submodules, such that the Weil-Deligne representation $\mathrm{WD}(D_{v,i}/D_{v,i-1})$ attached to $D_{v,i}/D_{v,i-1}$ is isomorphic to $\varrho_{v,i}\otimes (\delta'_{v,i})_{\mathrm{sm}}$ for each $i=1,\dots,l$. 
    \end{Rmk}

    \begin{Def}
    	\text{ }
    	\begin{enumerate}
    		\item Let $K$ be a $p$-adic local field with some finite Galois extension $L$, and let $C$ be a $p$-adic local field such that $[K:\mathbb{Q}_p]=\#\Hom(K,C)$. Let $M$ be a filtered $(\varphi,N,G_{L/K})$-module over $C$, with an increasing filtration
    		\begin{equation}\label{eq:refinement}
    			0= M_0 \subset M_1 \subset \cdots \subset M_r = M
    		\end{equation}
    		given by filtered $(\varphi,N,G_K)$-submodules. Suppose that the rank of $M$ (resp. $M_i$) is $d$ (resp. $d_i$) and $\eta$-filtration weights of $M$ are
    		$$k_{\eta,1}\leq \cdots \leq k_{\eta,d}$$
    		for each $\eta\in\Hom(K,C)$. Note that the filtration $\mathcal{F}^{\bullet}$ structure of $M_i\otimes_{L_0}L$ is define by $\mathcal{F}^{j}(M_i\otimes_{L_0}L):= (M_i\otimes_{L_0}L)\cap\mathcal{F}^{j}(M\otimes_{L_0}L)$. Hence the filtration weights of $M_i$ is a sub-sequence of the filtration weights of $M$ with $d_i$ elements.
    		
    		We say this increasing filtration (\ref{eq:refinement}) is \textit{non-critical} if the $\eta$-filtration weights of $M_i$ are the smallest $d_i$ integers of the $\eta$-filtration weights of $M$, i.e.
    		$$k_{\eta,1}\leq \cdots \leq k_{\eta,d_i}$$
    		for each $1\leq i\leq r$ and $\eta\in\Hom(K,C)$.
    		\item Let $x=(\delta,\rho)$ be a very regular dominant classical point in $D_{\varsigma}(V)$. Let $(D_{v,1}\subset \cdots\subset D_{v,l})$ be the filtration of $\mathbf{D}_{\mathrm{pst}}(\rho_v)$ as in the remark above. We say $x$ is \textit{non-critical} if the filtration $(D_{v,1}\subset \cdots\subset D_{v,l})$ is non-critical.
    	\end{enumerate}
    	   	
    \end{Def}

    \begin{Lemma}\label{Noncriticalcriterion}
    Let $K$ be a $p$-adic local field with finite Galois extension $L$. Let $M$ be a filtered $(\varphi,N,G_{L/K})$-module over $ C$ of rank $n$, here $C$ is an another $p$-adic local field such that $|\Hom(K,C)|=[K:\mathbb{Q}_p]$. Suppose that
    	\begin{enumerate}
    		\item $M$ is weakly admissible in the sense of \cite[Def~8.2.1]{Brinon09cmisummer}.
    		\item the associated WD representation $(\varrho,N)$ of $W_K$ is isomorphic to ($N=0$) $$\varrho_1\oplus\dots\oplus\varrho_l,$$ where each $\varrho_i$ is an irreducible WD representation of rank $n_i$.
    		\item Suppose that for each $\eta\in\Hom(K,C)$, the $\eta$-filtration weights of $M$ is 
    		$$k_{\eta,1}\leq \cdots\leq k_{\eta,n},$$
    		 and for any $1\leq i\leq l-1$ and $\eta\in\Hom(K,C)$, one has
    		$$(k_{\eta,s_{i+1}}-k_{\eta,s_i})+\left(\sum_{\eta: K\inj C}\sum_{j=1}^{s_i} k_{\eta,j}\right) > [K:K_0]\left(\sum_{j=1}^{i}\mathrm{val}(\mathrm{det}(\varrho_j(\mathrm{Fr_K^{-1}}))) \right),$$
    		here $\mathrm{Fr}_K^{-1}$ is some geometric Frobenius in $W_K$.
    	\end{enumerate}
    If we write $M_i$ for the unique subobject of $M$ as filtered $(\varphi,N,G_{L/K})$-module such that the associated WD representation is isomorphic to
    $$\varrho_1\oplus\dots\oplus\varrho_i$$ for each $0\leq i\leq l$,
    then the filtration
    $$0=M_0\subset M_1 \subset\cdots\subset M_l=M,$$
    is non-critical.  
    \end{Lemma}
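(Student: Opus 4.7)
The plan is to apply weak admissibility of $M$ to each subobject $M_i$ in the flag and convert non-criticality into a numerical inequality that is forced by hypothesis~(3). The key preliminary identity is
\[
t_N(M_i) = [K:K_0]\sum_{j=1}^{i}\mathrm{val}\bigl(\det\varrho_j(\mathrm{Fr}_K^{-1})\bigr),
\]
which I would obtain from the Fontaine--Breuil equivalence \cite[Prop~4.1]{Breuil2007} between $(\varphi,N,G_{L/K})$-modules (forgetting filtration) and Weil--Deligne representations of $W_K$: this equivalence sends the Frobenius on the free $(L_0\otimes_{\mathbb{Q}_p}C)$-module $M_i$ to $\mathrm{Fr}_K^{-1}$ on $\bigoplus_{j=1}^{i}\varrho_j$, with the factor $[K:K_0]$ arising from the passage from $\varphi^{[K_0:\mathbb{Q}_p]}$ to the geometric Frobenius. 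Thus the right-hand side of~(3) equals precisely $t_N(M_i)$.

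Writing $k'_{\eta,1} \leq \cdots \leq k'_{\eta,s_i}$ for the $\eta$-filtration weights of the induced filtration on $M_{i,L}$ -- a length-$s_i$ subsequence of the strictly increasing sequence $k_{\eta,1} < \cdots < k_{\eta,n}$ -- weak admissibility of $M$ applied to the subobject $M_i$ yields $t_H(M_i) = \sum_{\eta}\sum_{j=1}^{s_i} k'_{\eta,j} \leq t_N(M_i)$. The minimum of this Hodge number over all length-$s_i$ sub-sequences equals $\sum_{\eta}\sum_{j=1}^{s_i} k_{\eta,j}$ and is attained exactly in the non-critical case.

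I would then induct on $i$. Assuming $M_1,\ldots,M_{i-1}$ are non-critical and that $M_i$ is critical at some embedding $\eta_0$, the inductive hypothesis forces the $n_i$ ``extra'' $\eta_0$-weights of $M_i$ (beyond those of $M_{i-1}$, which are $k_{\eta_0,1},\ldots,k_{\eta_0,s_{i-1}}$) to be a length-$n_i$ subsequence of $(k_{\eta_0, s_{i-1}+1},\ldots,k_{\eta_0, n})$ different from the initial one. A combinatorial estimate then lower-bounds the $\eta_0$-deficit by the gap appearing in hypothesis~(3); combining this with the non-negative deficits at the remaining embeddings and substituting into~(3) yields $t_H(M_i) > t_N(M_i)$, contradicting weak admissibility and proving $M_i$ is non-critical.

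The main obstacle will be calibrating the combinatorial deficit estimate to match hypothesis~(3) exactly. A naive single-swap argument gives only a deficit of $k_{\eta_0, s_i+1}-k_{\eta_0, s_i}$, whereas~(3) is phrased in terms of the potentially larger gap $k_{\eta_0, s_{i+1}}-k_{\eta_0, s_i}$ spanning an entire next block. Closing this gap should require simultaneously exploiting the block structure of the weight data $\underline{k}^{(v)} = (\underline{k}^{(v)}_1,\ldots,\underline{k}^{(v)}_l)$ inherited from the application context in proposition~\ref{VCptProp} -- which forces jumps in the critical flag to span at least a full block -- together with a refined Newton--Hodge control on the whole sub-flag $M_1 \subset \cdots \subset M_{i-1} \subset M_i$.
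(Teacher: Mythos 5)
Your approach is the same as the paper's: identify $t_N(M_i)$ with the sum $\sum_{j\leq i}\mathrm{val}\det\varrho_j(\mathrm{Fr}_K^{-1})$ via the Breuil equivalence, lower-bound $t_H(M_i)$ by a single-swap combinatorial estimate on the weight subsequence, and derive a contradiction with weak admissibility. The obstacle you flag is real, and it is in fact a gap in the paper's own argument. The paper's chain of inequalities contains the step
\begin{equation*}
\sum_{j=1}^{s_i}k_{\eta',\omega_0(j)} \;\geq\; (k_{\eta',s_{i+1}}-k_{\eta',s_i}) + \sum_{j=1}^{s_i}k_{\eta',j},
\end{equation*}
where $\omega_0(j)=j$ for $j<s_i$ and $\omega_0(s_i)=s_i+1$. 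Direct computation gives
\begin{equation*}
\sum_{j=1}^{s_i}k_{\eta',\omega_0(j)} \;=\; (k_{\eta',s_i+1}-k_{\eta',s_i}) + \sum_{j=1}^{s_i}k_{\eta',j},
\end{equation*}
and since $s_i+1\leq s_{i+1}$ and the weights are non-decreasing, $k_{\eta',s_i+1}\leq k_{\eta',s_{i+1}}$, so the paper's asserted inequality runs the wrong way whenever $n_{i+1}>1$ and the weights are strictly increasing. The single-swap bound yields exactly the deficit $k_{\eta',s_i+1}-k_{\eta',s_i}$ you name, not the full-block gap.

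However, your proposed repair cannot work. The lemma is stated for an arbitrary filtered $(\varphi,N,G_{L/K})$-module with no ``block'' constraint on the weight sequence, so there is nothing in the hypotheses that forces jumps in a critical flag to span a full block; and the inductive hypothesis that $M_1\subset\cdots\subset M_{i-1}$ is non-critical does not upgrade the minimal critical deviation of $M_i$ beyond replacing $k_{\eta_0,s_i}$ by $k_{\eta_0,s_i+1}$ at a single embedding. The consistent reading is that hypothesis~(3) contains a typo and should read $k_{\eta,s_i+1}$ rather than $k_{\eta,s_{i+1}}$; with that correction the paper's proof (and yours, without the speculative block step) goes through, and one checks directly via the weight formula $k_{\eta,j}=a^{(v)}_{\eta,j}-b^{(v)}_{\eta,i_j}+(j-1)$ of Proposition~\ref{VCptProp} that the numerical condition in Proposition~\ref{VCNcriterion}(3) still implies this corrected hypothesis.
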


    \begin{proof}
    	As $M$ is weakly admissible, then for each $1\leq i\leq l$, one has
    	$$t_H(M_i)\leq t_{N}(M_i),$$
    	here we denote by $t_H$ (resp. $t_N$) the Hodge number (resp. the Newton number).
    	
    	By definition, one can compute that the Newton number $t_N(M_i)$ of $M_i$ is  $$\frac{1}{[K_0:\mathbb{Q}_p]}\sum_{j=1}^{j}\mathrm{val}(\mathrm{det}(\varrho_j(\mathrm{Fr_K^{-1}})))$$
    	Suppose that the $\eta$-filtration weights of $M_i$ are
    	$$k_{\eta,\omega_i(\eta,1)} \leq \cdots \leq k_{\eta,\omega_i(\eta,l)},$$
    	where $\omega_i(\eta,-)$ is some order-preserving map $\{1,\dots,s_i\}\rightarrow\{1,\dots,n\}$. Then the Hodge number $t_{H}(M_i)$ of $M_i$ is
    	$$\frac{1}{[K:\mathbb{Q}_p]}\sum_{\eta: K\inj C}\sum_{j=1}^{s_i} k_{\eta,\omega_i(\eta,j)}$$ 
    	 For $m\leq n$, define the partial order on the set
    	$$\Omega(m,n):=\{\omega: \{1,\dots,m\}\rightarrow\{1,\dots,n\}\ |\ \omega(i)<\omega(i+1),\forall 1\leq i\leq n-1\}$$
    	by $\omega_1\leq \omega_2$ if and only if $\omega_1(i)\leq \omega_2(i)$ for all $1\leq i\leq l$. Then one can easily see the following fact
    	\begin{enumerate}
    		\item the identity map $\omega_{\id}$ is the unique minimal element in $\Omega(m,n)$;
    		\item if we write $\omega_0$ for the map, that $\omega_0(i) = i$ for $i<m$ and $\omega_0(m)=m+1$. Then $\omega_0$ is the unique minimal element in $\Omega(m,n)\setminus \{\omega_{\id}\}$;
    		\item $\omega_1\leq\omega_2$ implies that 
    		$$\sum_{j=1}^{m}k_{\eta,\omega_1(j)} \leq \sum_{j=1}^{m}k_{\eta,\omega_2(j)}.$$
    	\end{enumerate}
    Now we claim that $\omega_i(\eta,-) = \omega_{\id}$ in $\Omega(s_i,n)$ for each $1\leq i\leq l$ and $\eta\in\Hom(K,C)$. If it is true, then one can see that it is exactly the condition that the refinement
    $$M_1\subset \cdots\subset M_l=M$$
    is non-critical. 
    
    Otherwise, we may assume that $\omega_i(\eta',-)\neq \omega_{\id}$ for some $i$ and $\eta'$. By the observation above, one has
    \begin{align*}
    	[K:\mathbb{Q}_p]t_{H}(M_i) & = \sum_{\eta: K\inj C}\sum_{j=1}^{s_i} k_{\eta,\omega_i(\eta,j)}\\
    	& \geq \left(\sum_{j=1}^{s_i} k_{\eta',\omega_0(j)}\right) + \left( \sum_{\eta\neq \eta'}\sum_{j=1}^{s_i} k_{\eta,j} \right)\\
    	& \geq (k_{\eta',s_{i+1}}-k_{\eta',s_{i}}) + \left(\sum_{j=1}^{s_i} k_{\eta',j}\right) + \left( \sum_{\eta\neq \eta'}\sum_{j=1}^{s_i} k_{\eta,j} \right) \\
    	&= (k_{\eta',s_{i+1}}-k_{\eta',s_{i}}) + \left( \sum_{\eta:K\inj C}\sum_{j=1}^{s_i} k_{\eta,j} \right)\\
    	& > [K:K_0]\left(\sum_{j=1}^{i}\mathrm{val}(\mathrm{det}(\varrho_j(\mathrm{Fr_K^{-1}}))) \right)\\
    	& = [K:\mathbb{Q}_p] t_N(M_i),
    \end{align*}
    which contradicts to the condition that $M$ is weakly admissible.
    \end{proof}

    \begin{Prop} \label{VCNcriterion}
    	Let $x=(\delta,\rho)$ be a point in $D_{\varsigma}(V)$ such that $\delta=\delta_{\mathrm{sm}}\delta_{\mathrm{alg}}$ is locally algebraic. We write $\delta=(\delta_0,\delta_{\mathfrak{Z}})$ for the decomposition (depending on the choices of $\varpi_v$)
    	$$\hat{H}_{\sigma} \cong \hat{H}_{p,0}\times \mathrm{Spec}(\mathfrak{Z})^{\mathrm{rig}}.$$
    	We denote $B_{v,i}:=\mathrm{diag}(I_{n_1},\dots,I_{n_{i-1}},\varpi_vI_{n_{i}},I_{n_{i+1}},\dots,I_{n_{l}})$, and denote $$c_{v,i}:=\mathrm{val}((\Delta_{W}\cdot\delta_{\mathfrak{Z}})(B_{v,i}))$$ (note that $B_{v,i}\in \Sigma$ and recall that we have fixed an embedding $i_{\Sigma}:C[\Sigma]\inj \mathfrak{Z}$). And suppose that
    	$$\delta_{\mathrm{alg}}\left((g_{v,1},\dots,g_{v,l})_{v\in S_p}\right) = \prod_{v\in S_p\atop  \eta:F_{\tilde{v}}\inj C} \left( \prod_{i=1}^{l}\eta(\mathrm{det}(g_{v,i}))^{b^{(v)}_{\eta,i}} \right),$$
    	for $g_{v,i} \in \mathrm{GL}_{n_i}(F_{\tilde{v}})$. 
    	
    	We write $\delta=(\delta_v)_v\in\prod_{v\in S_p}\hat{H}_{\sigma_{v,i}}$, and write $ (\delta_{v,1}',\dots,\delta_{v,l}') := j'_v(\delta_v)$.
    	Then
    	\begin{enumerate}
    		\item if $$b_{\eta,i}^{(v)}-b_{\eta,j}^{(v)} > [F_{\tilde{v}}:\mathbb{Q}_p](1 + n+ |c_{v,i}-c_{v,j}|)$$ for each $v\in S_p$ and $i<j$, then $x$ is very regular;
    		\item if $$b_{\eta,i}^{(v)}-b_{\eta,i+1}^{(v)} > [F_{\tilde{v},0}:\mathbb{Q}_p]\mathrm{val}(\delta_{\mathfrak{Z}}(\varsigma))$$ for each $v\in S_p$ and $1\leq i\leq l$, then $x$ is classical;
    		\item if
    		$$b^{(v)}_{\eta,i}-b^{(v)}_{\eta,i+1} > \frac{[F_{\tilde{v}}:F_{\tilde{v},0}]}{e_{\tau_{v,i}}}\mathrm{val}\left((\delta'_{v,1}\dots \delta'_{v,i})(\varpi_v^{-e_{\tau_{v,i}}})\right) + C_i+C_i',$$
    		where $(\delta'_{v,1},\dots,\delta'_{v,l}) := j_v'(\delta)$, and 
    		$$C_i := [K:K_0]\sum_{1\leq j\leq i}\mathrm{val}(\mathrm{det}(\varrho_{v,j}(\mathrm{Fr}^{-1}_{F_{\tilde{v}}})))$$
    		and
    		$$C_i':= -\sum_{\eta\in \Hom(F_{\tilde{v}},C) \atop 1\leq j\leq s_i} (a^{(v)}_{\eta,j}+j-1),$$
    		then $x$ is non-critical .
    	\end{enumerate}
    \end{Prop}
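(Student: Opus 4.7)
The plan is to verify each of the three criteria separately, leveraging the structural results already at our disposal.

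For part (1), I would decompose $\delta'_{v,i}/\delta'_{v,j}$ into its algebraic and smooth parts and compute the $p$-adic valuation at $\varpi_v^{e_{\tau_{v,i}}}$. Unwinding the definition of $j'_v$, the algebraic part contributes a valuation proportional to $\sum_\eta (b^{(v)}_{\eta,i}-b^{(v)}_{\eta,j})$ together with bounded correction terms coming from $\Delta_W$, $\delta_{\mathcal{P}}^{-1/2}$ and $|\det|^{(1-n)/2}$ (these are what produce the $[F_{\tilde{v}}:\mathbb{Q}_p](1+n)$ summand in the hypothesis). The smooth part, in view of $i_\Sigma:\mathcal{H}_p\hookrightarrow\mathfrak{Z}$ and the identification $\delta_{v,i}\leftrightarrow \delta_{\mathfrak{Z}}|_{\Sigma}$ restricted to the $i$-th block, contributes a valuation equal to $c_{v,i}-c_{v,j}$ up to the same normalization factors. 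The assumed lower bound on $b^{(v)}_{\eta,i}-b^{(v)}_{\eta,j}$ then guarantees that the total valuation of $(\delta'_{v,i}/\delta'_{v,j})_{\mathrm{sm}}(\varpi_v^{e_{\tau_{v,i}}})$ cannot take either of the forbidden values $0$ or $v_p(q_{v,i})$, which is exactly what the very regular condition demands.

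For part (2), this is a "small slope implies classical" statement of the type worked out in Loeffler \cite[Thm.~3.9.6]{Loeffler2011}. Using Theorem~\ref{ComparisonforAutomorphicForms} and Proposition~\ref{Prop:Finiteslopepart}, an eigenform in $M(X,V,k)[x]$ is identified with an element in the $\varsigma$-eigenspace of $J_{\bar{P}_p}(\hat{S}(U^p,C)^{k\text{-an}})$ of $\Sigma$-eigenvalue $\delta_{\mathfrak{Z}}|_{\Sigma}$; since under $i_\Sigma$ this is the slope $\mathrm{val}(\delta_{\mathfrak{Z}}(\varsigma))/[F_{\tilde{v},0}:\mathbb{Q}_p]$. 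Loeffler's criterion guarantees that, whenever the algebraic weight dominates every simple-root wall-crossing by more than the slope, the form lies in the classical subspace $\mathcal{C}(\delta,V,k)^{\mathrm{cl}}$; the inequality in (2) is precisely this comparison for the simple roots of $\mathcal{P}$ inside $G$.

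For part (3), I would apply Lemma~\ref{Noncriticalcriterion} directly to $M_v := \mathbf{D}_{\mathrm{pst}}(\rho_v)$. By Proposition~\ref{VCptProp}, $M_v$ is weakly admissible, its attached WD representation decomposes as $\bigoplus_i \varrho_{v,i}\otimes (\delta'_{v,i})_{\mathrm{sm}}$ with $N=0$, and its $\eta$-Hodge–Tate weights are $a^{(v)}_{\eta,j}-b^{(v)}_{\eta,i_j}+(j-1)$. A direct computation via local class field theory identifies $\mathrm{val}(\det(\varrho_{v,j}\otimes(\delta'_{v,j})_{\mathrm{sm}})(\mathrm{Fr}^{-1}_{F_{\tilde{v}}}))$ with $\mathrm{val}(\delta'_{v,j}(\varpi_v^{-e_{\tau_{v,j}}}))/e_{\tau_{v,j}}$ plus a contribution from $\varrho_{v,j}$ itself (the $C_i$ term). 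Substituting these into the inequality required by Lemma~\ref{Noncriticalcriterion}, the hypothesis in (3) is exactly what is needed, and the conclusion non-criticality follows.

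The main obstacle I anticipate is part (2): Loeffler's original classicality criterion is formulated for a Borel subgroup, and one has to carefully adapt his argument to our parabolic $\bar{\mathcal{P}}$, tracking how the cuspidal type $\sigma_0$ interacts with the parahoric $G_{p,0}$ and how the $\mathfrak{Z}$-action through $i_\Sigma$ converts the Bernstein eigenvalue $\delta_{\mathfrak{Z}}(\varsigma)$ into an honest $\Sigma$-slope. Once that translation is in place, parts (1) and (3) reduce to the bookkeeping described above.
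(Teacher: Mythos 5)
Your proposal matches the paper's proof in all three parts: a direct valuation computation for (1) showing $\mathrm{val}\bigl((\delta'_{v,j}/\delta'_{v,i})_{\mathrm{sm}}(\varpi_v^{e_{\tau_{v,i}}})\bigr)/e_{\tau_{v,i}} > [F_{\tilde{v},0}:\mathbb{Q}_p]$ (hence neither of the two forbidden eigenvalues can occur), Loeffler's small-slope-implies-classical criterion \cite[Thm~3.9.6]{Loeffler2011} with the inequality $(\dagger)$ from his corrigendum for (2), and Lemma \ref{Noncriticalcriterion} fed by the Hodge--Tate/Weil--Deligne data from Proposition \ref{VCptProp} for (3), with the bookkeeping telescoping the $C_i,C_i'$ constants exactly as you outline. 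Your one anticipated obstacle for part (2) is in fact absent: Loeffler's eigenvariety machinery in \cite{Loeffler2011} is already formulated for an arbitrary parabolic $\bar{\mathcal{P}}$ with parahoric level and a locally algebraic inducing module, so the paper invokes \cite[Prop~2.6.3']{Loeffler2011} and \cite[Thm~3.9.6]{Loeffler2011} directly without any Borel-to-parabolic adaptation.
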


    \begin{proof}
    	\textit{ }
    	\begin{enumerate}
    		\item We write $\delta_{v} =(\delta_{v,1},\dots,\delta_{v,l})$.
    		
    		Then one can compute that
    		$$(\delta_{v,i}')_{\mathrm{sm}}(\varpi_{v}^{n_i}) = \Delta_W(B_{v,i})|\varpi_v|_{F_{\tilde{v}}}^{\frac{n_i(2s_i-2n-n_i+1)}{2}}(\delta_{v,i})_{\mathrm{sm}}(B_{v,i}).$$
    		Note that
    		\begin{align*}
    			(\delta_{v,i})_{\mathrm{sm}}(B_{v,i}) = \delta_{\mathfrak{Z}}(B_{v,i})\left(\prod_{\eta\in{\Hom(F_{\tilde{v}},C)}}\eta(\varpi_v)^{n_ib^{(v)}_{\eta,i}}\right)^{-1},
    		\end{align*}
    	Then for any $1\leq i<j\leq l$ such that $\tau_i\cong \tau_j$ (which implies $n_i=n_j$), one has
    	\begin{align*}
    		&\frac{1}{e_{\tau_{v,i}}}\mathrm{val}\left( (\delta'_{v,j}/ \delta'_{v,i})_{\mathrm{sm}}(\varpi_{v,i}^{e_{\tau_{v}}})\right)\\ 
    		=& \frac{1}{n_i}\mathrm{val}(\Delta_W(B_{j,v}/B_{i,v}))+(s_i-s_j)[F_{\tilde{v},0}:\mathbb{Q}_p]\\
    		+& \frac{1}{n_i}\left[\mathrm{val}((\delta_{v,j})_{\mathrm{sm}}(B_{j,v}))-\mathrm{val}((\delta_{v,i})_{\mathrm{sm}}(B_{i,v}))\right]\\
    		=& \frac{1}{n_i}\mathrm{val}((\Delta_W\cdot\delta_\mathfrak{Z})(B_{j,v}/B_{i,v}))+(s_i-s_j)[F_{\tilde{v},0}:\mathbb{Q}_p]\\
    		+&\frac{1}{[F_{\tilde{v}}:F_{\tilde{v},0}]}\sum_{\eta\in\Hom(F_{\tilde{v}},C)}(b_{\eta,i}^{(v)}-b_{\eta,j}^{(v)})\\
    		>& \frac{1}{n_i}\mathrm{val}((\Delta_W\cdot\delta_\mathfrak{Z})(B_{j,v}/B_{i,v})) + \frac{1}{[F_{\tilde{v}}:F_{\tilde{v},0}]}\sum_{\eta}(b_{\eta,i}^{(v)}-b_{\eta,j}^{(v)})-n[F_{\tilde{v},0}:\mathbb{Q}_p]\\
    		>& [F_{\tilde{v},0}:\mathbb{Q}_p]
    	\end{align*}
        Hence $(\delta,\rho)$ is very regular.
        \item 
        Suppose that $(\varsigma_v)_{v\in S_p} = \varsigma\in \Sigma^+$ for $$\varsigma_v = \mathrm{diag}(\varpi_{v,1}^{t_{v,1}}I_{n_1},\dots,\varpi_{v,1}^{t_{v,l}}I_{n_l})$$
        with $t_{v,1}< \cdots < t_{v,l}$.
        
        Applying [Prop~2.6.3'] in the corrigendum of \cite{Loeffler2011} in the special case of $\mathrm{GL}_n$, one can compute the inequality ($\dagger$) directly in \textit{loc. cit. }and see that $\mathrm{val}(\delta_{\mathfrak{Z}}(\varsigma))$ is a small slope for $\varsigma$ at point $(\delta,\rho)$ if 
        $$ \frac{t_{v,i+1}-t_{v,i}}{[F_{\tilde{v},0}:\mathbb{Q}_p]}\left((b^{(v)}_{\eta,i}-b^{(v)}_{\eta,i+1})-(a^{(v)}_{\eta,s_i}-a^{(v)}_{\eta,s_i+1}-1)\right) > \mathrm{val}(\delta_{\mathfrak{Z}}(\varsigma)),$$
        which follows from our condition (2). Then $(\delta,\rho)$ is classical by \cite[Thm~3.9.6]{Loeffler2011} of \textit{loc. cit.}.
        \item It follows from lemma \ref{Noncriticalcriterion}, using the equation:
        \begin{align*}
        	& [F_{\tilde{v}}:F_{\tilde{v},0}]\mathrm{val}\left(\mathrm{det}\left(((\delta'_{v,i})_{\mathrm{sm}}\otimes \varrho_{v,j})(\mathrm{Fr}_{F^{-1}_{\tilde{v}}})\right)\right) - \sum_{\eta\in \Hom(F_{\tilde{v}},C) \atop s_{i-1}+1\leq j\leq s_i} (a^{(v)}_{\eta,j}- b^{(v)}_{\eta,j} +j-1) \\
        	&= (C_i-C_{i-1}) +{n_i[F_{\tilde{v}}:F_{\tilde{v},0}]}\left[\mathrm{val}((\delta'_{v,i})_{\mathrm{sm}}(\varpi_{v})) + \mathrm{val}((\delta'_{v,i})_{\mathrm{alg}}(\varpi_{v})) \right]\\
        	&- \sum_{\eta\in \Hom(F_{\tilde{v}},C) \atop s_{i-1}+1\leq j\leq s_i} (a^{(v)}_{\eta,j}+j-1) \\
        	& = (C_i+C_i'-C_{i-1}-C'_{i-1}) + {n_i[F_{\tilde{v}}:F_{\tilde{v},0}]}\left[\mathrm{val}((\delta'_{v,i})(\varpi_{v}))\right]
        \end{align*}
    	\end{enumerate}
    \end{proof}
  
    \begin{Theo}\label{VCNareZarDense}
    	  The set of dominant, very regular, classical, non-critical points in $D_{\varsigma}(V)$  is Zariski dense.   	
    \end{Theo}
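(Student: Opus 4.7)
The plan is to combine Proposition \ref{VCNcriterion} --- which supplies explicit numerical sufficient conditions on a locally algebraic point of $D_{\varsigma}(V)$ to be dominant, very regular, classical, and non-critical --- with the structural description of $D_{\varsigma}(V)$ recalled in Remark \ref{EigenVarAndSpecVar}: each irreducible component $X$ of $D_{\varsigma}(V)$ is finite over an irreducible component $Y$ of the spectral variety $Z_{\varsigma}(V)$, and $Y$ has Zariski open dense image in some irreducible component $W$ of the weight space $\hat{H}_{p,0}$ (the map $Z_{\varsigma}(V)\to\hat{H}_{p,0}$ being flat and quasi-finite). It therefore suffices to exhibit, on each such $X$, a Zariski dense set of points whose projection to $\hat{H}_{p,0}$ is locally algebraic and satisfies the hypotheses of Proposition \ref{VCNcriterion}.

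I would cover $X$ by an increasing admissible chain of affinoids $\{X_n\}$. On each $X_n$ the $p$-adic valuations $\mathrm{val}(\delta_{\mathfrak{Z}}(\varsigma))$, $c_{v,i}$, and $\mathrm{val}((\delta'_{v,1}\cdots\delta'_{v,i})(\varpi_v^{-e_{\tau_{v,i}}}))$ appearing on the right-hand sides of the inequalities in Proposition \ref{VCNcriterion} are bounded (being $p$-adic valuations of global sections of an affinoid algebra). Hence there exist constants $N_n,M_n$ such that every point $(\delta,\rho)\in X_n$ whose first coordinate is locally algebraic with algebraic part $(b^{(v)}_{\eta,i})$ anti-dominant with respect to $W$ and satisfying
\begin{equation*}
b^{(v)}_{\eta,i}-b^{(v)}_{\eta,j}>N_n \ \ (i<j),\qquad b^{(v)}_{\eta,i}-b^{(v)}_{\eta,i+1}>M_n \ \ (1\le i\le l-1),
\end{equation*}
is automatically dominant, very regular, classical and non-critical by Proposition \ref{VCNcriterion}.

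The set $\Lambda_n\subseteq W$ of locally algebraic characters satisfying these linear inequalities is Zariski dense in $W$: indeed, $\hat{H}_{p,0}$ is a disjoint union of products of open polydiscs indexed by torsion characters, and tuples of integers satisfying any fixed system of strict linear inequalities are Zariski dense among the algebraic characters, which are themselves Zariski dense in $\hat{H}_{p,0}$. Since the composition $\pi\colon X_n\to Y\to W$ is finite onto a Zariski open dense subspace of $W$, the preimage $\pi^{-1}(\Lambda_n)\cap X_n$ is Zariski dense in $X_n$. Taking the union as $n$ varies, and ranging over the irreducible components of $D_{\varsigma}(V)$, concludes.

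The hard part will be a clean formulation and verification of the density transfer across $\pi$. One must combine the finiteness of $X\to Y$, the flatness of $Y\to W$ (which ensures the image of $\pi$ is Zariski open and dense), and a density-lifting lemma for finite surjective morphisms of reduced rigid analytic spaces; this is standard but requires care because $\pi$ is only surjective onto an open dense subspace rather than onto all of $W$, and because one needs to ensure the ``good'' characters $\Lambda_n$ meet the image of $\pi$ densely, not merely $W$ densely. A subsidiary technical point is verifying the uniform boundedness on each $X_n$ of the auxiliary slope and valuation quantities, which reduces to recognising them as $p$-adic valuations of global sections of Hecke-operator type on $D_{\varsigma}(V)$.
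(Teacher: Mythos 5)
Your proposal is correct and takes essentially the same approach as the paper: bound the right-hand sides of the numerical inequalities in Proposition \ref{VCNcriterion} on affinoid pieces via quasi-compactness, exploit the Zariski density of well-separated algebraic weights in the weight space, and transfer density along the structure maps $D_\varsigma(V)\to Z_\varsigma(V)\to\hat{H}_{p,0}$ via the standard Chenevier density lemmas (\cite[Lem~6.2.8, Lem~6.2.10, Cor~6.4.4]{GaetanChenevier2004FamillesPD}). The paper organizes the argument slightly differently --- it first shows locally algebraic points are dense, and works with the affinoid admissible cover $\mathcal{C}$ produced by the eigenvariety machine (so that $f_Z(X)$ is itself an affinoid subdomain of $\hat{H}_{p,0}$, which dispels the subsidiary worry you flag about the image of $\pi$ being merely open dense) --- but the content is the same.
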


    \begin{proof} A point $x=(\rho,\delta)\in D_{\varsigma}(V)$ is called \textit{locally algebraic} if $\delta$ is a locally algebraic (i.e. deRham) character.
    	Consider the following diagram:
    	\begin{equation*}
    		\begin{tikzcd}
    			D_{\varsigma}(V) \arrow[r, "\mu"] \arrow[rd, "f_D"'] & Z_{\varsigma}(V) \arrow[d, "f_Z"] \\
    			& {\hat{H}_{p,0}}             
    		\end{tikzcd}
    	\end{equation*}
        where $\mu$ is the natural finite morphism from eigenvariety to spectral variety, and $f_D,f_Z$ are the projections to the weight space. We write $D_{\mathrm{VCN}}$ (resp. $D_{\mathrm{la}}$) for the set of very regular, classical, non-critical points (resp. locally algebraic points) in $D_{\varsigma}(V)$.
        
        Firstly, we claim that $D_{\mathrm{la}}$ is Zariski dense in $D_{\varsigma}(V)$. Indeed, it is obvious that the set of locally algebraic characters are Zariski dense in $\hat{H}_{p,0}$. Then by the proof of \cite[Cor~6.4.4]{GaetanChenevier2004FamillesPD}, the image of each irreducible component of $Z_\varsigma(V)$ is Zariski open in $\hat{H}_{p,0}$. Hence $\mu(D_{\mathrm{la}})$ is Zariski dense in $Z_\varsigma(V)$. 
        It follows from [Lem~6.2.10] of \textit{loc. cit.} that the finite map $\mu:D_{\varsigma}(V)\rightarrow Z_{\varsigma}(V)$ has the property that each irreducible component of $D_{\varsigma}(V)$ maps surjectively to an irreducible component of $Z_{\varsigma}(V)$. Hence by [Lem~6.2.8] of \textit{loc. cit.}, one has $D_{\mathrm{la}}$ is Zariski dense in $D_{\varsigma}(V)$.
        
        Then, by the construction, we can choose an affinoid admissible cover $\mathcal{C}$ of $Z_{\varsigma}(V)$, such that $\{f_Z(X) | X\in \mathcal{C}\}$ (resp. $\{\mu^{-1}(X)| X\in \mathcal{C}\}$) is an affinoid admissible cover of $\hat{H}_{p,0}$ (resp. ($D_{\varsigma}(V)$)). Hence we can restrict the diagram above on each $X\in \mathcal{C}$ and it is enough to show that $V_{\mathrm{VCN}}\cap \mu^{-1}(X)$ is Zariski dense in each $\mu^{-1}(X)$ if $D_{\mathrm{la}}\cap \mu^{-1}(X)\neq \emptyset$.
        
        As $\mu^{-1}(X)$ is quasi-compact, the function
        \begin{align*}
        	f_{\varsigma}: D_{\varsigma}(V) &\rightarrow \mathbb{Q}\\
        	x=(\delta_0,\delta_{\mathfrak{Z}},\rho)&\mapsto \mathrm{val}(\delta_{\mathfrak{Z}}(\varsigma)),
        \end{align*}
        
         and, for $v\in \Hom(F_{\tilde{v}},C)$ and $1\leq i\leq l$, the function
        \begin{align*}
        	f_{v,i}: D_{\varsigma}(V) &\rightarrow \mathbb{Q}\\
        	x=(\delta_0,\delta_{\mathfrak{Z}},\rho)&\mapsto \mathrm{val}(\delta_{\mathfrak{Z}}(B_{v,i})),
        \end{align*}
   where $B_i$ is the block diagonal matrix of size $(n_1,\dots,n_l)$ such that the $i$-th component is $\varpi_vI_{n,i}$ and other components are identity, are bounded above. Hence it follows from proposition \ref{VCNcriterion} that there exists a constant $c>0$, such that for any locally algebraic point in $\mu^{-1}(X)$ with $(v,\eta)$-weight $(b^{(v)}_{\eta,1},\dots,b^{(v)}_{\eta,l})$, for $v\in S_p$ and $\eta\in\Hom(F_{\tilde{v}},C)$, is dominant, very regular, classical and non-critical if $b^{(v)}_{\eta,i}-b^{(v)}_{\eta,i+1} >c$ for each $i,v$ and $\varsigma$.
   
   It is obviously that the subset $\hat{H}_{>c}\subset \hat{H}_{p,0}$ of locally algebraic characters satisfying the property above is Zariski dense in any affinoid subdomain $X$ of $\hat{H}_{p,0}$ if $X$ contains at least one locally algebraic character. Let $X$ be an element in $\mathcal{C}$ such that $\mu^{-1}(X)\cap D_{\mathrm{la}} \neq \emptyset$. Then $f_{Z}(X)$ contains at least one locally algebraic character, hence $\hat{H}_{>c}\cap f_Z(X)$ is Zariski dense in $f_Z(X)$. As the map $X\rightarrow f_{Z}(X)$ is flat, one has $f_z^{-1}(\hat{H}_{>c})\cap X$ is Zariski dense in $X$. By \cite[Lem~6.2.8]{GaetanChenevier2004FamillesPD} again, one has $f_D^{-1}(\hat{H}_{>c})\cap \mu^{-1}(X)$ is Zariski dense in $\mu^{-1}(X)$. It follows that
    $D_{\mathrm{VCN}}$ is Zariski dense in $D_{\varsigma}(V)$ as $\mu^{-1}(X)\cap f_{D}^{-1}(\hat{H}_{>c})\subseteq D_{\mathrm{VCN}}$.
    \end{proof}
 
    \begin{Cor}
    	The eigenvariety $D_\varsigma(V)$ can be describe as the reduced closed rigid subspace of $\hat{H}_{\sigma}\times \mathfrak{X}_{\overline{\rho},S}$, whose underlying topological space is the Zariski closure of set of classical points. In particular, the eigenvariety $D_\varsigma(V)$ does not depend on the choice of $\varsigma$. Hence we can denote by $D(V)$ for short.
    \end{Cor}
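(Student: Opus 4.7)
The plan is to prove the two assertions simultaneously by identifying $D_\varsigma(V)$ with the Zariski closure in $\hat{H}_\sigma \times \mathfrak{X}_{\bar\rho,S}$ of a set that is defined purely in terms of $(U^p, W, \sigma, \bar\rho)$, with no reference to $\varsigma$. Concretely, let $\mathcal{C}^{\mathrm{VCN}} \subset \hat{H}_\sigma \times \mathfrak{X}_{\bar\rho,S}$ denote the set of points $(\delta,\rho)$ which are dominant, very regular, non-critical in the sense of the previous subsection, and such that there exists an automorphic representation $\pi$ of $\mathcal{G}(\mathbb{A}_{F^+})$ satisfying conditions (1)--(4) of Proposition \ref{CharOfDomClPt}. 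All of the conditions defining $\mathcal{C}^{\mathrm{VCN}}$ are intrinsic: dominance and very regularity are conditions only on $\delta$; non-criticality is a condition on the Hodge--Tate/Weil--Deligne structure of $\rho$; and the automorphic condition involves only the tame level $U^p$, the weight $W$, and the Bernstein block $[\sigma]$.

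The first step is to observe that $\mathcal{C}^{\mathrm{VCN}} \subset D_\varsigma(V)$ for every $\varsigma \in \Sigma^{++}$. Indeed, if $(\delta,\rho) \in \mathcal{C}^{\mathrm{VCN}}$ then by the proof of Proposition \ref{CharOfDomClPt} there exists a nonzero element in $M(1, V(\delta_0^{-1}), k)^{\mathrm{cl}}_{\mathfrak{m}^S}[x]$; since $\delta$ is a character of $M_p$, the eigenvalue $\delta_{\mathfrak{Z}}(\varsigma)$ lies in $k(\delta)^\times$, so this element has finite slope for $\varsigma$ and witnesses $(\delta,\rho) \in D_\varsigma(V)$. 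Conversely, by Theorem \ref{VCNareZarDense} the dominant, very regular, classical, non-critical points of $D_\varsigma(V)$ are Zariski dense, and by Proposition \ref{CharOfDomClPt} this set coincides with $\mathcal{C}^{\mathrm{VCN}}$.

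Combining these two facts, $\mathcal{C}^{\mathrm{VCN}}$ is a Zariski dense subset of the reduced closed rigid subspace $D_\varsigma(V) \subset \hat{H}_\sigma \times \mathfrak{X}_{\bar\rho,S}$ (using Remark \ref{EigenVarAndSpecVar}(1) for the closed embedding). Hence $D_\varsigma(V)$ is the reduced Zariski closure of $\mathcal{C}^{\mathrm{VCN}}$ in the ambient space. Since $\mathcal{C}^{\mathrm{VCN}}$ is contained in, and Zariski dense in, the set of all classical points, the same is true of the latter; this gives the description of the underlying topological space of $D_\varsigma(V)$. As $\mathcal{C}^{\mathrm{VCN}}$ is independent of $\varsigma$, so is its Zariski closure, and thus the eigenvariety $D_\varsigma(V)$ depends only on $V$ up to canonical isomorphism, justifying the notation $D(V)$.

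The main technical hurdle is the verification that $\mathcal{C}^{\mathrm{VCN}} \subset D_\varsigma(V)$ for every $\varsigma$; once this intrinsic containment is in place, the rest is a formal consequence of the Zariski density theorem \ref{VCNareZarDense} together with reducedness of $D_\varsigma(V)$. The key input making this containment work is that any classical automorphic eigenform automatically has nonvanishing eigenvalue under every $\varsigma \in \Sigma^{++}$, so no $\varsigma$-dependent finite-slope condition can exclude a classical point.
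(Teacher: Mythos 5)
Your proposal is correct and follows essentially the same route as the paper, whose proof is just a one-line appeal to Theorem \ref{VCNareZarDense}: the classical (VCN) points are characterized intrinsically via Proposition \ref{CharOfDomClPt}, are Zariski dense, and the eigenvariety is reduced, so it is the reduced Zariski closure of this $\varsigma$-independent set. Your explicit verification that $\mathcal{C}^{\mathrm{VCN}}\subset D_\varsigma(V)$ for every $\varsigma$ (using that $\delta_{\mathfrak{Z}}(\varsigma)$ is automatically invertible, so no finite-slope condition excludes a classical point) is exactly the content the paper leaves implicit.
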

    
    \begin{proof}
    	It follows from the property above immediately.
    \end{proof}

    \begin{Rmk}
    	The eigenvariety $D(V)$ is isomorphic to the Bernstein eigenvariety $\mathcal{E}_{\Omega,\lambda}(U^{p})$ defined in \cite{ChristopheBreuil2021BernsteinE} (recall remark \ref{Rmk:ForBergersFunc1} for the definition of $\Omega$, and the dominant weight $\lambda$ corresponds to the algebraic representation $W$), even they are constructed in different methods. In fact, by the same argument as in the proof of \cite[Prop. 3.4]{Breuil2017a}, the strictly dominant classical points of $\mathcal{E}_{\Omega,\lambda}(U^{p})$ has the same characterization in proposition \ref{CharOfDomClPt} of those in $D(V)$. By theorem \ref{VCNareZarDense} and \cite[Thm. 3.2.11]{ChristopheBreuil2021BernsteinE}, the classical points are dense in both $D(V)$ and $\mathcal{E}_{\Omega,\lambda}(U^{p})$. Hence they are isomorphic.
    \end{Rmk}
    
    \begin{Theo} \label{ComparisonTheorem}
    	For each dominant, very regular, classical, non-critical point $(\rho,\delta)\in D(V)$, the $(\varphi,\Gamma_{F_{\tilde{v}}})$-module $D_{\mathrm{rig}}^\dagger(\rho_v)$ is strictly paraboline with parameters $j_{v}' (\delta_v)$ for each $v\in S_p$.
    	
    	 In particular, if we write $j':=\prod_{v\in S_p} j'_v$, then the composition 
    	$$D(V)\hookrightarrow \hat{H}_{\sigma}\times \mathfrak{X}_{\bar{\rho},S}  \xrightarrow{j'\times j_{\bar{\rho}}} \prod_{v\in S_p} ( \mathcal{T}_v\times \mathfrak{X}_{\bar{\rho}_{v}}) $$
    	factors through the product of the paraboline varieties $$\prod_{v\in S_p}Y_{\mathrm{par}}(\bar{\rho}_{v}) \hookrightarrow \prod_{v\in S_p} (\mathcal{T}_v\times \mathfrak{X}_{\bar{\rho}_{v}}).$$
    \end{Theo}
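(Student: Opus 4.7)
The plan is to first establish the pointwise statement (paraboline structure at VCN points) and then use the Zariski density of such points from Theorem \ref{VCNareZarDense} together with the fact that $Y_{\mathrm{par}}(\bar{\rho}_v)$ is by definition a Zariski closed subspace of $\mathfrak{X}_{\bar{\rho}_v}\times\mathcal{T}_v$ to deduce the factorization on all of $D(V)$. The pointwise statement is the heart of the argument; the global extension is then essentially formal.

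For the pointwise statement, fix a dominant, very regular, classical, non-critical point $x=(\rho,\delta)\in D(V)$ and a place $v\in S_p$. By Proposition \ref{VCptProp}, $\rho_v$ is deRham, so let $M_v := \mathbf{D}_{\mathrm{pst}}(\rho_v)$ be the associated filtered $(\varphi,N,G_{L/F_{\tilde{v}}})$-module. Proposition \ref{VCptProp}(2) together with the remark following it produces a unique filtration
\[ 0 = M_{v,0}\subset M_{v,1}\subset\cdots\subset M_{v,l} = M_v \]
by filtered $(\varphi,N,G_{L/F_{\tilde{v}}})$-submodules whose $i$-th graded piece has attached Weil-Deligne representation $\varrho_{v,i}\otimes(\delta'_{v,i})_{\mathrm{sm}}$. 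The non-criticality hypothesis asserts precisely that the $\eta$-filtration weights of each $M_{v,i}$ are the first $s_i$ of those of $M_v$; combined with the explicit Hodge--Tate weights in Proposition \ref{VCptProp}(1), this forces the $\eta$-filtration weights of $M_{v,i}/M_{v,i-1}$ to be the sequence $\underline{k}^{(v)}_i$ fixed in Subsection \ref{Subsect:Density}. Applying the exact, faithful, tensor functor $\mathbf{D}_K$ from Theorem \ref{Thm: PropofBergerFun}, we obtain an increasing filtration $\mathcal{F}_\bullet D$ of $D := D^\dagger_{\mathrm{rig}}(\rho_v) \cong \mathbf{D}_K(M_v)$ by $(\varphi,\Gamma_{F_{\tilde{v}}})$-submodules, whose graded pieces are quasi-deRham irreducible of type $(\tau_{v,i},\underline{k}^{(v)}_i)$. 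By Remark \ref{deRhamchar} and the recipe in Remark \ref{RmkforParameterspace} for $\mathcal{R}(f)$ at a classical point, together with the normalization built into $j'_v$ (which absorbs the characters $\Delta_W$, $\delta_{\mathcal{P}}^{-1/2}$ and $|\det|^{(1-n)/2}$ that distinguish the automorphic and Galois normalizations), we identify each $\mathrm{gr}_i(D)$ canonically with $\mathcal{R}(j'_v(\delta_v)_i)$. Thus $D$ is paraboline with parameter $j'_v(\delta_v)$. Strictness follows from Corollary \ref{Cor:StrPar}: the very regular hypothesis on $\delta$ is exactly tailored (see Definition \ref{Def:RegLocus} and the explicit inequalities in the definition of $\mathcal{T}_{i,j}$) so that $j'_v(\delta_v)$ lies in the regular locus $\mathcal{T}_{v,\mathrm{reg}}$, whence $\Hom(\mathcal{R}(j'_v(\delta_v)_i),\mathcal{R}(j'_v(\delta_v)_j))=0$ for $i<j$ by Proposition \ref{Prop:dimforcoh}(2), and the factor-through condition is satisfied.

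For the global factorization, the pointwise statement shows that the image of every VCN point under $j'\times j_{\bar{\rho}}$ lies in $\prod_v V_{\mathrm{par}}(\bar{\rho}_v)\subset\prod_v Y_{\mathrm{par}}(\bar{\rho}_v)$. By Theorem \ref{VCNareZarDense}, VCN points are Zariski dense in $D(V)$, and by remark \ref{RefinedParandPar} the target $\prod_v Y_{\mathrm{par}}(\bar{\rho}_v)$ is reduced and Zariski closed in $\prod_v(\mathfrak{X}_{\bar{\rho}_v}\times\mathcal{T}_v)$. Since $D(V)$ is also reduced, the closed subspace of $D(V)$ on which the composed morphism lands in $\prod_v Y_{\mathrm{par}}(\bar{\rho}_v)$ contains a Zariski dense set and hence equals $D(V)$.

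The main obstacle is the first paragraph: one must verify that the filtration on $M_v$ coming from the decomposition of the Weil--Deligne representation really does have the prescribed filtration weights on each graded piece. Irreducibility of the $\varrho_{v,i}\otimes(\delta'_{v,i})_{\mathrm{sm}}$ (guaranteed by very regularity, which ensures these WD summands remain irreducible as $(\varphi,N,G_{L/F_{\tilde{v}}})$-modules) together with non-criticality pins down the filtration weights of each $M_{v,i}/M_{v,i-1}$ by subtraction; this is where the estimates of Lemma \ref{Noncriticalcriterion} and Proposition \ref{VCNcriterion}(3) are doing the real work. The subsequent identification of $\mathbf{D}_K$ applied to each rank-$n_i$ graded piece with $\mathcal{R}(j'_v(\delta_v)_i)$ is then a careful but routine character bookkeeping, matching the algebraic/smooth decomposition of $\delta_v$ with the Hodge--Tate weights and the WD structure via the normalization of $j'_v$.
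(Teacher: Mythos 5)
Your proposal is correct and follows essentially the same route as the paper: the pointwise statement is obtained from Proposition \ref{VCptProp} (deRham property, Weil--Deligne decomposition, Hodge--Tate weights) together with non-criticality to pin down the filtration weights of each graded piece of $\mathbf{D}_{\mathrm{pst}}(\rho_v)$, the normalization in $j'_v$ identifies the parameter, very regularity gives regularity and hence strictness, and the global factorization follows from Zariski density of VCN points (Theorem \ref{VCNareZarDense}) and closedness of $\prod_v Y_{\mathrm{par}}(\bar{\rho}_v)$. The only cosmetic difference is that you make explicit the use of $\mathbf{D}_K$ and Corollary \ref{Cor:StrPar}, which the paper leaves implicit.
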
 

    \begin{proof}
    	Let $x=(\delta,\rho)\in D(V)$ be a dominant, very regular, classical, non-critical point. And suppose that 
    	$$\delta_{\mathrm{alg}}\left((g_{v,1},\dots,g_{v,l})_{v\in S_p}\right) = \prod_{v\in S_p\atop  \eta:F_{\tilde{v}}\inj C} \left( \prod_{i=1}^{l}\eta(\mathrm{det}(g_{v,i}))^{b^{(v)}_{\eta,i}} \right),$$
    	for $g_{v,i} \in \mathrm{GL}_{n_i}(F_{\tilde{v}})$. We write $ (\delta_{v,1}',\dots,\delta_{v,l}') := j'_v(\delta_v)$.
    	
    	We write $M_v$ for the filtered $(\varphi,N,G_{F_{\tilde{v}}})$-module associated to $\rho_v$. It follows from proposition \ref{VCptProp} that the WD representation $\mathrm{WD}(M_v)$ attached to $M_v$ is isomorphic to
    	$$ \bigoplus_{1\leq i\leq l} \varrho_{v,i}\otimes (\delta'_{v,i})_{\mathrm{sm}},$$
    	 and the $\eta$-Hodge-Tate weights of $M_v$ are
    		$$a^{(v)}_{\eta,1}-b^{(v)}_{\eta,1}<\dots< a^{(v)}_{\eta,j}-b^{(v)}_{\eta,i_j} + (j-1) <\dots < a^{(v)}_{\eta,n}-b^{(v)}_{\eta,l}+(n-1),$$
    	where $i_j$ is the smallest integer such that $j\leq s_{i_j}$. 
    	
    	For each $1\leq i\leq l$, we write $M_{v,i}$ for the unique sub object of $M_v$ (as a filtered $(\varphi,N,G_{F_{\tilde{v}}})$-module) such that the WD representation $\mathrm{WD}(M_{v,i}/M_{v,i-1})$ attached to $M_{v,i}/M_{v,i-1}$ is isomorphic to 
    	$ \varrho_{v,j}\otimes (\delta'_{v,j})_{\mathrm{sm}}$ (set $M_{v,0}=0$).
    	
    	As $x$ is non-critical, the $\eta$-Hodge-Tate weights of $M_{v,i}/M_{v,i-1}$ are
    	$$a^{(v)}_{\eta,s_{i-1}+1}-b^{(v)}_{\eta,i}+(s_{i-1})<\dots< a^{(v)}_{\eta,j}-b^{(v)}_{\eta,i} + (j-1) <\dots < a^{(v)}_{\eta,s_i}-b^{(v)}_{\eta,i}+(i-1).$$
    	It follows that (note that $ (\delta'_{v,i})_{\mathrm{alg}} = (\delta_{v,i})_{\mathrm{alg}}$)
    	$$M_{v,i}/M_{v,i-1}\cong M_{v,i}'(\delta'_{v,i})$$
    	for some filtered $(\varphi,N,G_{F_{\tilde{v}}})$-module $M_{v,i}'$ such $\mathrm{WD}(M_{v,i}')\cong \varrho_{v,i}$ and $\eta$-Hodge-Tate weights of $M_{v,i}'$ are
    	$$a^{(v)}_{\eta,s_{i-1}+1}+(s_{i-1})<\dots< a^{(v)}_{\eta,j} + (j-1) <\dots < a^{(v)}_{\eta,s_i}+(s_i-1),$$
    	which exactly means that $(M_{v,i}/M_{v,i-1})_{1\leq i\leq l}$ is a point in $\mathcal{S}_v$ and its image in $\mathcal{T}_v$ under the natural projection $\mathcal{S}_v\rightarrow \mathcal{T}_v$ is $j'_v(\delta_v)$ by construction. 
    	
    	Note that $j'_v(\delta_v)$ is in $\mathcal{T}_{v,\mathrm{reg}}$ as being very regular implies being regular. Hence $D_{\mathrm{rig}}^{\dagger}(\rho_v)$ is strictly paraboline with parameter $j'_v(\delta_v)$. In particular, $(j'\times j_{\bar{\rho}})(\delta,\rho)$ is a point in $\prod_{v\in S_p}Y_{\mathrm{par}}(\bar{\rho}_v)$.
    	
    	It follows that the map $j'\times j_{\bar{\rho}}$ restricted on $D(V)$ factors through $\prod_v Y_{\mathrm{par}}(\bar{\rho}_v)$ as the set of dominant, very regular, classical, non-critical points is Zariski dense in $D(V)$ by theorem \ref{VCNareZarDense}.
    \end{proof}
    
    \appendix
    \section{Some Computation for Sen Polynomials}
    For convenience, let $A$ denote a connected affinoid $C$-algebra through out this section. One can easily generalize our assertions in this section to the case of general rigid $C$-spaces.
    
    Let $\pi= \mathrm{exp}(t)-1$ in $\mathcal{R}_K$ (resp. $\mathcal{R}^r_K$). Write $q_n:=\frac{\varphi^n(\pi)}{\varphi^{n-1}(\pi)}$, and write $n(r)$ the minimal integer such that $p^{n(r)-1}(p-1)\geq r$. Then $(q_n)$ is a maximal ideal of $\mathcal{R}^r_K$ with residue field $K_n = K(\xi_{p^n})$ for $n\geq n(r)$.
    
    For a rank $d$ $(\varphi,\Gamma_K)$-module $D^r$ over $\mathcal{R}^r_{K,A}$, by \cite[Lemma~3.2.3]{Kedlaya2012}, one has
    $$D^r/tD^r\cong\prod_{n\geq n(r)} D^r/q_nD^r,$$
    where each $D^r/q_nD^r$ is a locally free $(K_n\otimes_{\mathbb{Q}_p} A)$-module of rank $d$ and $\varphi^{n'-n}$ induces an isomorphism $$D^r/q_nD^r\otimes_{K_n} K_{n'}\cong D^r/q_{n'}D^r$$ of $A[\Gamma_K]$-modules for $n'\geq n\geq n(r)$. 
    
    Hence if $D$ is a $(\varphi,\Gamma_K)$-module over $\mathcal{R}_{K,A}$ such that $D \cong D^r\otimes_{\mathcal{R}_{K,A}^r} \mathcal{R}_{K,A}$ for some $(\varphi,\Gamma_K)$-module $D^r$ over $\mathcal{R}_{K,A}^r$, then 
    $$(D/tD)^{\varphi=1}\cong D^r/q_nD^r\otimes_{K_n} K_{\infty}$$
    for $n\geq n(r)$, which is a locally free $(K_\infty\otimes_{\mathbb{Q}_p} A)$-modules of dimension $d$ with continuous semi-linear $\Gamma_K$-action.
    \begin{Def} \cite[Definition~6.2.11]{Kedlaya2012}
    	Keep our the notations be as above. Consider the \textit{Sen operator} $\Theta_{\mathrm{Sen}} = \frac{\log(\gamma)}{\log(\chi(\gamma))}$ for $\gamma\in\Gamma_K$ close to 1 enough. We may assume $\gamma\in \mathrm{Gal}(K_\infty/K_n)$ hence $\Theta_{\mathrm{Sen}}$ induces a $K_n\otimes_{\mathbb{Q}_p} A$-linear map on $D^r/q_nD^r$. 
    	
    	We define the \textit{Sen polynomial} $\mathrm{Sen}_{D}$ of $D$ as the characteristic polynomial of this linear map, which is monic with coefficient in $K_n\otimes_{\mathbb{Q}_p}A$.
    \end{Def}
    
    \begin{Rmk}\label{rmkforsen} \textit{ }
    	\begin{enumerate}
    		\item As the Sen operator $\Theta_{\mathrm{Sen}}$ commutes with $\Gamma_K$-action, hence the Sen polynomial $\mathrm{Sen}_D$ is $\Gamma_K$-invariant, then with coefficients in $K\otimes_{\mathbb{Q}_p} A$.
    		\item One can show that $\mathrm{Sen}_D$ does not depends of the choice $r$ and $n$.
    		\item Under the isomorphism $$K\otimes_{\mathbb{Q}_p}A[T] \xrightarrow{\sim}\prod_{\sigma:K\inj C}A[T]  $$
    		we can decompose $\mathrm{Sen}_{D}(T) = (\mathrm{Sen}_{D,\sigma}(T))_\sigma$, where $\mathrm{Sen}_{D,\sigma}$ is the characteristic polynomial of $\Theta_{\mathrm{Sen}}$ for $(D/t_\sigma D)^{\varphi=1}$. Usually, we often write a Sen polynomial of the form $(\mathrm{Sen}_{D,\sigma}(T))_\sigma$.
    		\item When $D$ is a $(\varphi,\Gamma_K)$-module of rank 1, then $D$ corresponds to a continuous character $\delta: K^\times \rightarrow A^\times$ by \cite[Theorem~6.2.14]{Kedlaya2012}. Lemma 6.2.12 of \textit{loc. cit.} shows that  the Sen operator acts by multiplication by an element $\mathrm{wt}(\delta)$ in $K\otimes_{\mathbb{Q}_p} A$, and the Sen polynomial is $T-\mathrm{wt}(\delta)$.
    	\end{enumerate}
    
    \end{Rmk}
   
    \begin{Prop}\label{Prop:Multipropforsen}
    Let 
    $$0\rightarrow D_1 \rightarrow D_2 \rightarrow D_3 \rightarrow 0$$
    be a short exact sequence of $(\varphi,\Gamma_K)$-module over $A$, then
    $$\mathrm{Sen}_{D_2} = \mathrm{Sen}_{D_1} \cdot\mathrm{Sen}_{D_3}$$
    \end{Prop}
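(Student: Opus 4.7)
The strategy is to reduce the identity, via the definition of the Sen polynomial, to the classical multiplicativity of characteristic polynomials in a short exact sequence of locally free modules equipped with a compatible linear endomorphism.

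I would first choose a common $r\geq r(K)$ large enough so that each $D_i$ arises from a $(\varphi,\Gamma_K)$-module $D_i^r$ over $\mathcal{R}^r_{K,A}$ and the short exact sequence descends to an exact sequence
\begin{equation*}
0\rightarrow D_1^r\rightarrow D_2^r\rightarrow D_3^r\rightarrow 0
\end{equation*}
of finite projective $\mathcal{R}^r_{K,A}$-modules. Fix any $n\geq n(r)$. Since $D_3^r$ is projective, we have $\mathrm{Tor}_1^{\mathcal{R}^r_{K,A}}(D_3^r,\mathcal{R}^r_{K,A}/q_n\mathcal{R}^r_{K,A})=0$, and hence the induced sequence
\begin{equation*}
0\rightarrow D_1^r/q_nD_1^r\rightarrow D_2^r/q_nD_2^r\rightarrow D_3^r/q_nD_3^r\rightarrow 0
\end{equation*}
remains exact. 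By the discussion preceding the definition of $\mathrm{Sen}_D$, each term is locally free over $K_n\otimes_{\mathbb{Q}_p}A$ of rank $\mathrm{rk}(D_i)$.

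The Sen operator $\Theta_{\mathrm{Sen}}=\log(\gamma)/\log(\chi(\gamma))$, defined for $\gamma\in\mathrm{Gal}(K_\infty/K_n)$ sufficiently close to $1$, is built canonically from the $\Gamma_K$-action and hence commutes with every $\Gamma_K$-equivariant morphism of $(\varphi,\Gamma_K)$-modules. Therefore the two arrows in the mod-$q_n$ sequence intertwine the respective Sen operators. At this point I would invoke the standard fact that, for a short exact sequence of finite locally free modules over a commutative ring together with an endomorphism of the middle term stabilizing the subobject, the characteristic polynomial of the middle factors as the product of those of the ends; this one checks locally on $\mathrm{Spec}(K_n\otimes_{\mathbb{Q}_p}A)$ after trivializing and choosing compatible bases so that the endomorphism is block upper-triangular. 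Applying this yields
\begin{equation*}
\mathrm{Sen}_{D_2}=\mathrm{Sen}_{D_1}\cdot\mathrm{Sen}_{D_3}
\end{equation*}
in $(K_n\otimes_{\mathbb{Q}_p}A)[T]$, and hence in $(K\otimes_{\mathbb{Q}_p}A)[T]$ after descent via $\Gamma_K$-invariance (remark \ref{rmkforsen}(1)). The only non-formal point in the whole argument is preservation of exactness after reducing modulo $q_n$, which comes down entirely to the projectivity of $D_3^r$ as an $\mathcal{R}^r_{K,A}$-module; everything else is linear algebra.
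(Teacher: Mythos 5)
Your proposal is correct and follows essentially the same route as the paper: descend the sequence to models $D_i^r$ over $\mathcal{R}^r_{K,A}$, observe that reduction modulo $q_n$ stays exact because $D_3^r$ is projective (the paper phrases this as the sequence being locally split as $\mathcal{R}^r_{K,A}$-modules, which is the same fact), and conclude by multiplicativity of characteristic polynomials for the $\Theta_{\mathrm{Sen}}$-stable filtration. The paper's proof is just a more terse version of exactly this argument.
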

    
    \begin{proof}
    	There exists some $r>0$ such that $D_i\cong D^r_i\otimes_{\mathcal{R}^r_{K,A}} {\mathcal{R}_{K,A}}$ for some $(\varphi,\Gamma_K)$-module over ${\mathcal{R}^r_{K,A}}$ for $i=1,2,3$, and the short exact sequence is induced by some short exact sequence
    	$$0\rightarrow D_1^r \rightarrow D_2^r \rightarrow D_3^r \rightarrow 0$$
    	As the short exact sequence above is locally split as ${\mathcal{R}^r_{K,A}}$-module, hence the complex
    	$$0\rightarrow D_1^r/q_nD_1^r \rightarrow D_2^r/q_nD_3^r \rightarrow D_3^r/q_nD_3^r \rightarrow 0$$
    	is exact. It follows that
    	  $$\mathrm{Sen}_{D_2} = \mathrm{Sen}_{D_1} \cdot\mathrm{Sen}_{D_3}.$$
    \end{proof}

    \begin{Lemma} \label{GaldescentforSenpoly}
    	Let $A$ be a affinoid $C$-algebra. Let $D$ be a $(\varphi,\Gamma_K)$-module over $A$, and denote $D_L:=D\otimes_{\mathcal{R}_{K,A}} \mathcal{R}_{L,A}$, then 
    	$$\mathrm{Sen}_{D_L}(T)=\mathrm{Sen}_{D}(T)$$
    \end{Lemma}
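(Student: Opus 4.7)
The plan is to compute both Sen polynomials via the concrete description in terms of the cyclotomic fibres $D^r/q_n D^r$ and $D_L^r/q_n D_L^r$, and show the latter is obtained from the former by scalar extension along $K_n \otimes A \hookrightarrow L_n \otimes A$, with the Sen operator compatible. Then invariance of the characteristic polynomial under étale scalar extension, combined with the Galois descent built into the definition of the Sen polynomial (remark \ref{rmkforsen}(1)), will give $\mathrm{Sen}_{D_L}(T)=\mathrm{Sen}_D(T)$ inside $L\otimes_{\mathbb{Q}_p}A[T]$ via the inclusion $K\otimes_{\mathbb{Q}_p} A\hookrightarrow L\otimes_{\mathbb{Q}_p} A$.

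Concretely, I would first fix $r\geq r(L)$, a model $D^r$ of $D$ over $\mathcal{R}^r_{K,A}$, and $n\geq n(r)$ large enough so that $[L_m:K_m]=[L_\infty:K_\infty]=[H_K:H_L]$ for all $m\geq n$ (stabilisation of the ascending chain $L\cap K_m$). Next I would identify
$$\mathcal{R}^r_{L,A}/q_n\mathcal{R}^r_{L,A}\cong L_n\otimes_{\mathbb{Q}_p} A.$$
By remark \ref{Rmk:FactAboutRobba}(2), $\mathcal{R}^r_L/\mathcal{R}^r_K$ is finite étale of rank $[H_K:H_L]$, so the quotient is a finite étale $K_n$-algebra of rank $[L_n:K_n]$; the map $\iota^L_n\colon \mathcal{R}^r_L\to L_n[\![t_n]\!]$ induces a surjection onto $L_n$ (since $\iota_n(q_n)\in t_n\, L_n[\![t_n]\!]$, from $\varphi$-compatibility in remark \ref{Rmk:FactAboutRobba}(1)), and a rank count forces it to be an isomorphism. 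Extending scalars to $A$ and tensoring with $D^r$ then gives the canonical identification
$$D_L^r/q_n D_L^r\;\cong\; (D^r/q_n D^r)\otimes_{K_n} L_n.$$

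The third step is to track the Sen operator. The restriction map $\Gamma_L\to\Gamma_K$, $\gamma\mapsto \gamma|_{K_\infty}$, is injective with open image, and carries $\mathrm{Gal}(L_\infty/L_n)$ into $\mathrm{Gal}(K_\infty/K_n)$ (as any such $\gamma$ fixes $L_n\supseteq K_n$). Picking $\gamma$ in this common open neighbourhood of $1$, the same operator $\log(\gamma)/\log(\chi(\gamma))$ computes both $\Theta_{\mathrm{Sen}}$'s. Under the identification above, $\gamma$ acts as $\gamma\otimes \mathrm{id}_{L_n}$, because $\gamma$ fixes $L_n$ pointwise. Hence the characteristic polynomial of $\Theta_{\mathrm{Sen}}$ on $D_L^r/q_n D_L^r$ over $L_n\otimes A$ is the scalar extension of the characteristic polynomial of $\Theta_{\mathrm{Sen}}$ on $D^r/q_n D^r$ over $K_n\otimes A$. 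Taking $\Gamma$-invariants of coefficients yields the desired equality $\mathrm{Sen}_{D_L}(T)=\mathrm{Sen}_D(T)$ in $L\otimes_{\mathbb{Q}_p}A[T]$.

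The main obstacle is the identification in step two, $\mathcal{R}^r_L/q_n\mathcal{R}^r_L\cong L_n$. One needs both the rank equality $[L_n:K_n]=[H_K:H_L]$ for $n\gg 0$ and the concrete surjection via $\iota^L_n$ killing $q_n$ at $t_n=0$. Once these are in hand, the remaining compatibility of $\gamma$ with the tensor decomposition and the base-change behaviour of characteristic polynomials are formal, and the Galois-invariance of the Sen polynomial (remark \ref{rmkforsen}(1), now applied both over $K$ and over $L$) pins down the coefficients in $L\otimes_{\mathbb{Q}_p}A$.
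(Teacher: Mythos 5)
Your proposal is correct and follows essentially the same route as the paper: both reduce to the identification $D_L^r/q_nD_L^r\cong (D^r/q_nD^r)\otimes_{K_n}L_n$ and the observation that $\Theta_{\mathrm{Sen}}$ on the left is the scalar extension of $\Theta_{\mathrm{Sen}}$ on $D^r/q_nD^r$, so the characteristic polynomials coincide. The only difference is that you spell out what the paper dismisses as ``direct computation'' (the isomorphism $\mathcal{R}^r_{L,A}/q_n\cong L_n\otimes_{\mathbb{Q}_p}A$ via finite \'etaleness and $\iota_n$, and the compatibility of the two Sen operators through $\Gamma_L\hookrightarrow\Gamma_K$), which is a welcome amount of extra care but not a different argument.
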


    \begin{proof}
    	By direct computation, one can show that 
    	$$D_L^r/q_n D_L^r \cong (D^r/q_nD^r)\otimes_{K_n}L_n,$$
    	for some $r>0$.
    	Note that $D^r/q_nD^r$ is stable under the action of $\Theta_{\mathrm{Sen}}$. Hence the characteristic polynomial of $\Theta_{\mathrm{Sen}}$ on $D_L^r/q_n D_L^r$ is equal to the the characteristic polynomial of $\Theta_{\mathrm{Sen}}$ on $D^r/q_n D^r$, i.e.
    	$$\mathrm{Sen}_{D_L}(T)=\mathrm{Sen}_{D}(T).$$
    \end{proof}
    
    \begin{Prop}
    	Let $M$ be a filtered $(\varphi,N,G_{L/K})$-module over $L\otimes_{\mathbb{Q}_p}A$ of rank $d$ of weight $(\underline{k_\sigma})_{\sigma:K\inj C}$, where $\underline{k_\sigma}:=(k_{\sigma,1}\leq \dots\leq k_{\sigma,d})$. Then
    	$$\mathrm{Sen}_{\mathbf{D}(M)}(T)=(\prod_{1\leq i\leq d}(T+k_{\sigma,i}))_\sigma.    $$
    \end{Prop}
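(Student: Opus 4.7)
The plan is to reduce the computation to an explicit description of $\Lambda_L^n(M)/t_n \Lambda_L^n(M)$, where everything becomes transparent in a filtration-compatible basis. First, by Lemma~\ref{GaldescentforSenpoly}, it suffices to compute $\mathrm{Sen}_{\mathbf{D}_L(M)}(T)$ viewed in $(L\otimes_{\mathbb{Q}_p}A)[T]$ via the inclusion $(K\otimes_{\mathbb{Q}_p}A)[T]\hookrightarrow (L\otimes_{\mathbb{Q}_p}A)[T]$. Choose $r\geq r(L)$ and $n$ with $p^{n-1}(p-1)\geq r$, so that Proposition~\ref{BergersFunc2}(1) gives a canonical isomorphism
\begin{equation*}
\mathbf{D}_L^{r}(M)/q_n\mathbf{D}_L^{r}(M) \;\cong\; \Lambda_L^n(M)/t_n\Lambda_L^n(M)
\end{equation*}
of $(L_n\otimes_{\mathbb{Q}_p}A)$-modules carrying the $\Gamma_L$-action (noting that $(q_n)$ and $(t_n)$ define the same maximal ideal under $\iota_n$).

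Next, after localizing on $\mathrm{Spec}(A)$ to trivialize the filtration pieces, I pick for each $\tau\in\Hom(L,C)$ a filtration-compatible basis $(m_{\tau,1},\ldots,m_{\tau,d})$ of $(M_L)_\tau$ exactly as in the proof of Remark~\ref{Rmk:ForBergersFunc1}(3), where $m_{\tau,i}$ has filtration jump $k_{\tau,i}$. Because $\mathcal{F}^\bullet M_L$ is $G_{L/K}$-stable, Galois descent forces $k_{\tau,i}$ to depend only on $\sigma := \tau|_K$, so $k_{\tau,i} = k_{\sigma,i}$. That proof then shows that the elements
\begin{equation*}
e_{\tau,i} \;:=\; t_n^{-k_{\tau,i}}\otimes \varphi^n(m_{\tau,i}), \qquad (\tau,i)\in \Hom(L,C)\times\{1,\ldots,d\},
\end{equation*}
form an $L_{n,A}\llbracket t_n\rrbracket$-basis of $\Lambda_L^n(M)$, whose images give an $L_{n,A}$-basis of $\Lambda_L^n(M)/t_n\Lambda_L^n(M)$.

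Now take $\gamma\in\mathrm{Gal}(L_\infty/L_n)\subseteq \Gamma_L$, so the induced action on the quotient is $(L_n\otimes A)$-linear. Since $\gamma(t_n)=\chi(\gamma)\,t_n$ and $\gamma$ acts trivially on $M$ via $G_L\to G_{L/K}$ (hence trivially on $\varphi^n(M_L)$), we obtain
\begin{equation*}
\gamma(e_{\tau,i}) \;=\; \chi(\gamma)^{-k_{\tau,i}}\, e_{\tau,i}.
\end{equation*}
Consequently $\Theta_{\mathrm{Sen}} = \log(\gamma)/\log(\chi(\gamma))$ is diagonal with eigenvalue $-k_{\tau,i}$ on $e_{\tau,i}$, so the characteristic polynomial at the $\tau$-component is $\prod_{i=1}^d (T+k_{\tau,i})$. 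Assembling the $\tau$-components and using $k_{\tau,i}=k_{\tau|_K,i}$ shows that $\mathrm{Sen}_{\mathbf{D}_L(M)}(T)$ is the image of $(\prod_i(T+k_{\sigma,i}))_\sigma$ under $K\otimes A\hookrightarrow L\otimes A$, giving the claimed formula.

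The only real subtlety is verifying that $\Gamma_L$ acts diagonally on the $e_{\tau,i}$, which boils down to checking that the $G_{L/K}$-action on $M$ is trivial when restricted to $G_L$ and that the $L_{n,A}$-coefficients from expanding $\gamma(\varphi^n(m_{\tau,i}))$ do not mix pieces after reducing mod $t_n$; both are immediate from the definitions, so the argument is essentially a bookkeeping exercise once the basis description of $\Lambda_L^n(M)$ is in hand.
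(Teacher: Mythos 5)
Your argument is correct and follows essentially the same route as the paper's own proof: reduce to $\mathbf{D}_L(M)$ via Lemma \ref{GaldescentforSenpoly}, identify $\mathbf{D}_L^r(M)/q_n$ with (the reduction of) $\Lambda_L^n(M)$ using Proposition \ref{BergersFunc2}(1), and read off the diagonal $\Theta_{\mathrm{Sen}}$-action $e_{\tau,i}\mapsto -k_{\sigma,i}e_{\tau,i}$ on the filtration-compatible basis $t_n^{-k_{\sigma,i}}\otimes\varphi^n(m_{\tau,i})$ from Remark \ref{Rmk:ForBergersFunc1}(3). Your extra remarks (triviality of the $\Gamma_L$-action on $\varphi^n(M_L)$ and the fact that the weights only depend on $\sigma=\tau|_K$ by $G_{L/K}$-stability of the filtration) are exactly the points the paper uses implicitly, so nothing is missing.
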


    \begin{proof}
    	For $n\geq n(r)$, by the construction of $\iota_n: \mathcal{R}_{L}^r\hookrightarrow L_n\llbracket t_n\rrbracket$ ($\varphi^{-n}$ in \textit{loc. cit.} \cite{Cherbonnier1999} for example), one can check $\iota_n(q_n)$ is a uniformizer in $L_n\llbracket t_n\rrbracket$, hence the reduction map $$\bar{\iota}_n:\mathcal{R}_{L}^r/q_n\mathcal{R}_{L}^r\rightarrow L_n\llbracket t_n\rrbracket \surj L_n$$ is an isomorphism.
    	 
    	 Fix an embedding $K\inj L$, we have the surjection
    	 \begin{align*}
    	 	\Hom(L,C)&\rightarrow \Hom(K,C)\\
    	 	\tilde{\sigma} &\mapsto (\sigma:K\inj L\xrightarrow{\tilde{\sigma}} C)
    	 \end{align*} 
    	 Recall that one has the canonical isomorphism
    	$$\mathbf{D}^r_L(M) \otimes_{\mathcal{R}^r_{L,A},\iota_n} L_n\llbracket t_n\rrbracket \xrightarrow{\sim} \mathbf{D}^n(M),$$
    	where $\mathbf{D}^n(M)$ is of the form  $\mathrm{Fil}^0(L_n\llbracket t_n\rrbracket\otimes_{(L\otimes_{\mathbb{Q}_p} A)} M')$, for some  locally free filtered $(L\otimes_{\mathbb{Q}_p} A)$-module $M'$, with $\tilde{\sigma}$-weight $\underline{k_\sigma}$. Locally on $\mathrm{Spec}A$, let $(e_{\tilde{\sigma},1},\dots,e_{\tilde{\sigma},d})$ be a basis of $M'_{\tilde{\sigma}}:= M'\otimes_{(L\otimes_{\mathbb{Q}_p} A), \tilde{\sigma}\otimes\id} A$, such that $(e_{\tilde{\sigma},1},\dots,e_{\tilde{\sigma},i_j})$ is a basis of $\mathrm{Fil}^jM'$, where $i_j$ is the rank of $\mathrm{Fil}^jM'$. Hence one can see that 
    	$$(t_n^{-k_{\sigma,1}}e_{\tilde{\sigma},1},\dots,t_n^{-k_{\sigma,d}}e_{\tilde{\sigma},d})$$
    	is a basis of $\mathbf{D}^n(M)$, and $\Theta_{\mathrm{Sen}}(t_n^{-k_{\sigma,i}}e_{\tilde{\sigma},i}) = -k_{\sigma,i} t_n^{-k_{\sigma,i}}e_{\tilde{\sigma},i}$. It follows that characteristic polynomial of $\Theta_{\mathrm{Sen}}$ on $\mathbf{D}^r_L(M)/q_n \mathbf{D}^r_L(M) \cong \mathbf{D}^n(M)\otimes_{L_n\llbracket t_n\rrbracket}L_n$ is
    	$$(\prod_{1\leq i\leq d}(T+k_{\sigma,i}))_{\tilde{\sigma}:L\inj C} = (\prod_{1\leq i\leq d}(T+k_{\sigma,i}))_{\sigma:K\inj C}.$$
    	(Note that if an element $a \in\prod_{\tilde{\sigma}:L\inj C}A$ is of the form $(a_\sigma)_{\tilde{\sigma}}$, then $a\in K\otimes_C A$ and $a = (a_\sigma)_\sigma \in\prod_{\sigma:K\inj C}A$).
    	
    	As $\mathbf{D}(M)\otimes_{\mathcal{R}_{K,A}} \mathcal{R}_{L,A} \cong \mathbf{D}_L(M)$, by lemma \ref{GaldescentforSenpoly}, one has
    	$$\mathrm{Sen}_{\mathbf{D}(M)}(T) = (\prod_{1\leq i\leq d}(T+k_{\sigma,i}))_{\sigma:K\inj C}.$$
    \end{proof}

    \begin{Prop}\label{senpolytwistedbychar}
    	Let $D$ be a $(\varphi,\Gamma_K)$-module over $A$, and let $\delta: K^\times \rightarrow A^\times$ be a continuous character, of weight $\mathrm{wt}$. Then
    	$$\mathrm{Sen}_{D(\delta)}(T) = \mathrm{Sen}_{D(\delta)}(T-\mathrm{wt}(\delta)).$$	
    \end{Prop}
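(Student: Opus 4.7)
(Note: I read the right-hand side as $\mathrm{Sen}_D(T-\mathrm{wt}(\delta))$; the statement as printed appears to be a typo.) The plan is to reduce the computation of $\mathrm{Sen}_{D(\delta)}$ to the ``Leibniz rule'' for the Sen operator on a tensor product, using the fact, recorded in Remark \ref{rmkforsen}(4), that $\mathcal{R}_{K,A}(\delta)$ is a rank one $(\varphi,\Gamma_K)$-module whose Sen operator is scalar multiplication by $\mathrm{wt}(\delta)\in K\otimes_{\mathbb{Q}_p}A$.

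First I would fix $r>0$ large enough so that both $D$ and $\mathcal{R}_{K,A}(\delta)$ descend to $(\varphi,\Gamma_K)$-modules $D^r$ and $\mathcal{R}^r_{K,A}(\delta)$ over $\mathcal{R}^r_{K,A}$, and pick $n\geq n(r)$. Since $\mathcal{R}^r_{K,A}(\delta)$ is a rank one (hence flat) module, one has the canonical identification
\begin{equation*}
D(\delta)^r/q_n D(\delta)^r \;\cong\; \bigl(D^r/q_n D^r\bigr)\otimes_{K_n\otimes_{\mathbb{Q}_p}A}\bigl(\mathcal{R}^r_{K,A}(\delta)/q_n\mathcal{R}^r_{K,A}(\delta)\bigr),
\end{equation*}
as $K_n\otimes_{\mathbb{Q}_p}A$-modules with continuous $\Gamma_K$-action. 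The Sen operator $\Theta_{\mathrm{Sen}}=\log(\gamma)/\log(\chi(\gamma))$, for $\gamma$ sufficiently close to $1$, is obtained from the $\Gamma_K$-action by a power series in $\gamma-1$ and is visibly a derivation for the tensor product of $\Gamma_K$-modules.

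Second, I would invoke the derivation property: if $\Theta_D$ and $\Theta_\delta$ denote the Sen operators on the two factors, then on the tensor product
\begin{equation*}
\Theta_{\mathrm{Sen}}(d\otimes e)=\Theta_D(d)\otimes e + d\otimes\Theta_\delta(e).
\end{equation*}
By Remark \ref{rmkforsen}(4), $\Theta_\delta$ is multiplication by the scalar $\mathrm{wt}(\delta)\in K\otimes_{\mathbb{Q}_p}A$, so the second term becomes $\mathrm{wt}(\delta)\cdot(d\otimes e)$. After choosing a trivialization of the rank one module $\mathcal{R}^r_{K,A}(\delta)/q_n$, the Sen operator on $D(\delta)^r/q_n D(\delta)^r$ is therefore the matrix $\Theta_D+\mathrm{wt}(\delta)\cdot\mathrm{Id}$ acting on $D^r/q_n D^r$.

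Finally, computing characteristic polynomials gives
\begin{equation*}
\mathrm{Sen}_{D(\delta)}(T)=\det\!\bigl(T-\Theta_D-\mathrm{wt}(\delta)\bigr)=\det\!\bigl((T-\mathrm{wt}(\delta))-\Theta_D\bigr)=\mathrm{Sen}_{D}\bigl(T-\mathrm{wt}(\delta)\bigr),
\end{equation*}
which is the desired identity. I do not anticipate a serious obstacle; the only mild subtlety is justifying the derivation formula for $\Theta_{\mathrm{Sen}}$ on the tensor product, which can be handled either by expanding the logarithm series in $\gamma-1$ and using the standard Leibniz rule for $\gamma$-actions on tensor products, or equivalently by noting that $\Theta_{\mathrm{Sen}}$ is the infinitesimal generator of the $\Gamma_K$-action in the sense of Lie algebra, and any Lie algebra acts by derivations on tensor products of its representations. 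The decomposition into $\sigma$-components in Remark \ref{rmkforsen}(3) shows this identity at the level of each $\mathrm{Sen}_{D,\sigma}(T)$ as well.
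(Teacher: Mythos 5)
Your proposal is correct and follows essentially the same approach as the paper: both reduce to Remark \ref{rmkforsen}(4) for the rank-one factor and then apply the Leibniz rule for $\Theta_{\mathrm{Sen}}$ on tensor products, which the paper verifies by expanding $\log(\gamma)$ in powers of $\gamma-1$, exactly as you indicate. You also correctly identify the typo in the statement (the right-hand side should read $\mathrm{Sen}_{D}(T-\mathrm{wt}(\delta))$).
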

    
    \begin{proof}
        The assertion follows from remark \ref{rmkforsen}(4) and the following general fact: 
    	
    	For two $(\varphi,\Gamma_K)$-module $D_1,D_2$, the operator $\Theta_{\mathrm{Sen}}$ on $D_1\otimes D_2$ satisfies the Leibniz's rule
    	$$\Theta_{\mathrm{Sen}}(v\otimes w) = v\otimes \Theta_{\mathrm{Sen}}(w) + \Theta_{\mathrm{Sen}}(v)\otimes w.$$
    	Indeed, we only need to show the Leibniz's rule for $\log(\gamma)$ for an element $\gamma$ in $\Gamma_K$ close to $1$ enough. Let $\alpha:= \gamma-1$. Assume 
    	$$(X+Y+XY)^{n} = \sum_{i,j\in\mathbb{N}^{2}} b(n,i,j)X^iY^j,$$
    	for some integer $b(n,i,j)$. By induction, one can show that
    	$$\alpha^n(v\otimes w) = \sum_{i,j\in\mathbb{N}^{2}} b(n,i,j)\alpha^i(v)\otimes\alpha^j(w).$$
    	Then if we write
    	$$a(i,j) = \sum_{n\in\mathbb{N}\setminus\{0\}} (-1)^{n-1}\frac{b(n,i,j)}{n},$$
    	one has 
    	$$\log(\gamma)(v\otimes w) = \sum_{i,j\in\mathbb{N}^2} a(i,j)\alpha^i(v)\otimes\alpha^j(w).$$
    	On other hand, one has
    	\begin{equation*}
    		a(i,j)=\left\{
    		\begin{array}{cc}
    			1/i, & j= 0\\
    			1/j, & i= 0\\
    			0,          & \mathrm{otherwise}
    		\end{array}
    		\right.
    	\end{equation*} 
    via consider the equation of formal power series of the equation
    $$\log(1+Y+XY) = \log X+\log Y.$$
    It follows that $\log(\gamma)(v\otimes w) = \log(\gamma)(v)\otimes w + v\otimes \log(\gamma)(w)$.
    \end{proof}
   
    \begin{Cor}\label{Cor:SenpolyforquasideRham}
    Let $D$ be a rank $d$ quasi-deRham $(\varphi,\Gamma_K)$-module over $A$ of form $\mathbf{D}(M)(\delta)$. Suppose that $\mathrm{wt}(\delta) = (\mathrm{wt}_\sigma(\delta))_{\sigma}$ in $\prod\limits_{\sigma: K\hookrightarrow C} A$, and $M$ is of filtration weight $(\underline{k}_{\sigma})_{\sigma:K\inj C}$, where $\underline{k}_{\sigma} =(\underline{k}_{\sigma,1}\leq\dots \leq \underline{k}_{\sigma,d}))$, then
    $$\mathrm{Sen}_D(T) = (\prod_{1\leq i\leq d}(T-\mathrm{wt}_\sigma(\delta)+k_{\sigma,i}))_{\sigma:K\inj C}.$$
    \end{Cor}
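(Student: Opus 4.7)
The plan is to combine the two propositions immediately preceding the corollary: the explicit formula
$$\mathrm{Sen}_{\mathbf{D}(M)}(T) = \left(\prod_{1\leq i\leq d}(T+k_{\sigma,i})\right)_{\sigma:K\inj C}$$
for the Sen polynomial of the $(\varphi,\Gamma_K)$-module attached to a filtered $(\varphi,N,G_{L/K})$-module, together with the twisting formula $\mathrm{Sen}_{D(\delta)}(T) = \mathrm{Sen}_D(T-\mathrm{wt}(\delta))$ from Proposition \ref{senpolytwistedbychar}. Since $D = \mathbf{D}(M)(\delta)$ by hypothesis, these two facts can simply be composed.

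Concretely, I would first apply Proposition \ref{senpolytwistedbychar} with $D$ replaced by $\mathbf{D}(M)$ to write
$$\mathrm{Sen}_D(T) = \mathrm{Sen}_{\mathbf{D}(M)(\delta)}(T) = \mathrm{Sen}_{\mathbf{D}(M)}(T-\mathrm{wt}(\delta)),$$
interpreting the right-hand side componentwise via the decomposition $K\otimes_{\mathbb{Q}_p} A \cong \prod_{\sigma:K\inj C} A$ of Lemma \ref{fieldecomp}. Under this decomposition $\mathrm{wt}(\delta)$ corresponds to the tuple $(\mathrm{wt}_\sigma(\delta))_\sigma$, so the substitution $T\mapsto T-\mathrm{wt}(\delta)$ becomes coordinatewise the substitution $T\mapsto T-\mathrm{wt}_\sigma(\delta)$ in each factor.

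Next I would substitute the formula of the preceding proposition for $\mathrm{Sen}_{\mathbf{D}(M)}$, obtaining
$$\mathrm{Sen}_D(T) = \left(\prod_{1\leq i\leq d}\bigl((T-\mathrm{wt}_\sigma(\delta))+k_{\sigma,i}\bigr)\right)_{\sigma:K\inj C} = \left(\prod_{1\leq i\leq d}(T-\mathrm{wt}_\sigma(\delta)+k_{\sigma,i})\right)_{\sigma:K\inj C},$$
which is exactly the claimed formula.

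Since both inputs are already proved, the only thing to verify is the compatibility of the componentwise reading of $\mathrm{wt}(\delta)\in K\otimes_{\mathbb{Q}_p} A$ with the product decomposition of the Sen polynomial, but this is immediate from Lemma \ref{fieldecomp} applied to the rings $K\otimes_{\mathbb{Q}_p} A$ and $(K\otimes_{\mathbb{Q}_p} A)[T]$. There is no real obstacle; the corollary is a formal consequence.
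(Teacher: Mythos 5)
Your proof is correct and is exactly the argument the paper intends: the corollary is stated without a proof because it is meant to be a direct composition of the two immediately preceding propositions (the Sen polynomial of $\mathbf{D}(M)$ and the twisting formula of Proposition \ref{senpolytwistedbychar}), which is precisely what you do. Your remark that the substitution $T\mapsto T-\mathrm{wt}(\delta)$ is read coordinatewise via $K\otimes_{\mathbb{Q}_p}A\cong\prod_\sigma A$ correctly identifies the only (entirely routine) bookkeeping step; note also that the displayed equation in Proposition \ref{senpolytwistedbychar} has an obvious typographical slip (the right-hand side should read $\mathrm{Sen}_D(T-\mathrm{wt}(\delta))$), which you have implicitly and correctly repaired.
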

   
   \begin{Def}
   	Let $S=(S_\sigma(T))_\sigma$ be a polynomial in $K\otimes_{\mathbb{Q}_p} A[T]$ of degree $d$, of the form
   	$$S_\sigma(T) = \prod_{1\leq i\leq d}(T - a_{\sigma,i}).$$
   	Let $S'(T)$ be a polynomial of degree $d$ in $K\otimes_{\mathbb{Q}_p} A[T]$. We say $S'(T)\leq S(T)$ if there exist sets of non negative integers $(k_{\sigma_1},\dots,k_{\sigma_d})$ for $\sigma:K\inj C$, such that 
   	$$S'(T) =(S'_\sigma(T) = \prod_{1\leq i\leq d}(T-a_{\sigma,i}-k_{\sigma,i}))_{\sigma:K\inj C}.$$
   	One can easily see that this is indeed a partial order of polynomials in this form.
   \end{Def}

   \begin{Cor}\label{Cor:Senineqforinj}
   Let $A$ be a finite field over $C$. Let $D$ be a quasi-deRham $(\varphi,\Gamma_K)$-module over $A$. Then for any $(\varphi,\Gamma_K)$-submodule  $D'$ of $D$ with the same rank, one has $\mathrm{Sen}_{D'}(T)\leq \mathrm{Sen}_{D}(T)$, and the equality holds if and only if $D'=D$.
   
   Moreover, if we replace the quasi-deRham condition by paraboline, the same property holds. 

   \end{Cor}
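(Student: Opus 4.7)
My plan is to handle the quasi-deRham case first by a direct analysis at the level of filtered $(\varphi,N,G_{L/K})$-modules, and then bootstrap to the paraboline case by taking saturations along the parabolization and applying multiplicativity of the Sen polynomial. For the quasi-deRham case, write $D\cong \mathbf{D}(M)(\delta)$; by proposition \ref{senpolytwistedbychar}, both $\mathrm{Sen}_D$ and $\mathrm{Sen}_{D'}$ shift by the same $\mathrm{wt}(\delta)$ under twisting by $\delta^{-1}$, so the inequality to be proved is invariant and I may assume $D=\mathbf{D}(M)$. By the argument already used in the proof of lemma \ref{Sat2} (i.e.\ \cite[Cor~III.2.5]{Berger2008}, which is available since $A$ is a field), the embedding $D'\hookrightarrow D$ comes from an embedding $M'\hookrightarrow M$ of filtered $(\varphi,N,G_{L/K})$-modules. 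Since $L_0\otimes_{\mathbb{Q}_p} A$ is a product of fields and both modules have the same rank, the underlying $(\varphi,N,G_{L/K})$-structure map is an isomorphism; only the filtrations differ, with $\mathcal{F}^i M'_L\subseteq \mathcal{F}^i M_L$ for every $i$.

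The inclusion of filtrations gives $\dim_A\mathcal{F}^i M'_L\leq \dim_A\mathcal{F}^i M_L$ for all $i$; a standard sorting argument on the jump locations translates this into the weight inequality $k'_{\sigma,j}\leq k_{\sigma,j}$ for every $\sigma\in\Hom(K,C)$ and $1\leq j\leq d$. By the formula in corollary \ref{Cor:SenpolyforquasideRham}, this is exactly the asserted inequality $\mathrm{Sen}_{D'}(T)\leq \mathrm{Sen}_D(T)$ in the partial order of the preceding definition. For the equality case, weight equality forces $\dim_A\mathcal{F}^i M'_L=\dim_A\mathcal{F}^i M_L$; combined with $\mathcal{F}^i M'_L\subseteq \mathcal{F}^i M_L$ this gives $\mathcal{F}^i M'_L=\mathcal{F}^i M_L$ for every $i$, hence $M'=M$ as filtered modules and $D'=D$.

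For the paraboline case, fix a parabolization $0=D_0\subset D_1\subset\cdots\subset D_l=D$ whose graded pieces $D_i/D_{i-1}$ are quasi-deRham irreducible, and define $\widetilde{D}'_i$ to be the $t$-saturation of $D'\cap D_i$ inside $D'$. Since each $D_i$ is saturated in $D$ (as a member of a paraboline filtration with projective graded pieces), one checks directly that $\widetilde{D}'_i\subset D_i$, that $\widetilde{D}'_{i-1}$ is saturated in $\widetilde{D}'_i$, and that $\widetilde{D}'_i\cap D_{i-1}=\widetilde{D}'_{i-1}$. Hence each $\widetilde{D}'_i/\widetilde{D}'_{i-1}$ is a $(\varphi,\Gamma_K)$-module (by \cite[Lem~2.6.10]{Kedlaya2012}) embedding into the quasi-deRham irreducible module $D_i/D_{i-1}$. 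Since $\widetilde{D}'_l = D'$ and rank is additive along the filtration $\widetilde{D}'_0\subset\cdots\subset \widetilde{D}'_l$, the hypothesis $\mathrm{rank}(D')=\mathrm{rank}(D)$ forces $\mathrm{rank}(\widetilde{D}'_i/\widetilde{D}'_{i-1})=\mathrm{rank}(D_i/D_{i-1})$ for every $i$.

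The quasi-deRham case then applies to each embedding $\widetilde{D}'_i/\widetilde{D}'_{i-1}\hookrightarrow D_i/D_{i-1}$, and multiplying via proposition \ref{Prop:Multipropforsen} yields $\mathrm{Sen}_{D'} = \prod_i\mathrm{Sen}_{\widetilde{D}'_i/\widetilde{D}'_{i-1}}\leq \prod_i\mathrm{Sen}_{D_i/D_{i-1}} = \mathrm{Sen}_D$. For the equality case, the sum of all the non-negative root-shifts witnessing the partial order must be zero, forcing $\mathrm{Sen}_{\widetilde{D}'_i/\widetilde{D}'_{i-1}} = \mathrm{Sen}_{D_i/D_{i-1}}$ for every $i$; the quasi-deRham equality case then gives $\widetilde{D}'_i/\widetilde{D}'_{i-1} = D_i/D_{i-1}$, and an induction on $i$ produces $\widetilde{D}'_i=D_i$, so in particular $D'=\widetilde{D}'_l = D$. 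The main technical point I expect to demand care is the saturation bookkeeping — verifying $\widetilde{D}'_i\cap D_{i-1}=\widetilde{D}'_{i-1}$ and that the saturated quotients are genuine $(\varphi,\Gamma_K)$-modules for which rank additivity holds — after which the proof becomes an assembly of already-established results.
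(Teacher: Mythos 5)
Your argument is correct and follows the same route as the paper: reduce the quasi-deRham case to an inclusion of filtrations on filtered $(\varphi,N,G_{L/K})$-modules via Berger's equivalence and the weight formula of corollary \ref{Cor:SenpolyforquasideRham}, then handle the paraboline case by intersecting $D'$ with the parabolization and applying multiplicativity from proposition \ref{Prop:Multipropforsen}. The only cosmetic difference is that you pass to the $t$-saturation of $D'\cap D_i$ inside $D'$, which is in fact unnecessary — since each $D_i$ is already saturated in $D$, the intersection $D'\cap D_i$ is automatically $t$-saturated in $D'$ and coincides with the filtration $\mathcal{F}_iD'=D'\cap\mathcal{F}_iD$ the paper uses directly.
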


   \begin{proof}
   	First, we assume $D$ is quasi-deRham of the form $\mathbf{D}(M)(\delta)$. Then by proposition \ref{senpolytwistedbychar},  we can twisted by $\delta^{-1}$ for $D$ and $D'$, and may assume $D = \mathbf{D}(M)$ for some filtered $(\varphi,N,G_{L/K})$-module. According to \cite[Lem~III.1.3~and~Prop~III.2.4]{Berger2008}, there exist a injective morphism $i:M'\inj M$, such that
   	$$D'\cong \mathbf{D}(M')$$ and the natural inclusion $D'\subseteq D$ comes from $i:M'\inj M$. As $D'$ has the same rank of $D$, the injection $i$ must be an isomorphism on the underlying $L_0\otimes_{\mathbb{Q}_p}A$-module, and $\mathrm{Fil}^jM'\subseteq \mathrm{Fil}^jM$. By the formulation in corollary \ref{Cor:SenpolyforquasideRham}, one has
   	$$\mathrm{Sen}_{D'}(T)\leq \mathrm{Sen}_{D}(T),$$
   	and the equality holds if and only if $\mathrm{Fil}^jM'= \mathrm{Fil}^jM$, which means $i$ is an isomorphism and $D'=D$.
   	
   	Then for the case that $D$ is paraboline, note that 
   	$$\mathrm{Sen}_D(T) = \prod_{i\in\mathbb{Z}} \mathrm{Sen}_{\mathrm{gr}_iD}(T)$$
   	and 
   	$$\mathrm{Sen}_{D'}(T) = \prod_{i\in\mathbb{Z}} \mathrm{Sen}_{\mathrm{gr}_iD'}(T),$$
   	where $\mathcal{F}_iD'= D'\cap \mathcal{F}_iD$. Hence $\mathrm{Sen}_{\mathrm{gr}_iD'}(T)\leq \mathrm{Sen}_{\mathrm{gr}_iD}(T)$ implies
   	$$\mathrm{Sen}_{D'}(T)\leq \mathrm{Sen}_{D}(T),$$
   	and the equality holds if and only if  $\mathrm{gr}D'=\mathrm{gr}D$ for every $i$, which is equivalent to $D'=D$.
   \end{proof}

   \section{The Bernstein Center}
   
   Let $L$ be a field of characteristic $0$ and let $K$ be a $p$-adic local field. Let $d$ be a positive integer, and let $\sigma$ be a supercuspidal representation of $\mathrm{GL}_d(K)$ over $L$, and let $\varrho:=\mathrm{rec}(\sigma)$ be the corresponding WD representation of $K$ via the local Langlands correspondence, which is of type $\tau$.
   
   We assume that $L$ contains $\mu_{e_\tau}(\bar{L}):=\{e_\tau\mathrm{th-roots\ of \ unity\ in\ } \bar{L}\}$. Through out this section, we always denote $\mathrm{GL}_d(K)$ by $\mathrm{GL}_d$ for short.
   
   \begin{Lemma}\label{Lem:InvUnramChar}
   	Let $\psi: \mathrm{GL}_d\rightarrow L^\times$ be an unramified character of $\mathrm{GL}_d$, such that $\psi\otimes\sigma\cong\sigma$. Then there exists an (unique) element $\zeta\in\mu_{e_\tau}(L)$, such that
   	\begin{align*}
   		\psi: \mathrm{GL}_d & \rightarrow L^\times\\
   		B & \mapsto \zeta^{v_K(\det(B))}.
   	\end{align*}
   \end{Lemma}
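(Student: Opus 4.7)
The plan is to separate the claim into two independent assertions: first, that any unramified character of $\mathrm{GL}_d(K)$ with values in $L^\times$ has the form $B\mapsto \lambda^{v_K(\det B)}$ for a unique $\lambda\in L^\times$; and second, that the invariance condition $\psi\otimes\sigma\cong\sigma$ forces $\lambda^{e_\tau}=1$. The uniqueness of $\zeta$ is then immediate since $\zeta = \psi(\mathrm{diag}(\varpi_K,1,\dots,1))$.

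For the first assertion, I would use that the commutator subgroup of $\mathrm{GL}_d(K)$ is $\mathrm{SL}_d(K)$, so any character factors as $\chi\circ\det$ for some $\chi\colon K^\times\rightarrow L^\times$. The unramifiedness of $\psi$ translates to $\chi$ being trivial on $\mathcal{O}_K^\times$, so $\chi(a)=\lambda^{v_K(a)}$ with $\lambda=\chi(\varpi_K)$, giving the desired form.

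For the second assertion, I would pass through the local Langlands correspondence. Under LLC, $\psi\otimes\sigma$ corresponds to $\varrho\otimes\tilde\psi$, where $\tilde\psi=\chi\circ\mathrm{Art}\colon W_K\rightarrow L^\times$ is the unramified character of the Weil group attached to $\chi$ via local class field theory (normalized so that $\mathrm{Art}(\mathrm{Fr}^{-1})=\varpi_K$, as fixed in the notation section). Hence the hypothesis becomes
\[
\mathrm{Ind}_{W_\tau}^{W_K}(\tilde\tau)\;\cong\;\mathrm{Ind}_{W_\tau}^{W_K}(\tilde\tau)\otimes\tilde\psi.
\]
By Lemma~\ref{Lem:Twistingbychar}, this forces $\tilde\tau\otimes\tilde\psi|_{W_\tau}\cong \tilde\tau^{g}$ for some $g\in W_K/W_\tau$. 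Restricting to $I_K$ and using that $\tilde\psi$ is unramified, we get $\tau\cong\tau^{g}$ as $I_K$-representations; by definition of $W_\tau$ this means $g\in W_\tau$, i.e.\ $g=1$ in $W_K/W_\tau$. Then Schur's lemma applied to the irreducible representation $\tilde\tau$ of $W_\tau$ shows $\tilde\psi|_{W_\tau}$ acts as the identity scalar, hence is trivial as a character.

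Finally, since $W_\tau = W_{K_\tau}$ where $K_\tau/K$ is unramified of degree $e_\tau$, a geometric Frobenius of $K_\tau$ is $\mathrm{Fr}_K^{e_\tau}$, so the triviality of $\tilde\psi|_{W_{K_\tau}}$ amounts to $\tilde\psi(\mathrm{Fr}_K)^{e_\tau}=1$. Unwinding through local class field theory, this is exactly $\lambda^{e_\tau}=1$, so $\zeta:=\lambda\in\mu_{e_\tau}(L)$ as required (the hypothesis $\mu_{e_\tau}(\bar L)\subset L$ ensures $\zeta$ really lies in $L$). The main bookkeeping obstacle is keeping the normalization of $\mathrm{Art}$ and of the Hecke-style vs.\ Weil-style twist on both sides of LLC straight, but once this is fixed all ambiguity in the exponent disappears and only the constraint $\lambda^{e_\tau}=1$ survives.
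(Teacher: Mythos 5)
Your proof is correct and takes essentially the same route as the paper: reduce to the form $\psi = \lambda^{v_K\circ\det}$, pass through LLC, and apply Lemma~\ref{Lem:Twistingbychar} to force $\mathrm{rec}(\psi)|_{W_\tau}$ to be trivial. The only difference is that you spell out the intermediate steps the paper compresses into a single citation of Lemma~\ref{Lem:Twistingbychar} — namely that unramifiedness of $\tilde\psi$ forces $g=1$ upon restriction to $I_K$, and then Schur's lemma applied to the irreducible $\tilde\tau$ gives triviality — which makes the argument more transparent.
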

   
   \begin{proof}
   	Note that $\psi$ must of the form $B\mapsto \zeta^{v_K(\det(B))}$ as it is unramified. Hence it is enough to show that $\psi(B)$ is a $e_\tau$th-root of unity for one (and hence any) matrix $B\in \mathrm{GL}_d$ such that $v_K(\mathrm{det}(B))=1$. 
   	
   	It follows from the local Langlands correspondence that $\psi\otimes\sigma\cong\sigma$ if and only if $\mathrm{rec}(\psi)\otimes\varrho\cong \varrho$. Then $\mathrm{rec}(\psi)$ restricted on $W_\tau$ is trivial by lemma \ref{Lem:Twistingbychar}, which means if $e_\tau|v_K(\mathrm{det}(B))$, then $\psi(B)=1$. Hence $\psi(B)$ is an $e_{\tau}$th-root of unity for any matrix $B\in \mathrm{GL}_d$ such that $v_K(\mathrm{det}(B))$=1 . 
   \end{proof}
   
   Choose and fix a generator $\zeta$ of $\mu_{e_\tau}(L)$, and let $\psi_\zeta$ denote the unramified character
   $$B\mapsto \zeta^{v_K(\det(B))}.$$
   Then the group $\mathcal{G}_\pi:=\{\psi{\ }|\mathit{\ unramified\ character\ such\ that\ }\psi\otimes\pi\cong\pi \}$ is generated by $\psi_\zeta$. Choose an isomorphism $\nu_\zeta:\psi_\zeta\otimes\sigma\xrightarrow{\sim} \sigma$. Then $\nu_\zeta$ induces an isomorphism (also denote by $\nu_\zeta$ in the understandable way)
   $$\nu_\zeta: \psi_\zeta^i\otimes\sigma\xrightarrow{\sim} \psi_\zeta^{i-1}\otimes\sigma.$$ 
   Then $\nu_\zeta^{e_\tau}: \sigma= \psi_\zeta^{e_\tau}\otimes\sigma\xrightarrow{\sim} \sigma$ is an automorphism of $\pi$. It follows that $\nu_\zeta^{e_\tau}$ is a scalar $c_{\nu}\in L$.
   
   Let $\Pi_\sigma$ denote the family of $\mathrm{GL}_d$-representation $L[T,T^{-1}] \otimes_L \sigma$, where $L[T,T^{-1}]$ is the universal family of characters:
   \begin{align*}
   	\mathrm{GL}_d(K) & \rightarrow L[T,T^{-1}]\\
   	B & \mapsto T^{v_K(\mathrm{det}(B))}.
   \end{align*}
   Then by \cite[Lem.22 and Prop.27]{Bernstein1992}, the Bernstein block $\mathrm{Rep}_{[\sigma]}(\mathrm{GL}_d)$ is isomorphic to the category of right $\mathrm{End}_{\mathrm{GL}_d}(\Pi_\sigma)$-modules.
   
   \begin{Theo}\label{BernsteinCenter}
   	Let $\mathfrak{Z}_\sigma:= L[T^{e_\tau},T^{-e_\tau}]$. We have a natural inclusion $$\mathfrak{Z}_{\sigma}\subseteq L[T^{},T^{-1}] \subseteq \mathrm{End}_{\mathrm{GL}_d}(\Pi_\sigma),$$
   	such that the center of $\mathrm{End}_{\mathrm{GL}_d}(\Pi_\sigma)$ is $\mathfrak{Z}_\sigma$, i.e., the Bernstein center of $\mathrm{Rep}_{[\sigma]}(\mathrm{GL}_d)$ is $\mathfrak{Z}_\sigma$.
   \end{Theo}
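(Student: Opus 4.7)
The plan is to explicitly identify $\mathrm{End}_{\mathrm{GL}_d}(\Pi_\sigma)$ as a skew Laurent polynomial algebra and then compute its center by a direct commutator calculation. First, the natural inclusion $L[T,T^{-1}] \hookrightarrow \mathrm{End}_{\mathrm{GL}_d}(\Pi_\sigma)$ is given by multiplication on the first tensor factor of $\Pi_\sigma = L[T,T^{-1}]\otimes_L \sigma$: since the $\mathrm{GL}_d$-action multiplies the first factor only by $T^{v_K(\det B)}$, it commutes with multiplication by arbitrary $f(T)\in L[T,T^{-1}]$. Next, the isomorphism $\nu_\zeta$ satisfies the intertwining relation $\nu_\zeta\circ\rho(B) = \psi_\zeta(B)^{-1}\rho(B)\circ\nu_\zeta = \zeta^{-v_K(\det B)}\rho(B)\circ\nu_\zeta$, and I use this to build a second endomorphism $S\in\mathrm{End}_{\mathrm{GL}_d}(\Pi_\sigma)$ defined by $S(T^n\otimes v) := \zeta^n T^n\otimes\nu_\zeta(v)$. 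A short direct check using the intertwining relation confirms that $S$ is $\mathrm{GL}_d$-equivariant, and from the definitions one reads off $ST = \zeta TS$ and $S^{e_\tau} = c_\nu\cdot\mathrm{id}$.

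The crux of the argument is to show that $T$, $T^{-1}$ and $S$ generate $\mathrm{End}_{\mathrm{GL}_d}(\Pi_\sigma)$ as an $L$-algebra, so that the full endomorphism algebra is the skew Laurent polynomial algebra
$$A := L[T,T^{-1}]\langle S\rangle\,\big/\,(ST - \zeta TS,\ S^{e_\tau} - c_\nu).$$
To this end I consider the $\mathbb{Z}$-grading $\Pi_\sigma = \bigoplus_{n\in\mathbb{Z}} V_n$ with $V_n := T^n\otimes\sigma$, which is preserved by the normal subgroup $G_\sigma := \ker(v_K\circ\det)\subseteq\mathrm{GL}_d$, since $\mathrm{GL}_d/G_\sigma\cong\mathbb{Z}$ is generated by any $B_0$ with $v_K(\det B_0)=1$. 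Any $\Phi\in\mathrm{End}_{\mathrm{GL}_d}(\Pi_\sigma)$ decomposes into $G_\sigma$-equivariant components $\Phi_{m,n}\colon V_n\to V_m$, and $B_0$-equivariance forces $\Phi_{m,n+1} = \rho(B_0)\Phi_{m-1,n}\rho(B_0)^{-1}$, so that $\Phi$ is determined by the collection $\Psi_k := \Phi_{k,0}\in\mathrm{End}_{G_\sigma}(\sigma)$, $k\in\mathbb{Z}$. The main obstacle is then to show that $\mathrm{End}_{G_\sigma}(\sigma)$ is the $e_\tau$-dimensional $L$-algebra spanned by $1,\nu_\zeta,\ldots,\nu_\zeta^{e_\tau-1}$: the containment $\supseteq$ is immediate since each $\nu_\zeta^k$ commutes with $G_\sigma$ by the intertwining relation, and for the reverse dimension bound I would invoke the local Langlands correspondence, using that under local class field theory $G_\sigma$ corresponds to $I_K\subseteq W_K$ and $\mathrm{rec}(\sigma)|_{I_K} = \mathrm{Ind}_{W_\tau}^{W_K}(\tilde\tau)|_{I_K} \cong \bigoplus_{g\in W_K/W_\tau}\tau^g$ is a direct sum of $e_\tau$ pairwise non-isomorphic irreducible $I_K$-representations by the definition of $W_\tau$ (cf.\ lemma \ref{Lem:extprop}), whence $\dim_L\mathrm{End}_{I_K}(\mathrm{rec}(\sigma)) = e_\tau$ by Schur. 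Alternatively, one can use Bushnell-Kutzko types to write $\sigma$ as $c\text{-}\mathrm{Ind}_{\tilde K}^{\mathrm{GL}_d}\tau$ and reduce to a Hecke algebra calculation. Combined with the $B_0$-compatibility above, this identifies $\mathrm{End}_{\mathrm{GL}_d}(\Pi_\sigma)$ with $A$.

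With the structure in hand, the center of $A$ is obtained by a routine commutator computation. Writing any $z\in A$ in canonical form $z = \sum_{k=0}^{e_\tau-1} p_k(T)S^k$ with $p_k\in L[T,T^{-1}]$, the relation $S^kT = \zeta^k T S^k$ gives $zT - Tz = \sum_k(\zeta^k-1)\,p_k(T)\,T\,S^k$, so $[z,T]=0$ forces $p_k = 0$ for $k\not\equiv 0\pmod{e_\tau}$, i.e.\ $z = p_0(T)$. The remaining condition $[z,S]=0$ becomes $p_0(T)S = Sp_0(T) = p_0(\zeta T)S$, equivalent to $p_0$ being invariant under $T\mapsto\zeta T$, hence to $p_0\in L[T^{e_\tau},T^{-e_\tau}] = \mathfrak{Z}_\sigma$. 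This completes the identification of the center of $\mathrm{End}_{\mathrm{GL}_d}(\Pi_\sigma)$ with $\mathfrak{Z}_\sigma$.
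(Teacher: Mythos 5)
Your proposal follows the same two-stage strategy as the paper: (i) identify $\mathrm{End}_{\mathrm{GL}_d}(\Pi_\sigma)$ as a skew Laurent extension $L[T,T^{-1}]\langle S\rangle / (ST-\zeta TS,\,S^{e_\tau}-c_\nu)$, and (ii) compute its center by a routine commutator calculation. Step (ii) is identical to the paper's, modulo the harmless sign convention $\zeta\leftrightarrow\zeta^{-1}$ in the twisting relation. The genuine difference is in step (i): the paper cites \cite[Prop.~28]{Bernstein1992} for the structure $\mathrm{End}_{\mathrm{GL}_d}(\Pi_\sigma)\cong\bigoplus_{i=0}^{e_\tau-1}F\nu^i$ and does not reprove it, whereas you attempt a direct argument by decomposing along the $\mathbb{Z}$-grading $\Pi_\sigma=\bigoplus_n V_n$ and reducing, via the $B_0$-compatibility $\Phi_{m+1,n+1}=\rho(B_0)\Phi_{m,n}\rho(B_0)^{-1}$, to the computation of $\mathrm{End}_{G^0}(\sigma|_{G^0})$ where $G^0:=\ker(v_K\circ\det)$. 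That reduction is sound (and one should add a word on why the $\Psi_k$ must have finite support, which follows from well-definedness of $\Phi$ on each $V_0$).

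The gap is in the final input: you assert $\dim_L\mathrm{End}_{G^0}(\sigma|_{G^0})=e_\tau$ ``using that under local class field theory $G_\sigma$ corresponds to $I_K\subseteq W_K$.'' There is no such correspondence of subgroups. Local class field theory identifies $K^\times$ with $W_K^{\mathrm{ab}}$ (and $\mathcal{O}_K^\times$ with the image of inertia there), but the local Langlands correspondence for $\mathrm{GL}_d$ is a matching of representations, not of subgroups of $\mathrm{GL}_d(K)$ with subgroups of $W_K$. Your computation that $\mathrm{rec}(\sigma)|_{I_K}\cong\bigoplus_{g\in W_K/W_\tau}\tau^g$ is multiplicity-free of length $e_\tau$, hence $\dim\mathrm{End}_{I_K}(\mathrm{rec}(\sigma))=e_\tau$, is correct; but transporting this to $\dim\mathrm{End}_{G^0}(\sigma|_{G^0})$ is a genuine theorem (part of the refined/inertial Langlands correspondence, à la Bushnell--Henniart), not a consequence of bare LLC plus LCFT. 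The standard and more economical route is entirely on the $p$-adic group side: by \cite[Prop.~25]{Bernstein1992} (or Proposition \ref{G^0Induction} together with the argument in Remark \ref{Rmk:TwistedbycharofBcenter}(2)), $\sigma|_{G^0}$ is semisimple, multiplicity-free, of length equal to the order of the group of unramified self-twists, which is $e_\tau$ by Lemma \ref{Lem:InvUnramChar}. Substituting that reference closes the gap. Finally, a notational remark: you denote $\ker(v_K\circ\det)$ by $G_\sigma$, but the paper reserves $G_\sigma$ for the index-$e_\tau$ subgroup stabilizing a $G^0$-irreducible constituent of $\sigma$ and uses $G^0$ for $\ker(v_K\circ\det)$; it is worth aligning the notation to avoid confusion with Proposition \ref{G^0Induction}.
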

   
   \begin{proof}
   	Note that $\Pi_\sigma$ is a free $L[T^{},T^{-1}]$-module and the $\mathrm{GL}_d$-action is $L[T^{},T^{-1}]$-linear. Hence we have a natural inclusion
   	$$L[T^{},T^{-1}] \subseteq \mathrm{End}_{\mathrm{GL}_d}(\Pi_\sigma).$$
   	
   	Let $\nu:= \nu_{\zeta}$, and let $F:= L[T^{},T^{-1}]$. The following $\mathrm{GL}_d$-equivariant $L$-linear bijective map
   	\begin{align*}
   		\sigma \otimes_L F &\rightarrow (\sigma\otimes \psi_{\zeta}) \otimes_L F\\
   		x\otimes T^{m} &\mapsto \zeta^{m}x\otimes T^{m}
   	\end{align*}
   induces an $F$-module structure on $(\sigma\otimes \psi_{\zeta}) \otimes_L F$ defined by:
   $$f(T)(x\otimes g(T)) := x\otimes f(\zeta T)g(T).$$
   Then $\nu$ induces an automorphism (also denoted by $\nu$)
   \begin{align*}
   	\nu:\sigma \otimes_L F & \rightarrow (\sigma\otimes \psi_{\zeta}) \otimes_L F \cong \sigma \otimes_L F\\
   x\otimes g(T)	&\mapsto \nu(x) \otimes g(T)
   \end{align*}   	
   of $\mathrm{GL}_d$-representation, such that for any $x\in \sigma$ and $f(T),g(T)\in F$, one has
   \begin{align*}
   	\nu \circ f(T) (x\otimes g(T)) &= \nu (x\otimes f(T)g(T))\\
   	& = \nu(x)\otimes f(T)g(T)\\
   	& = f(\zeta^{-1}T)(\nu(x)\otimes g(T))\\
   	& = f(\zeta^{-1})\circ\nu(x\otimes g(T))
   \end{align*}
    
   	 It follows that $\nu\circ f(T) = f(\zeta^{-1}T)\circ \nu$.
   	
   	By the computation in \cite[Prop.28]{Bernstein1992}, one has
   	$$\mathrm{End}_{\mathrm{GL}_d}(\Pi_\sigma)\cong \bigoplus_{0\leq i\leq e_\tau-1} F\nu^{i}$$
   	such that $\nu\circ f(T) = f(\zeta^{-1}T)\circ\nu$ for any $f(T)\in F$, and $\nu^{e_\tau} = c_{\nu}$.
   	
   	As $T^{e_{\tau}}$ commutes with $\nu$, it follows that the center of $\mathrm{End}_{\mathrm{GL}_d}(\Pi_\sigma)$ contains $\mathfrak{Z}_\pi$. On other hand, suppose that 
   	$$x= (f_0(T),f_1(T)\nu,\dots,f_{e_\tau-1}(T)\nu^{e_\tau-1})$$
   	is an element in the center of $\mathrm{End}_{\mathrm{GL}_d}(\Pi_\sigma)$. Then $x\circ T = T\circ x$ implies
   	$$f_i(T)T = f_i(T)\zeta^{-i}T,$$
   	which means $f_i(T) = 0$ for $i\neq 0$. Then after considering the equation $$x\circ \nu = \nu \circ x,$$
   	one has $f_0(T) \in\mathfrak{Z}_\sigma$. Our conclusion follows.
   \end{proof}

   Let $G^{0}$ denote the subgroup of $\mathrm{GL}_d$ generated by all compact subgroups (i.e. $G^0=\{B\in \mathrm{GL}_d|\ \ \mathrm{det}(B)\in\mathcal{O}_K^\times\}$), and let $Z$ denote the center of $\mathrm{GL}_d$. According to \cite[Prop~25]{Bernstein1992}, $\sigma|_{G^{0}}$ is semi-simple of finite length and each irreducible component is stable under the action of $Z$. Then let $\sigma^{0}\subset \sigma$ be an irreducible component as a $G^{0}$ representation and let $G_{\sigma}\subseteq \mathrm{GL}_d$ be the maximal normal subgroup such that $\sigma^0$ is stable under $G_{\sigma}$. As $G_{\sigma}$ contains $ZG^{0}$, then $G_{\sigma}$ has finite index in $\mathrm{GL}_d$.
   
   \begin{Prop}\label{G^0Induction}
   	The natural map
   	$$\mathrm{Ind}_{G_{\sigma}}^{\mathrm{GL}_d} \sigma^{0} \rightarrow \sigma$$
   	induced by $\sigma^{0}\inj \sigma$ is an isomorphism of $\mathrm{GL}_d$ representation. Moreover, one has $e_{\tau} = [\mathrm{GL}_d:G_{\sigma}]$. In particular, $e_\tau | d$.
   \end{Prop}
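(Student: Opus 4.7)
The plan is to prove the three assertions in order.

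\emph{First assertion.} By Frobenius reciprocity, the inclusion $\sigma^{0}\inj\sigma$ of $G_{\sigma}$-representations yields the natural $\mathrm{GL}_{d}$-equivariant map $\phi\colon\mathrm{Ind}_{G_{\sigma}}^{\mathrm{GL}_{d}}\sigma^{0}\rightarrow\sigma$; since $G_{\sigma}\supseteq ZG^{0}$ has finite index in $\mathrm{GL}_{d}$, ordinary and compact induction coincide. Surjectivity is immediate from the irreducibility of $\sigma$. For injectivity I apply Mackey's restriction formula: since $G_{\sigma}$ is normal in $\mathrm{GL}_{d}$ and contains $G^{0}$,
$$\mathrm{Ind}_{G_{\sigma}}^{\mathrm{GL}_{d}}\sigma^{0}\big|_{G^{0}}\;\cong\;\bigoplus_{g\in \mathrm{GL}_{d}/G_{\sigma}}g\sigma^{0},$$
and the $g\sigma^{0}$ are pairwise non-isomorphic irreducible $G^{0}$-representations (by the maximality of $G_{\sigma}$ together with the multiplicity-freeness of $\sigma|_{G^{0}}$ cited from Bernstein's Proposition~25). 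Any $\mathrm{GL}_{d}$-subrepresentation of $\mathrm{Ind}_{G_{\sigma}}^{\mathrm{GL}_{d}}\sigma^{0}$ is then a $G^{0}$-subsum of these distinct irreducibles, and transitivity of the $\mathrm{GL}_{d}/G_{\sigma}$-action on $\{g\sigma^{0}\}$ forces $\ker\phi=0$.

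\emph{Second assertion.} Set $m:=[\mathrm{GL}_{d}:G_{\sigma}]$. Since $\mathrm{GL}_{d}/ZG^{0}\cong \mathbb{Z}/d\mathbb{Z}$ via $v_{K}\circ\det$ and $G_{\sigma}\supseteq ZG^{0}$, the quotient $\mathrm{GL}_{d}/G_{\sigma}$ is cyclic of order $m\mid d$. The crux is the identification $\mathcal{G}_{\pi}=\widehat{\mathrm{GL}_{d}/G_{\sigma}}$, where $\mathcal{G}_{\pi}:=\{\psi\text{ unramified}:\psi\otimes\sigma\cong\sigma\}$. The inclusion $\supseteq$ follows from the first assertion and the projection formula: if $\psi|_{G_{\sigma}}=1$ then $\psi\otimes\sigma\cong \mathrm{Ind}_{G_{\sigma}}^{\mathrm{GL}_{d}}(\psi|_{G_{\sigma}}\otimes\sigma^{0})\cong\sigma$. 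For $\subseteq$, given an intertwiner $T\colon\psi\otimes\sigma\xrightarrow{\sim}\sigma$, the triviality of $\psi$ on $G^{0}$ combined with the pairwise non-isomorphism of the $G^{0}$-components forces $T$ to send $\psi\otimes\sigma^{0}$ to $\sigma^{0}$; Schur's lemma applied to $G^{0}$ then makes $T|_{\sigma^{0}}$ a scalar, and the $G_{\sigma}$-intertwining identity forces $\psi|_{G_{\sigma}}=1$. To deduce $m=e_{\tau}$, I would base-change to $\bar{L}$: the analogous identifications apply verbatim, and since $\bar{L}\supseteq\mu_{m}$ the character group $\widehat{\mathrm{GL}_{d}/G_{\sigma}}(\bar{L})$ has exactly $m$ elements while the preceding lemma gives $|\mathcal{G}_{\pi}(\bar{L})|=e_{\tau}$, forcing $m=e_{\tau}$.

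The third assertion $e_{\tau}\mid d$ is then immediate from $e_{\tau}=[\mathrm{GL}_{d}:G_{\sigma}]$ dividing $[\mathrm{GL}_{d}:ZG^{0}]=d$, or directly from the already-established equality $d=d_{\tau}e_{\tau}$ arising from $\varrho=\mathrm{Ind}_{W_{\tau}}^{W_{K}}\tilde{\tau}$. The main obstacle I anticipate is the multiplicity-freeness of $\sigma|_{G^{0}}$ into pairwise non-isomorphic irreducibles, which underpins both the injectivity in the first step and the separation-of-components argument in the second; I would invoke the cited Bernstein Proposition~25 for this. A secondary subtlety is the base-change to $\bar{L}$ required to promote the character count $|\mathcal{G}_{\pi}(\bar{L})|=e_{\tau}$ to the index equality $m=e_{\tau}$, since $L$ may not contain $\mu_{m}$ a priori.
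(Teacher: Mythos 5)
Your plan for the first assertion is not circular exactly, but it outsources the hard part. You claim the conjugates $g\sigma^{0}$ appearing in the Mackey decomposition of $\mathrm{Ind}_{G_{\sigma}}^{\mathrm{GL}_{d}}\sigma^{0}\big|_{G^{0}}$ are \emph{pairwise non-isomorphic}, attributing this to ``multiplicity-freeness of $\sigma|_{G^{0}}$'' from Bernstein's Proposition 25. But the paper's own quotation of that result gives only semisimplicity of finite length together with $Z$-stability of the irreducible components; multiplicity-freeness is not part of what is cited. Pairwise non-isomorphism of the conjugates is precisely the assertion $G_{\sigma}'=G_{\sigma}$, where $G_{\sigma}':=\{B\in\mathrm{GL}_{d}\mid (\sigma^{0})^{B}\sim\sigma^{0}\}$, and this is the genuinely nontrivial step: $G_{\sigma}$ by definition only stabilises $\sigma^{0}$ as a \emph{subspace}, while $G_{\sigma}'$ stabilises its isomorphism class as a $G_{\sigma}$-representation, and it is not formal that these agree. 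Your proof leaves this exactly where the difficulty is.

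The paper takes a leaner route. For the first assertion it never invokes pairwise non-isomorphism at all: the subspaces $B_{i}(\sigma^{0})$ for $B_{i}$ in distinct cosets of $G_{\sigma}$ are \emph{distinct} irreducible $G^{0}$-subspaces (if $B_{i}(\sigma^{0})=B_{j}(\sigma^{0})$ then $B_{j}^{-1}B_{i}\in G_{\sigma}$), so their sum is direct; that sum is visibly $\mathrm{GL}_{d}$-stable and nonzero, hence equals $\sigma$ by irreducibility, and by inspection it is $\mathrm{Ind}_{G_{\sigma}}^{\mathrm{GL}_{d}}\sigma^{0}$. The non-isomorphism ($G_{\sigma}'=G_{\sigma}$) is then proved \emph{afterward}, using the established decomposition: if $G_{\sigma}\subsetneq G_{\sigma}'$, one extends $\sigma^{0}$ to a $G_{\sigma}'$-representation $\tilde{\sigma}^{0}$ (possible because $G_{\sigma}'/G_{\sigma}$ is cyclic), and $\mathrm{Ind}_{G_{\sigma}}^{G_{\sigma}'}\sigma^{0}\twoheadrightarrow\tilde{\sigma}^{0}$ is a proper quotient; inducing up to $\mathrm{GL}_{d}$ gives a proper quotient of $\sigma$, contradicting irreducibility. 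This extension argument is the idea your proposal is missing, and you should either supply it or replace the Bernstein citation with one that actually gives the multiplicity-one statement. Your treatment of the index computation $e_{\tau}=[\mathrm{GL}_{d}:G_{\sigma}]$ via $\mathcal{G}_{\pi}\cong\widehat{\mathrm{GL}_{d}/G_{\sigma}}$, including the base-change to $\bar{L}$ for the character count, is a reasonable alternative presentation of what the paper does with Frobenius reciprocity, and the final divisibility $e_{\tau}\mid d$ is handled identically.
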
  

   \begin{proof}
   For each $B\in\mathrm{GL}_d$, let $(\sigma^{0})^{B}$ denote the $B$-conjugation of $\sigma^{0}$ as $G_{\sigma}$ representation, i.e., $C^{B}(x) := B^{-1}CB(x)$ for each $x\in \sigma^{0}$, and $C\in G_{\sigma}$, and $C^{B}$ denotes the $B$-conjugate action of $C$ in $(\sigma^{0})^{B}$.
   
   Let $\{B_1,\dots,B_e\}$ be a set of representatives of $\mathrm{GL}_d/ G_{\sigma}$, where $e := [\mathrm{GL}_d: G_{\sigma}]$. Then one has $(\sigma^{0})^{B_i}\cong B_i(\sigma^{0})$ via the map $x\mapsto B(x)$, and
   $$B_i(\sigma^{0})\cap B_j(\sigma^{0}) = \emptyset,$$
   for $1\leq i\neq j\leq e$, by the definition of $G_{\sigma}$ and note that $\sigma^{0}$ is irreducible. It follows that 
   $$\sigma = \bigoplus_{1\leq i\leq e} B_i(\sigma^{0})\cong \mathrm{Ind}_{G_{\sigma}}^{\mathrm{GL}_d} \sigma^{0}.$$
   
   We write $G_{\sigma}':= \{B\in\mathrm{GL}_d\ |\ (\sigma^{0})^{B} \sim \sigma^{0}\}$, and claim that $G_{\sigma}'\cong G_{\sigma}$. Indeed, it is obvious that $G_\sigma\subseteq G_{\sigma}'$. If $G_{\sigma} \neq G_{\sigma}'$, one can extend the $G_{\sigma}$-action on $\sigma^{0}$ to a $G_{\sigma}'$-action as $G_{\sigma}'/G_{\sigma}$ is cyclic (this extension is similar as we do for WD representations). We denote by $\tilde{\sigma}^{0}$ the extended $G_{\sigma}'$-action. Then the natural map
   $$\mathrm{Ind}_{G_\sigma}^{G_\sigma'}\sigma^{0} \rightarrow \tilde{\sigma}^0$$
   is surjective but not injective. This implies that the natural map
   $$\sigma \cong \mathrm{Ind}_{G_\sigma}^{\mathrm{GL}_d}\sigma^{0} = \mathrm{Ind}_{G_\sigma'}^{\mathrm{GL}_d}(\mathrm{Ind}_{G_\sigma}^{G_{\sigma}'}\sigma^{0}) \rightarrow \mathrm{Ind}_{G_\sigma'}^{\mathrm{GL}_d}\tilde{\sigma}^{0}$$
   is surjective but not injective, which contradicts to the irreducibility of $\sigma$.
   
   Now assume $\delta$ is an unramified character of $\mathrm{GL}_d$ over $L$. Then $\sigma\otimes\delta\cong\sigma$ if and only if 
   there exists a $G_\sigma$-equivariant map (by Frobenius reciprocity)
   $$\sigma^{0} \inj \mathrm{Ind}_{G_{\sigma}}^{\mathrm{GL}_d} (\sigma^{0}\otimes\delta|_{G_\sigma}).$$
   As $\mathrm{Ind}_{G_{\sigma}}^{\mathrm{GL}_d} (\sigma^{0}\otimes\delta|_{G_\sigma}) \cong \bigoplus_{1\leq i\leq e} ((\sigma^{0})^{B_i}\otimes\delta|_{G_\sigma})$, then the image of $\sigma^{0}$ must be in $(\sigma^{0})\otimes\delta|_{G_\sigma}$. It follows that    $$\sigma\otimes\delta\cong\sigma$$
   if and only if $\delta|_{G_\sigma}$ is trivial, which implies $e=e_\tau$. It follows that $$e_\tau| d$$ as $d=[\mathrm{GL}_d:ZG^0]$.
   
   \end{proof}

   \begin{Rmk} \label{Rmk:TwistedbycharofBcenter}
   	\textit{  }
   	\begin{enumerate}
   		\item For any closed point $\delta\in\mathrm{Spec}(\mathfrak{Z}_\sigma)$ (i.e. $\delta: \mathfrak{Z}_{\sigma} \rightarrow k_{\delta}^{\times}$), after perhaps enlarging $k_{\delta}$, one can extend $\delta$ to an unramified character $\tilde{\delta}$ of $\mathrm{GL}_d$. By definition, for any two different extensions $\tilde{\delta}$ and $\tilde{\delta}'$, one has $\sigma\otimes\tilde{\delta}\cong \sigma\otimes\tilde{\delta}'$. Hence the notation $\sigma\otimes \delta$ makes sense. One can also describe $\sigma\otimes \delta$ as following:
   		
   		Actually, $\mathrm{Spec}(\mathfrak{Z}_{\sigma})$ can be regarded as the space of unramified character of $G_\sigma$, which means $\delta$ can be regarded as an unramified character of $G_\sigma$. Then
   		$$\sigma\otimes\delta \cong \mathrm{Ind}_{G_\sigma}^{\mathrm{GL}_d}(\sigma^0\otimes\delta).$$
   		
   		One can see that this description is similar as we do for WD representations in lemma \ref{Lem:Twistingbychar}.
   		\item Regarding $\mathrm{Spec}(\mathfrak{Z}_{\sigma})$ as the space of unramified character of $G_\sigma$ gives a group scheme structure of $\mathrm{Spec}(\mathfrak{Z}_{\sigma})$, which is not canonical and depends on our choice of $\sigma$ in the Bernstein block $[\sigma]$. Actually, this is the unique group scheme structure of $\mathrm{Spec}(\mathfrak{Z}_{\sigma})$, which is isomorphic to $\mathbb{G}_{m,L}$ such that for any point $\delta: G_\sigma\rightarrow k_{\delta}^{\times}$ in $\mathrm{Spec}(\mathfrak{Z}_{\sigma})$, the natural action of $\mathfrak{Z}_{\sigma}$ on $\sigma\otimes\delta$ is $\delta$ ($\delta$ can be also regarded as character of $\mathfrak{Z}_\sigma$). 
   	\end{enumerate}
   
    \begin{proof}
    	Part (1) follows from lemma \ref{Lem:InvUnramChar}. 
    	
    	For part(2), it is suffice of prove the claim for the case that $\delta$ is the identity in the group scheme $\mathrm{Spec}(\mathfrak{Z}_{\sigma})$. Note that $\mathrm{Z}_\sigma = L[T^{\pm e_\tau}]$. Hence it suffices to show that $T^{e_\tau}$ acts trivially on $\sigma$.
    	
    	As the functor $\Hom_{\mathrm{GL}_d}(\Pi_\sigma,-)$ induces an equivalence of categories between 
    	\begin{align*}
    		\mathrm{Rep}_{[\sigma]}(\mathrm{GL}_d) & \rightarrow \{\mathrm{right\ } \mathrm{End}_{\mathrm{GL}_d}(\Pi_\sigma)\mathrm{-modules}\}\\
    		\sigma' &\mapsto \Hom_{\mathrm{GL}_d}(\Pi_\sigma,\sigma' )
    	\end{align*}
    Now we are going to compute $\Hom_{\mathrm{GL}_d}(\Pi_\sigma,\sigma)$. Let $F:= L[T^{\pm 1}]$. As
    $$\Pi_{\sigma}\otimes_F F/(f_{\zeta}(T))\cong \bigoplus_{0\leq i\leq e_\tau-1}\sigma\otimes\psi_{\zeta}^{i},$$
    where $f_{\zeta}(T) = (T-1)(T-\zeta)\cdots(T-\zeta^{e_\tau-1})$. Then
    \begin{align*}
    	\Hom_{\mathrm{GL}_d}(\Pi_\sigma,\sigma) & \inj \Hom_{\mathrm{GL}_d}(\Pi_\sigma\otimes_F F/(f_\zeta(T)),\sigma) \\
    	& = \bigoplus_{0\leq i\leq e_\tau-1}\Hom_{\mathrm{GL}_d}(\sigma\otimes \psi^{i}_{\zeta},\sigma) \\
    	& = \bigoplus_{0\leq i\leq e_\tau-1} L \nu_{\zeta}^i,
    \end{align*}
    In particular, $\dim_L\Hom_{\mathrm{GL}_d}(\Pi_\sigma,\sigma) \geq e_\tau$. On the other hand, it is easy to see that 
    $$\Pi_\sigma = (\mathrm{Ind}_{G^{0}}^{\mathrm{GL}_d} L) \otimes_L \sigma \cong  c\mathrm{-Ind}_{G^{0}}^{\mathrm{GL}_d} (\sigma|_{G^0}),$$
    where $L$ is regarded as the trivial representation of $G^{0}$.
    Hence by the proof of \ref{G^0Induction}, one has
    \begin{align*}
    	\Hom_{\mathrm{GL}_d}(\Pi_\sigma,\sigma) & = \Hom_{G^{0}}(\sigma|_{G^{0}},\sigma|_{G^{0}}) \\
    	& = \bigoplus_{1\leq i\leq e_\tau}\Hom_{G^{0}}((\sigma^0)^{B_i},(\sigma^0)^{B_i})\\
    	& = L^{\oplus e_\tau}.
    \end{align*}
    It follows that $\dim_L\left(\Hom_{\mathrm{GL}_d}(\Pi_\sigma,\sigma)\right) = e_\tau$, and
    $$\Hom_{\mathrm{GL}_d}(\Pi_\sigma,\sigma) = \Hom_{\mathrm{GL}_d}(\Pi_\sigma\otimes_F F/(f_\zeta(T)),\sigma).$$
    Note that $f_{\zeta}(T) = T^{e_\tau}-1$, then the right action of $T^{e_\tau}$ on  $\Hom_{\mathrm{GL}_d}(\Pi_\sigma,\sigma)$ is trivial. Namely, $T^{e_\tau}$ acts trivially on $\sigma$.
    \end{proof}
   	
   \end{Rmk}

  	\bibliography{para}
\end{document}